\documentclass[11pt]{preprint}

\usepackage[a4paper, left=2.65cm, right=2.65cm, top=2.75cm, bottom=2.75cm]{geometry}
\usepackage[full]{textcomp}
\usepackage[osf]{newtxtext}
\usepackage{cabin}
\usepackage{enumerate}
\usepackage{enumitem}

\usepackage{bm}
\usepackage{dsfont}
\usepackage{stmaryrd}

\usepackage[varqu,varl]{inconsolata}
\usepackage[cal=boondoxo]{mathalfa}

\usepackage{comment}
\excludecomment{versiona}

\usepackage{cancel}
\usepackage{mcomment}
\usepackage{hyperref}
\usepackage{mathrsfs}
\usepackage{booktabs}
\usepackage{caption}

\usepackage{tikz}
\usetikzlibrary{decorations.pathreplacing,decorations.markings}
\usepackage{amsmath} 
\usepackage{mhequ} 
\usepackage{mhsymb} 
\usepackage{mhenvs} 
\usepackage{microtype}
\usepackage{amssymb} 

\usepackage{wasysym}
\usepackage{centernot}

\usepackage{oldgerm}

\renewcommand{\R}{\mathbb{R}}
\renewcommand{\C}{\mathbb{C}}
\renewcommand{\N}{\mathbb{N}}

\newcommand{\T}{\mathbb{T}}
\renewcommand{\Z}{\mathbb{Z}}
\renewcommand{\E}{\mathbb{E}}

\newcommand{\cA}{\mathcal{A}}
\newcommand{\cB}{\mathcal{B}}
\newcommand{\cC}{\mathcal{C}}

\newcommand{\cE}{\mathcal{E}}
\newcommand{\cF}{\mathcal{F}}

\newcommand{\cL}{\mathcal{L}}

\newcommand{\cN}{\mathcal{N}}
\newcommand{\cO}{\mathcal{O}}

\newcommand{\cR}{\mathcal{R}}
\newcommand{\cS}{\mathcal{S}}
\newcommand{\cT}{\mathcal{T}}

\newcommand{\cV}{\mathcal{V}}

\newcommand{\cZ}{\mathcal{Z}}

\newcommand{\fe}{\mathfrak{e}}
\newcommand{\ff}{\mathfrak{f}}

\newcommand{\fh}{\mathfrak{h}}

\newcommand{\fs}{\mathfrak{s}}

\newcommand{\fw}{\mathfrak{w}}

\newcommand{\dd}{\mathrm{d}}

\newcommand{\eff}{\mathrm{eff}}

\newcommand{\fsi}{\mathsf{i}}

\newcommand{\fsP}{\mathsf{P}}
\newcommand{\fsQ}{\mathsf{Q}}
\newcommand{\fsR}{\mathsf{R}}

\def\one{\mathrm{(I)}}
\def\two{\mathrm{(II)}}
\def\three{\mathrm{(III)}}
\def\four{\mathrm{(IV)}}
\def\five{\mathrm{(V)}}
\def\six{\mathrm{(VI)}}
\def\seven{\mathrm{(VII)}}

\newcommand{\vertiii}[1]{{\vert\kern-0.25ex\vert\kern-0.25ex\vert #1 
    \vert\kern-0.25ex\vert\kern-0.25ex\vert}}

\newcommand{\1}{\text{\usefont{U}{dsrom}{m}{n}1}}

\newcommand{\vphi}{\varphi}

\colorlet{darkblue}{blue!90!black}
\colorlet{darkred}{red!90!black}
\colorlet{darkgreen}{green!50!black}
\colorlet{darkyellow}{yellow!90!black}

\newcommand{\fock}[2]{
    \def\order{#2}
    \ifx\order\empty
        \Gamma L^2_{#1}
    \else
        \Gamma H^{#2}_{#1}
    \fi
}

\newcommand{\fockd}[2]{
    \def\order{#2}
    \ifx\order\empty
        \Gamma \ell^2_{#1}
    \else
        \Gamma h^{#2}_{#1}
    \fi
}

\newcommand{\fockdo}[2]{
    \def\order{#2}
    \ifx\order\empty
        \Gamma_0 \ell^2_{#1}
    \else
        \Gamma_0 h^{#2}_{#1}
    \fi
}

\newcommand{\fockb}[2]{
    \def\order{#2}
    \ifx\order\empty
        \bar \Gamma L^2_{#1}
    \else
        \bar \Gamma H^{#2}_{#1}
    \fi
}

\newcommand{\fockdb}[2]{
    \def\order{#2}
    \ifx\order\empty
        \bar \Gamma \ell^2_{#1}
    \else
        \bar \Gamma h^{#2}_{#1}
    \fi
}

\newcommand{\fockdob}[2]{
    \def\order{#2}
    \ifx\order\empty
        \bar \Gamma_0 \ell^2_{#1}
    \else
        \bar \Gamma_0 h^{#2}_{#1}
    \fi
}

\newcommand{\fockcore}[3]{
 \def\order{#2}
    \ifx\order\empty
        \boldsymbol{\Gamma}_{#1} \mathbf{L}^2_{#3}
    \else
        \boldsymbol{\Gamma}_{#1} \mathbf{H}^{#2}_{#3}
    \fi
}

\makeatletter
\def\DeclareSymbol#1#2#3{\expandafter\gdef\csname MH@symb@#1\endcsname{\tikzsetnextfilename{symbol#1}\tikz[baseline=#2,scale=0.15,draw=symbols]{#3}}\expandafter\gdef\csname MH@symb@#1s\endcsname{\scalebox{0.7}{\tikzsetnextfilename{symbol#1s}\tikz[baseline=#2,scale=0.15,draw=symbols]{#3}}}}
\def\<#1>{\csname MH@symb@#1\endcsname}
\makeatother

\definecolor{Red}{rgb}{1,0,0}
\definecolor{Blue}{rgb}{0,0,1}
\definecolor{Olive}{rgb}{0.41,0.55,0.13}
\definecolor{Yarok}{rgb}{0,0.5,0}
\definecolor{Green}{rgb}{0,1,0}
\definecolor{MGreen}{rgb}{0,0.8,0}
\definecolor{DGreen}{rgb}{0,0.65,0}
\definecolor{Yellow}{rgb}{1,1,0}
\definecolor{Cyan}{rgb}{0,1,1}
\definecolor{Magenta}{rgb}{1,0,1}
\definecolor{Orange}{rgb}{1,.5,0}
\definecolor{Violet}{rgb}{.5,0,.5}
\definecolor{Purple}{rgb}{.75,0,.25}
\definecolor{Brown}{rgb}{.75,.5,.25}
\definecolor{Grey}{rgb}{.7,.7,.7}
\definecolor{Black}{rgb}{0,0,0}
\definecolor{dr}{rgb}{0.8,0,0}
\definecolor{db}{rgb}{0,0,0.8}

\newcounter{list_size}
\newcommand{\connect}[4][dr]{
	\edef\myleftx{10000}
	\edef\myrightx{-10000}
	\edef\mycentery{0}
	\setcounter{list_size}{0}
	\foreach \j in {#2}{\stepcounter{list_size}};
	\edef\n{\arabic{list_size}}
	\ifthenelse{\n=1}
	{
		\draw (#3#2) -- (#4#2);
	}
	{
		\foreach \j in {#2} {
			\path (#3\j); \pgfgetlastxy{\XCoord}{\YCoord};
			\pgfmathsetmacro{\lx}{min(\myleftx,\XCoord)};
			\pgfmathsetmacro{\rx}{max(\myrightx,\XCoord)};
			\pgfmathsetmacro{\cy}{\mycentery+\YCoord/(2*\n)};
			\path (#4\j); \pgfgetlastxy{\XCoord}{\YCoord};
			\pgfmathsetmacro{\cy}{\cy+\YCoord/(2*\n)};
			\global\let\myleftx=\lx
			\global\let\myrightx=\rx
			\global\let\mycentery=\cy
		}
		\foreach \j in {#2} {
			\draw (#3\j) -- (#4\j);
		}
		\draw[thick,#1] (\myleftx pt,\mycentery pt) -- (\myrightx pt,\mycentery pt);
		\foreach \j in {#2} {
			\path (#3\j); \pgfgetlastxy{\XCoord}{\YCoord};
			\node at (\XCoord,\mycentery pt) [simple,fill=#1] {};
		}
	}
}

\def\restriction#1#2{\mathchoice
              {\setbox1\hbox{${\displaystyle #1}_{\scriptstyle #2}$}
              \restrictionaux{#1}{#2}}
              {\setbox1\hbox{${\textstyle #1}_{\scriptstyle #2}$}
              \restrictionaux{#1}{#2}}
              {\setbox1\hbox{${\scriptstyle #1}_{\scriptscriptstyle #2}$}
              \restrictionaux{#1}{#2}}
              {\setbox1\hbox{${\scriptscriptstyle #1}_{\scriptscriptstyle #2}$}
              \restrictionaux{#1}{#2}}}
\def\restrictionaux#1#2{{#1\,\smash{\vrule height .8\ht1 depth .85\dp1}}_{\,#2}}

\newcommand{\rap}{{\mathrm{rap}}}

\let\ss=\smallskip
\let\ms=\medskip

\usepackage{tikz-cd}
\usepackage{graphicx}

\def\Log{\mathrm{Log}}

\title{Superdiffusive Invariance Principle for the two-dimensional Asymmetric Simple Exclusion Process}

\begin{document}

\date{}

\maketitle

\vspace{-2cm}
\noindent{\large \bf Quentin Moulard}

\noindent{Technical University of Vienna, Austria\\}
\noindent\email{quentin.moulard@tuwien.ac.at}
\newline

\bigskip\noindent
\begin{abstract}
We consider the Asymmetric Simple Exclusion Process (ASEP) at its critical dimension $d = 2$, started from the Bernoulli product measure with density $\rho \in (0,1)$.
We first derive the sharp $\log(t)^{2/3}$ divergence of the bulk diffusivity, predicted via mode-coupling theory in [H. van Beijeren, R. Kutner, H. Spohn, Phys. Rev. Lett., 1985], including the exact prefactor, thereby removing the subleading diverging multiplicative corrections in the Tauberian bounds of [H.-T. Yau, Ann. of Math., 2004].
More importantly, we establish an invariance principle for the density fluctuation field on the corresponding superdiffusive scales.
On the time scale $\tau$, we rescale space by $\sqrt{\tau} \log(\tau)^{1/3}$ in the direction of the drift and by $\sqrt{\tau}$ in the transverse direction, and prove functional convergence of the rescaled fluctuation process to the solution of a conservative stochastic heat equation with additive noise. This is the same explicit Gaussian Fixed Point obtained in [G. Cannizzaro, Q. Moulard, F. Toninelli, arXiv:2501.00344, 2025] for the two-dimensional Stochastic Burgers Equation (SBE). We also deduce a superdiffusive central limit theorem for the second-class particle under the same anisotropic rescaling. The result holds for any finite-range jump kernel with nonzero drift and uniaxial asymmetry.

This is the first space-time scaling limit result for a nonequilibrium interacting particle system at the critical dimension, where Gaussian fluctuations emerge despite logarithmic corrections to the scaling. The proof uses an approximate fluctuation-dissipation relation, which is obtained by combining a sharp quantitative comparison between the ASEP and SBE generators with refined estimates on the SBE Ansatz. An additional key novelty is a tightness argument that overcomes the limitations of the It\^o trick on superdiffusive scales.
\end{abstract}

\bigskip

\noindent{\it Key words and phrases.} Asymmetric simple exclusion process, interacting particle systems, critical dimension, superdiffusion, invariance principle, driven diffusive systems

\setcounter{tocdepth}{3} 
\tableofcontents

\section{Introduction}

Understanding how macroscopic behaviour emerges from microscopic dynamics is a central question in the theory of interacting particle systems and in nonequilibrium statistical mechanics~\cite{Liggett, KL99, KLO}.
For systems with a conserved particle density, a first step is to establish a hydrodynamic limit describing the evolution of the macroscopic density profile. The natural next step is to investigate the large-scale evolution of the density fluctuations around this profile.
An important question is how these fluctuations depend on the spatial
dimension and on the reversibility of the microscopic dynamics.
In this paper, we establish a superdiffusive invariance principle for the stationary density fluctuations of the two-dimensional Asymmetric Exclusion Process (ASEP), at its critical dimension and with fixed microscopic asymmetry.
\ss

The ASEP is a paradigmatic example of a driven diffusive system.
The particles occupy the sites of $\Z^d$ and evolve in continuous time: a particle at $x$ attempts to jump to $y$ at rate $r(y-x)$, where the jump kernel $r$ has finite range, and the jump is performed only if the target site $y$ is empty. This exclusion rule encodes the interaction between the particles.
For every density $\rho \in (0,1)$, the Bernoulli product measure $\nu_\rho$, under which each site is occupied independently with probability $\rho$, is invariant for the dynamics.
Here, however, we assume that the drift $\fw := \sum_{z \in \Z^d} r(z) \, z$ is nonzero, so that the stationary dynamics is non-reversible and carries the macroscopic particle current $j(\rho) := \rho(1-\rho) \, \fw$.
Despite the simplicity of the invariant measure, the space-time fluctuations are highly non trivial and reflect both the interactions and the irreversible particle current.
\ss

To put the effect of the asymmetry into perspective, recall that for the symmetric simple exclusion process, the stationary density fluctuation field has a diffusive Gaussian scaling limit in every dimension~\cite{KL99, KLO}. The introduction of a non-zero drift leads to a much richer picture, whose nature strongly depends on the spatial dimension $d$.
The propagation of density fluctuations can be probed by a second-class particle, which tracks the single discrepancy between two copies of ASEP evolving under the basic coupling, started from configurations differing at a single site~\cite{Liggett}.
The second-class particle has velocity $v := j'(\rho) = (1 - 2 \rho) \fw$, and its spreading can be measured by the bulk diffusivity matrix $D(t)$~\eqref{e:DiffusivityDef}. More precisely, if we define the longitudinal diffusivity as
\begin{equ} \label{e:Dparallel}
D_{\parallel}(t) := \fe^T D(t) \fe \, , \quad \fe := \frac{\fw}{|\fw|} \, ,
\end{equ}
then $t D_{\parallel}(t)$ gives the mean-square displacement of the second-class particle in the direction of the drift, after centring by $t v$.
Thus, divergence of $D_{\parallel}(t)$ as $t \to \infty$ is a signature of superdiffusive behaviour. On the basis of mode-coupling theory, van Beijeren, Kutner and Spohn~\cite{vBKS} predicted for driven diffusive systems such as ASEP that $D_{\parallel}(t)$ is of order $1$, $t^{1/3}$ and $\log(t)^{2/3}$ in dimensions $d \geq 3$, $d = 1$ and $d = 2$, respectively.
\ms

In dimensions $d \geq 3$, the asymmetry does not change the qualitative diffusive picture.
The mathematical understanding of this regime developed through the study of diffusive hydrodynamic limits for small perturbations of constant density profiles~\cite{EMY94}, the analysis of the diffusion coefficient~\cite{LOY96}, and the construction of a \emph{fluctuation-dissipation} relation~\cite{LY97}. In the reference frame moving with the characteristic velocity $v$, the density fluctuation process converges under diffusive scaling to the solution of a conservative stochastic heat equation with a renormalised diffusion matrix~\cite{CLO01, LOV04}, which also yields a central limit theorem for the second-class particle. These works show how a non-reversible microscopic dynamics can produce Gaussian fluctuations at large scale, with the effect of the asymmetry retained in the effective diffusion coefficient; see also~\cite{KLO}.
\ss

In dimension $d=1$, by contrast, the nonlinear current is relevant and produces KPZ behaviour.
For nearest-neighbour ASEP, coupling estimates for the second-class particle yield the predicted $t^{1/3}$ growth of the diffusivity~\cite{BS10}; the same two-sided bound is also known in a Tauberian sense for ASEP~\cite{QV08}. At the level of the height function fluctuations, convergence to the KPZ fixed point under the $1:2:3$ scaling has been established for TASEP~\cite{MQR21} and, more recently, for nearest-neighbour ASEP~\cite{ACH24b, ACH24}. For TASEP,~\cite{RV25} established a path scaling limit for the second-class particle, which is described by a competition interface in the directed landscape. The identification of these scaling limits relies on the integrable structures of the models. 

A complementary approach concerns weak asymmetry: when the drift is of order $\tau^{-1/4}$ on the time scale $\tau$, the density fluctuations retain diffusive scaling and converge to the one-dimensional Stochastic Burgers Equation, as first proved for WASEP in~\cite{BG97}. Subsequent works developed energy solution methods for broader classes of weakly asymmetric particle systems~\cite{GJ14, GJS15}, with uniqueness of the limiting energy solutions established
in~\cite{GP18}, and obtained KPZ equation limits for further exclusion models~\cite{CT17}. More recently,~\cite{HMW25} derived the KPZ equation from WASEP through the theory of regularity structures, without using the microscopic Cole-Hopf transform.
\ss

At the critical dimension $d=2$, the resolvent approach of~\cite{LQSY04} established superdiffusivity, and the breakthrough work~\cite{Yau04} obtained the predicted logarithmic divergence of $D_{\parallel}(t)$ at density $\rho = 1/2$, up to divergent subleading multiplicative corrections and in a Tauberian formulation: as $\mu \searrow 0$,
\begin{equ} \label{e:Yau}
\mu^2 \int_0^\infty e^{-\mu t} \, t D_\parallel(t) \, \dd t = \log(1/\mu)^{2/3} \exp\big(O(|\log\log\log(1/\mu)|^2) \big).
\end{equ}
These estimates identify the logarithmic correction of the spreading scales, but do not determine the limiting space-time law of the density fluctuations.
The mode-coupling predictions of~\cite{vBKS} do not make an explicit prediction on the scaling limit of the density fluctuation process. However, it is worth mentioning that their computations rely on the assumption that the density fluctuation process is Gaussian: then, they deduce that the density-density correlations after time $t$ must have an anisotropic Gaussian profile, with longitudinal spreading of order $\sqrt{t} \log(t)^{1/3}$ and transversal spreading of order $\sqrt{t}$.
Numerical evidence for this profile and logarithmically enhanced spreading was obtained in~\cite{KNSS18}. 
This raises a stronger question: does the density fluctuation process itself have a Gaussian scaling limit? Establishing such a limit for the $2d$ ASEP is the main purpose of the present work.
\ms

Our principal result is a superdiffusive invariance principle for the stationary density fluctuation field (Theorem~\ref{th:CLT}). On the time scale $\tau \to \infty$, we work in the reference frame moving with the characteristic velocity $v$, and rescale space by $\sqrt{\tau} \log(\tau)^{1/3}$ longitudinally and by $\sqrt{\tau}$ transversally. Under this rescaling, the properly normalised fluctuation process converges in law to the solution of an anisotropic conservative stochastic heat equation with explicit renormalised coefficients. The convergence holds in the space of distribution-valued càdlàg paths and, in particular, includes temporal tightness. It holds at every density $\rho \in (0,1)$ and for the class of jump kernels with uniaxial asymmetry specified below.

As a consequence of the convergence of the fluctuation field, we deduce a superdiffusive central limit theorem for the second-class particle (Theorem~\ref{th:CLT2ndClass}). We also establish the sharp diffusivity asymptotic (Theorem~\ref{th:Diff})
\begin{equ}
D_\parallel(t) \sim d^{\lim} \, \log(t)^{2/3} \, , \mbox{ as } t \to \infty \, , 
\end{equ}
where $d^{\lim} > 0$ is explicit and agrees with the coefficient predicted by mode-coupling theory.
\ss

From a broader perspective, our result concerns the emergence of Gaussian fluctuations at the critical dimension despite logarithmic corrections to the scaling. At equilibrium, this picture has been established for important lattice systems, including the $4d$ Ising and $\phi^4_4$ models~\cite{BBS14,ADC21}. The present work can be seen as the first dynamical, nonequilibrium instance of this phenomenon for a microscopic interacting particle system.

The proof does not rely on integrability. Its main novelties are an asymptotically sharp comparison between the dynamics of the ASEP and of the two-dimensional Stochastic Burgers Equation, and a tightness argument that overcomes the limitations on superdiffusive scales of the Itô trick (the $L^p$ version of the celebrated Kipnis-Varadhan lemma, see~\cite{GJ13}). We discuss these ingredients in more detail in Section~\ref{sec:SBE}. We also give a brief overview of recent progress on nonequilibrium stochastic models at the critical dimension in Section~\ref{sec:Literature}.

\subsection{The model}

We consider the ASEP on $\Z^2$, that is, the Markov process $(\eta_t)_{t \geq 0}$ taking values in $\Omega \eqdef \{0,1\}^{\Z^2}$ whose generator acts on local functions $f \colon \Omega \to \R$ as
\begin{equ} \label{e:GenLocal}
L f(\eta) = \sum_{x, z \in \Z^2} r(z) \, \eta(x) \big(1 - \eta(x+z)\big) \big(f(\eta^{x,x+z}) - f(\eta)\big),
\end{equ}
where $r \colon \Z^2 \to \R_+$ is the jump rate function and is assumed to have finite support, and $\eta^{x,y}$ is the configuration obtained from $\eta$ by swapping $\eta(x)$ and $\eta(y)$. For $\eta \in \Omega$, we denote by $\mathbf P_\eta$ the law of the ASEP started from $\eta_0 = \eta$. We fix $\rho \in (0,1)$, set $\nu_\rho := \bigotimes_{x \in \Z^2} \operatorname{Ber}(\rho)$ and denote by $\mathbf P_{\nu_\rho}$ the law of the stationary ASEP started from $\eta_0 \sim \nu_\rho$.
\ss

The value of $r(0)$ is irrelevant, and we henceforth assume that $r(0)=0$. For $z\in\Z^2$, we define the symmetric and antisymmetric parts of the jump rates by
\begin{equ} \label{e:Sym&AntiSymRate}
s(z) \eqdef \frac{r(z) + r(-z)}{2} \, , \qquad a(z) \eqdef \frac{r(z) - r(-z)}{2} \, .
\end{equ}
Two important associated objects are the covariance matrix and the drift of the jump process:
\begin{equ} \label{e:D0&drift}
D_0 \eqdef \sum_{z \in \Z^2} s(z) \, z z^T \in S_2^+(\R) \, ,
\qquad \fw \eqdef \sum_{z \in \Z^2} a(z) \, z \in \R^2 \, ,
\end{equ}
which are equivalently given by $D_0 = \sum_z r(z) \, z z^T$ and $\fw = \sum_z r(z) \, z$. We assume that $D_0$ is non-degenerate, that is, the jump vectors are not confined to a line, and that $\fw \neq 0$, that is, the jump process has a non-zero drift. For simplicity, we also assume that the jump kernel is irreducible, in the sense that the lattice generated by $\mathrm{Supp}(r)$ is $\Z^2$. This assumption entails no loss of generality, since a reducible dynamics can always be decomposed into finitely many independent irreducible dynamics. More importantly, we assume that the asymmetry is uniaxial in the following sense.

\begin{assumption}[Uniaxial asymmetry] \label{ass:Unidirectional}
The support of the antisymmetric part of the jump kernel is collinear. Equivalently, $\mathrm{Supp}(a) \subset \R \fw$.
\end{assumption}
\ss

We discuss the role of Assumption~\ref{ass:Unidirectional} in Remark~\ref{rmk:Unidirectional} below. Figure~\ref{fig:jump-kernels} illustrates the scope of Assumption~\ref{ass:Unidirectional}: in particular, it does not require the jump kernel to be nearest-neighbour or to have an axis of symmetry. 

\begin{figure}[t]
    \centering
    \resizebox{\textwidth}{!}{%
\begin{tikzpicture}[
    x=0.82cm,
    y=0.82cm,
    >=Latex,
    lattice point/.style={circle,fill=black!35,inner sep=1.15pt},
    origin/.style={circle,fill=black,inner sep=2.0pt},
    rate arrow/.style={-{Latex[length=2.8mm,width=2.0mm]},line cap=round},
    axis line/.style={densely dashed,black!45,line width=0.45pt},
    panel title/.style={font=\small\bfseries,align=center},
    rate label/.style={font=\small,fill=white,inner sep=1pt}
]

\newcommand{\asepdrawlattice}{%
  \foreach \i in {-2,-1,0,1,2}{%
    \foreach \j in {-2,-1,0,1,2}{%
      \node[lattice point] at (\i,\j) {};%
    }%
  }%
  \node[origin] at (0,0) {};%
}

\begin{scope}[shift={(0,0)}]
  \asepdrawlattice
  \draw[axis line] (-2.55,0) -- (2.55,0);
  \node[font=\scriptsize,black!60,above] at (1.65,0.02)
      {$\mathbb R \fw$};

  \draw[rate arrow,line width=1.35pt] (0.14,0) -- (0.94,0)
      node[rate label,midway,above=2pt] {$p$};
  \draw[rate arrow,line width=0.75pt] (-0.14,0) -- (-0.94,0)
      node[rate label,midway,above=2pt] {$q$};
  \draw[rate arrow,line width=0.95pt] (0,0.14) -- (0,0.94)
      node[rate label,midway,right=2pt] {$1$};
  \draw[rate arrow,line width=0.95pt] (0,-0.14) -- (0,-0.94)
      node[rate label,midway,right=2pt] {$1$};

  \node[panel title,text width=5.7cm] at (0,3.05)
      {(a) Nearest-neighbour ASEP\\with horizontal bias $p>q$};
\end{scope}

\begin{scope}[shift={(7.0,0)}]
  \asepdrawlattice
  \draw[axis line] (-2.45,-1.225) -- (2.45,1.225);
  \node[font=\scriptsize,black!60,above,rotate=26.565]
      at (-1.40,-0.70) {$\mathbb R \fw$};

  \draw[rate arrow,line width=0.85pt] (0.14,0) -- (0.94,0)
      node[rate label,midway,below=2pt] {$1$};
  \draw[rate arrow,line width=0.85pt] (-0.14,0) -- (-0.94,0)
      node[rate label,midway,below=2pt] {$1$};
  \draw[rate arrow,line width=0.85pt] (0,0.14) -- (0,0.94)
      node[rate label,midway,left=2pt] {$1$};
  \draw[rate arrow,line width=0.85pt] (0,-0.14) -- (0,-0.94)
      node[rate label,midway,left=2pt] {$1$};
  \draw[rate arrow,line width=1.35pt] (0.13,0.065) -- (1.90,0.95)
      node[rate label,pos=0.72,above=3.5pt,rotate=26.565] {$p$};

  \node[panel title,text width=5.8cm] at (0,3.05)
      {(b) SSEP plus a TASEP jump};
\end{scope}

\begin{scope}[shift={(14.0,0)}]
  \asepdrawlattice
  \draw[axis line] (-1.82,-1.82) -- (1.82,1.82);
  \node[font=\scriptsize,black!60,above,rotate=45] at (1.32,1.32)
      {$\mathbb R \fw$};

  \draw[rate arrow,line width=1.25pt] (0.14,0) -- (0.94,0)
      node[rate label,midway,below=2pt] {$1$};
  \draw[rate arrow,line width=1.25pt] (0,0.14) -- (0,0.94)
      node[rate label,midway,left=2pt] {$1$};

  \node[panel title,text width=5.2cm] at (0,3.05)
      {(c) North-East TASEP};
\end{scope}

\end{tikzpicture}%
    }
    \caption{Examples of jump kernels: $(a)$ and $(b)$ satisfy Assumption~\ref{ass:Unidirectional}, but $(c)$ does not.}
    \label{fig:jump-kernels}
\end{figure}

We define $\lambda \eqdef |\fw|$, $\fe \eqdef \fw / |\fw|$, and let $\ff$ be the unique vector such that $(\fe, \ff)$ is a positively oriented orthonormal basis of $\R^2$. For $x, y \in \R^2$, we also write $D_0[x,y] \eqdef x^T D_0 y$.

\subsection{Main results: sharp diffusivity asymptotics and superdiffusive invariance principle}

We use the second-class particle introduced above with the following initialisation: first-class particles occupy the sites of $\Z^2 \setminus \{0\}$ independently with probability $\rho$, while a second-class particle starts at the origin. We denote its position by $X_t$ and the law of the resulting process by $\mathbf P_{\nu_\rho}^{\mathrm{II}}$.
The basic coupling~\cite{Liggett} shows that the second-class particle is distributed as follows, for every $t \geq 0$ and $x \in \Z^2$,
\begin{equ} \label{e:2PointFunction}
\mathbf P_{\nu_\rho}^{\mathrm{II}}(X_t=x) = \frac{1}{\rho \, (1-\rho)} \, \mathbf E_{\nu_\rho}\big[(\eta_0(0)-\rho)(\eta_t(x)-\rho)\big] \, .
\end{equ}
A simple computation shows that its expectation follows the characteristic velocity
\begin{equ} \label{e:TransportTerm2ndClass}
\mathbf E_{\nu_\rho}^{\mathrm{II}}[X_t] = tv, \qquad v\eqdef(1-2\rho) \, \fw \, .
\end{equ}
The spreading of $X_t$ around this characteristic is then quantified by the bulk diffusivity matrix
\begin{equs} \label{e:DiffusivityDef}
D(t) &\eqdef \frac{1}{t} \, \mathbf{Cov}_{\nu_\rho}^{\mathrm{II}}(X_t) = \frac{1}{t \, \rho(1-\rho)}\sum_{x\in\Z^2} \mathbf E_{\nu_\rho}\big[(\eta_0(0)-\rho)(\eta_t(x)-\rho)\big] \ (x-tv) \, (x-tv)^T \, .
\end{equs}

Our first result confirms the conjecture of~\cite{vBKS} and upgrades the estimate of~\cite{Yau04} to a sharp asymptotic of the full diffusivity matrix.

\begin{theorem}[Sharp diffusivity asymptotics] \label{th:Diff}
As $t\to\infty$,
\begin{equ} \label{e:Diff}
D(t) = D_0 + \big(d^{\lim} + o(1)\big) \, \log(t)^{2/3}\, \fe \fe^T \, ,
\end{equ}
where
\begin{equ} \label{e:dlim}
d^{\lim} \eqdef \Big(\frac{3}{2\pi}\Big)^{2/3}\frac{\lambda^{4/3} \, \rho^{2/3} (1 - \rho)^{2/3}}{D_0[\ff,\ff]^{1/3}} \, .
\end{equ}
\end{theorem}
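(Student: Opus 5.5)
We reduce the statement to the Green--Kubo representation of the diffusivity and then identify the current--current correlation using the approximate fluctuation-dissipation relation that underlies Theorem~\ref{th:CLT}. Let $j_{x,z}(\eta) = r(z)\eta(x)(1-\eta(x+z)) - r(-z)\eta(x+z)(1-\eta(x))$ denote the microscopic current, and let $\mathcal W$ be the degree-two part of the total antisymmetric current in direction $\fe$. By the standard computation of~\cite{LOY96, Yau04}, \eqref{e:DiffusivityDef} splits as
\begin{equ}
t D(t) = t D_0 + \frac{2}{\rho(1-\rho)} \int_0^t (t-s)\, \langle \mathcal W, e^{sL}\mathcal W\rangle_{\nu_\rho}\,\dd s \; \fe\fe^T .
\end{equ}
Here the degree-one part of the current only produces the transport term $tv$ together with $D_0$. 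Uniaxiality (Assumption~\ref{ass:Unidirectional}) is what ensures that the nonlinear correction is carried by $\fe\fe^T$ alone, since the quadratic current is then collinear with $\fw$.

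It therefore suffices to show that the time integral equals $\tfrac{\rho(1-\rho)}{2}\,(d^{\lim}+o(1))\,t\log(t)^{2/3}$. Equivalently, we need the sharp asymptotics of $\int_0^t\langle\mathcal W,e^{sL}\mathcal W\rangle\,\dd s$. We work in Laplace variables: for $\mu>0$ we study $\langle \mathcal W,(\mu-L)^{-1}\mathcal W\rangle$ and aim for
\begin{equ}
\langle \mathcal W,(\mu-L)^{-1}\mathcal W\rangle \sim c\,\log(1/\mu)^{2/3}
\end{equ}
with an explicit constant $c$, \emph{without} the $\exp(O(|\log\log\log|^2))$ losses of~\eqref{e:Yau}.

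The plan for this step has three parts. First, we use the generator comparison between the ASEP and the two-dimensional SBE in Fock space: after the Charlier/chaos decomposition, $L=L_0+A_++A_-$, and on low chaoses and large scales this agrees with the SBE generator up to errors that are quantitatively small relative to $\log(1/\mu)$. Second, we plug in the SBE Ansatz. This is the ``replacement of $\mathcal L_0$ by $\mathcal L_0+G(\log(1/\mu)$-dependent effective operator$)$'' solving the fixed-point equation $G'(x)\propto (x+G(x))^{-1/2}$, whose solution behaves like $x^{2/3}$. Approximating $(\mu-L)^{-1}\mathcal W$ by this Ansatz gives upper and lower bounds that match to leading order, via the variational characterisation of $\langle\mathcal W,(\mu-L)^{-1}\mathcal W\rangle=\sup_\phi\{2\langle \mathcal W,\phi\rangle-\|\phi\|_{1}^2-\|A\phi\|_{-1}^2\}$ and its dual. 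Third, we evaluate the leading constant explicitly. The effective diffusivity in direction $\fe$ obeys $\partial_\ell d(\ell)= \tfrac{\lambda^2\rho(1-\rho)}{\pi\sqrt{d(\ell)D_0[\ff,\ff]}}$ (up to normalisation), which integrates to $d(\ell)=\big(\tfrac{3}{2\pi}\big)^{2/3}\lambda^{4/3}(\rho(1-\rho))^{2/3}D_0[\ff,\ff]^{-1/3}\ell^{2/3}$, i.e.\ \eqref{e:dlim}.

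Passing from the Laplace asymptotics back to real time is where the sharpness matters. The function $s\mapsto\langle\mathcal W,e^{sL}\mathcal W\rangle=\|e^{sL/2}\mathcal W\|^2$ is not obviously monotone because $L$ is not self-adjoint. We bypass Tauberian theory by using the fluctuation-dissipation relation behind Theorem~\ref{th:CLT}: this gives $\mathbf E[(\int_0^t \mathcal W(\eta_s)\dd s)^2]$ directly through a martingale decomposition with an explicit quadratic variation. Alternatively, since $t\mapsto tD_\parallel(t)$ is a variance of an additive functional whose increments have slowly varying growth, Karamata's theorem applies once we know that $\int_0^t(t-s)\langle\mathcal W,e^{sL}\mathcal W\rangle\dd s$ is increasing and regularly varying. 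This holds because it is a second moment, so a monotone density argument gives the asymptotics of the mean-square displacement from the Laplace asymptotics.

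The main obstacle is the second part: obtaining \emph{matching} upper and lower bounds at leading order, with errors $o(\log(1/\mu)^{2/3})$. This requires controlling the ASEP--SBE generator discrepancies on all chaoses where the Ansatz lives, as well as the non-diagonal contributions of $A_\pm$, uniformly in the chaos level. We would try first to truncate at chaos level $n\sim\log\log(1/\mu)$ and show that the tail contribution is negligible via the recursive replacement estimates.
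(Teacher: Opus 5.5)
\paragraph{Gap 1: the passage from Laplace to real time.} Your main route obtains asymptotics of $\langle\mathcal W,(\mu-L)^{-1}\mathcal W\rangle$ and then invokes Karamata. This fails as written. Karamata's Tauberian theorem needs $V(t)=\mathbf E_{\nu_\rho}[(\int_0^t\mathcal W(\eta_s)\,\dd s)^2]$ to be nondecreasing, and being a second moment does not give this. One has $V''(t)=2\langle\mathcal W,e^{tL}\mathcal W\rangle$. For non-reversible $L$ this need not be nonnegative, and it is not $\|e^{tL/2}\mathcal W\|^2$. Already the deterministic rotation $\theta\mapsto\theta+t$ on the circle, with $\mathcal W=\cos\theta$, gives $V(t)=1-\cos t$. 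Regular variation is the conclusion of Karamata's theorem, not an input. This is the obstruction that keeps resolvent-based bounds such as~\eqref{e:Yau} in Laplace form.

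Your alternative, a martingale decomposition, is the paper's route. Once you take it, the Laplace-domain variational analysis is superfluous. As sketched, that analysis would not even work directly: inserting an approximation $u$ of $(\mu-L)^{-1}\mathcal W$ into $\sup_\phi\{\cdots\}$ returns about $2\|u\|_1^2-\|\mathcal W\|_{-1}^2$. Since $\|\mathcal W\|_{-1}^2\asymp\log(1/\mu)$ (cf.\ Remark~\ref{rmk:noGSC}), this is useless; you would need a symmetrised test function. What is needed is a corrector for the current itself, not the one behind Theorem~\ref{th:CLT}. The paper regularises the current as $B_+\cV^\delta$ (Lemma~\ref{lem:GKneat}), takes $\mu=1/t$ and $\mathrm{Cor}=\fsR\fsP\,\mathfrak{Ans}_{1/t}(\fB_+\cV^\delta)$, and writes
\[
\int_0^t B_+\cV^\delta(\eta_s)\,\dd s=M_t(\mathrm{Cor})-\mathrm{Cor}(\eta_t)+\mathrm{Cor}(\eta_0)+W_t\big(L\,\mathrm{Cor}+B_+\cV^\delta\big).
\]
The boundary terms are $O(t)$ in second moment because $\sqrt{\mu}\,\|\mathrm{Cor}\|\lesssim1$. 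The last term is handled by the It\^o trick. The leading term is $\mathbf E[M_t(\mathrm{Cor})^2]=2t\|(-S)^{1/2}\mathrm{Cor}\|^2$.

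\paragraph{Gap 2: the precision of the generator comparison.} The precision you ask for (``errors quantitatively small relative to $\log(1/\mu)$'', agreement ``on low chaoses and large scales'') is not what is required, and the mechanism that delivers the required precision is missing. The corrector has $\|(-S)^{1/2}\mathrm{Cor}\|\asymp\log(t)^{1/3}$. You therefore need
\[
\|(1/t-S)^{-1/2}(L\,\mathrm{Cor}+B_+\cV^\delta)\|=o(\log(t)^{1/3}),\qquad 2\|(-S)^{1/2}\mathrm{Cor}\|^2=d^{\lim}\log(t)^{2/3}+o(\log(t)^{2/3}).
\]
A comparison whose error is $O(1)$ times the $H^1$ norm, as in the classical hardcore removal of~\cite{LQSY04}, allows a contribution of order $t\log(t)^{2/3}$ to $V(t)$. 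That is the order of the main term, so it cannot identify $d^{\lim}$. Moreover, the Ansatz is spread over all scales, not only large ones. The missing ingredients are:
\begin{enumerate}
\item[(i)] A comparison whose $H^{-1}$ error is bounded by the $H^{1+\gamma}$ norm and the $\fe$-directed $H^1$ norm of the input (Theorem~\ref{th:ComparisonGen}, and Lemma~\ref{lem:ComparisonH1} for the Dirichlet forms). This is where Assumption~\ref{ass:Unidirectional} is really used, through~\eqref{e:wpqRemoval}, and not only to make the correction proportional to $\fe\fe^T$. Without it one only gets bounds up to constants (Remark~\ref{rmk:Unidirectional}).
\item[(ii)] New estimates showing that these two norms of the Ansatz are only $O(\log(1/\mu)^{1/6+\eps})$, see~\eqref{e:AnsatzDiffH1fe} and~\eqref{e:AnsatzDiffH1+gamma}. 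In words, its Dirichlet energy sits on scales much larger than the lattice spacing and elongated along $\fe$. They are combined with the precise evaluation~\eqref{e:AnsatzH1precise} of its $H^1$ norm.
\end{enumerate}
These estimates hold with polynomial (even exponential) weights in the chaos degree, thanks to the strict contraction~\eqref{e:ReplContraction}. So no truncation at chaos level $\log\log(1/\mu)$ is needed.

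A small slip: a flow $G'\propto(x+G)^{-1/2}$ would give $G\sim x^{1/2}$. The correct flow is $\dot\Upsilon=C_\Upsilon(\det(D_0)+D_0[\ff,\ff]\Upsilon)^{-1/2}$, as in your third part.
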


The absolute value of the $o(1)$ term in~\eqref{e:Diff} is bounded by $\log(t)^{-1/6+o(1)}$; see~\eqref{e:DiffStrong}.

\begin{remark}

The explicit form of $d^{\lim}$ has noteworthy features. First, after the adequate identification of parameters (see~\eqref{e:ASEPFormalSBE}), it coincides not only with the prefactor appearing in the mode-coupling computations of~\cite{vBKS}, but also with the diffusivity prefactor obtained for the two-dimensional stochastic Burgers equation in~\cite{CMT25}. More strikingly, the $\lambda^{4/3}$ proportionality is shared by one-dimensional models in the KPZ universality class, even though their diffusivity grows as $t^{1/3}$ rather than logarithmically. In addition, the symmetric jump rates enter the leading prefactor only through the transverse component $D_0[\ff,\ff]$; in particular it is independent of $D_0[\fe,\fe]$. The scaling argument of~\cite[Remark~1.2]{CMT25} draws a connection between this independence and the powers $\lambda^{4/3}$ and $D_0[\ff,\ff]^{-1/3}$.
\end{remark}

Theorem~\ref{th:Diff} identifies the natural spatial scales of the second-class particle: after centring by $tv$, its fluctuations are of order $\sqrt{t} \log(t)^{1/3}$ in the direction of the drift and of order $\sqrt{t}$ in the transverse direction. The next natural step is to determine the shape of the rescaled distribution of the second-class particle: our next result shows that it is asymptotically Gaussian. For $t \gg 1$, define the scaling matrix by
\begin{equ} \label{e:ScalingMatrixAsymp}
\Sigma^t \eqdef \sqrt{t} \, \Big(\log(t)^{1/3} \, \fe \fe^T + \ff\ff^T\Big) \, .
\end{equ}

\begin{theorem}[Central limit theorem for the second class particle] \label{th:CLT2ndClass}
Under $\mathbf P_{\nu_\rho}^{\mathrm{II}}$,
\begin{equ} \label{e:CLT2ndClass}
(\Sigma^t)^{-1}(X_t-tv) \xrightarrow[t\to\infty]{\mathrm{law}} \mathcal N(0,D^{\lim}) \, ,
\end{equ}
where $\mathcal N(0,D^{\lim})$ denotes the centred Gaussian law on $\R^2$ with covariance matrix
\begin{equ} \label{e:Dlim}
D^{\lim} \eqdef d^{\lim} \, \fe \fe^T + D_0[\ff,\ff] \, \ff \ff^T \, .
\end{equ}
\end{theorem}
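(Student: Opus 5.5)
The plan is to deduce Theorem~\ref{th:CLT2ndClass} from the invariance principle for the density fluctuation field (Theorem~\ref{th:CLT}, stated later in the paper), combined with the identity~\eqref{e:2PointFunction}. By~\eqref{e:2PointFunction}, the law of $X_t$ is the normalised two-point function, so for a test function $\varphi$
\begin{equ}
\mathbf E^{\mathrm{II}}_{\nu_\rho}\big[\varphi\big((\Sigma^t)^{-1}(X_t-tv)\big)\big] = \frac{1}{\rho(1-\rho)} \sum_{x\in\Z^2} \mathbf E_{\nu_\rho}\big[(\eta_0(0)-\rho)(\eta_t(x)-\rho)\big]\, \varphi\big((\Sigma^t)^{-1}(x-tv)\big) \, .
\end{equ}
By translation invariance of $\nu_\rho$, I would rewrite the right-hand side as a space-time covariance of the rescaled fluctuation field $\cY^\tau$ at times $0$ and $1$, with time scale $\tau=t$. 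The field is
\begin{equ}
\cY^\tau_s(\psi) = \det(\Sigma^\tau)^{-1/2}\sum_x (\eta_{s\tau}(x)-\rho)\,\psi\big((\Sigma^\tau)^{-1}(x-s\tau v)\big) \, ,
\end{equ}
up to the exact normalisation used in the paper. One obtains $\mathbf E_{\nu_\rho}[\cY^\tau_0(\psi)\cY^\tau_1(\varphi)]$, which is $\rho(1-\rho)$ times the above expression smeared against $\psi$, up to a Riemann-sum error as $\psi$ approaches a delta.

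The key steps are as follows.
\begin{enumerate}
\item Use Theorem~\ref{th:CLT} to obtain convergence of the covariance $\mathbf E[\cY^\tau_0(\psi)\cY^\tau_1(\varphi)]$ to the corresponding covariance of the stationary solution of the anisotropic conservative stochastic heat equation. That limit is $\rho(1-\rho)\iint \psi(y)\,(p_{D^{\lim}}*\varphi)(y)\,\dd y$, where $p_{D^{\lim}}$ is the heat kernel with covariance $D^{\lim}$.
\item Upgrade convergence in law to convergence of second moments. This follows because the field has uniformly bounded $L^p$ moments at fixed time: it is stationary, and under $\nu_\rho$ the fixed-time field is a sum of independent bounded variables. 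Hence $\cY^\tau_0(\psi)\cY^\tau_1(\varphi)$ is uniformly integrable.
\item Remove the smearing in $\psi$ by letting $\psi$ approximate $\delta_0$.
\end{enumerate}

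I expect step~3 to be the main obstacle, since Theorem~\ref{th:CLT} only controls macroscopically smeared observables, whereas the second-class particle starts at a single site. I would handle it using the exact translation structure. Write $\mu_t$ for the law of $(\Sigma^t)^{-1}(X_t-tv)$. Then
\begin{equ}
\frac{1}{\rho(1-\rho)}\mathbf E[\cY^t_0(\psi)\cY^t_1(\varphi)] \approx \int \psi(y)\, \mathbf E^{\mathrm{II}}\big[\varphi(y + \cdot)\big]\,\dd y = \int (\check\psi * \varphi)\, \dd\mu_t \, ,
\end{equ}
up to vanishing discretisation errors, with $\check\psi(y)=\psi(-y)$. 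This exploits the fact that $\Sigma^t$ is linear, so $(\Sigma^t)^{-1}(x-y'-tv)$ splits additively.

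It then remains to show that the convolution of $\mu_t$ with a fixed mollifier converges. Tightness of $(\mu_t)$ follows from Theorem~\ref{th:Diff}, since $(\Sigma^t)^{-1}D(t)t(\Sigma^t)^{-1}\to D^{\lim}$ gives bounded second moments. Any subsequential limit $\mu$ therefore satisfies $\mu*\psi = \mathcal N(0,D^{\lim})*\psi$ for all mollifiers $\psi$ of the form $\psi_\epsilon$, that is, $\check\psi_\epsilon$ convolved with a test function. Letting $\epsilon\to0$, or taking Fourier transforms with a mollifier whose Fourier transform is nonvanishing (e.g.\ Gaussian), identifies $\mu=\mathcal N(0,D^{\lim})$. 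Uniqueness of the limit point then gives~\eqref{e:CLT2ndClass}.

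The remaining checks are routine:
\begin{itemize}
\item the discretisation errors vanish, because $\varphi$ and $\psi$ are smooth and $\det\Sigma^t\to\infty$;
\item the limiting covariance equals $\mathcal N(0,D^{\lim})$ exactly, which follows from the explicit coefficients in Theorem~\ref{th:CLT}, consistent with Theorem~\ref{th:Diff}.
\end{itemize}
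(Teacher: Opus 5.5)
Your proposal is correct and follows essentially the same route as the paper. The paper likewise uses translation invariance and \eqref{e:2PointFunction} to rewrite $\mathbf E_{\nu_\rho}[Y_0^\tau(\vphi_1)Y_1^\tau(\vphi_2)]$ as $\mathbf E^{\mathrm{II}}_{\nu_\rho}[(\vphi_1*_\tau\vphi_2)((\Sigma^\tau)^{-1}(X_\tau-\tau v))]$, where the discrete convolution $\vphi_1*_\tau\vphi_2$ converges uniformly to $\vphi_1*\vphi_2$; it takes the covariance limit from Proposition~\ref{p:CVfindim}, which already covers polynomially growing test functions and so makes your separate uniform-integrability step unnecessary. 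The only real difference is the last step: instead of obtaining tightness from Theorem~\ref{th:Diff}, the paper uses that such convolutions span a dense subset of $\cC_0(\R^2)$ and that the limit $\mathcal N(0,D^{\lim})$ is a probability measure, which upgrades vague convergence to weak convergence with no second-moment input.
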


We stress that the limiting covariance matrix $D^{\lim}$ is consistent with Theorem~\ref{th:Diff}, since it is the limit of $(\Sigma^t)^{-1} \, t D(t) \, (\Sigma^t)^{-1}$ as $t \to \infty$.

\begin{remark}
Theorem~\ref{th:CLT2ndClass} identifies the scaling limit of the second-class particle for one time marginal. It would be interesting to investigate joint convergence for several time marginals, or even a functional scaling limit. We stress however that this is an highly nontrivial question: in particular, we are not aware of any scaling limit results beyond one time marginal for the second-class particle in the ASEP in dimension $d \geq 3$.
\end{remark}
\ms

We now turn to our principal result, which identifies the space-time scaling limit of the density fluctuation field. Letting $\tau \gg 1$ denote the time scale, we use the spatial scaling matrix $\Sigma^\tau$ from~\eqref{e:ScalingMatrix}. For a configuration $\eta \in \Omega$, we define the centred and normalised occupation variables by
\begin{equ} \label{e:Sigma}
\sigma(x) \eqdef \frac{\eta(x)-\rho}{\sqrt{\rho(1 - \rho)}}.
\end{equ}
For $t\geq0$, let $Z_t^\tau[\eta]\in\cS'(\R^2)$ be the tempered distribution given, for every $\vphi\in\cS(\R^2)$, by
\begin{equ} \label{e:Ztau}
Z_t^\tau[\eta](\vphi) \eqdef \det(\Sigma^\tau)^{-1/2} \sum_{x\in\Z^2} \sigma(x) \, \vphi\big((\Sigma^\tau)^{-1}(x - \tau t v)\big),
\end{equ}
and we define the rescaled density fluctuation process by
\begin{equ} \label{e:Ytau}
Y_t^\tau \eqdef Z_t^\tau[\eta_{\tau t}] \, .
\end{equ}
The normalisation in~\eqref{e:Ztau} is chosen so that, when $\eta \sim \nu_\rho$, the random distribution $Z_t^\tau[\eta]$ converges in law to spatial white noise, that is the centred Gaussian distribution $\zeta \in \cS'(\R^2)$ with covariance
\begin{equ} \label{e:whitenoise}
\mathbb E[\zeta(\vphi) \, \zeta(\psi)] = \langle\vphi,\psi\rangle_{L^2(\R^2)} \, .
\end{equ}
We denote by $\mathbb P$ the law of the spatial white noise.

The limiting process turns out to be the solution of the anisotropic conservative stochastic heat equation (sometimes referred to as an Ornstein-Uhlenbeck process)
\begin{equ} \label{e:Ylim}
\partial_tY^{\lim} = \frac{1}{2} \nabla \cdot D^{\lim} \nabla \, Y^{\lim} + \nabla \cdot (D^{\lim})^{1/2} \, \vec{\xi} \, ,
\end{equ}
where $\vec\xi$ is a two-dimensional space-time white noise. For $w \in \cS'(\R^2)$, we denote by $\mathbf P_w^{\lim}$ the law of the solution of~\eqref{e:Ylim} started from $Y_0^{\lim} = w$. A direct computation shows that spatial white noise is invariant for~\eqref{e:Ylim}; hence, let $\mathbf P_{\mathbb P}^{\lim}$ be the stationary law of the process $Y_\cdot^{\lim}$ starting from $Y_0^{\lim} \sim \mathbb P$.
\ss

We can now state the superdiffusive invariance principle for the fluctuation process.

\begin{theorem}[Superdiffusive invariance principle] \label{th:CLT}
As $\tau \to \infty$, the law of $(Y_t^\tau)_{t \geq 0}$ under $\mathbf P_{\nu_\rho}$ converges weakly to the law of $(Y_t^{\lim})_{t \geq 0}$ under $\mathbf P_{\mathbb P}^{\lim}$, in the Skorokhod topology of $D(\R_+, \cS'(\R^2))$. Equivalently, for every bounded continuous functional $F \colon D(\R_+,\cS'(\R^2)) \to \R$,
\begin{equ} \label{e:CLTFunctional}
\mathbf E_{\nu_\rho}[F(Y_\cdot^\tau)] \xrightarrow[\tau\to\infty]{} \mathbf E_{\mathbb P}^{\lim}[F(Y_\cdot^{\lim})] \, .
\end{equ}

Furthermore, the convergence holds conditionally on the microscopic initial configuration $\eta$ with high probability under $\nu_\rho(\dd \eta)$, in the following sense. For every bounded continuous functional $F$ as above,
\begin{equ} \label{e:CLTFunctionalQuenched}
\int_{\Omega} \nu_\rho(\dd \eta) \, \big|\mathbf E_\eta[F(Y_\cdot^\tau)]-\mathbf E_{Z_0^\tau[\eta]}^{\lim}[F(Y_\cdot^{\lim})]\big| \xrightarrow[\tau\to\infty]{} 0 \, .
\end{equ}
\end{theorem}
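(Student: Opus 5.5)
The plan is the standard route for Gaussian fluctuation limits of stationary conservative systems: tightness in $D(\R_+,\cS'(\R^2))$, identification of every limit point through a martingale problem, and uniqueness for \eqref{e:Ylim}. The identification step rests on an approximate fluctuation-dissipation relation adapted to the logarithmic scales.

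By Mitoma's criterion, it suffices to fix $\vphi\in\cS(\R^2)$ and work with the real process $Y^\tau_t(\vphi)$. Dynkin's formula gives
\[
Y^\tau_t(\vphi)=Y^\tau_0(\vphi)+\int_0^t \tau(L+\partial_s)Z^\tau_s[\eta_{\tau s}](\vphi)\,\dd s+M^\tau_t(\vphi).
\]
The drift splits into two parts. The symmetric part produces $\frac12 Y^\tau(\nabla\cdot D_0\nabla\vphi)$, which is negligible longitudinally but gives the transverse term $D_0[\ff,\ff]\partial_\ff^2$. The antisymmetric part, once the transport by $v$ is removed, is the quadratic term $\lambda\,\tau\det(\Sigma^\tau)^{-1/2}\sum_x \partial_\fe\vphi\,\sigma(x)\sigma(x+z)$. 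This is the microscopic counterpart of the 2d SBE nonlinearity at scale $\log(\tau)^{-1/3}$, i.e.\ the formal SBE \eqref{e:ASEPFormalSBE}. The quadratic term does not vanish; it must be shown to equal, up to an $L^2(\nu_\rho)$-small error, $\frac12(d^{\lim}) Y^\tau(\partial_\fe^2\vphi)$ plus a martingale.

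The key step is to construct a corrector. Following the SBE Ansatz of~\cite{CMT25}, define $\mathcal H^\tau = \mathcal H^\tau(\vphi)$ in the Fock space (chaos decomposition of $L^2(\nu_\rho)$ in the variables $\sigma$). Its $n$-th chaos component is given by the truncated recursive resolvent $(\tau^{-1}\mu-\cL_0-\cG^\tau)^{-1}$ with the explicit replacement operator $\cG^\tau\approx -\frac12 \lambda^2 L(\ldots)^{2/3}$ acting diagonally. It is built so that $\tau L\,\mathcal H^\tau$ cancels the quadratic term up to the effective diffusion term and a small remainder. Two estimates are then needed:
\begin{enumerate}[label=(\roman*)]
\item $\|\mathcal H^\tau\|$ and the remainder are small in $H^{-1}$ of the symmetric part. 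This is done by transferring SBE estimates.
\item The ASEP generator differs from the SBE generator on chaos expansions. It has an extra cubic-type antisymmetric part and lattice corrections, and the comparison $\|(L-L^{\mathrm{SBE}})\mathcal H^\tau\|_{-1}\to0$ must hold sharply at the log scale.
\end{enumerate}
Assumption~\ref{ass:Unidirectional} enters here: with it, the antisymmetric part only involves $\partial_\fe$ and matches the SBE structure. The Kipnis–Varadhan $H^{-1}$ bound then controls the time integral of the remainder, and $d^{\lim}$ emerges as the fixed point $\bigl(\frac{3}{2\pi}\bigr)^{2/3}\cdots$ of the resolvent recursion. The martingale $M^\tau$ has a quadratic variation converging in $L^1$, by the ergodic averaging of $\sum \eta(1-\eta)$ terms, to $t\,\|(D^{\lim})^{1/2}\nabla\vphi\|^2$. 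Including the corrector's martingale gives the longitudinal enhancement, and Lévy's characterization identifies the limit. Theorem~\ref{th:Diff} serves as the consistency check of the covariance.

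I expect tightness to be the main obstacle. The Itô trick bounds $\E\sup_t|\int_0^t F|^p$ only by $t\,\|F\|_{-1}$-type quantities, which lose the $\log(\tau)^{1/3}$ factor. My first attempt is to apply Aldous' criterion to the decomposition $Y^\tau(\vphi)-\mathcal H^\tau(\eta_{\tau t})$. The martingale part is controlled by Burkholder–Davis–Gundy with $L^{p}$ bounds on jumps, which are of size $\det(\Sigma^\tau)^{-1/2}$. The boundary terms $\mathcal H^\tau(\eta_{\tau t})$ are treated via hypercontractivity-free moment bounds on finite chaoses plus stationarity and a union bound over a fine time mesh; the small $H^{-1}$ remainder is handled by the $L^2$ Kipnis–Varadhan inequality on short intervals.

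Finally, the quenched statement \eqref{e:CLTFunctionalQuenched} follows because the corrector and remainders vanish in $L^2(\nu_\rho)$, not just in annealed law. This makes the conditional martingale problem hold for $\nu_\rho$-most $\eta$. Combined with the continuity of $w\mapsto\mathbf P^{\lim}_w$ (by Gaussianity) and the convergence $Z^\tau_0[\eta]\to\zeta$, this yields the claim.
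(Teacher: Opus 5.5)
There are two genuine gaps: the identification of the limit relies on an unproven law of large numbers, and the proposed tightness scheme cannot control the corrector's boundary terms.

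Your identification step rests on the claim that the bracket of the total martingale converges in $L^1$ to $t\|(D^{\lim})^{1/2}\nabla\vphi\|^2$ ``by ergodic averaging of $\sum\eta(1-\eta)$ terms''. That law of large numbers holds only for the linear martingale $M^\tau(\fsi^\tau_\cdot\vphi)$. Its bracket tends to $t\,D_0[\ff,\ff]\|\partial_\ff\vphi\|^2$ alone, because the longitudinal part carries a factor $\log_*(\tau)^{-2/3}$ from the scaling. The whole enhancement $t\,d^{\lim}\|\partial_\fe\vphi\|^2$ sits in the bracket $\tau\int_0^t\Gamma_2(\mathcal H^\tau)(\eta_{\tau s})\,\dd s$ of the corrector martingale, and $\mathcal H^\tau$ has components in every chaos of order $\geq 2$. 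You can compute its mean, $2t\|(-\tau S)^{1/2}\mathcal H^\tau\|^2$, but nothing in your plan controls its variance. So L\'evy's characterisation is not available; uniform integrability of the squared martingales would also need moments beyond $L^2$, which you propose to avoid.

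The paper never needs such a law of large numbers. It applies the fluctuation--dissipation relation to $\fsi^\tau_t\vphi_t$, with $\vphi_t=P^{\lim}_{T-t}\vphi$, for \emph{arbitrary} $\vphi\in\fock{\rap}{}$, i.e.\ all chaoses and not only linear observables. It then takes $\mathbf E_{\nu_\rho}[\,\cdot\,|\cF_0]$, which kills both martingales, and obtains convergence of semigroups (Proposition~\ref{p:CVsemigroupFock}). The Bernoulli/Gaussian chaos matching of Lemma~\ref{lem:CVchaos} turns this into convergence for cylinder functionals, and iterating the Markov property gives the finite-dimensional laws. If you want to stay with a martingale problem, the repair is to run the relation on second-chaos observables such as $H_2(Y^\tau_t(\vphi),\|\vphi\|_{L^2(\R^2)})$, which again requires Lemma~\ref{lem:CVchaos}. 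You also leave the sharp ASEP/SBE generator comparison as a bare requirement. The paper obtains it by bounding the $H^{-1}$ discrepancy through the $H^{1+\gamma}$ and $\fe$-directed $H^1$ norms of the input (Theorem~\ref{th:ComparisonGen}), and by proving new estimates showing these norms vanish on the Ansatz.

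The tightness scheme is where the proposal genuinely breaks. Subtracting $\mathcal H^\tau(\eta_{\tau t})$, with $\mu=1/\tau$, and applying Aldous' criterion requires control of the corrector at stopping times, essentially of $\sup_{t\le T}|\mathcal H^\tau(\eta_{\tau t})|$. You only know $\|\mathcal H^\tau\|\lesssim\log_*(\tau)^{-1/3}$ at fixed times. A union bound over a time mesh cannot close this, for three reasons:
\begin{itemize}
\item $\mathcal H^\tau$ is not a finite-chaos functional, and truncating it leaves an uncancelled $A_+$ term.
\item Moments beyond $L^2$ are accessible only through hypercontractivity, $\nu_\rho(|\phi|^p)\le\|\alpha(p)^N\phi\|^p$. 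The Ansatz bounds hold only with weights $\alpha<\sqrt2$, hence only for $p$ in a bounded range near $2$.
\item Between mesh points the corrector changes at infinitely many jump times, so you would need exactly the modulus of continuity you are trying to prove.
\end{itemize}
Likewise, your BDG bound via jumps of size $\det(\Sigma^\tau)^{-1/2}$ applies only to the linear martingale. The corrector martingale has no $L^\infty$ control on its jumps.

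The missing idea is the paper's window-dependent corrector. For $|t-s|\le1$ it takes $\mu=(\tau|t-s|)^{-1}$ in $\mathrm{Cor}^{\tau,\mu}$. Then \eqref{e:AnsatzA+L2ASEP} makes the boundary terms $O(|t-s|^{1/2})$ in $L^p$, and the dissipation coefficient $\log_*(1/\mu)^{2/3}/\log_*(\tau)^{2/3}$ stays $\le 1$. The exponentially weighted bound \eqref{e:AnsatzA+H1ASEP}, combined with the $\Gamma_p$-version of BDG and hypercontractivity (Proposition~\ref{p:TightnessBlackBox}), gives $\mathbf E_{\nu_\rho}|W^\tau_{s,t}(\tau A_+\fsi^\tau_\cdot\vphi)|^p\lesssim|t-s|^{p/2}$ for some $p>2$. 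This is Kolmogorov--Chentsov for the superdiffusive term, and it needs the new $\alpha^N$-weighted $H^1$ estimates on the Ansatz, not only those of~\cite{CMT25}.

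Finally, for \eqref{e:CLTFunctionalQuenched}, $Z^\tau_0[\eta]$ converges only in law, so continuity of $w\mapsto\mathbf P^{\lim}_w$ is not enough. The paper combines quenched finite-dimensional convergence with annealed tightness through Lemma~\ref{lem:ApproxCyl}. That lemma is needed because space-time cylinder functionals are not continuous on $D(\R_+,\cS'(\R^2))$.
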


In words, the statement~\eqref{e:CLTFunctionalQuenched} shows that the superdiffusive invariance principle is not restricted to the stationary process and holds for typical microscopic initial configurations $\eta$ sampled from $\nu_\rho$, up to starting the limiting process $(Y_t^{\lim})_{t \geq 0}$ from the rescaled fluctuation field $Z_0^\tau[\eta]$.

\begin{remark}
Theorem~\ref{th:CLT} exhibits a characteristic feature of systems at their critical dimension. The microscopic fluctuation field must be observed under a logarithmically superdiffusive scaling, whereas the limiting Gaussian process $Y^{\lim}$ is itself invariant under diffusive scaling. Hence, the logarithmic correction disappears after taking the scaling limit. A similar phenomenon occurs for the two-dimensional Stochastic Burgers Equation~\cite{CMT25} and for the diffusion in a critically correlated incompressible random drift~\cite{ABRK24}.
\end{remark}

\begin{remark} \label{rmk:Unidirectional}
We briefly discuss the role of Assumption~\ref{ass:Unidirectional}. In fact, this assumption is only important here to determine the precise prefactor of the diffusivity asymptotics and the precise scaling limit of the fluctuation process. Indeed, with little extra effort, the techniques developed in this paper can be adapted to show that, without Assumption~\ref{ass:Unidirectional}, there still exists $C \in (0, \infty)$ such that for sufficiently large time $t$
\begin{equ}
C^{-1} \log(t)^{2/3} \leq D_{\parallel}(t) \leq C \log(t)^{2/3} \, ,
\end{equ}
and the laws of the rescaled fluctuation process $(Y_t^\tau)_{t \geq 0}$ under $\mathbf P_{\nu_\rho}$ are still tight in $D(\R_+, \cS'(\R^2))$. For more details, see Remark~\ref{rmk:wRemoval}.
\end{remark}

\subsection{Main novelties and connection with the $2d$ Stochastic Burgers Equation}

\label{sec:SBE}

Our analysis builds on a quantitative comparison between the $2d$ ASEP and the $2d$ Stochastic Burgers Equation (SBE).
This connection is suggested by nonlinear fluctuating hydrodynamics~\cite{vBKS,NLFH,CGMO25}. Starting from the Euler-scale hydrodynamic equation $\partial_t\alpha + \nabla\cdot j(\alpha)=0$~\cite{R91}, expanding the current around the reference density $\rho$, and adding the appropriate diffusion term and conservative noise formally gives, in the reference frame moving with characteristic velocity $v$,
\begin{equ} \label{e:ASEPFormalSBE}
\partial_t Y = \frac{1}{2} \, \nabla \cdot D_0 \nabla Y + \sqrt{\rho(1-\rho)} \, \fw\cdot\nabla(Y^2) + \nabla \cdot \sqrt{D_0} \, \vec{\, \xi} \, .
\end{equ}
Here, $Y$ formally represents the density fluctuation field and $\vec{\xi}$ is a
two-dimensional space-time white noise. After an appropriate linear change of variables, this
takes the form of the $2d$ SBE
\begin{equ} \label{e:FormalSBE}
\partial_t u = \frac{1}{2}\,\Delta u
+ \lambda_{\mathrm{SBE}}\,\partial_1(u^2)
+ \nabla\cdot\vec{\xi} \, .
\end{equ}
This equation is only formal, since putative solutions would be too singular to give a meaning to the quadratic nonlinearity. However, one can first regularise the nonlinearity to make sense of the equation, and then investigate the large-scale behaviour of the system, with the microscopic regularisation kept fixed. In particular, for the regularisations used in~\cite{CGT24,DGHS24,CMT25}, the regularised SBE admits the spatial white noise $\mathbb P$ (or its mollified counterpart) as an invariant measure, which is the continuum analogue of the stationarity of $\nu_\rho$ for the ASEP.
\ss

Recent works have substantially advanced the understanding of the large-scale behaviour of the regularised $2d$ SBE.
First,~\cite{DGHS24} obtained Tauberian bounds on the diffusivity analogous to~\eqref{e:Yau}, but with smaller, though still diverging, multiplicative corrections.
Subsequently,~\cite{CMT25} established the sharp large-time asymptotics of the diffusivity, including its explicit prefactor. After matching the coefficients through~\eqref{e:ASEPFormalSBE}, this prefactor agrees with the one obtained here for the ASEP.
Moreover,~\cite{CMT25} proved that the superdiffusively rescaled SBE solution converges, in finite-dimensional distributions, to a Gaussian process of the same form as $Y^{\lim}$ in~\eqref{e:Ylim}.
In fact, this work constructed an accurate \emph{Ansatz} for the resolvent of the generator, which retains much more information across scales on the dynamics than the mere limiting Gaussian description~$Y^{\lim}$.
The extension in~\cite{CKM26} established a similar superdiffusive central limit theorem for regularised SBEs with nonlinearities $\partial_1 F(u)$, for a large class of smooth functions $F$ with nonzero effective coefficient.
Its proof exploits the quadratic SBE Ansatz of~\cite{CMT25}, which still gives an accurate description of the resolvent of the generalised SBEs.

This naturally leads to a broader question: does the SBE Ansatz also capture the large-scale behaviour of discrete microscopic particle systems expected to belong to the same universality class? Transferring the SBE Ansatz description to the ASEP and
identifying the full fluctuation process raises several difficulties.
\begin{itemize}
    \item \emph{The generators act on different spaces.}
    The ASEP and SBE generators act on $L^2(\nu_\rho)$ and $L^2(\mathbb P)$, respectively. Their chaos decompositions lead to a discrete Fock space with a hard-core constraint in the first case, and a continuum Fock space in the second.
    Consequently, the formal similarity between the generators does not entail a direct comparison: both the exclusion constraint and the discrete nature of the ASEP dynamics produce discrepancies that are absent from the SBE analysis.

    \item \emph{A comparison up to multiplicative constants is not sufficient.}
    The classical hardcore removal estimates of~\cite{LQSY04} compare resolvents up to multiplicative factors depending on the chaos degree. Such bounds do not provide the asymptotic precision needed to identify the limiting coefficients or the limiting process.
    Moreover, even after hardcore removal, the discrete and continuum generators genuinely differ at microscopic scales. Their agreement at large scales therefore does not by itself show that the error is negligible when they act on the SBE Ansatz, which retains information across all scales.

    \item \emph{The direct use of the It\^o trick is insufficient at
    the superdiffusive scale.}
    In contrast to the regimes above the critical dimension or at weak coupling, the nonlinear current cannot be uniformly controlled by the Itô trick under the scaling of interest. Relatedly, no uniform graded sector condition holds.
    Thus, estimates based on the symmetric dynamics alone do not provide the control of the asymmetric contribution needed for tightness. This reflects the mismatch between the diffusive relaxation of the symmetric dynamics and the superdiffusive spreading produced by the asymmetry.

    \item \emph{Finite-dimensional convergence does not provide
    temporal tightness.}
    The convergence of the resolvents obtained for the SBE in~\cite{CMT25,CKM26} does not give tightness. In particular, the convergence does not give sufficiently good control of the increments of the fluctuation process over short time intervals. In addition, the higher-moment estimates needed for tightness would also require stronger control of the chaos expansion of the Ansatz.
    A further difficulty specific to the particle dynamics is that the associated Dynkin martingales have jumps, which must also be accounted for in these estimates.
\end{itemize}

We now explain how these difficulties are overcome, first through an asymptotically sharp comparison of the generators, and then through a fluctuation-dissipation relation and a tightness argument adapted to the superdiffusive scale.
\ss

The first main novelty of the present work is an asymptotically sharp comparison of the generators, particularly when they act on the SBE Ansatz.
To make this comparison meaningful, we introduce bridge operators between the discrete and continuum Fock spaces. The comparison proceeds through an intermediate discrete dynamics without the hard-core constraint, separating the removal of the exclusion constraint from the comparison with the continuum operators; see Sections~\ref{sec:Bridges} and~\ref{sec:FromASEPtoSBE}.

The main challenge is then to make the resulting error vanish
under the superdiffusive rescaling. The classical hard-core removal approach of~\cite{LQSY04} controlled the $H^{-1}$ error between the generators through the $H^1$ norm of the input functional. Here, we refine this approach by controlling the $H^{-1}$ error through \emph{higher-order Sobolev} and \emph{anisotropic} $H^1$ norms of the input. This estimate captures the idea that the two generators tend to better agree on spatial scales that are both much larger than the lattice mesh and elongated in the direction of the drift.
However, such refined comparison of the generators is not sufficient by itself to make the comparison error vanish: we further need quantitative information about the spatial scales carrying the Dirichlet energy of the input functional, that is the SBE Ansatz.
To this end, we establish new higher-order Sobolev estimates on the Ansatz showing that its Dirichlet energy is indeed asymptotically concentrated on scales much larger than the lattice mesh.
Combined with the anisotropic $H^1$ estimates of~\cite{CMT25}, these estimates make the discrete-continuum error vanish and eventually show that the SBE Ansatz gives a faithful description of the ASEP dynamics.

We then use the transferred Ansatz to establish an approximate \emph{fluctuation-dissipation}
relation for the ASEP. This decomposes the nonlinear transport into an effective longitudinal
Laplacian and a fluctuation term represented through a \emph{corrector} that is tightly connected to the SBE Ansatz. A fluctuation-dissipation
approach was developed for ASEP in dimension $d \geq 3$ in~\cite{LY97}, and used in~\cite{CGT24} for the SBE in dimensions $d \geq 3$ and $d = 2$ in the weak-coupling regime.
Here, however, both the emergent dissipation and fluctuation terms contribute at the superdiffusive scale, which requires finer estimates on the corrector. From the fluctuation-dissipation relation, we deduce convergence of the finite-dimensional laws of the fluctuation process.

The second main novelty is a tightness argument that further exploits this fluctuation-dissipation relation. This requires two refinements. First, we strengthen the corrector estimates by introducing exponential weights in the chaos degree, in particular for its full Dirichlet energy. This control of the high chaos levels enables higher-moment estimates. Second, we choose the corrector in a way that depends on the length of the time interval under consideration. This allows us to control its boundary terms and the jumps of its Dynkin martingale, which cannot simply be discarded in a tightness argument. Together, these refinements yield the short-time increment bounds needed to apply the Kolmogorov-Chentsov criterion to the time-integrated nonlinear current.

In this way, we obtain temporal tightness and convergence of the full fluctuation process. Our tightness argument could be applied mutatis mutandis to the SBE studied in~\cite{CMT25,CKM26}, extending the finite-dimensional marginal convergence result therein to an invariance principle.
Furthermore, our discrete-continuum comparison argument and tightness argument seem sufficiently robust to be adapted to other stationary interacting particle systems in the same universality class as the the $2d$ SBE, such as the two-dimensional asymmetric zero-range process or speed-change ASEP (see~\cite{GJ14}).

We refer to Section~\ref{sec:Roadmap} for a more detailed outline of the proof.

\subsection{Relation with the literature}

\label{sec:Literature}

Much of the recent progress for nonequilibrium stochastic models at the critical dimension concerns the \emph{weak-coupling} regime, in which the interaction strength is tuned to zero as the observation scale increases, typically as
\begin{equ} \label{e:WeakCouplingIntro}
\lambda_\tau = \frac{\hat\lambda}{\sqrt{\log(\tau)}} \, .
\end{equ}
Gaussian diffusive limits in this regime have been obtained for a range of critical SPDEs and diffusions in random environment~\cite{CGT24,CET23b,KRYZ25,YY26,CG26}. A different weak-coupling picture, involving a non-Gaussian diffusive limit at a critical point $\hat \lambda_c$, arises for the $2d$ multiplicative stochastic heat equation and
directed polymers~\cite{CSZ17,CSZ20,CSZ23a,CSZ23b,Tsa24}; related questions for the elliptic $4d$ Anderson model are studied in~\cite{GR26,DS26}.
\ss

At fixed coupling, the resolvent methods developed for ASEP~\cite{LQSY04,Yau04} have led to logarithmic superdiffusivity bounds for a range of critical stochastic
models~\cite{DGHS24,CET23a,CHST22,N26}.
Quantitative homogenisation techniques have also yielded optimal-order diffusivity bounds for diffusion in the curl of the two-dimensional Gaussian free field~\cite{CMOW25}, as well as a quenched superdiffusive invariance principle for a broader class of critically correlated incompressible random drifts~\cite{ABRK24}.
\ss

Finally, we mention a remarkable but complementary line of work on one-dimensional asymmetric exclusion processes with fixed heavy-tailed jump rates. In the critical regime with stable index $3/2$ and under the $1:2:3$ scaling,~\cite{GJ18} established tightness and identified the limit points as stationary energy solutions of a fractional stochastic Burgers equation; more recently,~\cite{CCGO26} proved convergence to the unique energy solution of such an equation and extended the result to a multi-species model. These equations are scaling-critical, in the sense that they are themselves invariant under the $1:2:3$ scaling. The mechanism is nevertheless substantially different from the one considered here, since the superdiffusive scaling is already that of the symmetric long-jump dynamics. In particular, the microscopic nonlinear current survives as a continuum nonlinear transport term, rather than being replaced by enhanced linear dissipation and an additional effective noise.

\subsection*{Organisation of the paper}

The remainder of the paper is organised as follows. In Section~\ref{sec:Preliminaries}, we collect the main definitions of functional spaces and operators that are used throughout. We eventually outline the overall proof strategy in subsection~\ref{sec:Roadmap} and describe in more detail the main novelties of the paper. The core of our analysis begins in Section~\ref{sec:FromASEPtoSBE} where we develop a sharp comparison between the ASEP generator, its soft-core version, and the SBE generator, which we believe is of independent interest. Section~\ref{sec:AnsatzSBE} is devoted to establishing a superdiffusive \emph{fluctuation-dissipation} relation which captures the large-scale dynamics produced by the asymmetry. The \emph{Ansatz} of~\cite{CMT25} for the SBE resolvent is introduced in Section~\ref{sec:AnsatzDef}, and used to build a fluctuation-dissipation relation for the SBE in Section~\ref{sec:AnsatzA+}. The latter is then turned into an ASEP fluctuation-dissipation relation in Section~\ref{sec:AnsatzA+ASEP} by taking advantage of the comparison of the generators and of estimates on the $H^{1+\gamma}$ and anisotropic $H^1$ norms of the Ansatz. In Section~\ref{sec:Fluctuation}, we deduce from the fluctuation-dissipation relation the invariance principle of $(Y_t^\tau)_{t \geq 0}$. In particular we are able to prove tightness in Section~\ref{sec:Tightness} by exploiting refined estimates on the Ansatz with exponentially fast decay in chaos. We finally derive in Section~\ref{sec:2ndClass} the central limit theorem for the second-class particle and the sharp diffusivity asymptotics, as by-products of our analysis.

\subsection*{Notations}

To introduce the next definitions, let $d \geq 0$. We write $|\cdot|$ for the canonical Euclidean norm on $\R^d$, $B(x,r)$ for the corresponding open balls of centre $x$ and radius $r$, and $x \cdot y := x^T y$ for the inner product. We also write $|\cdot|_\infty$ for the $\ell^\infty$-norm on $\R^d$ (i.e. the maximum of the absolute values of the coordinates), and $B_\infty(x,r)$ for the corresponding open balls. For $p_1, ..., p_n \in \R^2$, we use the shorthand notation $p_{1:n} := (p_1, ..., p_n) \in \R^{2n}$ (note that $|p_{1:n}|^2 = |p_1|^2 + ... + |p_n|^2$). More generally, for any index set $I = \{i_1 < ... < i_k\} \subset \N$, we set $p_I = (p_{i_1}, ..., p_{i_k})$; in particular, $p_{1:n \setminus i,j}$ stands for the vector obtained from $p_{1:n}$ by omitting $p_i$ and $p_j$. If $M$ is a $2 \times 2$ matrix, we write $M p_{1:n} \eqdef (M p_1, ..., M p_n)$.

Let $\mathbb T^d \eqdef \R^d / 2 \pi \Z^d$ be the $d$-dimensional torus of side length $2 \pi$. We equip $\T^d$ with its natural group structure and Lebesgue measure. We systematically identify functions on $\T^d$ with $2 \pi \Z^d$-periodic functions on $\R^d$. For any $x \in \mathbb T^d$, we denote by $\llbracket x \rrbracket$ its unique representative in $(-\pi, \pi]^{d}$.

Let $\cS(\R^d)$ be the Schwartz space and $\cS'(\R^d)$ be its topological dual. For $f \in \cS(\R^d)$, we define its Fourier transform by
\begin{equ}[e:FourierDef]
\fF(f) \, (p) \eqdef \frac{1}{(2 \pi)^{d/2}} \int_{\R^d} f(x) \, e^{- \iota \, p \cdot x} \, \dd x \, , \quad p \in \R^d \, .
\end{equ}
The Fourier transform extends by duality to $\cS'(\R^d)$. Alongside this, we also define for $f \in \ell^1(\Z^d)$ its Fourier transform by
\begin{equ}[e:PerFourierDef]
\cF(f) \, (p) \eqdef \frac{1}{(2 \pi)^{d/2}} \sum_{x \in \Z^d} f(x) \, e^{- \iota \, p \cdot x} \, , \quad p \in \T^d \, .
\end{equ}
This Fourier transform also extends by duality, from functions on $\Z^d$ growing at most polynomially to distributions on $\T^d$. Let us highlight that the operator $\fF$ (resp. $\cF$) is unitary from $L^2(\R^d, \C)$ to $L^2(\R^d, \C)$ (resp. from $\ell^2(\Z^d, \C)$ to $L^2(\T^d, \C)$). We recall the following corollary of the Poisson summation formula: for every $f \in \cS(\R^d)$ and $p \in \T^d$,
\begin{equ} \label{e:PoissonSummation}
\cF (\restriction{f}{\Z^d}) \, (p) = \sum_{k \in p + 2 \pi \Z^d} \fF f(k) \, .
\end{equ}

We fix throughout the paper $l_0 \geq 1$ such that the jump rate function $r$ (and thus also $s$ and $a$) is supported on $B_\infty(0,l_0)$. Let us observe that we have $\mathrm{Supp}(s) = \mathrm{Supp}(r) \cup (- \mathrm{Supp}(r))$ and $\mathrm{Supp}(a) \subset \mathrm{Supp}(s)$.

For any $n \geq 0$, we equip $\Z^{2n}$ with the following graph structure: $x_{1:n}$ and $y_{1:n}$ are neighbours (and we write $x_{1:n} \sim y_{1:n}$) if there exists $i \in \{1, \dots, n\}$ such that $x_i - y_i \in \mathrm{Supp}(s)$ and $x_j = y_j$ for every $j \in \{1, \dots, n\} \setminus \{i\}$. The irreducibility assumption implies the connectedness of this graph.
\ss

To avoid repeatedly writing the lower cut-off $1 \vee [...]$ in estimates, we set for any $y \in \R$: $y_* \eqdef 1 \vee y$. In particular, for $x > 0$, we write $\log_*(x) \eqdef 1 \vee \log(x)$. It is convenient to define the scaling matrix $\Sigma^t$ in~\eqref{e:ScalingMatrixAsymp} for any $t \geq 1$ by
\begin{equ} \label{e:ScalingMatrix}
\Sigma^t \eqdef \sqrt{t} \, (\log_*(t)^{1/3} \, \fe \fe^T + \ff \ff^T) \, .
\end{equ}

Finally, we write $a \lesssim b$ if there exists a constant $C > 0$, independent of any quantity relevant to the result,
such that $a \leq C b$; and $a \asymp b$ if $a \lesssim b$ and $b \lesssim a$.
If we want to highlight the dependence of the constant $C$ on a specific quantity $Q$, we write instead $\lesssim_Q$ or $\asymp_Q$. For simplicity, we will never track the dependence of the constants on the jump rate function $r(\cdot)$ and the density $\rho$.

\subsection*{AI disclosure}

The use of AI in this work was limited to English-language editing, proofreading, and identifying relevant references.
The result was announced in March 2026 at the workshop ``From discrete systems to SPDE'' at TU Wien.

\subsection*{Acknowledgments}

The author is grateful to Giuseppe Cannizzaro and Fabio Toninelli for their invaluable and continued support, and to Damiano De Gaspari for helpful discussions.
This research was funded by the Austrian Science Fund
(FWF) 10.55776/F1002. For open access purposes, the author has applied a CC BY public
copyright license to any author accepted manuscript version arising from this submission.

\section{Preliminaries}

\label{sec:Preliminaries}

First we gather all necessary definitions of functional spaces and operators, including the generator of the ASEP (Section~\ref{sec:ASEPgen}), its soft-core counterpart (Section~\ref{sec:Softcore}), the generator of the SBE (Section~\ref{sec:SBEgen}), bridges to be able to compare them (Section~\ref{sec:Bridges}), and the Green-Kubo formula for the diffusivity (Section~\ref{sec:GK}). Then, we outline the overall proof strategy in Section~\ref{sec:Roadmap}.

\subsection{The ASEP generator and its variants}

Let us first define the semigroup of the ASEP (we refer to~\cite[Chapter 5]{KLO} for more details): if $F \in L^1(\nu_\rho)$ and $t \geq 0$, then $P_t F \in L^1(\nu_\rho)$ is given by
\begin{equ} \label{e:SemiGroup}
P_t F(\eta) \eqdef \mathbf E_{\eta}[F(\eta_t)] \, .
\end{equ}
Because of the $\nu_\rho$-stationarity of the ASEP, this defines a strongly continuous contractive semigroup on $L^p(\nu_\rho)$, for any $p \in [1,\infty)$. Then, the central object of our analysis is the generator $L \colon \mathrm{Dom}(L) \subset L^2(\nu_\rho) \to L^2(\nu_\rho)$ of the semigroup $(P_t)_{t \geq 0}$ acting on $L^2(\nu_\rho)$. The action of $L$ on local functionals is given by~\eqref{e:GenLocal}, and the space of local functionals constitutes a core for $L$. The symmetric part $S$ (resp. antisymmetric part $A$) of $L$ acts on local functionals as in~\eqref{e:GenLocal} after replacing the jump rates $r(\cdot)$ by $s(\cdot)$ (resp. $a(\cdot)$). To have a better understanding of the generator $L$, we will view its action through the Fock space decomposition of $L^2(\nu_\rho)$, as presented in the following subsection.

\subsubsection{The ASEP generator $L$ and the discrete Fock space}

\label{sec:ASEPgen}

It is common (see~\cite[Section 5.4]{KLO}) to write the action of the generator $L$ in the Hilbert basis of $L^2(\nu_\rho)$ consisting of the cylinder functionals
\begin{equ} \label{e:defsigma}
\sigma_A \eqdef \prod_{a \in A} \sigma(a) \, , \ \mbox{ where } \ \sigma(a) \eqdef \frac{\eta(a) - \rho}{\sqrt{\rho(1 - \rho)}} \, ,
\end{equ}
for $A$ running over all finite subsets $A$ of $\Z^2$. However, for our purpose, it is more convenient to parameterise this basis with symmetrised $n$-tuples rather than finite subsets.

To do so, let us define the Fock space $F \ell^2$ as the Hilbert space which is the completion of the orthogonal direct sum $\bigoplus_{n \geq 0} \ell^2(\Z^{2n})$, endowed with the norm
\begin{equ} \label{e:defNormFockd}
\|f\| \eqdef \Big(\sum_{n \geq 0} n! \, \|f_n\|_{\ell^2(\Z^{2n})}^2\Big)^{1/2} \, ,
\end{equ}
and we denote by $\langle \cdot, \cdot \rangle$ the corresponding scalar product. We will mainly work with the symmetric Fock space $\fockd{}{}$, which is the subspace of $F \ell^2$ consisting of functions $f = (f_n)$ such that each $f_n \in \ell^2((\Z^2)^n)$ is symmetric with respect to permutations of its $n$ variables $x_{1:n} \in \Z^{2n}$. Besides, let $\fockdo{}{}$ be the subspace of $\fockd{}{}$ consisting of functions that are supported on the subset $HC$ given by
\begin{equ} \label{e:defHC}
HC \eqdef \bigsqcup_{n \geq 0} \{x_{1:n} \in \Z^{2n} : \, \forall i \ne j \, , x_i \ne x_j \} \subset \bigsqcup_{n \geq 0} \Z^{2n} \eqdef \cZ \, .
\end{equ}
Observe that $HC$ is the set of vectors that satisfy the \emph{hard core} condition. We also sometimes refer to $DI \eqdef \cZ \setminus HC$ as the diagonal (so that $f \in \fockd{}{}$ belongs to $\fockdo{}{}$ if and only if it vanishes on the diagonal).
Finally, for $n \geq 0$ we denote $\fockd{n}{}{} \eqdef \fockd{}{} \cap \ell^2(\Z^{2n})$ and $\fockdo{n}{}{} \eqdef \fockdo{}{} \cap \ell^2(\Z^{2n})$, which we sometimes refer to as the $n$-th chaos.

Since $(\sigma_A)_{A \subset \subset \Z^2}$ is a Hilbert basis of $L^2(\nu_\rho)$, we readily deduce that the operator
\begin{equ} \label{e:defI}
I \colon f \in \fockdo{}{} \mapsto \sum_{n \geq 0} \sum_{x_{1:n} \in \Z^{2n} \cap HC} f(x_{1:n}) \, \sigma_{\{x_1, \dots, x_n\}} \in L^2(\nu_\rho) \, ,
\end{equ}
defines an isometry. From now on, we will identify $L^2(\nu_\rho)$ with $\fockdo{}{}$ via this isometry (in particular, we identify operators $O$ acting on $L^2(\nu_\rho)$ with the operator $I^{-1} O I$ acting on $\fockdo{}{}$, and conversely).
\ms

Now that we have defined the hard-core discrete Fock space $\fockdo{}{}$, we describe the action of the generator $L$ on the latter. First, let us introduce the number operator $N$ and the core $\fockdo{\rap}{}{}$.

\begin{definition} \label{def:N}

Let $\cN$ be the operator which maps $f = (f_n)_{n \geq 0} \in \fockd{}{}$ to $\cN f := (n f_n)_{n \geq 0}$ ; similarly, let $\cN_*$ be the operator which maps $f = (f_n)_{n \geq 0} \in \fockd{}{}$ to $\cN_* f := (n_* f_n)_{n \geq 0}$ where $n_* := 1 \vee n$. We denote by $N$ (and $N_*$) the restriction of $\cN$ (and $\cN_*$) to $\fockdo{}{}$. 

Let $\fockd{\rap}{}$ be the subspace consisting of $f = (f_n)_{n \geq 0} \in \fockd{}{}$ such that $\|\cN_*^k f\|$ is finite for every $k \geq 0$. We denote $\fockdo{\rap}{} = \fockdo{}{} \cap \fockd{\rap}{}$. We equip $\fockd{\rap}{}$ with its natural Fréchet metric.
\end{definition}

Note that the space $\fockdo{\rap}{}$ contains local functions, since they have finite chaos expansions (in particular, $\fockdo{\rap}{}$ will also be a core for $L$). A simple computation (in analogy to~\cite[Section 5.4]{KLO}) shows that the generator $L$ acts on the Fock space as follows.

\begin{lemma} \label{lem:Gen}
The generator $L$ equals $S + A \eqdef S + A_+ + A_- + A_0$ on $\fockdo{\rap}{}{}$ (which is a core for $L$), where the operators $S, A_+, A_-, A_0$ act (continuously) from $\fockdo{\rap}{}{}$ to itself as follows, for $f \in \fockdo{\rap}{}{}$,
\begin{equs}
\label{e:S}
S f (x_{1:n}) &\eqdef \sum_{i=1}^n \sum_{z \in \Z^2} \1_{HC}(x_{1:n}, x_i + z) \, s(z) \, \big(f(x_i + z, x_{1:n \setminus i}) - f(x_{1:n})\big) \, , \\
\label{e:A+}
A_+ f (x_{1:n+1}) &\eqdef \frac{2 \sqrt{\rho (1-\rho)}}{n+1} \sum_{1 \leq i < j \leq n+1} \1_{HC}(x_{1:n+1}) \, a(x_j - x_i) \, \big(f(x_{1:n+1 \setminus i}) - f(x_{1:n+1 \setminus j})\big) \, , \\
\label{e:A-}
A_- f(x_{1:n-1}) &\eqdef - 2 \sqrt{\rho(1-\rho)} \, n \sum_{i=1}^{n-1} \sum_{z \in \Z^2} \1_{HC}(x_{1:n-1}, x_i+z) \, a(z) \, f(x_i + z, x_{1:n-1}) \, , \\
\label{e:A0}
A_0 f(x_{1:n}) &\eqdef (1 - 2 \rho) \sum_{i=1}^n \sum_{z \in \Z^2} \1_{HC}(x_{1:n}, x_i+z) \, a(z) \, \big(f(x_i + z, x_{1:n \setminus i}) - f(x_{1:n})\big) \, .
\end{equs}
On $\fockdo{\rap}{} \subset \fockdo{}{}$, the operator $S$ is essentially self-adjoint while $A$ is skew-symmetric as we have $(A_+)^* = - A_-$ and $(A_0)^* = - A_0$. Observe that, for any $n \geq 0$, the operators $S$, $A_+$, $A_-$, $A_0$ map $\fockdo{n}{}{}$ respectively to $\fockdo{n}{}{}$, $\fockdo{n+1}{}{}$, $\fockdo{n-1}{}{}$, $\fockdo{n}{}{}$.
\end{lemma}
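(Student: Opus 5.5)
The plan is a direct computation of $L\sigma_A$ for a finite $A\subset\Z^2$, followed by translation into the symmetric $n$-tuple parametrisation through the isometry $I$ of~\eqref{e:defI}. Since $\sigma_A$ is local, $L\sigma_A$ is given by~\eqref{e:GenLocal}. We first write the exchange rate as $r(z)\eta(x)(1-\eta(x+z))$ and split it into its part symmetric under $x\leftrightarrow x+z$ and its antisymmetric remainder. Because $f(\eta^{x,x+z})-f(\eta)$ is antisymmetric under this swap, we may pair the bond $(x,x+z)$ with $(x+z,x)$, so that $L = \frac12\sum_{x,z} c_{x,z}(\eta)\,\nabla_{x,x+z}$ with $\nabla_{x,y}f = f(\eta^{x,y})-f(\eta)$ and
\[
c_{x,z}(\eta)= s(z)\bigl(\eta(x)(1-\eta(x+z))+\eta(x+z)(1-\eta(x))\bigr) + a(z)\bigl(\eta(x)-\eta(x+z)\bigr).
\]

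For each bond we then compute $\nabla_{x,x+z}\sigma_A$. It vanishes unless exactly one of $x$ and $x+z$ lies in $A$. If $x\in A$ and $x+z\notin A$, it equals $(\sigma(x+z)-\sigma(x))\,\sigma_{A\setminus x}$. The factor $\eta(x)(1-\eta(x+z))+\eta(x+z)(1-\eta(x))$ is then effectively $1$, since the jump changes nothing when both sites are equal. This term produces the $S$ contribution $s(z)(\sigma_{A\setminus x\cup\{x+z\}}-\sigma_A)$, with the constraint $x+z\notin A$, i.e.\ $\1_{HC}$. For the $a$ term we expand $\eta(x)-\eta(x+z)=\sqrt{\rho(1-\rho)}\,(\sigma(x)-\sigma(x+z))$ and multiply by $(\sigma(x+z)-\sigma(x))$. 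This requires the one-site identity
\[
\sigma(y)^2 = 1 + \frac{1-2\rho}{\sqrt{\rho(1-\rho)}}\,\sigma(y),
\]
which holds because $\eta(y)^2=\eta(y)$. Using it, $-(\sigma(x)-\sigma(x+z))^2$ decomposes into three pieces. The constant piece gives terms in chaos $n-1$, i.e.\ $A_-$ acting on a function of $n$ points. The piece linear in $\sigma(x)$ or $\sigma(x+z)$ with coefficient $(1-2\rho)$ gives the degree-preserving terms, i.e.\ $A_0$. The cross term $2\sigma(x)\sigma(x+z)$ raises the degree, i.e.\ $A_+$.

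The final step is to rewrite $\langle \sigma_B, L\sum_A f(A)\sigma_A\rangle$ as a function of ordered $n$-tuples. The combinatorial factors come from the symmetrisation convention in the norm~\eqref{e:defNormFockd}: a subset of size $n$ corresponds to $n!$ tuples, and the weights $n!$ produce the factors $n$ in~\eqref{e:A-} and $1/(n+1)$ in~\eqref{e:A+}, while the factor $2$ comes from summing both orientations of each bond. I expect this bookkeeping, together with the sign conventions for $a(x_j-x_i)$ versus $a(z)$, to be the only delicate point. It can be checked by verifying $(A_+)^*=-A_-$ directly from the formulas.

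Skew-symmetry of $A$ and symmetry of $S$ follow from the stationarity of $\nu_\rho$ under both the $s$- and $a$-dynamics, since $\nu_\rho$ is reversible for the symmetric kernel and invariant for any translation-invariant kernel. The same adjoint relations can also be checked directly from the formulas after the change of variables $x_i\mapsto x_i+z$ and $z\mapsto -z$ with $a(-z)=-a(z)$. Continuity on $\fockdo{\rap}{}$ holds because each operator changes the chaos degree by at most one and has norm $\lesssim n$ on the $n$-th chaos, by finite range. The core property follows from the standard fact that local functions form a core~\cite[Chapter 5]{KLO} and are contained in $\fockdo{\rap}{}$. Essential self-adjointness of $S$ follows from the same core fact applied to the reversible $s$-dynamics.
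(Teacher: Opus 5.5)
Your route is the computation the paper has in mind (it only points to \cite[Section~5.4]{KLO}), and the structural steps are sound. The symmetrised form $L=\frac12\sum_{x,z}c_{x,z}\nabla_{x,x+z}$ is correct. The pairing works because $\nabla_{x,x+z}$ is \emph{symmetric} under $x\leftrightarrow x+z$ (since $\eta^{x,y}=\eta^{y,x}$), not antisymmetric as you wrote. The identity for $\sigma(y)^2$ and the split into constant, linear and cross terms are also correct. Matching \eqref{e:A0} and \eqref{e:A-} additionally needs $\sum_{x\in A}\sum_{z:\,x+z\notin A}a(z)=0$ and $\sum_{z:\,x+z\notin B}a(z)=-\sum_{y\in B}a(y-x)$; both follow from $a(-z)=-a(z)$.

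The genuine problem is the bookkeeping you postpone: it does not reproduce \eqref{e:A+} as printed. With your own signs, the cross term contributes $2\sqrt{\rho(1-\rho)}\sum_{x\in A}\sum_{z:\,x+z\notin A}a(z)\,\sigma_{A\cup\{x+z\}}$ to $L\sigma_A$. In the $n$-tuple picture this reads
\[
A_+f(x_{1:n+1})=\frac{2\sqrt{\rho(1-\rho)}}{n+1}\sum_{1\le i<j\le n+1}\1_{HC}(x_{1:n+1})\,a(x_j-x_i)\big(f(x_{1:n+1\setminus j})-f(x_{1:n+1\setminus i})\big),
\]
which is minus \eqref{e:A+}. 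You can see this already for $n=1$: the quadratic part of $L\eta(0)$ is $2\eta(0)\sum_z a(z)\eta(z)$. This gives $A_+f(x,y)=\sqrt{\rho(1-\rho)}\,a(y-x)\big(f(x)-f(y)\big)$, not $f(y)-f(x)$.

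The check you propose actually detects this. With \eqref{e:A+} and \eqref{e:A-} as printed, one gets $\langle A_+g,f\rangle=\langle g,A_-f\rangle$. Then $A_++A_-$ would be symmetric, contradicting the skew-symmetry asserted in the same lemma. By contrast, \eqref{e:S}, \eqref{e:A-} and \eqref{e:A0} do come out of your computation exactly as stated.

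So your argument proves the lemma only after flipping the sign in \eqref{e:A+}, and likewise in the first-chaos formula quoted after Definition~\ref{def:B+}. You should derive and state that corrected formula instead of asserting agreement. The correction is harmless for the rest of the paper. It amounts to conjugating by $(-1)^N$, which turns $A_\pm$ into $-A_\pm$ while fixing $S$, $A_0$, the chaos-preserving bridges, and every $N$-weighted norm used later.
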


\begin{remark} \label{rmk:IPPS}
The operator $\restriction{S}{\fockdo{n}{}{}}$ can be seen as the $s$-weighted discrete Laplacian on $\Z^{2n} \setminus DI$ with Neumann boundary condition. In particular, the following integration by parts formula holds: if we define for any $z \in \Z^2$ the operator $\nabla_z^0$ acting from $\fockdo{}{}{}$ to $F \ell^2$ as
\begin{equ} \label{e:nablaz0}
\nabla_z^0 f(x_{1:n}) = \1_{HC}(x_1 + z, x_{1:n}) \, (f(x_1 + z, x_{2:n}) - f(x_{1:n})) \, ,
\end{equ}
then, for any $f \in \fockdo{n}{}{}$, we have
\begin{equ} \label{e:NeumannBC}
\|(- S)^{1/2} f\|^2 = \langle (-S) f, f\rangle = \frac{n}{2} \sum_{z \in \Z^2} s(z) \, \|\nabla_z^0 f\|^2 \, .
\end{equ}
\end{remark}

\begin{remark} \label{rmk:A0}
The operator $A_0$ can be heuristically interpreted as a discrete transport operator with corresponding velocity $v = (1 - 2 \rho) \fw$ (in particular, observe that $A_0 = 0$ when $\rho = 1/2$). Indeed, after omitting the hard-core constraint and expanding $f \in \fockdo{n}{}$ at first order around $x_{1:n}$, we formally obtain $A_0 f \approx v \cdot (\nabla_{x_1} + \cdots + \nabla_{x_n}) f$. Therefore, the Galilean change of variables $x \to x + t v$ performed in the definition~\eqref{e:Ztau} of $Y_t^\tau$ is designed to cancel the action of $A_0$.
\end{remark}

\begin{remark} \label{rmk:noGSC}
An important difficulty here is that, contrary to systems at the supercritical dimension or in the weak coupling regime, no graded sector condition holds: the generator is not bounded from $H^1$ to $H^{-1}$, even if it is restricted to a finite number of chaoses. For example, it can be checked that
\begin{equ} \label{e:noGSC}
\sup_{f \in \fockdo{1}{}, \, f \not \equiv 0} \frac{\|(\mu - S)^{-1/2} \, A_+ f\|}{\|(\mu - S)^{1/2} f\|} \asymp \log_*(1/\mu)^{1/2} \, .
\end{equ}
\end{remark}

\subsubsection{The softcore generator $\cL$}

\label{sec:Softcore}

One of the main difficulties in studying the generator $L$ (and more broadly every operator defined in Lemma~\ref{lem:Gen}) is the hard core constraint, which, for instance, makes direct computations via the Fourier transform intractable. To remedy the problem, in analogy to~\cite{LQSY04}, we introduce the ``softcore'' variants of these operators, which act on the discrete Fock space $\fockd{}{}{}$ (instead of $\fockdo{}{}{}$) and are obtained by removing the hard core constraints in the formulas of Lemma~\ref{lem:Gen}.

\begin{definition} \label{def:SoftCore}
Let us define the operators $\cL \eqdef \cS + \cA \eqdef \cS + \cA_+ + \cA_- + \cA_0$ acting (continuously) from $\fockd{\rap}{}{}$ to itself, by the corresponding formulas~\eqref{e:S},~\eqref{e:A+},~\eqref{e:A-},~\eqref{e:A0} with the indicators $\1_{HC}(...)$ removed. In terms of the Fourier transform, they act on $f \in \fockd{\rap}{}$ as follows
\begin{equs}
\label{e:FcS}
\cF (\cS f) \, (k_{1:n}) &= - \frac{1}{2} \, d_0(k_{1:n}) \, \cF f (k_{1:n}) \, , \\
\label{e:FcA+}
\cF (\cA_+ f) \, (k_{1:n+1}) &= \frac{\sqrt{\rho(1-\rho)}}{\pi (n+1)} \sum_{1 \leq i < j \leq n+1} \iota \, (w(k_i) + w(k_j)) \, \ \cF f (k_i + k_j, k_{1:n+1 \setminus i,j}) \, , \\
\label{e:FcA-}
\cF (\cA_- f) \, (k_{1:n-1}) &= \frac{\sqrt{\rho(1-\rho)} \, n}{2 \pi} \sum_{i=1}^{n-1} \int_{\T^2} \dd \ell \, \iota \, (w(\ell) + w(k_i - \ell)) \, \cF f(\ell,k_i-\ell, k_{1:n-1 \setminus i}) \, , \\
\label{e:FcA0}
\cF (\cA_0 f) \, (k_{1:n}) &= (1 - 2 \rho) \ \iota \, \Big(\sum_{i=1}^n w(k_i)\Big) \, \cF f (k_{1:n}) \, ,
\end{equs}
where we have used the shorthand notation, for $k \in \R^2$,
\begin{equ} \label{e:defd0w}
d_0(k) \eqdef 2 \sum_{z \in \Z^2} s(z) \, (1-\cos(z \cdot k)) \, , \quad w(k) \eqdef \sum_{z \in \Z^2} a(z) \, \sin(z \cdot k) \, ,
\end{equ}
and $d_0(k_{1:n}) \eqdef d_0(k_1) + \dots + d_0(k_n)$ for $k_{1:n} \in \R^{2n}$.
Similarly to Lemma~\ref{lem:Gen}, on $\fockd{\rap}{} \subset \fockd{}{}$ the operator $\cS$ is essentially self-adjoint while $(\cA_+)^* = - \cA_-$ and $(\cA_0)^* = - \cA_0$; for any $n \geq 0$, the operators $\cS$, $\cA_+$, $\cA_-$, $\cA_0$ map $\fockd{n}{}{}$ respectively to $\fockd{n}{}{}$, $\fockd{n+1}{}{}$, $\fockd{n-1}{}{}$, $\fockd{n}{}{}$.
\end{definition}

\begin{remark} \label{rmk:IPPcS}
The operator $\restriction{\cS}{\fockd{n}{}}$ can be seen as the $s$-weighted discrete Laplacian on $\Z^{2n}$. It satisfies an integration by parts formula similar to that for $S$ (cf. Remark~\ref{rmk:IPPS}). Indeed, for $z \in \Z^2$, let us define the operator $\nabla_z$ acting from $\fockd{}{}{}$ to $F \ell^2$ as
\begin{equ} \label{e:nablaz}
\nabla_z f(x_{1:n}) = f(x_1 + z, x_{2:n}) - f(x_{1:n}) \, .
\end{equ}
Then the soft-core counterpart of~\eqref{e:NeumannBC} is, for $f \in \fockd{n}{}{}$,
\begin{equ} \label{e:noBC}
\|(- \cS)^{1/2} f\|^2 = \frac{n}{2} \sum_{z \in \Z^2} s(z) \, \|\nabla_z f\|^2 \, .
\end{equ}
\end{remark}

\begin{remark} \label{rmk:NonSymmetricFock}
Because the operator $\nabla_z$ produces non-symmetric functions, we may need to work with the non-symmetric Fock space $F \ell^2$. To this end, we need to generalise a few definitions. First, we extend the counting operator $\cN$ to $F \ell^2$ and define the space $F \ell^2_{\rap}$ in the obvious way (as in Definition~\ref{def:N}). Then, we extend the operator $\cS$ to $F_* \ell^2 \subset F \ell^2$ as follows
\begin{equ} \label{e:NonSymmetricLapl}
\cS f(x_{1:n}) \eqdef \sum_{i=1}^n \sum_{z \in \Z^2} s(z) \, \big(f(x_1, \dots, x_{i-1}, x_i + z, x_{i+1}, \dots, x_n) - f(x_{1:n})\big) \, , 
\end{equ}
which, in terms of the Fourier transform, corresponds to the same formula as~\eqref{e:FcS}. Of course, this extended operator $\cS$ is still essentially self-adjoint. With this extension of $\cS$ in mind, we can for instance state that, for any $\gamma \geq 0$ and $f \in \fockd{n}{}$, it holds
\begin{equ} \label{e:FracLaplNabla}
\|(- \cS)^{(1+\gamma)/2} f\|^2 = \frac{n}{2} \, \sum_{z \in \Z^2} \|(-\cS)^{\gamma/2} \, \nabla_z f\|^2 \, .
\end{equ}
Indeed, this follows from the integration by parts formula~\eqref{e:noBC} applied to $(- \cS)^{\gamma/2} f$, and the observation that $\nabla_z$ and $(- \cS)^{\gamma/2}$ commute.
\end{remark}

\subsubsection{The SBE generator $\fL$ and the continuum Fock space}

\label{sec:SBEgen}

Let us now introduce the continuum analogue of the soft-core generator, which is the generator of the SBE. This generator can be formally obtained by replacing in formulas~\eqref{e:FcS}-\eqref{e:FcA0} the discrete Fourier transform $\cF$ by the continuum one $\fF$ and the trigonometric functions $d_0(\cdot)$ and $w(\cdot)$ by their Taylor expansions at $0$ (up to the first nontrivial order). Here is an example that highlights the gain from working with continuum operators: in~\eqref{e:FcA-}, the factor $w(\ell) + w(k_i - \ell)$ appearing in the integrand depends nontrivially on $\ell$ and thus cannot be factorised outside the integral; however, we always have $\fw \cdot \ell + \fw \cdot (k_i - \ell) = \fw \cdot k_i$ which does not depend on $\ell$.
\ss

To define the SBE generator, we first need to define the continuum Fock space that corresponds to the chaos decomposition of $L^2(\mathbb P)$. Let $\fock{}{}$ be the completion of $\bigoplus_{n \geq 0} \fock{n}{}{}$, where $\fock{n}{}{}$ is\footnote{For $n = 0$, we use the convention $\fock{0}{} = L^2(\R^0) = \R^{\{0\}}$ with the norm $\|f\|_{L^2(\R^0)} \eqdef |f(0)|$.} the subspace of $L^2(\R^{2n}) \simeq L^2((\R^{2})^n)$ made of functions which are symmetric with respect to permutations of their $n$ variables, for the Hilbert norm
\begin{equ} \label{e:FockNorm}
\|f\| = \Big(\sum_{n \geq 0} n! \,  \|f_n\|_{L^2(\R^{2n})}^2\Big)^{1/2} \, ,
\end{equ}
and we denote by $\langle \cdot, \cdot \rangle$ the corresponding scalar product. Then, let us define the so-called Wiener-Itô integration operator $\fI \colon \fock{}{} \to L^2(\mathbb P)$ (we refer to~\cite{peccati2011wiener} for more details) which is the isometry that maps elements of the form $h^{\otimes n} \in \fock{n}{}{}$, for $h \in \cS(\R^2)$, to the random variable $H_n(\zeta(h), \|h\|_{L^2(\R^2)})$, where $\zeta$ is the spatial white noise and $H_n(\cdot , s)$ is the $n$-th Hermite polynomial given by
\begin{equ} \label{e:Hermite}
H_n(X, s) \eqdef \sum_{0 \leq k \leq n/2} c_{k, n} \, (-1)^k \, s^{2k} \, X^{n - 2k} \, , \mbox{ where } c_{k, n} := \frac{n!}{2^k k! (n-2k)!} \, .
\end{equ} 
For later use, we mention that the coefficient $c_{k,n}$ can be interpreted combinatorially as the number of partitions of $\{1, \dots, n\}$ with $k$ blocks of size $2$ and $n - 2 k$ blocks of size $1$. From now on, we will identify $L^2(\mathbb P)$ with $\fock{}{}$ through the isometry $\fI$.
\ss

\begin{definition} \label{def:CoreFock}
Let $\fock{\rap}{} \subset \fock{}{}$ be the subspace consisting of $f = (f_n)_{n \geq 0}$ such that for every $a, b \geq 0$ it holds
\begin{equ} \label{e:defCoreFock}
\sup_{n \geq 0} n! \, (1 + n^a) \int_{\R^{2n}} (1+|k_{1:n}|^2)^{b} \, |\fF f_n(k_{1:n})|^2 \, \dd k_{1:n} < \infty \, .
\end{equ}
We equip $\fock{\rap}{}$ with its natural Fréchet metric. For $n \geq 0$, we denote $\fock{\rap,n}{} \eqdef \fock{\rap}{} \cap \fock{n}{}$.
\end{definition}
Let us mention that the space $\fock{\rap}{}$ contains all smooth cylinder functionals, that is, functionals of the form $\zeta \in \cS'(\R^2) \mapsto f(\zeta(\vphi_1), \dots, \zeta(\vphi_m))$ with $\vphi_1, \dots, \vphi_m \in \cS(\R^2)$ and $f \colon \R^m \to \R$ smooth with at most polynomial growth.

\begin{definition} \label{def:GenSBE}
Let us define the operators $\fL \eqdef \fS + \fA \eqdef \fS + \fA_+ + \fA_- + \fA_0$ which act (continuously) from $\fock{\rap}{}$ to itself as follows, for $f \in \fock{\rap}{}$,
\begin{equs}
\label{e:fS}
\fF (\fS f) \, (k_{1:n}) &\eqdef - \frac{1}{2} D_0(k_{1:n}) \,\fF f(k_{1:n}) \, , \\
\label{e:fA+}
\fF (\fA_+ f) \, (k_{1:n+1}) &\eqdef \frac{\sqrt{\rho(1-\rho)}}{\pi (n+1)} \sum_{1 \leq i < j \leq n+1} \iota \, \fw \cdot (k_i+k_j) \, \Pi_{k_i, k_j} \, \fF f (k_i + k_j, k_{1:n+1 \setminus i,j}) \, , \\
\label{e:fA-}
\fF (\fA_- f) \, (k_{1:n-1}) &\eqdef \frac{\sqrt{\rho(1-\rho)} \, n}{2 \pi} \sum_{i=1}^{n-1} \iota \, \fw \cdot k_i \, \int_{\R^2} \dd \ell \, \Pi_{\ell, k_i-\ell} \, \fF f (\ell, k_i-\ell, k_{1:n-1 \setminus i}) \, , \\
\label{e:fA0}
\fF(\fA_0 f) \, (k_{1:n}) &\eqdef \iota \, \Big(v \cdot \sum_{i=1}^n k_i\Big) \, \fF f(k_{1:n}) \, ,
\end{equs}
where $\fw$ and $D_0$ were defined in~\eqref{e:D0&drift}, $v = (1 - 2 \rho) \, \fw$, and where we have used the shorthand notation, for $k_{1:n} \in \R^{2n}$ and $p, q \in \R^2$,
\begin{equ} \label{e:CutOff}
D_0(k_{1:n}) \eqdef \sum_{i=1}^n k_i^T D_0 \,  k_i \, , \quad  \Pi_{p,q} \eqdef \1_{|p|_\infty < \pi} \, \1_{|q|_\infty < \pi} \, \1_{|p+q|_\infty < \pi} \, .
\end{equ}
Similarly to Lemma~\ref{lem:Gen}, on $\fock{\rap}{} \subset \fock{}{}$, the operator $\fS$ is essentially self-adjoint while $(\fA_+)^* = - \fA_-$ and $(\fA_0)^* = - \fA_0$. Observe that, for any $n \geq 0$, the operators $\fS$, $\fA_+$, $\fA_-$, $\fA_0$ map $\fock{\rap,n}{}$ respectively to $\fock{\rap,n}{}$, $\fock{\rap,n+1}{}$, $\fock{\rap,n-1}{}$, $\fock{\rap,n}{}$.
\end{definition}

\begin{remark} \label{rmk:fLSBE}
Formally, $\fL$ can be interpreted as the generator of the SBE
\begin{equ} \label{e:ASEPTrueSBE}
(\partial_t + v \cdot \nabla) \, u = \frac{1}{2} \, \nabla \cdot D_0 \nabla u + \sqrt{\rho(1-\rho)} \, \fw \cdot \nabla \chi * (\chi * u)^2 + \nabla \cdot \big(\sqrt{D_0} \, \vec{\, \xi}\big) \, ,
\end{equ}
where the mollifier $\chi$ is given by the ultraviolet Fourier cut-off $\fF^{-1}((2 \pi)^{-1} \1_{|\cdot|_\infty < \pi})$. Compare~\eqref{e:ASEPTrueSBE} to the heuristic equation~\eqref{e:ASEPFormalSBE}: here, we have an additional transport term because we have not yet performed the Galilean transformation, and the non-linearity is regularised via the mollifier $\chi$, which is connected to the choice of the cut-off $\Pi_{p,q} = 2 \pi \hat \chi(p) \times 2 \pi \hat \chi(q) \times 2 \pi \hat \chi(p+q)$ in~\eqref{e:CutOff}. Heuristically, $\fS$ corresponds to the generator of the diffusive part $\frac{1}{2} \, \nabla \cdot D_0 \nabla u + \nabla \cdot \sqrt{D_0} \, \vec{\xi}$, while $\fA_+ + \fA_-$ comes from the non-linearity $\sqrt{\rho(1-\rho)} \, \fw \cdot \nabla \chi * (\chi * u^2)$ and $\fA_0$ from the transport term $v \cdot \nabla u$. For a more rigorous statement, we refer to~\cite{CMT25}; we stress however that, although the connection between $\fL$ and the generator of the SBE is enlightening, we do not need this interpretation of $\fL$ in this work (in particular, we do not need the well-posedness of the SPDE~\eqref{e:ASEPTrueSBE}).
\end{remark}

\begin{remark} \label{rmk:fA0}
Note that we can rewrite~\eqref{e:fA0} as $\fA_0 f(x_{1:n}) = (v \cdot \nabla_{x_1} + ... + v \cdot \nabla_{x_n}) f(x_{1:n})$, that is, $\fA_0$ acts as a transport term $v \cdot \nabla$ in each chaos variable, which is consistent with Remark~\ref{rmk:A0}. In particular, let us stress that $\fA_0$ commutes with every translation invariant operator, such as $\fS$, $\fA_+$, $\fA_-$ or any diagonal operator (cf. the definition just below).
\end{remark}

Among the tractable operators that act on $\fock{}{}$ are those that act as a Fourier multiplier on each chaos; we refer to these as \emph{diagonal} operators.

\begin{definition} \label{def:DiagonalOperator}

An operator $\fD$ acting on $\fock{\rap}{}$ is said to be \emph{diagonal} if there exists a family of symmetric kernels $d = (d_n)_{n \geq 0}$ such that, for any $\vphi \in \fock{\rap}{}$, it holds
$\fF(\fD \vphi) (k_{1:n}) = d_n(k_{1:n}) \, \fF \vphi(k_{1:n})$ for every $k_{1:n} \in \R^{2n}$, $n \geq 0$ (to avoid pathologies, we always assume that the kernels are measurable and grow at most as $|d_n(k_{1:n})| \lesssim n^a \, (1 + |k_{1:n}|^b)$ for some $a, b \geq 0$). Given two diagonal operators $\fD^{(1)}$ and $\fD^{(2)}$, we write $\fD^{(1)} \leq \fD^{(2)}$ if their respective kernels $d^{(1)}$ and $d^{(2)}$ satisfy $d_n^{(1)}(k_{1:n}) \leq d_n^{(2)}(k_{1:n})$ for every $k_{1:n} \in \R^{2n}$, $n \geq 0$.
Finally, given a diagonal operator $\fD$ with kernel $d = (d_n)_{n \geq 0}$
and a scalar function $f$, we define (when it makes sense) $f(\fD)$ as the diagonal operator
whose kernel is $f(d) = (f \circ d_n)_{n \geq 0}$.

We define the diagonal operators $\fN$, $\fN_*$, $\fL_0$, $\fL_0^\fe$, $\fL_0^\ff$ whose respective kernels on chaos $n$ are given by: $n$, $1 \vee n$, $- |k_{1:n}|^2/2$, $- |\fe \cdot k_{1:n}|^2/2$, $- |\ff \cdot k_{1:n}|^2/2$.
\end{definition}

Finally, the action of the generator of linear stochastic heat equations, such as~\eqref{e:Ylim}, is particularly simple on the Fock space $\fock{}{}$. More precisely, we can define the semigroup $P_t^{\lim} F$ of the process $Y^{\lim}$, for $t \geq 0$ and $F \in L^1(\mathbb P)$, as
\begin{equ} \label{e:defPlim}
P_t^{\lim} F(\zeta) = \mathbf E_\zeta^{\lim}[F(Y^{\lim}_t)] \, .
\end{equ}
Because of the Markov property and the $\mathbb P$-stationarity of $(Y^{\lim}_t)_{t \geq 0}$, we deduce that $(P_t^{\lim})_{t \geq 0}$ defines a strongly continuous contractive semigroup on $L^p(\mathbb P)$ for any $p \in [1, \infty)$. In particular, we denote by $\fL^{\lim} \colon \mathrm{Dom}(\fL^{\lim}) \subset L^2(\mathbb P) \to L^2(\mathbb P)$ the generator of $(P_t^{\lim})_{t \geq 0}$ acting on $L^2(\mathbb P) \simeq \fock{}{}$, coming from Hille-Yosida theory. A straightforward computation shows the following formula for $\fL^{\lim}$.

\begin{lemma} \label{lem:Llim}
On $\fock{\rap}{} \subset \fock{}{}$, the generator $\fL^{\lim}$ is given by $d^{\lim} \, \fL_0^\fe +  D_0[\ff, \ff] \, \fL_0^{\ff}$.
\end{lemma}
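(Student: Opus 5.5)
The plan is to verify the formula on the dense class of exponential functionals and then pass to the core $\fock{\rap}{}$ by linearity and closure. Set $M \eqdef D^{\lim}$. The linear equation~\eqref{e:Ylim} is solved explicitly by
\begin{equ}
Y_t = e^{t\cdot} Y_0 + \int_0^t e^{(t-s)\cdot} \, \nabla\cdot M^{1/2}\,\vec\xi_s \, \dd s \, ,
\end{equ}
where $e^{t\cdot}$ denotes the heat semigroup $e^{t \Delta_M}$ with $\Delta_M \eqdef \frac12 \nabla\cdot M\nabla$. Hence, for $h\in\cS(\R^2)$, conditionally on $Y_0=\zeta$, the variable $Y_t(h)$ is Gaussian with mean $\zeta(e^{t\Delta_M}h)$ and variance $\|h\|^2-\|e^{t\Delta_M}h\|^2$. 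To check the latter, note that the noise contributes $\int_0^t \|M^{1/2}\nabla e^{s\Delta_M}h\|^2\,\dd s$, and $\frac{\dd}{\dd s}\|e^{s\Delta_M}h\|^2 = -\|M^{1/2}\nabla e^{s\Delta_M}h\|^2$.

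Apply this to the Wick exponential $\mathcal E_h \eqdef \exp(\zeta(h)-\frac12\|h\|^2)$, whose chaos kernels are $(h^{\otimes n}/n!)_{n\geq0}$. We obtain
\begin{equ}
P_t^{\lim}\mathcal E_h = \mathcal E_{e^{t\Delta_M}h} \, ,
\end{equ}
because the variance correction exactly compensates the change of norm. Taking kernels, this says that $P_t^{\lim}$ acts on chaos $n$ by $(e^{t\Delta_M})^{\otimes n}$, first on tensors $h^{\otimes n}$. By polarisation and density of the span of $\{h^{\otimes n}\}$ in $\fock{n}{}$, together with the contractivity of both sides, it acts this way on all of $\fock{n}{}$. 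In Fourier variables this is multiplication by
\begin{equ}
\exp\Big(-\frac t2\sum_i k_i^TMk_i\Big) \, .
\end{equ}
Since $k^TMk = d^{\lim}(\fe\cdot k)^2 + D_0[\ff,\ff](\ff\cdot k)^2$, this multiplier is $\exp\big(t\,(d^{\lim}\ell^\fe_n + D_0[\ff,\ff]\,\ell^\ff_n)\big)$, where $\ell^\fe_n(k_{1:n}) = -|\fe\cdot k_{1:n}|^2/2$ and $\ell^\ff_n$ are the kernels of $\fL_0^\fe$ and $\fL_0^\ff$ from Definition~\ref{def:DiagonalOperator}.

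It follows that $P_t^{\lim} = e^{t\fD}$ with $\fD \eqdef d^{\lim}\fL_0^\fe + D_0[\ff,\ff]\,\fL_0^\ff$, which is a nonpositive self-adjoint diagonal operator. The generator of this semigroup is therefore $\fD$ on its maximal domain, namely the set of $f$ with $\sum_n n!\,\|\fF f_n \, d_n\|^2 < \infty$. Every $f\in\fock{\rap}{}$ lies in this domain, since $|d_n|\lesssim |k_{1:n}|^2$ and~\eqref{e:defCoreFock} holds with $b=2$. For such $f$, dominated convergence in Fourier space gives $t^{-1}(P_t^{\lim}f - f)\to \fD f$ in $\fock{}{}$.

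The only delicate point is the identification of $P_t^{\lim}$ on all of $L^2(\mathbb P)$ from its action on exponentials, that is, the density and continuity step. The Wiener–Itô isometry $\fI$ handles this directly, so I expect no genuine obstacle.
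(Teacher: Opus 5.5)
Your proof is correct, and it takes a different route from the one the paper presumably intends. The paper gives no argument beyond saying that a straightforward computation yields the formula. The natural computation would be to apply It\^o's formula to smooth cylinder functionals $f(\zeta(\vphi_1),\dots,\zeta(\vphi_m))$ of the solution of~\eqref{e:Ylim}. This gives the operator $\sum_i \partial_i f\,\zeta(\Delta_M\vphi_i)+\frac12\sum_{i,j}\partial_{ij}f\,\langle\nabla\vphi_i,M\nabla\vphi_j\rangle_{L^2(\R^2)}$, which one then recognises in the chaos decomposition as the diagonal operator with kernel $-\frac12\sum_i k_i^TMk_i$.

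You instead compute the semigroup in closed form: it acts on $\fock{n}{}$ as $(e^{t\Delta_M})^{\otimes n}$. You obtain this from Wick exponentials and the explicit solution, which the paper records in~\eqref{e:YlimExplicit}, and then read off the generator by spectral calculus. The advantage of your route is that it identifies the Hille--Yosida generator together with its full domain, so $\fock{\rap}{}\subset\mathrm{Dom}(\fL^{\lim})$ is immediate. The It\^o route only gives the action on cylinder functionals. It would need an extra closure argument (closedness of $\fL^{\lim}$ plus approximation) to reach elements of $\fock{\rap}{}$ with infinite chaos expansions.

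Two small points should be tightened.

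First, from $P_t^{\lim}\mathcal E_h=\mathcal E_{e^{t\Delta_M}h}$ you cannot literally ``take kernels'', because you do not yet know that $P_t^{\lim}$ preserves the chaos degree. Replace $h$ by $\lambda h$ with $\lambda\in\R$. By $L^2$-contractivity of $P_t^{\lim}$ and $\|\fI(h^{\otimes n})\|=\sqrt{n!}\,\|h\|^n$, both sides become entire $L^2(\mathbb P)$-valued power series in $\lambda$. Comparing coefficients then gives $P_t^{\lim}\fI(h^{\otimes n})=\fI((e^{t\Delta_M}h)^{\otimes n})$. Alternatively, apply the Gaussian identity $\mathbb E[H_n(m+\sigma G,s)]=H_n(m,\sqrt{s^2-\sigma^2})$, for $G\sim\mathcal N(0,1)$, directly to $\fI(h^{\otimes n})=H_n(\zeta(h),\|h\|_{L^2(\R^2)})$.

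Second, in the domain check you need~\eqref{e:defCoreFock} with some $a>1$ as well as $b=2$, so that the sum over $n$ converges. This is available, since~\eqref{e:defCoreFock} holds for all $a,b\geq0$.
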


\subsubsection{The bridges between the different Fock spaces}

\label{sec:Bridges}

\begin{figure}[ht]
\centering
\begin{tikzcd}[scale=1.5, transform shape, column sep=5em]
{\Gamma_0 \ell^2}
  \arrow["{L, \, S, \, A_{\pm}, \, A_0}"', loop, distance=2em, in=305, out=235]
  \arrow[rr, "{\fsR^*, \, \fsQ}"', bend right]
& &
{\Gamma \ell^2}
  \arrow["{\cL, \, \cS, \, \cA_\pm, \, \cA_0}"', loop, distance=2em, in=305, out=235]
  \arrow[ll, "{\fsR, \, \fsQ^*}"', bend right]
  \arrow[rr, "\mathsf{P}^*"', bend right]
& &
{\Gamma L^2}
  \arrow["{\fL, \, \fS, \, \fA_{\pm}, \, \fA_0}"', loop, distance=2em, in=305, out=235]
  \arrow[ll, "\mathsf{P}"', bend right]
  \arrow["{\fU_t^\tau}"', loop, distance=2em, in=125, out=55]
\end{tikzcd}
\caption{The three Fock spaces and the operators between them}
\label{fig:operator-diagram}
\end{figure}

In this section, we define some additional operators that act between the Fock spaces: in particular, the scaling operator $\fU_t^\tau \colon \fock{}{} \to \fock{}{}$ and the bridges $\fsP \colon \fock{}{} \to \fockd{}{}$ and $\fsR, \fsQ^* \colon \fockd{}{} \to \fockdo{}{}$.

First, we define the scaling operator $\fU_t^\tau$ whose purpose is to produce large scale functionals consistent with the superdiffusive scaling we are interested in: space scales of order $\sqrt{\tau} \log(\tau)^{1/3}$ and $\sqrt{\tau}$ in the directions $\fe$ and $\ff$, together with the translation $\cdot - \tau t v$ corresponding to the Galilean transformation.

\begin{definition} \label{def:Utau}
For any $z \in \R^2$, $\tau \geq 1$, $t \geq 0$, we define the isometries $\fT_{z}, \fU_t^\tau \colon \fock{}{} \to \fock{}{}$ that act on $f \in \fock{}{}$ as follows
\begin{equs}
\fT_z f (x_{1:n}) &\eqdef f(x_1 + z, \dots, x_n + z) \label{e:T} \\
\fU_0^\tau f (x_{1:n}) &\eqdef \det(\Sigma^\tau)^{-n/2} \, f((\Sigma^\tau)^{-1} \, x_{1:n}) \, , \\
\fU^\tau_t f &\eqdef \fT_{- \tau t v} \, \fU^\tau_0 \, f \, , \label{e:Utau}
\end{equs}
where the matrix $\Sigma^\tau$ is given by~\eqref{e:ScalingMatrix} and $v = (1 - 2 \rho) \fw$. In Fourier space, the action of $\fT_{z}$ and $\fU_0^\tau$ can be written as
\begin{equ}
\fF(\fT_z f) \, (k_{1:n}) = \exp\big(\iota \, z \cdot (k_1 + \dots + k_n)\big) \, \fF f(k_{1:n}) \, , \quad \fF(\fU_0^\tau f) \, (k_{1:n}) = \det(\Sigma^\tau)^{n/2} \, \fF f(\Sigma^\tau k_{1:n}) \, . \label{e:UtauF}
\end{equ}
\end{definition}

\begin{remark} \label{rmk:Utau}
The scaling operator $\fU_t^\tau$ is consistent with the scaling defining the fluctuation process $Y_t^\tau$ (cf.~\eqref{e:Ztau} and~\eqref{e:Ytau}): indeed, we can rewrite the random variables $Z_t^\tau(\vphi)$, for any $\vphi \in \cS(\R^2)$, as $Z_t^\tau(\vphi) = I(\restriction{(\fU_t^\tau \vphi)}{\Z^2})$.
\end{remark}

\begin{remark} \label{rmk:GalileanTransform}
    By comparing~\eqref{e:UtauF} and~\eqref{e:fA0}, we note that $\fT_{- \tau t v} = \exp(- \tau t \, \fA_0)$, which in particular gives $(\partial_t + \tau \fA_0) \, \fU^\tau_t = 0$. This identity confirms the idea from Remark~\ref{rmk:A0} that the Galilean change of variables cancels the action of $\fA_0$.
\end{remark}

Now, we introduce the bridge operators which act as embeddings between the different Fock spaces $\fock{}{}$, $\fockd{}{}$ and $\fockdo{}{}$. These will allow us to compare the generators of the ASEP and SBE. First, we define the operators $\fsR \colon \fockd{}{} \to \fockdo{}{}$ and $\fsP \colon \fock{}{} \to \fockd{}{}$ that act on $f \in \fockd{}{}$ and $g \in \fock{}{}$ as
\begin{equ} \label{e:defRP}
\fsR f \, (x_{1:n}) \eqdef \1_{HC}(x_{1:n}) \, f(x_{1:n}) \, , \quad \cF (\fsP g) \, (k_{1:n}) \eqdef \, \fF g(\llbracket k_{1:n} \rrbracket) \, ,
\end{equ}
where we recall that $\llbracket \cdot \rrbracket$ denotes the representative map from $\T^d$ to $(-\pi, \pi]^d$. Intuitively, $\fsP$ projects the continuous Fock space into the discrete one by truncating the Fourier modes outside $(-\pi,\pi)^2$, while $\fsR$ restricts the discrete Fock space to the hardcore one by truncating the diagonal. By duality, these definitions give also rise to operators $\fsR^* \colon \fockdo{}{} \to \fockd{}{}$ and $\fsP^* \colon \fockd{}{} \to \fock{}{}$. Observe that $\fsR^*$ is simply the inclusion of $\fockdo{}{}$ in $\fockd{}{}$, while $\fsP^*$ acts on $f \in \fockd{}{}$ as
\begin{equ} \label{e:defP*}
\fF(\fsP^* f)(k_{1:n}) = \1_{k_{1:n} \in (- \pi, \pi)^{2 n}} \, \cF f(k_{1:n}) \, .
\end{equ}
Furthermore, we define the operator $\fsi_t^\tau \colon \fock{}{} \to \fockdo{}{}$ which simply combines the scaling operator $\fU_t^\tau$ with the previous bridges, as follows
\begin{equ} \label{e:fsittau}
\fsi_t^\tau \eqdef \fsR \, \fsP \, \fU_t^\tau \, .
\end{equ}

We now state elementary bounds on the bridges $\fsP$ and $\fsR$ and $H^{\pm 1}$ norms, for later use.

\begin{lemma} \label{lem:BoundP}
For every $\kappa \in \R$, $f \in \fock{\rap}{}$, $g \in \fockd{\rap}{}$, and $\mu \geq 0$, we have
\begin{equ} \label{e:BoundP}
\|(\mu - \cS)^{\kappa} \, \fsP f\| \lesssim_\kappa \|(\mu - \fL_0)^{\kappa} f\| \, , \quad \|(\mu - \fL_0)^{\kappa} \, \fsP^* g\| \lesssim_\kappa \|(\mu - \cS)^{\kappa} \, g\| \, . 
\end{equ}
\end{lemma}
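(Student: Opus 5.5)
Both bounds are multiplier comparisons on each chaos, read through Plancherel. I will use that $\cF$ is unitary from $\ell^2(\Z^{2n})$ to $L^2(\T^{2n})$ and $\fF$ is unitary on $L^2(\R^{2n})$. The Fock norms carry the same weights $n!$, so it suffices to argue chaos by chaos. All that is then needed is a comparison, uniform in $n$, between the two symbols $d_0(k_{1:n})$ and $|k_{1:n}|^2$ on the fundamental domain.

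\textbf{Step 1: the symbol comparison.} I would show that there is $c\ge 1$ such that
\begin{equ}
c^{-1}|k|^2 \le d_0(k) \le c\,|k|^2 \quad\text{for all } k\in(-\pi,\pi]^2 .
\end{equ}
The upper bound holds on all of $\R^2$, since $1-\cos x\le x^2/2$ gives $d_0(k)\le \sum_z s(z)(z\cdot k)^2 = k^TD_0k\lesssim |k|^2$.

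For the lower bound, take $k\in(-\pi,\pi]^2\setminus\{0\}$, which is a nonzero element of $\T^2$. Suppose $d_0(k)=0$. Then $z\cdot k\in2\pi\Z$ for every $z\in\mathrm{Supp}(s)\supset \mathrm{Supp}(r)$. By irreducibility these $z$ generate $\Z^2$, so $e_1\cdot k$ and $e_2\cdot k$ both lie in $2\pi\Z$, i.e. $k=0$ in $\T^2$, a contradiction. Hence $d_0$ vanishes on the compact torus only at the origin.

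Near the origin, $d_0(k)=k^TD_0k+O(|k|^4)$ with $D_0$ non-degenerate. This gives $d_0(k)\ge c^{-1}|k|^2$ on a small ball. On the rest of the compact set, $d_0$ is continuous and positive, hence bounded below by a positive constant, while $|k|^2\le 2\pi^2$. Combining the two regions gives the lower bound.

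Summing over the $n$ variables then gives, uniformly in $n$,
\begin{equ}
c^{-1}|k_{1:n}|^2\le d_0(k_{1:n})\le c\,|k_{1:n}|^2 \quad\text{on } (-\pi,\pi]^{2n}.
\end{equ}
It follows that $(\mu+\tfrac12 d_0(k_{1:n}))^{2\kappa}\asymp_\kappa(\mu+\tfrac12|k_{1:n}|^2)^{2\kappa}$ there, uniformly in $\mu\ge0$ and $n$. This holds for either sign of $\kappa$, with constant $c^{2|\kappa|}$.

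\textbf{Step 2: the bound on $\fsP$.} By \eqref{e:defRP}, \eqref{e:FcS} and Plancherel,
\begin{equ}
\|(\mu-\cS)^\kappa\fsP f_n\|^2=\int_{\T^{2n}}(\mu+\tfrac12 d_0(k))^{2\kappa}\,|\fF f_n(\llbracket k\rrbracket)|^2\,\dd k .
\end{equ}
Identifying $\T^{2n}$ with $(-\pi,\pi]^{2n}$, Step 1 bounds this by a constant times the integral of $(\mu+\tfrac12|k|^2)^{2\kappa}|\fF f_n(k)|^2$ over $(-\pi,\pi]^{2n}$. Extending the integral to all of $\R^{2n}$ only increases it, and the result is $\|(\mu-\fL_0)^\kappa f_n\|^2$.

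\textbf{Step 3: the bound on $\fsP^*$.} Here \eqref{e:defP*} gives
\begin{equ}
\|(\mu-\fL_0)^\kappa\fsP^*g_n\|^2=\int_{(-\pi,\pi)^{2n}}(\mu+\tfrac12|k|^2)^{2\kappa}|\cF g_n(k)|^2\,\dd k .
\end{equ}
Step 1 bounds this by a constant times the corresponding integral with $d_0$ in place of $|k|^2$, which equals $\|(\mu-\cS)^\kappa g_n\|^2$.

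\textbf{Main obstacle.} The only delicate point is the uniform lower bound on $d_0$ away from the origin. It genuinely requires the irreducibility assumption; otherwise $d_0$ could vanish at other points of the torus, and Step 1 would fail. The $\fsP$ direction needs the lower bound for $\kappa<0$ and the upper bound for $\kappa>0$, and the $\fsP^*$ direction needs the reverse. So both sides of Step 1 are used.
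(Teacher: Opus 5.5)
Your proof is correct and follows the paper's argument: Plancherel chaos by chaos, combined with the symbol equivalence $d_0(k_{1:n}) \asymp |k_{1:n}|^2$ on $[-\pi,\pi]^{2n}$. This equivalence is the paper's \eqref{e:d0D0equivalence}, proved there by the same irreducibility, Taylor-expansion and compactness reasoning you give. The only difference is cosmetic: the paper deduces the $\fsP^*$ bound from the $\fsP$ bound with $-\kappa$ by duality, while you read it off directly from \eqref{e:defP*}, which is equally short.
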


\begin{proof}[of Lemma~\ref{lem:BoundP}]
The second bound in~\eqref{e:BoundP} can be deduced from the first one (with $- \kappa$ instead of $\kappa$) by using a simple duality argument: thus we focus on the first bound. By using the elementary estimate~\eqref{e:d0D0equivalence} from Lemma~\ref{lem:SoftcoreRemoval} below, we deduce
\begin{equs}
\|(\mu - \cS)^\kappa \, \fsP f\|^2 &= \sum_{n \geq 0} n! \int_{[- \pi, \pi]^{2n}} (\mu + d_0(k_{1:n})/2)^{2 \kappa} \, |\fF f(k_{1:n})|^2 \, \dd k_{1:n} \\
&\lesssim_\kappa \sum_{n \geq 0} n! \int_{[- \pi, \pi]^{2n}} (\mu + |k_{1:n}|^2/2)^{2 \kappa} \, |\fF f(k_{1:n})|^2 \, \dd k_{1:n} \leq \|(\mu - \fL_0)^\kappa f\|^2 \, ,
\end{equs}
which shows the desired bound.
\end{proof}

\begin{lemma} \label{lem:BoundR}
For every $\mu \geq 0$ and $f \in \fockd{}{}$, we have
\begin{equ} \label{e:BoundR}
\|\fsR f\| \leq \|f\| \, , \quad \|(\mu - S)^{1/2} \, \fsR f\| \leq\|(\mu - \cS)^{1/2} f\| \, .
\end{equ}
\end{lemma}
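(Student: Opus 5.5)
The first bound in~\eqref{e:BoundR} is immediate. Since $\fsR$ multiplies $f$ by the indicator $\1_{HC}$ and the norms on $\fockd{}{}$ and $\fockdo{}{}$ are the same weighted $\ell^2$ norms~\eqref{e:defNormFockd}, we get, chaos by chaos, $\|\fsR f_n\|_{\ell^2(\Z^{2n})} \leq \|f_n\|_{\ell^2(\Z^{2n})}$. Summing with the weights $n!$ gives $\|\fsR f\|\leq \|f\|$.

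For the second bound, we expand both sides as $\mu \|\cdot\|^2$ plus a Dirichlet form:
\begin{equ}
\|(\mu - S)^{1/2} \fsR f\|^2 = \mu \|\fsR f\|^2 + \langle (-S) \fsR f, \fsR f\rangle \, , \qquad \|(\mu - \cS)^{1/2} f\|^2 = \mu \|f\|^2 + \langle (-\cS) f, f\rangle \, .
\end{equ}
The $\mu$ terms are already compared by the first bound, since $\mu \geq 0$. It therefore suffices to show $\langle (-S)\fsR f, \fsR f\rangle \leq \langle (-\cS) f, f\rangle$, and it is enough to do so on each chaos $n$, first for $f$ finitely supported and then by density.

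On chaos $n$ we use the integration by parts formulas~\eqref{e:NeumannBC} and~\eqref{e:noBC}. This reduces the claim to the pointwise-in-$z$ inequality $\|\nabla_z^0 \fsR f\|^2 \leq \|\nabla_z f\|^2$, since $s(z) \geq 0$ lets us sum over $z$ afterwards. By definition~\eqref{e:nablaz0}, $\nabla_z^0 \fsR f(x_{1:n})$ is nonzero only when both $(x_1+z, x_{2:n})$ and $x_{1:n}$ lie in $HC$. At such points, $\fsR f$ coincides with $f$ at both arguments, so
\begin{equ}
\nabla_z^0 \fsR f(x_{1:n}) = \1_{HC}(x_1 + z, x_{1:n}) \, \nabla_z f(x_{1:n}) \, .
\end{equ}
Here we read the indicator as requiring both $n$-tuples to be hard-core, which is consistent with~\eqref{e:nablaz0} applied to an element of $\fockdo{}{}$. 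Hence $|\nabla_z^0 \fsR f| \leq |\nabla_z f|$ pointwise, and summing over $x_{1:n}$ gives the claim.

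The only point needing care is the Neumann structure of $S$. Jumps from a hard-core configuration to a diagonal one are removed by $S$ rather than penalised, so $\fsR f$ pays no energy across the diagonal boundary. This is exactly why its Dirichlet energy is dominated by that of $f$, which does pay for those bonds. One should also check that~\eqref{e:NeumannBC} applies to $\fsR f \in \fockdo{n}{}$, which holds by the first bound together with density of finitely supported functions in the form domain. Note that no $\lesssim$ constant appears: the inequality holds with constant one.
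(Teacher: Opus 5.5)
Your proof is correct and follows the paper's argument. You split $\|(\mu-S)^{1/2}\cdot\|^2=\mu\|\cdot\|^2+\|(-S)^{1/2}\cdot\|^2$ (and likewise for $\cS$), rewrite both Dirichlet forms via \eqref{e:NeumannBC} and \eqref{e:noBC}, and conclude from $\1_{HC}\le 1$. Your pointwise identity $\nabla_z^0\fsR f=\1_{HC}(x_1+z,x_{1:n})\,\nabla_z f$ simply makes that last step explicit.
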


\begin{proof}[of Lemma~\ref{lem:BoundR}]
Recalling the identities~\eqref{e:NeumannBC},~\eqref{e:noBC} and $\|(\mu - S)^{1/2} \cdot\|^2 = \mu \|\cdot\|^2 + \|(- S)^{1/2} \cdot\|^2$ (and similarly for $\cS$), we readily deduce~\eqref{e:BoundR} from the trivial bound $\1_{HC}(\cdot) \leq 1$.
\end{proof}

\begin{remark}
On the contrary, the corresponding bound on $\fsR$ for the $H^{-1}$ norm would be (ignoring possible extra polynomial cost in the number operator $\cN$)
\begin{equ} \label{e:BoundRBad}
\|(\mu - S)^{-1/2} \, \fsR f\| \lesssim \log_{*}(1/\mu)^{1/2} \, \|(\mu - \cS)^{-1/2} f\| \, ,
\end{equ}
where the diverging logarithmic factor $\log_{*}(1/\mu)^{1/2}$ is optimal (in fact, it comes from the logarithmic divergence at the origin of the two-dimensional Green's function).
\end{remark}

Contrary to the bridge $\fsP$ that behaves well in both $H^{1}$ and $H^{-1}$ norms, the bridge $\fsR$ only behaves well in $H^1$ norm. The reason why $\fsR$ is bad in $H^{-1}$ norm can be more easily seen by looking at the dual problem, that is, the fact that $\fsR^*$ is bad in $H^1$ norm. Indeed, given a function $f$ defined on $\Z^{2n} \cap HC$, the bridge $\fsR^*$ simply extends $f$ on the diagonal $DI$ by $0$: this creates a discrepancy between the values of $\fsR^* f$ at and near the diagonal, which prevents $\fsR^* f$ from being ``smooth'' (no matter how ``smooth'' $f$ is). Consequently, we need a bridge $\fsQ \colon \fockdo{}{} \to \fockd{}{}$ that behaves better in $H^1$ norm (or, equivalently, a bridge $\fsQ^* \colon \fockd{}{} \to \fockdo{}{}$ that behaves better in $H^{-1}$ norm).
\ss

We first define a useful partition of the phase space $\cZ = \bigsqcup_{n \geq 0} \Z^{2n}$. For $l \geq 1$, let $\cZ_0(l)$, $\cZ_1(l)$ and $\cZ_{\geq 2}(l)$ be the subsets of $\cZ$ consisting of vectors $x_{1:n} \in \Z^{2n}$ such that
\begin{equ} \label{e:defZi}
\# \{1 \leq i < j \leq n \, , |x_i - x_j|_\infty \leq l\}
\end{equ}
is, respectively, equal to $0$, equal to $1$, and greater than or equal to $2$. When $l = 2 l_0 + 1$ (recall that $l_0$ was introduced at the end of the preamble, as the radius of the support of the jump rates $r$), we simply denote the corresponding sets as $\cZ_0$, $\cZ_1$ and $\cZ_{\geq 2}$. Obviously, we have $\cZ = \cZ_0 \sqcup \cZ_1 \sqcup \cZ_{\geq 2}$, $\cZ_0 \subset HC$ and $DI \subset \cZ_1 \sqcup \cZ_{\geq 2}$. Observe that, if $x_{1:n} \in DI \cap \cZ_1$, then there exist two indices $i \ne j$ such that $x_i = x_j$ while for every $k \not \in \{i,j\}$ we have $|x_k - x_i|_\infty = |x_k - x_j|_\infty > 2 l_0 + 1$; 
in this case, we refer to $x_i = x_j$ as the isolated \emph{double site} of $x_{1:n}$. Let us comment that our partition is similar to the one used in~\cite{LQSY04}, but with the crucial difference that, in the latter reference, elements of $\cZ_1$ are allowed to have multiple isolated double sites, which can substantially deteriorate estimates for large chaos $n$.

Now, we define the operator $\fsQ \colon \fockdo{}{} \to \fockd{}{}$, in a similar vein to~\cite{LQSY04}, that acts on $f \in \fockdo{}{}$ and $x_{1:n} \in \Z^{2n}$ as follows
\begin{equ} \label{e:defQ}
\fsQ f \, (x_{1:n}) \eqdef
\begin{cases}
f(x_{1:n}) &\mbox{if } x_{1:n} \in HC \\
f(x_i, x_j + z_0, x_{1:n \setminus i, j}) &\mbox{if } x_{1:n} \in DI \cap \cZ_1 \mbox{ with the isolated double site } x_i = x_j \\
0 &\mbox{if } x_{1:n} \in DI \cap \cZ_{\geq 2}
\end{cases}
\end{equ}
where $z_0 \in \Z^2 \setminus \{0\}$ is an arbitrary element of $\mathrm{Supp}(s)$ that we fix for the rest of the paper. Note that, by construction, the vector $(x_i,x_j+z_0,x_{1:n \setminus i,j})$ that appears in the definition of $\fsQ f$ always belongs to $HC$. Intuitively, we are given a function $f$ defined only on $HC$ that we want to extend on the diagonal $DI$, but instead of simply extending it by $0$ as $\fsR^*$ does, we want a ``more continuous'' extension, which we obtain by copying the value of the function at a neighbouring point that belongs to $HC$. However, we only run this strategy for $x_{1:n} \in DI \cap \cZ_1$, because finding a ``good'' neighbouring point of $x_{1:n}$ in $HC$ becomes more and more complex as the number of close points in $x_{1:n}$ increases, which could lead to bad (e.g. exponential) dependence on $n$ of the multiplicative constants in various estimates. As for the reason why it is possible to ignore the vectors $x_{1:n} \in \cZ_{\geq 2}$, it will be justified later by Lemmas~\ref{lem:ZiBound} and~\ref{lem:ZiBoundHC}.

By duality, this definition also gives rise to the operator $\fsQ^* \colon \fockd{}{} \to \fockdo{}{}$. It is easily checked that it acts on $g \in \fockd{}{}{}$ and $x_{1:n} \in \Z^{2n} \cap HC$ as follows
\begin{equ} \label{e:defQ*}
\fsQ^* g(x_{1:n}) = g(x_{1:n}) + \frac{1}{2} \sum_{1 \leq i \ne j \leq n} \1_{x_j = x_i + z_0} \, \1_{\cZ_0}(x_{1:n \setminus j}) \, g(x_i, x_i, x_{1:n \setminus i,j}) \, .
\end{equ}

\subsection{The Green-Kubo formula}

\label{sec:GK}

In this section, we recall the Green-Kubo formula (see e.g.~\cite{LOY96}) for the diffusivity matrix $D(t)$ and give a few related definitions. More precisely, we are interested in the following variant of the Green-Kubo formula
\begin{equ} \label{e:GKformulaIntro}
D(t) = D_0 + \frac{\rho (1 - \rho)}{t} \sum_{x \in \Z^2} \mathbf E_{\nu_\rho}\Big[\Big(\int_0^t \sum_{z \in \Z^2} a(z) \, z \, \sigma_{s}(0) \, \sigma_{s}(z) \, \dd s\Big) \Big(\int_0^t \sum_{z \in \Z^2} a(z) \, z \, \sigma_{s}(x) \, \sigma_{s}(x + z) \, \dd s\Big)^T\Big] \, ,
\end{equ}
which is well-defined because, on any finite time horizon, the space-time correlations of the ASEP decay exponentially fast in space. Since we assume $a(\cdot)$ to be supported on the line $\R \fe$, we can further simplify this formula as
\begin{equ} \label{e:GKformulafe}
D(t) = D_0 + d(t) \, \fe \fe^T \, , \quad d(t) \eqdef \frac{1}{t} \sum_{x \in \Z^2} \mathbf E_{\nu_\rho}\Big[\Big(\int_0^t B_+(\1_{\{0\}})[\eta_s] \, \dd s\Big) \Big(\int_0^t B_+(\1_{\{x\}})[\eta_s] \, \dd s\Big)\Big] \, , 
\end{equ}
where the operator $B_+$ is defined as follows.

\begin{definition} \label{def:B+}
We define the operator $B_+ \colon \fockdo{1}{} \to \fockdo{2}{}$ that acts on $f \in \fockdo{1}{}$ as follows
\begin{equ} \label{e:B+}
B_+ f \, (x,y) \eqdef \sqrt{\rho (1 - \rho)} \, a(y-x) \, (\fe \cdot (y-x)) \, \frac{f(x) + f(y)}{2} \, .
\end{equ}
We also define the operator $\fB_+ \colon \fock{\rap,1}{} \to \fock{\rap,2}{}$ that acts on $f \in \fock{\rap,1}{}$ as follows
\begin{equ} \label{e:fB+}
\fF(\fB_+ f) \, (p,q) \eqdef  \frac{\lambda \, \sqrt{\rho(1-\rho)}}{2 \pi} \, \Pi_{p,q} \, \fF f(p+q) \, ,
\end{equ}
where $\Pi_{p,q}$ is the cut-off function given by~\eqref{e:CutOff}.
\end{definition}

\begin{remark}
Compare the formula~\eqref{e:B+} of $B_+$ with the action of $A_+$ on the first chaos
\begin{equ}
A_+ f(x,y) = \sqrt{\rho (1-\rho)} \, a(y-x) \, (f(y) - f(x)) \, .
\end{equ}
Hence, we morally have $A_+ f \approx B_+ \, \partial_\fe f$. In fact, this is an exact identity at the level of the continuum operators: $\fA_+ f = \fB_+ \, \partial_\fe f$.
\end{remark}

Observe that, if there existed an even-symmetric function $\cV^0 \in \cS(\R^2)$ such that $\cV^0 * \cV^0 \equiv 1$, then we would be able to formally rewrite the formula for the diffusivity~\eqref{e:GKformulafe} as
\begin{equ} \label{e:GKneatFormal}
d(t) \overset{?}{=} \frac{1}{t} \, \mathbf E_{\nu_\rho}\Big[\Big(\int_0^t B_+ \cV^0 \, (\eta_s) \, \dd s\Big)^2\Big] \, .
\end{equ}
Although such a function $\cV^0$ certainly does not exist, it is possible to make sense of the formula~\eqref{e:GKneatFormal} by considering an appropriate sequence of functions $\cV^\delta$ such that $\cV^\delta * \cV^\delta \to 1$ as $\delta \to 0$.

\begin{definition} \label{def:V}
Let $\cV \in \cS(\R^2)$ be a Schwartz function that is even-symmetric, with $L^2(\R^2)$-norm equal to $1$, and with Fourier transform supported on $(- \pi, \pi)^2$; we fix such $\cV$ for the rest of the paper. Then, for any $\delta \in (0,1]$, we define $\cV^\delta \in \cS(\R^2)$ as
\begin{equ} \label{e:SqrtDirac}
\cV^\delta(x) = \delta \, \cV(\delta x) \ \mbox{, or equivalently, } \ \fF(\cV^\delta) \, (p) = \delta^{-1} \, \fF \cV(\delta^{-1} p) \, .
\end{equ}
We also denote by $\cV^\delta_{|\Z^2}$ the restriction of $\cV^\delta$ to $\Z^2$. Note that $\cV^\delta$ and $\cV^\delta_{|\Z^2}$ can be seen as elements of, respectively, $\fock{1}{}$ and $\fockdo{1}{}$. 
\end{definition}

\begin{lemma} \label{lem:GKneat}
For every $t > 0$, we have
\begin{equ}
d(t) = \lim_{\delta \to 0} \frac{1}{t} \, \mathbf E_{\nu_\rho}\Big[\Big(\int_0^t B_+ \cV^\delta_{|\Z^2} (\eta_s) \, \dd s\Big)^2\Big] \, . \label{e:GKneat}
\end{equ}
\end{lemma}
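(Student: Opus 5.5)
The plan is to expand the square in~\eqref{e:GKneat} using the linearity of $f \mapsto B_+ f$, and then to compare with~\eqref{e:GKformulafe} via translation invariance. By~\eqref{e:B+}, $B_+ \cV^\delta_{|\Z^2} = \sum_{x \in \Z^2} \cV^\delta(x) \, B_+(\1_{\{x\}})$, and the sum converges in $\fockdo{2}{}$, since $\cV^\delta$ is Schwartz and $a$ has finite range. Write $J_x \eqdef \int_0^t B_+(\1_{\{x\}})[\eta_s] \, \dd s$. Then
\begin{equ}
\frac{1}{t} \, \mathbf E_{\nu_\rho}\Big[\Big(\int_0^t B_+ \cV^\delta_{|\Z^2}(\eta_s) \, \dd s\Big)^2\Big] = \frac{1}{t} \sum_{x, y \in \Z^2} \cV^\delta(x) \, \cV^\delta(y) \, \mathbf E_{\nu_\rho}[J_x J_y] \, .
\end{equ}
The interchange of sum and expectation is justified by Fubini once absolute summability is established below.

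Next I use the translation invariance of $\nu_\rho$ and of the dynamics, which gives $\mathbf E_{\nu_\rho}[J_x J_y] = C(y-x)$ with $C(z) \eqdef \mathbf E_{\nu_\rho}[J_0 J_z]$. The expression above therefore equals
\begin{equ}
\frac{1}{t} \sum_{z \in \Z^2} C(z) \, g^\delta(z) \, , \qquad g^\delta(z) \eqdef \sum_{x \in \Z^2} \cV^\delta(x) \, \cV^\delta(x+z) \, .
\end{equ}
By~\eqref{e:GKformulafe}, $d(t) = t^{-1} \sum_z C(z)$. So it suffices to show two things: that $C \in \ell^1(\Z^2)$, and that $g^\delta(z) \to 1$ pointwise as $\delta \to 0$ with $\sup_{\delta, z} |g^\delta(z)| < \infty$. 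Dominated convergence then concludes.

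For the function $g^\delta$, I use the Poisson summation formula~\eqref{e:PoissonSummation}. Since $\fF \cV^\delta$ is supported in $\delta(-\pi,\pi)^2 \subset (-\pi,\pi)^2$, the discrete Fourier transform of $\cV^\delta_{|\Z^2}$ on $\T^2$ coincides with $\fF \cV^\delta$, with no aliasing. Hence
\begin{equ}
g^\delta(z) = \int_{\R^2} |\fF \cV^\delta(p)|^2 \, e^{\iota p \cdot z} \, \dd p = \int_{\R^2} |\fF \cV(q)|^2 \, e^{\iota \delta q \cdot z} \, \dd q \, ,
\end{equ}
using that $\cV$ is real and even. This is bounded by $\|\cV\|_{L^2}^2 = 1$ uniformly, and it tends to $1$ as $\delta \to 0$ for each fixed $z$.

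The main obstacle is the summability $C \in \ell^1(\Z^2)$ at fixed $t$, which is exactly what the paper invokes when asserting that~\eqref{e:GKformulaIntro} is well defined. I would argue as follows. First write $\mathbf E_{\nu_\rho}[J_0 J_z] = 2 \int_0^t \int_0^s \mathbf E_{\nu_\rho}[B_+(\1_{\{0\}}) \, P_{s-r} B_+(\1_{\{z\}})] \, \dd r \, \dd s$. Next I would use a finite-speed-of-propagation estimate: in the graphical (Harris) construction with finite-range jumps, the event that the state at $z$ up to time $t$ is influenced by the region near $0$ has probability at most $e^{-c|z|}$ for $|z|$ large compared with $t$. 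The point is that $B_+(\1_{\{0\}})$ and $B_+(\1_{\{z\}})$ are mean-zero local functions, and they are built from degree-two chaos elements supported near $0$ and near $z$. Their space-time correlation therefore vanishes, up to the probability of such an influence path. This gives $|C(z)| \lesssim_t e^{-c|z|}$, which yields summability, and the Fubini step above follows as well.
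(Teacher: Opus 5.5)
Your proof is correct and follows the paper's argument. You expand the square, use translation invariance to reduce the claim to $\frac1t\sum_z C(z)\,(1-g^\delta(z))\to0$, and then invoke the spatial decay of ASEP correlations at fixed $t$.

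The only real difference is how you treat $g^\delta$. You compute it exactly by Poisson summation, which shows it is bounded by $1$ and converges pointwise, so dominated convergence with $C\in\ell^1$ suffices. The paper instead proves $|g^\delta(x)-1|\lesssim\delta(1+|x|)$ by a Lipschitz/Riemann-sum estimate, and uses the exponential decay of $C$ to absorb the factor $1+|x|$.

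There is one minor slip, harmless for the decay bound. Because ASEP is non-reversible, $\mathbf E_{\nu_\rho}[J_0J_z]=K(z)+K(-z)$ with $K(z)=\int_0^t\int_0^s \mathbf E_{\nu_\rho}[B_+(\1_{\{0\}})\,P_{s-r}B_+(\1_{\{z\}})]\,\dd r\,\dd s$, not $2K(z)$.
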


\begin{proof}
By writing $B_+ \cV^\delta = \sum_{x \in \Z^2} \cV^\delta(x) \, B_+(\1_{x})$, expanding the square, using the translation invariance of the ASEP, and recalling the even-symmetry of $\cV^\delta$, we obtain
\begin{equs}
d&(t) - \frac{1}{t} \, \mathbf E_{\nu_\rho}\Big[\Big(\int_0^t B_+ \cV^\delta_{|\Z^2} (\eta_s) \, \dd s\Big)^2\Big] \\
&= \frac{1}{t} \, \sum_{x \in \Z^2} \big(1 - \cV^\delta *_{\Z^2} \cV^\delta(x)\big) \, \mathbf E_{\nu_\rho}\Big[\Big(\int_0^t B_+(\1_{\{0\}})[\eta_s] \, \dd s\Big) \Big(\int_0^t B_+(\1_{\{x\}})[\eta_s] \, \dd s\Big)\Big] \, , \label{e:GKneatProof}
\end{equs}
where $*_{\Z^2}$ stands for the $\Z^2$-convolution, defined as: $f *_{\Z^2} g := \sum_{y \in \Z^2} f(\cdot-y) \, g(y)$. Now, we claim that, for every $\delta \in (0,1]$ and $x \in \Z^2$, we have
\begin{equ} \label{e:SquareRootUnity}
|(\cV^\delta *_{\Z^2} \cV^\delta)(x) - 1| \lesssim \delta \, (1 + |x|) \, .
\end{equ}
Then, we deduce the desired convergence~\eqref{e:GKneat} because the space-time correlations of the ASEP decay exponentially fast in space on any finite time window. As for the claim~\eqref{e:SquareRootUnity}, it easily follows by first writing
\begin{equs}
\Big|(\cV^\delta *_{\Z^2} \cV^\delta)(x) - \delta^2 \sum_{y \in \Z^2} \cV(\delta y)^2\Big| &= \Big|\delta^2 \sum_{y \in \Z^2} \cV(\delta y - \delta x) \cV(\delta y) - \delta^2 \sum_{y \in \Z^2} \cV(\delta y)^2\Big| \\
&\leq \|\cV\|_{\mathrm{Lip}} \ \delta |x| \ \delta^2 \sum_{y \in \Z^2} |\cV(\delta y)| \, ,
\end{equs}
and then estimating the appearing Riemann sums, as follows,
\begin{equ}
\delta^2 \sum_{y \in \Z^2} \cV(\delta y)^2 = \|\cV\|_{L^2(\R^2)}^2 + O(\delta) = 1 + O(\delta) \, , \quad \delta^2 \sum_{y \in \Z^2} |\cV(\delta y)| = O(1) \, .
\end{equ}
\end{proof}

\subsection{Roadmap}

\label{sec:Roadmap}

Before turning to the technical details of the proof, we first provide an overview of its underlying strategy and highlight the main ideas that will play a role in the arguments to come.
\ms

{\noindent \it The role of the fluctuation-dissipation relation}

The cornerstone of the proof of Theorem~\ref{th:CLT} is to establish a \emph{fluctuation-dissipation} relation, in a similar fashion to~\cite{LY97} (ASEP in dimension $d \geq 3$) and~\cite{CGT24} (SBE in dimension $d \geq 3$, and in dimension $2$ at weak coupling) but with the crucial difference that here the asymmetry contributes superdiffusively. Schematically, the fluctuation-dissipation relation consists in finding, for any $\vphi \in \cS(\R^2)$, a \emph{corrector} $\mathrm{Cor}_t^\tau(\vphi) \in L^2(\nu_\rho)$ such that
\begin{equ} \label{e:FluctuationDissipation}
\tau A_+ Y_t^\tau(\vphi) \approx \frac{d^{\lim}}{2} \, Y_t^\tau(\partial_{\fe}^2 \vphi) - (\partial_t + \tau L) \, \mathrm{Cor}_t^\tau(\vphi) \quad \mbox{ in $H^{-1}$ norm.}
\end{equ}
The two summands on the right hand side of~\eqref{e:FluctuationDissipation} can be respectively interpreted as the dissipation term and the fluctuation term produced by the asymmetry.
\ss

We briefly explain the interest of the relation~\eqref{e:FluctuationDissipation}. Start from Dynkin's decomposition
\begin{equ} \label{e:DynkinDecomposition}
Y_t^\tau(\vphi) = Y_0^\tau(\vphi) + \int_0^t (\partial_s + \tau L) Y_s^\tau(\vphi) \, \dd s + M_t^\tau(Y_\cdot^\tau) \, ,
\end{equ}
where we use the notation $M_t^\tau(f_\cdot) := f_t(\eta_{\tau t}) - f_0(\eta_{0}) - \int_0^t (\partial_s + \tau L) f_s(\eta_s)$ for the Dynkin martingale. It is then relatively straightforward to justify (see Section~\ref{sec:EasyDynkin}) that, as $\tau \to \infty$,
\begin{equ} \label{e:DynkinEasy}
\int_0^t (\partial_s + \tau (S + A_0 + A_-)) Y_s^\tau(\vphi) \, \dd s \approx \frac{D_0[\ff, \ff]}{2} \int_0^t Y_s^\tau(\partial_\ff^2 \vphi) \, \dd s \, ,
\end{equ}
but capturing the limiting contribution of $A_+$ is more subtle. To do so, we integrate the fluctuation-dissipation relation~\eqref{e:FluctuationDissipation} over time and deduce
\begin{equ} \label{e:DynkinFluctuationDissipation}
\int_0^t \tau A_+ Y_s^\tau(\vphi) \, \dd s \approx \frac{d^{\lim}}{2} \int_0^t Y_t^\tau(\partial_{\fe}^2 \vphi) \, \dd s + M_t^\tau(\mathrm{Cor}_\cdot^\tau(\vphi)) \, ,
\end{equ}
where $M_t^\tau(\mathrm{Cor}^\tau_\cdot)$ is the Dynkin martingale corresponding to the corrector. Note that we have neglected the boundary terms of this martingale in~\eqref{e:DynkinFluctuationDissipation}: this is because the corrector is actually expected to be small in $L^2$ norm. Finally, by combining~\eqref{e:DynkinDecomposition},~\eqref{e:DynkinEasy} and~\eqref{e:DynkinFluctuationDissipation}, we deduce that the fluctuation process $Y_\cdot^\tau$ formally satisfies the martingale problem of $Y_\cdot^{\lim}$ in the limit $\tau \to \infty$, which hints at Theorem~\ref{th:CLT}.
\ms

{\noindent \it The SBE fluctuation-dissipation relation}
\ss 

Before deriving the fluctuation-dissipation relation~\eqref{e:FluctuationDissipation} for the ASEP, it is natural to first work at the level of the SBE since its generator admits nicer expressions. For the SBE, the analogue of~\eqref{e:FluctuationDissipation} becomes
\begin{equ} \label{e:FluctuationDissipationSBE}
\tau \fA_+ \, \fU_t^\tau \vphi \approx \frac{d^{\lim}}{2} \, \fU_t^\tau \partial_\fe^2 \vphi - (\partial_t + \tau \fL) \, \mathfrak{Cor}_t^\tau(\vphi) \, , \quad \mbox{ in $H^{-1}$ norm,}
\end{equ}
where the scaling operator $\fU_t^\tau$ was defined in Definition~\ref{def:Utau}. To build the SBE corrector $\mathfrak{Cor}_t^\tau(\vphi)$ and establish~\eqref{e:FluctuationDissipationSBE}, we shall rely on the \emph{Ansatz} of the SBE resolvent introduced in~\cite{CMT25}.
\ss

Schematically, this Ansatz $\mathfrak{Ans}_\mu \colon \fock{}{} \to \fock{}{}$ takes the form
\begin{equ} \label{e:AnsatzRoadmap}
\mathfrak{Ans}_\mu = \sum_{n = 0} \big((\mu - \fS - \fh_\mu)^{-1} \fA_+\big)^n \, (\mu - \fS - \fh_\mu)^{-1} \, ,
\end{equ}
where $\fh_\mu \colon \fock{}{} \to \fock{}{}$ is an explicit diagonal operator (cf. Definition~\ref{def:DiagonalOperator}) that satisfies
\begin{equ} \label{e:FixedPointRoadmap}
\fh_\mu \approx \fA_- (\mu - \fS - \fh_\mu)^{-1} \, \fA_+ \, .
\end{equ}
This operator $\fh_\mu$ can be heuristically thought of as the generator of the effective dynamics produced by the asymmetry at large scales. We refer to~\cite[Section 2.3]{CMT25} for more details about the importance of the fixed point equation~\eqref{e:FixedPointRoadmap} and the motivation behind the definition~\eqref{e:AnsatzRoadmap}. The key point is that the Ansatz satisfies, for any reasonable functional $\bm \vphi \in \fock{}{}$,
\begin{equ} \label{e:AnsatzHeuristic}
(\mu - \bar \fL) \, \mathfrak{Ans}_\mu(\bm \vphi) \approx \bm \vphi - \fA_- (\mu - \fS - \fh_\mu)^{-1} \bm \vphi \quad \mbox{ in $H^{-1}$ norm,}
\end{equ}
where $\bar \fL := \fL - \fA_0$ is the generator of the centred SBE (i.e.~\eqref{e:ASEPTrueSBE} without the transport term). It turns out that, if $\bm \vphi$ is a macroscopic functional, then the second summand on the right hand side of~\eqref{e:AnsatzHeuristic} is negligible: hence, in this case, $\mathfrak{Ans}_\mu(\bm \vphi)$ can be thought of as an approximation of the resolvent $(\mu - \bar \fL)^{-1} \bm \vphi$ (which is the key to the analysis in~\cite{CMT25}). However, if we instead consider $\bm \vphi = \fA_+ \fU_t^\tau \vphi$ and take $\mu = 1/\tau$, then~\eqref{e:AnsatzHeuristic} leads to
\begin{equ} \label{e:AnsatzHeuristicA+}
(1 - \tau \bar \fL) \, \mathfrak{Ans}_{1/\tau}(\fA_+ \fU_t^\tau \vphi) \approx \tau \fA_+ \fU_t^\tau \vphi - \tau \fh_{1/\tau} \, \fU_t^\tau \vphi \, ,
\end{equ}
because of the fixed point equation~\eqref{e:FixedPointRoadmap}. In fact, it should be possible to replace above $(1 - \tau \bar \fL)$ by $- (\partial_t + \tau \fL)$, and we should have $\tau \,  \fh_\mu \, \fU_t^\tau \vphi \approx \frac{d^{\lim}}{2} \, \fU_t^\tau \partial_\fe^2 \vphi$. Therefore, we end up with the desired fluctuation-dissipation relation~\eqref{e:FluctuationDissipationSBE} by choosing the corrector therein as
\begin{equ} \label{e:SBECorrectorRoadmap}
\mathfrak{Cor}_t^\tau := \mathfrak{Ans}_{1/\tau}(\fA_+ \fU_t^\tau \vphi) \, .
\end{equ}

Proving that the corrector~\eqref{e:SBECorrectorRoadmap} indeed satisfies the desired fluctuation-dissipation relation~\eqref{e:FluctuationDissipationSBE} is one of the purposes of Sections~\ref{sec:AnsatzDef} and~\ref{sec:AnsatzA+} (see e.g.~\eqref{e:AnsatzA+Gen} in Proposition~\ref{p:AnsatzA+}).
\ms

\noindent{\it Transferring the fluctuation-dissipation relation from the SBE to the ASEP}
\ss

Now, we need to deduce a fluctuation-dissipation relation~\eqref{e:FluctuationDissipation} for the ASEP from the previous SBE fluctuation-dissipation relation~\eqref{e:FluctuationDissipationSBE}. It turns out that the hardest step is to show that
\begin{equ} \label{e:ComparisonGenRoadmap}
\fsQ^* \fsP \, \tau \fL \, \mathfrak{Cor}_t^\tau(\vphi) \approx \tau L \, \fsR \fsP \,  \mathfrak{Cor}_t^\tau(\vphi) \mbox{ in $H^{-1}$ norm,}
\end{equ}
where the bridge operators $\fsR$, $\fsQ$ and $\fsP$ were defined in Section~\ref{sec:Bridges}. Indeed, once we have~\eqref{e:ComparisonGenRoadmap}, the idea is to apply the bridge $\fsQ^* \fsP$ to the left of~\eqref{e:FluctuationDissipationSBE} and to morally deduce~\eqref{e:FluctuationDissipation}, where the ASEP corrector therein is $\mathrm{Cor}_t^\tau(\vphi) := \fsR \fsP \, \mathfrak{Cor}_t^\tau(\vphi)$. The comparison of the generators~\eqref{e:ComparisonGenRoadmap} is (together with tightness, which is discussed below) the step that requires the most novelties compared to previous works.

The proof of~\eqref{e:ComparisonGenRoadmap} is naturally divided into two steps: first comparing the ASEP and softcore generators, and then the softcore and SBE generators. For simplicity, we focus our discussion here on the former, that is the \emph{hardcore removal}. A first promising starting point is given by the estimates of~\cite{LQSY04}, which essentially show that, for any $F \in \fockd{n}{}$,
\begin{equ} \label{e:HCremovalNaive}
\|(- S)^{-1/2} \, (L \, \fsR - \fsQ^* \, \cL) F\| \leq C(n) \, \|(- \cS)^{1/2} F\| \, ,
\end{equ}
where $C(n)$ is a factor that grows polynomially fast in the degree $n$ of $F$. However, since the $H^1$ norm of the corrector is of order one, the estimate~\eqref{e:HCremovalNaive} can at best give
\begin{equ} \label{e:HCremovalNaiveCorrector}
\|(- \tau S)^{-1/2} \, (\tau L \, \fsR - \fsQ^* \, \tau \cL) \, \fsP \, \mathrm{Cor}_t^\tau(\vphi)\| \lesssim 1 \, .
\end{equ}
While~\eqref{e:HCremovalNaiveCorrector} is already highly nontrivial (because the generators $L$ and $\cL$ themselves are not bounded from $H^1$ to $H^{-1}$, cf. Remark~\ref{rmk:noGSC}), this is still not sufficient to obtain a vanishing error in the comparison~\eqref{e:ComparisonGenRoadmap}.
\ss

To go beyond~\eqref{e:HCremovalNaive}, the crux of our strategy is to improve the hardcore removal estimate~\eqref{e:HCremovalNaive} by exploiting specific properties of the functional $F$ we are interested in, that is the corrector. In this perspective, our contribution is two-fold: first, we show a refined hardcore removal (see Proposition~\ref{p:HardcoreRemoval}) whose error is bounded by the $H^{1 + \gamma}$ norm of the input (with $\gamma \in [0,1)$)
\begin{equ} \label{e:HCremovalMeso}
\|(- S)^{-1/2} \, (L \fsR - \fsQ^* \cL) F\| \lesssim \|N^{10} \, (- \cS)^{(1+\gamma)/2} F\| \, ,
\end{equ}
and second, we show that the corrector has vanishing $H^{1+\gamma}$ norm when $\gamma > 0$ (see Proposition~\ref{p:AnsatzA+})
\begin{equ} \label{e:CorrectorH1gammaRoadmap}
\tau^{1/2} \, \|N^{10} \, (- \cS)^{(1+\gamma)/2} \, \fsP \, \mathfrak{Cor}_t^\tau(\vphi)\| \lesssim_\gamma \frac{1}{\log_*(\tau)^{1/6 + o(1)}} \xrightarrow[\tau \to \infty]{} 0 \, .
\end{equ}
Intuitively, the extra factor $(-\cS)^\gamma$ in~\eqref{e:HCremovalMeso}  contributes as a factor $\ell^{-2 \gamma}$ on space scales of order~$\ell$: consequently,~\eqref{e:HCremovalMeso} does not improve~\eqref{e:HCremovalNaive} on microscopic scales ($\ell \asymp 1$), but it does on larger scales ($\ell \gg 1$). The property~\eqref{e:CorrectorH1gammaRoadmap} can be then interpreted as follows: the $H^1$ norm of the corrector is concentrated on large space scales (or, equivalently, small Fourier modes). Let us highlight that the latter fact is far from immediate and constitutes a major difference with what happens for the ASEP in dimension $d \geq 3$: in this case, the $H^1$ norm of the corrector would be concentrated on microscopic scales\footnote{Which reflects the fact that the ASEP and SBE in dimensions $d \geq 3$ cannot be sharply compared: their limiting diffusivity generically differs and depends on the microscopic details of the system.}.

Let us mention that the comparison between the softcore and SBE generators follows a similar strategy, but with an additional subtlety: in addition to the $H^{1 + \gamma}$ norm $\|(- \fL_0)^{(1+\gamma)/2} \cdot\|$, we also need to rely on the $\fe$-directed $H^1$ norm $\|(- \fL_0^\fe)^{1/2} \cdot\|$. In fact, the $\fe$-directed $H^1$ norm of the corrector also vanishes, and this crucial property was already noticed and exploited in~\cite{CMT25}. This can be intuitively interpreted as follows: the $H^1$ norm of the corrector is concentrated on anisotropic space scales, that are much more stretched in the direction $\fe$ than in the transverse direction $\ff$.
\ms

\noindent{\it The tightness}
\ss

The previously sketched strategy gives the fluctuation-dissipation relation~\eqref{e:FluctuationDissipation} for the ASEP, from which the convergence of the finite-dimensional laws of the fluctuation process $(Y_t^\tau)_{t \geq 0}$ to $(Y_t^{\lim})_{t \geq 0}$ easily follows. However, we still need to prove the tightness in order to derive the invariance principle of Theorem~\ref{th:CLT}; in particular, the difficulty lies in proving the tightness of the asymmetric term $\int_0^t \tau A_+ \, Y_s^\tau(\vphi) \, \dd s$ in Dynkin's decomposition~\eqref{e:DynkinDecomposition}. As mentioned in the introduction, the difficulty is that, compared to systems at the supercritical dimension or in the weak-coupling regime, the Itô trick is not even sufficient to show that the asymmetric term has bounded second moment as $\tau \to \infty$: indeed, it holds
\begin{equ}
\|(1 - \tau S)^{-1/2} \, \tau A_+ Y_s^\tau(\vphi)\| \asymp_{\vphi} \log_*(\tau)^{1/6} \to \infty \quad \mbox{(provided that $\vphi \not \equiv 0$).}
\end{equ}
The failure of the Itô trick is not just a technical artefact: it reflects that the superdiffusive part of the dynamics cannot be handled by the relaxation properties of the symmetric dynamics (that is diffusive) on which Itô's trick entirely relies.
\ss

To avoid this obstruction, the novelty here is to use once again the fluctuation-dissipation relation. By the above discussion, we morally have
\begin{equ} \label{e:TightnessNaive}
\int_s^t \tau A_+ \, Y_r^\tau(\vphi) \, \dd r \approx \frac{d^{\lim}}{2} \int_s^t Y_r^\tau(\partial_\fe^2 \vphi) \, \dd r + M_t^\tau(\mathrm{Cor}_\cdot^\tau(\vphi)) -  M_s^\tau(\mathrm{Cor}_\cdot^\tau(\vphi)) \, .
\end{equ}
The tightness of the first summand is trivial, so that the difficulty lies in the Dynkin martingale of the corrector. By computing its quadratic variation and using \emph{hypercontractivity} (and ignoring important issues about the jumps of the martingale, which we discuss below), we can bound the increments of this martingale as follows, for $p > 2$,
\begin{equ} \label{e:KolmogorovNaive}
\mathbf E_{\nu_\rho}[|M_t^\tau(\mathrm{Cor}_\cdot^\tau(\vphi)) - M_s^\tau(\mathrm{Cor}_\cdot^\tau(\vphi))|^p] \lesssim_p |t-s|^{p/2} \, \|\alpha(p)^N \, (- \tau S)^{1/2} \, \mathrm{Cor}^\tau(\vphi)\| \, ,
\end{equ}
where $\alpha(p) > 1$ is an explicit constant that satisfies $\alpha(p) \to 1$ as $p \to 2$. Therefore, we could verify a Kolmogorov-Chentsov criterion provided that, uniformly in $\tau$, the $H^1$ norm of the corrector decays \emph{exponentially} fast in the chaos degree. An important contribution of the present paper is to establish the latter property, and more generally to show that the estimates of~\cite{CMT25} on the Ansatz still hold when penalised by an exponential weight $\alpha^N$ (see Proposition~\ref{p:GenericAnsatz} below).
\ss

There are two crucial subtleties that we have omitted in this informal discussion, and that will be taken into account precisely in the actual proof. First, there is actually no chance that~\eqref{e:KolmogorovNaive} holds as written, because the Kolmogorov-Chentsov criterion would then imply that the Dynkin martingale is continuous! A related and in fact much more important issue is that we have neglected in~\eqref{e:TightnessNaive} the boundary terms of the Dynkin martingale, that are given by
\begin{equ} \label{e:BoundaryTermRoadmap}
\mathrm{Cor}_t^\tau(\vphi)(\eta_{\tau t}) - \mathrm{Cor}_s^\tau(\vphi)(\eta_{\tau s}) \, .
\end{equ}
It can be easily shown that the second moment of~\eqref{e:BoundaryTermRoadmap} goes to $0$ as $\tau \to \infty$, but to prove tightness we would need much finer control as $|t-s| \to 0$ (e.g. a Kolmogorov-Chentsov criterion), which we do not have. Our workaround is to tune the fluctuation-dissipation relation used in~\eqref{e:TightnessNaive} in a way that depends on the size of the given time window $|t-s|$: more precisely, we shall choose the parameter of the Ansatz in~\eqref{e:SBECorrectorRoadmap} as $\mu = (|t-s| \, \tau)^{-1}$ instead of $1/\tau$. This modification provides much better control over the boundary terms and the jumps of the corresponding Dynkin martingale, which eventually allows us to obtain a Kolmogorov-Chentsov criterion for the asymmetric term $\int_s^t \tau A_+ Y_t^\tau(\vphi) \, \dd r$.

\section{Comparison between the ASEP and SBE generators}

\label{sec:FromASEPtoSBE}

This section is devoted to the comparison between the ASEP generator $L$ and the SBE generator $\fL$, which will later be the key to transferring the analysis of~\cite{CMT25} to the ASEP setting. More precisely, we establish the following theorem that bounds the $H^{-1}$ error between $L$ and $\fL$ in terms of the $H^{1+\gamma}$ norm and the anisotropic $H^1$ norm of the input.

\begin{theorem} \label{th:ComparisonGen}
Let $\gamma \in [0,1)$. Then, for every $\phi \in \fock{\rap}{}$, we have
\begin{equ}
\|(- S)^{-1/2} \, (L \fsR \fsP - \fsQ^* \fsP \fL) \, \phi\| \lesssim_\gamma \|\fN^{6} (- \fL_0)^{(1+\gamma)/2} \phi\| + \|\fN^{6} (- \fL_0^\fe)^{1/2} \phi\| \, . \label{e:ComparisonGen}
\end{equ}
Further, the bound remains valid if $(L, \fL)$ is replaced by any of $(S, \fS)$, $(A_+, \fA_+)$, $(A_-, \fA_-)$, or $(A_0, \fA_0)$; and the second summand on the right hand side is only needed for $(A_+, \fA_+)$.
\end{theorem}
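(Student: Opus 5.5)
The plan is to pass through the soft-core Fock space $\fockd{}{}$ and split the error telescopically as
\begin{equ}
L \fsR \fsP - \fsQ^* \fsP \fL = \big(L \fsR - \fsQ^* \cL\big) \fsP + \fsQ^* \big(\cL \fsP - \fsP \fL\big) \, ,
\end{equ}
with the first term being the hard-core removal and the second the discrete--continuum comparison. Both sides are linear in the generator, so I would treat each pair $(S,\fS)$, $(A_+,\fA_+)$, $(A_-,\fA_-)$, $(A_0,\fA_0)$ separately; the statement for $L$ then follows by the triangle inequality. Working chaos by chaos, all constants must be polynomial in $n$, which the $\fN^6$ weight will absorb.

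\textbf{Step 1: hard-core removal, $\|(-S)^{-1/2}(L\fsR - \fsQ^*\cL)F\| \lesssim \|\cN^{6}(-\cS)^{(1+\gamma)/2}F\|$ for $F\in\fockd{n}{}$.}

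I would test against $g\in\fockdo{}{}$ with $\|(-S)^{1/2}g\|\le1$. The difference $\langle g,(L\fsR-\fsQ^*\cL)F\rangle = \langle g, L\fsR F\rangle - \langle \fsQ g,\cL F\rangle$ only involves configurations near the diagonal, that is, in $\cZ_1\sqcup\cZ_{\ge2}$ at distance at most $2l_0+1$.

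On $\cZ_{\ge2}$ I would use the cardinality bounds of Lemmas~\ref{lem:ZiBound} and~\ref{lem:ZiBoundHC}. For the $S$ part, Neumann versus free boundary conditions (Remarks~\ref{rmk:IPPS},~\ref{rmk:IPPcS}), the error is a boundary sum $\sum_{x\in DI\cap\cZ_1} \nabla_z g \cdot \nabla_{z'}F$, localized near a single double site. I would bound it by Cauchy--Schwarz together with a \emph{trace inequality}: the $\ell^2$ mass of $\nabla F$ on the codimension-two set $\{x_i\approx x_j\}$ is controlled by $\|(-\cS)^{(1+\gamma)/2}F\|$. In two dimensions this trace is exactly where $H^1$ fails by a logarithm and $H^{1+\gamma}$, $\gamma>0$, suffices. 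In Fourier this reads
\begin{equ}
\sup_k\int d_0(\ell)^{-\gamma}(\ldots)\,\dd\ell<\infty \, ;
\end{equ}
for $\gamma=0$ the paper's earlier LQSY-type bound is used instead.

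The $g$ side only needs $H^1$, because $\fsQ g$ copies values from neighbouring points and so $\|\nabla\fsQ g\|$ near $DI$ is controlled by $\|(-S)^{1/2}g\|$ via a path along $z_0$. The $A_\pm$ and $A_0$ errors are handled identically. Their near-diagonal pieces are one-point evaluations weighted by $a(\cdot)$, and they are written as differences using antisymmetry, $a(z)=-a(-z)$, so that a gradient appears.

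\textbf{Step 2: soft-core versus continuum, $\|(-\cS)^{-1/2}(\cL\fsP-\fsP\fL)\phi\|$.}

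I would then apply $\|(-S)^{-1/2}\fsQ^*h\|\lesssim \mathrm{poly}(n)\|(-\cS)^{-1/2}h\|$, which is the dual of the $H^1$ bound on $\fsQ$.

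Here everything is explicit in Fourier. For $\cS$ versus $\fS$, the symbol difference is $|d_0(k)-D_0(k)|\lesssim|k|^4\wedge|k|^2$ on $(-\pi,\pi)^2$. Dividing by $|k|$ gives at most $|k|^{1+\gamma}$, since $|k|^{3}\le|k|^{1+\gamma}$ on bounded sets. For $\cA_0$ versus $\fA_0$, we have $|w(k)-\fw\cdot k|\lesssim|k|^3$, and the same argument applies.

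The boundary of the cutoff also needs care. $\fsP$ periodises, so the modes $k_i+k_j\notin(-\pi,\pi)^2$ produced by $\cA_+$ are killed by $\Pi$ on the continuum side but not on the discrete side. At those points $|k_{1:n}|\gtrsim1$, so the $H^{1+\gamma}$ norm controls them trivially.

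\textbf{Main obstacle: $\cA_+$ versus $\fA_+$.}

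The symbol error is $w(k_i)+w(k_j)-\fw\cdot(k_i+k_j) = O(|k_i|^3+|k_j|^3)$. Measured in $H^{-1}$ of the $(n+1)$-st chaos against an input in the $n$-th chaos at frequency $k_i+k_j$, this is the non-graded-sector situation of Remark~\ref{rmk:noGSC}: the output frequency pair $(k_i,k_j)$ can be large while $k_i+k_j$ is small. I would split $w(k)=\lambda\sin$-type terms along $\fe$, using Assumption~\ref{ass:Unidirectional} so that $w(k)$ depends only on $\fe\cdot k$. The error then becomes
\begin{equ}
(\fe\cdot k_i)^3+(\fe\cdot k_j)^3 = (\fe\cdot(k_i+k_j))\big((\fe\cdot k_i)^2-(\fe\cdot k_i)(\fe\cdot k_j)+(\fe\cdot k_j)^2\big) \, .
\end{equ}
The factor $\fe\cdot(k_i+k_j)$ is absorbed by $\|(-\fL_0^\fe)^{1/2}\phi\|$. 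The remaining quadratic factor divided by $d_0(k_{1:n+1})^{1/2}$ is bounded, and after integrating over the split $\ell$ with $k_i+k_j$ fixed it gives a convergent (not logarithmic) integral thanks to the extra $|\ell|^2$ gain. This explains why only the $A_+$ pair needs the anisotropic term.

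The $\cA_-$ pair is the adjoint situation. There the dangerous integral has output frequency bounded by input, and the $H^{1+\gamma}$ term suffices after Cauchy--Schwarz in $\ell$. I expect this $A_+$ step, together with the trace estimate of Step 1 near isolated double sites with only polynomial $n$-dependence, to be the technical heart.
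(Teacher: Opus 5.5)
Your architecture matches the paper's: you telescope through $\fockd{}{}$, return to $\fockdo{}{}$ via the $H^1$ bound on $\fsQ$ and its dual, localise near the diagonal using the fractional gains of Lemma~\ref{lem:ZiBound}, and treat $\cA_+$ by the uniaxial factorisation of $w(p)+w(q)-\fw\cdot(p+q)$. Testing with hard-core $g$ against $\fsQ g$ is equivalent to the paper's route, since your error equals $E(F,\fsQ g)$ with $E(f,h):=\langle L\fsR f,\fsR h\rangle-\langle\cL f,h\rangle$, a bilinear form on $\fockd{}{}\times\fockd{}{}$. Two steps, however, fail as written.

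\textbf{The $A_0$ term in Step 1.} The near-diagonal error is
$\sum_z a(z)\sum_x\big(1-\1_{HC}(x)\,\1_{HC}(x_1+z,x_{2:n})\big)\nabla_z F(x)\,h(x)$.
Here the test function is undifferentiated on $\cZ_{\ge1}$, a codimension-two set. (For $A_+$ it is evaluated on $\cZ_{\ge2}$, codimension four, which is why $\dot H^1$ suffices there; $A_-$ is the adjoint case.)

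This form is genuinely first order. It does not vanish when $h$ is locally constant: summing by parts gives a nonzero multiple of $\sum_{x\in HC}F(x)\sum_z a(z)\1_{DI}(x_1+z,x_{2:n})$. So no rewriting with $a(z)=-a(-z)$ can put a gradient on $h$.

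After Cauchy--Schwarz you are left with $\|\1_{\cZ_{\ge1}}h\|$. This is controlled by $\|(-\cS)^{\gamma/2}h\|$ only for $\gamma<1$, and not by $\|(-\cS)^{1/2}h\|$: that is the endpoint of \eqref{e:Z1Bound}, which costs $\sqrt{\log_*(1/\mu)}$ and is infinite at $\mu=0$. This is where the two-dimensional logarithm actually bites. It is not in a trace of $\nabla F$: on the lattice, the near-diagonal mass of $\nabla_z F$ is trivially bounded by $\|\nabla_z F\|$, and $\gamma>0$ only buys the homogeneous gain.

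The missing idea is a redistribution of derivatives between the two sides:
\begin{itemize}
\item[(i)] Prove $|E_0(f,h)|\lesssim n\|(-\cS)^{(1+\gamma)/2}f\|\,\|(-\cS)^{\gamma/2}h\|$.
\item[(ii)] By skew-adjointness of $A_0$ and $\cA_0$, prove the swapped bound as well.
\item[(iii)] Apply the bilinear interpolation Lemma~\ref{lem:Interpolation}, a dyadic decomposition in the spectrum of $-\cS$, which is bounded by $\lesssim n$ on the $n$-th chaos. This yields $n^{1+\varepsilon}\|(-\cS)^{(1+\gamma')/2}f\|\,\|(-\cS)^{1/2}h\|$ for any $\gamma'\in[0,1)$.
\end{itemize}
Step (ii) needs fractional norms of the test function. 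So the argument must be run on the soft-core form with arbitrary $h\in\fockd{}{}$ (then set $h=\fsQ g$), not directly on hard-core $g$.

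\textbf{The folded modes for $\cA_+$ in Step 2.} Consider the region where $|k_{1:n+1}|_\infty<\pi$ but $k_i+k_j\notin(-\pi,\pi)^2$. There the discrete operator evaluates the input at $\llbracket k_i+k_j\rrbracket$, which can be arbitrarily close to $0$ even though $k_i,k_j$ are of order one. The input is therefore not at large frequency, and the $H^{1+\gamma}$ norm does not control these modes trivially.

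What saves them is that $w$ is odd and $2\pi\Z^2$-periodic. Hence $w(k_i)+w(k_j)=w(k_i)+w(\llbracket k_i+k_j\rrbracket-k_i)=O(|\fe\cdot\llbracket k_i+k_j\rrbracket|)$ by \eqref{e:wpqRemoval}, while $d_0\asymp1$ on the output side. This region must be charged to $\|(-\fL_0^\fe)^{1/2}\phi\|$, as in region $R_2$ of the proof of Proposition~\ref{p:SoftcoreRemoval}.

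For $\gamma>1/2$ the isotropic norm genuinely cannot absorb it. Take nearest-neighbour ASEP and $\phi$ in the first chaos, with Fourier transform concentrated at a mode of size $\varepsilon$ along $\fe$. The folded region alone contributes of order $\varepsilon^{3/2}\|\phi\|$ to $\|(-\cS)^{-1/2}(\cA_+\fsP-\fsP\fA_+)\phi\|$, whereas $\|(-\fL_0)^{(1+\gamma)/2}\phi\|\asymp\varepsilon^{1+\gamma}\|\phi\|$.
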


In particular, we deduce the $H^1 \to H^{-1}$ bound
\begin{equ}
\|(- S)^{-1/2} \, (L \fsR \fsP - \fsQ^* \fsP \fL) \, \phi\| \lesssim \|\fN^{6} \, (- \fL_0)^{1/2} \phi\| \, , \label{e:ComparisonGenEasy}
\end{equ}
which is closer in spirit to the classical hardcore removal estimates from~\cite{LQSY04}. As mentioned above, the comparison~\eqref{e:ComparisonGenEasy} is already highly nontrivial. In fact, it still holds without Assumption~\ref{ass:Unidirectional} (see Remark~\ref{rmk:wRemoval} below) and would suffice to show the results sketched in Remark~\ref{rmk:Unidirectional}. However, the comparison~\eqref{e:ComparisonGen} becomes substantially better than~\eqref{e:ComparisonGenEasy} when the $H^1$ norm of $\phi$ is concentrated on Fourier modes $k$ such that $|k| \ll 1$ and $|\fe \cdot k| \ll |k|$, which intuitively correspond to spatial scales that are, respectively, much larger than the lattice mesh and very stretched in the direction of the drift. We will be able to fully leverage this improvement in the next section, when taking as $\phi$ the Ansatz.
\ss

The rest of this section is organised as follows: we first compare the ASEP and softcore generators in Section~\ref{sec:FromASEPtoSoft}, then compare the softcore and SBE generators in Section~\ref{sec:FromSofttoSBE}, and deduce afterwards Theorem~\ref{th:ComparisonGen} in Section~\ref{sec:FromASEPtoSBEccl}.

\subsection{Comparison between the ASEP generator and the soft core generator}

\label{sec:FromASEPtoSoft}

The main purpose of this section is to compare the generators $L$ and $\cL$ as follows.

\begin{proposition} \label{p:HardcoreRemoval}
Let $\gamma \in [0, 1)$. Then, for every $f \in \fockd{\rap}{}{}$, we have
\begin{equ}
\|(- S)^{-1/2} \, (L \fsR - \fsQ^* \cL) f\| \lesssim_\gamma \, \|\cN^{6} \, (- \cS)^{(1 + \gamma)/2} f\| \, . \label{e:HardcoreRemoval}
\end{equ}
Further, the bound remains valid if $(L, \cL)$ is replaced by any of $(S, \cS)$, $(A_+, \cA_+)$, $(A_-, \cA_-)$, or $(A_0, \cA_0)$.
\end{proposition}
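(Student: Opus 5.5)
We prove the bound separately for each pair $(S,\cS)$, $(A_+,\cA_+)$, $(A_-,\cA_-)$, $(A_0,\cA_0)$; the statement for $(L,\cL)$ then follows from the triangle inequality. Throughout we work chaos by chaos: $L\fsR-\fsQ^*\cL$ maps $\fockd{n}{}$ into $\fockdo{n-1}{}\oplus\fockdo{n}{}\oplus\fockdo{n+1}{}$. Hence it suffices to show, for $f\in\fockd{n}{}$, a bound with constant polynomial in $n$. The weight $\cN^6$ absorbs these polynomial factors and also the combinatorial factors $n!$ hidden in the Fock norms.

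The $H^{-1}$ norm is handled by duality. We write
\[
\|(-S)^{-1/2}G\|=\sup\big\{\langle G,g\rangle : g\in\fockdo{}{},\ \|(-S)^{1/2}g\|\le1\big\},
\]
and we aim to show $\langle (L\fsR-\fsQ^*\cL)f,g\rangle=\langle \fsR^*L^*... \rangle$-type error terms that are local near the diagonal. Concretely, we have $\langle \fsQ^*\cL f,g\rangle=\langle \cL f,\fsQ g\rangle$. Since $\fsQ g=\fsR^* g$ on $HC$, the discrepancy between $\langle L\fsR f,g\rangle$ and $\langle \cL f,\fsQ g\rangle$ comes only from configurations in $DI$, or within distance $l_0$ of $DI$, where the indicators $\1_{HC}$ differ. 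For $S$ we use the integration by parts formulas~\eqref{e:NeumannBC} and~\eqref{e:noBC}. The difference becomes a sum, over bonds crossing the diagonal region, of products $\nabla_z f\cdot\nabla_{z'}(\fsQ g)$. On $\cZ_1$ the copying rule makes $\nabla(\fsQ g)$ a gradient of $g$ along a bounded path in $HC$. The contribution of $\cZ_{\ge2}$ is discarded using Lemmas~\ref{lem:ZiBound} and~\ref{lem:ZiBoundHC}, which control the $\ell^2$ mass there by the Dirichlet energy with polynomial-in-$n$ loss. For $A_\pm,A_0$ the error terms are likewise supported near the diagonal, but they involve $f$ itself rather than its gradient.

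Every error is thus of the form
\[
\sum_{x_{1:n}\in\cZ_1\cap\text{near }DI} |\nabla f(x_{1:n})|\,|\nabla g(\cdots)| \quad\text{or}\quad \sum |f|\,|\nabla g|.
\]
Cauchy--Schwarz reduces each of these to bounding the restriction of $\nabla f$ (or $f$) to a neighbourhood of a two-particle diagonal $\{x_i=x_j\}$ in $\ell^2$. This is the core step, and the main obstacle. For a diagonal trace we have
\[
\sum_{x_i=x_j}|h|^2\lesssim \int |\cF h|^2\cdot\Big(\text{integral over the relative momentum}\Big),
\]
and in $d=2$ the relevant kernel is $\int_{\T^2}\dd\ell\,(d_0(\ell)+d_0(k-\ell))^{-s}$. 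This integral is finite exactly when $s<1$, i.e. a trace estimate $\|h|_{\mathrm{diag}}\|\lesssim\|(-\cS)^{s/2}h\|$ holds for $s>0$ but fails logarithmically at $s=0$. This is precisely why $H^1$ alone gives only~\eqref{e:BoundRBad}-type logarithmic losses and one needs $H^{1+\gamma}$. Applied with $h=\nabla_z f$ and $s=\gamma$, the trace estimate gives, via~\eqref{e:FracLaplNabla}, a bound by $\|(-\cS)^{(1+\gamma)/2}f\|$, with a constant blowing up like $\gamma^{-1}$. For terms involving $f$ rather than $\nabla f$ (from $A_\pm,A_0$), we use the antisymmetric structure instead. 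Differences $f(x_{\setminus i})-f(x_{\setminus j})$ with $|x_i-x_j|\le l_0$ are gradients. Similarly, in $A_-$ the hard-core correction is $\sum_z a(z)f(x_i+z,x_i,\dots)$, which after symmetrising in $z\mapsto -z$ (using that $a$ is odd) becomes a gradient term.

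The remaining care concerns $\gamma=0$, where the trace of $\nabla f$ is not controlled. There we use the extra cancellation that $g$ is paired through $\nabla g$, so that we can sum by parts once more along the diagonal direction, as in~\cite{LQSY04}. Alternatively, the $\gamma=0$ case can be deduced from $\gamma>0$ by splitting $f$ into Fourier modes $|k|\gtrsim1$ and $|k|\lesssim1$, and on the high modes using the bound $(-\cS)^{\gamma}\lesssim1$.

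Finally, we track the $n$-dependence. The number of pairs $(i,j)$ is $O(n^2)$, and the partition $\cZ_0,\cZ_1,\cZ_{\ge2}$ ensures at most one isolated double site. The symmetrisation factors in~\eqref{e:A+} and~\eqref{e:A-} contribute further powers of $n$. Altogether the losses stay within $n^{6}$.
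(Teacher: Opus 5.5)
Your overall framework is sound and matches the paper's. Testing $(L\fsR-\fsQ^*\cL)f$ against a hardcore $g$ and writing $\langle \fsQ^*\cL f,g\rangle=\langle \cL f,\fsQ g\rangle$ is the paper's identity $O\fsR-\fsQ^*\cO=\fsQ^*(\fsR^*O\fsR-\cO)$ combined with the $H^1$ bound~\eqref{e:BoundQ} on $\fsQ$. Your $(S,\cS)$ term (gradient against gradient, fractional trace on $\nabla_z f$, then~\eqref{e:FracLaplNabla}) is essentially what the paper does.

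The gap is in the terms where one factor carries no gradient. First, you have the range of the trace estimate backwards. On the lattice, restriction to a neighbourhood of a two-particle diagonal is free at $s=0$, and your kernel $\int_{\T^2}(d_0(\ell)+d_0(k-\ell))^{-s}\,\dd\ell$ is bounded for every $s\in[0,1)$. The logarithmic divergence occurs at $s=1$. So nothing needs fixing at $\gamma=0$, and the constant degenerates as $\gamma\to 1$, not as $\gamma\to0$.

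The real consequence is that restriction to $\cZ_{\ge1}$ gains strictly less than half a derivative:
\begin{itemize}
\item $\|\1_{\cZ_{\ge1}}f\|\lesssim\|(-\cS)^{\gamma/2}f\|$ is not controlled by $\|(-\cS)^{(1+\gamma)/2}f\|$;
\item $\|\1_{\cZ_{\ge1}}\fsQ g\|$ is not controlled by $\|(-S)^{1/2}g\|$, which is why~\eqref{e:Z1BoundHC} carries $\sqrt{\log_*(1/\mu)}$, cf.~\eqref{e:BoundRBad}.
\end{itemize}
Your ``antisymmetric structure'' fix only moves the derivative from one factor to the other. After Cauchy--Schwarz you always keep an undifferentiated factor on a neighbourhood of a two-particle diagonal, and the bound fails.

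The paper's proof relies on two ingredients that are missing from your proposal.

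\textbf{The $A_\pm$ terms.} The discrepancy is supported on $\cZ_{\ge2}$, because the created or annihilated particle sits within $l_0$ of a double site. The codimension-four bound~\eqref{e:Z2Bound} then gains a full $(1+\gamma)/2$ derivatives on the undifferentiated factor. In your framework this is explicit. Since $A_+\fsR=\fsR\cA_+$, and $\cA_+$ vanishes on $DI\cap\cZ_1$ while $\fsQ g$ vanishes on $DI\cap\cZ_{\ge2}$, the $A_+$ error is identically zero. The $A_-$ error equals $\langle \1_{DI\cap\cZ_{\ge2}}f,\cA_+\fsQ g\rangle$. It is bounded, up to powers of $n$, by $\|\1_{\cZ_{\ge2}}f\|\,\|(-\cS)^{1/2}\fsQ g\|$, and so requires exactly~\eqref{e:Z2Bound}.

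\textbf{The $A_0$ term.} Here the discrepancy genuinely lives on $HC\cap\cZ_{\ge1}$, paired with the undifferentiated $g$, so no Cauchy--Schwarz splitting works. The paper proves two bounds:
\begin{itemize}
\item $\|(-\cS)^{(1+\gamma)/2}f\|\,\|(-\cS)^{\gamma/2}g\|$;
\item by skew-symmetry of $A_0$ and $\cA_0$, the mirror bound $\|(-\cS)^{\gamma/2}f\|\,\|(-\cS)^{(1+\gamma)/2}g\|$.
\end{itemize}
It then redistributes derivatives using the bilinear interpolation Lemma~\ref{lem:Interpolation} to reach $\|(-\cS)^{(1+\gamma')/2}f\|\,\|(-\cS)^{1/2}g\|$.

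A minor point: the $n!$ weights appear identically on both sides and need no absorbing; $\cN^6$ could not absorb a factorial anyway.
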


A result very similar to Proposition~\ref{p:HardcoreRemoval} was proved in~\cite{LQSY04} in the case $\rho = 1/2$ and $\gamma = 0$, but, as noted before, the extension to $\gamma > 0$ plays a crucial role in our analysis.
\ss

We also collect a few useful bounds about the bridges $\fsQ$ and $\fsQ^*$.

\begin{lemma} \label{lem:BoundQ}
For every $g \in \fockdo{\rap}{}{}$, $f \in \fockd{\rap}{}{}$, and $\mu \geq 0$, we have
\begin{equs}
\|\fsQ g\| \lesssim \|g\| \, , \quad \|(\mu - &\cS)^{1/2} \, \fsQ g\| \lesssim \|N^3 \, (\mu - S)^{1/2} g\| \, , \label{e:BoundQ} \\
\|\fsQ^* f\| \lesssim \|f\| \, , \quad \|(\mu - &S)^{-1/2} \, \fsQ^* f\| \lesssim \|\cN^3 \, (\mu - \cS)^{-1/2} f\| \, , \label{e:BoundQ*} \,
\end{equs}
and, provided that $\mu > 0$,
\begin{equ}
\|(\mu - S)^{-1/2} \, (\fsQ^* - \fsR) f\| \lesssim \log_{*}(1/\mu) \, \|\cN^2 \, (\mu - \cS)^{1/2} f\| \, . \label{e:RemovalQ*R}
\end{equ}
\end{lemma}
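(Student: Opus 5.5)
The plan is to handle the three groups of estimates in order. I would prove the $L^2$ and $H^1$ bounds on $\fsQ$ directly and obtain those on $\fsQ^*$ by duality. The last estimate~\eqref{e:RemovalQ*R} reduces to a two-dimensional trace inequality on the diagonal, which is where the factor $\log_*(1/\mu)$ comes from.

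\emph{$L^2$ bounds.} Write $\|\fsQ g\|^2$ as the contribution of $HC$, where $\fsQ g = g$, plus the contribution of $DI \cap \cZ_1$. Consider $x_{1:n} \in DI\cap\cZ_1$ with isolated double site $x_i = x_j$. The image point $y = (x_i, x_j+z_0, x_{1:n\setminus i,j})$ lies in $HC$, and its only close pair is $\{y_i,y_j\}$, since the other points are at $\ell^\infty$-distance $>2l_0+1$ from $x_i$. Hence $x_{1:n}$ is recovered from $y$ up to the choice of which of the two close points is duplicated, and up to the relabellings already accounted for by the $n!$ weights. The map $x_{1:n}\mapsto y$ is therefore uniformly boundedly-many-to-one, which gives $\|\fsQ g\|\lesssim\|g\|$. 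The bound $\|\fsQ^* f\|\lesssim\|f\|$ follows by duality.

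\emph{$H^1$ bound on $\fsQ$.} By~\eqref{e:noBC} one has $\|(\mu-\cS)^{1/2}\fsQ g\|^2 = \mu\|\fsQ g\|^2 + \frac n2\sum_z s(z)\|\nabla_z \fsQ g\|^2$, so it suffices to bound each soft-core gradient $\fsQ g(x+ze_1)-\fsQ g(x)$, where $e_1$ indicates that the shift acts on the first coordinate, as in~\eqref{e:nablaz}. I would split into three cases.
\begin{itemize}
\item Both endpoints in $HC$: the gradient appears in the Neumann form~\eqref{e:NeumannBC}.
\item An endpoint in $DI\cap\cZ_1$: replace it by its image in $HC$. Using irreducibility and the fact that the double site is isolated, I would join the two resulting $HC$ points by a path of uniformly bounded length staying inside $HC$. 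This expresses the gradient as a bounded sum of hard-core gradients $\nabla^0_{z'}g$. Each hard-core edge is used by at most $\mathrm{poly}(n)$ such paths.
\item An endpoint in $DI\cap\cZ_{\geq 2}$: there $\fsQ g=0$, so the gradient is $g$ evaluated at a neighbouring $HC$ point that has at least two close pairs. I would bound $\|\1_{\cZ_{\geq 2}(l)} g\|$ by a Hardy-type inequality; this is the role of Lemma~\ref{lem:ZiBoundHC}. For a fixed choice of two close pairs (or a close triple), the relative coordinates live in $\Z^4$, where the discrete Hardy inequality holds, namely $\sum_{y\in\Z^4} |u(y)|^2/|y|_*^2 \lesssim$ Dirichlet form of $u$. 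This gives $\|\1_{\cZ_{\geq2}}g\|^2 \lesssim \mathrm{poly}(n)\,\|(-S)^{1/2}g\|^2$, with no dependence on $\mu$.
\end{itemize}
Summing over the $O(n^4)$ choices of pairs and combining with the path counting should give the factor $N^3$. Duality then yields the $H^{-1}$ bound on $\fsQ^*$: write $\langle(\mu-S)^{-1/2}\fsQ^*f,h\rangle=\langle (\mu-\cS)^{-1/2}f,(\mu-\cS)^{1/2}\fsQ(\mu-S)^{-1/2}h\rangle$ and apply~\eqref{e:BoundQ}, moving the $\cN^3$ onto $f$ since $\fsQ$ preserves chaoses.

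\emph{Estimate~\eqref{e:RemovalQ*R}.} By~\eqref{e:defQ*}, $(\fsQ^*-\fsR)f$ only involves the values of $f$ on $DI\cap\cZ_1$. Dually, $\langle (\fsQ^*-\fsR)f,h\rangle = \sum_{x\in DI\cap \cZ_1} f(x)\,(\fsQ h)(x)$. By Cauchy--Schwarz this is at most $\|\1_{DI\cap\cZ_1} f\|\,\|\1_{DI\cap\cZ_1}\fsQ h\|$. I would prove the two-dimensional trace estimate $\|\1_{DI\cap\cZ_1} F\|^2\lesssim n^2\log_*(1/\mu)\,\|(\mu-\cS)^{1/2}F\|^2$. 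It follows by freezing all but the relative coordinate $x_i-x_j\in\Z^2$ and using that the value at $0$ of a function on $\Z^2$ is controlled by $\mu\|u\|^2+\|\nabla u\|^2$ times the Green function at the origin, which is of order $\log_*(1/\mu)$. The same bound holds for $\fsQ h$ in terms of $\|(\mu-S)^{1/2}h\|$, via the previous step, and since only the single isolated double site matters here, the factor $N^3$ can be avoided. Taking $h=(\mu-S)^{-1/2}k$ gives the claim, with the two square roots of $\log_*(1/\mu)$ multiplying to one factor.

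The main obstacle is the $\cZ_{\geq 2}$ Hardy estimate together with the path counting in the $H^1$ bound on $\fsQ$. This is where the polynomial, rather than exponential, dependence on $n$ has to be tracked carefully.
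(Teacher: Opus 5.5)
Your $L^2$ bounds, your three-case treatment of $\|(-\cS)^{1/2}\fsQ g\|$, and your duality arguments for $\fsQ^*$ are essentially the paper's proof, which adapts \cite{LQSY04}. The three cases are hard-core edges, bounded-length paths around the isolated double site, and the $\cZ_{\geq 2}$ part controlled by \eqref{e:Z2BoundHC}.

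One side remark on the $\cZ_{\geq 2}$ part. Your sketch via a Hardy inequality on $\Z^4$ in relative coordinates does not apply as stated to a hard-core $g$. Its relative coordinates must avoid the collision set, and $(-S)$ only sees edges inside $HC$. This is why Lemma~\ref{lem:ZiBoundHC} is proved by first integrating out the other coordinates, giving $\rho_d(g)$, and then extending $\sqrt{\rho_d(g)}$ across the diagonal with canonical paths. Since you explicitly invoke that lemma, this does no harm.

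The genuine gap is in the last step of \eqref{e:RemovalQ*R}. Your duality identity and Cauchy--Schwarz splitting match the paper, and your bound on $\|\1_{DI\cap\cZ_1} f\|$ is \eqref{e:Z1Bound}. For the second factor, however, you go ``via the previous step'': you apply the soft-core trace bound to $F=\fsQ h$ and then the $H^1$ bound \eqref{e:BoundQ}. This gives only $\|\1_{DI\cap\cZ_1}\fsQ h\|\lesssim n^4\sqrt{\log_*(1/\mu)}\,\|(\mu-S)^{1/2}h\|$, and hence $\cN^5$ rather than the stated $\cN^2$. Your remark that the $N^3$ ``can be avoided'' is asserted, not argued.

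The missing observation is that on $DI\cap\cZ_1$ the function $\fsQ h$ only reads $h$ at points of $\cZ_1(3l_0+1)$. Hence $\1_{DI\cap\cZ_1}\fsQ h=\1_{DI\cap\cZ_1}\fsQ(\1_{\cZ_1(3l_0+1)}h)$. The $L^2$ bound on $\fsQ$ then reduces the second factor to $\|\1_{\cZ_1(3l_0+1)}h\|$. You bound this directly by the hard-core trace estimate \eqref{e:Z1BoundHC}; the hard-core version is needed here because $h$ is not a soft-core function. This gives $n\sqrt{\log_*(1/\mu)}$ on each side and thus the stated $\cN^2$.

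Downstream the weaker power would only shift the index $\kappa$ in the constants $C^s_{\alpha,\kappa}$. As a proof of the estimate as stated, though, this step is incomplete.
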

\ss

As in~\cite{LQSY04}, the main technical step to remove the hardcore constraint consists in bounding the $\ell^2$-norm of what lies near the diagonal by the adequate global Sobolev norm. The main novelty here compared to~\cite{LQSY04} is to take advantage of fractional Sobolev norms instead of relying exclusively on the $H^1$ norm. We state a result in this direction first for functions $f \in F \ell^2$ (see Remark~\ref{rmk:NonSymmetricFock} for the extension of $\cS$ on $F \ell^2$) and then for functions $g \in \fockdo{}{}$. We recall that the sets $\cZ_0(l), \cZ_1(l)$ and $\cZ_{\geq 2}(l)$ were defined at the end of Section~\ref{sec:Bridges}; we also denote $\cZ_{\geq 1}(l) := \cZ_1(l) \sqcup \cZ_{\geq 2}(l)$.

\begin{lemma} \label{lem:ZiBound}
Let $\gamma \in [0,1)$. Then, for every $f \in F \ell_*^2$, $l \geq 1$, and $\mu > 0$, we have
\begin{equs}
\|\1_{\cZ_{\geq 1}(l)} \, f\| &\lesssim_\gamma \min\big\{l \, \|\cN \, (-\cS)^{\gamma/2} f\| \, , \ l \, \sqrt{\log_{*}(1/\mu)} \, \|\cN \, (\mu-\cS)^{1/2} f\|\big\} \, , \label{e:Z1Bound} \\
\|\1_{\cZ_{\geq 2}(l)} \, f\| &\lesssim_\gamma l^2 \, \|\cN^2 \, (-\cS)^{(1+\gamma)/2} f\| \, , \label{e:Z2Bound}
\end{equs}
\end{lemma}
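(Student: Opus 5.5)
The plan is to reduce both bounds to a single two-particle estimate and then sum over pairs. By definition, $x_{1:n}\in\cZ_{\geq 1}(l)$ if and only if some pair $i<j$ has $|x_i-x_j|_\infty\le l$. A union bound and the permutation structure of the norm (for non-symmetric $f$ we keep all pairs) give
\begin{equ}
\|\1_{\cZ_{\geq1}(l)} f_n\|_{\ell^2}^2\le \sum_{i<j}\big\|\1_{|x_i-x_j|_\infty\le l}\, f_n\big\|_{\ell^2}^2 .
\end{equ}
For fixed $(i,j)$, change variables to $u=x_i-x_j$ and $w=x_j$, keeping the other coordinates as spectators. The problem then becomes bounding $\sum_{|u|_\infty\le l}|g(u)|^2$ for a function $g$ on $\Z^2$, where the remaining variables are integrated afterwards. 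The relative-coordinate Laplacian is controlled by $-\cS$ acting on $x_i,x_j$ (since $d_0(k_i)+d_0(k_j)\gtrsim |k_i-k_j|^2$ in a suitable sense). Summing over the $\binom n2$ pairs gives a factor $n^2$ in the square, which matches the $\cN$ in \eqref{e:Z1Bound}.

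The core 2d estimates are as follows.

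(a) With $(-\cS)^{\gamma/2}$, $\gamma<1$: by the discrete Sobolev inequality $\dot h^{\gamma}(\Z^2)\hookrightarrow \ell^{q}$ with $1/q=1/2-\gamma/2$, Hölder on the box of $\asymp l^2$ sites gives $\sum_{|u|\le l}|g|^2\lesssim l^{2\gamma}\|g\|_{\ell^q}^2 \lesssim_\gamma l^{2}\|(-\Delta)^{\gamma/2}g\|^2$. The case $\gamma=0$ is trivial, and the constant blows up as $\gamma\to1$.

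(b) With $(\mu-\cS)^{1/2}$: apply Cauchy–Schwarz in Fourier at $u$ fixed, $|g(u)|^2\le \int(\mu+|k|^2)^{-1}dk\cdot\|(\mu-\Delta)^{1/2}g\|^2\lesssim\log_*(1/\mu)\|\cdot\|^2$. Summing over $l^2$ sites gives the factor $l^2\log_*(1/\mu)$.

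To make both rigorous with the spectators, I would work in Fourier. The one-pair restriction operator has kernel $\cF(\1_{|u|\le l})(k_i-k'_i)$ acting diagonally in $k_i+k_j$. Schur's test with weights $(\mu+d_0(k_{1:n}))^{s}$ then gives the bounds. Here I use $d_0(k_{1:n})\ge d_0(k_i)+d_0(k_j)$, monotonicity of the multipliers, and \eqref{e:d0D0equivalence}.

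For \eqref{e:Z2Bound}, $\cZ_{\ge2}(l)$ requires two close pairs. These are either disjoint pairs $\{i,j\},\{k,m\}$ or share an index (a triple). For disjoint pairs, I apply the two-particle estimate twice in independent relative coordinates. This distributes the $H^{1+\gamma}$ regularity as $H^{(1+\gamma)/2}$-type gains per pair; more precisely, I would use the sharp version of (a) with exponent $s\in(0,1)$ each and total $2s=1+\gamma$, obtaining $l^2\cdot l^2$ in the square. For the triple case $u=x_i-x_j$, $u'=x_j-x_k$ with both small, the restriction is to a box of $l^4$ points in $\Z^4$. There $H^{s}(\Z^4)\hookrightarrow\ell^{q}$ with $1/q=1/2-s/4$, and Hölder gives $\ell^2(\text{box})\lesssim l^{s}\|\cdot\|_{\dot h^s}$. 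We need $s=1+\gamma<2$, which is subcritical in 4d, so the factor is $l^{1+\gamma}\le l^2$. The number of pair-pairs is $\lesssim n^4$, which gives $\cN^2$.

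The main obstacle I expect is making the Sobolev/Hölder step uniform when the spectator coordinates and the non-relative (centre-of-mass) coordinate carry part of the $(-\cS)^{s}$ weight. Fractional powers do not split additively across variables, so I would use $(\sum_i d_0(k_i))^{s}\ge (d_0(k_i)+d_0(k_j))^{s}$ together with the fact that the relative-coordinate symbol is dominated by $d_0(k_i)+d_0(k_j)$. A related concern is getting the correct dependence on $l$ rather than a $\gamma$-dependent power, which is why I would use Hölder on the box rather than a pointwise bound.
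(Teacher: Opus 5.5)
Your proposal is correct. Its skeleton matches the paper: a union bound over close pairs, then freezing every coordinate except the moved one(s), then Fourier analysis in those coordinates together with the monotonicity $d_0(k_i)\le d_0(k_{1:n})$. Where you differ is in how you prove the core box estimate.

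The paper proves a single pointwise bound on $\Z^{2d}$, $d\in\{1,2\}$. By Fourier inversion and Cauchy--Schwarz,
$|F(y_{1:d})|^2\lesssim \big(\int_{[-\pi,\pi]^{2d}}(\mu+d_0(k_{1:d})/2)^{-2\nu}\,\dd k_{1:d}\big)\,\|(\mu-\cS)^\nu F\|^2$.
The integral is bounded uniformly in $\mu$ when $\nu<d/2$, and is $\lesssim\log_*(1/\mu)$ when $\nu=d/2$. Summing over the $\asymp l^{2d}$ points of the box then gives the $l^{2d}$ factor.

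This is exactly your step (b). The same bound also handles (a), taking $d=1$, $\nu=\gamma/2$, and the $\cZ_{\geq 2}$ estimate, taking $d=2$, $\nu=(1+\gamma)/2<1$.

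Your alternative for (a) and for the triple case uses a discrete Sobolev (Hardy--Littlewood--Sobolev) embedding into $\ell^q$ followed by Hölder on the box. That route needs this external inequality. It only buys the sharper powers $l^{\gamma}$ and $l^{1+\gamma}$, which do not matter here because $l$ is always a fixed multiple of $l_0$. Your worry that a pointwise bound would spoil the $l$-dependence is unfounded: the pointwise bound gives exactly $l^{2d}$, and the $\gamma$-dependence sits only in the constant.

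For $\cZ_{\geq 2}(l)$, the paper also avoids your split into disjoint and overlapping pairs. It picks distinct moved indices $i_1\ne i_2$ and reference indices $j_1,j_2\notin\{i_1,i_2\}$; in the overlapping case both pairs reference the shared index. It then freezes all other coordinates and applies the $d=2$ bound once.

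Your iterated two-pair argument is also valid. The second indicator commutes with the multiplier in the first moved variable, and $d_0(k_{i_1})^{s}d_0(k_{i_2})^{s}\le d_0(k_{1:n})^{2s}$ with $2s=1+\gamma$. With the shared index as reference it covers the overlapping case as well, so the $\Z^4$ Sobolev step is unnecessary.

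Finally, you do not need a Schur test for the spectator variables. Plancherel in the frozen variables plus monotonicity is enough. With $x_j$ frozen, the relevant symbol is just $d_0(k_i)$, not anything involving $k_i-k_j$.
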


\begin{lemma} \label{lem:ZiBoundHC}
For every $g \in \fockdo{}{}$, $\ell \geq 1$, and $\mu > 0$, we have
\begin{equs}
\|\1_{\cZ_{\geq 1}(l)} \, g\| &\lesssim l \, \sqrt{\log_{*}(1/\mu)} \, \|\cN \, (\mu-S)^{1/2} g\| \, , \label{e:Z1BoundHC} \\
\|\1_{\cZ_{\geq 2}(l)} \, g\| &\lesssim l^2 \, \|\cN^2 \, (-S)^{1/2} g\| \, . \label{e:Z2BoundHC}
\end{equs}
\end{lemma}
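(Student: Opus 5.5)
The approach is to prove Lemma~\ref{lem:ZiBoundHC} directly, following the same pair-by-pair scheme as Lemma~\ref{lem:ZiBound}, but with the hard-core Dirichlet form~\eqref{e:NeumannBC} in place of~\eqref{e:noBC}. I would not go through the extension $\fsQ$ and Lemma~\ref{lem:BoundQ}, for two reasons. First, that route would lose powers of $N$ (it would give $N^4$ instead of $N$). Second, the proof of~\eqref{e:BoundQ} presumably relies on Lemma~\ref{lem:ZiBoundHC}, so the argument would be circular.

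\textbf{Reduction to pairs.} By symmetry of $g$ and a union bound,
\begin{equ}
\|\1_{\cZ_{\geq 1}(l)} g\|^2 \leq \sum_{n} n! \binom{n}{2} \sum_{x_{1:n}} \1_{|x_1-x_2|_\infty \leq l} \, |g_n(x_{1:n})|^2 \, .
\end{equ}
Then freeze $x_{3:n}$ and the centre $x_1$, and view $h(y) := g_n(x_1, x_1+y, x_{3:n})$ as a function of the relative coordinate $y$ on the set $\Z^2 \setminus \{0, x_k - x_1 : k \geq 3\}$. The core estimate is a two-dimensional discrete Sobolev bound with logarithmic loss, of the form
\begin{equ}
\sum_{|y|_\infty \leq l} |h(y)|^2 \lesssim l^2 \big( \log_*(1/\mu) \, \cE(h) + \mu \|h\|_{\ell^2}^2 \big) \, ,
\end{equ}
where $\cE$ is the Dirichlet energy that uses only jumps of $x_2$ allowed by the hard-core constraint. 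To prove it, I would compare $|h(y)|^2$ with its average over a box of radius $R = \mu^{-1/2}$. The box average is bounded by $R^{-2}\|h\|^2 = \mu \|h\|^2$. The fluctuation around the average is bounded via a telescoping sum over dyadic boxes, each dyadic scale costing $O(\cE(h))$; there are $\log R$ scales, which gives the factor $\log_*(1/\mu)$. Summing over $|y|_\infty \leq l$ produces the factor $l^2$. Finally, summing over the frozen variables and over pairs, and using $\|(\mu - S)^{1/2} g\|^2 = \mu\|g\|^2 + \frac n2\sum_z s(z)\|\nabla^0_z g\|^2$, the combinatorial factor $\binom n2$ against the prefactor $n$ in~\eqref{e:NeumannBC} leaves one power of $n$ in the squared norm. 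This is consistent with the factor $\cN$ in~\eqref{e:Z1BoundHC}.

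\textbf{The bound~\eqref{e:Z2BoundHC}.} For $x_{1:n} \in \cZ_{\geq 2}(l)$ there are two close pairs, which either share an index, $(i,j),(j,k)$, or are disjoint, $(i,j),(k,m)$. In either case I would freeze everything except two relative coordinates, which live in $\Z^4$. The analogous Sobolev/Hardy inequality in four dimensions has no logarithmic divergence, since the Green's function is summable near the origin. This allows one to take $\mu = 0$ and obtain a factor $l^4$ in the squared norm, that is $l^2$ in the norm. Counting the pairs of pairs gives $n^4$ in the squared norm, hence the factor $\cN^2$.

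\textbf{Main obstacle.} The hard part is the hard-core constraint from the spectator particles: moves of $x_2$ landing on some $x_k$, $k \geq 3$, are forbidden. This punctures the lattice and could make the constants depend on the configuration. My first attempt would be to build, for each required increment, a family of $O(1)$ alternative lattice paths. These paths would use $\mathrm{Supp}(s)$, irreducibility, and moves of either $x_1$ or $x_2$ (joint translations of the pair included) to go around obstacles. Each forbidden bond would then be used by boundedly many paths, and each obstacle would affect only $O(1)$ bonds per configuration. If this works, the constants remain uniform in $n$, and the total overcounting introduces at most an additional bounded factor, which is absorbed into the $\cN$ and $\cN^2$ weights.
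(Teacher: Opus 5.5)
There is a genuine gap, and it sits exactly at what you call the main obstacle. Your per-fibre inequality, with the spectators $x_{3:n}$ frozen, is false. No family of detour paths for the two tagged particles can repair it, because the spectators can enclose the pair.

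Here is a counterexample. Let $Q=\{|y|_\infty<r\}$, and let $W=\{r\le |y|_\infty<r+l_0\}$, a wall thicker than the jump range. Let $g\in\fockdo{n}{}$, with $n-2=\#W$, be the symmetrisation of the indicator that the occupied set equals $W\cup\{a,b\}$ for some $a\neq b$ in $Q$. Take a fibre where $x_{3:n}$ enumerates $W$ and $x_1$ lies near the centre of $Q$. There your $h$ satisfies $h(y)=c\neq 0$ whenever $x_1+y\in Q\setminus\{x_1\}$, and $h$ vanishes elsewhere. Every jump of $x_2$ (and also of $x_1$) that would leave $Q$ lands in $W$, so it is forbidden by the hard-core constraint. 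Hence $\cE(h)=0$, even if joint moves of the pair are allowed. On the other hand, for $r=l$ we have $\sum_{|y|_\infty\le l}|h(y)|^2\asymp l^2c^2$ and $\mu\|h\|^2\asymp \mu l^2c^2$. So your inequality fails as soon as $\mu\ll l^{-2}$.

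The lemma itself is not contradicted by this example. The energy that controls it is carried by moves of the wall particles. Equivalently, by symmetry of $g$, it is carried by moves of particle $1$ or $2$ on \emph{other} fibres, where that label sits in the wall. A fibre-by-fibre argument with frozen spectators can never reach this energy. More generally, spectators can force arbitrarily long detours, so constants uniform in the configuration and in $n$ are out of reach on this route.

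The missing idea is to integrate the spectators out, using the permutation symmetry of $g$, rather than freezing them. The paper proceeds as follows.
\begin{itemize}
\item It sets $\rho_d(g)(x_{1:d})=\sum_{x_{d+1:n}}g(x_{1:n})^2$ for $d\in\{2,3,4\}$.
\item It bounds $\|\1_{\cZ_{\geq 1}(l)}g\|^2$ by $n^2$ times the corresponding quantity for $\sqrt{\rho_2(g)}$, and $\|\1_{\cZ_{\geq 2}(l)}g\|^2$ by $n^3$ and $n^4$ times those for $\sqrt{\rho_3(g)}$ and $\sqrt{\rho_4(g)}$.
\item It uses that $\sqrt{\rho_d(g)}$ has $d$-particle hard-core Dirichlet energy at most that of $g$.
\end{itemize}

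The last step rests on a compensation. Consider a jump of particle $1$ from $x_1$ to $x_1+z$.
\begin{itemize}
\item The spectator configurations that block it have a spectator at $x_1+z$; they are counted in $\rho_d$ at the departure point.
\item The configurations that block the reverse jump have a spectator at $x_1$; they are counted in $\rho_d$ at the arrival point.
\item Swapping that spectator's label with label $1$ gives a bijection between the two sets, with identical weights by symmetry of $g$.
\end{itemize}
These common terms $E$ can be discarded, since $|\sqrt{A+E}-\sqrt{B+E}|\le|\sqrt{A}-\sqrt{B}|$. The bound $(\sqrt{\sum_j a_j^2}-\sqrt{\sum_j b_j^2})^2\le\sum_j(a_j-b_j)^2$, taken over the unblocked spectator configurations, then finishes the step.

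After this you work in the fixed dimension $2d\le 8$, where only the mutual exclusion of the $d$ tagged particles remains. There, canonical paths of length $O_d(1)$ give an extension of $\sqrt{\rho_d(g)}$ to all of $\Z^{2d}$ with comparable Dirichlet form, and the soft-core Lemma~\ref{lem:ZiBound} applies. Your dyadic box-averaging argument would serve equally well at that final stage. You are also right to avoid the bridge $\fsQ$: \eqref{e:BoundQH1dot} is proved using \eqref{e:Z2BoundHC}, so that route would be circular.
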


Informally, Lemma~\ref{lem:ZiBound} shows that restricting to $\cZ_{\geq 1}(l)$ and $\cZ_{\geq 2}(l)$ allows one to gain a factor $(- \cS)^{\gamma/2}$ and $(- \cS)^{(1+\gamma)/2}$, respectively, for any $\gamma \in [0,1)$. In fact, the same estimate holds for $\gamma = 1$ at the cost of a logarithmically diverging factor $\sqrt{\log_{*}(1/\mu)}$, which is however too large to be useful in most applications. Furthermore, Lemma~\ref{lem:ZiBoundHC} extends these estimates to the hardcore Fock space when the upper bound is given in terms of the $H^1$-norm: indeed, in the hardcore setting, it is much more delicate to work with fractional Sobolev norms as in Lemma~\ref{lem:ZiBound}.

\begin{proof}[of Lemma~\ref{lem:ZiBound}]

First, we claim that for any $d \geq 1$, $\nu \in [0,d/2]$, $\mu > 0$, $F \in \ell^2(\Z^{2d})$, and $y_{1:d} \in \Z^{2d}$, we have
\begin{equ} \label{e:ZiBoundClaim}
|F(y_{1:d})|^2 \lesssim_{d, \nu} \log_{*}(1/\mu)^{\1_{\nu = d/2}} \, \|(\mu - \cS)^\nu F\|_{\ell^2(\Z^{2d})}^2 \, .
\end{equ}
In particular, for any $l \geq 1$, we deduce
\begin{equ} \label{e:ZiBoundClaimCorollary}
\|1_{|x_{1:d}|_\infty \leq l} \, F\|^2 \lesssim_{d, \nu} l^{2d} \, \log_{*}(1/\mu)^{\1_{\nu = d/2}} \, \|(\mu - \cS)^\nu F\|_{\ell^2(\Z^{2d})}^2 \, .
\end{equ}
To prove the claim~\eqref{e:ZiBoundClaim}, we write the Fourier inversion formula 
\begin{equ}
F(y_{1:d}) = (2 \pi)^{-d} \int_{[- \pi, \pi]^{2d}} \cF F (k_{1:d}) \, \exp(\iota \, y_{1:d} \cdot k_{1:d}) \, \dd k_{1:d} \, ,
\end{equ}
and we apply the Cauchy-Schwarz inequality to deduce that $|F(y_{1:d})|^2$ is bounded by 
\begin{equs}
(2 &\pi)^{-2d} \, \Big(\int_{[- \pi, \pi]^{2d}} (\mu + d_0(k_{1:d})/2)^{- 2 \nu} \, \dd k_{1:d}\Big) \Big(\int_{[- \pi, \pi]^{2d}} (\mu + d_0(k_{1:d})/2)^{2 \nu} \, |\cF F(k_{1:d})|^2 \, \dd k_{1:d}\Big) \\
&\lesssim_{d, \nu} \log_{*}(1/\mu)^{\1_{\nu = d/2}} \, \|(\mu - \cS)^\nu F\|_{\ell^2(\Z^{2d})}^2 \, ,
\end{equs}
where, in order to bound the first integral by $C(d, \nu) \log_{*}(1/\mu)^{\1_{\nu = d/2}}$, we used the estimate $d_0(k_{1:d}) \asymp |k_{1:d}|^2$ for $k_{1:d} \in [- \pi, \pi]^{2d}$ (see~\eqref{e:d0D0equivalence} from Lemma~\ref{lem:SoftcoreRemoval} below).
\ss

Now, we turn to the proof of~\eqref{e:Z1Bound} and~\eqref{e:Z2Bound}. By orthogonality of chaoses, it is sufficient to prove these bounds for any $f \in F \ell^2_n$ and $n \geq 1$. Then notice that
\begin{equs}
\|\1_{\cZ_{\geq 1}(l)} f\|_{\ell^2(\Z^{2n})}^2 &\leq \sum_{x_{1:n} \in \Z^{2n}} \sum_{i \in \{1, \dots, n\} \, ; \, j \in \{1, \dots, n\} \setminus \{i\}} \1_{|x_i - x_j|_\infty \leq l} \, f(x_{1:n})^2 \, , \\
\|\1_{\cZ_{\geq 2}(l)} f\|_{\ell^2(\Z^{2n})}^2 &\leq \sum_{x_{1:n} \in \Z^{2n}} \sum_{i_1 \ne i_2 \in \{1, \dots, n\} \, ; \, j_1, j_2 \in \{1, \dots, n\} \setminus \{i_1, i_2\}} \1_{|x_{i_1} - x_{j_1}|_\infty \vee |x_{i_2} - x_{j_2}|_\infty \leq l} \, f(x_{1:n})^2 \, .
\end{equs}
Hence, to obtain~\eqref{e:Z1Bound} and~\eqref{e:Z2Bound}, it is sufficient to show that for any $d \in \{1,2\}$, $\nu \in [0,d/2]$, $\mu > 0$, $j_1, \dots, j_d \in \{d+1, \dots, n\}$ and $f \in F \ell^2_n$ we have
\begin{equ} \label{e:ZiBoundReductionBis}
\|\1_{|x_1 - x_{j_1}|_\infty \vee \dots \vee |x_d - x_{j_d}|_\infty \leq l} \, f\|_{\ell^2(\Z^{2n})}^2 \lesssim_\nu l^{2d} \, \log_{*}(1/\mu)^{\1_{\nu = d/2}} \, \|(\mu - \cS)^{\nu} f\|_{\ell^2(\Z^{2n})}^2 \, .
\end{equ}
(In particular, if $\nu < d/2$, then we can let $\mu \to 0$ and extend the statement to $\mu = 0$.)
To show~\eqref{e:ZiBoundReductionBis}, we first write
\begin{equ}
\|\1_{|x_1 - x_{j_d}|_\infty \vee \dots \vee |x_d - x_{j_d}|_\infty \leq l} \, f\|_{\ell^2(\Z^{2n})}^2 = \sum_{x_{d+1:n} \in \Z^{2(n-d)}} \sum_{z_{1:d}\in \Z^{2d}} \1_{|z_{1:d}|_\infty \leq l} \, f(z_1 + x_{j_1}, \dots, z_d + x_{j_d}, x_{d+1:n})^2 \, ,
\end{equ}
and then notice that the second sum corresponds to the left hand side of~\eqref{e:ZiBoundClaimCorollary} with the function $F$ that maps $z_{1:d} \in \Z^{2d}$ to $f(z_1 + x_{j_1}, \dots, z_d + x_{j_d}, x_{d+1:n})$. As a consequence, this second sum is bounded, up to a multiplicative constant $C(d, \nu) \, l^{2 d} \, \log_{*}(1/\mu)^{\1_{\nu = d / 2}}$, by
\begin{equs}
\int_{[-\pi, \pi]^{2 d}} &\dd k_{1:d} \, (\mu + d_0(k_{1:d})/2)^{2 \nu} \, |e^{\iota (k_1 \cdot x_{j_1} + \dots + k_d \cdot x_{j_d})} \, \cF_{1 : d} f(k_{1:d}, x_{d+1 : n})|^2 \\
&= \int_{[-\pi, \pi]^{2d}} \dd k_{1:d} \, (\mu + d_0(k_{1:d})/2)^{2 \nu} \, |\cF_{1 : d} f(k_{1:d}, x_{d+1 : n})|^2 \, ,
\end{equs}
where $\cF_{1 : d}$ stands for the Fourier transform applied to the first $d$ coordinates. Then we take the sum over $x_{d+1:n} \in \Z^{2(n-d)}$ and deduce that $\|\1_{|x_1 - x_{j_d}|_\infty \vee \dots \vee |x_d - x_{j_d}|_\infty \leq l} \, f\|_{\ell^2(\Z^{2n})}^2$ is bounded, up to a multiplicative constant $C(d,\nu) \, l^{2d} \, \log_{*}(1/\mu)^{\1_{\nu = d / 2}}$, by
\begin{equs}
\int_{[-\pi, \pi]^{2 n}} &\dd k_{1:n} \, (\mu + d_0(k_{1:d})/2)^{2 \nu} \, |\cF f(k_{1:n})|^2 \leq \|(\mu - \cS)^{\nu} f\|_{\ell^2(\Z^{2n})}^2 \, ,
\end{equs}
which shows~\eqref{e:ZiBoundReductionBis} and concludes the proof.
\end{proof}

\begin{proof}[of Lemma~\ref{lem:ZiBoundHC}]

The statement can be deduced from Lemma~\ref{lem:ZiBound} (more precisely, the second bound in~\eqref{e:Z1Bound} and the bound~\eqref{e:Z2Bound} with $\gamma = 0$) by adapting, mutatis mutandis, the proof of~\cite[Lemmas 4.1 and 4.2]{LQSY04}. For the reader's convenience, and because our sets $\cZ_{\geq 1}(\ell)$ and $\cZ_{\geq 2}(\ell)$ are slightly more general than their counterparts therein, we provide a sketch of the proof in Appendix~\ref{app:HCSC}.
\end{proof}

We are now ready to prove Lemma~\ref{lem:BoundQ} and  Proposition~\ref{p:HardcoreRemoval}.

\begin{proof}[of Lemma~\ref{lem:BoundQ}]
First, note that it is sufficient to show each bound for every $f \in \fockd{n}{}{}$ and $g \in \fockdo{n}{}{}$, for any $n \geq 1$. We start with the first bound of~\eqref{e:BoundQ}. From the definition~\eqref{e:defQ} of $\fsQ$, we immediately deduce
\begin{equs}
\|\fsQ g\|_{\ell^2(\Z^{2n})}^2 &= \|g\|_{\ell^2(\Z^{2n})}^2 + \sum_{y_{1:n} \in \Z^{2n}} g(y_{1:n})^2 \, \#\{1 \leq i \ne j \leq n \, : \, y_j - y_i = z_0 \, \mbox{ and } y_{1:n \setminus j} \in \cZ_0\} \\
&\leq 2 \, \|g\|_{\ell^2(\Z^{2n})}^2 \, , \label{e:BoundL2QProof}
\end{equs}
where in the last bound we noticed that, for any $y_{1:n} \in \Z^{2n}$, there can exist at most one pair $(i,j)$ satisfying the desired condition: indeed, such a pair must satisfy $|y_i - y_j|_\infty < l_0$ while $|y_k - y_\ell|_\infty \geq (2 l_0 + 1) - l_0 > l_0$ for every $\{k, \ell\} \ne \{i,j\}$, which determines $\{i,j\}$ (and then $(i,j)$ is determined by the relation $y_j = y_i + z_0$).

Now we turn to the second bound of~\eqref{e:BoundQ}. Since $\|(\mu - \cS)^{1/2} \cdot\|^2 = \mu \, \|\cdot\|^2 + \|(-\cS)^{1/2} \cdot\|^2$ (and similarly for $S$), it only remains to show
\begin{equ} \label{e:BoundQH1dot}
\|(- \cS)^{1/2} \, \fsQ g\|^2_{\ell^2(\Z^{2n})} \lesssim n^6 \, \|(- S)^{1/2} \, g\|_{\ell^2(\Z^{2n})}^2 \, .
\end{equ}
The latter can be proved by adapting, mutatis mutandis, the proof of~\cite[Lemma 4.3]{LQSY04} (and using, in particular,~\eqref{e:Z2BoundHC}). For the reader's convenience, and because our operator $\fsQ$ differs slightly from its counterpart therein (in particular, here $\fsQ g(x_{1:n})$ vanishes as soon as $x_{1:n}$ contains multiple isolated double sites), we give a sketch of the proof in Appendix~\ref{app:HCSC}.
\ss

As for the bounds in~\eqref{e:BoundQ*}, observe that they readily follow from those in~\eqref{e:BoundQ} by using a simple duality argument.
\ss

Finally, we tackle~\eqref{e:RemovalQ*R}. Let $h \in \fockdo{n}{}{}$. By the definition~\eqref{e:defQ} of $\fsQ$, we have
\begin{equ}
\langle (\fsQ^* - \fsR) f, h \rangle = \langle f, (\fsQ - \fsR^*) h \rangle = \langle f, \1_{DI \cap \cZ_1} \fsQ h \rangle = \langle \1_{DI \cap \cZ_1} f, \fsQ \, \1_{\cZ_1(3 l_0 + 1)} \, h \rangle \, ,
\end{equ}
where in the last equality we were able to add the indicator $\1_{\cZ_1(3 l_0 + 1)}$ in front of $h$ by using~\eqref{e:defQ} and observing that, if $x_{1:n} \in DI \cap \cZ_1 = DI \cap \cZ_1(2 l_0 + 1)$, then $(x_i, x_j + z_0, x_{1:n \setminus i,j}) \in \cZ_1(3 l_0 + 1)$.
Now, by first using the Cauchy Schwarz inequality and~\eqref{e:BoundQ}, and then~\eqref{e:Z1Bound} applied with $f$ and~\eqref{e:Z1BoundHC} applied with $h$, we deduce
\begin{equ}
|\langle \fsQ^* f - \fsR f, h \rangle| \lesssim \|\1_{\cZ_1} f\| \, \|\1_{\cZ_1(3 l_0 + 1)} h\| \lesssim \log_{*}(1/\mu) \, n^2 \, \|(\mu - \cS)^{1/2} f\| \, \|(\mu - S)^{1/2} h\| \, .
\end{equ}
By taking the supremum over $h$ such that $\|(\mu - S)^{1/2} h\| = 1$, we deduce the desired estimate~\eqref{e:RemovalQ*R} and conclude the proof.
\end{proof}

\begin{proof}[of Proposition~\ref{p:HardcoreRemoval}]

We show that, for any $(O, \cO) \in \{(S, \cS), (A_+, \cA_+), (A_-, \cA_-), (A_0, \cA_0)\}$ and for every $f \in \fockd{n}{}$, $n \geq 0$, $g \in \fockd{\rap}{}$, it holds
\begin{equ} \label{e:HCremovalOclaim}
\big|\langle O \, \fsR f, \fsR g \rangle - \langle \cO f, g\rangle\big| \lesssim_{\gamma} n^3 \, \|(- \cS)^{(1+\gamma)/2} f\| \, \|(- \cS)^{1/2} g\| \, , 
\end{equ}
Then, by noticing that $\fsQ^* \fsR^* = \mathrm{Id}$ and using~\eqref{e:BoundQ*}, we deduce
\begin{equs}
\|(- S)^{-1/2} \, (O \fsR - \fsQ^* \cO) f\| &= \|(- S)^{-1/2} \, \fsQ^* (\fsR^* O \fsR - \cO) f\| \\
&\lesssim n^3 \, \|(- \cS)^{-1/2} \, (\fsR^* O \fsR - \cO) f\| \\
&\lesssim_\gamma n^6 \, \|(- \cS)^{(1+\gamma)/2} f\| \, ,
\end{equs}
where in the last bound we used~\eqref{e:HCremovalOclaim}. Finally, by using orthogonality of chaoses, we can extend this bound to any $f \in \fockd{}{}$ up to replacing the factor $n^6$ by $\cN^6$, which results in the desired estimate~\eqref{e:HardcoreRemoval} and would conclude the proof.
\ss

Now, we prove~\eqref{e:HCremovalOclaim}. Obviously, we can assume that $g$ belongs respectively to $\fockdo{n}{}{}$, $\fockdo{n+1}{}{}$, $\fockdo{n-1}{}{}$, $\fockdo{n}{}{}$ when we prove the statement for $O = S, A_+, A_-, A_0$.
\ss

We tackle the case $(O, \cO) = (S, \cS)$. We write
\begin{equs}
\langle (-\cS) f, g\rangle - \langle (- S) \fsR f, \fsR g\rangle = n! \times n \sum_{z \in \Z^2} &s(z) \sum_{x_{1:n} \in \Z^{2n}} \big(1 -  \1_{HC}(x_{1:n}) \1_{HC}(x_1 + z, x_{2:n})\big) \\
&\times (f(x_1 + z, x_{2:n}) - f(x_{1:n})) \, (g(x_1 + z, x_{2:n}) - g(x_{1:n})) \, .
\end{equs}
Now, observe that if either $x_{1:n}$ or $(x_1 + z, x_{2:n})$ belongs to the diagonal $DI$ (with $z \in \mathrm{Supp}(s)$ so that $|z|_\infty \leq l_0$), then they both belong to $\cZ_{\geq 1}$. Thus we deduce, by the Cauchy-Schwarz inequality,
\begin{equs}
|\langle (-\cS) f, g\rangle - \langle (- S) \fsR f, \fsR g\rangle| &\lesssim n \sum_{z \in \Z^2} s(z) \, \|\1_{\cZ_{\geq 1}} \nabla_z f\| \, \|\1_{\cZ_{\geq 1}} \nabla_z g\|\ \\
&\lesssim n \, \Big(\sum_{z \in \Z^2} s(z) \, \|\1_{\cZ_{\geq 1}} \nabla_z f\|^2\Big)^{1/2} \Big(\sum_{z \in \Z^2} s(z) \, \|\1_{\cZ_{\geq 1}} \nabla_z g\|^2\Big)^{1/2} \, .
\end{equs}
By using~\eqref{e:Z1Bound} and~\eqref{e:FracLaplNabla}, we deduce
\begin{equ} \label{e:HCremovalS}
|\langle (-\cS) f, g\rangle - \langle (- S) \fsR f, \fsR g\rangle|
\lesssim_{\gamma} n \, \|(- \cS)^{(1+\gamma)/2} f\| \|(- \cS)^{(1+\gamma)/2} g\| \, ,
\end{equ}
which proves~\eqref{e:HCremovalOclaim} for $(O, \cO) = (S, \cS)$ (note that we can use the crude bound $-\cS \lesssim \cN$ to drop a factor $(-\cS)^\gamma$ in the second norm).
\ss

We tackle the case $(O, \cO) = (A_+, \cA_+)$. We write
\begin{equs}
\langle &\cA_+ f, g\rangle - \langle A_+ \fsR f, \fsR g\rangle \\
&= (n+1)! \times (-1) \, \sqrt{\rho(1-\rho)} \, n \sum_{z \in \Z^2} a(z) \sum_{x_{1:n} \in \Z^{2n}} \big(1 - \1_{HC}(x_{1:n}) \1_{HC}(x_1 + z, x_{2:n})\big) \\
&\qquad \qquad \qquad \qquad \qquad \qquad \qquad \qquad \times (f(x_1 + z, x_{2:n}) - f(x_{1:n})) \, g(x_1 + z, x_{1:n}) \, .
\end{equs}
Now, observe that if either $x_{1:n}$ or $(x_1 + z, x_{2:n})$ belongs to the diagonal $DI$ (with $z \in \mathrm{Supp}(a)$), then not only do they belong to $\cZ_{\geq 1}$ (as noticed previously), but also $(x_1+z, x_{1:n})$ belongs to $\cZ_{\geq 2}$. Thus we deduce, by the Cauchy-Schwarz inequality and the bound $|a(z)| \leq s(z)$,
\begin{equs}
|\langle \cA_+ f, g\rangle - \langle A_+ \fsR f, \fsR g\rangle| &\lesssim n^{1/2} \times n \sum_{z \in \Z^2} s(z) \, \|\1_{\cZ_{\geq 1}} \nabla_z f\| \|\1_{\cZ_{\geq 2}} g\| \\
&\lesssim n^{3/2} \, \Big(\sum_{z \in \Z^2} s(z) \, \|\1_{\cZ_{\geq 1}} \nabla_z f\|^2\Big)^{1/2} \Big(\sum_{z \in \Z^2} s(z) \, \|\1_{\cZ_{\geq 2}} g\|^2\Big)^{1/2} \, .
\end{equs}
By using~\eqref{e:Z1Bound},~\eqref{e:FracLaplNabla}, and~\eqref{e:Z2Bound}, we deduce 
\begin{equ} \label{e:HCremovalA+}
|\langle \cA_+ f, g\rangle - \langle A_+ \fsR f, \fsR g\rangle|
\lesssim_{\gamma} n^{5/2} \, \|(- \cS)^{(1+\gamma)/2} f\| \|(-\cS)^{(1+\gamma)/2} g\| \, ,
\end{equ}
which proves~\eqref{e:HCremovalOclaim} for $(O, \cO) = (A_+, \cA_+)$ (once again, we can use the crude bound $-\cS \lesssim \cN$ to drop a factor $(-\cS)^\gamma$ in the second norm).
\ss

We tackle the case $(O, \cO) = (A_-, \cA_-)$. By starting from~\eqref{e:HCremovalA+} applied with $(g, f)$ instead of $(f,g)$ (hence, applied with $n-1$ instead of $n$) and using that $(A_+)^* = - A_-$ and $(\cA_+)^* = - \cA_-$, we obtain
\begin{equ}
|\langle \cA_- f, g\rangle - \langle A_- \fsR f, \fsR g\rangle|
\lesssim_{\gamma} n^{5/2} \, \|(- \cS)^{(1+\gamma)/2} f\| \|(-\cS)^{(1+\gamma)/2} g\| \, ,
\end{equ}
which proves~\eqref{e:HCremovalOclaim} for $(O, \cO) = (A_-, \cA_-)$ (once again, we can use the crude bound $-\cS \lesssim \cN$ to drop a factor $(-\cS)^\gamma$ in the second norm).
\ss

Finally, we tackle the case $(O, \cO) = (A_0, \cA_0)$. We write
\begin{equs}
\langle \cA_0 f, g\rangle - \langle A_0 \fsR f, \fsR g \rangle = n! \times (1 - 2 \rho) \, n \sum_{z \in \Z^2} a(z) &\sum_{x_{1:n} \in \Z^{2n}} \big(1 - \1_{HC}(x_{1:n}) \1_{HC}(x_1 + z, x_{2:n})\big) \\
&\quad \times \big(f(x_1 + z, x_{2:n}) - f(x_{1:n})\big) \, g(x_{1:n}) \, .
\end{equs}
Observe that, as previously noticed, if either $x_{1:n}$ or $(x_1+z,x_{2:n})$ belongs to the diagonal $DI$ (with $z \in \mathrm{Supp}(a)$), then they both belong to $\cZ_{\geq 1}$. Thus we deduce, by the Cauchy-Schwarz inequality and the bound $|a(z)| \leq s(z)$,
\begin{equs}
|\langle \cA_0 f, g\rangle - \langle A_0 \fsR f, \fsR g \rangle| &\lesssim n \sum_{z \in \Z^2} s(z) \, \|\1_{\cZ_{\geq 1}} \nabla_z f\| \, \|\1_{\cZ_{\geq 1}} g\| \, \\
&\lesssim n \Big(\sum_{z \in \Z^2} s(z) \, \|\1_{\cZ_{\geq 1}} \nabla_z f\|^2\Big)^{1/2} \Big(\sum_{z \in \Z^2} s(z) \, \|\1_{\cZ_{\geq 1}} g\|^2\Big)^{1/2} \, .
\end{equs}
By using~\eqref{e:Z1Bound} and~\eqref{e:FracLaplNabla}, we deduce
\begin{equ}
|\langle \cA_0 f, g\rangle - \langle A_0 \fsR f, \fsR g \rangle| \lesssim_{\gamma} n \, \|(-\cS)^{(1+\gamma)/2} f\| \,  \|(- \cS)^{\gamma/2} g\|\, .
\end{equ}
Now, by exchanging $f$ and $g$ and using that $(A_0)^* = - A_0$ and $(\cA_0)^* = - \cA_0$, we also obtain
\begin{equ}
|\langle \cA_0 f, g\rangle - \langle A_0 \fsR f, \fsR g \rangle| \lesssim_{\gamma} n \, \|(-\cS)^{\gamma/2} f\| \,  \|(- \cS)^{(1+\gamma)/2} g\|\, .
\end{equ}
Hence, we can apply Lemma~\ref{lem:Interpolation} (by taking $\mathscr S$ therein as $\restriction{- \cS}{\fockd{n}{}}$, which is bounded as $\lesssim n$) to deduce that, for any $\eps > 0$,
\begin{equ}
|\langle \cA_0 f, g\rangle - \langle A_0 \fsR f, \fsR g \rangle| \lesssim_{\gamma, \eps} n^{1+\eps} \, \|(-\cS)^{\gamma - \eps} f\| \,  \|(- \cS)^{1/2} g\|\, .
\end{equ}
Since for any $\gamma' \in [0,1)$ we can obviously find $\gamma, \eps \in [0,1)$ such that $(1 +\gamma')/2 = \gamma - \eps$, we deduce~\eqref{e:HCremovalOclaim} for $(O, \cO) = (A_0, \cA_0)$ and conclude the proof.
\end{proof}

\subsection{Comparison between the soft core generator and the SBE generator}

\label{sec:FromSofttoSBE}

The main purpose of this section is to compare the generators $\cL$ and $\fL$ as follows.

\begin{proposition} \label{p:SoftcoreRemoval}
Let $\gamma \in [0,1]$. For every $f \in \fock{\rap}{}$, we have
\begin{equ} \label{e:SoftcoreRemoval}
\|(- \cS)^{-1/2} \, (\cL \fsP - \fsP \fL) f\| \lesssim \|\fN \, (- \fL_0)^{(1+\gamma)/2} f\| + \|\fN \, (-\fL_0^\fe)^{1/2} f\| \, .
\end{equ}
Further, the bound remains valid if $(\cL, \fL)$ is replaced by any of $(\cS, \fS)$, $(\cA_+, \fA_+)$, $(\cA_-, \fA_-)$, or $(\cA_0, \fA_0)$; the second summand on the right hand side is only needed for $(\cA_+, \fA_+)$.
\end{proposition}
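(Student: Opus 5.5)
We will prove the estimate separately for each pair $(\cO,\fO)\in\{(\cS,\fS),(\cA_+,\fA_+),(\cA_-,\fA_-),(\cA_0,\fA_0)\}$ and then sum. Everything happens in Fourier space. We use that $\cF(\fsP f)(k_{1:n})=\fF f(\llbracket k_{1:n}\rrbracket)$ and that all operators act as explicit multipliers or convolution-type kernels, by \eqref{e:FcS}--\eqref{e:FcA0} and \eqref{e:fS}--\eqref{e:fA0}. Fix a chaos $n$. The $H^{-1}$ norm $\|(-\cS)^{-1/2}\cdot\|$ then becomes a weighted $L^2$ norm on $[-\pi,\pi]^{2n}$ with weight $(d_0(k_{1:n})/2)^{-1}\asymp |k_{1:n}|^{-2}$, using \eqref{e:d0D0equivalence}. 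The constant $\fN$ enters only through the number of summands $i$ or pairs $i<j$, via Cauchy--Schwarz.

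\emph{Diagonal pairs.} For $(\cS,\fS)$ the difference is the multiplier $-\frac12(d_0(k_{1:n})-D_0(k_{1:n}))$ on $(-\pi,\pi)^{2n}$. Taylor expansion gives $|d_0(k)-D_0(k)|\lesssim |k|^4\wedge|k|^2\lesssim |k|^{2+2\gamma}$ for $\gamma\in[0,1]$, with the sum over the $n$ variables handled the same way. Dividing by $|k_{1:n}|$ from the $H^{-1}$ weight yields $\|(-\fL_0)^{(1+\gamma)/2}f\|$. For $(\cA_0,\fA_0)$ the multiplier difference is $(1-2\rho)\sum_i (w(k_i)-\fw\cdot k_i)$. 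Since $w$ is odd, $|w(k)-\fw\cdot k|\lesssim|k|^3\wedge|k|\lesssim|k|^{1+2\gamma}$, hence the difference is bounded by $\lesssim n^{1/2}|k_{1:n}|^{1+2\gamma}$. Dividing by $|k_{1:n}|$ gives at most $|k_{1:n}|^{2\gamma}\lesssim |k_{1:n}|^{1+\gamma}$ on the bounded domain once $\gamma\le 1$; for smaller $\gamma$ we choose the Taylor exponent appropriately.

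\emph{The pair $(\cA_+,\fA_+)$.} Here $\fsP$ does not commute with the creation structure, because $k_i+k_j$ must be read modulo $2\pi$. We split $\cF(\cA_+\fsP f)-\cF(\fsP\fA_+f)$ into two pieces. The first is the region where $|k_i+k_j|_\infty\ge\pi$, i.e. where $\Pi_{k_i,k_j}=0$ but the discrete formula still sees the aliased mode $\llbracket k_i+k_j\rrbracket$. There $|k_i|\vee|k_j|\gtrsim1$, so the $H^{-1}$ weight costs nothing, and the integrand is controlled by a Sobolev norm of $f$ at the point $\llbracket k_i+k_j\rrbracket$ through a change of variables. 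This piece is bounded by $\|\fN(-\fL_0)^{(1+\gamma)/2}f\|$ up to adjusting the order; I expect that after the change of variables one must trade a factor $|k_i|^{2}\gtrsim 1$ for regularity of $f$, and that this is where $\gamma$ appears. The second piece is the coefficient error $\iota\,(w(k_i)+w(k_j)-\fw\cdot(k_i+k_j))$. This is the delicate one: it is $O(|k_i|^3+|k_j|^3)$, but it does \emph{not} vanish when $k_i+k_j=0$, whereas the continuum coefficient does. We will write it as
\begin{equ}
w(k_i)+w(k_j)-\fw\cdot(k_i+k_j)=\big(w(k_i)-\fw\cdot k_i\big)+\big(w(k_j)-\fw\cdot k_j\big).
\end{equ}
Under Assumption~\ref{ass:Unidirectional}, $w(k)=\sum_z a(z)\sin(z\cdot k)$ depends only on $\fe\cdot k$, so each bracket is bounded by $\lesssim|\fe\cdot k_i|^3\wedge|\fe\cdot k_i|\lesssim|\fe\cdot k_i|\,|k_i|^{\gamma}$. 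After applying Cauchy--Schwarz over the pairs and changing variables $p=k_i+k_j$ as in the standard estimate of $\|(-\cS)^{-1/2}\cA_+\|$, the resulting integrand carries the factor $|k_i|^{-2}$ from the $H^{-1}$ weight against the coefficient. The key difficulty is that $f$ is evaluated at $p=k_i+k_j$ rather than at $k_i$, so the factor $|\fe\cdot k_i|$ must be converted into $|\fe\cdot p|$ or into $|p|^{1+\gamma}$. I would first try splitting into $|k_i|\le 2|p|$, where the factor transfers directly and yields $\|(-\fL_0)^{(1+\gamma)/2}f\|$, and $|k_i|>2|p|$, where $k_j\approx -k_i$. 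In the second region the integral $\int d\ell\,|\fe\cdot\ell|^2|\ell|^{2\gamma-2}\ldots$ is logarithmically divergent. This is exactly why the anisotropic term $\|\fN(-\fL_0^\fe)^{1/2}f\|$ is needed, and I expect this region to be the main obstacle: the crude bound gives only an extra factor $\log$, and one must use the $|\fe\cdot\ell|$ factor carefully to absorb it.

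\emph{The pair $(\cA_-,\fA_-)$.} This follows by duality from the $(\cA_+,\fA_+)$ analysis, using $\cA_-=-\cA_+^*$ and $\fA_-=-\fA_+^*$, with $\fsP^*$ bounded as in Lemma~\ref{lem:BoundP}. Alternatively, we estimate it directly: there the continuum coefficient $\fw\cdot k_i$ is already factorised, so the error is $|w(\ell)+w(k_i-\ell)-\fw\cdot k_i|$. This is again controlled by the single-variable Taylor bounds, together with the $H^{1+\gamma}$ norm of $f$ in the integrated variables. Since $(\cA_-,\fA_-)$ lowers the chaos and the $H^{-1}$ weight falls on the outer variable $k_i$, no anisotropic term should be needed here, in line with the statement. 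Summing the four pieces gives \eqref{e:SoftcoreRemoval}.
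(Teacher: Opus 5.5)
There is a genuine gap, and it sits exactly where you place your ``main obstacle''. Your claim that $w(k_i)+w(k_j)-\fw\cdot(k_i+k_j)$ does not vanish when $k_i+k_j=0$ is false. Since $w$ is odd, $w(k)+w(-k)=0$. Quantitatively, the identity $\sin x+\sin y=2\sin\frac{x+y}{2}\cos\frac{x-y}{2}$ gives $|\sin x+\sin y-(x+y)|\lesssim|x+y|\,(x^2+y^2)$. Moreover $z\cdot(p+q)=(z\cdot\fe)\,\fe\cdot(p+q)$ for $z\in\mathrm{Supp}(a)$; this is the one place where Assumption~\ref{ass:Unidirectional} enters. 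Together these give \eqref{e:wpqRemoval}: $|w(p)+w(q)-\fw\cdot(p+q)|\lesssim|\fe\cdot(p+q)|\,(|p|^2+|q|^2)$.

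The small factor sits on the very frequency $p+q$ at which $f$ is evaluated. Splitting the error into two single-variable brackets throws this cancellation away, and it cannot be recovered afterwards. Indeed, $|w(k_i)-\fw\cdot k_i|$ is bounded below on a set of $k_i$ of positive measure, independently of $\ell=k_i+k_j$. So for $f$ with Fourier support in $\{|k_{1:n}|\le\delta\}$, any estimate built from $|w(k_i)-\fw\cdot k_i|+|w(k_j)-\fw\cdot k_j|$ is of order $\|f\|$. By contrast, both terms on the right of \eqref{e:SoftcoreRemoval} are $O(\delta\|f\|)$. Splitting into $|k_i|\le 2|\ell|$ and $|k_i|>2|\ell|$ cannot fix this. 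The obstacle you anticipate is an artefact of the lost cancellation, and the $\cA_+$ case is left unproved.

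With \eqref{e:wpqRemoval} the case is a plain Cauchy--Schwarz. Where $\Pi_{k_i,k_j}=1$, the error kernel is $\lesssim|\fe\cdot(k_i+k_j)|\,(|k_{1:n+1}|^2\wedge1)\,|\fF f(k_i+k_j,k_{1:n+1\setminus i,j})|$. The change of variables $\ell=k_i+k_j$ then yields $|\langle\cA_+\fsP f,\fsP g\rangle-\langle\fA_+f,g\rangle|\lesssim n\,\|((-\fL_0)\wedge1)g\|\,\|(-\fL_0^\fe)^{1/2}f\|$.

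The same identity also closes the two other places where your argument is incomplete.

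\emph{Aliased region.} By $2\pi$-periodicity, $w(k_i)+w(k_j)=w(k_i)+w(\ell-k_i)$ with $\ell=\llbracket k_i+k_j\rrbracket$, so \eqref{e:wpqRemoval} bounds this coefficient by $|\fe\cdot\ell|$. Trading $|k_i|^2\gtrsim1$ produces no extra power of $|\ell|$, since the coefficient is genuinely first order in $\ell$. This piece therefore lands in the anisotropic term; for $\gamma>1/2$ the $H^{1+\gamma}$ norm alone does not even control it.

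\emph{The pair $(\cA_-,\fA_-)$.} Dualising an $H^{-1}$ bound on the $\cA_+$ error only puts $\|(-\fL_0)^{1/2}f\|$ on $f$. For $\gamma>0$ this is not controlled by $\|(-\fL_0)^{(1+\gamma)/2}f\|$ at low frequencies. What dualises correctly is the bilinear estimate above, whose output side carries $(-\fL_0)\wedge1$ because the factor $|p|^2+|q|^2$ from \eqref{e:wpqRemoval} was kept rather than spent. Swapping $f$ and $g$ gives $n\,\|((-\fL_0)\wedge1)f\|\,\|(-\fL_0^\fe)^{1/2}g\|\le n\,\|(-\fL_0)^{(1+\gamma)/2}f\|\,\|(-\fL_0)^{1/2}g\|$. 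Then $\fsP\fsP^*=\mathrm{Id}$ and Lemma~\ref{lem:BoundP} convert the bilinear bounds into \eqref{e:SoftcoreRemoval}.

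Your direct alternative for $\cA_-$ fails for the same reason as the $\cA_+$ case. The error $w(\ell)+w(k_i-\ell)-\fw\cdot k_i$ vanishes at $k_i=0$ only through the cancellation $w(\ell)+w(-\ell)=0$. Already for $f$ in the second chaos, the output weight $|k|^{-2}$ is not integrable at $k=0$ in two dimensions, so any bound built from single-variable Taylor estimates is infinite.
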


The main technical ingredient to derive these estimates is the following lemma, which essentially follows from the Taylor expansion of the sine and cosine around $0$.

\begin{lemma} \label{lem:SoftcoreRemoval}
For every $k_{1:n} \in [- \pi, \pi]^{2n}$, $n \geq 0$, we have
\begin{equ} \label{e:d0D0equivalence}
d_0(k_{1:n}) \asymp |k_{1:n}|^2 \, .
\end{equ}
For every $k_{1:n} \in \R^{2n}$, $n \geq 0$, and $p, q \in \R^2$ we have
\begin{equs}
|d_0(k_{1:n}) - D_0(k_{1:n})| &\lesssim |k_{1:n}|^4 \, , \label{e:d0Removal}  \\
\Big|\sum_{i=1}^n w(k_i) - \fw \cdot \sum_{i=1}^n k_i\Big| &\lesssim |k_{1:n}|^3 \, , \label{e:wRemoval} \\ 
|w(p) + w(q) - \fw \cdot (p+q)| &\lesssim |\fe \cdot (p+q)| \, (|p|^2 + |q|^2) \, . \label{e:wpqRemoval}
\end{equs}
\end{lemma}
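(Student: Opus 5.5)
The plan is to prove the four bounds one at a time. Each follows from elementary Taylor-remainder estimates for $\cos$ and $\sin$ applied termwise in the finite sums defining $d_0$ and $w$ in~\eqref{e:defd0w}, plus a compactness argument for the lower bound in~\eqref{e:d0D0equivalence}. Throughout, $r$ has finite support in $B_\infty(0,l_0)$, so $|z| \lesssim 1$ on $\mathrm{Supp}(s)$, and all constants depend only on $r$. Since both sides of every bound are sums over the variables $k_i$, it suffices to work with one variable $k \in \R^2$ and then sum, using $\sum_i |k_i|^4 \leq |k_{1:n}|^4$ and $\sum_i |k_i|^3 \leq |k_{1:n}|^3$.

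\emph{Bounds~\eqref{e:d0Removal} and~\eqref{e:wRemoval}.} These are global Taylor remainders. From $|1-\cos x - x^2/2| \leq x^4/24$ for all $x \in \R$, and since $D_0(k) = \sum_z s(z)(z\cdot k)^2$ by~\eqref{e:D0&drift}, we get
\[
|d_0(k) - D_0(k)| \leq \tfrac{1}{12}\sum_z s(z)\,(z\cdot k)^4 \lesssim |k|^4 .
\]
Similarly, $|\sin x - x| \leq |x|^3/6$ together with $\fw\cdot k = \sum_z a(z)\, z\cdot k$ gives $|w(k) - \fw\cdot k| \lesssim |k|^3$. Summing over $i$ yields both bounds for all $k_{1:n} \in \R^{2n}$.

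\emph{Bound~\eqref{e:d0D0equivalence}.} For the upper bound, $1-\cos x \leq x^2/2$ gives $d_0(k) \leq D_0(k) \lesssim |k|^2$. For the lower bound, I would show that the continuous function $k \mapsto d_0(k)/|k|^2$ is bounded below on $[-\pi,\pi]^2\setminus\{0\}$.
\begin{itemize}
\item Near $0$: by~\eqref{e:d0Removal} and the non-degeneracy of $D_0$, $d_0(k) \geq D_0(k) - C|k|^4 \geq c|k|^2$ for $|k| \leq \delta$.
\item Away from $0$: on the compact set $\{k \in [-\pi,\pi]^2 : |k| \geq \delta\}$ it suffices that $d_0$ does not vanish. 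If $d_0(k) = 0$, then $z\cdot k \in 2\pi\Z$ for every $z \in \mathrm{Supp}(s)$, hence for every $z$ in the lattice generated by $\mathrm{Supp}(s)$. That lattice contains $\mathrm{Supp}(r)$, so by irreducibility it is $\Z^2$. Taking $z = e_1, e_2$ gives $k \in 2\pi\Z^2$. The only such point of $[-\pi,\pi]^2$ is $k = 0$; the boundary points with a coordinate equal to $\pm\pi$ are excluded because $\pm\pi \notin 2\pi\Z$.
\end{itemize}
Summing over $i$ gives~\eqref{e:d0D0equivalence}. This compactness step, and the use of irreducibility in it, is the only place needing a little care.

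\emph{Bound~\eqref{e:wpqRemoval}.} This is the step that uses Assumption~\ref{ass:Unidirectional}, and I expect it to be the main point. Since $\mathrm{Supp}(a) \subset \R\fe$, every $z$ in the support satisfies $z = (\fe\cdot z)\fe$, so $z\cdot p = (\fe\cdot z)(\fe\cdot p)$. Set $g(x) = \sin x - x$, which is odd with $g'(x) = \cos x - 1$ and $|g'(x)| \leq x^2/2$. Write $\alpha = z\cdot p$ and $\beta = z\cdot q$. Then
\[
g(\alpha) + g(\beta) = g(\alpha) - g(-\beta) = (\alpha+\beta)\int_0^1 g'\big(-\beta + s(\alpha+\beta)\big)\,\dd s .
\]
The argument of $g'$ lies between $-\beta$ and $\alpha$, so $|g(\alpha)+g(\beta)| \leq \tfrac12|\alpha+\beta|\max(\alpha^2,\beta^2)$. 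Now $\alpha+\beta = (\fe\cdot z)\,\fe\cdot(p+q)$ and $|\alpha| \leq |z||p|$, $|\beta| \leq |z||q|$. Summing against $a(z)$ over the finite support, and using $w(p)+w(q)-\fw\cdot(p+q) = \sum_z a(z)\big(g(z\cdot p)+g(z\cdot q)\big)$, gives $\lesssim |\fe\cdot(p+q)|\,(|p|^2+|q|^2)$, which is~\eqref{e:wpqRemoval}.

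The essential mechanism is that the factor $\fe\cdot(p+q)$ can be pulled out exactly, not just approximately; without uniaxiality one would only get the weaker $|p+q|$ in its place.
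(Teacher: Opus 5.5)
Your proof is correct and follows essentially the same route as the paper: termwise Taylor remainders for \eqref{e:d0Removal} and \eqref{e:wRemoval}, and the Taylor expansion near the origin combined with continuity and irreducibility (so that $d_0$ vanishes only on $2\pi\Z^2$) for \eqref{e:d0D0equivalence}. For \eqref{e:wpqRemoval}, both arguments use the uniaxial assumption to extract the factor $\fe\cdot(p+q)$ exactly; the only cosmetic difference is how the scalar bound $|\sin x+\sin y-(x+y)|\lesssim |x+y|\,(x^2+y^2)$ is obtained (the paper uses $\sin x+\sin y = 2\sin\frac{x+y}{2}\cos\frac{x-y}{2}$, you use the mean value theorem for the odd function $\sin x - x$), and both are equally short.
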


\begin{remark} \label{rmk:wRemoval}
The bound~\eqref{e:wpqRemoval} is the only place in the paper where we directly use Assumption~\ref{ass:Unidirectional}. Without this assumption, we could not say much more than
\begin{equ} \label{e:wRemovalWeak}
|w(p) + w(q) - \fw \cdot (p+q)| \lesssim |p+q| \, (|p|^2 + |q|^2) \, .
\end{equ}
It is easily checked from the proof of Proposition~\ref{p:SoftcoreRemoval} below that, if we use~\eqref{e:wRemovalWeak} instead of~\eqref{e:wpqRemoval}, then the error term $\|\fN \, (- \fL_0^\fe)^{1/2} f\|$ appearing in~\eqref{e:SoftcoreRemoval} when comparing $\cA_+$ and $\fA_+$ would now become $\|\fN \, (- \fL_0)^{1/2} f\|$ (in particular, it would make the choice $\gamma > 0$ useless). Consequently, we would eventually end up with the weaker estimate~\eqref{e:ComparisonGenEasy} instead of~\eqref{e:ComparisonGen} in Theorem~\ref{th:ComparisonGen}.
\end{remark}

\begin{proof}[of Lemma~\ref{lem:SoftcoreRemoval}]
First,~\eqref{e:d0D0equivalence} is a consequence of the irreducibility assumption on the jump rates. Indeed, on the one hand $d_0(k) \sim D_0(k) \asymp |k|^2$ as $k \to 0$ by the Taylor expansion of $\cos$ at $0$, and on the other hand the function $d_0$ on $\R^2$ is continuous, non-negative and only vanishes on $2 \pi \Z^2$ by the irreducibility assumption. Thus $d_0(k) \asymp |k|^2$ on $[- \pi, \pi]^2$, which gives~\eqref{e:d0D0equivalence}.
\ss

Now we prove~\eqref{e:d0Removal} and~\eqref{e:wRemoval}. We use the Taylor estimates $|\cos(\theta) - (1 - \theta^2/2)| \lesssim |\theta|^4$ and $|\sin(\theta) - \theta| \lesssim |\theta|^3$, and deduce for every $k \in \R^2$
\begin{equs}
|d_0(k) - D_0(k)| &\leq 2 \sum_{z \in \Z^2} s(z) \, \Big|1 - \cos(z \cdot k) - \frac{(z \cdot k)^2}{2}\Big| \lesssim |k|^4 \, , \\
|w(k) - \fw \cdot k| &\leq \sum_{z \in \Z^2} |a(z)| \, |\sin(z \cdot k) - z \cdot k| \lesssim |k|^3 \, ,
\end{equs}
which respectively give~\eqref{e:d0Removal} and~\eqref{e:wRemoval} (by noticing that $\sum_{i=1}^n |k_i|^p \leq |k_{1:n}|^p$ for $p \geq 2$).
\ss

Finally, we turn to the proof of~\eqref{e:wpqRemoval}. Using the identity $\sin(x) + \sin(y) = 2 \sin(\frac{x + y}{2}) \cos(\frac{x - y}{2}) $ together with the Taylor estimates $|\sin(\theta) - \theta| \lesssim |\theta|^3$ and $|\cos(\theta) - 1| \lesssim |\theta|^2$ , we obtain for every $x, y \in \R$
\begin{equ}
|\sin(x) + \sin(y) - (x + y)| \lesssim |x+y|^3 + |x + y| \, |x-y|^2 \lesssim |x+y| \, (|x|^2 + |y|^2) \, .
\end{equ}
As a consequence, for every $p,q \in \R^2$ we have
\begin{equ}
|w(p) + w(q) - \fw \cdot (p+q)| \lesssim \sum_{z \in \Z^2} |a(z)| \, |\sin(z \cdot p) + \sin(z \cdot q) - z \cdot (p+q)| \lesssim |\fe \cdot (p+q)| \, (|p|^2 + |q|^2) \, ,
\end{equ}
where in the last bound we used that $\mathrm{Supp}(a) \subset \R \fe$. This shows~\eqref{e:wpqRemoval} and concludes the proof.
\end{proof}

We are now ready to prove Proposition~\ref{p:SoftcoreRemoval}.

\begin{proof}[of Proposition~\ref{p:SoftcoreRemoval}]

We show that, for any $(\cO, \fO) \in \{(\cS, \fS), (\cA_+, \fA_+), (\cA_-, \fA_-), (\cA_0, \fA_0)\}$ and for every $f \in \fock{\rap, n}{}$, $n \geq 0$, $g \in \fock{\rap}{}$, it holds
\begin{equ} \label{e:SoftcoreRemovalO}
\big|\langle \cO \, \fsP f, \fsP g \rangle - \langle \fO f, g\rangle\big| \lesssim_{\gamma} n \, \big(1_{\fO \in \{\fS, \, \fA_-, \, \fA_0\}} \|(- \fL_0)^{(1+\gamma)/2} f\| + 1_{\fO = \fA_+} \|(- \fL_0^\fe)^{1/2} f\|\big) \, \|(- \fL_0)^{1/2} g\| \, , 
\end{equ}
Then, by noticing that $\fsP \fsP^* = \mathrm{Id}$ and using~\eqref{e:BoundP}, we deduce
\begin{equs}
\|(- \cS)^{-1/2} \, (\cO \fsP - \fsP \fO) f\| &= \|(- \cS)^{-1/2} \, \fsP \, (\fsP^* \cO \fsP - \fO) f\| \\
&\lesssim \|(- \fL_0)^{-1/2} \, (\fsP^* \cO \fsP - \fO) f\| \\
&\lesssim n \, \big(1_{\fO \in \{\fS, \, \fA_-, \, \fA_0\}} \|(- \fL_0)^{(1+\gamma)/2} f\| + 1_{\fO = \fA_+} \|(- \fL_0^\fe)^{1/2} f\|\big) \, ,
\end{equs}
where in the last bound we used~\eqref{e:SoftcoreRemovalO}. Finally, by using orthogonality of chaoses, we can extend this bound to any $f \in \fock{\rap}{}$ up to replacing the factor $n$ by $\fN$, which results in the desired estimate~\eqref{e:SoftcoreRemoval} and would conclude the proof.
\ss

Now, we prove~\eqref{e:SoftcoreRemovalO}. Obviously, we can further assume that $g$ belongs respectively to $\fock{n}{}$, $\fock{n+1}{}$, $\fock{n-1}{}$, $\fock{n}{}$ when we prove the statement for $\cO = \cS, \, \cA_+, \, \cA_-, \, \cA_0$.
\ss

We start with the case $(\cO, \fO) = (\cS, \fS)$. We have
\begin{equ}
\langle \cS \fsP f, \fsP g \rangle - \langle \fS f, g \rangle = n! \ \frac{1}{2} \int_{\R^{2n}} \dd k_{1:n} \, \fF f(k_{1:n}) \, \overline{\fF g(k_{1:n})} \, (\1_{|k_{1:n}|_\infty < \pi} \, d_0(k_{1:n}) - D_0(k_{1:n})) \, .
\end{equ}
Now, we use~\eqref{e:d0Removal} and observe that
\begin{equ}
|\1_{|k_{1:n}|_\infty < \pi} \, d_0(k_{1:n}) - D_0(k_{1:n})| \lesssim \1_{|k_{1:n}|_\infty < \pi} \, |k_{1:n}|^4 + \1_{|k_{1:n}|_\infty \geq \pi} \, |k_{1:n}|^2 \lesssim n \, (|k_{1:n}|^4 \wedge |k_{1:n}|^2) \, ,
\end{equ}
where for the second bound we used that $|k_{1:n}|^2 \lesssim n$ if $|k_{1:n}|_\infty < \pi$ and that $1 \lesssim |k_{1:n}|^2$ if $|k_{1:n}|_\infty \geq \pi$. By the Cauchy-Schwarz inequality, we then deduce
\begin{equs}
|\langle &\cS \fsP f, \fsP g \rangle - \langle \fS f, g \rangle| \\
&\lesssim n! \, n \, \Big(\int_{\R^{2n}} \dd k_{1:n} \, (|k_{1:n}|^4 \wedge |k_{1:n}|^2) \, |\fF f(k_{1:n})|^2\Big)^{1/2} \Big(\int_{\R^{2n}} \dd k_{1:n} \, (|k_{1:n}|^4 \wedge |k_{1:n}|^2) \, |\fF g(k_{1:n})|^2\Big)^{1/2} \\
&\lesssim n \|((- \fL_0) \wedge (- \fL_0)^{1/2}) \, f\| \, \|((- \fL_0) \wedge (- \fL_0)^{1/2}) \, g\| \label{e:SCremovalSNeat} \\
&\leq n \, \|(- \fL_0)^{(1+\gamma)/2} f\| \, \|(- \fL_0)^{1/2} \, g\| \, , \label{e:SCremovalSFinal}
\end{equs}
which shows~\eqref{e:SoftcoreRemovalO} for $(\cO, \fO) = (\cS, \fS)$.
\ss

We turn to the case $(\cO, \fO) = (\cA_+, \fA_+)$. We have
\begin{equ} \label{e:SoftcoreRemovalA+Proof}
\langle \cA_+ \fsP f, \fsP g \rangle - \langle \fA_+ f, g \rangle = (n+1)! \, n \, \frac{\iota \, \sqrt{\rho(1-\rho)}}{2 \pi}  \int_{\R^{2(n+1)}} \dd k_{1:n+1} \, \overline{\fF g(k_{1:n+1})} \, K_+^f(k_{1:n+1}) \, , \end{equ}
where $K_+^f(k_{1:n+1})$ is a shorthand notation for
\begin{equ}
\1_{|k_{1:n+1}|_\infty < \pi} \, (w(k_1) + w(k_2)) \, \fF f (\llbracket k_1 + k_2 \rrbracket, k_{3:n+1}) - \Pi_{k_1, k_2} \, \fw \cdot (k_1 + k_2) \, \fF f (k_1 + k_2, k_{3:n+1}) \, . \label{e:SoftCoreRemovalA+Kernel}
\end{equ}
Note that $K_+^f(k_{1:n+1})$ vanishes if $|k_{1:2}|_\infty \geq \pi$, so we can assume that $|k_{1:2}|_\infty < \pi$. Now, we estimate $K_+^f(k_{1:n+1})$ by distinguishing the following three regions
\begin{equ}
R_1 := \{|k_{1:n+1}|_\infty < \pi, \, \Pi_{k_1, k_2} = 1\}, \ R_2 := \{|k_{1:n+1}|_\infty < \pi, \,\Pi_{k_1, k_2} = 0\}, \ R_3 := \{|k_{1:n+1}|_\infty \geq \pi\} \, . \label{e:SoftCoreRemovalA+Regions}
\end{equ}

We start with the region $R_1$. We first observe that $\llbracket k_1 + k_2 \rrbracket = k_1 + k_2$ because $k_1 + k_2 \in (- \pi, \pi)^2$ since $\Pi_{k_1, k_2} = 1$ on $R_1$. Then we use~\eqref{e:wpqRemoval} and obtain
\begin{equs}
|K_+^f(k_{1:n+1})| &\lesssim |\fe \cdot (k_1 + k_2)| \, (|k_1|^2 + |k_2|^2) \, |\fF f (k_1 + k_2, k_{3:n+1})| \\
&\lesssim |\fe \cdot (k_1 + k_2)| \, (|k_{1:n+1}|^2 \wedge 1) \, |\fF f (k_1 + k_2, k_{3:n+1})| \, , \label{e:SoftcoreRemovalA+Technical1}
\end{equs}
where in the second bound we used that $|k_{1:2}|_\infty < \pi$.

We continue with the region $R_2$ (note that only the first summand of~\eqref{e:SoftCoreRemovalA+Kernel} survives here). We first observe that
\begin{equ}
|w(k_1) + w(k_2)| = |w(k_1) + w(\llbracket k_1 + k_2 \rrbracket - k_1)| \lesssim |\fe \cdot \llbracket k_1 + k_2 \rrbracket| \, (1 + |k_1|^2 + |\llbracket k_1 + k_2 \rrbracket - k_1|^2) \lesssim |\fe \cdot \llbracket k_1 + k_2 \rrbracket| \, ,
\end{equ}
where we first used the $2 \pi \Z^2$ periodicity of $w$, then~\eqref{e:wpqRemoval} and finally $|k_{1:2}|_\infty < \pi$. Therefore, we deduce
\begin{equs}
|K_+^f(k_{1:n+1})| &\lesssim |\fe \cdot \llbracket k_1 + k_2 \rrbracket| \, |\fF f (\llbracket k_1 + k_2 \rrbracket, k_{3:n+1})| \\
&\lesssim |\fe \cdot \llbracket k_1 + k_2 \rrbracket| \, (|k_{1:n+1}|^2 \wedge 1) \, |\fF f (\llbracket k_1 + k_2 \rrbracket, k_{3:n+1})| \, , \label{e:SoftcoreRemovalA+Technical3}
\end{equs}
where we used in the second bound that $|k_{1:2}|_\infty \geq \pi/2$ since $\Pi_{k_1, k_2} = 0$ on $R_2$.

We finish with the region $R_3$ (note that only the second summand of~\eqref{e:SoftCoreRemovalA+Kernel} survives here), as follows
\begin{equs}
|K_+^f(k_{1:n+1})| &\lesssim |\fe \cdot (k_1 + k_2)| \, |\fF f (k_1 + k_2, k_{3:n+1})| \\
&\lesssim |\fe \cdot (k_1 + k_2)| \, (|k_{1:n+1}|^2 \wedge 1) \, |\fF f (k_1 + k_2, k_{3:n+1})| \, , \label{e:SoftcoreRemovalA+Technical2}
\end{equs}
where in the second bound we used that $|k_{1:n+1}|_\infty \geq \pi$ on $R_3$.

Coming back to~\eqref{e:SoftcoreRemovalA+Proof}, recalling that we can restrict the integral to $k_{1:2} \in [- \pi, \pi]^4$, gathering~\eqref{e:SoftcoreRemovalA+Technical1},~\eqref{e:SoftcoreRemovalA+Technical2} and~\eqref{e:SoftcoreRemovalA+Technical3}, and using the Cauchy-Schwarz inequality, we deduce
\begin{equs}
|\langle &\cA_+ \fsP f, \fsP g \rangle - \langle \fA_+ f, g \rangle| \lesssim n! \, n^2 \, \Big(\int_{\R^{2(n+1)}} \dd k_{1:n+1} \, (|k_{1:n+1}|^4 \wedge 1) \, |\fF g(k_{1:n+1})|^2\Big)^{1/2} \\
&\qquad \qquad \qquad \times \Big(\int_{[- \pi, \pi]^{4} \times \R^{2(n-1)}} \dd k_{1:n+1} \, |\fe \cdot (k_1 + k_2)|^2 \, |\fF f (k_1 + k_2, k_{3:n+1})|^2 \\
&\qquad \qquad \qquad \qquad \qquad+ \int_{[- \pi, \pi]^{4} \times \R^{2(n-1)}} \dd k_{1:n+1} \, |\fe \cdot \llbracket k_1 + k_2 \rrbracket|^2 \, |\fF f (\llbracket k_1 + k_2 \rrbracket, k_{3:n+1})|^2\Big)^{1/2} \\
&\lesssim n^2 \, n! \, \Big(\int_{\R^{2(n+1)}} \dd k_{1:n+1} \, (|k_{1:n+1}|^4 \wedge 1) \, |\fF g(k_{1:n+1})|^2\Big)^{1/2} \Big(\int_{\R^{2n}} \dd \ell_{1:n} \, |\fe \cdot \ell_1|^2 \, |\fF f (\ell_{1:n})|^2\Big)^{1/2} \\
&\lesssim n \, \|((- \fL_0) \wedge 1) \, g\| \, \|(- \fL_0^\fe)^{1/2} f\| \label{e:SoftcoreRemovalA+Neat} \\
&\leq n \, \|(- \fL_0)^{1/2} g\| \, \|(- \fL_0^\fe)^{1/2} f\| \, , \label{e:SoftcoreRemovalA+Final} 
\end{equs}
where for the second bound we made the changes of variables $\ell_{1:n} = (k_1 + k_2, k_{3:n+1})$ and $\ell_{1:n} = (\llbracket k_1 + k_2 \rrbracket, k_{3:n+1})$ in the two last integrals, respectively.
This concludes the proof of~\eqref{e:SoftcoreRemovalO} for $(\cO, \fO) = (\cA_+, \fA_+)$.
\ss

We turn to the case $(\cO, \fO) = (\cA_-, \fA_-)$. We start from~\eqref{e:SoftcoreRemovalA+Neat}, but applied with $(g,f)$ instead of $(f,g)$ (and hence $n-1$ instead of $n$), and then we use $(\cA_+)^* = - \cA_-$ and $(\fA_+)^* = - \fA_-$, which leads to
\begin{equ}
|\langle \cA_- \fsP f, \fsP g \rangle - \langle \fA_- f, g \rangle| \lesssim n \, \|((- \fL_0) \wedge 1) f\| \, \|(- \fL_0^\fe)^{1/2} g\| \leq n \, \|(- \fL_0)^{(1+\gamma)/2} \, f\| \, \|(- \fL_0)^{1/2} g\| \, .
\end{equ}
This shows~\eqref{e:SoftcoreRemovalO} for $(\cO, \fO) = (\cA_-, \fA_-)$.
\ss

Finally, we tackle the case $(\cO, \fO) = (\cA_0, \fA_0)$. We have
\begin{equ}
\langle \cA_0 \fsP f, \fsP g \rangle - \langle \fA_0 f, g \rangle = n! \, (1 - 2 \rho) \int_{\R^{2n}} \dd k_{1:n} \, \fF f(k_{1:n}) \, \overline{\fF g(k_{1:n})} \, \Big(\1_{|k_{1:n}|_\infty < \pi} \sum_{i=1}^n w(k_i) - \sum_{i=1}^n \fw \cdot k_i\Big) \, .
\end{equ}
Now, we use~\eqref{e:wRemoval} and observe that
\begin{equ}
\Big|\1_{|k_{1:n}|_\infty < \pi} \sum_{i=1}^n w(k_i) - \sum_{i=1}^n \fw \cdot k_i\Big| \lesssim \1_{|k_{1:n}|_\infty < \pi} \, |k_{1:n}|^3 + \1_{|k_{1:n}|_\infty \geq \pi} \, n^{1/2} \, |k_{1:n}| \lesssim n \, (|k_{1:n}|^3 \wedge |k_{1:n}|) \, ,
\end{equ}
where in the first bound we used that $\sum_{i=1}^n |k_i| \leq n^{1/2} \, |k_{1:n}|$, and in the second bound we used that $|k_{1:n}|^2 \lesssim n$ if $|k_{1:n}|_\infty < \pi$ and that $1 \lesssim |k_{1:n}|^2$ if $|k_{1:n}|_\infty \geq \pi$. By the Cauchy-Schwarz inequality, we then deduce
\begin{equs}
|\langle &\cA_0 \fsP f, \fsP g \rangle - \langle \fA_0 f, g \rangle | \\
&\lesssim n! \, n\, \Big(\int_{\R^{2n}} \dd k_{1:n} \, (|k_{1:n}|^4 \wedge 1) \, |\fF f(k_{1:n})|^2\Big)^{1/2} \Big(\int_{\R^{2n}} \dd k_{1:n} \, |k_{1:n}|^2 \, |\fF g(k_{1:n})|^2\Big)^{1/2} \\
&\lesssim n \, \|((- \fL_0) \wedge 1) \, f\| \, \|(- \fL_0)^{1/2} \, g\| \\
&\leq n \, \|(- \fL_0)^{(1+\gamma)/2} f\| \, \|(- \fL_0)^{1/2} g\| ,
\end{equs}
which gives~\eqref{e:SoftcoreRemovalO} for $(\cO, \fO) = (\cA_0, \fA_0)$, and concludes the proof.
\end{proof}

\subsection{Proof of Theorem~\ref{th:ComparisonGen}}

\label{sec:FromASEPtoSBEccl}

We can finally prove Theorem~\ref{th:ComparisonGen} by gathering the results of the two previous sections.

\begin{proof}[of Theorem~\ref{th:ComparisonGen}]

First, we write
\begin{equ}
\|(- S)^{-1/2} \, (L \fsR \fsP - \fsQ^* \fsP \fL) \phi\| \leq \|(- S)^{-1/2} \, (L \fsR - \fsQ^* \cL) \fsP \phi\| + \|(- S)^{-1/2} \, \fsQ^* (\cL \fsP - \fsP \fL) \phi\| \, .
\end{equ}
To bound the first summand we use Proposition~\ref{p:HardcoreRemoval} and~\eqref{e:BoundP} (the latter trivially holds with an extra factor $\cN^{6}$ and $\fN^{6}$ on both sides, by orthogonality of chaoses), which gives
\begin{equ}
\|(- S)^{-1/2} \, [L \fsR - \fsQ^* \cL] \fsP \phi\| \lesssim_\gamma \|\cN^{6} \, (- \cS)^{(1+\gamma)/2} \, \fsP \phi\| \lesssim \|\fN^{6} \, (- \fL_0)^{(1+\gamma)/2} \phi\| \, .
\end{equ}
To bound the second summand we use~\eqref{e:BoundQ*} and Proposition~\ref{p:SoftcoreRemoval} (the latter trivially holds with an extra factor $\cN^3$ and $\fN^3$ on both sides, by orthogonality of chaoses), which gives
\begin{equs}
\|(- S)^{-1/2} \, \fsQ^* (\cL \fsP - \fsP \fL) \phi\| &\lesssim \|\cN^3 \, (- \cS)^{-1/2} \, (\cL \fsP - \fsP \fL) \phi\| \\
&\lesssim \|\fN^{4} (- \fL_0)^{(1+\gamma)/2} \phi\| + \|\fN^{4} (- \fL_0^\fe)^{1/2} \phi\| \, ,
\end{equs}
By gathering the previous bounds, we deduce the desired estimate~\eqref{e:ComparisonGen}. Obviously, we can proceed similarly with any of $(S, \cS, \fS)$, $(A_+, \cA_+, \fA_+)$, $(A_-, \cA_-, \fA_-)$ or $(A_0, \cA_0, \fA_0)$ instead of $(L, \cL, \fL)$, since Propositions~\ref{p:HardcoreRemoval} and~\ref{p:SoftcoreRemoval} apply to each of them (besides, the last summand $\|\fN^4 (- \fL_0^\fe)^{1/2} \phi\|$ is only required to deal with $(A_+, \cA_+, \fA_+)$). This concludes the proof.
\end{proof}

From the results of the two previous sections, we also deduce the following estimate that compares the two $H^1$ norms related to $S$ and $\fS$, which will be useful later to estimate the diffusivity.

\begin{lemma} \label{lem:ComparisonH1}
For every $\phi \in \fock{\rap}{}$ and $\gamma \in [0,1)$, we have
\begin{equ} \label{e:ComparisonH1}
\Big|\|(- S)^{1/2} \, \fsR \fsP \phi\|^2 - \|(- \fS)^{1/2} \phi\|^2\Big| \lesssim_\gamma \|\fN \, (- \fL_0)^{(1 + \gamma)/2} \phi\|^2 \, .
\end{equ}
\end{lemma}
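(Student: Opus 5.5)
We split the difference through the softcore Dirichlet form, via the triangle inequality:
\begin{equ}
\Big|\|(- S)^{1/2} \fsR \fsP \phi\|^2 - \|(- \fS)^{1/2} \phi\|^2\Big| \leq \Big|\langle (-S) \fsR f, \fsR f\rangle - \langle (-\cS) f, f\rangle\Big| + \Big|\langle (-\cS) \fsP \phi, \fsP \phi\rangle - \langle (-\fS)\phi,\phi\rangle\Big| \, ,
\end{equ}
where $f \eqdef \fsP \phi$. By orthogonality of chaoses (all the operators involved preserve the chaos degree), it is enough to treat $\phi \in \fock{\rap,n}{}$ and to show that each term is bounded by $n^2 \, \|(-\fL_0)^{(1+\gamma)/2}\phi\|^2$. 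Summing over $n$ then produces the factor $\fN$ inside the norm.

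For the first term I would apply the bound~\eqref{e:HCremovalS} from the proof of Proposition~\ref{p:HardcoreRemoval} with $g = f$. That bound is obtained from~\eqref{e:Z1Bound} and~\eqref{e:FracLaplNabla} and does not need symmetry of $g$ beyond $f\in\fockd{n}{}$. It gives
\begin{equ}
\big|\langle (-S) \fsR f, \fsR f\rangle - \langle (-\cS) f, f\rangle\big| \lesssim_\gamma n \, \|(-\cS)^{(1+\gamma)/2} f\|^2 \, .
\end{equ}
Lemma~\ref{lem:BoundP} then yields $\|(-\cS)^{(1+\gamma)/2}\fsP\phi\| \lesssim \|(-\fL_0)^{(1+\gamma)/2}\phi\|$. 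Two points need checking here. First, the left-hand quantity really equals $\|(-S)^{1/2}\fsR f\|^2$, which holds because $\fsR f \in \fockdo{}{}$ and $S$ is its own symmetric part. Second, the range $\gamma<1$ is needed, since it is required in~\eqref{e:Z1Bound}.

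For the second term I would use the intermediate estimate~\eqref{e:SCremovalSNeat} from the proof of Proposition~\ref{p:SoftcoreRemoval}, again with $g=f$ replaced by $\phi$. This gives the bound $n\,\|((-\fL_0)\wedge(-\fL_0)^{1/2})\phi\|^2$. This is controlled by $n\,\|(-\fL_0)^{(1+\gamma)/2}\phi\|^2$ pointwise in Fourier, because $x^2\wedge x \leq x^{1+\gamma}$ for $x\geq 0$ and $\gamma\in[0,1]$.

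Adding the two bounds gives the claim with factor $n \leq n^2$, hence with $\fN$. There is no real obstacle here. The only care needed is to make sure the cited intermediate estimates were proved for general, not necessarily distinct, arguments $f,g$; both proofs use only the Cauchy--Schwarz inequality, so taking $g=f$ is legitimate.
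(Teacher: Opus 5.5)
Your proposal is correct and is essentially the paper's own proof: you use the same splitting through the softcore form $\langle(-\cS)\fsP\phi,\fsP\phi\rangle$. The hardcore--softcore difference is handled by \eqref{e:HCremovalS} combined with \eqref{e:BoundP}, and the softcore--continuum difference by \eqref{e:SCremovalSNeat} combined with the pointwise bound $x^2\wedge x\le x^{1+\gamma}$.

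Allowing a factor $n^2$ per chaos is also the safer bookkeeping. Counting powers of $n$ in the derivation of \eqref{e:HCremovalS} seems to give $n^2$ rather than $n$: the factor $\cN$ from \eqref{e:Z1Bound} enters twice, against $2/n$ from \eqref{e:FracLaplNabla}. That is still exactly what the factor $\fN$ in the statement absorbs.
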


\begin{proof}
By orthogonality of chaoses, it is sufficient to prove~\eqref{e:ComparisonH1} for every $\phi \in \fock{\rap,n}{}$ and $n \geq 1$. By using~\eqref{e:HCremovalS} (together with~\eqref{e:BoundP}) and~\eqref{e:SCremovalSNeat}, we obtain
\begin{equs}
\big|\langle (- S) \fsR \fsP \phi, \fsR \fsP \phi \rangle - \langle (- \cS) \fsP \phi, \fsP \phi \rangle\big| &\lesssim_\gamma n \, \|(- \cS)^{(1+\gamma)/2} \, \fsP \phi\|^2 \lesssim_\gamma n \, \|(- \fL_0)^{(1+\gamma)/2} \phi\|^2 \, , \\
\big|\langle (- \cS) \fsP \phi, \fsP \phi \rangle - \langle (-\fS) \phi, \phi \rangle\big| &\lesssim n \, \|((- \fL_0) \wedge (- \fL_0)^{1/2}) \, \phi\|^2 \leq n \, \|(- \fL_0)^{(1+\gamma)/2} \phi\|^2 \, ,
\end{equs}
which gives, after summing the two bounds, the desired estimate~\eqref{e:ComparisonH1} and concludes the proof.
\end{proof}

\section{From the SBE Ansatz to the fluctuation-dissipation relation}

\label{sec:AnsatzSBE}

In this section we establish a fluctuation-dissipation relation for the ASEP, of the form
\begin{equ} \label{e:FormalFluctuationDissipationASEP}
    \tau A_+ \fsi_t^\tau \vphi \approx d^{\lim} \frac{\log_{*}(1/\mu)^{2/3}}{\log_{*}(\tau)^{2/3}} \, \fsi_t^\tau \fL_0^\fe \vphi - (\partial_t + \tau L) \, \mathrm{Cor}^{\tau, \mu}_{t}(\vphi) \quad \mbox{ in $H^{-1}$ norm,}
\end{equ}
for any $\vphi \in \fock{\rap}{}$, where $\fsi_t^\tau$ is the rescaling operator given by~\eqref{e:fsittau} and $\mathrm{Cor}^{\tau, \mu}_{t}(\vphi)$ is a corrector that we construct below. We refer to Proposition~\ref{p:AnsatzA+ASEP} for a more precise statement. We start by recalling the definition of the SBE Ansatz $\mathfrak{Ans}_\mu(\bm \vphi)$ from~\cite{CMT25} and giving related estimates in Section~\ref{sec:AnsatzDef}. We then use the Ansatz in Section~\ref{sec:AnsatzA+} to construct a fluctuation-dissipation relation for the SBE. Finally, by using the comparison of generators from Theorem~\ref{th:ComparisonGen}, we deduce the fluctuation-dissipation relation~\eqref{e:FormalFluctuationDissipationASEP} in Section~\ref{sec:AnsatzA+ASEP}. Incidentally, we construct a specific corrector that captures the asymptotic of the diffusivity in Section~\ref{sec:AnsatzB+}.

\subsection{The generic SBE Ansatz: definition and properties}

\label{sec:AnsatzDef}

Let $\mu > 0$ be a parameter which, informally, represents the inverse of the time scale of interest (in particular, we have in mind $\mu \ll 1$). We also introduce an auxiliary parameter $\nu_0 \geq 0$, which will be chosen sufficiently large and fixed in Appendix~\ref{app:AnsatzSBE} (see Definition~\ref{def:nu}).
\ss

We first define modulations of the diagonal operator $\fL_0^\fe$ (recall Definition~\ref{def:DiagonalOperator}). Given a scalar function $f \colon \R_+ \to \R_+$, let $\fL_0^\fe[f]_\mu$ be the diagonal operator given by
\begin{equ} \label{e:Modulation}
\fL_0^\fe[f]_\mu \eqdef f(\Log(\mu - \fL_0)) \, \fL_0^\fe \, ,
\end{equ}
where $\Log(x) \eqdef \log(1/x) \, \1_{0 < x \leq 1}$. The modulation we are primarily interested in is the solution $\Upsilon \colon \R_+ \to \R_+$ of the ODE
\begin{equ} \label{e:ODE}
\dot{\Upsilon} = \frac{C_\Upsilon}{\sqrt{\det(D_0) + D_0[\ff, \ff] \, \Upsilon}} \, , \quad \Upsilon(0) = \nu_0 \, , \ \ \mbox{ where } C_\Upsilon \eqdef \frac{\lambda^2 \, \rho (1 - \rho)}{\pi} \, ,
\end{equ}
which is explicitly given by
\begin{equ} \label{e:ODEexplicit}
\Upsilon(\ell) \eqdef \Big(\frac{3}{2} \frac{C_\Upsilon}{D_0[\ff, \ff]^{1/2}} \, \ell + \Big(\frac{\det(D_0)}{D_0[\ff, \ff]} + \nu_0\Big)^{3/2}\Big)^{2/3} - \frac{\det(D_0)}{D_0[\ff, \ff]} \, .
\end{equ}
Heuristically, the operator $\Log(\mu - \fL_0)$ appearing in~\eqref{e:Modulation} parametrises spatial scales logarithmically, while $\Upsilon(\ell)$ encodes the effective diffusivity at logarithmic scale $\ell$. In this perspective, the ODE~\eqref{e:ODE} can be thought of as a flow equation that describes the growth of the diffusivity with logarithmic scale. Observe that $\Upsilon(\ell) \sim d^{\lim} \, \ell^{2/3}$ as $\ell \to \infty$, which echoes the diffusivity asymptotics of Theorem~\ref{th:Diff}. For later use, we mention that we have the finer estimate
\begin{equ} \label{e:AsymptoticUps}
|\Upsilon(\ell) - d^{\lim} \ell^{2/3}| \lesssim 1 \, .
\end{equ}

The operator $\fL_0^\fe[\Upsilon]_\mu$ plays the role of $\fh_\mu$ in the schematic definition of the Ansatz~\eqref{e:AnsatzRoadmap}. In particular, the modulation $\Upsilon$ is chosen to yield the approximate fixed-point equation~\eqref{e:FixedPointRoadmap} (cf.~\cite[Lemma 3.6]{CMT25}). We now define the diagonal operator
\begin{equ} \label{e:Leff}
\fL_{\mu}^{\eff} \eqdef \fS + \fL_0^\fe[\Upsilon]_\mu \, ,
\end{equ}
which intuitively captures the effective generator of the system across scales, up to the time scale $1/\mu$. Motivated by~\eqref{e:AnsatzRoadmap}, it would be natural to consider an Ansatz of the form
\begin{equ} \label{e:NaiveAnsatz}
\sum_{n \geq 0} \big((\mu - \fL_{\mu}^{\eff})^{-1} \, \fA_+\big)^{n} \, (\mu - \fL_{\mu}^{\eff})^{-1} \, \bm \vphi \, .
\end{equ}
However, this Ansatz may be too singular, because the operator $\fA_+$ produces so-called off-diagonal terms that become difficult to control when iterating successively $(\mu - \fL_{\mu}^{\eff})^{-1} \, \fA_+$. Following~\cite[Section 3.2]{CMT25}, we therefore cut the problematic component of $\fA_+$ in the definition of the Ansatz. More precisely, we decompose $\fA_+$ as $\fA_+^\sharp + \fA_+^\flat$ according to the following rule: a transformation of Fourier modes of the form $k_{1:n} \to (p, k_i - p, k_{1:n \setminus i})$ is assigned to $\fA_+^\sharp$ if the created modes $p$ and $k_i - p$ have norms at least three times that of the original modes $k_{1:n}$; otherwise, it is assigned to $\fA_+^\flat$.

\begin{definition} \label{def:Sharp}
We define the operators $\bar \fA^\sharp := \fA_+^\sharp + \fA_-^\sharp$ and $\bar \fA^\flat := \fA_+^\flat + \fA_-^\flat$ acting (continuously) from $\fock{\rap}{}$ to itself as follows. For $f \in \fock{\rap}{}$, we have
\begin{equs}
\fF (\fA_+^{\sharp / \flat} f) \, (k_{1:n+1}) &\eqdef \frac{\sqrt{\rho(1-\rho)}}{\pi (n+1)} \sum_{1 \leq i < j \leq n+1} \chi^{\sharp / \flat}(k_i, k_j; k_i + k_j, k_{1:n+1 \setminus i,j}) \label{e:fA+Sharp} \\
&\qquad \qquad \qquad \qquad \qquad \qquad \times \iota (\fw \cdot (k_i+k_j)) \, \Pi_{k_i, k_j} \, \fF f (k_i + k_j, k_{1:n+1 \setminus i,j}) \\
\fF (\fA_-^{\sharp / \flat} f) \, (k_{1:n-1}) &\eqdef \frac{\sqrt{\rho(1-\rho)} \, n}{2 \pi} \sum_{i=1}^{n-1} \iota (\fw \cdot k_i) \, \int_{\R^2} \dd \ell \ \chi^{\sharp / \flat}(\ell, k_i - \ell; k_{1:n-1}) \label{e:fA-Sharp} \\
&\qquad \qquad \qquad \qquad \qquad \qquad \qquad \qquad \times \Pi_{\ell, k_i-\ell} \, \fF f (\ell, k_i-\ell, k_{1:n-1 \setminus i}) \, ,
\end{equs}
where $\Pi_{\cdot, \cdot}$ was defined in~\eqref{e:CutOff}, and for $a, b \in \R^2$ and $c_{1:n} \in \R^{2n}$ we set
\begin{equ}
\chi^\sharp(a, b ; c_{1:n}) \eqdef \1_{|a| \wedge |b| \geq 3 \, |c_{1:n}|} \, , \quad \chi^\flat \eqdef 1 - \chi^\sharp \, .
\end{equ}
By construction, $\fA_{\pm} = \fA_{\pm}^\sharp + \fA_{\pm}^\flat$ (compare~\eqref{e:fA+Sharp} and~\eqref{e:fA-Sharp} with~\eqref{e:fA+} and~\eqref{e:fA-}). Moreover, $\bar \fA^\sharp$ and $\bar \fA^\flat$ are skew-symmetric on $\fock{\rap}{} \subset \fock{}{}$; more precisely, $(\fA_+^\sharp)^* = - \fA_-^\sharp$ and $(\fA_+^\flat)^* = - \fA_-^\flat$ (see~\cite[Lemma 3.2]{CMT25}).
\end{definition}

We now define the Ansatz $\mathfrak{Ans}_{\mu} \colon \fock{\rap}{} \to \fock{\rap}{}$ by
\begin{equ} \label{e:TrueAnsatz}
\mathfrak{Ans}_{\mu}(\bm \vphi) \eqdef \sum_{n \geq 0} \big((\mu - \fL_{\mu}^{\eff})^{-1} \, \fA_+^\sharp\big)^n \, (\mu - \fL_{\mu}^{\eff})^{-1} \bm \vphi \, .
\end{equ}
For example, if $\bm \vphi$ belongs to a single chaos $n_0$, then the summand of index $n \geq 0$ in~\eqref{e:TrueAnsatz} belongs to chaos $n_0 + n$: hence, the series decomposes the Ansatz chaos by chaos. The following lemma, analogous to~\cite[Lemma 4.2]{CMT25}, shows that~\eqref{e:TrueAnsatz} is well defined. Its proof is given in Step 1 of Appendix~\ref{app:GenericAnsatz}.

\begin{lemma} \label{lem:SummabilityAnsatz}
For every $\mu > 0$, the series in~\eqref{e:TrueAnsatz} converges normally in $\fock{\rap}{}$ in the following sense: for all $a, b \geq 0$ and $\bm \vphi \in \fock{\rap}{}$, we have
\begin{equ} \label{e:SummabilityAnsatz}
\sum_{n = 0}^\infty \|\fN_*^a \, (1 - \fL_0)^b \ \big((\mu - \fL_{\mu}^{\eff})^{-1} \, \fA_+^\sharp\big)^n \, (\mu - \fL_{\mu}^{\eff})^{-1} \bm \vphi\| \lesssim_{\mu, a, b} \|\fN_*^a \, (1 - \fL_0)^b \, \bm \vphi\| \, .
\end{equ}
Consequently,~\eqref{e:TrueAnsatz} defines a continuous operator $\mathfrak{Ans}_{\mu} \colon \fock{\rap}{} \to \fock{\rap}{}$, and the sum may be interchanged with any continuous operator $\fO \colon \fock{\rap}{} \to \fock{\rap}{}$.
\end{lemma}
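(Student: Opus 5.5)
The plan is to show that one step $T_\mu := (\mu - \fL_{\mu}^{\eff})^{-1} \fA_+^\sharp$ is a contraction by a factor that decays \emph{geometrically} in the number of steps, uniformly in the weights $\fN_*^a (1-\fL_0)^b$. Such a gain cannot come from the $\fA_+^\sharp$ cut-off alone. It comes from combining the resolvent's bounded action with the growth of the chaos index and the fact that the frequencies are forced upward at every step. Since $\mu>0$ is fixed and all constants may depend on $\mu$, crude bounds are acceptable.

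\textbf{Step 1 (one-step bound on chaos $n$).} For $f \in \fock{\rap,n}{}$, write $\fF(T_\mu f)(k_{1:n+1})$ explicitly from~\eqref{e:fA+Sharp}. The multiplier of $(\mu - \fL_{\mu}^{\eff})^{-1}$ is $(\mu + \tfrac12 D_0(k_{1:n+1}) + \Upsilon(\cdot)|\fe\cdot k_{1:n+1}|^2/2)^{-1}$, which is $\lesssim (\mu + |k_{1:n+1}|^2)^{-1}$ since $\Upsilon\ge 0$. The numerator is bounded by $|\fw\cdot(k_i+k_j)| \, \Pi_{k_i,k_j} \lesssim 1$, because the cut-off forces $|k_i|,|k_j|<\pi\sqrt2$. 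On the support of $\chi^\sharp$ we have $|k_i|\wedge|k_j| \ge 3|(k_i+k_j,k_{1:n+1\setminus i,j})|$, so the output frequency satisfies $|k_{1:n+1}|^2 \ge |k_i|^2+|k_j|^2 \ge 18\,|\ell_{1:n}|^2$, where $\ell_{1:n}$ denotes the input frequencies. Hence $(1+|k_{1:n+1}|^2)^b \lesssim_b (1+|\ell_{1:n}|^2)^b$ is false in general, but it only fails in the wrong direction. The correct move is the opposite: the output frequency is bounded above, $|k_{1:n+1}|^2 \lesssim n + |\ell_{1:n}|^2$, because only two modes are new and they are cut off. So $(1-\fL_0)^b$ costs at most a factor $n^b$, which the decay below has to absorb.

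\textbf{Step 2 (the gain).} The key observation is that on $\chi^\sharp\Pi$ we have $|k_i|,|k_j|\ge 3|\ell_{1:n}|$ and $|k_i|,|k_j|\lesssim 1$. Therefore the resolvent gives $(\mu+|k_i|^2+|k_j|^2)^{-1}$. I then compute the $L^2$ norm via Cauchy--Schwarz over the $\binom{n+1}{2}$ pairs, with the $1/(n+1)$ prefactor and the factorials $(n+1)!$ versus $n!$. For fixed $\ell$, integrating over the pair $(k_i, k_j)$ with $k_i+k_j=\ell_1$ fixed gives
\begin{equ}
\int_{|p|\ge 3|\ell|, |p|\lesssim 1} (\mu+|p|^2)^{-2}\,\dd p \lesssim \mu^{-1}.
\end{equ}
This yields a bound $\|T_\mu f\|^2 \lesssim \mu^{-1} n^{c}\|f\|^2$, which is not summable as it stands. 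To recover decay I will instead use the tail structure: iterating $m$ times produces frequencies $|k|\gtrsim 3^m|\ell_{\rm initial}|$ while all remain $\lesssim\sqrt{n+m}$. Combined with the resolvent factor $(\mu+|k|^2)^{-1}$, the $m$-fold iterate should pick up the product of $\mu$-resolvents at nested shells. I expect this to give $C_\mu^m/\sqrt{m!}$-type decay, coming from the ordering of the nested scales, analogous to the volume of a simplex in log-scale. It is this factorial decay that beats every polynomial weight $\fN_*^a$ and the $n^b$ cost of $(1-\fL_0)^b$.

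\textbf{Main obstacle.} The main difficulty is Step 2: making the simplex/factorial gain rigorous. I would follow the proof of~\cite[Lemma 4.2]{CMT25} closely. There, one bounds the full chaos-$(n_0+m)$ kernel of the $m$-th term by a sum over creation histories. In each history the successive created pairs have scales ordered geometrically, while all scales lie in $[\,\cdot\,,\sqrt{n}]$ with each of the $m$ pairs contributing a factor $\lesssim \mu^{-1}\log$. The number of admissible orderings of logarithmic scales in a bounded log-range is $\lesssim (C\log_*(n/\mu))^m/m!$, and this beats the $m^{a+b}$ weights. Summing over $m$ then gives~\eqref{e:SummabilityAnsatz}. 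Continuity and interchangeability with continuous $\fO$ follow immediately from normal convergence in the Fréchet topology of $\fock{\rap}{}$.
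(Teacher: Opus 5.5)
\paragraph{Where Step 2 fails.} Your Step~2 does not close, and the obstruction is structural rather than technical. You bound the numerator $|\fw\cdot(k_i+k_j)|\,\Pi_{k_i,k_j}$ by a constant and sum over the $\binom{n+1}{2}$ pairs by Cauchy--Schwarz. This gives a one-step loss $\mu^{-1}n^c$ on chaos $n$, and these losses compound: $m$ steps from chaos $n_0$ cost $\mu^{-m}\prod_{j<m}(n_0+j)^c\geq \mu^{-m}(m!)^c$.

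The same factorial reappears in your count of creation histories. Even with the exact identity~\eqref{e:SharpDiag}, one step weighs, up to constants, $\tfrac12\sum_{i=1}^n(\fe\cdot k_i)^2\int\dd p\,(\cdots)$. This is a sum over which of the $n$ modes is split. If each $|\fe\cdot k_i|$ is bounded crudely, there are $\asymp (n_0+m-1)!/(n_0-1)!$ histories of comparable weight. That offsets the $1/m!$ you hope to gain from ordering the scales, and what remains is essentially geometric in $m$ with ratio of order $\mu^{-1}$ up to logarithms. Your bookkeeping therefore does not show convergence for small $\mu$.

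Independently, any per-step loss growing with $n$ is incompatible with~\eqref{e:SummabilityAnsatz} as stated. That inequality carries the same weight $\fN_*^a$ on both sides, so it needs bounds uniform in the chaos of the input.

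A smaller correction: created modes are bounded by $\sqrt2\pi$, not $\sqrt n$. Indeed $\chi^\sharp\Pi_{k_i,k_j}\neq 0$ forces every other mode to have norm at most $\sqrt2\pi/3$. So the range of $\fA_+^\sharp$ is Fourier-supported in $\{|k|^2\le 6\pi^2\}$, and $(1-\fL_0)^b$ costs only the constant $(1+3\pi^2)^b$.

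\paragraph{The missing idea.} What you need is a uniform $L^2$ contraction for $T=(\mu-\fL^{\eff}_\mu)^{-1}\fA_+^\sharp$, obtained by keeping exactly the factor you discarded. Use~\eqref{e:SharpDiag}: off-diagonal terms vanish because $\chi^\sharp$ forces the created pair to be the two largest modes, so there is no Cauchy--Schwarz loss. Combined with~\eqref{e:Triangular} and $\Upsilon\geq\nu_0$, this gives, for $\phi$ in chaos $n$,
\[
\|T\phi\|^2\lesssim n!\int\dd k_{1:n}\,|\fF\phi(k_{1:n})|^2\sum_{i=1}^n|\fe\cdot k_i|^2\int_{\R^2}\frac{\dd p}{(|k_{1:n}|^2+|p|^2+\nu_0|\fe\cdot p|^2)^2}\,.
\]
The inner integral equals $\pi(1+\nu_0)^{-1/2}|k_{1:n}|^{-2}$, which cancels against $\sum_i|\fe\cdot k_i|^2\le|k_{1:n}|^2$. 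Hence $\|T\|\lesssim(1+\nu_0)^{-1/4}$ uniformly in $\mu$ and $n$. Taking $\nu_0$ large (Definition~\ref{def:nu}) gives $\|T\|\le 1/10$, which is Lemma~\ref{lem:GA+Contraction}.

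The lemma then follows:
\begin{itemize}
\item $10^{-m}$ absorbs $(n_0+m)_*^a\lesssim_a (n_0)_*^a\,m_*^a$;
\item the bounded Fourier support of the range of $\fA_+^\sharp$ handles $(1-\fL_0)^b$;
\item the $m=0$ term uses $\|(\mu-\fL^{\eff}_\mu)^{-1}\|\le\mu^{-1}$.
\end{itemize}

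\paragraph{Salvaging the nested-shell picture.} The mechanism that actually works there is not a simplex volume. It is the hard constraint that each created scale is at least three times the previous one, while all stay below $\sqrt2\pi$. Restrict the uniform-in-$n$ one-step estimate above to outputs of frequency at most $r$; its norm is then $\lesssim\min(r^2/\mu,1)$. This yields super-exponential decay of $\|T^m\|$ once $m\gg\log(1/\mu)$. This route too depends on keeping $\sum_i|\fe\cdot k_i|^2$.
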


\begin{remark} \label{rmk:AnsatzIdentity}
The recursive structure of the Ansatz can be expressed through the identity
\begin{equ} \label{e:AnsatzIdentity}
\mathfrak{Ans}_{\mu}(\bm \vphi) = (\mu - \fL_\mu^{\eff})^{-1} \, \fA_+^\sharp \, \mathfrak{Ans}_{\mu}(\bm \vphi) + (\mu - \fL_\mu^{\eff})^{-1} \bm \vphi \, .
\end{equ}
Indeed,~\eqref{e:AnsatzIdentity} follows directly from~\eqref{e:TrueAnsatz}, since Lemma~\ref{lem:SummabilityAnsatz} allows us to interchange the sum with the continuous operator $(\mu - \fL_\mu^{\eff})^{-1} \, \fA_+^\sharp$.
\end{remark}

\begin{remark} \label{rmk:ComparisonCMTansatz}
The Ansatz $\mathfrak{Ans}_\mu(\bm\vphi)$ differs from the construction in~\cite{CMT25} in a few technical respects, which we briefly describe.
First, we work with the ``microscopic'' SBE generator $\fL$, whereas that paper uses the superdiffusively rescaled generator $\fL^\tau = \tau \, (\fU_0^\tau)^{-1} \, \bar \fL \, \fU_0^\tau$. Thus, transferring its results to our setting requires an appropriate conjugation by $\fU^\tau$ and an adjustment of the powers of $\tau$ in the corresponding estimates.
Second, the cut-off $\Pi_{p,q}$ is defined there as $(2 \pi \fF \rho(p))^2 (2 \pi \fF \rho(q))^2 (2 \pi \fF \rho(p+q))^2$ for a smooth compactly supported test function $\rho$, rather than by the Fourier truncation~\eqref{e:CutOff}. Relatedly, the norm on $\fock{n}{}$ used there contains the additional Fourier weight $\prod_{i=1}^n (2 \pi \fF \rho(k_i))^2$.
Third, that paper treats only the case $D_0 = \mathrm{Id}$. Although one can always reduce to this case by appropriately rescaling space, this would be inconvenient here: for instance, it would replace $\Log(\mu - \fL_0)$ in~\eqref{e:Modulation} by $\Log(\mu - \fS)$.
Finally, the auxiliary parameter $\nu_0$ (denoted by $M$ in that paper) enters the construction differently. Instead of $\Upsilon$, the earlier construction uses $\nu_0 \vee \Upsilon_0$, where $\Upsilon_0$ solves~\eqref{e:ODE} with initial condition $0$. Our choice simplifies some subsequent calculations.
\end{remark}

We end this section with the main estimates for the Ansatz. They are related to~\cite[Propositions 4.3, 4.6 and 4.8]{CMT25}; see Remark~\ref{rmk:ComparisonCMTestimates} for a more detailed comparison. We first introduce, for $\theta \in [0,1]$, the function $\tilde \Upsilon^{\theta} \colon \R_+ \to \R_+$ defined by
\begin{equ} \label{e:Upstheta}
\tilde \Upsilon^\theta \eqdef (\det(D_0) + D_0[\ff, \ff] \, \Upsilon)^\theta \, .
\end{equ}

\begin{proposition} \label{p:GenericAnsatz}
Let $\mu > 0$, $\theta \in (1/2,1]$, $\alpha \in [1, \sqrt{2 \theta})$, $\kappa \geq 0$ and $\gamma \in (0,1]$. For every $\bm \vphi \in \fock{\rap}{}$ we have
\begin{equs}
\, \sqrt{\mu} \, \|\fN_{*}^\kappa \alpha^\fN \, \mathfrak{Ans}_{\mu}(\bm \vphi)\| &\lesssim_{\alpha, \kappa} \sqrt{\mu} \, K_{\alpha, \kappa}^{0}(\bm \vphi) \, , \label{e:AnsatzL2} \\
\|\fN_{*}^\kappa \alpha^\fN \, (- \fL_0)^{1/2} \, \mathfrak{Ans}_{\mu}(\bm \vphi)\| &\lesssim_{\alpha, \kappa} K_{\alpha, \kappa}^{1/2}(\bm \vphi) + K_{\alpha, \kappa}^{1/2, \fe, 1}(\bm \vphi) \, , \label{e:AnsatzH1} \\
\|\fN_{*}^\kappa \alpha^\fN \, (- \fL_0^\fe)^{1/2} \,  \mathfrak{Ans}_{\mu}(\bm \vphi)\| &\lesssim_{\alpha, \kappa, \theta} K_{\alpha, \kappa}^{1/2, \fe, \theta}(\bm \vphi)  \, , \label{e:AnsatzH1fe} \\
\|\fN_{*}^\kappa \alpha^\fN \, (- \fL_0)^{(1+\gamma)/2} \, \mathfrak{Ans}_{\mu}(\bm \vphi)\| &\lesssim_{\alpha, \kappa, \theta, \gamma} K_{\alpha, \kappa}^{(1+\gamma)/2}(\bm \vphi) + K_{\alpha, \kappa}^{1/2, \fe, \theta}(\bm \vphi) \, , \label{e:AnsatzH1+gamma} \\
\|\fN_{*}^\kappa \alpha^\fN \, (- \fL_0)^{-1/2} \, \big((\mu - \bar \fL) \, \mathfrak{Ans}_{\mu}(\bm \vphi) - \bm \vphi &+ \fA_-^\sharp (\mu - \fL_{\mu}^{\eff})^{-1} \bm \vphi\big)\| \lesssim_{\alpha, \kappa, \theta} K_{\alpha, \kappa + 1}^{1/2, \fe, \theta}(\bm \vphi) \, , \quad \label{e:AnsatzGen} \\
\Big|\|(\mu - \fS)^{1/2} \, \mathfrak{Ans}_{\mu}(\bm \vphi)\|^2 - \|(\mu &- \fL_{\mu}^{\eff})^{-1/2} \, \bm \vphi\|^2\Big| \lesssim_{\theta}
K_{1, 0}^{1/2, \fe, \theta}(\bm \vphi)^2 \, , \label{e:AnsatzH1precise}
\end{equs}
where we additionally assume for~\eqref{e:AnsatzH1precise} that $\bm \vphi$ belongs to a homogeneous chaos (that is, $\bm \vphi \in \fock{n}{}$ for some $n \geq 0$), and where we have written $\bar \fL \eqdef \fL - \fA_0$ and set
\begin{equs}[e:AnsatzConstantK]
K_{\alpha, \kappa}^s(\bm \vphi) &:= \|\fN_{*}^\kappa \alpha^\fN \, (- \fL_0)^{s} \, (\mu - \fL_{\mu}^{\eff})^{-1} \bm \vphi\| \, , \\
K_{\alpha, \kappa}^{1/2, \fe, \theta}(\bm \vphi) &:= \|\fN_{*}^\kappa \alpha^\fN \, (- \fL_0^\fe\big[\tilde \Upsilon^\theta\big]_\mu)^{1/2} \, (\mu - \fL_{\mu}^{\eff})^{-1} \bm \vphi\| \, .
\end{equs}
\end{proposition}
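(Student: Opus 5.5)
The plan follows the strategy of~\cite[Propositions 4.3, 4.6, 4.8]{CMT25}, adapted to the present conventions (no rescaling, Fourier truncation $\Pi$, general $D_0$) and strengthened by the exponential weight $\alpha^\fN$. Write $\fG \eqdef (\mu - \fL_\mu^{\eff})^{-1}$ and $\psi_n \eqdef (\fG \fA_+^\sharp)^n \fG \bm\vphi$, so that $\mathfrak{Ans}_\mu(\bm\vphi) = \sum_n \psi_n$. By orthogonality of chaoses it suffices to treat $\bm\vphi$ in a single chaos $m$. Then $\psi_n$ lies in chaos $m+n$, and every weighted norm of $\mathfrak{Ans}_\mu$ is a square sum of the corresponding norms of the $\psi_n$.

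\textbf{Key one-step estimate.} The central ingredient is a bound on $\fG \fA_+^\sharp$ from chaos $k$ to chaos $k+1$. For $h$ in chaos $k$, I would show
\[
\|(- \fL_0^\fe[\tilde\Upsilon^{\theta}]_\mu)^{1/2}\, \fG \fA_+^\sharp h\|^2 \leq c_\theta\, (1 + o_{k}(1))\, \|(- \fL_0^\fe[\tilde\Upsilon^{\theta}]_\mu)^{1/2} h\|^2 ,
\qquad c_\theta < \tfrac{1}{2\theta} .
\]
The proof computes the square of $\fA_+^\sharp h$ in Fourier. For diagonal terms (the same pair $(i,j)$ on both sides), one integrates over the created modes $p, q$ with $|p| \wedge |q| \geq 3|k_{1:k}|$. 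The $\fw\cdot(p+q)$ prefactor, combined with the multiplier $\fG$ and the ODE~\eqref{e:ODE}, yields the logarithmic integral $\int \dot\Upsilon / \tilde\Upsilon^{1-\theta}\, \dd \ell$ against $\tilde\Upsilon^\theta$. This gives a ratio $\lesssim 1/(2\theta)$, with the $\sqrt{\cdot}$ structure of~\eqref{e:ODE} producing precisely this constant; this is the content of~\cite[Lemma 3.6]{CMT25}. Off-diagonal terms, where different pairs $(i,j)$ interact, are controlled by the $\sharp$ cut-off: the created modes are much larger than the old ones, so overlapping pairs give contributions summable in $k$ and of lower order. The truncation $\Pi$ only restricts the domain and never hurts.

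\textbf{Deriving the bounds.} With contraction factor $\sqrt{c_\theta}$ per step, the weight $\alpha^{2n} c_\theta^n (m+n)^{2\kappa}$ is summable exactly when $\alpha^2 < 2\theta$. This yields~\eqref{e:AnsatzH1fe}, and also summability (Lemma~\ref{lem:SummabilityAnsatz}, where $\mu > 0$ gives crude bounds). The remaining bounds follow from this estimate.
\begin{itemize}
\item For~\eqref{e:AnsatzL2} and~\eqref{e:AnsatzH1}, the $n=0$ term gives $K^0$, respectively $K^{1/2}$. For $n \geq 1$, one uses $\mu\|\fG g\|^2 + \|(-\fL_0)^{1/2}\fG g\|^2 \lesssim \langle \fG g, g\rangle$ with $g = \fA_+^\sharp \psi_{n-1}$, and bounds $\|(\mu - \fL^\eff_\mu)^{-1/2}\fA_+^\sharp\psi_{n-1}\|$ by the $\fe$-weighted norm of $\psi_{n-1}$, giving $K^{1/2,\fe,1}$.
\item For~\eqref{e:AnsatzH1+gamma}, the extra factor $(-\fL_0)^{\gamma/2}$ is absorbed on the created large modes by the logarithmic gain $\ell^{-\gamma}$ at scale $\ell$, at the cost of a $\gamma$-dependent constant.
\item For~\eqref{e:AnsatzGen}, apply $(\mu - \bar\fL) = (\mu - \fL^\eff_\mu) + \fL_0^\fe[\Upsilon]_\mu - \bar\fA^\sharp - \bar\fA^\flat$ and use~\eqref{e:AnsatzIdentity}. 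The $\fA_+^\sharp$ term cancels, and it remains to bound three errors: $(\fL_0^\fe[\Upsilon]_\mu - \fA_-^\sharp\fG\fA_+^\sharp)\psi_n$ (the fixed point error), the $\fA_-^\sharp$ terms acting on non-matching pairs, and $\bar\fA^\flat\psi_n$. Each is estimated in $H^{-1}$ by $\fN$ times the $\fe$-norm, as in~\cite[Prop. 4.6]{CMT25}.
\item For~\eqref{e:AnsatzH1precise}, the cross terms between different $n$ vanish by chaos orthogonality, and $\|(\mu-\fS)^{1/2}\psi_0\|^2 - \|\fG^{1/2}\bm\vphi\|^2$ is controlled by the $\fL_0^\fe[\Upsilon]$ part. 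The higher-order terms telescope against the fixed point equation up to $K^{1/2,\fe,\theta}$ squared.
\end{itemize}

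\textbf{Main obstacle.} The hardest step is the sharp one-step constant with the exponential weight: one needs the off-diagonal and chaos-dependent corrections to be $o(1)$ relative to $1/(2\theta)$, uniformly in $\mu$. Polynomial losses in $k$ from counting pairs, which were harmless in~\cite{CMT25}, must here be beaten by $\alpha^{-2}$. I would first try to prove that off-diagonal contributions carry an extra factor decaying in the chaos level, obtained by exploiting the $3|c_{1:n}|$ separation in $\chi^\sharp$.
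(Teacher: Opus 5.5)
\textbf{The $L^2$ bound \eqref{e:AnsatzL2} does not follow from your argument.} For $n\geq 1$ you control $\sqrt{\mu}\,\|\psi_n\|$ by $\langle(\mu-\fL_\mu^{\eff})^{-1}\fA_+^\sharp\psi_{n-1},\fA_+^\sharp\psi_{n-1}\rangle^{1/2}$, that is, by the modulated $\fe$-norm of $\psi_{n-1}$. After summation this gives $\sqrt{\mu}\,K^0_{\alpha,\kappa}(\bm\vphi)+K^{1/2,\fe,1}_{\alpha,\kappa}(\bm\vphi)$, not $\sqrt{\mu}\,K^0_{\alpha,\kappa}(\bm\vphi)$. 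The extra term is not dominated by the first: for $\bm\vphi=\fB_+\cV^\delta$ one has $\sqrt{\mu}\,K^0\lesssim 1$, whereas $K^{1/2,\fe,1}$ is of order $\log_*(1/\mu)^{1/3}$. This loss is fatal downstream.
\begin{itemize}
\item \eqref{e:AnsatzA+L2} would only give an $O(1)$ corrector in $L^2$ instead of $O(\log_*(\tau)^{-1/3})$, so the boundary terms in Proposition~\ref{p:CVsemigroupFock} are no longer negligible.
\item In the proof of Theorem~\ref{th:Diff}, the bound on $\|\mathrm{Cor}^{B+}_{\mu,\delta}\|^2$ would be of the same order $t\log(t)^{2/3}$ as the main term.
\end{itemize}
What is missing is an $L^2$ operator bound for one step of the iteration that does not pass through $H^1$ (Lemma~\ref{lem:GA+Contraction}). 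For $\phi$ in the $n$-th chaos, by \eqref{e:SharpDiag}, \eqref{e:Triangular} and $\Upsilon\geq\nu_0$,
\[
\|(\mu-\fL_\mu^{\eff})^{-1}\fA_+^\sharp\phi\|^2\lesssim n!\int_{\R^{2n}}\dd k_{1:n}\,|\fF\phi(k_{1:n})|^2\sum_{i=1}^n|\fe\cdot k_i|^2\int_{\R^2}\frac{\dd p}{(|k_{1:n}|^2+|p|^2+\nu_0|\fe\cdot p|^2)^2}\lesssim\frac{\|\phi\|^2}{(1+\nu_0)^{1/2}} ,
\]
using $\sum_i|\fe\cdot k_i|^2\leq|k_{1:n}|^2$. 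For $\nu_0$ large this gives $\|\psi_n\|\leq 10^{-n}\|(\mu-\fL_\mu^{\eff})^{-1}\bm\vphi\|$. The weights $n_*^\kappa\alpha^n$ with $\alpha<\sqrt2<10$ are then trivially summable.

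\textbf{In the one-step $\fe$-contraction the difficulty is misplaced and the actual mechanism is missing.} There are no off-diagonal terms to control. The cut-off $\chi^\sharp$ forces the two created modes to be the two largest among $k_{1:n+1}$, so distinct pairs $(i,j)\neq(i',j')$ have disjoint supports. Hence $\langle\fD\fA_+^\sharp\phi,\fA_+^\sharp\psi\rangle$ equals its diagonal part exactly, with no loss in the chaos degree; this is \eqref{e:SharpDiag}. For the same reason $\fA_-^\sharp(\mu-\fL_\mu^{\eff})^{-1}\fA_+^\sharp$ is itself a Fourier multiplier, so the ``non-matching pairs'' error in your treatment of \eqref{e:AnsatzGen} does not arise. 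Conversely, the correction that does survive is not $o(1)$ in the chaos level. The second replacement lemma (Lemma~\ref{lem:ReplLemmaGeneral} with $(f,g)=(\Upsilon,\tilde\Upsilon^\theta)$) gives
\[
\|(-\fL_0^\fe[\tilde\Upsilon^\theta]_\mu)^{1/2}(\mu-\fL_\mu^{\eff})^{-1}\fA_+^\sharp h\|^2\leq\frac{1}{2\theta}\,\|(-\fL_0^\fe[\tilde\Upsilon^\theta]_\mu)^{1/2}h\|^2+\Big(C-\frac{(\det(D_0)+D_0[\ff,\ff]\,\nu_0)^\theta}{2\theta}\Big)\|(-\fL_0^\fe)^{1/2}h\|^2 ,
\]
where $C$ depends only on $D_0$ and $\lambda$. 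The negative term comes from $J'(\Upsilon;\tilde\Upsilon^\theta)=\frac{1}{2\theta}\big(\tilde\Upsilon^\theta-\tilde\Upsilon^\theta(0)\big)$. Your threshold $\alpha<\sqrt{2\theta}$ requires a contraction with constant exactly $1/(2\theta)$. That holds only because the negative term absorbs $C$ once $\nu_0\geq\nu_1$ is large, uniformly in $\theta\in(0,1]$. Your proposal never uses the largeness of $\nu_0$ (Definition~\ref{def:nu}), and this is the key point in both steps.

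With these two ingredients the rest of your plan works. The $H^1$ bound via the first replacement lemma and the generator identity are as in the paper. For \eqref{e:AnsatzH1+gamma}, the concrete input is that $(\mu+\Xi_0)^{1+\gamma}(\mu+\Xi)^{-2}\lesssim|p|^{2\gamma-2}$ is integrable on $\{|p|_\infty\leq\pi\}$. This bounds the $H^{1+\gamma}$ norm of $(\mu-\fL_\mu^{\eff})^{-1}\fA_+^\sharp h$ by the unmodulated $\|(-\fL_0^\fe)^{1/2}h\|$. Your telescoping for \eqref{e:AnsatzH1precise} is a valid alternative to the paper's computation of $\langle(\mu-\fS-\bar\fA^\sharp)\,\mathfrak{Ans}_\mu(\bm\vphi),\mathfrak{Ans}_\mu(\bm\vphi)\rangle$ via Lemma~\ref{lem:GenAnsatz}.
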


The first four estimates~\eqref{e:AnsatzL2},~\eqref{e:AnsatzH1},~\eqref{e:AnsatzH1fe}, and~\eqref{e:AnsatzH1+gamma} provide bounds on the $L^2$, $H^1$, $\fe$-directed $H^1$, and $H^{1+\gamma}$ norms of the Ansatz, respectively. The estimate~\eqref{e:AnsatzGen} establishes the key property~\eqref{e:AnsatzHeuristic} of the Ansatz. Finally,~\eqref{e:AnsatzH1precise} gives a precise evaluation of its $H^1$-norm, which will be used to estimate the diffusivity.

\begin{remark} \label{rmk:ComparisonCMTestimates}
Compared with the corresponding results in~\cite{CMT25}, Proposition~\ref{p:GenericAnsatz} provides three key refinements:
\begin{itemize}
    \item The $H^{1+\gamma}$ estimate~\eqref{e:AnsatzH1+gamma} is new. As indicated by Theorem~\ref{th:ComparisonGen} and Section~\ref{sec:Roadmap} (cf.~\eqref{e:HCremovalMeso} and~\eqref{e:CorrectorH1gammaRoadmap}), the control of the $H^{1+\gamma}$-norm of the Ansatz is essential for a sharp comparison between the ASEP and SBE generators. As shown below, this estimate can be deduced from the $\fe$-directed $H^1$ bound~\eqref{e:AnsatzH1fe} by exploiting the recursive structure~\eqref{e:AnsatzIdentity}.

    \item Our estimates allow both polynomial and exponential weights in the number operator $\fN_*^\kappa \, \alpha^\fN$. These weights are essential to compensate for two sources of losses at high chaos levels: the polynomial factor $\fN^6$ arising in the comparison of generators in Theorem~\ref{th:ComparisonGen}, and the exponential factor $\alpha(p)^N$ arising from hypercontractivity (see Proposition~\ref{p:TightnessBlackBox}), which enters the tightness argument. An exponentially weighted version of the $\fe$-directed $H^1$ estimate~\eqref{e:AnsatzH1fe} was already established in~\cite[Proposition 4.8 and Remark 4.10]{CMT25}. In fact, similar weighted estimates can be obtained for~\eqref{e:AnsatzL2} and~\eqref{e:AnsatzGen} with little additional work. However, obtaining the exponentially weighted estimate for the full $H^1$ norm~\eqref{e:AnsatzH1} requires a new argument, based on a different use of the \emph{recursive replacement lemma}~\cite[Lemma 3.9]{CMT25}.

    \item The estimate corresponding to~\eqref{e:AnsatzGen} in~\cite[Proposition 4.6]{CMT25} controls the difference between the Ansatz and the resolvent $(\mu - \bar \fL)^{-1} \bm \vphi$ in $H^1$ norm, rather than directly controlling the action of the generator in $H^{-1}$ norm as in~\eqref{e:AnsatzGen}. In fact, starting from~\eqref{e:AnsatzGen}, one recovers such a bound by moving the term $\fA_-^\sharp (\mu - \fL_\mu^{\eff})^{-1} \bm \vphi$ to the right-hand side using the triangle inequality, and then applying the standard resolvent bound from $H^{-1}$ to $H^1$. However, either of these two losses would be too crude for establishing a fluctuation-dissipation relation. A small adaptation of the proof of~\cite{CMT25} avoids both losses and gives an estimate of the form~\eqref{e:AnsatzGen}.
\end{itemize}
\end{remark}

\begin{proof}[of Proposition~\ref{p:GenericAnsatz}]
For the reader's convenience, we give a complete proof of Proposition~\ref{p:GenericAnsatz}. Most of the details are deferred to Appendices~\ref{app:ReplLemma} and~\ref{app:GenericAnsatz}; here we present the main new argument, which yields the first two improvements discussed in Remark~\ref{rmk:ComparisonCMTestimates}. We begin by outlining the four steps of the proof. During the first two steps, the auxiliary parameter $\nu_0$ (the initial condition of $\Upsilon$, cf.~\eqref{e:ODE}) will not yet be fixed. We therefore work with a free parameter $\nu \geq 0$ in its place, and add a subscript $\nu$ to write the corresponding objects (e.g., $\Upsilon_\nu$ or $\fL^{\eff}_{\mu, \nu}$).
\ss

{\it Step 0. The replacement lemmas:} The main technical tools in the analysis of the SBE Ansatz are the replacement lemmas. They estimate expressions of the form $\langle \fD \fA_+^\sharp \phi, \fA_+^\sharp \psi\rangle$ for specific diagonal operators $\fD$ built from modulations of $\fL_0^\fe$. More precisely, we shall use the following two estimates, corresponding to the replacement lemma and the recursive replacement lemma of~\cite[Lemmas 3.6 and 3.9]{CMT25}.

\begin{lemma} \label{lem:ReplLemma}
1) For every $\nu \geq 0$, $\mu > 0$ and $\phi, \psi \in \fock{\rap}{}$, we have
\begin{equ} \label{e:ReplFixPt}
\big|\langle \big(- \fA_-^\sharp (\mu - \fL_{\mu, \nu}^{\eff})^{-1} \, \fA_+^\sharp + \fL_0^\fe[\Upsilon_\nu]_\mu\big) \, \phi, \psi \rangle\big| \lesssim_\nu  \|(- \fL_0^\fe)^{1/2} \, \phi\| \, \|(- \fL_0^\fe)^{1/2} \, \psi\| \, ,
\end{equ}

2) There exists $\nu_1 \geq 0$, depending only on $D_0$ and $\lambda$, such that for every $\nu \geq \nu_1$, $\theta \in (0, 1]$, $\mu > 0$ and $\phi \in \fock{\rap}{}$, we have
\begin{equ} \label{e:ReplContraction}
\|(- \fL_0^\fe\big[\tilde \Upsilon_\nu^\theta\big]_\mu)^{1/2} \, (\mu - \fL_{\mu, \nu}^{\eff})^{-1} \fA_+^\sharp \phi\|^2 \leq \frac{1}{2 \theta} \, \|(- \fL_0^\fe \big[\tilde \Upsilon_\nu^\theta\big]_\mu)^{1/2} \, \phi\|^2 \, .
\end{equ}
\end{lemma}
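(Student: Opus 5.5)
Both estimates are quadratic-form computations in Fourier space; the common tool will be the explicit kernel of $\fA_-^\sharp \fD \fA_+^\sharp$ for a diagonal operator $\fD$. Following~\cite[Lemmas 3.6 and 3.9]{CMT25}, I would first reduce to a single chaos $n$, using that all operators involved are translation invariant and preserve the chaos grading up to a shift by one. By \eqref{e:fA+Sharp}, $\fF(\fA_+^\sharp\phi)$ is a symmetrised sum over pairs $\{i,j\}$. Expanding $\langle \fD\fA_+^\sharp\phi,\fA_+^\sharp\psi\rangle$ then gives two kinds of terms:
\begin{itemize}
\item \emph{diagonal} terms, where the same pair $\{i,j\}$ appears on both sides;
\item \emph{off-diagonal} terms, where the pairs differ.
\end{itemize}

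\textbf{Part 1.} The diagonal contribution with $\fD=(\mu-\fL^{\eff}_{\mu,\nu})^{-1}$ equals $\langle \fH\phi,\psi\rangle$ for an explicit diagonal operator $\fH$. Its kernel on $k_{1:n}$ is
\[
\frac{\rho(1-\rho)}{2\pi^2}\sum_i (\fw\cdot k_i)^2\int \dd\ell\,\chi^\sharp\,\Pi_{\ell,k_i-\ell}\,\big(\mu+\tfrac12 D_0(\ell,k_i-\ell,k_{-i})+\tfrac12\Upsilon_\nu(\cdots)|\fe\cdot(\ell,k_i-\ell,k_{-i})|^2\big)^{-1}.
\]
Since $|\ell|\ge 3|k_{1:n}|$, I would freeze the external momenta and replace $k_i-\ell$ by $-\ell$. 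The $\ell$-integral in polar-type anisotropic coordinates then behaves like
\[
\int_{3|k|}^{1}\frac{\dd r}{r\,\sqrt{\det D_0+D_0[\ff,\ff]\,\Upsilon_\nu(\log(1/r^2))}},
\]
coming from integrating out the $\fe$-direction of $\ell$ against the enhanced $\fe$-diffusivity. With $\lambda^2=|\fw|^2$, the prefactor matches $C_\Upsilon$, so the integral equals $\int_0^{\Log(\mu-\fL_0)}\dot\Upsilon_\nu$ by the ODE~\eqref{e:ODE}. Hence
\[
\fH=-\fL_0^\fe[\Upsilon_\nu]_\mu+\big(\text{diagonal error}\lesssim_\nu(-\fL_0^\fe)\big),
\]
where the error collects $\Upsilon_\nu(0)=\nu$, the freezing errors, and $\mu$-versus-$|k|^2$ cutoffs. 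Each of these is bounded uniformly because of the logarithmic-scale Lipschitz property of $\Upsilon_\nu$.

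For the off-diagonal terms, the two pairs share at most one index, and $\chi^\sharp$ forces both created momenta to be large. I would bound them by Cauchy–Schwarz with weights $|\fw\cdot k|$ on each side. The kernel $(\mu+|\ell|^2)^{-1}$ integrated against the mismatched variables yields a bounded Schur-type integral (no logarithm, because one integration is against a point rather than a shell). This gives $\|(-\fL_0^\fe)^{1/2}\phi\|\|(-\fL_0^\fe)^{1/2}\psi\|$ with a constant independent of $n$ after the combinatorial normalisation. This is the step I expect to be the main obstacle: the off-diagonal bound must not lose powers of $n$, and it must produce $\fe$-directed norms, using $\fw\cdot(k_i+k_j)=\lambda\,\fe\cdot(\cdot)$, rather than full $H^1$ norms.

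\textbf{Part 2.} Write $\fD=(\mu-\fL^{\eff})^{-1}(-\fL_0^\fe[\tilde\Upsilon^\theta_\nu])(\mu-\fL^{\eff})^{-1}$, so the left side is $\langle \fD\fA_+^\sharp\phi,\fA_+^\sharp\phi\rangle$. I would repeat the diagonal computation with this $\fD$, which on large $\ell$ behaves like $\tilde\Upsilon^\theta\,|\fe\cdot\ell|^2/(\tfrac12 D_0+\tfrac12\Upsilon|\fe\cdot|^2)^2$. The $\ell$-integral now produces
\[
\int_0^{L}\dot\Upsilon_\nu\,\tilde\Upsilon_\nu^{\theta-1}\,\frac{\dd\ell}{D_0[\ff,\ff]}=\frac{\tilde\Upsilon_\nu^\theta(L)-\tilde\Upsilon_\nu^\theta(0)}{\theta}\le\frac{1}{\theta}\,\tilde\Upsilon_\nu^\theta(L),
\]
where $L=\Log(\mu-\fL_0)$. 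A careful tracking of constants (the factor $1/2$ comes from the Gaussian $\fe$-integral of $x^2/(a+bx^2)^2$ compared to $1/(a+bx^2)$) gives exactly $\frac{1}{2\theta}$ times the kernel of $-\fL_0^\fe[\tilde\Upsilon_\nu^\theta]_\mu$ at leading order. The freezing errors and the off-diagonal terms are lower order by a factor $O(\nu^{-1/2})$, since $\tilde\Upsilon_\nu\ge D_0[\ff,\ff]\,\nu$ damps the bounded Schur constants from Part 1 relative to the main term. Since the leading term has a strict margin because of $-\tilde\Upsilon^\theta_\nu(0)/\theta$, choosing $\nu\ge\nu_1$ large makes these errors absorbable and yields the sharp constant $\frac1{2\theta}$.
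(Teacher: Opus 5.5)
Your diagonal computation follows the paper's route (Lemma~\ref{lem:ReplLemmaGeneral}): compare the $p$-integral to an anisotropic polar integral, identify $J(\Upsilon_\nu)=\Upsilon_\nu-\nu$ and $J'(\Upsilon_\nu;\tilde\Upsilon_\nu^\theta)=\frac{1}{2\theta}\big(\tilde\Upsilon_\nu^\theta-\tilde\Upsilon_\nu^\theta(0)\big)$ via the ODE, and absorb errors into the margin $\tilde\Upsilon_\nu^\theta(0)/(2\theta)$ by taking $\nu$ large. (Your displayed $\ell$-integral has the wrong constant: the integrand should be $\frac12 D_0[\ff,\ff]\,\dot\Upsilon_\nu\,\tilde\Upsilon_\nu^{\theta-1}$, which integrates exactly to this.)

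\textbf{The gap: the off-diagonal step.} This is the step you yourself flag as the main obstacle. You assert, without argument, an $n$-uniform Schur bound for the cross terms $\langle \fD\fA_+^{\sharp,i,j}\phi,\fA_+^{\sharp,i',j'}\psi\rangle$ with $\{i,j\}\neq\{i',j'\}$. No such bound is available for generic cross terms. Without the cut-off, the paper only has \eqref{e:CSDiag}, which loses $\fN^2$, and that loss is exactly why $\fA_+$ is split into $\fA_+^\sharp+\fA_+^\flat$.

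In Part~2 this is not cosmetic. \eqref{e:ReplContraction} must hold with the sharp constant $\frac{1}{2\theta}$ and with $\nu_1$ independent of the chaos degree, because it is iterated over all chaoses in Lemma~\ref{lem:H1feEstimate}. The margin, however, does not grow with $n$, so an $n$-dependent cross-term error could not be absorbed.

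\textbf{The missing idea: the cross terms vanish identically.} The factor $\chi^\sharp(k_i,k_j;k_i+k_j,k_{1:n+1\setminus i,j})$ forces $|k_i|\wedge|k_j|\ge 3|k_l|$ for every $l\notin\{i,j\}$. Take $a\in\{i,j\}\setminus\{i',j'\}$ and $b\in\{i',j'\}\setminus\{i,j\}$. The two indicators then give $|k_a|\ge 3|k_b|\ge 9|k_a|$, so their product is supported on a null set. Hence $\langle\fD\fA_+^\sharp\phi,\fA_+^\sharp\psi\rangle$ equals its diagonal part exactly, which is \eqref{e:SharpDiag}. Both parts then reduce to the scalar estimates \eqref{e:FirstReplLemmaInt}--\eqref{e:SecondReplLemmaInt}, with no combinatorial loss.

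\textbf{Absorption in Part~2 needs an absolute error bound.} The diagonal comparison error must be bounded by $C\,|\fe\cdot k_i|^2$, with $C$ depending only on $D_0,\lambda$ and uniform in $\nu,\theta,\mu,k_{1:n}$. Being ``lower order relative to the main term'' is not enough. The main term grows like $\tilde\Upsilon_\nu^\theta(L)$ with $L=\Log(\mu+|k_{1:n}|^2/2)$, while the margin $\tilde\Upsilon_\nu^\theta(0)/(2\theta)$ is fixed. A relative error $\epsilon(\nu)\,\tilde\Upsilon_\nu^\theta(L)$ would therefore not be absorbable as $L\to\infty$.

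The paper obtains such a $C$ from the bounds \eqref{e:AssReplLemma} for $(f,g)=(\Upsilon_\nu,\tilde\Upsilon^\theta_\nu)$. These hold uniformly in $\nu$ and $\theta$ because the factor $\tilde\Upsilon^\theta_\nu$ in the numerator is compensated by $\Upsilon_\nu$ in the denominator.

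Freezing the external momenta on $\{|p|\ge 3|k_{1:n}|\}$ is also not sufficient. Replacing $\Xi_0^\fe$ by $|\fe\cdot p|^2$ has relative error of order $\sqrt{\mu+|k_{1:n}|^2}/|\fe\cdot p|$. The comparison therefore has to be done on $\{|\fe\cdot p|>10\sqrt{\mu+|k_{1:n}|^2},\ |p|<1/10\}$. The complementary region is bounded directly, its strip part by $\int\1_{|\fe\cdot q|\le 10}(1+|q|^2)^{-1}\,\dd q<\infty$.
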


The first estimate shows that $- \fL_0^\fe[\Upsilon_\nu]_\mu$ approximately solves the fixed point equation~\eqref{e:FixedPointRoadmap}. The second shows that, when $\theta \in (1/2, 1]$, the operator $(\mu - \fL_{\mu, \nu}^{\eff})^{-1} \, \fA_+^\sharp$ is a strict contraction in the norm $\|(- \fL_0^\fe\big[\tilde \Upsilon_\nu^\theta\big]_\mu)^{1/2} \cdot\|$. Both statements are special cases of Lemma~\ref{lem:ReplLemmaGeneral}, proved in Appendix~\ref{app:ReplLemma}. Here, applying the recursive replacement lemma directly with the modulations $\tilde \Upsilon_\nu^\theta$, as in~\eqref{e:ReplContraction}, slightly simplifies the approach of~\cite{CMT25} and, importantly, will yield in addition the exponentially weighted $H^1$ estimate~\eqref{e:AnsatzH1}.
\ss

{\it Step 1. The $L^2$ estimates:} In Appendix~\ref{app:GenericAnsatz}, we prove~\eqref{e:AnsatzL2} and Lemma~\ref{lem:SummabilityAnsatz} using an $L^2$ contraction estimate for $(\mu - \fL_{\mu, \nu}^{\eff})^{-1} \fA_+^\sharp$, following the strategy of~\cite[Proposition 4.3]{CMT25}. At this stage we also fix $\nu = \nu_0$ as in Definition~\ref{def:nu}.
\ss

{\it Step 2. The $H^1$-type estimates:} We then establish~\eqref{e:AnsatzH1},~\eqref{e:AnsatzH1fe} and~\eqref{e:AnsatzH1+gamma} by combining the recursive replacement lemma~\eqref{e:ReplContraction} with the recursive identity~\eqref{e:AnsatzIdentity}. Since this argument is the main new ingredient, we give it in full below.
\ss

{\it Step 3. The generator estimates:} Finally, we prove in Appendix~\ref{app:GenericAnsatz} the estimates~\eqref{e:AnsatzGen} and~\eqref{e:AnsatzH1precise} by combining the replacement lemma~\eqref{e:ReplFixPt} with the $\fe$-directed $H^1$ bound~\eqref{e:AnsatzH1fe}.
\ms

We now carry out Step 2. The starting point is the following estimate for the Ansatz in the modulated $\fe$-directed $H^1$ norm $\|(- \fL_0^\fe\big[\tilde \Upsilon^\theta\big]_\mu)^{1/2} \cdot\|$, which follows directly from the recursive replacement lemma~\eqref{e:ReplContraction}.

\begin{lemma} \label{lem:H1feEstimate}
Let $\theta \in (1/2, 1]$, $\alpha \in [1, \sqrt{2 \theta})$ and $\kappa \geq 0$. For every $\mu > 0$ and $\bm{\vphi} \in \fock{\rap}{}$, we have
\begin{equ}  \label{e:H1feEstimate}
\|\fN_*^\kappa \alpha^{\fN} \, (- \fL_0^\fe\big[\tilde \Upsilon^\theta\big]_\mu)^{1/2} \, \mathfrak{Ans}_{\mu}(\bm \vphi)\| \lesssim_{\alpha, \kappa, \theta} \|\fN_*^\kappa \alpha^{\fN} \, (-\fL_0^\fe\big[\tilde \Upsilon^\theta\big]_\mu)^{1/2} \, (\mu - \fL_{\mu}^\eff)^{-1} \bm \vphi\| = K_{\alpha, \kappa}^{1/2, \fe, \theta}(\bm \vphi) \, .
\end{equ}
\end{lemma}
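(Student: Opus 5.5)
The plan is to expand the Ansatz chaos by chaos and apply the contraction~\eqref{e:ReplContraction} term by term. The weights $\fN_*^\kappa\alpha^\fN$ are then absorbed using the strict contraction factor $1/(2\theta)<1$.

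Fix $\theta\in(1/2,1]$. Since $\nu_0\geq\nu_1$ (Definition~\ref{def:nu} chooses $\nu_0$ large enough), Lemma~\ref{lem:ReplLemma}(2) applies with $\nu=\nu_0$. Write $\fT\eqdef(\mu-\fL_\mu^\eff)^{-1}\fA_+^\sharp$ and $\|\cdot\|_\theta\eqdef\|(-\fL_0^\fe[\tilde\Upsilon^\theta]_\mu)^{1/2}\cdot\|$. By orthogonality of chaoses and linearity, it suffices to treat $\bm\vphi$ in a single chaos $m$. Set $\psi\eqdef(\mu-\fL_\mu^\eff)^{-1}\bm\vphi\in\fock{m}{}$. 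The $n$-th summand of~\eqref{e:TrueAnsatz} is then $\fT^n\psi\in\fock{m+n}{}$, and these summands are mutually orthogonal. Lemma~\ref{lem:SummabilityAnsatz} lets us commute the diagonal operator $\fN_*^\kappa\alpha^\fN(-\fL_0^\fe[\tilde\Upsilon^\theta]_\mu)^{1/2}$ with the sum. This gives
\begin{equ}
\|\fN_*^\kappa\alpha^\fN\,\mathfrak{Ans}_\mu(\bm\vphi)\|_\theta^2=\sum_{n\geq0}(m+n)_*^{2\kappa}\,\alpha^{2(m+n)}\,\|\fT^n\psi\|_\theta^2 \, .
\end{equ}

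Iterating~\eqref{e:ReplContraction} gives $\|\fT^n\psi\|_\theta^2\leq(2\theta)^{-n}\|\psi\|_\theta^2$. Hence the sum is bounded by
\begin{equ}
\alpha^{2m}\|\psi\|_\theta^2\sum_{n\geq0}(m+n)_*^{2\kappa}\Big(\frac{\alpha^2}{2\theta}\Big)^n \, .
\end{equ}
Using $(m+n)_*^{2\kappa}\lesssim_\kappa m_*^{2\kappa}\,n_*^{2\kappa}$ and $\alpha^2/(2\theta)<1$, the series is $\lesssim_{\alpha,\kappa,\theta}m_*^{2\kappa}$. Therefore
\begin{equ}
\|\fN_*^\kappa\alpha^\fN\,\mathfrak{Ans}_\mu(\bm\vphi)\|_\theta^2\lesssim m_*^{2\kappa}\alpha^{2m}\|\psi\|_\theta^2=\big(K^{1/2,\fe,\theta}_{\alpha,\kappa}(\bm\vphi)\big)^2 \, .
\end{equ}
Summing over chaoses for general $\bm\vphi$ yields~\eqref{e:H1feEstimate}.

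The only delicate point is the strict contraction itself, which is exactly Lemma~\ref{lem:ReplLemma}(2); I am taking it as given. Beyond that, one must check that the norms involved are finite, so that the manipulations are legitimate. This holds because $\bm\vphi\in\fock{\rap}{}$, the multiplier $\tilde\Upsilon^\theta$ has at most polynomial growth, and Lemma~\ref{lem:SummabilityAnsatz} supplies the needed summability.
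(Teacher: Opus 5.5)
Your single-chaos computation is correct. The gap is the reduction to a single chaos, and the matching final step ``summing over chaoses''. The Ansatz does not preserve chaos. Write $\bm\vphi=\sum_m\bm\vphi_m$ and $\psi_m\eqdef(\mu-\fL_\mu^\eff)^{-1}\bm\vphi_m$. Then $\mathfrak{Ans}_\mu(\bm\vphi_m)$ has components in every chaos $\geq m$, so the pieces coming from different $m$ overlap: the chaos-$N$ component of $\mathfrak{Ans}_\mu(\bm\vphi)$ is $\sum_{m\le N}\fT^{N-m}\psi_m$. Hence you cannot simply add your squared single-chaos bounds. Falling back on the triangle inequality over $m$ only gives
\[
\sum_m m_*^\kappa\alpha^m\|\psi_m\|_\theta \, .
\]
This is an $\ell^1$ quantity in $m$, and it is not controlled by $K^{1/2,\fe,\theta}_{\alpha,\kappa}(\bm\vphi)=\big(\sum_m m_*^{2\kappa}\alpha^{2m}\|\psi_m\|_\theta^2\big)^{1/2}$. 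You would lose either an extra power of $\fN_*$ or a slightly larger $\alpha$ on the right-hand side. So ``orthogonality of chaoses and linearity'' does not justify the reduction, and the lemma as stated for general $\bm\vphi$ is not yet proved.

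The fix is to exchange the roles of the two indices, which is what the paper does. Fix $n$ and take a general $\psi=(\mu-\fL_\mu^\eff)^{-1}\bm\vphi$. The vectors $\fT^n\psi_m$, $m\geq 0$, lie in the distinct chaoses $m+n$. Combine orthogonality with your iteration of \eqref{e:ReplContraction} and the bound $(m+n)_*^\kappa\alpha^{m+n}\lesssim_\kappa n_*^\kappa\alpha^n\, m_*^\kappa\alpha^m$. This gives
\[
\|\fN_*^\kappa\alpha^\fN\,\fT^n\psi\|_\theta\lesssim_\kappa n_*^\kappa\Big(\frac{\alpha}{\sqrt{2\theta}}\Big)^n K^{1/2,\fe,\theta}_{\alpha,\kappa}(\bm\vphi) \, .
\]
Then sum over $n$ with the triangle inequality; the series converges because $\alpha<\sqrt{2\theta}$. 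Alternatively, you can keep your chaos-by-chaos bookkeeping. Bound the chaos-$N$ piece by $\|\sum_{m\le N}\fT^{N-m}\psi_m\|_\theta\le\sum_{m\le N}(2\theta)^{-(N-m)/2}\|\psi_m\|_\theta$. Then apply Young's inequality $\ell^1*\ell^2\subset\ell^2$ to the convolution of $\big(n_*^\kappa(\alpha/\sqrt{2\theta})^n\big)_n\in\ell^1$ with $\big(m_*^\kappa\alpha^m\|\psi_m\|_\theta\big)_m\in\ell^2$.
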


\begin{proof}[of Lemma~\ref{lem:H1feEstimate}]
Iterating~\eqref{e:ReplContraction}, we find that, for any $\bm \vphi \in \fock{\rap}{}$ and $n \geq 0$,
\begin{equ}
\|(- \fL_0^\fe\big[\tilde \Upsilon^\theta\big]_\mu)^{1/2} \, \big((\mu - \fL_{\mu}^\eff)^{-1} \fA_+^\sharp\big)^n (\mu - \fL_{\mu}^\eff)^{-1} \bm \vphi\| \leq \frac{1}{(2 \theta)^{n/2}} \, \|(- \fL_0^\fe\big[\tilde \Upsilon^\theta\big]_\mu)^{1/2} \, (\mu - \fL_{\mu}^\eff)^{-1} \bm \vphi\| \, .
\end{equ}
Since the operator $\big((\mu - \fL_{\mu}^{\eff})^{-1} \fA_+^\sharp\big)^n \, (\mu - \fL_{\mu}^{\eff})^{-1}$ maps $\fock{\rap,n_0}{}$ into $\fock{\rap,n_0 + n}{}$ for any $n_0 \geq 0$, the orthogonality of chaoses and the bound $(n_0 + n)_*^\kappa \, \alpha^{n_0 + n} \lesssim_\kappa n_*^\kappa \alpha^n \, (n_0)_*^\kappa \alpha^{n_0}$ further give, for any $\bm \vphi \in \fock{\rap}{}$ and $n \geq 0$,
\begin{equs}
\|\fN_{*}^\kappa \alpha^\fN (- \fL_0^\fe\big[\tilde \Upsilon^\theta\big]_\mu)^{1/2} \big((\mu - &\fL_{\mu}^{\eff})^{-1} \fA_+^\sharp\big)^n \, (\mu - \fL_{\mu}^{\eff})^{-1}  \bm \vphi\| \\
&\lesssim_\kappa \frac{n_*^\kappa \alpha^n}{(2 \theta)^{n/2}} \, \|\fN_{*}^\kappa \alpha^\fN \, (- \fL_0^\fe\big[\tilde \Upsilon^\theta\big]_\mu)^{1/2} (\mu - \fL_{\mu}^{\eff})^{-1} \bm \vphi\| \, .
\end{equs}
By summing this bound over $n \geq 0$ (recall that $\alpha < \sqrt{2 \theta}$) and recognising~\eqref{e:TrueAnsatz}, we deduce~\eqref{e:H1feEstimate}.
\end{proof}

A direct application of Lemma~\ref{lem:H1feEstimate} gives the $\fe$-directed $H^1$ estimate~\eqref{e:AnsatzH1fe}. Indeed, by using the trivial lower bound $\tilde \Upsilon^\theta \geq \det(D_0)^{\theta} \gtrsim 1$, we obtain
\begin{equ}
\|\fN_{*}^\kappa \alpha^{\fN} \, (- \fL_0^\fe)^{1/2} \, \mathfrak{Ans}_{\mu}(\bm \vphi)\| \lesssim \|\fN_{*}^\kappa \alpha^{\fN} \, (- \fL_0^\fe\big[\tilde \Upsilon^\theta\big]_\mu)^{1/2} \, \mathfrak{Ans}_{\mu}(\bm \vphi)\| \, ,
\end{equ}
which shows~\eqref{e:AnsatzH1fe} after using~\eqref{e:H1feEstimate}.
\ss

To prove~\eqref{e:AnsatzH1} and~\eqref{e:AnsatzH1+gamma}, the idea is to exploit the recursive identity~\eqref{e:AnsatzIdentity} which, thanks to the next lemma, makes it possible to bound the $H^1$ and $H^{1+\gamma}$ norms of the Ansatz by appropriate modulated $\fe$-directed $H^1$ norms.

\begin{lemma} \label{lem:ConvertionH1GA+}
For every $\gamma \in (0,1]$, $\mu > 0$ and $\phi \in \fock{\rap}{}$, we have
\begin{equs}
\|(\mu - \fL_0)^{1/2} \, (\mu - \fL_{\mu}^{\eff})^{-1} \fA_+^\sharp \phi\| &\lesssim \|(- \fL_0^\fe\big[\tilde \Upsilon^1\big]_\mu)^{1/2} \, \phi\| \, , \label{e:ConvertionH1feH1GA+} \\
\|(\mu - \fL_0)^{(1+\gamma)/2} \, (\mu - \fL_{\mu}^{\eff})^{-1} \fA_+^\sharp \phi\| &\lesssim_\gamma \|(- \fL_0^\fe)^{1/2} \, \phi\| \, . \label{e:ConvertionH1feH1gammaGA+}
\end{equs}
\end{lemma}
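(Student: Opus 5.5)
The plan is to reduce both bounds to quadratic forms of the type $\langle -\fA_-^\sharp \fD \fA_+^\sharp \phi, \phi\rangle$ for suitable diagonal operators $\fD$, and then control these forms with the replacement lemma (Lemma~\ref{lem:ReplLemma}) or, for the second bound, with a direct Fourier computation of the same type as in its proof. The first step is a comparison of diagonal kernels. On chaos $n$, the kernel of $\mu - \fL_\mu^\eff$ is $\mu + \frac12 D_0(k_{1:n}) + \frac12 \Upsilon(\Log(\mu + |k_{1:n}|^2/2)) |\fe\cdot k_{1:n}|^2$. Since $D_0$ is non-degenerate and $\Upsilon \geq 0$, this kernel is $\gtrsim \mu + |k_{1:n}|^2/2$, which is the kernel of $\mu - \fL_0$. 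Hence $(\mu - \fL_0)^{1/2}(\mu - \fL_\mu^\eff)^{-1/2}$ is bounded, uniformly in $\mu$ and $n$.

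\textbf{First bound \eqref{e:ConvertionH1feH1GA+}.} By the comparison above,
\begin{equ}
\|(\mu - \fL_0)^{1/2}(\mu - \fL_\mu^\eff)^{-1}\fA_+^\sharp\phi\|^2 \lesssim \|(\mu - \fL_\mu^\eff)^{-1/2}\fA_+^\sharp\phi\|^2 = \langle -\fA_-^\sharp(\mu-\fL_\mu^\eff)^{-1}\fA_+^\sharp\phi,\phi\rangle \, ,
\end{equ}
where the equality uses $(\fA_+^\sharp)^* = -\fA_-^\sharp$. I then apply \eqref{e:ReplFixPt} with $\psi=\phi$ and $\nu = \nu_0$ fixed. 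This bounds the right-hand side by $\langle -\fL_0^\fe[\Upsilon]_\mu \phi,\phi\rangle + C\|(-\fL_0^\fe)^{1/2}\phi\|^2$. Both terms are $\lesssim \|(-\fL_0^\fe[\tilde\Upsilon^1]_\mu)^{1/2}\phi\|^2$, because $\tilde\Upsilon^1 = \det(D_0) + D_0[\ff,\ff]\,\Upsilon \gtrsim 1 + \Upsilon$. This concludes the first bound.

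\textbf{Second bound \eqref{e:ConvertionH1feH1gammaGA+}.} First I discard the effective part of the resolvent: the comparison above gives pointwise $(\mu - \fL_0)^{(1+\gamma)/2}(\mu - \fL_\mu^\eff)^{-1} \lesssim (\mu - \fL_0)^{-(1-\gamma)/2}$. It therefore suffices to show
\begin{equ}
\|(\mu-\fL_0)^{-(1-\gamma)/2}\fA_+^\sharp\phi\|^2 = \langle -\fA_-^\sharp \fD_\gamma \fA_+^\sharp \phi,\phi\rangle \lesssim_\gamma \|(-\fL_0^\fe)^{1/2}\phi\|^2 \, , \qquad \fD_\gamma \eqdef (\mu - \fL_0)^{-(1-\gamma)} \, .
\end{equ}
I expand the norm in Fourier via \eqref{e:fA+Sharp} and split the square of the sum over pairs $i<j$ into diagonal and off-diagonal contributions.

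In the \emph{diagonal} part the same pair $(i,j)$ appears twice. Relabel the input modes as $(\ell, k_{\text{rest}})$ with $\ell = k_i + k_j$, and write $p = k_i$. The contribution becomes $n!\int |\fw\cdot\ell|^2|\fF\phi(\ell,\dots)|^2 \, I(\ell)\,\dd\ell\cdots$, where $I(\ell)$ is the integral over $p$ restricted by the $\sharp$-constraint $|p|,|\ell-p| \geq 3|\ell,\dots|$ and the cutoff $\Pi$ (so $|p|_\infty<\pi$). On this region the kernel of $\fD_\gamma$ is $\lesssim |p|^{-2(1-\gamma)}$. Since $\int_{|p|\lesssim 1} |p|^{-2+2\gamma}\,\dd p \lesssim \gamma^{-1}$ in two dimensions, we get $I(\ell) \lesssim \gamma^{-1}$ uniformly. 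The combinatorial prefactors $\frac{1}{(n+1)^2}\binom{n+1}{2}\times(n+1)!$ against $n!\cdot n$ match exactly as in the $L^2$ computation for $\fA_+$. Hence the diagonal part is $\lesssim_\gamma \|(-\fL_0^\fe)^{1/2}\phi\|^2$. This step is where $\gamma>0$ is essential: for $\gamma = 0$ the $p$-integral diverges logarithmically, which is exactly the source of $\Upsilon$ in the first bound.

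\textbf{Main obstacle: the off-diagonal terms.} These are the contributions from pairs $(i,j)\neq(i',j')$, and the difficulty is to bound them without a loss polynomial in $\fN$, since the statement carries no $\fN$ weights. I would follow the off-diagonal treatment in the proof of the replacement lemma (Lemma~\ref{lem:ReplLemmaGeneral}, \cite[Lemma 3.6]{CMT25}), using the $\sharp$-constraint. In an off-diagonal term, the two created modes of one pair are at least $3$ times larger than all input modes of the other pairing. This forces the output modes to be large simultaneously for both pairs, so the weight $\fD_\gamma$ gains a factor $|p|^{-2(1-\gamma)}$ on each. After Cauchy--Schwarz with weights $|\fw\cdot\ell|$ and $|\fw\cdot\ell'|$, the remaining integral is bounded uniformly in $n$. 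If the general replacement lemma in Appendix~\ref{app:ReplLemma} is stated for arbitrary diagonal weights $\fD$ decaying like $(\mu-\fL_0)^{-(1-\gamma)}$, I would simply invoke it with $\fD_\gamma$ in place of $(\mu-\fL^\eff_\mu)^{-1}$. Its conclusion then bounds the off-diagonal contributions by $\|(-\fL_0^\fe)^{1/2}\phi\|^2$, which completes the second bound.
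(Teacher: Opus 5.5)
Your first bound is exactly the paper's argument (the comparison $\mu - \fL_0 \lesssim \mu - \fL_{\mu}^{\eff}$, then \eqref{e:ReplFixPt} and $\tilde\Upsilon^1 \gtrsim 1 + \Upsilon$). For the second bound, your reduction to the weight $(\mu-\fL_0)^{-(1-\gamma)}$ and your diagonal $p$-integral are also what the paper does.

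The gap is in the step you call the main obstacle: you never actually bound the off-diagonal terms. Lemma~\ref{lem:ReplLemmaGeneral} is stated only for the specific weights $(\mu - \fS - \fL_0^\fe[f]_\mu)^{-1}$ and $-\fL_0^\fe[g]_\mu (\mu - \fS - \fL_0^\fe[f]_\mu)^{-2}$. It does not cover an arbitrary diagonal weight such as $(\mu - \fL_0)^{-(1-\gamma)}$, so it cannot simply be invoked. A Cauchy--Schwarz over all pairs of index pairs is precisely what produces the $\fN^2$ loss in \eqref{e:CSDiag}, and the statement (with no $\fN$ weights) does not allow that loss. Your heuristic that the weight ``gains a factor on each pair'' does not yield a bound uniform in $n$.

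The missing idea is that there is nothing to bound. For $\fA_+^\sharp$ the off-diagonal contributions vanish identically, for \emph{any} positive diagonal operator $\fD$; this is \eqref{e:SharpDiag} in Lemma~\ref{lem:Diag}. To see it, suppose $\{i,j\} \neq \{i',j'\}$ and choose $a \in \{i,j\} \setminus \{i',j'\}$ and $b \in \{i',j'\} \setminus \{i,j\}$. The cut-off $\chi^\sharp$ attached to the pair $(i,j)$ forces $|k_a| \geq 3 |k_b|$, while the one attached to $(i',j')$ forces $|k_b| \geq 3|k_a|$. Hence the product of the two indicators is supported on the null set $\{k_a = k_b = 0\}$.

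This exact cancellation is the reason for the $\sharp/\flat$ splitting. It is also what makes \eqref{e:ReplFixPt} free of any $\fN$ loss, so you were already relying on it implicitly in your first bound. With it, $\|(\mu - \fL_0)^{(1+\gamma)/2} (\mu - \fL_{\mu}^{\eff})^{-1} \fA_+^\sharp \phi\|^2$ equals its diagonal part. Your estimate $\int_{|p|_\infty < \pi} |p|^{-2+2\gamma} \, \dd p \lesssim \gamma^{-1}$ then concludes exactly as in the paper.
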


\begin{proof}[of Lemma~\ref{lem:ConvertionH1GA+}]

We start with~\eqref{e:ConvertionH1feH1GA+}. The trivial bound $- \fL_0 \leq - \|D_0^{-1}\| \, \fL_{\mu}^{\eff}$ gives
\begin{equs}
\|(\mu - \fL_0)^{1/2} \, (\mu - \fL_{\mu}^{\eff})^{-1} \fA_+^\sharp \, \phi\|^2 &\lesssim \langle (\mu - \fL_{\mu}^{\eff})^{-1} \, \fA_+^\sharp \phi, \fA_+^\sharp \phi\rangle \\
&\lesssim \langle - \fL_0^\fe[\Upsilon]_\mu \, \phi, \phi\rangle + \langle - \fL_0^\fe \, \phi, \phi\rangle \\
&\lesssim \|(- \fL_0^\fe\big[\tilde \Upsilon^1\big]_\mu)^{1/2} \, \phi\|^2 \, ,
\end{equs}
where the second inequality follows from the first replacement lemma~\eqref{e:ReplFixPt}. This proves~\eqref{e:ConvertionH1feH1GA+}.

We turn to~\eqref{e:ConvertionH1feH1gammaGA+}. By orthogonality of chaoses, it is sufficient to prove the bound for $\phi \in \fock{\rap,n}{}$, with $n \geq 1$. We apply~\eqref{e:SharpDiag} from Lemma~\ref{lem:Diag} and use the notation~\eqref{e:XiNotation}, which leads to
\begin{equs}
\|&(\mu - \fL_0)^{(1+\gamma)/2} (\mu - \fL_{\mu}^\eff)^{-1} \, \fA_+^\sharp \phi\|^2 = \langle (\mu - \fL_0)^{1 + \gamma} (\mu - \fL_{\mu}^\eff)^{-2} \, \fA_+^\sharp \phi, \fA_+^\sharp \phi \rangle_{\mathrm{diag}} \\
&\lesssim n! \int_{\R^{2n}} \dd k_{1:n} \, |\fF \phi(k_{1:n})|^2 \sum_{i=1}^n |\fe \cdot k_i|^2 \int_{\R^2} \frac{(\mu + \Xi_0)^{1 + \gamma} \, \1_{|p|_\infty \leq \pi} \, \dd p}{(\mu + \Xi + \Upsilon(\Log(\mu + \Xi_0)) \, \Xi_0^\fe)^2} \, \\
&\lesssim n! \int_{\R^{2n}} \dd k_{1:n} \, |\fF \phi(k_{1:n})|^2 \sum_{i=1}^n |\fe \cdot k_i|^2 \int_{\R^2} \frac{\1_{|p|_\infty \leq \pi} \, \dd p}{|p|^{2 - 2 \gamma}} \, , \\
&\lesssim_\gamma \|(- \fL_0^\fe)^{1/2} \phi\|^2 \, ,
\end{equs}
where in the third line we used $(\mu + \Xi_0)^{1 + \gamma} \lesssim (\mu + \Xi)^{1 + \gamma}$ and $\Xi \gtrsim |p|^2$ (cf.~\eqref{e:Triangular}), and in the fourth line we noticed that the last integral is finite since $\gamma > 0$. This proves~\eqref{e:ConvertionH1feH1gammaGA+}.
\end{proof}

We are now ready to show~\eqref{e:AnsatzH1} and~\eqref{e:AnsatzH1+gamma}. By applying~\eqref{e:ConvertionH1feH1GA+} and~\eqref{e:ConvertionH1feH1gammaGA+} in each chaos and accounting for the weights in the number operator, we obtain for any $\phi \in \fock{\rap}{}$
\begin{equs}
\|\fN_{*}^\kappa \alpha^\fN \, (- \fL_0)^{1/2} \, (\mu - \fL_{\mu}^{\eff})^{-1} \fA_+^\sharp \phi\| &\lesssim_{\kappa, \alpha} \|\fN_{*}^\kappa \alpha^\fN \, (- \fL_0^\fe\big[\tilde \Upsilon^1\big]_\mu)^{1/2} \, \phi\| \, , \label{e:ConvertionH1feH1GA+bis} \\
\|\fN_{*}^\kappa \alpha^\fN \, (- \fL_0)^{(1+\gamma)/2} \, (\mu - \fL_{\mu}^{\eff})^{-1} \fA_+^\sharp \phi\| &\lesssim_{\kappa, \alpha, \gamma} \|\fN_{*}^\kappa \alpha^\fN \, (- \fL_0^\fe)^{1/2} \, \phi\| \, . \label{e:ConvertionH1feH1gammaGA+bis}
\end{equs}
By taking $\phi = \mathfrak{Ans}_{\mu}(\bm \vphi)$ and using the recursive identity~\eqref{e:AnsatzIdentity}, we deduce
\begin{equs}
\|\fN_{*}^\kappa \alpha^\fN \, &(- \fL_0)^{1/2} \, \mathfrak{Ans}_{\mu}(\bm \vphi)\| \\
&\lesssim_{\kappa, \alpha} \|\fN_{*}^\kappa \alpha^\fN \, (- \fL_0^\fe\big[\tilde \Upsilon^1\big]_\mu)^{1/2} \, \mathfrak{Ans}_{\mu}(\bm \vphi)\| + \|\fN_{*}^\kappa \alpha^\fN \, (- \fL_0)^{1/2} \, (\mu - \fL_{\mu}^{\eff})^{-1} \bm \vphi\| \, , \\
\|\fN_{*}^\kappa \alpha^\fN \, &(- \fL_0)^{(1+\gamma)/2} \, \mathfrak{Ans}_{\mu}(\bm \vphi)\| \\
&\lesssim_{\kappa, \alpha, \gamma} \|\fN_{*}^\kappa \alpha^\fN \, (- \fL_0^\fe)^{1/2} \, \mathfrak{Ans}_{\mu}(\bm \vphi)\| + \|\fN_{*}^\kappa \alpha^\fN \, (- \fL_0)^{(1+\gamma)/2} \, (\mu - \fL_{\mu}^{\eff})^{-1} \bm \vphi\| \, .
\end{equs}
The first terms on the two right-hand sides can be bounded by using~\eqref{e:H1feEstimate} with $\theta = 1$ and~\eqref{e:AnsatzH1fe}, respectively. This results in the desired estimates~\eqref{e:AnsatzH1} and~\eqref{e:AnsatzH1+gamma}, and completes Step 2 of the proof of Proposition~\ref{p:GenericAnsatz}.
\end{proof}

\subsection{The fluctuation-dissipation relation for the SBE}

\label{sec:AnsatzA+}

In this section, we obtain a fluctuation-dissipation relation for the SBE by using the previous Ansatz: for $\vphi \in \fock{\rap}{}$, we show that
\begin{equ} \label{e:FormalFluctuationDissipationSBE}
\tau \fA_+ \fU_t^\tau \vphi \approx d^{\lim} \, \frac{\log_{*}(1/\mu)^{2/3}}{\log_{*}(\tau)^{2/3}} \, \fU_t^\tau \fL_0^\fe \vphi - (\partial_t + \tau \fL) \, \mathfrak{Ans}_\mu(\fA_+^\sharp \fU_t^\tau \vphi) \quad \mbox{ in $H^{-1}$ norm.}
\end{equ}
Here, $\fU_t^\tau$ is the scaling operator from Definition~\ref{def:Utau}. The most straightforward choice for the above parameter $\mu$ is the inverse of the time scale $\tau$. In fact, as sketched in Section~\ref{sec:Roadmap} (recall the discussion after~\eqref{e:BoundaryTermRoadmap}), when proving tightness we will instead need to choose $\mu$ as $(\tau |t-s|)^{-1}$ to have better control of the increments $Y_t^\tau - Y_s^\tau$ when $|t-s| \ll 1$. We stress that, in the two cases of interest, we have in particular $\mu \tau \geq 1$, which we shall always assume from now on.
\ss

The next proposition details all the estimates we need on the corrector $\mathfrak{Ans}_\mu(\fA_+^\sharp \fU_t^\tau \vphi)$. For later use, we define the following constants: for any $\vphi \in \fock{\rap}{}$, $s \geq 0$, $\alpha \geq 1$ and $\kappa \geq 0$,
\begin{equ} \label{e:Cvphi}
C_{\alpha, \kappa}^s(\vphi) \eqdef \|\fN_{*}^\kappa \, \alpha^\fN \, (1 - \fL_0)^s \, \vphi\| \, .
\end{equ}

\begin{proposition} \label{p:AnsatzA+}

Let $\eps \in (0,1/6)$, $\alpha \in [1, \sqrt{1 + 6 \eps})$, $\kappa \geq 0$ and $\gamma \in (0,1]$. For every $\tau \geq 1$, $\mu > 0$ such that $\mu \tau \geq 1$, and $\vphi \in \fock{\rap}{}$, it holds
\begin{equs}
\sqrt{\mu \tau} \, \|\fN_{*}^\kappa \alpha^\fN \, \mathfrak{Ans}_\mu(\fA_+^\sharp \fU_t^\tau \vphi)\| &\lesssim_{\alpha, \kappa} \log_{*}(\tau)^{-1/3} \, C_{\alpha, \kappa}^{1/2}(\vphi) \, , \label{e:AnsatzA+L2} \\
\|\fN_{*}^\kappa \alpha^\fN \, (- \tau \fL_0)^{1/2} \, \mathfrak{Ans}_\mu(\fA_+^\sharp \fU_t^\tau \vphi)\| &\lesssim_{\alpha, \kappa} C_{\alpha, \kappa}^{1/2}(\vphi) \, , \label{e:AnsatzA+H1} \\
\|\fN_{*}^\kappa \alpha^\fN \, (- \tau \fL_0^\fe)^{1/2} \,  \mathfrak{Ans}_\mu(\fA_+^\sharp \fU_t^\tau \vphi)\| &\lesssim_{\alpha, \kappa, \eps} \log_{*}(\tau)^{-1/6 + \eps} \, C_{\alpha, \kappa}^{1/2}(\vphi) \, , \label{e:AnsatzA+H1fe} \\
\|\fN_{*}^\kappa \alpha^\fN \, \tau^{- \gamma/2} (- \tau \fL_0)^{(1+\gamma)/2} \, \mathfrak{Ans}_\mu(\fA_+^\sharp \fU_t^\tau \vphi)\| &\lesssim_{\alpha, \kappa, \gamma, \eps} \log_{*}(\tau)^{-1/6 + \eps} \, C_{\alpha, \kappa}^{1}(\vphi) \, . \label{e:AnsatzA+H1+gamma} \\
\Big\|\fN_{*}^\kappa \alpha^\fN \, (\mu \tau - \tau \fL_0)^{-1/2} \, \Big(\tau \fA_+ \fU_t^\tau \vphi + (\partial_t + \tau \fL) \, &\mathfrak{Ans}_\mu(\fA_+^\sharp \fU_t^\tau \vphi) - d^{\lim} \frac{\log_{*}(1/\mu)^{2/3}}{\log_{*}(\tau)^{2/3}} \, \fU_t^\tau \fL_0^\fe \vphi\Big)\Big\| \\
&\lesssim_{\alpha, \kappa, \eps} \log_{*}(\tau)^{-1/6 + \eps} \, C^1_{\alpha, \kappa+1}(\vphi) \, , \label{e:AnsatzA+Gen}
\end{equs}
\end{proposition}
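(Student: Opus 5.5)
The plan is to apply Proposition~\ref{p:GenericAnsatz} with $\bm\vphi = \fA_+^\sharp \fU_t^\tau \vphi$ and then evaluate the resulting constants $K$. Since all norms appearing are invariant under the translation $\fT_{-\tau t v}$, which commutes with diagonal operators and with $\fA_\pm^\sharp$, we may take $t=0$ in the static estimates. It is convenient to conjugate by $\fU_0^\tau$. In Fourier variables, $\fU_0^\tau$ replaces $k$ by $(\Sigma^\tau)^{-1}k$, so $-\tau\fL_0^\fe$ becomes $\log_*(\tau)^{-2/3}(-\fL_0^\fe)$ and $-\tau\fL_0^\ff$ becomes $-\fL_0^\ff$ at macroscopic scale. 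On the support of $\fA_+^\sharp\fU_0^\tau\vphi$ the logarithmic scale is $\Log(\mu-\fL_0) \approx \log(\tau)+O(1)$ for the modes of $\fU_0^\tau\vphi$, and it is at most of order $\log(1/\mu)\le\log\tau$ for the created modes. Consequently every constant $K^{s}_{\alpha,\kappa}(\fA_+^\sharp\fU^\tau_0\vphi)$ and $K^{1/2,\fe,\theta}_{\alpha,\kappa}(\fA_+^\sharp\fU^\tau_0\vphi)$ reduces to an integral of the type already computed in the proof of Lemma~\ref{lem:ConvertionH1GA+}, through the diagonal identity~\eqref{e:SharpDiag}. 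Explicitly, for $\phi$ in chaos $n$,
\[
K^{1/2,\fe,\theta}(\fA_+^\sharp\phi)^2 \approx n!\int |\fF\phi(k_{1:n})|^2\sum_i |\fe\cdot k_i|^2 \int \frac{\Xi^\fe_0\,\tilde\Upsilon^\theta(\Log(\mu+\Xi_0))\,\dd p}{(\mu+\Xi+\Upsilon\,\Xi_0^\fe)^2}\,,
\]
and the inner $p$-integral is handled by the same computation that underlies $\Upsilon$.

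\textbf{Static bounds.} First, the $\fA_+^\sharp$ chaos weights cost at most $\alpha\,n_*$, which is why $C^{1/2}_{\alpha,\kappa}$ appears. The inner integral behaves like $\int_{\log(1/\mu)}^{\log\tau}\tilde\Upsilon^\theta(\ell)/\tilde\Upsilon^{3/2}(\ell)\,\dd\ell \cdot\Upsilon(\ell)$-type expressions; rather than tracking it exactly, I use that $\Upsilon(\ell)\asymp \ell^{2/3}$ gives $\int^{L}\ell^{(2/3)(\theta-3/2)}\dd\ell \lesssim L^{(2\theta-1)/3}$, with $L\lesssim\log\tau$. Multiplying by the $\tau\,\log_*(\tau)^{-2/3}$ coming from $|\fe\cdot k|^2$ after rescaling yields $\tau K^{1/2,\fe,\theta}(\cdot)^2 \lesssim \log_*(\tau)^{(2\theta-1)/3-2/3}$. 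Choosing $\theta=1/2+3\eps$, so that $\alpha<\sqrt{2\theta}=\sqrt{1+6\eps}$ as required, gives the rate $\log_*(\tau)^{-1/6+\eps}$ in~\eqref{e:AnsatzA+H1fe}. With $\theta=1$ the same computation gives an $O(1)$ bound, which together with~\eqref{e:ConvertionH1feH1GA+} yields~\eqref{e:AnsatzA+H1}. For~\eqref{e:AnsatzA+L2} I apply~\eqref{e:AnsatzL2} and bound $\sqrt{\mu}\,K^0 \le K^{1/2}$-type quantities using $(\mu-\fL^\eff_\mu)^{-1}\le\mu^{-1/2}(\mu-\fL_\mu^\eff)^{-1/2}$; the $\log(\tau)^{-1/3}$ gain comes from the $\Upsilon^{-1/2}\sim\log^{-1/3}$ factor in the effective resolvent at scale $\log\tau$. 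For~\eqref{e:AnsatzA+H1+gamma} I use~\eqref{e:AnsatzH1+gamma}. Its second term is the $\fe$-bound just obtained. For the first term, $K^{(1+\gamma)/2}$, I apply~\eqref{e:ConvertionH1feH1gammaGA+} to $\phi=\fU_0^\tau\vphi$; this gives $\|(-\fL_0^\fe)^{1/2}\fU_0^\tau\vphi\|$, and after the factor $\tau^{1/2}$ this is $\log_*(\tau)^{-1/3}C^{1}(\vphi)$, which is even better than needed.

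\textbf{Generator estimate.} Write $\bar\fL=\fL-\fA_0$ and use $(\partial_t+\tau\fA_0)\fU_t^\tau=0$ (Remark~\ref{rmk:GalileanTransform}). Since $\fA_0$ commutes with the diagonal operators and with $\fA_+^\sharp$, it also commutes with $\mathfrak{Ans}_\mu$, so
\[
(\partial_t+\tau\fL)\,\mathfrak{Ans}_\mu(\fA_+^\sharp\fU_t^\tau\vphi)=\tau\,\bar\fL\,\mathfrak{Ans}_\mu(\fA_+^\sharp\fU_t^\tau\vphi).
\]
Then~\eqref{e:AnsatzGen}, multiplied by $\tau$ and measured in the norm $(\mu\tau-\tau\fL_0)^{-1/2}$ (a factor $\tau^{1/2}$), gives
\[
\tau\bar\fL\,\mathfrak{Ans} \approx -\tau\fA_+^\sharp\fU\vphi + \tau\fA_-^\sharp(\mu-\fL^\eff_\mu)^{-1}\fA_+^\sharp\fU\vphi + \mu\tau\,\mathfrak{Ans},
\]
with error $\sqrt\tau K^{1/2,\fe,\theta}_{\alpha,\kappa+1}\lesssim\log_*(\tau)^{-1/6+\eps}$. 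The remaining pieces are treated as follows. (i) The term $\mu\tau\,\mathfrak{Ans}$ in the $H^{-1}$ norm is at most $\sqrt{\mu\tau}\,\|\mathfrak{Ans}\|$, which is controlled by~\eqref{e:AnsatzA+L2}. (ii) The term $\tau\fA_+^\flat\fU\vphi$ is localised on modes of comparable size, and its $H^{-1}$ norm is bounded directly through $|\fe\cdot k|\lesssim\log_*(\tau)^{-1/3}$ after rescaling. (iii) The main piece: by the replacement lemma~\eqref{e:ReplFixPt}, $\fA_-^\sharp(\mu-\fL^\eff_\mu)^{-1}\fA_+^\sharp \approx -\fL_0^\fe[\Upsilon]_\mu$ up to $\|(-\fL_0^\fe)^{1/2}\cdot\|^2$, which costs $\tau\log_*(\tau)^{-2/3}/\tau$. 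Finally, $\Upsilon(\Log(\mu-\fL_0))$ acting on $\fU_0^\tau\vphi$ equals $\Upsilon(\log(1/\mu))+O(1)$ on the bulk of its Fourier support, and then~\eqref{e:AsymptoticUps} gives $d^{\lim}\log_*(1/\mu)^{2/3}$. Dividing by the $\log_*(\tau)^{2/3}$ from the rescaling produces the stated dissipation term.

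\textbf{Main obstacle.} I expect the hardest step to be the last one: controlling the mismatch between $\Upsilon(\Log(\mu-\fL_0))$ and the constant $\Upsilon(\log(1/\mu))$. The two differ by $\Upsilon(\log\tau-\log|k|^2)-\Upsilon(\log(1/\mu))$, which is not small when $\mu\tau\gg1$. I would try to absorb this by writing the rescaled multiplier as $\Upsilon(\log(1/\mu)+O(\log(1+|k|^2)))$ on $\vphi$'s scales, using the fact that $\dot\Upsilon\lesssim\ell^{-1/3}$ to gain $\log_*(\tau)^{-1/3}\log(1+|k|^2)$, and paying one extra derivative in $C^1$. The boundary contribution where $\mu$ exceeds $|k|^2/\tau$ is handled by the cut-off in $\Log$. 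The precise bookkeeping of the $\theta$-dependent exponent $(2\theta-1)/3$ in the $p$-integral is the second delicate point.
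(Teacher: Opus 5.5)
Your architecture matches the paper's: Proposition~\ref{p:GenericAnsatz} with $\bm\vphi=\fA_+^\sharp\fU_t^\tau\vphi$, Galilean cancellation, \eqref{e:ReplFixPt}, the $\fA_+^\flat$ bound, and the asymptotics of $\Upsilon$. However, your argument for \eqref{e:AnsatzA+L2} does not work, and the failure carries over to \eqref{e:AnsatzA+Gen}. The inequality $(\mu-\fL_\mu^{\eff})^{-1}\le\mu^{-1/2}(\mu-\fL_\mu^{\eff})^{-1/2}$ reduces $\sqrt{\mu}\,K^0$ to $\|(\mu-\fL_\mu^{\eff})^{-1/2}\fA_+^\sharp\fU_t^\tau\vphi\|$. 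By \eqref{e:FirstReplLemma} (with $f=\Upsilon$, so $J(\Upsilon)=\Upsilon-\nu_0$), its square equals $\langle-\fL_0^\fe[\Upsilon-\nu_0]_\mu\fU_t^\tau\vphi,\fU_t^\tau\vphi\rangle+O(\tau^{-1}\log_*(\tau)^{-2/3})$. For $\mu=1/\tau$, the multiplier $\Upsilon\sim d^{\lim}\log(\tau)^{2/3}$ exactly cancels the factor $\log_*(\tau)^{-2/3}$ produced by rescaling $|\fe\cdot k|^2$. So this quantity is $\asymp\tau^{-1}\|(-\fL_0^\fe)^{1/2}\vphi\|^2$ for large $\tau$, and you only get $\sqrt{\mu\tau}\,\|\mathfrak{Ans}_\mu(\fA_+^\sharp\fU_t^\tau\vphi)\|\lesssim C^{1/2}_{\alpha,\kappa}(\vphi)$, with no decay.

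The factor $\Upsilon^{-1/2}$ you invoke does appear, but only per logarithmic shell of $\int\dd p\,(\mu+\Xi+\Upsilon\,\Xi_0^\fe)^{-1}$. That integral diverges logarithmically over $\mu\lesssim|p|^2\lesssim1$, and the shells add up to $\Upsilon(\log(1/\mu))$. Your step (i) then fails too: the term $\mu\tau\,\mathfrak{Ans}_\mu(\fA_+^\sharp\fU_t^\tau\vphi)$ is only $O(1)$ in the norm of \eqref{e:AnsatzA+Gen}. This decay matters later: it is what makes the corrector's boundary terms vanish in Sections~\ref{sec:CVsemigroup} and~\ref{sec:Tightness}.

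The fix is to keep the squared resolvent rather than passing to an $H^{-1}$-type norm. By \eqref{e:SharpDiag} and $-\fL_\mu^{\eff}\gtrsim-\fL_0$, for $\phi$ in chaos $n$,
\[
K^0(\fA_+^\sharp\phi)^2\lesssim n!\int_{\R^{2n}}\dd k_{1:n}\,|\fF\phi(k_{1:n})|^2\sum_{i=1}^n|\fe\cdot k_i|^2\int_{\R^2}\frac{\dd p}{(\mu+|p|^2)^2}\lesssim\mu^{-1}\|(-\fL_0^\fe)^{1/2}\phi\|^2 .
\]
The $p$-integral now converges at $|p|^2\sim\mu$, so no logarithm accumulates. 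Then \eqref{e:AnsatzL2} together with $\|(-\fL_0^\fe)^{1/2}\fU_t^\tau\vphi\|\le\tau^{-1/2}\log_*(\tau)^{-1/3}\|(-\fL_0)^{1/2}\vphi\|$ gives \eqref{e:AnsatzA+L2}. The entire gain comes from the anisotropic scaling of $\fe\cdot k$.

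There are also some smaller slips.
\begin{itemize}
\item Your shell integral is $\int^L\ell^{2\theta/3-1}\dd\ell\asymp L^{2\theta/3}$, not $L^{(2\theta-1)/3}$. Your exponent would give the false rates $\log_*(\tau)^{-1/3+\eps}$ and $\log_*(\tau)^{-1/6}$. Only the correct exponent yields the rates you actually state. More simply, \eqref{e:ReplContraction} gives $K^{1/2,\fe,\theta}(\fA_+^\sharp\phi)\le(2\theta)^{-1/2}\|(-\fL_0^\fe[\tilde\Upsilon^\theta]_\mu)^{1/2}\phi\|$, and $\mu\tau\ge1$ gives $\tilde\Upsilon^\theta(\Log(\mu+|p|^2/2))\lesssim\log_*(\tau)^{2\theta/3}$.
\item In (iii) the sign is reversed: \eqref{e:ReplFixPt} gives $\fA_-^\sharp(\mu-\fL_\mu^{\eff})^{-1}\fA_+^\sharp\approx\fL_0^\fe[\Upsilon]_\mu$, which is nonpositive.
\item Your ``main obstacle'' is mild. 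Since $\mu\tau\ge1$, $\mu$ dominates on the rescaled modes, so $|\Log(\mu+|k|^2/2)-\log_*(1/\mu)|\lesssim1+\mu^{-1/2}|k|$. With \eqref{e:AsymptoticUps} this gives $|\Upsilon(\Log(\mu+|k|^2/2))-d^{\lim}\log_*(1/\mu)^{2/3}|\lesssim1+\tau^{1/2}|k|$. The $O(1)$ part costs $\tau^{1/2}\|(-\fL_0^\fe)^{1/2}\fU_t^\tau\vphi\|\lesssim\log_*(\tau)^{-1/3}$, and the $\tau^{1/2}|k|$ part costs one extra derivative.
\end{itemize}
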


Estimates~\eqref{e:AnsatzA+L2} and~\eqref{e:AnsatzA+H1} tell us that the corrector has small $L^2$ norm and bounded $H^1$ norm, which reflects the fact that the corresponding Dynkin martingale (e.g. the one that appears in~\eqref{e:DynkinFluctuationDissipation}) should have small boundary terms and bounded second moment. As for the smallness of the $\fe$-directed $H^1$ norm~\eqref{e:AnsatzA+H1fe} and of the $H^{1+\gamma}$ norm~\eqref{e:AnsatzA+H1+gamma} of the corrector, it will be crucial in the next section to exploit the comparison between the SBE and ASEP generators from Theorem~\ref{th:ComparisonGen}.
\ss

The rest of this section is devoted to the proof of Proposition~\ref{p:AnsatzA+}. We first state an intermediate result which is the analogue of Proposition~\ref{p:GenericAnsatz} but where the input of the Ansatz is of the form $\fA_+^\sharp \phi$.

\begin{lemma} \label{lem:AnsatzA+bis}
In the notation of Proposition~\ref{p:GenericAnsatz}, for every $\phi \in \fock{\rap}{}$ we have
\begin{equs}
\, \sqrt{\mu} \, \|\fN_{*}^\kappa \alpha^\fN \, \mathfrak{Ans}_{\mu}(\fA_+^\sharp \phi)\| &\lesssim_{\alpha, \kappa} \|\fN_{*}^\kappa \alpha^\fN \, (- \fL_0^\fe)^{1/2} \phi\| \, , \label{e:AnsatzA+L2bis} \\
\|\fN_{*}^\kappa \alpha^\fN \, (- \fL_0)^{1/2} \, \mathfrak{Ans}_{\mu}(\fA_+^\sharp \phi)\| &\lesssim_{\alpha, \kappa} \|\fN_{*}^\kappa \alpha^\fN (- \fL_0^\fe\big[\tilde \Upsilon^1\big]_\mu)^{1/2} \phi\| \, , \label{e:AnsatzA+H1bis} \\
\|\fN_{*}^\kappa \alpha^\fN \, (- \fL_0^\fe)^{1/2} \,  \mathfrak{Ans}_{\mu}(\fA_+^\sharp \phi)\| &\lesssim_{\alpha, \kappa, \theta} \|\fN_{*}^\kappa \alpha^\fN (- \fL_0^\fe\big[\tilde \Upsilon^\theta\big]_\mu)^{1/2} \phi\| \, , \label{e:AnsatzA+H1febis} \\
\|\fN_{*}^\kappa \alpha^\fN \, (- \fL_0)^{(1+\gamma)/2} \, \mathfrak{Ans}_{\mu}(\fA_+^\sharp \phi)\| &\lesssim_{\alpha, \kappa, \theta, \gamma} \|\fN_{*}^\kappa \alpha^\fN (- \fL_0^\fe\big[\tilde \Upsilon^\theta\big]_\mu)^{1/2} \phi\| \, , \label{e:AnsatzA+H1gammabis} \\
\|\fN_{*}^\kappa \alpha^\fN (\mu - \fL_0)^{-1/2} \big(\bar \fL \, \mathfrak{Ans}_{\mu}(\fA_+^\sharp \phi) + \fA_+ \phi &- \fL_0^\fe[\Upsilon]_\mu \phi\big)\| \lesssim_{\alpha, \kappa, \theta} \|\fN_{*}^{\kappa+1} \alpha^\fN (- \fL_0^\fe\big[\tilde \Upsilon^\theta\big]_\mu)^{1/2} \phi\| \, . \qquad \ \ \label{e:GenAnsatzA+bis}
\end{equs}
\end{lemma}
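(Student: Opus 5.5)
The plan is to apply Proposition~\ref{p:GenericAnsatz} with $\bm\vphi = \fA_+^\sharp \phi$. This reduces every estimate to bounds on the quantities $K^{s}_{\alpha,\kappa}(\fA_+^\sharp\phi)$ and $K^{1/2,\fe,\theta}_{\alpha,\kappa}(\fA_+^\sharp\phi)$. Each of these involves $(\mu - \fL_\mu^{\eff})^{-1}\fA_+^\sharp\phi$, which is exactly the object controlled by Lemma~\ref{lem:ConvertionH1GA+} and the recursive replacement lemma~\eqref{e:ReplContraction}. Throughout, the weights $\fN_*^\kappa\alpha^\fN$ are handled chaos by chaos: $\fA_+^\sharp$ raises the chaos by one, so the weight changes by at most a factor $\lesssim_{\kappa} \alpha$, uniformly.

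\emph{Estimates \eqref{e:AnsatzA+H1bis}, \eqref{e:AnsatzA+H1febis} and \eqref{e:AnsatzA+H1gammabis}.}
\begin{itemize}
\item For \eqref{e:AnsatzA+H1febis}, bound $K^{1/2,\fe,\theta}(\fA_+^\sharp\phi)$ directly by \eqref{e:ReplContraction}, which gives at most $(2\theta)^{-1/2}$ times $\|(-\fL_0^\fe[\tilde\Upsilon^\theta]_\mu)^{1/2}\phi\|$.
\item For \eqref{e:AnsatzA+H1bis}, bound $K^{1/2}(\fA_+^\sharp\phi)$ by \eqref{e:ConvertionH1feH1GA+}. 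The term $K^{1/2,\fe,1}$ is handled as in the previous item.
\item For \eqref{e:AnsatzA+H1gammabis}, bound $K^{(1+\gamma)/2}(\fA_+^\sharp\phi)$ by \eqref{e:ConvertionH1feH1gammaGA+}, which gives $\|(-\fL_0^\fe)^{1/2}\phi\|$. This is dominated by the modulated norm because $\tilde\Upsilon^\theta\gtrsim 1$.
\end{itemize}

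\emph{Estimate \eqref{e:AnsatzA+L2bis}.} By \eqref{e:AnsatzL2} it suffices to show
\[
\sqrt\mu\,\|(\mu-\fL_\mu^\eff)^{-1}\fA_+^\sharp\phi\| \lesssim \|(-\fL_0^\fe)^{1/2}\phi\| .
\]
Since $\mu\le \mu-\fL_\mu^\eff$, we have $\mu(\mu-\fL_\mu^\eff)^{-2}\le(\mu-\fL_\mu^\eff)^{-1}$. The left side squared is therefore at most $\langle(\mu-\fL_\mu^\eff)^{-1}\fA_+^\sharp\phi,\fA_+^\sharp\phi\rangle$. By \eqref{e:ReplFixPt} this is at most $\langle -\fL_0^\fe[\Upsilon]_\mu\phi,\phi\rangle + O(\|(-\fL_0^\fe)^{1/2}\phi\|^2)$.

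This produces the unwanted factor $\Upsilon$, so I will instead use the diagonal computation from the proof of \eqref{e:ConvertionH1feH1gammaGA+} (Lemma~\ref{lem:Diag}) with the weight $\mu$ in place of $(\mu+\Xi_0)^{1+\gamma}$. The resulting integral is
\[
\int \frac{\mu\,\1_{|p|_\infty\le\pi}\,\dd p}{(\mu+\Xi)^2} \lesssim \int \frac{\mu\,\dd p}{(\mu+|p|^2)^2} \lesssim 1 ,
\]
which yields the claim without any $\Upsilon$ loss.

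\emph{Estimate \eqref{e:GenAnsatzA+bis}.} Apply \eqref{e:AnsatzGen} with $\bm\vphi=\fA_+^\sharp\phi$. Its right side is $K^{1/2,\fe,\theta}_{\alpha,\kappa+1}(\fA_+^\sharp\phi)$, which is bounded by \eqref{e:ReplContraction} as in the first item above. It remains to compare
\[
\mu\,\mathfrak{Ans} - \fA_+^\sharp\phi + \fA_-^\sharp(\mu-\fL_\mu^\eff)^{-1}\fA_+^\sharp\phi
\quad\text{with}\quad
-\fA_+\phi + \fL_0^\fe[\Upsilon]_\mu\phi
\]
in the $(\mu-\fL_0)^{-1/2}$ norm, accounting for the change from $(-\fL_0)^{-1/2}$ to $(\mu-\fL_0)^{-1/2}$, which only helps. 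Three error terms arise:
\begin{itemize}
\item $\mu\,\mathfrak{Ans}(\fA_+^\sharp\phi)$: its $(\mu-\fL_0)^{-1/2}$ norm is at most $\sqrt\mu\,\|\mathfrak{Ans}(\fA_+^\sharp\phi)\|$, which is controlled by \eqref{e:AnsatzA+L2bis}.
\item $\fA_-^\sharp(\mu-\fL_\mu^\eff)^{-1}\fA_+^\sharp\phi - \fL_0^\fe[\Upsilon]_\mu\phi$: this is controlled in duality by \eqref{e:ReplFixPt}. The test function $\psi$ must be measured in $(\mu-\fL_0)^{1/2}$ norm, which dominates $(-\fL_0^\fe)^{1/2}$, and chaos weights are distributed as usual.
\item $\fA_+^\flat\phi$: its $H^{-1}$ norm must be bounded by $\|(-\fL_0^\fe)^{1/2}\phi\|$ times polynomial factors in $\fN$.
\end{itemize}

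The main obstacle is this last term. I will estimate it by duality as $\sup_\psi \langle\fA_+^\flat\phi,\psi\rangle / \|(\mu-\fL_0)^{1/2}\psi\|$. On the support of $\chi^\flat$ one of the created modes has norm less than $3|k_{1:n}|$, so the $\ell$-integration is over a small region. A Cauchy--Schwarz argument in the style of \cite[Lemma 3.4]{CMT25} should then give a factor $n$ times $\|(-\fL_0^\fe)^{1/2}\phi\|$, with no logarithmic divergence. The extra power of $\fN_*$ on the right side of \eqref{e:GenAnsatzA+bis} absorbs this factor of $n$ and the combinatorial factors coming from \eqref{e:ReplFixPt}.
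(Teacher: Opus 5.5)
Your proposal is correct and matches the paper's proof. Both bound $K^0$, $K^{1/2}$, $K^{(1+\gamma)/2}$ and $K^{1/2,\fe,\theta}$ at $\fA_+^\sharp\phi$ via \eqref{e:SharpDiag} with $\int\mu(\mu+|p|^2)^{-2}\,\dd p\lesssim 1$, \eqref{e:ConvertionH1feH1GA+}, \eqref{e:ConvertionH1feH1gammaGA+} and \eqref{e:ReplContraction}, feed these into Proposition~\ref{p:GenericAnsatz}, and split \eqref{e:GenAnsatzA+bis} into four pieces: the \eqref{e:AnsatzGen} term, the $\mu\,\mathfrak{Ans}_\mu(\fA_+^\sharp\phi)$ term (which indeed costs $\sqrt{\mu}\,\|\mathfrak{Ans}_\mu(\fA_+^\sharp\phi)\|$), the \eqref{e:ReplFixPt} term, and $\fA_+^\flat\phi$. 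The last piece is not really an obstacle: it is exactly Lemma~\ref{lem:Aflat}, whose proof is the small-$p$-region argument you sketch, with the factor $\fN$ coming from \eqref{e:CSDiag}.
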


\begin{proof}
Recall the constants $K$ defined in~\eqref{e:AnsatzConstantK} in Proposition~\ref{p:GenericAnsatz}. We first claim that
\begin{equs}
K_{\alpha, \kappa}^{1/2}(\fA_+^\sharp \phi) &\lesssim_{\alpha, \kappa} \|\fN_{*}^\kappa \alpha^\fN \, (- \fL_0^{\fe}\big[\tilde \Upsilon^1\big]_\mu)^{1/2} \phi\| \, , \label{e:BoundK1A+} \\
K_{\alpha, \kappa}^{(1+\gamma)/2}(\fA_+^\sharp \phi) &\lesssim_{\alpha, \kappa}  \|\fN_{*}^\kappa \alpha^\fN \, (- \fL_0^\fe)^{1/2} \phi\| \, , \label{e:BoundK1gammaA+} \\
K_{\alpha, \kappa}^{1/2, \fe, \theta}(\fA_+^\sharp \phi) &\lesssim_{\alpha, \kappa}  \|\fN_{*}^\kappa \alpha^\fN \, (- \fL_0^\fe\big[\tilde \Upsilon^\theta\big]_\mu)^{1/2} \phi\| \, , \label{e:BoundK1feA+} \\
K_{\alpha, \kappa}^0(\fA_+^\sharp \phi) &\lesssim_{\alpha, \kappa} \mu^{-1/2} \, \|\fN_{*}^\kappa \alpha^\fN \, (- \fL_0^{\fe})^{1/2} \phi\| \, . \label{e:BoundK0A+}
\end{equs}
By orthogonality of chaoses, to prove the above claim we can reduce to the case $\phi \in \fock{n}{}$ with $n \geq 1$, and $\alpha = 1$, $\kappa = 0$ (up to dividing both sides by $n_*^\kappa \alpha^n$). Note that~\eqref{e:BoundK1A+} and~\eqref{e:BoundK1gammaA+} immediately follow from~\eqref{e:ConvertionH1feH1GA+} and~\eqref{e:ConvertionH1feH1gammaGA+}. As for~\eqref{e:BoundK1feA+}, it readily follows from the recursive replacement lemma~\eqref{e:ReplContraction}. We turn to the proof of~\eqref{e:BoundK0A+}: by using the trivial bound $- \fL^\eff_\mu \geq \|D_0\|^{-1} \, (- \fL_0)$, the identity~\eqref{e:SharpDiag} from Lemma~\ref{lem:Diag}, and the integral estimate $\int_{\R^2} \frac{\dd p}{(\mu + |p|^2)^2} \lesssim \mu^{-1}$, we deduce
\begin{equs}
K_{1, 0}^0(\fA_+^\sharp \phi)^2 &\lesssim \langle (\mu - \fL_0)^{-2} \, \fA_+^\sharp \phi, \fA_+^\sharp \phi \rangle_{\mathrm{diag}} \\
&\lesssim n! \int_{\R^{2n}} \dd k_{1:n} \, |\fF \phi(k_{1:n})|^2 \sum_{i=1}^n |\fe \cdot k_i|^2 \int_{\R^2} \frac{\dd p}{(\mu + |p|^2)^2} \\
&\lesssim \mu^{-1} \, \|(- \fL_0^\fe)^{1/2} \phi\|^2 \, .
\end{equs}

Now that we have proved the claim, the estimates~\eqref{e:AnsatzA+L2bis},~\eqref{e:AnsatzA+H1bis},~\eqref{e:AnsatzA+H1febis},~\eqref{e:AnsatzA+H1gammabis} readily follow from their counterparts~\eqref{e:AnsatzL2},~\eqref{e:AnsatzH1},~\eqref{e:AnsatzH1fe},~\eqref{e:AnsatzH1+gamma} in Proposition~\ref{p:GenericAnsatz} (using in particular the trivial bound $1 \lesssim \tilde \Upsilon^\theta \lesssim \tilde \Upsilon^1$). We turn to the proof of~\eqref{e:GenAnsatzA+bis}: we first rearrange its left hand side by the triangle inequality, as follows
\begin{equs}
\|\fN_{*}^\kappa \alpha^\fN \, &(\mu - \fL_0)^{-1/2} \, (\bar \fL \, \mathfrak{Ans}_\mu(\fA_+^\sharp \phi) + \fA_+ \phi - \fL_0^\fe\big[\Upsilon\big]_\mu)\| \\
&\leq \|\fN_{*}^\kappa \alpha^\fN \, (- \fL_0)^{-1/2} \, \big((\mu - \bar \fL) \, \mathfrak{Ans}_\mu(\fA_+ \phi) - \fA_+^\sharp \phi + \fA_-^\sharp (\mu - \fL_{\mu}^{\eff})^{-1} \fA_+^\sharp \phi\big)\| \\
&\quad + \mu^{-1/2} \, \|\fN_{*}^\kappa \alpha^\fN \, \mathfrak{Ans}_\mu(\fA_+^\sharp \phi)\| + \|\fN_{*}^\kappa \alpha^\fN \, (- \fL_0)^{-1/2} \, \fA_+^\flat \phi\| \\
&\quad + \|\fN_{*}^\kappa \alpha^\fN \, (- \fL_0)^{-1/2} \, (\fA_-^\sharp (\mu - \fL_{\mu}^{\eff})^{-1} \fA_+^\sharp - \fL_0^\fe[\Upsilon]_\mu) \, \phi\| \, .
\end{equs}
Now we can bound the first term by using~\eqref{e:AnsatzGen} (with $\bm \vphi = \fA_+^\sharp \vphi$) together with~\eqref{e:BoundK1feA+}, the second term by using~\eqref{e:AnsatzA+L2bis}, the third term by using~\eqref{e:Aflat} (which clearly holds with extra $\fN_*^\kappa \alpha^\fN$ on both sides up to paying a multiplicative cost $\lesssim_{\alpha, \kappa}$, because $\fA_+^\flat$ increases the number operator $\fN$ by $1$), and the fourth term by the first replacement lemma~\eqref{e:ReplFixPt} (and by noticing that $\fA_-^\sharp (\mu - \fL_{\mu}^{\eff})^{-1} \fA_+^\sharp - \fL_0^\fe[\Upsilon]_\mu$ preserves the number operator $\fN$). Thus, we deduce
\begin{equs}
\|&\fN_{*}^\kappa \alpha^\fN \, (\mu - \fL_0)^{-1/2} \, (\bar \fL \, \mathfrak{Ans}_\mu(\fA_+^\sharp \phi) + \fA_+ \phi - \fL_0^\fe\big[\Upsilon\big]_\mu)\| \\
&\lesssim \|\fN_{*}^{\kappa} \alpha^\fN (- \fL_0^\fe\big[\tilde \Upsilon^\theta\big]_\mu)^{1/2} \phi\| + \|\fN_{*}^{\kappa} \alpha^\fN (- \fL_0^\fe)^{1/2} \phi\| + \|\fN_{*}^{\kappa+1} \alpha^\fN (- \fL_0^\fe)^{1/2} \phi\| + \|\fN_{*}^{\kappa} \alpha^\fN (- \fL_0^\fe)^{1/2} \phi\| \, \\
&\lesssim \|\fN_{*}^{\kappa+1} \alpha^\fN (- \fL_0^\fe\big[\tilde \Upsilon^\theta\big]_\mu)^{1/2} \phi\| \, ,
\end{equs}
which proves~\eqref{e:GenAnsatzA+bis} and concludes the proof.
\end{proof}

\begin{proof}[of Proposition~\ref{p:AnsatzA+}]

Let us first state a simple estimate that we shall use repeatedly in the paper: for any $a, b \geq 0$, $\tau \geq 1$ and $\phi \in \fock{\rap}{}$,
\begin{equ}
\|(- \fL_0)^{a} (- \fL_0^\fe)^{b} \, \fU_t^\tau \phi\| \leq \tau^{- a - b} \, \log_{*}(\tau)^{-b/3} \, \|(- \fL_0)^{a+b} \phi\| \, , \label{e:HsUtau}
\end{equ}
and also, for any $\theta \in [0,1]$ and $\mu > 0$ such that $\mu \tau \geq 1$,
\begin{equ}
\|(- \fL_0^\fe\big[\tilde \Upsilon^\theta\big]_\mu)^{1/2} \, \fU_t^\tau \vphi\| \lesssim \tau^{-1/2} \log_{*}(\tau)^{-(1-\theta)/3} \, \|(- \fL_0^\fe)^{1/2} \, \vphi\| \, . \label{e:H1fethetaUtau}
\end{equ}
Obviously,~\eqref{e:HsUtau} and~\eqref{e:H1fethetaUtau} still hold if we add an extra factor $\fN_*^\kappa \alpha^\fN$ on both sides. These two bounds easily follow from Definition~\ref{def:Utau} and the trivial estimates
\begin{equs}
|\fe \cdot (\Sigma^\tau)^{-1} \, k_{1:n}|^2 &= \tau^{-1} \log_{*}(\tau)^{-2/3} \, |\fe \cdot k_{1:n}|^2 \, , \\
|(\Sigma^\tau)^{-1} k_{1:n}|^2 &\leq \tau^{-1} \, |k_{1:n}|^2 \, , \\
\tilde \Upsilon^\theta(\Log(\mu + |p_{1:n}|^2/2)) &\lesssim \log_*(\tau)^{2 \theta/3} \, ,
\end{equs}
where, for the last bound, we used that $\mu + |p_{1:n}|^2/2 \geq 1/\tau$ and the trivial bound $\tilde \Upsilon^\theta(\Log(\cdot)) \lesssim \log_*(1/\cdot)^{2 \theta/3}$ (recall~\eqref{e:Upstheta},~\eqref{e:ODEexplicit}, and $\Log(\cdot) = \log(1/\cdot) \, \1_{0 < \cdot \leq 1}$).
\ss

By using~\eqref{e:HsUtau} and~\eqref{e:H1fethetaUtau}, the estimates~\eqref{e:AnsatzA+L2},~\eqref{e:AnsatzA+H1},~\eqref{e:AnsatzA+H1fe},~\eqref{e:AnsatzA+H1+gamma} readily follows from their counterpart~\eqref{e:AnsatzA+L2bis},~\eqref{e:AnsatzA+H1bis},~\eqref{e:AnsatzA+H1febis},~\eqref{e:AnsatzA+H1gammabis} in Lemma~\ref{lem:AnsatzA+bis} (with $\theta$ therein chosen as $1/2 + 3 \eps \in (1/2, 1)$).

We turn to the proof of~\eqref{e:AnsatzA+Gen}. We first observe that $\fA_0$ commutes with $\fA_+^\sharp$ and $\mathfrak{Ans}_{\mu}$ (which is clear from Remark~\ref{rmk:fA0} and formulas~\eqref{e:fA+Sharp},~\eqref{e:TrueAnsatz}). By recalling that $(\partial_t + \tau \fA_0) \, \fU_t^\tau = 0$ (cf. Remark~\ref{rmk:GalileanTransform}), we deduce
\begin{equ} \label{e:GalileanCancellation}
(\partial_t + \tau \fA_0) \, \mathfrak{Ans}_\mu(\fA_+^\sharp \fU_t^\tau \vphi) = \mathfrak{Ans}_\mu\big(\fA_+^\sharp \, (\partial_t + \tau \fA_0) \, \fU_t^\tau \vphi\big) = 0 \, ,
\end{equ}
and, equivalently: $(\partial_t + \tau \fL) \, \mathfrak{Ans}_\mu(\fA_+^\sharp \fU_t^\tau \vphi) = \tau \bar \fL \, \mathfrak{Ans}_\mu(\fA_+^\sharp \fU_t^\tau \vphi)$. We also observe that Definition~\ref{def:Utau} readily implies: $\fU_t^\tau \fL_0^\fe = \tau \log_*(\tau)^{2/3} \, \fL_0^\fe \, \fU_t^\tau$. Consequently, we deduce
\begin{equs}
\Big\|&\fN_{*}^\kappa \alpha^\fN \, (\mu \tau - \tau \fL_0)^{-1/2} \, \Big((\partial_t + \tau \fL) \, \mathfrak{Ans}_\mu(\fA_+^\sharp \fU_t^\tau \vphi) + \tau \fA_+ \fU_t^\tau \vphi - d^{\lim} \frac{\log_{*}(1/\mu)^{2/3}}{\log_{*}(\tau)^{2/3}} \, \fU_t^\tau \fL_0^\fe \vphi\Big)\Big\| \\
&\leq \tau^{1/2} \, \|\fN_{*}^\kappa \alpha^\fN \, (\mu - \fL_0)^{-1/2} \, (\bar \fL \, \mathfrak{Ans}_\mu(\fA_+^\sharp \fU_t^\tau \vphi) + \fA_+ \fU_t^\tau \vphi - \fL_0^\fe[\Upsilon]_\mu \, \fU_t^\tau\vphi)\| \\
&\quad + \tau^{1/2} \, \|\fN_{*}^\kappa \alpha^\fN \, (- \fL_0)^{-1/2} \, (\fL_0^\fe[\Upsilon]_\mu - d^{\lim} \log_{*}(1/\mu)^{2/3} \fL_0^\fe) \, \fU_t^\tau \vphi\| \, \\
&\lesssim \log(\tau)^{-1/6 + \eps} \, C_{\alpha, \kappa + 1}^{1/2}(\vphi) + \|\fN_{*}^\kappa \alpha^\fN \, (- \tau \fL_0)^{-1/2} \, (\fL_0^\fe[\Upsilon]_\mu - d^{\lim} \log_{*}(1/\mu)^{2/3} \fL_0^\fe) \, \fU_t^\tau \vphi\| , \label{e:AnsatzA+GenProof}
\end{equs}
where in the last bound we used~\eqref{e:GenAnsatzA+bis} (with $\theta = 1/2 + 3 \eps$) together with~\eqref{e:HsUtau}. It only remains to estimate the last term. First, we observe that for any $k_{1:n} \in \R^{2n}$ we have
\begin{equ}
|\Log(\mu + |k_{1:n}|^2/2) - \log_{*}(1/\mu)| \lesssim 1 + \log(1 + \mu^{-1} \, |k_{1:n}|^2/2) \lesssim 1 + \mu^{-1/2} \, |k_{1:n}| \, .
\end{equ}
Then, we use~\eqref{e:AsymptoticUps} (and the trivial bound $|a^{2/3} - b^{2/3}| \lesssim 1 + |a - b|$) and deduce
\begin{equ} \label{e:gnewglim}
|\Upsilon(\Log(\mu + |k_{1:n}|^2/2)) - d^{\lim} \log_{*}(1/\mu)^{2/3}| \lesssim 1 + \mu^{-1/2} \, |k_{1:n}| \leq 1 + \tau^{1/2} \, |k_{1:n}| \, ,
\end{equ}
where in the last bound we used that $\mu \tau \geq 1$. By combining~\eqref{e:gnewglim} and~\eqref{e:HsUtau}, we obtain
\begin{equs}
\tau^{1/2} \, \|\fN_{*}^\kappa \alpha^\fN \, &(- \fL_0)^{-1/2} \, (\fL_0^\fe[\Upsilon]_\mu - d^{\lim} \log_{*}(1/\mu)^{2/3} \fL_0^\fe) \, \fU_t^\tau \vphi\| \\
&\lesssim \tau^{1/2} \, \|\fN_{*}^\kappa \alpha^\fN \, (- \fL_0^\fe)^{1/2} \, \fU_t^\tau \vphi\| + \tau \,  \|\fN_{*}^\kappa \alpha^\fN \, (- \fL_0^\fe)^{1/2} (- \fL_0)^{1/2} \, \fU_t^\tau \vphi\| \\
&\lesssim \log(\tau)^{-1/6} \, C_{\alpha, \kappa}^1(\vphi) \, . \label{e:LnewLlim}
\end{equs}
Plugging this bound in~\eqref{e:AnsatzA+GenProof} gives the desired estimate~\eqref{e:AnsatzA+Gen} and concludes the proof.
\end{proof}

\subsection{The fluctuation-dissipation relation for the ASEP}

\label{sec:AnsatzA+ASEP}

We now derive the fluctuation-dissipation relation~\eqref{e:FormalFluctuationDissipationASEP} by transferring the previous SBE fluctuation-dissipation relation to the ASEP setting. More precisely, the ASEP corrector is simply obtained by passing the SBE corrector $\mathfrak{Ans}_\mu(\fA_+^\sharp \fU_t^\tau \vphi)$ from $\fock{}{}$ to $\fockdo{}{}$ via the bridge $\fsR \fsP$.

\begin{definition} \label{def:AnsatzA+ASEP}
For any $\tau \geq 1$, $\mu > 0$ with $\mu \tau \geq 1$, and $\vphi \in \fock{\rap}{}$ we set
\begin{equ} \label{e:AnsatzA+ASEP}
\mathrm{Cor}^{\tau, \mu}_{t}(\vphi) \eqdef \fsR \fsP \, \mathfrak{Ans}_\mu(\fA_+^\sharp \fU_t^\tau \vphi) \, .
\end{equ}
\end{definition}

By combining Theorem~\ref{th:ComparisonGen} and Proposition~\ref{p:AnsatzA+}, we deduce the following proposition which details all the estimates we need on the ASEP corrector.

\begin{proposition} \label{p:AnsatzA+ASEP}
Let $\tau \geq 1$ and $\mu > 0$ such that $\mu \tau \geq 1$, and let $\eps \in (0,1/6)$, $\alpha \in [1, \sqrt{1 + 6 \eps})$, $\kappa \geq 0$. Then it holds
\begin{equs}
\sqrt{\mu \tau} \, \|N_{*}^\kappa \alpha^N \, \mathrm{Cor}^{\tau, \mu}_{t}(\vphi)\| &\lesssim_{\alpha, \kappa} \log_{*}(\tau)^{-1/3} \, C_{\alpha, \kappa}^{1/2}(\vphi) \, , \label{e:AnsatzA+L2ASEP} \\
\|N_{*}^\kappa \alpha^N \, (- \tau S)^{1/2} \, \mathrm{Cor}^{\tau, \mu}_{t}(\vphi)\| &\lesssim_{\alpha, \kappa} C_{\alpha, \kappa}^{1/2}(\vphi) \, , \label{e:AnsatzA+H1ASEP} \\
\Big\|N_{*}^\kappa \alpha^N (\mu \tau - \tau S)^{-1/2} \Big(\tau A_+ \fsi_t^\tau
\vphi + (\partial_t + \tau L) \, &\mathrm{Cor}^{\tau, \mu}_{t}(\vphi) - d^{\lim} \frac{\log_{*}(1/\mu)^{2/3}}{\log_{*}(\tau)^{2/3}} \, \fsi_t^\tau \fL_0^\fe \vphi\Big)\Big\| \\
&\lesssim_{\alpha, \kappa, \eps} \log_{*}(\tau)^{-1/6 + \eps} \, C_{\alpha, \kappa+6}^{3/2}(\vphi) \, , \label{e:AnsatzA+GenASEP}
\end{equs}
where $\fsi_t^\tau$ is defined in~\eqref{e:fsittau}, and the constants $C^s_{\alpha, \kappa}(\vphi)$ are given by~\eqref{e:Cvphi}.
\end{proposition}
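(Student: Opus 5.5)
The first two bounds will be transferred directly from the SBE estimates of Proposition~\ref{p:AnsatzA+}, and the third will be obtained by passing the SBE fluctuation-dissipation relation~\eqref{e:AnsatzA+Gen} through the bridge $\fsQ^*\fsP$ and controlling each discrepancy with Theorem~\ref{th:ComparisonGen}. Throughout, write $\phi_t := \mathfrak{Ans}_\mu(\fA_+^\sharp \fU_t^\tau \vphi)$, so that $\mathrm{Cor}^{\tau,\mu}_t(\vphi) = \fsR\fsP\phi_t$. All bridges act chaos by chaos, so every estimate below can be proved in a fixed chaos $n$ and reassembled with the weights $n_*^\kappa\alpha^n$.

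For~\eqref{e:AnsatzA+L2ASEP}, I will use $\|\fsR\|\le 1$ (Lemma~\ref{lem:BoundR}) and~\eqref{e:BoundP} with $\kappa=0$, and then apply~\eqref{e:AnsatzA+L2}. For~\eqref{e:AnsatzA+H1ASEP}, I will combine the $H^1$ bound in~\eqref{e:BoundR} with~\eqref{e:BoundP} for $\kappa=1/2$, $\mu=0$. This gives $\|(-S)^{1/2}\fsR\fsP\phi_t\|\lesssim\|(-\fL_0)^{1/2}\phi_t\|$, and the conclusion then follows from~\eqref{e:AnsatzA+H1}.

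For~\eqref{e:AnsatzA+GenASEP}, I decompose
\begin{equs}
\tau A_+\fsi_t^\tau\vphi + (\partial_t+\tau L)\fsR\fsP\phi_t - c_{\tau,\mu}\fsi_t^\tau\fL_0^\fe\vphi
&= \fsQ^*\fsP\Big(\tau\fA_+\fU_t^\tau\vphi + (\partial_t+\tau\fL)\phi_t - c_{\tau,\mu}\fU_t^\tau\fL_0^\fe\vphi\Big) \\
&\quad + \tau(L\fsR\fsP-\fsQ^*\fsP\fL)\phi_t + \tau(A_+\fsR\fsP-\fsQ^*\fsP\fA_+)\fU_t^\tau\vphi \\
&\quad + c_{\tau,\mu}(\fsQ^*-\fsR)\fsP\fU_t^\tau\fL_0^\fe\vphi \, ,
\end{equs}
where $c_{\tau,\mu} = d^{\lim}\log_*(1/\mu)^{2/3}/\log_*(\tau)^{2/3}\lesssim 1$ (using $\mu\tau\ge1$). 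Here I use that $\partial_t$ commutes with the time-independent bridges. The four terms are then bounded as follows.

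\emph{First term.} I will use~\eqref{e:BoundQ*} with parameter $\mu\tau$ (rescaled by $\tau$), followed by~\eqref{e:BoundP}, to move the $H^{-1}$ norm $(\mu\tau-\tau S)^{-1/2}$ to $(\mu\tau-\tau\fL_0)^{-1/2}$ at a cost of $\fN^3$. Then~\eqref{e:AnsatzA+Gen} applies directly.

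\emph{Second term.} Replacing $(\mu\tau-\tau S)^{-1/2}$ by $(-\tau S)^{-1/2}$, Theorem~\ref{th:ComparisonGen} bounds this term by
\[
\tau^{1/2}\|\fN^6(-\fL_0)^{(1+\gamma)/2}\phi_t\| + \tau^{1/2}\|\fN^6(-\fL_0^\fe)^{1/2}\phi_t\| .
\]
Since $\tau^{1/2}(-\fL_0)^{(1+\gamma)/2} = \tau^{-\gamma/2}(-\tau\fL_0)^{(1+\gamma)/2}$, the first piece is controlled by~\eqref{e:AnsatzA+H1+gamma} and the second by~\eqref{e:AnsatzA+H1fe}, with $\kappa+6$. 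Both are $\lesssim\log_*(\tau)^{-1/6+\eps}C^1_{\alpha,\kappa+6}(\vphi)$.

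\emph{Third term.} Theorem~\ref{th:ComparisonGen} for $(A_+,\fA_+)$ applied to the macroscopic function $\fU_t^\tau\vphi$, together with~\eqref{e:HsUtau}, gives at least a factor $\tau^{-\gamma/2}+\log_*(\tau)^{-1/6}$, multiplied by $C^1$ of $\vphi$.

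\emph{Fourth term.} I will use~\eqref{e:RemovalQ*R} with parameter $\mu\tau\ge1$ after rescaling, together with~\eqref{e:HsUtau}. The relevant input is $\fU_t^\tau\fL_0^\fe\vphi = \tau\log_*(\tau)^{2/3}\fL_0^\fe\fU_t^\tau\vphi$, whose $H^1$ norm in microscopic units is of order $\tau^{-1/2}$ times $C^{3/2}(\vphi)$. The logarithmic loss is at most $\log_*(\tau)$, so the product is negligible.

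The main obstacle I expect is this fourth term, and more generally the bookkeeping of the parameter $\mu$ in the rescaled bounds. The comparison estimates are stated with $(-S)^{-1/2}$ or with resolvent parameter $\mu$ in microscopic units, so each one must be rescaled into $(\mu\tau-\tau S)^{-1/2}$ consistently. In~\eqref{e:RemovalQ*R} the microscopic parameter becomes $\mu$ itself, which can be as small as $1/\tau$, producing a $\log_*(\tau)$ loss. I expect this loss to be beaten by the polynomial smallness $\tau^{-1/2}$ coming from the smoothness of the macroscopic input; this is the reason for the $C^{3/2}$ norm in the final bound.
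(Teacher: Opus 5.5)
Your arguments for \eqref{e:AnsatzA+L2ASEP}, \eqref{e:AnsatzA+H1ASEP} and for your first three terms are fine and agree with the paper. However, the displayed decomposition for \eqref{e:AnsatzA+GenASEP} is not an identity. On the left-hand side the time derivative enters as $\partial_t\fsR\fsP\phi_t=\fsR\fsP\,\partial_t\phi_t$. On your right-hand side it enters only through the first term, as $\fsQ^*\fsP\,\partial_t\phi_t$. The fact that $\partial_t$ commutes with the bridges does not help, since the two expressions carry different bridges. Expanding, the left-hand side minus your right-hand side equals
\[
(\fsR-\fsQ^*)\,\fsP\,\partial_t\phi_t ,
\]
which is the paper's term $\three$ and which your proof never estimates. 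This term is not negligible for free. Indeed, \eqref{e:RemovalQ*R} only provides a prefactor $\tau^{-1}\log_*(\tau)$ in front of $\|\fN^2(1-\tau\fL_0)^{1/2}\partial_t\phi_t\|$. If you then write $\partial_t\phi_t=-\tau\fA_0\phi_t$ and bound the symbol $v\cdot\sum_i k_i$ by $n^{1/2}|k_{1:n}|$ on $\phi_t$, whose Fourier modes extend up to order one, the available Ansatz bounds (even \eqref{e:AnsatzA+H1+gamma} with $\gamma=1$) leave a divergent power of $\log_*(\tau)$.

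The missing observation is that the time derivative only costs a macroscopic factor. The map $\mathfrak{Ans}_\mu\circ\fA_+^\sharp$ is linear and time-independent, and $\partial_t\fU_t^\tau=-\tau\fA_0\fU_t^\tau=-\tau^{1/2}\log_*(\tau)^{-1/3}\,\fU_t^\tau\fA_0$. Hence
\[
\partial_t\phi_t=-\tau^{1/2}\log_*(\tau)^{-1/3}\,\mathfrak{Ans}_\mu\big(\fA_+^\sharp\,\fU_t^\tau\fA_0\vphi\big) .
\]
First use $\mu\tau\ge1$ to replace $(\mu\tau-\tau S)^{-1/2}$ by $(1-\tau S)^{-1/2}$. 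Then apply \eqref{e:RemovalQ*R}, \eqref{e:BoundP}, and \eqref{e:AnsatzA+L2}--\eqref{e:AnsatzA+H1} with input $\fA_0\vphi$. This gives that the term is $\lesssim\tau^{-1/2}\log_*(\tau)^{2/3}\,C^{1}_{\alpha,\kappa+5/2}(\vphi)$, which is another place where the extra derivative on $\vphi$ is spent.

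Make the same preliminary reduction to $(1-\tau S)^{-1/2}$ in your fourth term. If you keep $\mu$ as the microscopic parameter in \eqref{e:RemovalQ*R}, the $\sqrt{\mu}\,\|\cdot\|$ part of $(\mu-\cS)^{1/2}$ makes your bound blow up as $\mu\to\infty$ at fixed $\tau$, a regime the statement allows.
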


\begin{proof}
We first comment that, by orthogonality of chaoses, the estimates of Lemma~\ref{lem:BoundR}, Lemma~\ref{lem:BoundQ}, Lemma~\ref{lem:BoundP}, and~\eqref{e:HsUtau} remain valid if we add an extra factor $N_{*}^\kappa \alpha^N$, $\cN_{*}^\kappa \alpha^\cN$ or $\fN_{*}^\kappa \alpha^\fN$ (depending on the context) on both sides of the inequalities. Furthermore, the same holds for Theorem~\ref{th:ComparisonGen}, up to paying an extra multiplicative cost only depending on $\alpha$ and $\kappa$ (because $(n \pm 1)_*^\kappa \alpha^{n \pm 1} \lesssim_{\kappa, \alpha} n_{*}^\kappa \alpha^n)$. For the upcoming estimates, it is useful to choose any $\gamma \in (0,1)$ that we fix for the rest of the proof.
\ss

We start with the proof of~\eqref{e:AnsatzA+L2ASEP} and~\eqref{e:AnsatzA+H1ASEP}. By first using~\eqref{e:BoundR} and~\eqref{e:BoundP}, and then applying~\eqref{e:AnsatzA+L2} and~\eqref{e:AnsatzA+H1}, we readily deduce
\begin{equs}
\sqrt{\mu \tau} \, \|N_{*}^\kappa \alpha^N \, \mathrm{Cor}^{\tau, \mu}_t(\vphi)\| &\lesssim \sqrt{\mu \tau} \, \|\fN_{*}^\kappa \alpha^\fN \, \mathfrak{Ans}_\mu(\fA_+^\sharp \fU_t^\tau \vphi)\| \lesssim \log_{*}(\tau)^{-1/3} \, C_{\alpha, \kappa}^{1/2}(\vphi) \, , \\
\|N_{*}^\kappa \alpha^N \, (- \tau S)^{1/2} \, \mathrm{Cor}^{\mu, \tau}_t(\vphi)\| &\lesssim \|\fN_{*}^\kappa \alpha^\fN \, (- \tau \fL_0)^{1/2} \, \mathfrak{Ans}_\mu(\fA_+^\sharp \fU_t^\tau \vphi)\| \lesssim C_{\alpha, \kappa}^{1/2}(\vphi) \, ,
\end{equs}
which proves the two desired estimates.
\ss

We turn to the proof of~\eqref{e:AnsatzA+GenASEP}. We can rearrange its left hand side by the triangle inequality, as follows
\begin{equs}
\Big\|&N_{*}^\kappa \alpha^N (\mu \tau - \tau S)^{-1/2} \Big(\tau A_+ \fsi_t^\tau
\vphi + (\partial_t + \tau L) \, \mathrm{Cor}^{\tau, \mu}_{t}(\vphi) - d^{\lim} \frac{\log_{*}(1/\mu)^{2/3}}{\log_{*}(\tau)^{2/3}} \, \fsi_t^\tau \fL_0^\fe \vphi\Big)\Big\| \\
&\leq \Big\|N_{*}^\kappa \alpha^N (\mu \tau - \tau S)^{-1/2} \, \fsQ^* \fsP \Big(\tau \fA_+ \fU_t^\tau \vphi + (\partial_t + \tau \fL) \, \mathfrak{Ans}_\mu(\fA_+^\sharp \fU_t^\tau \vphi) - d^{\lim} \frac{\log_{*}(1/\mu)^{2/3}}{\log_{*}(\tau)^{2/3}} \, \fU_t^\tau \fL_0^\fe \vphi\Big)\Big\| \\
&\qquad + \|N_{*}^\kappa \alpha^N (\mu \tau - \tau S)^{-1/2} \, (\tau A_+ \fsR \fsP - \fsQ^* \fsP \, \tau \fA_+) \, \fU_t^\tau \vphi\| \\
&\qquad + \|N_{*}^\kappa \alpha^N (\mu \tau - \tau S)^{-1/2} \, (\fsR - \fsQ^*) \, \fsP \, \partial_t \,  \mathfrak{Ans}_\mu(\fA_+^\sharp \fU_t^\tau \vphi)\| \\
&\qquad + \|N_{*}^\kappa \alpha^N (\mu \tau - \tau S)^{-1/2} \, (\tau L \fsR \fsP - \fsQ^* \fsP \, \tau \fL) \, \mathfrak{Ans}_\mu(\fA_+^\sharp \fU_t^\tau \vphi)\| \\
&\qquad + \frac{d^{\lim} \log_{*}(1/\mu)^{2/3}}{\log_{*}(\tau)^{2/3}} \, \|N_{*}^\kappa \alpha^N (\mu \tau - \tau S)^{-1/2} \, (\fsR - \fsQ^*) \fsP \, \fU_t^\tau \fL_0^\fe \vphi\| \\
&=: \one + \two + \three + \four + \five \, . \label{e:AnsatzA+ASEP0}
\end{equs}

For the first summand, we use~\eqref{e:BoundQ*},~\eqref{e:BoundP} and~\eqref{e:AnsatzA+Gen}, which results in
\begin{equ}
\one \lesssim_{\alpha, \kappa, \eps} \|\fN_{*}^{\kappa + 3} \alpha^\fN \, (\mu \tau - \tau \fL_0)^{-1/2} \, (...)\| \lesssim \log_{*}(\tau)^{-1/6+\eps} \, C_{\alpha, \kappa + 4}^{1}(\vphi) \, , \label{e:AnsatzA+ASEP1}
\end{equ}
where $(...)$ stands for the expression inside the large parenthesis in the definition of $\one$.

For the second summand, we use Theorem~\ref{th:ComparisonGen} and~\eqref{e:HsUtau}, which gives
\begin{equs}
\two &\lesssim_{\gamma} \tau^{1/2} \, \|\fN_{*}^{\kappa + 6} \alpha^\fN \, (- \fL_0)^{(1+\gamma)/2} \, \fU_t^\tau \vphi\| + \tau^{1/2} \, \|\fN_{*}^{\kappa + 6} \alpha^\fN \, (- \fL_0^\fe)^{1/2} \, \fU_t^\tau \vphi\| \\
&\lesssim \tau^{- \gamma/2} \, C_{\alpha, \kappa + 6}^{(1+\gamma)/2}(\vphi) + \log_{*}(\tau)^{-1/3} \, C_{\alpha, \kappa + 6}^{1/2}(\vphi) \, . \label{e:AnsatzA+ASEP2}
\end{equs}

For the third summand, we first observe that $\partial_t \, \fU_t^\tau = - \tau \fA_0 \, \fU_t^\tau = - \tau^{1/2} \log_{*}(\tau)^{-1/3} \, \fU_t^\tau \, \fA_0$ (recall Definition~\ref{def:Utau} and~\eqref{e:fA0}). We then deduce
\begin{equs}
\three &= \tau^{1/2} \log_{*}(\tau)^{-1/3} \, \|N_{*}^\kappa \alpha^N (\mu \tau - \tau S)^{-1/2} \, (\fsR - \fsQ^*) \, \fsP \, \mathfrak{Ans}_\mu(\fA_+^\sharp \fU_t^\tau \fA_0 \vphi)\| \\
&\lesssim \tau^{1/2} \log_{*}(\tau)^{-1/3} \times \log_{*}(1/\mu) \, \tau^{-1} \|\fN_{*}^{\kappa+2} \alpha^\fN (\mu \tau - \tau \fL_0)^{1/2} \, \mathfrak{Ans}_\mu(\fA_+^\sharp \fU_t^\tau \fA_0 \vphi)\| \\
&\lesssim_{\alpha, \kappa} \tau^{-1/2} \log_{*}(\tau)^{2/3} \, C_{\alpha, \kappa + 2}^1(\fA_0 \vphi) \\
&\lesssim \tau^{-1/2} \log_{*}(\tau)^{2/3} \, C_{\alpha, \kappa + 5/2}^{3/2}(\vphi) \, , \label{e:AnsatzA+ASEP3}
\end{equs}
where in the first bound we used~\eqref{e:RemovalQ*R} and~\eqref{e:BoundP}, in the second bound we used~\eqref{e:AnsatzA+L2} and~\eqref{e:AnsatzA+H1} (and $\mu \tau \geq 1$), and in the third bound we used the trivial bound $|v \cdot (k_1 + \dots + k_n)| \lesssim \sqrt{n} \, |k_{1:n}|$ due to the Cauchy-Schwarz inequality. 

For the fourth summand, we first apply Theorem~\ref{th:ComparisonGen} and then use~\eqref{e:AnsatzA+H1+gamma} and~\eqref{e:AnsatzA+H1fe}, which leads to
\begin{equs}
\four &\lesssim_{\gamma} \tau^{1/2} \, \|\fN_{*}^{\kappa + 6} \alpha^\fN \, (- \fL_0)^{(1+\gamma)/2} \, \mathfrak{Ans}_\mu(\fA_+^\sharp \fU_t^\tau \vphi)\| + \tau^{1/2} \, \|\fN_{*}^{\kappa + 6} \alpha^\fN \, (- \fL_0^\fe)^{1/2} \, \mathfrak{Ans}_\mu(\fA_+^\sharp \fU_t^\tau \vphi)\| \\
&\lesssim_{\alpha, \kappa, \eps} \tau^{- \gamma/2} \, C_{\alpha, \kappa + 6}^{(1+\gamma)/2}(\vphi) + \log_{*}(\tau)^{-1/6 + \eps} \, C_{\alpha, \kappa + 6}^{1}(\vphi) \, . \label{e:AnsatzA+ASEP4}
\end{equs}

For the fifth summand, we use $\mu \tau \geq1$,~\eqref{e:RemovalQ*R},~\eqref{e:BoundP} and~\eqref{e:HsUtau}, which gives
\begin{equs}
\five &\lesssim \|N_{*}^\kappa \alpha^N (1 - \tau S)^{-1/2} \, (\fsR - \fsQ^*) \fsP \, \fU_t^\tau \fL_0^\fe \vphi\| \\
&\lesssim \tau^{-1} \log_{*}(\tau) \, \|\fN_{*}^{\kappa+2} \alpha^\fN \, (1 - \tau \fL_0)^{1/2} \fU_t^\tau \fL_0^\fe \vphi\| \\
&\lesssim \tau^{-1} \log_{*}(\tau) \,  C_{\alpha, \kappa + 2}^{1/2}(\fL_0^\fe \vphi) \\
&\leq \tau^{-1} \log_{*}(\tau) \, C_{\alpha, \kappa + 2}^{3/2}(\vphi) \, . \label{e:AnsatzA+ASEP5}
\end{equs}

We can finally gather~\eqref{e:AnsatzA+ASEP0},~\eqref{e:AnsatzA+ASEP1},~\eqref{e:AnsatzA+ASEP2},~\eqref{e:AnsatzA+ASEP3},~\eqref{e:AnsatzA+ASEP4},~\eqref{e:AnsatzA+ASEP5} and deduce the estimate~\eqref{e:AnsatzA+GenASEP}, which concludes the proof.
\end{proof}

\subsection{The corrector for the diffusivity}

\label{sec:AnsatzDiffusivity}

\label{sec:AnsatzB+}

In this section, we introduce a corrector $\mathrm{Cor}_{\mu, \delta}^{B+}$ which formally satisfies
\begin{equ}
B_+ \cV_{|\Z^2}^\delta \approx - L \, \mathrm{Cor}_{\mu, \delta}^{B+} \quad \mbox{ in $H^{-1}$ norm,}
\end{equ}
where the operator $B_+$ and the function $\cV_{|\Z^2}^\delta$ are given by Definitions~\ref{def:B+} and~\ref{def:V}. The role of this corrector is to capture the sharp asymptotic of the diffusivity, because its Dynkin martingale should approximate the time integral of $B_+ \cV^\delta$ appearing in Lemma~\ref{lem:GKneat}.
\ss

At the level of the SBE, this corrector is given by $\mathfrak{Ans}_\mu(\fB_+ \cV^\delta)$ and the next proposition details all the related estimates we need. The proof can be found in Appendix~\ref{app:AnsatzProof}, and mainly follows from Proposition~\ref{p:GenericAnsatz} after evaluating the constants appearing therein with $\bm \vphi = \fB_+ \cV^\delta$.

\begin{proposition} \label{p:AnsatzDiff}
Let $\mu > 0$, $\delta \in (0,1]$, $\eps \in (0,1/6)$, $\alpha \in [1, \sqrt{1 + 6 \eps})$, and $\gamma \in (0,1]$. Then it holds
\begin{equs}
\sqrt{\mu} \, \|\alpha^\fN \, \mathfrak{Ans}_\mu(\fB_+ \cV^\delta)\| &\lesssim_\alpha 1 \, , \label{e:AnsatzDiffL2} \\
\|\alpha^\fN \, (- \fL_0^\fe)^{1/2} \, \mathfrak{Ans}_\mu(\fB_+ \cV^\delta)\| &\lesssim_{\alpha, \eps} \log_{*}(1/\mu)^{1/6 + \eps} \, , \label{e:AnsatzDiffH1fe} \\
\|\alpha^\fN \, (- \fL_0)^{(1+\gamma)/2} \, \mathfrak{Ans}_\mu(\fB_+ \cV^\delta)\| &\lesssim_{\alpha, \eps, \gamma} \log_{*}(1/\mu)^{1/6 + \eps} \, ,
\label{e:AnsatzDiffH1+gamma} \\
\limsup_{\delta \to 0} \big\|\alpha^\fN \, (\mu - \fL_0)^{-1/2} \, \big(\fL \, \mathfrak{Ans}_\mu(\fB_+ \cV^\delta) + \fB_+ &\cV^\delta\big)\big\| \lesssim_{\alpha, \eps} \log_{*}(1/\mu)^{1/6 + \eps} \, , \label{e:AnsatzDiffGen} \\
\limsup_{\delta \to 0} \big|\, 2 \, \|(- \fS)^{1/2} \, \mathfrak{Ans}_\mu(\fB_+ \cV^\delta)\|^2 - d^{\lim} \log_{*}(1/&\mu)^{2/3}\big| \lesssim_{\eps} \log_{*}(1/\mu)^{1/3 + 2 \eps} \, .\label{e:AnsatzDiffH1}
\end{equs}
\end{proposition}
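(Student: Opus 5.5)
The plan is to apply Proposition~\ref{p:GenericAnsatz} with $\bm\vphi = \fB_+\cV^\delta$. The whole proof then reduces to evaluating the constants $K$ of~\eqref{e:AnsatzConstantK} for this specific input; the main issue is keeping the bounds uniform in $\delta$. Since $\fB_+\cV^\delta$ lies in the second chaos, the weights $\fN_*^\kappa\alpha^\fN$ only cost constants. We fix $\theta = 1/2+3\eps$, which allows $\alpha<\sqrt{2\theta}=\sqrt{1+6\eps}$.

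\textbf{Step 1: diagonal integral.} In Fourier variables, $\fF(\fB_+\cV^\delta)(p,q)=c\,\Pi_{p,q}\,\delta^{-1}\fF\cV(\delta^{-1}(p+q))$ with $c=\lambda\sqrt{\rho(1-\rho)}/(2\pi)$. Changing variables to $k=p+q$, every $K$-constant becomes
\[
\int \dd k\,\delta^{-2}|\fF\cV(k/\delta)|^2 \int \dd p\,\Pi_{p,k-p}\, F(p,k-p)
\]
for an explicit multiplier $F$. As $\delta\to0$ the measure $\delta^{-2}|\fF\cV(\cdot/\delta)|^2\,\dd k$ tends to $\delta_0$, since $\|\cV\|_{L^2}=1$. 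So, up to continuity in $k$, we must evaluate $\int_{|p|_\infty<\pi} F(p,-p)\,\dd p$.

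\textbf{Step 2: the $K$-constants.}
\begin{itemize}
\item $K^0$: here $F=(\mu+\Xi+\Upsilon\,\Xi^\fe)^{-2}$ with $\Xi\gtrsim|p|^2$. This gives $\int(\mu+|p|^2)^{-2}\,\dd p\lesssim\mu^{-1}$, hence~\eqref{e:AnsatzDiffL2}.
\item $K^{1/2,\fe,\theta}$: the integral is $\int \tilde\Upsilon^\theta(p_\fe^2)\,(\mu+D_0(p)+\Upsilon p_\fe^2)^{-2}\,\dd p$, where $\Upsilon=\Upsilon(\Log(\mu+|p|^2))$. Integrating out $p_\fe$ in a dyadic shell $|p|\sim 2^{-j}$ gives a factor $\tilde\Upsilon^\theta\,\tilde\Upsilon^{-1/2}\cdot\tilde\Upsilon^{-1}\approx\Upsilon^{\theta-3/2}$. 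The combination $\tilde\Upsilon^{1/2}=\sqrt{\det D_0+D_0[\ff,\ff]\Upsilon}$ is exactly the one appearing in the ODE~\eqref{e:ODE}. Summing over $j\le\log(1/\mu)$ with $\Upsilon\approx\ell^{2/3}$, i.e.\ $\int^{\log 1/\mu}\ell^{(2/3)(\theta-3/2)}\,\dd\ell$, yields $\log_*(1/\mu)^{2\theta/3}$ since $\theta-3/2+3/2=\theta$. Its square root is $\log_*(1/\mu)^{\theta/3}=\log_*(1/\mu)^{1/6+\eps}$. Together with~\eqref{e:AnsatzH1fe} this gives~\eqref{e:AnsatzDiffH1fe}.
\item $K^{(1+\gamma)/2}$: for $\gamma\le$ a threshold the integral $\int|p|^{2+2\gamma}(\mu+|p|^2)^{-2}$ is bounded. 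Together with $K^{1/2,\fe,\theta}$ and~\eqref{e:AnsatzH1+gamma}, this gives~\eqref{e:AnsatzDiffH1+gamma}; larger $\gamma$ follows by monotonicity since $|p|\lesssim1$.
\end{itemize}

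\textbf{Step 3: the generator estimate~\eqref{e:AnsatzDiffGen}.} Here $\fL=\bar\fL+\fA_0$. Since $\fA_0$ commutes with the Ansatz and acts by $\iota v\cdot(p+q)$, which is $O(\delta)$ on the support, its contribution vanishes as $\delta\to0$. For $\bar\fL$ I use~\eqref{e:AnsatzGen}: its right side is $K^{1/2,\fe,\theta}_{\alpha,1}$, already bounded in Step 2. The remaining term is $\mu\,\mathfrak{Ans}_\mu$, which is bounded by $\sqrt\mu$ times the $L^2$ bound, hence $O(1)$. For the last term, $\fA_-^\sharp(\mu-\fL^\eff_\mu)^{-1}\fB_+\cV^\delta$ lives in chaos $1$ with Fourier factor $\fw\cdot k$, $k=O(\delta)$; its $H^{-1}$ norm is therefore $O(1)$ (the factor $\fw\cdot k$ cancels $|k|^{-1}$) and is $\lesssim\log$ at worst.

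\textbf{Step 4 (main obstacle): the sharp $H^1$ identity~\eqref{e:AnsatzDiffH1}.} I would use~\eqref{e:AnsatzH1precise}, which is valid since the input is in a homogeneous chaos. Its error is $K^{1/2,\fe,\theta}_{1,0}{}^2\lesssim\log_*(1/\mu)^{1/3+2\eps}$. The $\mu\|\cdot\|^2$ part is $O(1)$ by~\eqref{e:AnsatzDiffL2}. It remains to show
\[
2\|(\mu-\fL^\eff_\mu)^{-1/2}\fB_+\cV^\delta\|^2\to \frac{4c^2}{?}\int\frac{\dd p}{\mu+D_0(p)+\Upsilon\,p_\fe^2}= d^{\lim}\log(1/\mu)^{2/3}+O(\log^{1/3}).
\]
The point is that $\int\dd p_\fe\,(a+\Upsilon p_\fe^2)^{-1}$ produces $\pi/\sqrt{\Upsilon\cdot\text{transverse}}$. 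The exact constant $C_\Upsilon$ in~\eqref{e:ODE} is then designed so that the $\ell$-integrand equals $\dot\Upsilon$. Hence the integral telescopes to $\Upsilon(\log 1/\mu)-\nu_0$, and~\eqref{e:AsymptoticUps} finishes the argument. The delicate bookkeeping is matching the prefactor $2\cdot 2!\,c^2$ with $C_\Upsilon$ and controlling the errors from replacing $\Log(\mu+|p|^2)$ by shell-wise constants. I expect these errors to be $O(\log^{1/3})$ via $\dot\Upsilon\lesssim\ell^{-1/3}$.
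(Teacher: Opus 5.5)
Your overall route is the paper's. You apply Proposition~\ref{p:GenericAnsatz} with $\theta=1/2+3\eps$ and evaluate the $K$-constants through the explicit diagonal formula for $\fB_+\cV^\delta$; your shell computations in Steps~2 and~4 are exactly what \eqref{e:SecondReplLemmaInt}, \eqref{e:FirstReplLemmaInt} and \eqref{e:JJ'Ups} deliver, with $O(1)$ errors and the constants already matched. You split $\fL\,\mathfrak{Ans}_\mu(\fB_+\cV^\delta)+\fB_+\cV^\delta$ into the same four pieces and obtain \eqref{e:AnsatzDiffH1} from \eqref{e:AnsatzH1precise}.

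However, your bound on the last piece in Step~3 is wrong. In the first chaos, $\fF\big(\fA_-^\sharp(\mu-\fL_\mu^\eff)^{-1}\fB_+\cV^\delta\big)(k)$ is a constant times $\iota\,(\fw\cdot k)\,\fF\cV^\delta(k)\,I(k)$, where $I(k)=\int\dd\ell\,\chi^\sharp(\ell,k-\ell;k)\,\Pi_{\ell,k-\ell}\,\big(\mu+\Xi+\Upsilon(\Log(\mu+\Xi_0))\,\Xi_0^\fe\big)^{-1}$. You have dropped $I(k)$. By \eqref{e:FirstReplLemmaInt} and \eqref{e:JJ'Ups}, $\frac{\lambda^2\rho(1-\rho)}{\pi^2}\,I(k)=\Upsilon(\Log(\mu+|k|^2/2))+O(1)$, which is of order $\log_*(1/\mu)^{2/3}$ whenever $|k|\lesssim\sqrt\mu$. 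So after $\fw\cdot k$ cancels $(\mu+|k|^2/2)^{-1/2}$, the $H^{-1}$ norm of this term is, uniformly in $\delta$, as large as $\log_*(1/\mu)^{2/3}$ (take $\delta\asymp\sqrt\mu$), not $O(1)$. Your fallback ``$\lesssim\log$ at worst'' cannot yield \eqref{e:AnsatzDiffGen}, whose right-hand side is $\log_*(1/\mu)^{1/6+\eps}$ with $1/6+\eps<1/3$. This is not an artefact of crude bounds. The term is the correction $\fA_-^\sharp(\mu-\fL_\mu^\eff)^{-1}\bm\vphi$ appearing in \eqref{e:AnsatzGen}, the analogue of the second summand in \eqref{e:AnsatzHeuristic}, which is negligible only for inputs living on scales much larger than $\mu^{-1/2}$.

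The fix is to use the $\limsup_{\delta\to0}$ at fixed $\mu$, as you already did for $\fA_0$. On the support of $\fF\cV^\delta$ one has $|\fw\cdot k|\lesssim\delta$. Moreover $(\mu-\fL_0)^{-1/2}\le\mu^{-1/2}$ and $I(k)\lesssim\int_{|\ell|_\infty<\pi}\dd\ell\,(\mu+|\ell|^2)^{-1}\lesssim\log_*(1/\mu)$. Hence this term is $\lesssim\delta\,\mu^{-1/2}\log_*(1/\mu)\to0$, which is how the paper uses the second statement of \eqref{e:InputB+A-A0}. With this change your argument is complete.

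A minor remark on your $K^{(1+\gamma)/2}$ item: $\int_{|p|_\infty<\pi}(\mu+|p|^2)^{\gamma-1}\dd p$ is bounded uniformly in $\mu$ for every $\gamma\in(0,1]$. The only restriction is $\gamma>0$, which excludes the logarithmic divergence at $\gamma=0$; there is no upper threshold.
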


We can now transfer the corrector $\mathfrak{Ans}_\mu(\fB_+ \cV^\delta)$ to the ASEP setting in a similar vein to Definition~\ref{def:AnsatzA+ASEP}.

\begin{definition}
\label{def:AnsatzDiffASEP}
For every $\mu > 0$ and $\delta \in (0,1]$, we define
\begin{equ}
\mathrm{Cor}_{\mu, \delta}^{B+} \eqdef \fsR \fsP \, \mathfrak{Ans}_\mu(\fB_+ \cV^\delta) \, .
\end{equ}
\end{definition}

By combining Proposition~\ref{p:AnsatzDiff} and Theorem~\ref{th:ComparisonGen}, we deduce the following key estimates for the diffusivity corrector.

\begin{proposition} \label{p:AnsatzDiffASEP}
In the notation of Definition~\ref{def:AnsatzDiffASEP}, let $\eps \in (0,1/6)$ and $\alpha \in [1, \sqrt{1 + 6 \eps})$. Then it holds
\begin{equs}
\sqrt{\mu} \, \|\alpha^N \, \mathrm{Cor}_{\mu, \delta}^{B+}\| &\lesssim_\alpha 1 \, , \label{e:AnsatzDiffL2ASEP} \\
\limsup_{\delta \to 0} \big\|\alpha^N (\mu - S)^{-1/2} \big(L \, \mathrm{Cor}_{\mu, \delta}^{B+} + B_+ \cV^\delta_{|\Z^2}\big)\big\| &\lesssim_{\alpha, \eps} \log_{*}(1/\mu)^{1/6 + \eps} \, , \label{e:AnsatzDiffGenASEP} \\
\limsup_{\delta \to 0} \big|\, 2 \, \|(- S)^{1/2} \, \mathrm{Cor}_{\mu, \delta}^{B+}\|^2 - d^{\lim} \log_{*}(1/\mu)^{2/3}\big| &\lesssim_{\eps} \log_{*}(1/\mu)^{1/3 + 2 \eps} \, . \label{e:AnsatzDiffH1ASEP}
\end{equs}
\end{proposition}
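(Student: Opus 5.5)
The plan mirrors the proof of Proposition~\ref{p:AnsatzA+ASEP}, with the SBE input $\fB_+\cV^\delta$ in place of $\fA_+^\sharp\fU_t^\tau\vphi$ and Proposition~\ref{p:AnsatzDiff} in place of Proposition~\ref{p:AnsatzA+}.

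\emph{Step 1: the $L^2$ bound.} Estimate~\eqref{e:AnsatzDiffL2ASEP} should be immediate. By~\eqref{e:BoundR} and~\eqref{e:BoundP}, both weighted by $\alpha^N$ through orthogonality of chaoses, one has $\|\alpha^N\fsR\fsP\,\cdot\|\le\|\alpha^\fN\,\cdot\|$. Then~\eqref{e:AnsatzDiffL2} concludes.

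\emph{Step 2: the generator bound.} For~\eqref{e:AnsatzDiffGenASEP} I write, with $\Phi\eqdef\mathfrak{Ans}_\mu(\fB_+\cV^\delta)$,
\begin{equ}
L\,\mathrm{Cor}^{B+}_{\mu,\delta}+B_+\cV^\delta_{|\Z^2}=\fsQ^*\fsP(\fL\Phi+\fB_+\cV^\delta)+(L\fsR\fsP-\fsQ^*\fsP\fL)\Phi+\big(B_+\cV^\delta_{|\Z^2}-\fsQ^*\fsP\fB_+\cV^\delta\big)\, .
\end{equ}
\begin{itemize}
\item The first term is handled by~\eqref{e:BoundQ*},~\eqref{e:BoundP} and~\eqref{e:AnsatzDiffGen}. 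The polynomial weights $\fN^3$ are absorbed by slightly enlarging $\alpha$ within $[1,\sqrt{1+6\eps})$.
\item For the second term, note that $(\mu-S)^{-1/2}\le(-S)^{-1/2}$. Theorem~\ref{th:ComparisonGen} then bounds it by $\|\fN^6\alpha^\fN(-\fL_0)^{(1+\gamma)/2}\Phi\|+\|\fN^6\alpha^\fN(-\fL_0^\fe)^{1/2}\Phi\|$. Estimates~\eqref{e:AnsatzDiffH1+gamma} and~\eqref{e:AnsatzDiffH1fe}, again with a slightly larger $\alpha$, give $\log_*(1/\mu)^{1/6+\eps}$.
\end{itemize}

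\emph{Step 3: the third term (main obstacle).} This term is a discrepancy between the discrete and continuum $B_+$ in the second chaos. It involves the hard-core bridge and the fact that $a(\cdot)\,\fe\cdot z$ is not exactly the Fourier multiplier $\lambda\Pi/(2\pi)$. I would first split $\fsQ^*=\fsR+(\fsQ^*-\fsR)$. The piece involving $\fsQ^*-\fsR$ is supported on $x=y$, i.e.\ on the diagonal of the second chaos. By duality against $\|(\mu-S)^{1/2}h\|$ and Lemma~\ref{lem:ZiBoundHC}, it costs $\sqrt{\log_*(1/\mu)}\,\|\1_{\cZ_1}\fsP\fB_+\cV^\delta\|$. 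This last norm is $O(\delta)$, because $\fB_+\cV^\delta$ has kernel $\lambda\sqrt{\rho(1-\rho)}\,\chi*\chi*\cV^\delta$-type, of size $\delta$ pointwise. Hence the piece vanishes as $\delta\to0$.

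The remaining $\fsR$-piece compares
\begin{equ}
B_+\cV^\delta(x,y)=\sqrt{\rho(1-\rho)}\,a(y-x)\,\fe\cdot(y-x)\,\tfrac{\cV^\delta(x)+\cV^\delta(y)}{2}
\end{equ}
with the lattice restriction of the continuum kernel. Both are local in $y-x$, with total mass $\sum_z a(z)\,\fe\cdot z=\lambda$ matching $\lambda\,\Pi/(2\pi)$ at zero frequency. So the difference is a dipole-type kernel of size $O(\delta)$ in the relevant $H^{-1}$ norm. Its $H^{-1}$ norm is at most $\mu^{-1/2}$ times its $\ell^2$ norm, which is $O(\delta)$, so it disappears under $\limsup_{\delta\to0}$. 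I expect this bookkeeping, in particular handling the Fourier truncation $\Pi$ against the exact lattice kernel, to be the most delicate part. Since only $\delta\to0$ is needed at fixed $\mu$, crude $\mu$-dependent bounds suffice.

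\emph{Step 4: the $H^1$ bound.} For~\eqref{e:AnsatzDiffH1ASEP}, Lemma~\ref{lem:ComparisonH1} gives
\begin{equ}
\big|\|(-S)^{1/2}\fsR\fsP\Phi\|^2-\|(-\fS)^{1/2}\Phi\|^2\big|\lesssim\|\fN(-\fL_0)^{(1+\gamma)/2}\Phi\|^2\lesssim\log_*(1/\mu)^{1/3+2\eps}
\end{equ}
by~\eqref{e:AnsatzDiffH1+gamma}. Combining this with~\eqref{e:AnsatzDiffH1} and the triangle inequality concludes.
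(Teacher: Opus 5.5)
Your Steps 1 and 4, and your treatment of the first two terms in Step 2, coincide with the paper's proof. The gap is in Step 3, where both of your $O(\delta)$ claims are false. You use the pointwise size $\delta$ of $\cV^\delta$, but $\|\cV^\delta_{|\Z^2}\|_{\ell^2(\Z^2)}\to\|\cV\|_{L^2(\R^2)}=1$.

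In fact, the lattice inverse Fourier transform of $\1_{(-\pi,\pi)^2}$ is a Kronecker delta. One checks from~\eqref{e:fB+} that $\fsP\fB_+\cV^\delta(x,y)=\lambda\sqrt{\rho(1-\rho)}\,\cV^\delta(x)\,\1_{x=y}+o(1)$ in $\ell^2$ as $\delta\to0$, so the continuum kernel lives essentially on the diagonal. This has three consequences.
\begin{itemize}
\item $\|\1_{\cZ_1}\fsP\fB_+\cV^\delta\|$ tends to a positive constant, not to $0$.
\item $\fsR\fsP\fB_+\cV^\delta\to0$, so your $\fsR$-piece is essentially $B_+\cV^\delta_{|\Z^2}$ alone. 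There is no mass matching with the lattice restriction of the continuum kernel. Its $\ell^2$ norm is of order one, so ``$\mu^{-1/2}$ times the $\ell^2$ norm'' only gives $\mu^{-1/2}$.
\item The $(\fsQ^*-\fsR)$-piece is essentially $\tfrac{\lambda}{2}\sqrt{\rho(1-\rho)}\,\big(\cV^\delta(x_1)\1_{x_2-x_1=z_0}+\cV^\delta(x_2)\1_{x_1-x_2=z_0}\big)$.
\end{itemize}
Both pieces carry the same nonzero mass $\lambda\sqrt{\rho(1-\rho)}\,\cV^\delta$ in the relative variable, and they enter with opposite signs. By the logarithmic divergence of the two-dimensional Green's function, each has hardcore $H^{-1}$ norm of order $\log_*(1/\mu)^{1/2}$; this is the phenomenon behind~\eqref{e:BoundRBad} and~\eqref{e:RemovalQ*R}. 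Since $\eps<1/6$, this exceeds the target $\log_*(1/\mu)^{1/6+\eps}$. Only their difference, a zero-mass (dipole) kernel, is bounded. Splitting $\fsQ^*=\fsR+(\fsQ^*-\fsR)$ destroys exactly this cancellation. Your remark that crude $\mu$-dependent bounds suffice would only apply if these pieces vanished as $\delta\to0$, and they do not.

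The fix, which is the paper's route, is to keep $\fsQ^*$ intact.
\begin{itemize}
\item Since $a(0)=0$, $B_+\cV^\delta_{|\Z^2}$ vanishes on the diagonal. Hence $B_+\cV^\delta_{|\Z^2}=\fsQ^*B_+\cV^\delta_{|\Z^2}$, because $\fsQ^*\fsR^*=\mathrm{Id}$.
\item Then~\eqref{e:BoundQ*} reduces the third term to the softcore quantity $\|(-\cS)^{-1/2}(B_+\cV^\delta_{|\Z^2}-\fsP\fB_+\cV^\delta)\|$.
\item This is computed in Fourier, using Poisson summation and $\mathrm{Supp}\,\fF\cV^\delta\subset(-\pi,\pi)^2$. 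The two symbols are $\frac{\sqrt{\rho(1-\rho)}}{2\pi}\,\frac{\lambda(p)+\lambda(q)}{2}\,\fF\cV^\delta(\llbracket p+q\rrbracket)$, with $\lambda(\cdot)$ as in~\eqref{e:LambdaTrigo}, and $\frac{\sqrt{\rho(1-\rho)}}{2\pi}\,\lambda\,\Pi_{p,q}\,\fF\cV^\delta(\llbracket p+q\rrbracket)$.
\item Their difference is $\lesssim(|p|^2+|q|^2)\,|\fF\cV^\delta(\llbracket p+q\rrbracket)|$. Dividing by $d_0(p,q)\asymp|p|^2+|q|^2$ gives a bound $O(1)$, uniformly in $\delta$ and $\mu$.
\end{itemize}
This term does not vanish as $\delta\to0$, but $O(1)$ is all that is needed.
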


\begin{proof}
For the upcoming estimates, it is useful to choose any $\gamma \in (0,1)$ and $\beta \in (\alpha, \sqrt{1 + 6 \eps})$ that we fix for the proof.
\ss

First, since the bridges $\fsR$ and $\fsP$ trivially contract the $L^2$-norm $\|\cdot\|$ (and preserve the number operator), we readily deduce~\eqref{e:AnsatzDiffL2ASEP} from~\eqref{e:AnsatzDiffL2}.

Next, we turn to the estimate~\eqref{e:AnsatzDiffH1ASEP}. By using Lemma~\ref{lem:ComparisonH1} and the bound~\eqref{e:AnsatzDiffH1+gamma} (with any $\alpha \in (1, \sqrt{1 + 6 \eps})$, so that we can crudely bound $n \lesssim_\alpha \alpha^n$), we deduce
\begin{equs}
\big|\|(- S)^{1/2} \, \mathrm{Cor}^{B+}_{\mu, \delta}\|^2 - \|(-\fS)^{1/2} \,  \mathfrak{Ans}_\mu(\fB_+ \cV^\delta)\|^2\big| &\lesssim_\gamma \|\fN \, (- \fL_0)^{(1+\gamma)/2} \mathfrak{Ans}_\mu(\fB_+ \cV^\delta)\|^2 \\
&\lesssim_{\alpha, \eps, \gamma} \log_{*}(1/\mu)^{1/3 + 2\eps} \, .
\end{equs}
By combining this estimate with~\eqref{e:AnsatzDiffH1}, we readily deduce~\eqref{e:AnsatzDiffH1ASEP}.
\ss

Finally, we tackle~\eqref{e:AnsatzDiffGenASEP}. By the triangle inequality, we write
\begin{equs}
\big\|\alpha^N (\mu - S)^{-1/2} \big(L \, \mathrm{Cor}_{\mu, \delta}^{B+} + B_+ \cV^\delta_{|\Z^2}\big)\big\| &\leq \|\alpha^N (\mu - S)^{-1/2} \, \fsQ^* \fsP \big(\fL \, \mathfrak{Ans}_\mu(\fB_+ \cV^\delta) + \fB_+ \cV^\delta\big)\| \\
&\quad + \|\alpha^N (\mu - S)^{-1/2} \big(L \fsR \fsP - \fsQ^* \fsP \fL) \, \mathfrak{Ans}_\mu(\fB_+ \cV^\delta)\| \\
&\quad + \|\alpha^N (\mu - S)^{-1/2} (B_+ \cV^\delta_{|\Z^2} - \fsQ^* \fsP \fB_+ \cV^\delta)\| \\
&=: \one + \two + \three \, . \label{e:AnsatzDiffGenASEP0}
\end{equs}

For the first summand, we use~\eqref{e:BoundQ*},~\eqref{e:BoundP}, and~\eqref{e:AnsatzDiffGen} (with $\beta$ instead of $\alpha$, so that we can crudely bound $n^3 \alpha^n \lesssim_{\alpha, \beta} \beta^n$), which gives
\begin{equ}
\limsup_{\delta \to 0} \one \lesssim \limsup_{\delta \to 0} \|\fN^3 \alpha^\fN \, (\mu - \fL_0)^{-1/2} \big(\fL \, \mathfrak{Ans}_\mu(\fB_+ \cV^\delta) + \fB_+ \cV^\delta\big)\| \lesssim_{\alpha, \beta, \eps} \log_{*}(1/\mu)^{1/6 + \eps} \, .  \label{e:AnsatzDiffGenASEP1}
\end{equ}

For the second summand, we use Theorem~\ref{th:ComparisonGen},~\eqref{e:AnsatzDiffH1+gamma} and~\eqref{e:AnsatzDiffH1fe} (with $\beta$ instead of $\alpha$, so that we can bound $n^6 \alpha^n \lesssim_{\alpha, \beta} \beta^n$), which results in
\begin{equs}
\two &\lesssim \|\fN^6 \alpha^\fN \, (- \fL_0)^{(1+\gamma)/2} \, \mathfrak{Ans}_\mu(\fB_+ \cV^\delta)\| + \|\fN^6 \alpha^\fN \, (- \fL_0^\fe)^{1/2} \, \mathfrak{Ans}_\mu(\fB_+ \cV^\delta)\| \\
&\lesssim_{\alpha, \beta, \eps} \log_{*}(1/\mu)^{1/6 + \eps} \, . \label{e:AnsatzDiffGenASEP2}
\end{equs}

For the third summand, we first notice that $B_+ \cV_{|\Z^2}^\delta = \fsQ^* B_+ \cV_{|\Z^2}^\delta$ because $B_+ \cV_{|\Z^2}^\delta \in \fockdo{}{} \subset \fockd{}{}$. Then we use~\eqref{e:BoundQ} (and note that the number operator simply contributes here as $2$) to deduce
\begin{equ}
\three = \alpha^2 \, \|(\mu - S)^{-1/2} \, \fsQ^* (B_+ \cV_{|\Z^2}^\delta -\fsP \fB_+ \cV^\delta)\| \lesssim \|(\mu - \cS)^{-1/2} \, (B_+ \cV_{|\Z^2}^\delta - \fsP  \fB_+ \cV^\delta)\| \, . \label{e:AnsatzDiffGenASEP3}
\end{equ}
Now, we claim that the last term can be bounded as
\begin{equ} \label{e:B+Replacement}
\|(- \cS)^{-1/2} \, (B_+ \cV_{|\Z^2}^\delta - \fsP  \fB_+ \cV^\delta)\| \lesssim 1 \, .
\end{equ}
Then, by gathering~\eqref{e:AnsatzDiffGenASEP0},~\eqref{e:AnsatzDiffGenASEP1},~\eqref{e:AnsatzDiffGenASEP2},~\eqref{e:AnsatzDiffGenASEP3}, and the claim~\eqref{e:B+Replacement}, we deduce the desired estimate~\eqref{e:AnsatzDiffGenASEP}.
\ss

Consequently, it only remains to show the claim~\eqref{e:B+Replacement}. Let us first compute the Fourier transforms of the functionals $\fsP \fB_+ \cV^\delta$ and $B_+ \cV_{|\Z^2}^\delta$. For the first functional, we deduce from the definition~\eqref{e:fB+} of $\fB_+$ that, for any $p, q \in (- \pi, \pi)^2$, we have
\begin{equ} \label{e:FourierfB+}
\cF(\fsP \fB_+ \cV^\delta) \, (p,q) = \frac{\sqrt{\rho(1 - \rho)}}{2 \pi} \, \lambda \, \Pi_{p, q} \ \fF \cV^\delta (\llbracket p+q \rrbracket) \, .
\end{equ}
As for the second functional, an easy computation starting from the definition~\eqref{e:B+} of $B_+$ shows that
\begin{equ} \label{e:FourierB+}
\cF(B_+ \cV_{|\Z^2}^\delta) \, (p,q) = \frac{\sqrt{\rho(1 - \rho)}}{2 \pi} \, \frac{\lambda(p) + \lambda(q)}{2} \, \cF (\cV_{|\Z^2}^\delta) (p+q) \, ,
\end{equ}
where we have defined, for any $k \in \T^2$,
\begin{equ} \label{e:LambdaTrigo}
\lambda(k) \eqdef \sum_{z \in \Z^2} a(z) \, (\fe \cdot z) \, \cos(z \cdot k) \, .
\end{equ}
Furthermore, since $\fF \cV^\delta$ is supported on $(- \pi, \pi)^2$ (by the assumption on $\cV$, cf. Definition~\ref{def:V}), the Poisson summation formula~\eqref{e:PoissonSummation} shows that, for any $k \in \T^2$, we have $\cF(\cV_{|\Z^2}^\delta) \, (k) = \fF \cV^\delta \, (\llbracket k \rrbracket)$. Hence, it holds
\begin{equ} \label{e:FourierB+Bis}
\cF(B_+ \cV_{|\Z^2}^\delta) \, (p,q) = \frac{\sqrt{\rho(1 - \rho)}}{2 \pi} \, \frac{\lambda(p) + \lambda(q)}{2} \, \fF \cV^\delta (\llbracket p+q \rrbracket) \, .
\end{equ}
Now we compare~\eqref{e:FourierfB+} and~\eqref{e:FourierB+}. On the one hand, from the Taylor estimate $|\cos(\theta) - 1| \lesssim \theta^2$ we deduce that, for any $k \in \R^2$,
\begin{equ}
|\lambda(k) - \lambda| \leq \sum_{z \in \Z^2} |a(z)| \, |\fe \cdot z| \, |\cos(z \cdot k) - 1| \lesssim |k|^2 \, .
\end{equ}
On the other hand, we can trivially bound $|\Pi_{p,q} - 1| \lesssim |p|^2 + |q|^2$ for any $p, q \in \R^2$. Consequently, we deduce for every $p, q \in (- \pi, \pi)^2$
\begin{equ}
|\cF(B_+ \cV_{|\Z^2}^\delta - \fsP \fB_+ \cV^\delta) \, (p,q)| \lesssim (|p|^2 + |q|^2) \ |\fF \cV^\delta \, (\llbracket p+q \rrbracket)| \, .
\end{equ}
Then, by squaring, dividing by $d_0(p,q) \asymp |p|^2 + |q|^2$ (recall~\eqref{e:d0D0equivalence}), integrating over $p, q \in (- \pi, \pi)^2$ and recalling that $\|\cV^\delta\|_{L^2(\R^2)}^2 = 1$, we deduce the desired claim~\eqref{e:B+Replacement} and conclude the proof.
\end{proof}

\section{The superdiffusive invariance principle}

\label{sec:Fluctuation}

This section is devoted to the proof of the superdiffusive invariance principle for the fluctuation process, as stated in Theorem~\ref{th:CLT}. The plan is, on the one hand, to show the convergence of the finite-dimensional laws (Section~\ref{sec:FinDim}), by proving first the convergence of the semigroups through the rescaling operator $\fsi^\tau_t$ (Section~\ref{sec:CVsemigroup}), and, on the other hand, to show tightness (Section~\ref{sec:Tightness}). Both directions crucially rely on the fluctuation-dissipation relation~\eqref{e:FormalFluctuationDissipationASEP}, which we derived in the previous section. In addition, we need some general estimates on Dynkin's martingales and additive functionals, which we expose in the next section.

\subsection{General estimates about Dynkin's martingales and additive functionals}

\label{sec:TightBlackBox}

In this section, we give useful $L^p$-estimates for increments of Dynkin's martingales $M_\cdot^\tau(f)$ and additive functionals $W_\cdot^\tau(g)$, which are defined just below. Let $(\cF_t)_{t \geq 0}$ be the augmented filtration generated by the ASEP $(\eta_t)_{t \geq 0}$.

\begin{definition} \label{def:MI}
Let $\tau \geq 1$ and $T > 0$. For any $f = (f_t)_{t \in [0,T]} \in C^1([0, T], \fockdo{\rap}{})$ and $g = (g_t)_{t \in [0,T]} \in C([0, T], \fockdo{\rap}{})$, we define the processes $(M_t^\tau(f))_{t \in [0,T]}$ and $(W_t^\tau(g))_{t \in [0,T]}$ on $(\bm{\Omega}, \mathbf P_{\nu_\rho})$ as follows
\begin{equ} \label{e:DefMI}
M_t^\tau(f) \eqdef f_t(\eta_{\tau t}) - f_0(\eta_0) - \int_0^t (\partial_r + \tau L) f_{r} \, (\eta_{\tau r}) \ \dd r \, , \quad W_t^\tau(g) \eqdef \int_0^t g_r(\eta_{\tau r}) \, \dd r \, .
\end{equ}
It is classical that $(M_\cdot^\tau(f))_{t \geq 0}$ is a càdlàg $(\cF_{\tau t})_{t \geq 0}$-martingale and that $(W_t^\tau(g))_{t \geq 0}$ is a continuous $(\cF_{\tau t})_{t \geq 0}$-adapted finite-variation process.
For $0 \leq s \leq t \leq T$, we set $M_{s, t}^\tau(f) := M_t^\tau(f) - M_s^\tau(f)$ and $W_{s,t}^\tau(g) := W_t^\tau(g) - W_s^\tau(g)$.
When $\tau = 1$ we omit the superscript $\tau$ in the previous notations; let us notice that $M_t^\tau(f_\cdot) = M_{\tau t}(f_{\cdot/\tau})$ and $W_t^\tau(g_\cdot) = \tau^{-1} \, W_{\tau t}(g_{\cdot/\tau})$.
\end{definition}
\ss

We first recall two classical $L^2$-bounds for $M_\cdot^\tau(f)$ and $W_\cdot^\tau(g)$.

\begin{lemma} \label{lem:MIL2}
In the notation of Definition~\ref{def:MI}, for every $0 \leq s < t \leq T$, we have
\begin{equs}
\mathbf E_{\nu_\rho}\Big[\sup_{s \leq a \leq b \leq t} |M_{a,b}^\tau(f)|^2\Big] &\lesssim |t-s| \sup_{r \in [0,T]} \|(- \tau S)^{1/2} f_r\|^2 \label{e:ML2} \\
\mathbf E_{\nu_\rho}\Big[\sup_{s \leq a \leq b \leq t} |W^\tau_{a,b}(f)|^2\Big] &\lesssim |t-s| \sup_{r \in [0,T]} \|(|t-s|^{-1} - \tau S)^{-1/2} f_r\|^2 \, . \label{e:IL2}
\end{equs}
\end{lemma}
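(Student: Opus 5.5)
We prove the two bounds separately. Both rely on stationarity of $\nu_\rho$ and on the decomposition $L = S + A$ with $A$ skew-symmetric in $L^2(\nu_\rho)$. By the time change $M_t^\tau(f_\cdot) = M_{\tau t}(f_{\cdot/\tau})$ and $W_t^\tau(g_\cdot) = \tau^{-1} W_{\tau t}(g_{\cdot/\tau})$ from Definition~\ref{def:MI}, it suffices to treat $\tau = 1$ and then substitute back. Under this rescaling the factors $\tau S$ and $|t-s|^{-1}$ come out exactly as written.

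\emph{The martingale bound~\eqref{e:ML2}.} For $s \leq a \leq b \leq t$ we write $M_{a,b} = M_b - M_a$, so that $\sup_{a,b}|M_{a,b}|^2 \leq 4 \sup_{r \in [s,t]} |M_r - M_s|^2$. Doob's $L^2$ maximal inequality then reduces the claim to bounding $\mathbf E[|M_t - M_s|^2] = \mathbf E[\langle M\rangle_t - \langle M \rangle_s]$. The predictable quadratic variation of the Dynkin martingale of a time-dependent functional is $\int_s^t \Gamma(f_r)(\eta_r)\,\dd r$, where $\Gamma(h) = L(h^2) - 2hLh$ is the carré du champ. The time derivative $\partial_r f_r$ does not enter, since it contributes a finite-variation term only. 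By stationarity, $\mathbf E_{\nu_\rho}[\Gamma(h)] = -2\langle h, Lh\rangle = 2\|(-S)^{1/2}h\|^2$, because the antisymmetric part drops out. Integrating over $[s,t]$ and taking the supremum over $r$ gives~\eqref{e:ML2}.

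\emph{The additive functional bound~\eqref{e:IL2}.} This is the Kipnis--Varadhan (Itô trick) estimate, extended to time-dependent $g$. Set $\lambda := |t-s|^{-1}$ (in $\tau=1$ units, $\lambda = (\tau|t-s|)^{-1}$) and solve, for each $r$, the symmetric resolvent equation $(\lambda - S) u_r = g_r$. Applying Dynkin's formula to $u$ in both the forward process and the time-reversed stationary process (whose generator is $L^* = S - A$) and adding the two identities cancels the antisymmetric part. The outcome is the representation
\begin{equation*}
2\int_a^b g_r(\eta_r)\,\dd r = 2\lambda \int_a^b u_r(\eta_r)\,\dd r - M_{a,b}(u) - \overleftarrow{M}_{a,b}(u) + (\text{terms from } \partial_r u_r) \, .
\end{equation*}
Here $\overleftarrow{M}$ is a backward martingale. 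Both martingales are controlled by~\eqref{e:ML2}, giving $|t-s|\,\|(-S)^{1/2}u_r\|^2$. The drift term is controlled by Cauchy--Schwarz and stationarity, giving $\lambda^2 |t-s|^2 \|u_r\|^2 \leq \lambda |t-s|^2 \|(\lambda - S)^{1/2} u_r\|^2$. Since $\lambda|t-s| = 1$, both are bounded by $|t-s|\,\|(\lambda-S)^{1/2}u_r\|^2 = |t-s|\,\|(\lambda - S)^{-1/2}g_r\|^2$.

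\emph{Main obstacle.} The delicate step is the time dependence of $g$. The forward/backward argument produces an extra integral involving $\partial_r u_r$, and the supremum over $a \leq b$ for the backward martingale requires applying Doob's inequality in reversed time. For continuous (rather than $C^1$) $g$, I would first approximate $g$ by piecewise-constant-in-time functionals $g^{(m)}$ on a partition of $[0,T]$. On each piece the time-independent Kipnis--Varadhan argument applies without any $\partial_r$ terms. The pieces are then glued by the triangle inequality applied to the running maximum, using that the bound is additive in the interval length. Finally we pass to the limit using $\mathbf E|W_{a,b}(g) - W_{a,b}(g^{(m)})|^2 \leq |t-s|^2 \sup_r\|g_r - g_r^{(m)}\|^2 \to 0$.
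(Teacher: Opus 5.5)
Your argument for~\eqref{e:ML2} (Doob's inequality plus $\nu_\rho(\Gamma h) = 2\|(-S)^{1/2}h\|^2$) matches the paper's proof. So does the core of your argument for~\eqref{e:IL2}: the resolvent $u_r = (\lambda - S)^{-1} g_r$ with $\lambda\,|t-s| = 1$, the forward and backward Dynkin formulas, and the bound on the $\lambda\int u$ term. The paper simply invokes BDG with the predictable quadratic variation and the classical time-reversal trick.

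The gap is in the step you flag as the main obstacle. Gluing piecewise-constant-in-time approximants by the triangle inequality does not give~\eqref{e:IL2}. Cut $[s,t]$ into $K$ pieces of lengths $\ell_k$, and let $X_k$ be the running maximum on the $k$-th piece. The bound you have on each piece is of order $\sqrt{\ell_k}\,\sup_r \|(\lambda - S)^{-1/2} g_r\|$, because it is driven by the martingale part. Minkowski's inequality then only yields $\big\|\sum_k X_k\big\|_{L^2} \lesssim \sum_k \sqrt{\ell_k}\,\sup_r\|(\lambda - S)^{-1/2}g_r\|$. For equal pieces this is $\sqrt{K|t-s|}$ rather than $\sqrt{|t-s|}$. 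Additivity in the interval length would only help if you could add squares, which needs an orthogonality that nonnegative suprema do not have. Since the approximation requires $K \to \infty$, the constant degenerates.

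The obstacle itself is illusory: the $\partial_r u_r$ terms cancel exactly. Reversing time flips the sign of the time derivative, so the backward identity reads $u_a(\eta_a) - u_b(\eta_b) - \int_a^b (-\partial_r u_r + L^* u_r)(\eta_r)\,\dd r = \overleftarrow{M}_{a,b}(u)$. Adding the forward identity gives $\int_a^b (-S u_r)(\eta_r)\,\dd r = \tfrac12\big(M_{a,b}(u) + \overleftarrow{M}_{a,b}(u)\big)$ with no remainder. This is precisely~\eqref{e:ItoTrick} in the proof of Proposition~\ref{p:TightnessBlackBox}.

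So you have two ways to finish.
\begin{itemize}
\item Approximate a continuous $g$ by $C^1$-in-time paths, for instance by mollifying in time. Run the forward/backward argument directly on them, then pass to the limit using the $L^2$ estimate you already wrote. The right-hand side converges too, since $\|(\lambda - S)^{-1/2}h\| \leq \lambda^{-1/2}\|h\|$.
\item Keep piecewise-constant approximants but glue at the level of martingales. Concatenate the per-piece forward (resp.\ backward) martingales into one martingale on $[s,t]$. The boundary terms at the break points cancel between the forward and backward identities. Then apply Doob to the concatenated martingales, whose predictable quadratic variations are additive over the pieces.
\end{itemize}
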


\begin{proof}
The first estimate~\eqref{e:ML2} follows readily from the BDG inequality and the computation of the predictable quadratic variation $\langle M_\cdot(f) \rangle$ (see~\cite[Section 2.4]{KLO}). The second estimate~\eqref{e:IL2} then follows from the first by the classical time-reversal (Itô) trick (see~\cite[Section 2.5]{KLO}).
\end{proof}

While the $L^2$-estimates of Lemma~\ref{lem:MIL2} will be sufficient to establish the convergence of the semigroup and of the finite-dimensional laws, they are not sufficient for tightness. Indeed, the problem comes from the fact that the right hand sides of~\eqref{e:ML2} and~\eqref{e:IL2} are only of order $O(|t-s|)$, which prevents the use of the Kolmogorov-Chentsov criterion. Thus, we need to consider higher moments of the increments and improve the power of $|t-s|$. The next proposition shows that this is indeed possible, at the price of an exponential cost in the number operator $N$ (which is tightly connected to \emph{hypercontractivity}, see Section~\ref{sec:Hypercontractivity} for more details).

\begin{proposition} \label{p:TightnessBlackBox}
Let $(\alpha(p))_{p > 2}$ be the family of constants from Lemma~\ref{lem:UltimateHypercontractivity} (which, in particular, satisfy $\alpha(p) \to 1$ as $p \to 2$). We use the notations of Definition~\ref{def:MI}, and let $p > 2$. For every $0 \leq s < t \leq T$, we have
\begin{equs}
\label{e:KolmogorovM}
\mathbf E_{\nu_\rho}\Big[\sup_{s \leq a \leq b \leq t} |M_{a,b}^\tau(f)|^p\Big] &\lesssim_p |t-s|^{p/2} \sup_{r \in [0,T]} \|\alpha(p)^N \, (|t-s|^{-1} - \tau S)^{1/2} f_r\|^p \, , \\
\label{e:KolmogorovI}
\mathbf E_{\nu_\rho}\Big[\sup_{s \leq a \leq b \leq t} |W_{a,b}^\tau(g)|^p\Big] &\lesssim_p |t-s|^{p/2} \sup_{r \in [0,T]} \|\alpha(p)^N \, (|t-s|^{-1} - \tau S)^{-1/2} g_r\|^p \, .
\end{equs}
Moreover, for any $\delta > 0$ and $(\cF_{\tau t})_{t \geq 0}$-stopping times $\rm T_1, \rm T_2$ taking values in $[0, T]$ such that $\rm T_1 \leq \rm T_2 \leq \rm T_1 + \delta$, we have
\begin{equ}
\label{e:AldousM}
\mathbf E_{\nu_\rho}\Big[|M_{\rm T_1, \rm T_2}^\tau(f)|^2\Big] \lesssim_p \delta^{1 - \frac{2}{p}} \, T^{\frac{2}{p}} \sup_{r \in [0, T]} \|\alpha(p)^N \, (- \tau S)^{1/2} f_r\|^2 \, .
\end{equ}
\end{proposition}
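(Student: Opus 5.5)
The starting point for all three bounds is the quadratic variation of the Dynkin martingale. The martingale $M_\cdot^\tau(f)$ is purely discontinuous, with predictable quadratic variation
\[
\langle M^\tau(f)\rangle_{a,b} = \tau \int_a^b \Gamma(f_r)(\eta_{\tau r})\,\dd r ,
\]
where $\Gamma(h)(\eta) = \sum_{x,z} r(z)\,\eta(x)(1-\eta(x+z))\,\big(h(\eta^{x,x+z})-h(\eta)\big)^2$ is the carré du champ. Its jumps are exactly the increments $h(\eta^{x,x+z})-h(\eta)$ at the jump times.

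For \eqref{e:KolmogorovM} we apply the Burkholder--Davis--Gundy inequality for càdlàg martingales in its Rosenthal form:
\[
\mathbf E\Big[\sup|M_{a,b}|^p\Big] \lesssim_p \mathbf E\big[\langle M\rangle_{s,t}^{p/2}\big] + \mathbf E\Big[\sum_{r\in[s,t]}|\Delta M_r|^p\Big].
\]
\begin{itemize}
\item For the first term, Jensen in time and stationarity give
\[
\mathbf E\big[\langle M\rangle_{s,t}^{p/2}\big] \le |t-s|^{p/2}\,\tau^{p/2}\sup_r \|\Gamma(f_r)\|_{L^{p/2}(\nu_\rho)}^{p/2} .
\]
Since $\Gamma(h)\le \sum_{x,z}s(z)\big(\nabla_{x,x+z}h\big)^2$, it suffices to bound $\big\|\nabla_{x,x+z}h\big\|_{L^p}$ summed in $\ell^2$ over bonds. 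Here we invoke Lemma~\ref{lem:UltimateHypercontractivity}: for functionals in chaos $n$, the $L^p$ norm is at most $\alpha(p)^n$ times the $L^2$ norm. The exchange gradient preserves chaos degree, up to at most one unit, which we absorb into $\alpha(p)$. We then use Minkowski in $L^{p/2}$ to move the bond sum outside. This yields $\|\alpha(p)^N(-\tau S)^{1/2}f_r\|^p$.
\item For the jump term, the compensator gives
\[
\mathbf E\Big[\sum|\Delta M|^p\Big] = |t-s|\,\tau\,\sup_r \mathbf E_{\nu_\rho}\Big[\sum_{x,z} r(z)\,|\nabla_{x,x+z}f_r|^p\Big].
\]
We bound this by $\ell^p\le\ell^2$ and hypercontractivity again, which gives $|t-s|\,\tau\,\big(\tau^{-1}\|\alpha^N(-\tau S)^{1/2}f\|^2\big)^{p/2}$. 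This is only of order $|t-s|\,\tau^{1-p/2}$, not $|t-s|^{p/2}$. This is where the $|t-s|^{-1}$ in $(|t-s|^{-1}-\tau S)^{1/2}$ must enter: the single-jump size is controlled by the $L^\infty$-type size of $\nabla f$, so we write $\tau^{1-p/2}\le |t-s|^{p/2-1}$ when $\tau|t-s|\ge1$. When $\tau|t-s|<1$, we instead bound the increment crudely by boundary terms: $|f_b(\eta_{\tau b})-f_a(\eta_{\tau a})|$ plus the drift. These are controlled by $\|\alpha^N f\|\,|t-s|^{-1/2}\cdot|t-s|^{1/2}$, i.e.\ by the $|t-s|^{-1}\|f\|^2$ part of the norm. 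I expect this jump/short-time bookkeeping to be the main obstacle.
\end{itemize}

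For \eqref{e:KolmogorovI} we use the forward–backward (Itô) trick. Set $\mu=|t-s|^{-1}$ and solve the resolvent equation $(\mu-\tau L)h_r=g_r$. Decompose
\[
W_{a,b}(g) = \tfrac12\big(M^{+}_{a,b}(h)+M^{-}_{a,b}(h)\big) + \mu\, W_{a,b}(h),
\]
where $M^{\pm}$ are the forward and time-reversed Dynkin martingales. Time reversal of the stationary ASEP is ASEP with $r(-\cdot)$, so the first part of the argument applies to both $M^{\pm}$. This bounds them by $|t-s|^{p/2}\|\alpha^N(\mu-\tau S)^{1/2}h\|^p$. The remainder satisfies
\[
|\mu W(h)| \le \mu|t-s|\,\sup|h| ,
\]
so it is bounded by $\|\alpha^N h\|$ via hypercontractivity and Jensen, and hence by $\mu^{-1/2}\|\alpha^N(\mu-\tau S)^{1/2}h\|$. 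It remains to bound $\|\alpha^N(\mu-\tau S)^{1/2}h\|\lesssim\|\alpha^N(\mu-\tau S)^{-1/2}g\|$. This is the difficulty, since $L$ does not preserve chaos and the weight $\alpha^N$ is not conserved by $A_\pm$. To avoid it, I would first try the standard variant where $h$ solves $(\mu-\tau S)h=g$, using only the symmetric part. One then symmetrises the generator via the reversible process and applies a Lyons--Zheng decomposition, for which only $S$ enters and $S$ preserves chaos, so the weighted estimate is immediate.

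Finally, \eqref{e:AldousM} follows from the optional stopping identity
\[
\mathbf E\big[M_{\mathrm T_1,\mathrm T_2}^2\big] = \mathbf E\Big[\tau\int_{\mathrm T_1}^{\mathrm T_2}\Gamma(f_r)(\eta_{\tau r})\,\dd r\Big].
\]
We apply Hölder with exponents $p/2$ and $(1-2/p)^{-1}$ against $\mathbf 1_{[\mathrm T_1,\mathrm T_2]}$, whose time-integral is at most $\delta$. Bounding $\mathbf E\big[\int_0^T\Gamma^{p/2}\big]^{2/p}\le T^{2/p}\sup_r\|\Gamma(f_r)\|_{L^{p/2}}$ by the same hypercontractive estimate as in the first part gives the claim.
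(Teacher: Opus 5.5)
The gap is in the jump term of \eqref{e:KolmogorovM} in the regime $\tau|t-s|<1$. Most of the rest is sound.
\begin{itemize}
\item Your $\langle M\rangle$ part is valid: Jensen in time, stationarity, then hypercontractivity applied bond by bond (note that $\nabla^{x,y}$ commutes with $N$ exactly, with no unit lost), then Minkowski in $L^{p/2}$. It is even a bit more direct than the paper's chaos-by-chaos route through Lemma~\ref{lem:HypercontractivityL1}.
\item Your final It\^o-trick variant, with $h=(\mu-\tau S)^{-1}g$ and forward/backward martingales, is the paper's argument.
\item Your proof of \eqref{e:AldousM} is the paper's.
\end{itemize}

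Your pointwise bound $\Gamma_p f\lesssim(\Gamma_2^s f)^{p/2}$ gives $\tau|t-s|\,\|\alpha^N(-S)^{1/2}f\|^p=(\tau|t-s|)^{1-p/2}\,|t-s|^{p/2}\|\alpha^N(-\tau S)^{1/2}f\|^p$. This blows up as $\tau|t-s|\to0$, and your fallback does not work:
\begin{itemize}
\item Writing $M_{a,b}$ as boundary terms minus $\int_a^b(\partial_r+\tau L)f_r(\eta_{\tau r})\,\dd r$ introduces $\partial_r f_r$ and $\tau Lf_r$. The right-hand side of \eqref{e:KolmogorovM} does not control these at all: it only sees $\sup_r$ of an $H^1$-type norm of $f_r$, and $\partial_r f_r$ can be arbitrarily large.
\item $\mathbf E[\sup_a|f_a(\eta_{\tau a})|^p]$ is not bounded by fixed-time moments.
\item Enlarging the window to length $1/\tau$ does not help either. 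It produces $(\|\alpha^Nf\|^2+\|\alpha^N(-S)^{1/2}f\|^2)^{p/2}$ instead of the required $(\|\alpha^Nf\|^2+\tau|t-s|\,\|\alpha^N(-S)^{1/2}f\|^2)^{p/2}$.
\end{itemize}
This regime cannot be discarded. The Kolmogorov--Chentsov argument in Section~\ref{sec:Tightness} needs the bound uniformly in $\tau$ as $|t-s|\to0$, and \eqref{e:KolmogorovI} inherits the defect through the It\^o trick.

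The missing idea is to split the $p$-th power only \emph{after} hypercontractivity, so that a single jump is measured by the $L^2$ norm of $f$ rather than by the Dirichlet form. For each bond, \eqref{e:Hypercontractivity}, the commutation of $\nabla^{x,y}$ with $N$, and $\|\nabla^{x,y}\psi\|\le 2\|\psi\|$ give
\[
\nu_\rho\big(|\nabla^{x,y}f|^p\big)\le\|\nabla^{x,y}\beta_2(p)^Nf\|^{p}\le 2^{p-2}\,\|\beta_2(p)^Nf\|^{p-2}\,\|\nabla^{x,y}\beta_2(p)^Nf\|^2 .
\]
Summing over bonds yields $\nu_\rho(\Gamma_pf)\lesssim\|\alpha(p)^Nf\|^{p-2}\|\alpha(p)^N(-S)^{1/2}f\|^2$, which is \eqref{e:UltHyperGammap}. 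The jump term is then
\[
\tau|t-s|\,\|\alpha^Nf\|^{p-2}\|\alpha^N(-S)^{1/2}f\|^2=|t-s|^{p/2}\big(|t-s|^{-1}\|\alpha^Nf\|^2\big)^{\frac{p-2}{2}}\|\alpha^N(-\tau S)^{1/2}f\|^2\le|t-s|^{p/2}\|\alpha^N(|t-s|^{-1}-\tau S)^{1/2}f\|^p
\]
for all $\tau$ and $|t-s|$, with no case distinction. This is exactly where the $|t-s|^{-1}$ part of the norm enters.
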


\begin{remark}
Compared to other similar estimates that can be found in the literature, the bounds~\eqref{e:KolmogorovM} and~\eqref{e:KolmogorovI} have two key advantages: on the one hand, we can consider functionals $f$ and $g$ with infinite chaos decompositions, provided that we control their exponential decay in $N$ (which we do, for example, in the estimates of Proposition~\ref{p:AnsatzA+ASEP}); on the other hand, we do not need any control on quantities such as $\|\nabla f\|_\infty := \sup_{\eta \in \Omega, \, x \sim y \in \Z^2} |f(\eta^{x,y}) - f(\eta)|$ to control the size of the jumps of $M_\cdot^\tau(f)$ (which we do not have, for example, for the corrector $\mathrm{Cor}_t^{\tau, \mu}(\vphi)$).
\end{remark}

\begin{proof}[of Proposition~\ref{p:TightnessBlackBox}]

First, it is easily checked that the statement for general $\tau \geq 1$ can be deduced from the one with $\tau = 1$ by using the scaling relations $M_{s,t}^\tau(f) = M_{\tau s, \tau t}(f_{\cdot/\tau})$ and $W^\tau_{s,t}(g) = \tau^{-1} \, W_{\tau s, \tau t}(g_{\cdot/\tau})$. Consequently, we assume $\tau = 1$.

Second, an easy approximation argument shows that it is sufficient to prove the statement for smooth local paths $f$ and $g$ (that is to say functionals of the form $[0,T] \times \Omega \ni (t, \eta) \mapsto h(t, \restriction{\eta}{\Lambda})$ for some finite subset $\Lambda \subset \Z^2$ and smooth function $h \colon [0,T] \times \{0, 1\}^{\Lambda} \to \R$). This reduction allows us to avoid unnecessary technical difficulties.
\ss

Before delving into the proof, we first state a useful fact about $M_t(f)$. For $q \geq 2$, let us define the $q$-variation process as
\begin{equ} \label{e:pVariationMartingale}
V_t^{(q)}(f) := \sum_{0 < r \leq t} |M_r(f) - M_{r-}(f)|^q = \sum_{0 < r \leq t} |f_r(\eta_r) - f_r(\eta_{r-})|^q \, ,
\end{equ}
which generalises the quadratic variation $[M_t(f)] = V_t^{(2)}(f)$. We denote by $C^{(q)}_t(f)$ the compensator of $V_t^{(q)}(f)$, which generalises the predictable quadratic variation $\langle M(f) \rangle_t = C_t^{(2)}(f)$. Then, it is clear that
\begin{equ} \label{e:PredictablepVariationMartingale}
C_t^{(q)}(f) = \int_0^t \Gamma_q f_r(\eta_r) \, \dd r \, ,
\end{equ}
where the ``puissance $q$ du champ'' $\Gamma_q \psi \colon \Omega \to \R_+$, for any local functional $\psi \colon \Omega \to \R$, is given by
\begin{equ} \label{e:PuissanceDuChamp}
\Gamma_q \psi \, (\eta) \eqdef \sum_{x, y \in \Z^2} r(y-x) \, \eta(x) (1 - \eta(y)) \, \big|\psi(\eta^{x,y}) - \psi(\eta)\big|^q \, .
\end{equ}
In particular, we recover the classical formula of $\langle M(f) \rangle_t$ in terms of the ``carré du champ''.
\ss

We start the proof of~\eqref{e:KolmogorovM}. By the BDG inequality, we know that
\begin{equ} \label{e:KolmogorovBDG}
\mathbf E_{\nu_\rho}\Big[\sup_{s \leq a \leq b \leq t} |M_{a,b}(f)|^p\Big] \lesssim_p \mathbf E_{\nu_\rho}\big[\big([M(f)]_t - [M(f)]_s\big)^{p/2}\big] \, .
\end{equ}
We emphasise that, because $p > 2$ and $M_\cdot(f)$ is not continuous, it is not possible to replace the quadratic variation $[M(f)]$ by its compensator $\langle M(f) \rangle$. However, we can use~\cite[Theorem 2.1]{GeneralisedBDG} and~\eqref{e:PredictablepVariationMartingale} to deduce
\begin{equs}
\mathbf E_{\nu_\rho}\big[\big([M(f)]_t - [M(f)]_s\big)^{p/2}\big] &\lesssim_p \mathbf E_{\nu_\rho}\big[\big(\langle M(f) \rangle_t - \langle M(f) \rangle_s\big)^{p/2}\big] + \mathbf E_{\nu_\rho}[C_t^{(p)} - C_s^{(p)}] \, , \label{e:GeneralisedBDG} \\
&= \mathbf E_{\nu_\rho}\Big[\Big(\int_s^t \Gamma_2 f_r(\eta_r) \, \dd r\Big)^{p/2}\Big] + \mathbf E_{\nu_\rho}\Big[\int_s^t \Gamma_p f_r(\eta_r) \, \dd r\Big] \, ,
\end{equs}
Now, we first use Jensen's inequality and the $\nu_\rho$-stationarity of the ASEP, and then we apply the hypercontractivity estimates~\eqref{e:UltHyperGamma2} and~\eqref{e:UltHyperGammap}, which lead to
\begin{equs}
\, &\mathbf E_{\nu_\rho}\Big[\sup_{s \leq a \leq b \leq t} |M_{a,b}(f)|^p\Big] \lesssim_p |t-s|^{p/2} \sup_{r \in [0,T]} \nu_\rho\big(|\Gamma_2 f_r|^{p/2}\big) + |t-s| \sup_{r \in [0,T]} \nu_\rho\big(\Gamma_p f_r\big) \\
&\lesssim_p |t-s|^{p/2} \sup_{r \in [0,T]} \|\alpha(p)^N \, (-S)^{1/2} f_r\|^p + |t-s| \sup_{r \in [0,T]} \|\alpha(p)^N f_r\|^{p - 2} \|\alpha(p)^N \, (-S)^{1/2} f_r\|^2 \\
&\lesssim |t-s|^{p/2} \, \sup_{r \in [0,T]} \|\alpha(p)^N \, (|t-s|^{-1}-S)^{1/2} f_r\|^p \, ,  \label{e:KolmogorovMproof}
\end{equs}
where in the last line we used the trivial bounds $\|(- S)^{1/2} \cdot \| \leq \|(|t-s|^{-1} - S)^{1/2} \cdot\|$ and $\|\cdot\| \leq |t-s|^{1/2} \, \| (|t-s|^{-1} - S)^{1/2} \cdot\|$. This shows the desired estimate~\eqref{e:KolmogorovM}.
\ss

We now prove~\eqref{e:KolmogorovI}, which follows from~\eqref{e:KolmogorovM} and the Itô trick. More precisely, we first define $h = (|t-s|^{-1} - S)^{-1} \, g$ and we split
\begin{equ} \label{e:KolmogorovIdecomposition}
W_{a,b}(g) = |t-s|^{-1} W_{a,b}(h) + W_{a,b}(- S h) \, .
\end{equ}
To deal with the first summand of~\eqref{e:KolmogorovIdecomposition}, we crudely bound $|W_{a,b}(h)| \leq \int_s^t |h(\eta_r)| \, \dd r$, use Jensen's inequality, the $\nu_\rho$-stationarity of the process, and the hypercontractivity estimate~\eqref{e:UltHyper}, which gives
\begin{equ} \label{e:KolmogorovItrivial}
\mathbf E_{\nu_\rho}\Big[\sup_{s \leq a \leq b \leq t} \big||t-s|^{-1} W_{a,b}(h)\big|^p\Big] \leq \sup_{r \in [0,T]} \nu_\rho(|h_r|^p) \leq \sup_{r \in [0,T]}  \|\alpha(p)^{N} h_r\|^p \, ,
\end{equ}
To deal with the second summand of~\eqref{e:KolmogorovIdecomposition}, we use the Itô trick by writing
\begin{equs}
W_{a,b}(- S h) &= \frac{1}{2} \, \big(M_{a,b}(h) + \tilde M_{a+,b+}(\tilde h)\big) \, , \label{e:ItoTrick} \\
\mbox{where } \tilde M_{a,b}(\tilde h) &\eqdef \tilde h_{T-a}(\tilde \eta_{T-a}) - \tilde h_{T-b}(\tilde \eta_{T-b}) - \int_{T-b}^{T-a} (\partial_r + \tilde L) \, \tilde h_r \, (\tilde \eta_r) \, \dd r \, ,
\end{equs}
and where we introduced the time-reversed path $\tilde h_t := h_{T-t}$, the time-reversed ASEP\footnote{The time-reversal we consider is $\eta_{(T-t)-}$ rather than $\eta_{T-t}$ because we want to work with càdlàg processes.} $\tilde \eta_t := \eta_{(T-t)-}$ and the time-reversed generator $\tilde L := S - A$. Now, we recall the classical fact that the time-reversed ASEP $(\tilde \eta_r)_{r \in [0, T]}$ under $\mathbb P_{\nu_\rho}(\dd \eta_\cdot)$ has the same law as the $\nu_\rho$-stationary skewed ASEP, that is, with jump rates $s(\cdot) - a(\cdot)$ and generator $\tilde L$. Thus, the process $(\tilde M_{a,b}(\tilde h))_{0 < a \leq b < T}$ has the same law as the process $(M_{T-b,T-a}(\tilde h))_{0 < a \leq b < T}$ related to the skewed ASEP. Obviously, the estimate~\eqref{e:KolmogorovM} also holds for the skewed ASEP (indeed, the right hand side of~\eqref{e:KolmogorovM} depends only on the symmetric part of the jump rates, which is unchanged). Thus, we deduce from~\eqref{e:ItoTrick} and~\eqref{e:KolmogorovM} that it holds
\begin{equs}
\mathbf E_{\nu_\rho}\Big[\sup_{s \leq a \leq b \leq t} |W_{a,b}(- S h)|^p\Big] &= \mathbf E_{\nu_\rho}\Big[\sup_{s < a \leq b < t} |W_{a,b}(- S h)|^p\Big] \\
&\lesssim_p \mathbf E_{\nu_\rho}\Big[\sup_{s < a \leq b < t} |M_{a,b}(h)|^p\Big] + \mathbf E_{\nu_\rho}\Big[\sup_{s < a \leq b < t} |\tilde M_{a,b}(\tilde h)|^p\Big] \\
&\lesssim_p |t-s|^{p/2} \sup_{r \in [0,T]} \|\alpha(p)^N \, (|t-s|^{-1}-S)^{1/2} h_r\|^p \, . \label{e:KolmogorovItoTrick}
\end{equs}
Gathering~\eqref{e:KolmogorovIdecomposition},~\eqref{e:KolmogorovItrivial},~\eqref{e:KolmogorovItoTrick}, and recalling that $h_r = (|t-s|^{-1} - S)^{-1} g_r$, we deduce
\begin{equ}
\mathbf E_{\nu_\rho}\Big[\sup_{s \leq a \leq b \leq t} |W_{a,b}(g)|^p\Big] \lesssim_p |t-s|^{p/2} \sup_{r \in [s,t]} \|\alpha(p)^N \, (|t-s|^{-1} - S)^{-1/2} g_r\|^p \, ,
\end{equ}
which shows~\eqref{e:KolmogorovI}.
\ss

We finish with the proof of~\eqref{e:AldousM}. By using classical properties of martingales and stopping times, we deduce
\begin{equ}
\mathbf E_{\nu_\rho}[M_{\rm T_1, \rm T_2}(f)^2] = \mathbf E_{\nu_\rho}\big[\langle M(f) \rangle_{\rm T_2} - \langle M(f) \rangle_{\rm T_1}\big] = \mathbf E_{\nu_\rho}\Big[\int_{\rm T_1}^{\rm T_2} \Gamma_2 f_r (\eta_r) \, \dd r\Big] \, .
\end{equ}
By Hölder's inequality, we can deterministically bound the last integral (recall that $|\rm T_2 - \rm T_1| \leq \delta$) as
\begin{equ}
\int_{\rm T_1}^{\rm T_2} \Gamma_2 f_r (\eta_r) \, \dd r \leq \delta^{1 - \frac{2}{p}} \, \Big(\int_0^T |\Gamma_2 f_r (\eta_r)|^{p/2} \, \dd r\Big)^{2/p} \, .
\end{equ}
Then, by using Jensen's inequality, the $\nu_\rho$-stationarity of the ASEP, and finally using the hypercontractivity estimate~\eqref{e:UltHyperGamma2}, we deduce
\begin{equ}
\mathbf E_{\nu_\rho}[M_{\rm T_1, \rm T_2}(f)^2] \leq \delta^{1 - \frac{2}{p}} \, T^{\frac{2}{p}} \sup_{r \in [0,T]} \nu_\rho(|\Gamma_2 f_r|^{p/2})^{2/p} \, \lesssim_p \delta^{1 - \frac{2}{p}} \, T^{\frac{2}{p}} \sup_{r \in [0, T]} \|\alpha^N \, (- \tau S)^{1/2} f_r\|^2 \, ,
\end{equ}
which shows~\eqref{e:AldousM} and concludes the proof.
\end{proof}

\subsection{The easy part of the Dynkin martingale}

\label{sec:EasyDynkin}

The starting point to investigate the large scale behaviour of the fluctuation process $Y_t^\tau$ is to understand the limiting behaviour as $\tau \to \infty$ of the following Dynkin martingale, for any $\vphi \in \fock{\rap}{}$,
\begin{equ}
M_t^\tau(\fsi_\cdot^\tau \vphi) = \fsi_t^\tau \vphi \, (\eta_{\tau t}) - \fsi_0^\tau \vphi \, (\eta_0) - \int_0^t (\partial_s + \tau L) \, \fsi_s^\tau \vphi \, (\eta_{\tau s}) \, \dd s \, ,
\end{equ}
where $\fsi_t^\tau$ is the rescaling operator defined in~\eqref{e:fsittau}. As sketched in the beginning of Section~\ref{sec:Roadmap}, the most delicate contribution here comes from the action of $A_+$, which can be captured by using the previous fluctuation-dissipation relation~\eqref{e:FormalFluctuationDissipationASEP}. In this section, we collect useful estimates about the remaining part of this Dynkin martingale.
\ss

More precisely, the next lemma shows that in particular $\tau S$ produces the Laplacian term $\frac{1}{2} D_0[\ff, \ff] \, \partial_\ff^2$, while $\tau A_-$ and $\partial_t + \tau A_0$ are negligible (in $N$-weighted $H^{-1}$ norm).

\begin{lemma} \label{lem:SimpleGen}
Let $\tau \geq 1$, $t \geq 0$, $\alpha \geq 1$, $\kappa \geq 0$, $\gamma \in (0,1)$, and $\vphi \in \fock{\rap}{}$. Then it holds
\begin{equs}
\|N_{*}^\kappa \alpha^N \, (1 - \tau S)^{1/2} \, \fsi_t^\tau \vphi\| &\lesssim_{\alpha, \kappa} C_{\alpha, \kappa}^{2}(\vphi) \, , \label{e:SimpleH1} \\
\|N_{*}^\kappa \alpha^N \, (1 - \tau S)^{-1/2} \, (\tau S \, \fsi_t^\tau \vphi - D_0[\ff, \ff] \, \fsi_t^\tau \fL_0^\ff \vphi)\| &\lesssim_{\alpha, \kappa} \log_{*}(\tau)^{-1/3} \, C_{\alpha, \kappa+6}^{2}(\vphi) \, , \label{e:SimpleS} \\
\|N_{*}^\kappa \alpha^N \, (1 - \tau S)^{-1/2} \ \tau A_- \, \fsi_t^\tau \vphi\| &\lesssim_{\alpha, \kappa} \log_{*}(\tau)^{-1/2} \, C_{\alpha, \kappa+6}^{2}(\vphi) \, , \label{e:SimpleA-} \\
\|N_{*}^\kappa \alpha^N \, (1 - \tau S)^{-1/2} \, (\partial_t + \tau A_0) \, \fsi_t^\tau \vphi\| &\lesssim_{\alpha, \kappa, \gamma} \tau^{- \gamma/2} \, C_{\alpha, \kappa+6}^{2}(\vphi) \, , \label{e:SimpleA0}
\end{equs}
where the constant $C_{\alpha, \kappa}^s(\vphi)$ is defined in~\eqref{e:Cvphi}.
\end{lemma}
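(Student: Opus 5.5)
The four estimates share one strategy. Each ASEP operator acting on $\fsi_t^\tau\vphi = \fsR\fsP\fU_t^\tau\vphi$ is replaced by its SBE counterpart via Theorem~\ref{th:ComparisonGen}, and the resulting continuum expressions are computed explicitly in Fourier variables using the scaling relations~\eqref{e:HsUtau}. The weights $N_*^\kappa\alpha^N$ are harmless throughout. Every operator involved shifts chaos by at most one, and the bridges preserve chaos, so each bound may be proved chaos by chaos with the loss $(n\pm1)_*^\kappa\alpha^{n\pm1}\lesssim_{\alpha,\kappa}n_*^\kappa\alpha^n$. We also use $\|(1-\tau S)^{-1/2}\cdot\|\le \tau^{-1/2}\|(-S)^{-1/2}\cdot\|$ together with $\|(1-\tau S)^{-1/2}\cdot\|\le\|\cdot\|$.

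For~\eqref{e:SimpleH1}, apply Lemma~\ref{lem:BoundR} and Lemma~\ref{lem:BoundP} to get $\|(1-\tau S)^{1/2}\fsi_t^\tau\vphi\|\lesssim\|(1-\tau\fL_0)^{1/2}\fU_t^\tau\vphi\|$. Then~\eqref{e:HsUtau} bounds this by $C^{1/2}_{\alpha,\kappa}(\vphi)$, which is stronger than what is claimed.

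For~\eqref{e:SimpleS}, \eqref{e:SimpleA-} and~\eqref{e:SimpleA0}, write $\tau O\fsR\fsP\fU_t^\tau\vphi = \fsQ^*\fsP\,\tau\fO\fU_t^\tau\vphi + \tau(O\fsR\fsP-\fsQ^*\fsP\fO)\fU_t^\tau\vphi$ for each pair $(O,\fO)$. The comparison error is controlled by Theorem~\ref{th:ComparisonGen}. Since $O\neq A_+$, only the $H^{1+\gamma}$ term appears, and~\eqref{e:HsUtau} gives
\[\tau^{1/2}\|\fN^6(-\fL_0)^{(1+\gamma)/2}\fU_t^\tau\vphi\|\le\tau^{-\gamma/2}C^{1}_{\alpha,\kappa+6}(\vphi).\]
The remaining bridge discrepancies are of three kinds: $\fsQ^*$ versus $\fsR$ (e.g.\ in $\fsi_t^\tau\fL_0^\ff\vphi$ versus $\fsQ^*\fsP\fU_t^\tau\fL_0^\ff\vphi$), and the time derivative hitting $\fsi^\tau_t$. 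These are handled by~\eqref{e:RemovalQ*R} with $\mu=1/\tau$, which gives $\tau^{-1}\log_*(\tau)$ times an $H^1$ norm of $\fU_t^\tau\fL_0^\ff\vphi$. By~\eqref{e:HsUtau} that norm costs $\tau^{-1/2}\cdot\tau\cdot C^{3/2}$, so the total is $O(\tau^{-1/2}\log\tau)$. Each discrepancy is thus a negligible power of $\tau$.

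It remains to treat the continuum terms, using~\eqref{e:BoundQ*} and~\eqref{e:BoundP} to move to $\|(1-\tau\fL_0)^{-1/2}\cdot\|$.

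\emph{Term $\fS$.} In Fourier space $\tau\fS\fU_t^\tau\vphi$ has multiplier $-\frac{\tau}{2}D_0((\Sigma^\tau)^{-1}k)$. Expanding in the basis $(\fe,\ff)$, the $\ff\ff$ entry gives exactly $D_0[\ff,\ff]\fU_t^\tau\fL_0^\ff\vphi$. The $\fe\fe$ entry carries a factor $\log_*(\tau)^{-2/3}$, and the cross term carries $\log_*(\tau)^{-1/3}$. After the $H^{-1}$ weight $(1-\tau\fL_0)^{-1/2}\approx \tau^{-1/2}|(\Sigma^\tau)^{-1}k|^{-1}$, these leave $\log_*(\tau)^{-1/3}C^{1}$.

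\emph{Term $\fA_0$.} By Remark~\ref{rmk:GalileanTransform}, $(\partial_t+\tau\fA_0)\fU^\tau_t=0$, so only the discrepancy terms above survive. Here $\partial_t$ acts only on $\fU_t^\tau$ and commutes with the bridges.

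\emph{Term $\fA_-$.} This is the main quantitative step. Using~\eqref{e:fA-}, the factor $\fw\cdot k_i$ rescales to $\tau^{-1/2}\log_*(\tau)^{-1/3}\fw\cdot k_i$, and $\fw\cdot k_i$ is itself longitudinal. I would bound $\|(-\fL_0)^{-1/2}\fA_-\psi\|$ by the dual (Cauchy--Schwarz) estimate: the $\ell$-integral over the cut-off $\Pi$ is controlled by a Sobolev norm of $\psi$ through an inequality like
\[\int\dd\ell\,|\fF\psi(\ell,k-\ell,\dots)|\le\Big(\int\frac{\dd\ell}{|\ell|^2+|k-\ell|^2}\Big)^{1/2}\|\dots\|.\]
The first factor is logarithmically divergent, which costs a factor $\log_*(\tau)^{1/2}$. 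The resulting gain is $\tau\cdot\tau^{-1/2}\log_*(\tau)^{-1/3}$ times a longitudinal factor $\log_*(\tau)^{-1/3}$ and the rescaled $H^1$ norm $\tau^{-1/2}$. Altogether this should give $\log_*(\tau)^{-2/3+1/2}$, or better $\log_*(\tau)^{-1/2}$ if the longitudinal gain is exploited optimally, by using $|\fe\cdot k_i|$ and the anisotropic $H^{1}$ bound.

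I expect this $\fA_-$ estimate to be the main obstacle. Securing exactly the exponent $-1/2$ requires careful use of the high-regularity norm $C^2$ to absorb the ultraviolet part of the $\ell$-integral.
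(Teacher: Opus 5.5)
You treat \eqref{e:SimpleH1}, \eqref{e:SimpleS} and \eqref{e:SimpleA0} as the paper does. That means Theorem~\ref{th:ComparisonGen} for the comparison error, \eqref{e:RemovalQ*R} for $\fsQ^*$ versus $\fsR$, the Galilean cancellation $(\partial_t+\tau\fA_0)\fU_t^\tau=0$, and the explicit multiplier $\tau D_0((\Sigma^\tau)^{-1}q)$ for $\fS$. However, the continuum part of \eqref{e:SimpleA-} is not proved, and the route you sketch cannot reach the claimed rate.

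Cauchy--Schwarz in $\ell$ against the weight $|\ell|^2+|k-\ell|^2$ produces $\int\dd\ell\,\Pi_{\ell,k-\ell}/(|\ell|^2+|k-\ell|^2)\approx\pi\log(1/|k|)\approx\log_*(\tau)$. The reason is that this weight spreads over all scales between $\tau^{-1/2}$ and $1$, while $\fF(\fU_t^\tau\vphi)$ lives only at the macroscopic scale. The honest count multiplies the following factors:
\begin{itemize}
\item the prefactor $\tau$;
\item the factor $\fw\cdot k_i$, of size $\tau^{-1/2}\log_*(\tau)^{-1/3}$;
\item the weight $(1-\tau\fL_0)^{-1/2}$, which is $O(1)$ at that scale;
\item the logarithm, contributing $\log_*(\tau)^{1/2}$;
\item the rescaled $H^1$ norm $\tau^{-1/2}$.
\end{itemize}
The product is $\log_*(\tau)^{1/6}$. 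Your extra ``longitudinal factor'' $\log_*(\tau)^{-1/3}$ has no source other than $\fw\cdot k_i$, which you already counted. So this route does not even give a vanishing bound, and even your own count only yields $\log_*(\tau)^{-1/6}$. The sentence ``or better $\log_*(\tau)^{-1/2}$ if the longitudinal gain is exploited optimally'' is exactly the missing argument.

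The missing idea is to drop the $H^{-1}$ structure for this term and use a Cauchy--Schwarz weight adapted to the anisotropic Fourier support of $\fU_t^\tau\vphi$.
\begin{itemize}
\item Since $(1-\tau S)^{-1/2}\le 1$, \eqref{e:BoundQ*} and \eqref{e:BoundP} reduce the continuum part to $\tau\|\fA_-\fU_t^\tau\vphi\|$ in plain $L^2$.
\item In \eqref{e:fA-}, apply Cauchy--Schwarz over $i$, using symmetry to reduce to $i=1$.
\item Then apply Cauchy--Schwarz in $\ell$ with the weight $w_\tau(\ell):=(1+\tau\log_*(\tau)^{2/3}|\fe\cdot\ell|^2)(1+\tau|\ff\cdot\ell|^2)$. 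Its inverse satisfies $\int_{\R^2}\dd\ell/w_\tau(\ell)\lesssim\tau^{-1}\log_*(\tau)^{-1/3}$. This is the area of the box carrying $\fF(\fU_t^\tau\vphi)$, and there is no logarithmic divergence.
\item On the other factor, change variables $p_{1:n}=(\ell,k_1-\ell,k_{2:n-1})$ and rescale $q=\Sigma^\tau p$. Then $w_\tau(p_1)$ becomes $(1+|\fe\cdot q_1|^2)(1+|\ff\cdot q_1|^2)$, and $|\fe\cdot k_1|^2\lesssim\tau^{-1}\log_*(\tau)^{-2/3}|\fe\cdot q_{1:n}|^2$.
\item Both are paid for by $\|(1-\fL_0)^{3/2}\vphi\|$. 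This is the precise sense in which higher regularity tames the $\ell$-integral.
\end{itemize}
Altogether
\[
\tau^2\|\fA_-\fU_t^\tau\vphi\|^2\lesssim n^3\,\tau^2\cdot\tau^{-1}\log_*(\tau)^{-2/3}\cdot\tau^{-1}\log_*(\tau)^{-1/3}\,\|(1-\fL_0)^{3/2}\vphi\|^2=n^3\log_*(\tau)^{-1}\|(1-\fL_0)^{3/2}\vphi\|^2 .
\]
Combined with the comparison error $\tau^{-\gamma/2}$ from Theorem~\ref{th:ComparisonGen}, this gives \eqref{e:SimpleA-}. The product form of $w_\tau$ is essential: an isotropic weight such as $(1+\tau|\ell|^2)^2$ would only give $\log_*(\tau)^{-1/3}$.
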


\begin{proof}
By using the orthogonality of chaoses, it is sufficient to show the statement for every $\vphi \in \fock{\rap,n}{}$ and $n \geq 1$. In addition, we can assume $\alpha = 1$ and $\kappa = 0$ up to dividing both sides by $n_{*}^\kappa \alpha^n$. Next, we recall the useful estimate~\eqref{e:HsUtau} and give another similar statement: for any $a, b \geq 0$ and $\phi \in \fock{\rap}{}$, we have
\begin{equ}
\|(- \fL_0)^{a} (- \fL_0^\fe)^{b} \, \fA_0 \, \fU_t^\tau \phi\| \lesssim \tau^{- a - b - 1/2} \, \log_{*}(\tau)^{-b/3} \, \|\fN \, (- \fL_0)^{a+b+1/2} \phi\| \, \label{e:SimpleHsA0}
\end{equ}
Indeed, it follows from the same proof as~\eqref{e:HsUtau} by noticing in addition the trivial bound
\begin{equ} \label{e:SimpleScalingA0}
|v \cdot (\Sigma^\tau)^{-1} (k_1 + ... + k_n)| = \tau^{-1/2} \log_{*}(\tau)^{-1/3} \, |v \cdot (k_1 + \dots + k_n)| \lesssim \tau^{-1/2} \log_{*}(\tau)^{-1/3} \, n \, |k_{1:n}| \, .
\end{equ}

We start with the proof of~\eqref{e:SimpleH1}. This is a straightforward consequence of~\eqref{e:BoundR},~\eqref{e:BoundP} and~\eqref{e:HsUtau}, as follows
\begin{equ}
\|(1 - \tau S)^{1/2} \, \fsR \fsP \fU_t^\tau \phi\| \lesssim \|(1 - \tau \fL_0)^{1/2} \, \fU^\tau \vphi\| \lesssim \|(- \fL_0)^{1/2} \vphi\| \, .
\end{equ}

We turn to~\eqref{e:SimpleS}. We decompose its left hand side as follows
\begin{equs}
\|(1 - \tau S)^{-1/2} \, (\tau S \, \fsi_t^\tau \vphi -  \fsi_t^\tau D_0[\ff, \ff] \fL_0^\ff \vphi)\| &\leq \|(- \tau S)^{-1/2} \, (\tau S \, \fsR \fsP - \fsQ^* \fsP \, \tau \fS) \fU_t^\tau \vphi\| \\
&\quad + \|(- \tau S)^{-1/2} \, \fsQ^* \fsP (\tau \fS \, \fU_t^\tau \vphi - \fU_t^\tau D_0[\ff, \ff] \fL_0^\ff \vphi)\| \\
&\quad + \|(1 - \tau S)^{-1/2} \, (\fsQ^* - \fsR) \fsP \, \fU_t^\tau D_0[\ff, \ff] \fL_0^\ff \vphi\| \\
&=: \one + \two + \three \, ,
\end{equs}
and we bound individually each summand. For the first one, we use Theorem~\ref{th:ComparisonGen} and~\eqref{e:HsUtau}, which gives
\begin{equ} \label{e:SimpleS1}
\one \lesssim_\gamma n^6 \, \tau^{1/2} \, \|(- \fL_0)^{3/4} \, \fU^\tau \vphi\| \lesssim n^6 \, \tau^{-1/4} \, \|(- \fL_0)^{3/4} \vphi\| \, .
\end{equ}
For the second summand, we use~\eqref{e:BoundQ*},~\eqref{e:BoundP}, and deduce
\begin{equ} \label{e:SimpleS2}
\two \lesssim n^3 \, \|(- \fL_0)^{-1/2}(\tau \fS \, \fU_t^\tau \vphi - \fU_t^\tau D_0[\ff, \ff] \fL_0^\ff \vphi)\| \lesssim n^3 \log_{*}(\tau)^{-1/3} \, \|(-\fL_0)^{1/2} \vphi\| \, ,
\end{equ}
where in the last bound we used the definition of $\fU^\tau$ and the simple bound
\begin{equ}
\big|\tau D_0((\Sigma^\tau)^{-1} k_{1:n}) - D_0[\ff, \ff] \, |\ff \cdot k_{1:n}|^2\big| \lesssim \log_{*}(\tau)^{-1/3} \, |k_{1:n}|^2 \, .
\end{equ}
For the third summand, we use~\eqref{e:RemovalQ*R},~\eqref{e:BoundP} and~\eqref{e:HsUtau}, which results in
\begin{equ} \label{e:SimpleS3}
\three \lesssim n^2 \, \log_{*}(\tau) \, \tau^{-1} \, \|(1 - \tau \fL_0)^{1/2} \, \fU^\tau \fL_0^\ff \vphi\| \lesssim n^2 \log_{*}(\tau) \, \tau^{-1} \, \|(- \fL_0)^{3/2} \vphi\| \, .
\end{equ}
By combining~\eqref{e:SimpleS1},~\eqref{e:SimpleS2} and~\eqref{e:SimpleS3}, we deduce the desired estimate~\eqref{e:SimpleS}.
\ss

We tackle the proof of~\eqref{e:SimpleA-}. We write
\begin{equs}
\|(1 - \tau S)^{-1/2} \, \tau A_- \fsi_t^\tau \vphi\| &\leq \|(- \tau S)^{-1/2} \, (\tau A_- \fsR \fsP - \fsQ^* \fsP \, \tau \fA_-) \, \fU_t^\tau\vphi\| + \|\fsQ^* \fsP \, \tau \fA_- \, \fU_t^\tau\vphi\| \\
&\lesssim_\gamma n^6 \, \tau^{-1/4} \, \|(- \fL_0)^{3/4} \vphi\| + \tau \, \|\fA_- \fU_t^\tau\vphi\| \, , \label{e:SimpleA-1}
\end{equs}
where in the last bound we used Theorem~\ref{th:ComparisonGen} and~\eqref{e:HsUtau} to deal with the first summand (similarly to~\eqref{e:SimpleS1}), and we used~\eqref{e:BoundQ*} and~\eqref{e:BoundP} to deal with the second summand. Now, we estimate further the last term and write
\begin{equs}
\|&\fA_- \fU^\tau_t \vphi\|^2 \lesssim (n-1)! \int_{\R^{2(n-1)}} \dd k_{1:n-1} \Big|n \, \sum_{i=1}^{n-1} (\fe \cdot k_i) \int_{\R^{2}} \dd \ell \, \Pi_{\ell, k_i - \ell} \, \fF(\fU^\tau_t \vphi) \, (\ell, k_i - \ell, k_{1:n-1 \setminus i})\Big|^2 \\
&\lesssim n! \, n^3 \int_{\R^{2(n-1)}} \dd k_{1:n-1} \, |\fe \cdot k_1|^2 \Big|\int_{\R^2} \dd \ell \, \Pi_{\ell, k_1 - \ell} \, \fF(\fU^\tau_t \vphi) \, (\ell, k_1 - \ell, k_{2:n-1})\Big|^2 \\
&\lesssim n! \, n^3 \int_{\R^{2(n-1)}} \dd k_{1:n-1} \dd \ell \, |\fe \cdot k_1|^2 \, (1 + \tau \log_{*}(\tau)^{2/3} |\fe \cdot \ell|^2) \, (1 + \tau |\ff \cdot \ell|^2) \, |\fF(\fU^\tau_t \vphi) \, (\ell, k_1 - \ell, k_{2:n-1})|^2 \\
&\qquad \times \int_{\R^2} \dd \ell \, \frac{1}{(1 + \tau \log_{*}(\tau)^{2/3} |\fe \cdot \ell|^2) \, (1 + \tau |\ff \cdot \ell|^2)} \\
&\lesssim_\nu n! \, n^3 \int \dd p_{1:n} \, |\fe \cdot p_{1:n}|^2 \, (1 + \tau \log_{*}(\tau)^{2/3} |\fe \cdot p_{1:n}|^2) \, (1 + \tau |\ff \cdot p_{1:n}|^2) \, |\fF(\fU^\tau_t \vphi)(p_{1:n})|^2 \\
&\qquad \times \tau^{-1/2} \log_{*}(\tau)^{-1/3} \times \tau^{-1/2} \\
&\lesssim_\nu n^3 \, \tau^{-1} \log_{*}(\tau)^{-2/3} \|(- \fL_0^\fe)^{1/2} (1 - \fL_0^\fe)^{1/2} (1 - \fL_0^\ff)^{1/2} \vphi\|^2 \times \tau^{-1} \log_{*}(\tau)^{-1/3} \\
&\lesssim n^3 \, \tau^{-2} \log_{*}(\tau)^{-1} \, \|(1 - \fL_0)^{3/2} \vphi\|^2 \, , \label{e:SimpleA-2}
\end{equs}
where we proceeded as follows: in the first bound we used the definition of $\fA_-$ as in~\eqref{e:fA-}; in the second bound we used the Cauchy-Schwarz inequality for the sum over $i$, and the symmetry of $f$ with respect to permutations of variables to reduce to $i = 1$; in the third bound, we used the Cauchy-Schwarz inequality for the integral over $\ell$, after biasing by $(1 + \tau \log_{*}(\tau)^{2/3} |\fe \cdot \ell|^2)^{\pm 1} \, (1 + \tau |\ff \cdot \ell|^2)^{\pm 1}$; in the fourth bound, we made the change of variable $p_{1:n} = (\ell, k_1 - \ell, k_{2:n})$ for the first integral, and we bounded the second integral by using the general estimate $\int_{\R} \frac{1}{1 + A x^2} \lesssim A^{-1/2}$ for $A > 0$;
in the fifth bound, we made the change of variable $q_{1:n} = (\Sigma^\tau)^{-1} p_{1:n}$ and used the definition of $\fU_t^\tau$ (cf. Definition~\ref{def:Utau}). Therefore, by plugging~\eqref{e:SimpleA-2} into~\eqref{e:SimpleA-1}, we deduce
\begin{equ}
\|(1 - \tau S)^{-1/2} \, \tau A_- \fsi_t^\tau \vphi\| \lesssim n^6 \log_{*}(\tau)^{-1/2} \, \|(1 - \fL_0)^{3/2} \vphi\| \, ,
\end{equ}
which shows the desired bound~\eqref{e:SimpleA-}.
\ss

Finally, we prove~\eqref{e:SimpleA0}. We first recall that $(\partial_t + \tau \fA_0) \, \fU_t^\tau = 0$ (cf. Remark~\ref{rmk:GalileanTransform}), which gives $\partial_t \, \fsi_t^\tau \vphi = - \fsR \fsP \, \tau \fA_0 \, \fU_t^\tau \vphi$. Hence, we obtain
\begin{equs}
\|(1 - \tau S)^{-1/2} \, (\partial_t + \tau A_0) \, &\fsi_t^\tau \vphi\| \leq \|(- \tau S)^{-1/2} \, (\tau A_0 \fsR \fsP - \fsQ^* \fsP \, \tau \fA_0) \, \fU_t^\tau\vphi\| \\
&\qquad \quad + \|(1 - \tau S)^{-1/2} \, (\fsQ^* - \fsR) \, \fsP \, \tau \fA_0 \, \fU_t^\tau\vphi\| \\
&\lesssim_\gamma n^6 \, \tau^{1/2} \, \|(- \fL_0)^{3/4} \, \fU^\tau \vphi\| + n^2 \log_{*}(\tau) \, \|(1 - \tau \fL_0)^{1/2} \, \fA_0 \fU_t^\tau \vphi\| \, , \\
&\lesssim n^6 \, \tau^{-1/4} \, \|(- \fL_0)^{3/4} \vphi\| + n^2 \log_{*}(\tau) \, \tau^{-1/2} \, \|(1 - \fL_0)^{1/2} (- \fL_0)^{1/2} \vphi\| \, ,
\end{equs}
where in the second bound we used Theorem~\ref{th:ComparisonGen} and~\eqref{e:RemovalQ*R}, and in the last bound we used~\eqref{e:HsUtau} and~\eqref{e:SimpleHsA0}. This shows the desired bound~\eqref{e:SimpleA0} and concludes the proof.
\end{proof}

\subsection{Convergence of the semigroup, at the level of the Fock space}

\label{sec:CVsemigroup}

In this section, we show that, after rescaling and embedding the Fock space $\fock{}{}$ into $\fockdo{}{}$ by using the map $\fsi_\cdot^\tau$ (recall Definition~\ref{def:Utau}), the $\tau$-accelerated ASEP semigroup $P_{\tau t}$ converges to the semigroup $P_t^{\lim}$ of the limiting stochastic heat equation.

\begin{proposition} \label{p:CVsemigroupFock}
Let $\eps \in (0,1/6)$. Then, for every $\tau \geq 1$, $t \geq 0$ and $\vphi \in \fock{\rap}{}$, it holds
\begin{equ} \label{e:QuantitativeCVsemigroup}
\|P_{\tau t} \, \fsi_t^\tau \vphi - \fsi_0^\tau P_t^{\lim} \vphi\| \lesssim_{\eps} \frac{(1 \vee t) \, C_{1, 6}^{2}(\vphi)}{\log_{*}(\tau)^{1/6 - \eps}} \, ,
\end{equ}
where the constant $C_{\alpha, \kappa}^s(\vphi)$ is defined in~\eqref{e:Cvphi}. As a consequence, for every $\vphi \in \fock{}{}$, it holds
\begin{equ} \label{e:CVsemigroupFock}
\sup_{t_0, t \geq 0} \|P_{\tau t} \, \fsi_{t_0 + t}^\tau \vphi - \fsi_{t_0}^\tau P_t^{\lim} \vphi\| \xrightarrow[\tau \to \infty]{} 0 \, .
\end{equ}
\end{proposition}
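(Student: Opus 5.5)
We plan to use a Duhamel interpolation between the two semigroups. Fix $t>0$ and set $\psi_s \eqdef P_{t-s}^{\lim}\vphi$ for $s\in[0,t]$. Since $\fL^{\lim}$ is a Fourier multiplier (Lemma~\ref{lem:Llim}), $\psi_s\in\fock{\rap}{}$, it satisfies $\partial_s\psi_s=-\fL^{\lim}\psi_s$, and $C^{s'}_{\alpha,\kappa}(\psi_s)\le C^{s'}_{\alpha,\kappa}(\vphi)$ for all indices, because the multiplier is contractive. Consider
\begin{equ}
F(s)\eqdef P_{\tau s}\,\fsi_s^\tau\psi_s\,,\qquad s\in[0,t],
\end{equ}
so that $F(t)-F(0)=P_{\tau t}\fsi_t^\tau\vphi-\fsi_0^\tau P_t^{\lim}\vphi$ and
\begin{equ}
F(t)-F(0)=\int_0^t P_{\tau s}\big((\partial_s+\tau L)\fsi_s^\tau\psi_s-\fsi_s^\tau\fL^{\lim}\psi_s\big)\,\dd s\,,
\end{equ}
where $\partial_s$ acts on the rescaling only. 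Estimating the integrand directly in $L^2$ fails, because the error terms are only small in $H^{-1}$. We therefore do not bound it pointwise in $s$. Instead we use the $H^{-1}$ control together with the corrector, in the spirit of an $L^2$ resolvent argument.

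Step 1: decompose the generator. Write $L=S+A_-+A_0+A_+$. By Lemma~\ref{lem:SimpleGen}, $(\partial_s+\tau(S+A_-+A_0))\fsi_s^\tau\psi_s=D_0[\ff,\ff]\fsi_s^\tau\fL_0^\ff\psi_s+E_s^1$, with $\|(1-\tau S)^{-1/2}E_s^1\|\lesssim\log_*(\tau)^{-1/3}C^2_{1,6}(\vphi)$. For the $A_+$ term we use Proposition~\ref{p:AnsatzA+ASEP} with $\mu=1/\tau$, so that $\log_*(1/\mu)=\log_*(\tau)$:
\begin{equ}
\tau A_+\fsi_s^\tau\psi_s=d^{\lim}\fsi_s^\tau\fL_0^\fe\psi_s-(\partial_s+\tau L)\mathrm{Cor}_s+E_s^2\,,
\end{equ}
with $\|(1-\tau S)^{-1/2}E_s^2\|\lesssim\log_*(\tau)^{-1/6+\eps}C^{3/2}_{1,6}(\vphi)$. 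Here $\mathrm{Cor}_s$ denotes the corrector built from $\psi_s$. Its extra $s$-dependence through $\psi_s$ contributes $\partial_s$ of the corrector applied to $\fL^{\lim}\psi_s$, and this costs one more derivative. The terms $\fsi\fL_0^\ff$ and $\fsi\fL_0^\fe$ cancel exactly against $\fsi_s^\tau\fL^{\lim}\psi_s$ by Lemma~\ref{lem:Llim}. What remains is $\int_0^tP_{\tau s}(E_s-(\partial_s+\tau L)\mathrm{Cor}_s)\dd s$.

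Step 2: handle the corrector. We have $\int_0^tP_{\tau s}(\partial_s+\tau L)\mathrm{Cor}_s\,\dd s=P_{\tau t}\mathrm{Cor}_t-\mathrm{Cor}_0$ minus the term where $\partial_s$ hits $\psi_s$, and that term is again a corrector of $\fL^{\lim}\psi_s$. The boundary terms are bounded in $L^2$ by \eqref{e:AnsatzA+L2ASEP}, which gives $\log_*(\tau)^{-1/3}C^{1/2}$; the remaining term is bounded by $t\log_*(\tau)^{-1/3}C^{3/2}$. This uses the contractivity of $P$.

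Step 3, the main obstacle: show $\|\int_0^tP_{\tau s}E_s\,\dd s\|\lesssim(1\vee t)\sup_s\|(1-\tau S)^{-1/2}E_s\|$. We will use duality against $g\in L^2$:
\begin{equ}
\Big\langle\int_0^tP_{\tau s}E_s\,\dd s,g\Big\rangle=\int_0^t\langle E_s,P^*_{\tau s}g\rangle\,\dd s\le\int_0^t\|(1-\tau S)^{-1/2}E_s\|\,\|(1-\tau S)^{1/2}P^*_{\tau s}g\|\,\dd s\,.
\end{equ}
The energy identity $\int_0^t\|(-\tau S)^{1/2}P^*_{\tau s}g\|^2\dd s\le\frac12\|g\|^2$ holds since $S$ is the symmetric part of $L^*$. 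Combined with Cauchy--Schwarz in $s$, it yields the factor $(1\vee t)^{1/2}\|g\|$. This gives \eqref{e:QuantitativeCVsemigroup}, with room to spare in $t$. Equivalently, we can use the stationary $L^2$ bound \eqref{e:IL2} together with Jensen.

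Step 4: deduce \eqref{e:CVsemigroupFock}. Translation invariance removes $t_0$, because the $\fT$-shift commutes with everything. Uniformity for large $t$ comes from density and contractivity: for $\vphi\in\fock{\rap}{}$, truncate in time using $P_t^{\lim}\vphi\to$ its $0$-th chaos component as $t\to\infty$, and note that the $0$-th chaos is preserved by both semigroups. For general $\vphi\in\fock{}{}$, approximate by rapid elements; both sides are uniformly $L^2$-bounded, since $\fsi$ has norm at most $1$.
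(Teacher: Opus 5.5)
Your proposal is correct and is essentially the paper's proof. It uses the same interpolation $\psi_s = P^{\lim}_{t-s}\vphi$, the same Lemma~\ref{lem:SimpleGen} and Proposition~\ref{p:AnsatzA+ASEP} with $\mu = 1/\tau$, the same handling of the corrector's boundary terms and of the term where the time derivative hits $\psi_s$, and the same translation, density and large-time argument for \eqref{e:CVsemigroupFock}. The only difference is presentational: your Duhamel identity is the $\cF_0$-conditional expectation of the paper's summed Dynkin decompositions, and your duality-plus-energy bound on $\int_0^t P_{\tau s} E_s \, \dd s$ replaces the It\^o-trick estimate \eqref{e:IL2} used there, as you note yourself.
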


Intuitively,~\eqref{e:CVsemigroupFock} investigates the evolution of the rescaled fluctuation process $Y_\cdot^\tau$ between times $t_0$ and $t_0 + t$. The reason why it is useful to take into account the starting time $t_0$ is that, although the process $\eta_\cdot$ is stationary under $\mathbf P_{\nu_\rho}$, the fluctuation process $Y_\cdot^\tau$ is not (as soon as $\rho \ne 1/2$) because we are looking at the process on the moving lattice $\Z^2 - v t$.
\ss 

We first give a brief sketch of the proof of~\eqref{e:QuantitativeCVsemigroup}. We start from Dynkin's decomposition
\begin{equs}
\fsi_t^\tau \vphi(\eta_{\tau t}) &= \fsi_0^\tau \vphi (\eta_0) + \int_0^t (\tau S + \partial_r + \tau A_0 + \tau A_-) \, \fsi_r^\tau \vphi \ (\eta_{\tau r}) + \int_0^t \tau A_+ \fsi_r^\tau \vphi \ (\eta_{\tau r}) \, \dd r + M_t^\tau(\fsi_\cdot^\tau \vphi) \\
&\approx \fsi_0^\tau \vphi (\eta_0) + \int_0^t \fsi_r^\tau \, (D_0[\ff, \ff] \, \fL_0^\ff + d^{\lim} \, \fL_0^\fe) \, \vphi \, (\eta_{\tau r}) \, \dd r + M_t^\tau(\mathrm{Cor}_\cdot^{\tau, 1/\tau}(\vphi)) + M_t^\tau(\fsi_\cdot^\tau \vphi) \, ,
\end{equs}
where the latter approximation follows by using Lemma~\ref{lem:SimpleGen} and the fluctuation-dissipation relation~\eqref{e:FormalFluctuationDissipationASEP} (with $\mu = 1/\tau$). Now, by recalling that $\fL^{\lim} = D_0[\ff, \ff] \, \fL_0^\ff + d^{\lim} \, \fL_0^\fe$ (cf. Lemma~\ref{lem:Llim}) and taking the conditional expectation with respect to $\cF_0$ (which in particular eliminates the two martingales), we formally deduce
\begin{equ} \label{e:SketchCVsemigroup}
P_{\tau t} \, \fsi_t^\tau \vphi \approx \fsi_0^\tau \vphi + \int_0^t P_{\tau r} \, \fsi_r^\tau \, \fL^{\lim} \vphi \, \dd r \, .
\end{equ}
This estimate suggests that $P_{\tau s} \, \fsi_s^\tau \approx \fsi_0^\tau P_s^{\lim}$ for every $s \geq 0$, but it is not really convenient to work with directly. In fact, we shall rather start from the Dynkin decomposition of $\fsi_t^\tau P_{T - t}^{\lim} \, \vphi$ for some fixed $T > 0$. Indeed, this choice leads to computations similar to before, but with an additional time integral of $- \fL^{\lim}$ that arises from the action of the time derivative in Dynkin's decomposition on the time-reversed limiting semigroup $P_{T - \, \cdot}^{\lim}$. This additional summand eventually cancels with the time integral appearing in~\eqref{e:SketchCVsemigroup}, and we end up with
\begin{equ}
P_{\tau t} \, P_{T-t}^{\lim} \, \fsi_t^\tau \vphi \approx \fsi_0^\tau \, P_{T}^{\lim} \vphi \, .
\end{equ}
By taking $t = T$, we then deduce the desired convergence of the semigroups $P_T \, \fsi_T^\tau \vphi \approx \fsi_0^\tau P_{T}^{\lim} \vphi$.

\begin{proof}[of Proposition~\ref{p:CVsemigroupFock}]

{\noindent \it Part I. Proof of~\eqref{e:QuantitativeCVsemigroup}}

In this proof, we take $\mu := 1/\tau$ and write $\mathrm{Cor}_t^\tau := \mathrm{Cor}_t^{\tau, 1/\tau}$ for conciseness (recall Definition~\ref{def:AnsatzA+ASEP}). Let $T > 0$ and set $\vphi_t := P_{T - t}^{\lim} \, \vphi$ for any $t \in [0,T]$. We write the following two Dynkin decompositions (recall Definition~\ref{def:MI})
\begin{equs}
\fsi_t^{\tau} \vphi_t \, (\eta_{\tau t}) &= \fsi_0^{\tau} \vphi_0 \, (\eta_0) + W_t^\tau\big([(\partial_\cdot + \tau L) \, \fsi_\cdot^\tau] \, \vphi_\cdot - \fsi_\cdot^\tau \fL^{\lim} \vphi_\cdot\big) + M_t^\tau(\fsi_\cdot^{\tau} \vphi_\cdot) \, , \label{e:CVDynkin1} \\ \mathrm{Cor}_t^\tau \vphi_t \, (\eta_{\tau t}) &= \mathrm{Cor}_0^\tau \vphi_0 \, (\eta_0) + W_t^\tau\big([(\partial_\cdot + \tau L) \, \mathrm{Cor}_\cdot^{\tau}] \, \vphi_\cdot - \mathrm{Cor}_\cdot^{\tau} \fL^{\lim} \vphi_\cdot\big) + M_t^\tau(\mathrm{Cor}_\cdot^\tau \vphi_\cdot) \, . \label{e:CVDynkin2}
\end{equs}
By summing the two formulas and rearranging the terms (in particular, in~\eqref{e:CVDynkin1}, we expand $L$ as $S + A_- + A_0 + A_+$ and $\fL^{\lim}$ as $d^{\lim} \fL_0^\fe + D_0[\ff, \ff] \fL_0^\ff$), we deduce
\begin{equ} \label{e:MildProcessDecomposition}
\fsi_t^{\tau} \vphi_t \, (\eta_{\tau t}) = \fsi_0^{\tau} \vphi_0 \, (\eta_0) + M_t^\tau(\fsi_\cdot^{\tau} \vphi_\cdot) + M_t^\tau(\mathrm{Cor}_\cdot^\tau \vphi_\cdot) + \cR_t^\tau(\vphi) \, ,
\end{equ}
where the remainder term $\cR_t^\tau(\vphi)$ is given by
\begin{equs}
\cR_t^\tau(\vphi) &:= W_t^\tau(\tau S \, \fsi_\cdot^\tau \vphi_\cdot - D_0[\ff, \ff] \, \fsi_\cdot^\tau \fL_0^\ff \vphi_\cdot)
+ W_t^\tau(\tau A_- \fsi_\cdot^{\tau} \vphi_\cdot)
+ W_t^\tau\big([(\partial_{\cdot} + \tau A_0) \, \fsi_\cdot^\tau] \vphi_\cdot\big) \\
&\quad + W_t^\tau(\tau A_+ \fsi_\cdot^\tau \vphi_\cdot + [(\partial_{\cdot} + \tau L) \, \mathrm{Cor}_\cdot^\tau] \, \vphi_\cdot - d^{\lim} \, \fsi_\cdot^\tau \fL_0^\fe \vphi_\cdot\big) \\
&\quad - W_t^\tau(\mathrm{Cor}_\cdot^\tau \fL^{\lim} \vphi_\cdot) - \mathrm{Cor}_t^{\tau} \vphi_t \, (\eta_{\tau t}) + \mathrm{Cor}_0^{\tau} \vphi_0 \, (\eta_{0}) \, . \label{e:CVsemigroupSplitting}
\end{equs}
Now, start from~\eqref{e:MildProcessDecomposition}, take the conditional expectation with respect to $\cF_0$ (which eliminates the two martingales) and choose $t = T$ (note that $\vphi_0 = P_T^{\lim} \vphi$ and $\vphi_T = \vphi$), which leads to
\begin{equ} \label{e:MildSemigroup}
P_{\tau T} \, \fsi_t^\tau \vphi - \fsi_0^\tau P_T^{\lim} \vphi = \mathbf E_{\nu_\rho}[\cR_T^\tau(\vphi)  | \cF_0] \, . 
\end{equ}
Consequently, to prove the desired estimate~\eqref{e:QuantitativeCVsemigroup}, it only remains to show that
\begin{equ} \label{e:CVsemigroupClaim}
\mathbf E_{\nu_\rho}\Big[\big|\cR_T^\tau(\vphi)\big|^2\Big]^{1/2} \lesssim_\eps T_* \, C_{1,10}^2(\vphi) \, \log_{*}(\tau)^{-1/6 + \eps} \, .
\end{equ}

To prove~\eqref{e:CVsemigroupClaim}, we treat separately each summand appearing in~\eqref{e:CVsemigroupSplitting} (that we refer to as $\one$, $\two$, $\three$, $\four$, $\five$, $\six$, $\seven$ respectively). For later use, we give some trivial bounds: from the definition~\eqref{e:Cvphi} of $C_{\alpha, \kappa}^s$ we readily deduce that, for any $t \in [0,T]$,
\begin{equ} \label{e:CvphiSemigroup}
C^s_{\alpha, \kappa}(\vphi_t) \leq C^s_{\alpha, \kappa}(\vphi) \, , \quad C^s_{\alpha, \kappa}( \fL^{\lim} \, \vphi_t) \lesssim C^{s+1}_{\alpha, \kappa}(\vphi) \, ,
\end{equ}
and next, for any $\psi \in \fock{}{}$, we have
\begin{equ} \label{e:TrivialBoundmuH-1}
\|(T^{-1} - \tau S)^{-1/2} \, \psi\| \leq T_*^{1/2} \, \|(1 - \tau S)^{-1/2} \, \psi\| \leq T_*^{1/2} \, \|\psi\| \, .
\end{equ}
To deal with $\one$, we use~\eqref{e:IL2},~\eqref{e:TrivialBoundmuH-1},~\eqref{e:SimpleS},~\eqref{e:CvphiSemigroup} and deduce
\begin{equs}
\mathbf E_{\nu_\rho}\Big[\big|W_T^\tau(\tau S \, \fsi_\cdot^\tau \vphi_\cdot - D_0[\ff, \ff] \, \fsi_\cdot^\tau \fL_0^\ff \vphi_\cdot)\big|^2\Big]^{1/2} &\lesssim T_* \sup_{r \in [0,T]} \|(1 - \tau S)^{-1/2} \, (\tau S \, \fsi_r^\tau - D_0[\ff, \ff] \, \fsi_r^\tau \fL_0^\ff) \, \vphi_r\| \\
&\lesssim T_* \, C_{1, 6}^2(\vphi) \log_{*}(\tau)^{-1/2} \, . \label{e:QuantitativeCVsemigroup1}
\end{equs}
To deal with $\two$, we use~\eqref{e:IL2},~\eqref{e:TrivialBoundmuH-1},~\eqref{e:SimpleA-},~\eqref{e:CvphiSemigroup} and obtain
\begin{equ}
\mathbf E_{\nu_\rho}\Big[\big|W_T^\tau(\tau A_- \fsi_\cdot^{\tau} \vphi_\cdot)\big|^2\Big]^{1/2} \lesssim T_* \sup_{r \in [0,T]} \|(1 - \tau S)^{-1/2} \, \tau A_- \fsi_r^\tau \vphi_r\| \lesssim T_* \, C_{1, 6}^2(\vphi) \log_{*}(\tau)^{-1/2} \, . \label{e:QuantitativeCVsemigroup2}
\end{equ}
To deal with $\three$, we use~\eqref{e:IL2},~\eqref{e:TrivialBoundmuH-1},~\eqref{e:SimpleA0} (with any fixed $\gamma \in (0,1)$) and~\eqref{e:CvphiSemigroup}, which yields
\begin{equ}
\mathbf E_{\nu_\rho}\Big[\big|W_T^\tau\big([(\partial_{\cdot} + \tau A_0) \, \fsi_\cdot^\tau] \vphi_\cdot\big)\big|^2\Big]^{1/2} \lesssim T_* \sup_{r \in [0,T]} \|(1 - \tau S)^{-1/2} \, [(\partial_r + \tau A_0) \, \fsi_r^\tau] \vphi_r\| \lesssim_\gamma T_* \, C_{1, 6}^2(\vphi) \, \tau^{- \gamma/2} \, . \label{e:QuantitativeCVsemigroup3}
\end{equ}
To deal with $\four$, we use~\eqref{e:IL2},~\eqref{e:TrivialBoundmuH-1},~\eqref{e:AnsatzA+GenASEP} (recall that $\mu = 1/\tau$) and~\eqref{e:CvphiSemigroup}, which shows
\begin{equs}
\mathbf E_{\nu_\rho}\Big[\big|W_T^\tau&(\tau A_+ \fsi_\cdot^\tau \vphi_\cdot + [(\partial_{\cdot} + \tau L) \, \mathrm{Cor}_\cdot^\tau] \, \vphi_\cdot - d^{\lim} \, \fsi_\cdot^\tau \fL_0^\fe \vphi_\cdot\big)\big|^2\Big]^{1/2} \\
&\lesssim T_* \sup_{r \in [0,T]} \|(1 - \tau S)^{-1/2} \, (\tau A_+ \fsi_r^\tau \vphi_r + [(\partial_r + \tau L) \, \mathrm{Cor}_r^\tau] \, \vphi_r - d^{\lim} \, \fsi_r^\tau \fL_0^\fe \vphi_r)\| \\
&\lesssim T_* \, C_{1,6}^{3/2}(\vphi) \, \log_{*}(\tau)^{-1/6 + \eps} \, . \label{e:QuantitativeCVsemigroup4}
\end{equs}
To deal with $\five$, we use~\eqref{e:IL2},~\eqref{e:TrivialBoundmuH-1},~\eqref{e:AnsatzA+L2ASEP} (recall that $\mu = 1/\tau$),~\eqref{e:CvphiSemigroup}, and deduce
\begin{equ}
\mathbf E_{\nu_\rho} \Big[\big|W_t^\tau(\mathrm{Cor}_\cdot^\tau \fL^{\lim} \vphi_\cdot)\big|^2\Big]^{1/2} \lesssim T_* \sup_{r_1, r_2 \in [0,T]} \|\mathrm{Cor}_r^{\tau} \, \fL^{\lim} \vphi_r\| \lesssim (1 \vee T) \, C_{1,6}^{3/2}(\vphi) \, \log_{*}(\tau)^{-1/3} \, . \label{e:QuantitativeCVsemigroup5}
\end{equ}
Finally, we finish with the boundary terms $\six$ and $\seven$. We can easily bound them by using the $\nu_\rho$-stationarity,~\eqref{e:AnsatzA+L2ASEP} (recall that $\mu = 1/\tau$) and~\eqref{e:CvphiSemigroup}, which leads to
\begin{equ}
\sup_{t \in [0,T]} \mathbf E_{\nu_\rho}\Big[\big|\mathrm{Cor}_t^{\tau} \vphi_t \, (\eta_{\tau t})\big|^2\Big]^{1/2} \lesssim \sup_{t \in [0,T]} \|\mathrm{Cor}_t^{\tau} \vphi_r\| \lesssim C_{1,0}^{1/2}(\vphi) \, \log_{*}(\tau)^{-1/3} \, . \label{e:QuantitativeCVsemigroup6}
\end{equ}
By gathering~\eqref{e:QuantitativeCVsemigroup1},~\eqref{e:QuantitativeCVsemigroup2},~\eqref{e:QuantitativeCVsemigroup3},~\eqref{e:QuantitativeCVsemigroup4},~\eqref{e:QuantitativeCVsemigroup5},~\eqref{e:QuantitativeCVsemigroup6}, we deduce~\eqref{e:CVsemigroupClaim} and conclude the proof of~\eqref{e:QuantitativeCVsemigroup}.
\ms 

{\noindent \it Part II. Proof of~\eqref{e:CVsemigroupFock}}

First, by recalling Definition~\ref{def:Utau} and noticing that the translation operator $\fT_\cdot$ commutes with $P_t^{\lim}$, we have
\begin{equ}
\fsi^\tau_{t_0 + t} = \fsi^\tau_t \, \fT_{\tau t_0 v} \, , \quad \fsi^\tau_{t_0} P_t^{\lim} = \fsi^\tau_{0} \, P_t^{\lim} \, \fT_{\tau t_0 v} \, .
\end{equ}
Hence, by applying~\eqref{e:QuantitativeCVsemigroup} to $\fT_{\tau t_0 v} \, \vphi$ (and noticing that $C_{\alpha, \kappa}^s(\fT_{\tau t_0 v} \vphi) = C_{\alpha, \kappa}^s(\vphi)$) we deduce
\begin{equ}
\|P_{\tau t} \fsi_{t_0 + t}^\tau \vphi - \fsi_{t_0}^\tau P_t^{\lim} \vphi\| \lesssim_\eps \frac{(1 \vee t) \, C_{1,6}(\vphi)}{\log_{*}(\tau)^{1/6-\eps}} \, .
\end{equ}
In particular, for any $\vphi \in \fock{\rap}{}$ and $T > 0$, we have
\begin{equ} \label{e:CVsemigroupLocal}
\sup_{t_0 \geq 0, \, t \in [0,T]} \|P_{\tau t} \fsi_{t_0 + t}^\tau \vphi - \fsi_{t_0}^\tau P_t^{\lim} \vphi\| \xrightarrow[\tau \to \infty]{} 0 \, .
\end{equ}
Then, a straightforward density argument allows us to extend~\eqref{e:CVsemigroupLocal} to any $\vphi \in \fock{}{}$, because the operators $P_{\tau t} \fsi_{t_0 + t}^\tau$ and $\fsi_{t_0}^\tau P_t^{\lim}$ are uniformly bounded by $1$ from $\fock{}{}$ to $\fockdo{}{}$.
\ss

Hence, it only remains to extend~\eqref{e:CVsemigroupLocal} to $T = \infty$. First, note that we can assume $\E[\vphi] = 0$ up to replacing $\vphi$ by $\vphi - \E[\vphi]$. Next, by using the contractivity of $P_\cdot$, $P_\cdot^{\lim}$ and $\fsi^\tau_\cdot$, we deduce, for any $t_0 \geq 0$ and $t \geq T \geq 0$ (let us denote $t_1 := t_0 + t - T \geq 0$),
\begin{equs}
\|P_{\tau t} \fsi_{t_0 + t}^\tau \vphi - \fsi_{t_0}^\tau P_t^{\lim} \vphi\| &\leq \|P_{\tau t} \fsi_{t_0 + t}^\tau \vphi\| + \|\fsi_{t_0}^\tau P_t^{\lim} \vphi\| \\
&\leq \|P_{\tau T} \fsi_{t_0 + t}^\tau \vphi\| + \|P_T^{\lim} \vphi\| \\
&\leq \|P_{\tau T} \fsi_{t_1 + T}^\tau \vphi - \fsi_{t_1}^\tau P_T^{\lim} \vphi\| + 2 \|P_T^{\lim} \vphi\| \, ,
\end{equs}
which gives, for any $T \geq 0$,
\begin{equ} \label{e:ExtensionTinfty}
\sup_{t_0 \geq 0, \, t \geq 0} \|P_{\tau t} \fsi_{t_0 + t}^\tau \vphi - \fsi_{t_0}^\tau P_t^{\lim} \vphi\| \leq 2 \, \|P_T^{\lim} \vphi\| + \sup_{t_0 \geq 0, \, t \in [0,T]} \|P_{\tau t} \fsi_{t_0 + t}^\tau \vphi - \fsi_{t_0}^\tau P_t^{\lim} \vphi\| \, .
\end{equ}
Now, we notice that $\|P_T^{\lim} \vphi\| \to 0$ as $T \to \infty$ because $\E[\vphi] = 0$ (which can be easily checked by a direct computation using the chaos decomposition). Consequently, by taking in~\eqref{e:ExtensionTinfty} the limsup first as $\tau \to \infty$ (which eliminates the second summand of the right hand side, by~\eqref{e:CVsemigroupLocal}) and then as $T \to \infty$ (to eliminate the first summand of the right hand side, by the previous observation), we deduce~\eqref{e:CVsemigroupFock} and conclude the proof.
\end{proof}

\subsection{Convergence of the finite dimensional laws}

\label{sec:FinDim}

In this section we show the convergence of the finite dimensional laws of the fluctuation process, as follows.

\begin{proposition} \label{p:CVfindim}
For any $n \geq 1$, $0 \leq t_1 \leq ... \leq t_n$, $\vphi_1, ..., \vphi_n \in \cS(\R^2)$, $f \colon \R^n \to \R$ continuous with at most polynomial growth, $p \in [1,\infty)$, it holds
\begin{equs}
\mathbf E_{\nu_\rho}&[f(Y^\tau_{t_1}(\vphi_1), \dots, Y^\tau_{t_n}(\vphi_n))] \xrightarrow[\tau \to \infty]{} \mathbf E_{\mathbb P}^{\lim}[f(Y^{\lim}_{t_1}(\vphi_1), \dots, Y^{\lim}_{t_n}(\vphi_n))] \, , \label{e:CVfindimAnnealed} \\
\int_{\Omega} \nu_\rho(\dd \eta) \, \big|\mathbf E_{\eta}&[f(Y^\tau_{t_1}(\vphi_1), \dots, Y^\tau_{t_n}(\vphi_n))] - \mathbf E_{Z_0^\tau(\eta)}^{\lim}[f(Y^{\lim}_{t_1}(\vphi_1), \dots, Y^{\lim}_{t_n}(\vphi_n))]\big|^p \xrightarrow[\tau \to \infty]{} 0 \, . \quad\label{e:CVfindimQuenched}
\end{equs}
\end{proposition}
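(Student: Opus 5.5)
We derive Proposition~\ref{p:CVfindim} from the semigroup convergence~\eqref{e:CVsemigroupFock} by an induction on $n$ at the level of the Fock spaces, followed by a moment/density argument to pass from polynomial to general continuous test functions. The basic object is the following: for $F \in \fock{\rap}{}$ (more generally $F \in \fock{}{}$), the random variable $\fsi^\tau_t F(\eta)$ is the ``rescaled'' image of the functional $F$ evaluated at the fluctuation field. For a linear functional $F = \vphi \in \fock{1}{}$ we have exactly $\fsi^\tau_t \vphi = Z_t^\tau[\cdot](\vphi)$ by Remark~\ref{rmk:Utau}. For products we must be careful, because $\fsi^\tau_t$ is not multiplicative: the product $Z^\tau_t(\vphi_1) Z^\tau_t(\vphi_2)$ contains, besides $\fsi^\tau_t(\vphi_1 \otimes_s \vphi_2)$, diagonal contributions. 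I would first prove a lemma: for $F, G \in \fock{\rap}{}$ of finite chaos, $\|(\fsi^\tau_t F)(\fsi^\tau_t G) - \fsi^\tau_t (\fI^{-1}(\fI F \cdot \fI G))\| \to 0$ uniformly in $t$. This uses that on $\nu_\rho$ one has $\sigma(x)^2 = 1 + c_\rho \sigma(x)$ with $c_\rho = (1-2\rho)/\sqrt{\rho(1-\rho)}$, so the contracted terms reproduce the Wick contractions of white noise (the constant $1$ with a Riemann sum converging to $\langle \vphi,\psi\rangle$), while the extra $c_\rho\sigma(x)$ terms carry an additional factor $\det(\Sigma^\tau)^{-1/2} \to 0$; the hard-core restriction $\fsR$ and the Fourier truncation $\fsP$ only create errors vanishing by the smoothness of $\vphi_i$.

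With this in hand, the annealed statement for $f$ a polynomial, i.e. $f(y) = \prod_i y_i^{m_i}$ (multilinear after polarisation), goes by induction on $n$ using the Markov property. Set $G_n := \vphi_n$-power functional in $\fock{}{}$, namely $F_n := \fI^{-1}(\zeta(\vphi_n)^{m_n})$, and recursively $F_{k} := \fI^{-1}\big(\zeta(\vphi_k)^{m_k} \cdot P^{\lim}_{t_{k+1}-t_k} \fI F_{k+1}\big)$. Conditioning on $\cF_{\tau t_{n-1}}$ and using the product lemma, $\mathbf E_\eta[\prod_k Y^\tau_{t_k}(\vphi_k)^{m_k}]$ is approximated in $L^2(\nu_\rho)$ by applying successively $P_{\tau(t_{k+1}-t_k)} \fsi^\tau_{t_{k+1}}$; by~\eqref{e:CVsemigroupFock} with $t_0 = t_k$, each such step can be replaced by $\fsi^\tau_{t_k} P^{\lim}_{t_{k+1}-t_k}$ with an $L^2$ error going to zero uniformly, and the contractivity of all operators involved prevents error accumulation. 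The remaining issue is that the product lemma is applied to $P^{\lim}_{t}\fI F_{k+1}$, which has infinitely many chaoses; I would handle this by approximating in $\fock{}{}$ by finite-chaos Schwartz functionals and using uniform boundedness of $\fsi^\tau$, combined with hypercontractivity (Lemma~\ref{lem:UltimateHypercontractivity}) to control products in $L^2$ from higher moments. In the end we obtain $\mathbf E_\eta[\cdots] \approx \fsi^\tau_{0}F_1(\eta)$ in $L^2(\nu_\rho)$, while $\mathbf E^{\lim}_{w}[\cdots] = \fI F_1(w)$; evaluating $\fI F_1$ at $w = Z_0^\tau[\eta]$ is again, by the product lemma, close to $\fsi_0^\tau F_1$. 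This gives~\eqref{e:CVfindimQuenched} for $p=2$ and polynomial $f$, and integrating gives~\eqref{e:CVfindimAnnealed} since $Z^\tau_0[\eta] \to \zeta$ in law with all moments.

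Finally, to pass to continuous $f$ of polynomial growth and general $p$: uniform moment bounds of all orders for $Y^\tau_t(\vphi)$ follow from hypercontractivity, since $\fsi^\tau_t\vphi$ lies in the first chaos with bounded norm. Gaussian limits are determined by moments, so annealed convergence extends by standard uniform integrability. For the quenched statement I would truncate $f$, approximate it on a compact set by polynomials (Weierstrass), and control the tails by the uniform moment bounds under both $\mathbf E_\eta$ (integrated over $\nu_\rho$) and $\mathbf E^{\lim}_{Z^\tau_0[\eta]}$; $L^p$ convergence then follows from $L^2$ convergence plus uniform boundedness in $L^{2p}$. I expect the main obstacle to be the product lemma with infinite-chaos inputs coming from $P^{\lim}$, i.e. justifying that $\fsi^\tau$ is asymptotically multiplicative uniformly enough to iterate.
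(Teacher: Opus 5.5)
Your route is viable and has the same skeleton as the paper's proof: backward Markov induction, the Fock-level convergence~\eqref{e:CVsemigroupFock} applied with $t_0 = t_k$, and a lemma saying that $\fsi^\tau_t$ is asymptotically compatible with the nonlinear structure of the Gaussian chaos. The order of approximations differs, though.

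The paper first uses the uniform moment bounds and density in $\cC_0(\R^n)$ to reduce to bounded product test functions $f = f_1 \otimes \cdots \otimes f_n$ with $f_i \in \cC_0(\R)$. It then combines~\eqref{e:CVsemigroupFock} with Lemma~\ref{lem:CVchaos} to get Lemma~\ref{lem:CVsemigroupCyl}. Lemma~\ref{lem:CVchaos} states $F(Z^\tau_t) \approx \fsi^\tau_t(F(\zeta))$ for every continuous cylinder functional, and is proved by weighted polynomial approximation, reduction to $H_n(w(h),\|h\|_{L^2})$, and a M\"obius-inversion count. After that the induction is trivial in every $L^p$, because all functionals are bounded by $\prod_i (1 \vee \|f_i\|_\infty)$.

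You instead keep polynomial test functions and replace Lemma~\ref{lem:CVchaos} by a Wick-contraction product lemma based on $\sigma^2 = 1 + c_\rho\sigma$. This is a legitimate, arguably more transparent, proof of Bernoulli/Gaussian chaos consistency. The cost is that you must carry $L^q$ bounds through every step: multiplication by $Z^\tau_{t_k}(\vphi_k)^{m_k}$ is not an $L^2$ contraction. So ``contractivity prevents error accumulation'' is not enough on its own. You need $L^2$ convergence of the errors together with uniform $L^q$ bounds for some $q>2$, which hypercontractivity on finite chaoses and the moment bounds do supply. A minor point: $\fsi^\tau_t\vphi$ is not exactly $Z^\tau_t(\vphi)$. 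By Remark~\ref{rmk:Utau} the latter is $I$ applied to the restriction of $\fU^\tau_t\vphi$ to $\Z^2$, which differs from $\fsP\fU^\tau_t\vphi$ by an aliasing error of order $\tau^{-m}$, cf.~\eqref{e:ReplacementRP}.

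Two points need fixing.

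\emph{The obstacle you single out does not exist.} $P^{\lim}_t$ preserves chaos degree, since $\fL^{\lim}$ is diagonal (Lemma~\ref{lem:Llim}). By~\eqref{e:YlimExplicit} it maps polynomial cylinder functionals to polynomial cylinder functionals, so all your $F_k$ have finitely many chaoses. More importantly, $\fI F_1$ then has a canonical representative as a polynomial cylinder functional, equal to $w \mapsto \mathbf E^{\lim}_w[\cdots]$ for every $w \in \cS'(\R^2)$. This is what gives meaning to evaluating it at $Z^\tau_0[\eta]$, whose law is singular with respect to $\mathbb P$.

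\emph{The passage from polynomial to continuous $f$ in~\eqref{e:CVfindimQuenched} fails as written.} A Weierstrass approximant $q$ of $f$ on a ball $B_R$ depends on $R$ and may be huge outside $B_R$. Controlling $\sup_\tau \mathbf E[|q(Y^\tau)|\,\1_{|Y^\tau|>R}]$ would require choosing $R$ after $q$, which is circular, and moment bounds of each fixed order do not break this. In the annealed statement, moment-determinacy of the Gaussian limit saves you. In the quenched one it does not, because the comparison law $\mathbf P^{\lim}_{Z^\tau_0[\eta]}$ moves with $\tau$. Either of the following closes the step:
\begin{itemize}
\item A weighted approximation $\sup_y |f(y)-q(y)|\,e^{-\sum_i |y_i|} < \varepsilon$, combined with the uniform exponential moments of Lemma~\ref{lem:Subgaussian}. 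Via stationarity and H\"older, these control $\mathbf E_{\nu_\rho}[e^{p\sum_i |Y^\tau_{t_i}(\vphi_i)|}]$ and the analogous limiting quantity. This is exactly Step~1 of the proof of Lemma~\ref{lem:CVchaos}.
\item A compactness argument using that, under $\mathbf P^{\lim}_w$, the vector $(Y^{\lim}_{t_i}(\vphi_i))_i$ is Gaussian with fixed covariance and mean linear in $w$.
\end{itemize}
The paper needs the weighted approximation only once, for the single-time law of $Z^\tau_t$ under $\nu_\rho$. Its multi-time reduction uses sup-norm density in $\cC_0(\R^n)$, which has no tail issue.
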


\begin{remark}

It would be possible to strengthen Proposition~\ref{p:CVfindim} in two ways. First, the convergences~\eqref{e:CVfindimAnnealed} and~\eqref{e:CVfindimQuenched} hold uniformly over the sampling times $t_1, \dots, t_n \geq 0$. Second, if we assume $f$ to be smooth with at most polynomial growth, then the errors with respect to the convergences~\eqref{e:CVfindimAnnealed} and~\eqref{e:CVfindimQuenched} can be bounded by $O(\log_{*}(\tau)^{-1/6 + o(1)})$. Indeed, these improvements could be obtained, respectively, from the uniformity in time in the convergence~\eqref{e:CVsemigroupFock} and from the quantitative estimate~\eqref{e:QuantitativeCVsemigroup}. However, since implementing these refinements would require additional work, we prefer to stick to the simpler statement of Proposition~\ref{p:CVfindim}.
\end{remark}

Before diving further into the proof of Proposition~\ref{p:CVfindim}, we give some useful definitions and basic properties. First, we mention the following elementary concentration bounds (see Lemma~\ref{lem:Subgaussian} for a stronger statement): for any $\vphi \in \cS(\R^2)$ and $p \geq 1$,
\begin{equ} \label{e:Concentration}
\sup_{\tau \geq 1, \ t_0, t \geq 0} \mathbb E[|\zeta(\vphi)|^p] \vee \nu_\rho[|Z_{t}^\tau(\vphi)|^p] \vee \nu_\rho[\mathbf E_{Z_{t_0}^\tau}^{\lim}[|Y_t^{\lim}(\vphi)|^p]] < \infty \, .
\end{equ}
Next, in this section we work mainly with (spatial) cylinder functionals that we now define.

\begin{definition} \label{def:CylinderFunctional}
A \emph{cylinder functional} is a function $F \colon \cS'(\R^2) \to \R$ such that there exist an integer $m \geq 1$, a continuous function $f \in \cC(\R^m, \R)$ that grows at most polynomially fast, and test functions $\vphi_1, \dots, \vphi_m \in \cS(\R^2)$, such that for every $w \in \cS'(\R^2)$ it holds
\begin{equ} \label{e:CylinderFunctional}
F(w) = f(w(\vphi_1), \dots, w(\vphi_m)) \, .
\end{equ}
We denote by $\mathrm{Cyl}$ the space of cylinder functionals. We can define the semigroup $(P_t^{\lim})_{t \geq 0}$ on $\mathrm{Cyl}$, simply by using the formula $(P_t^{\lim} F)(w) \eqdef \mathbf E^{\lim}_w[F(Y_t^{\lim})]$. Indeed, the fact that $P_t^{\lim} F \in \mathrm{Cyl}$ can easily be checked from the following explicit representation of the process $(Y_t^{\lim})_{t \geq 0}$
\begin{equ} \label{e:YlimExplicit}
Y_t^{\lim}(\vphi) = Y_0^{\lim}(K^{\lim}_t \vphi) + \int_0^t \vec{\xi}(\dd s) \, \big(K^{\lim}_{t-s} \,  \sqrt{D^{\lim}} \, \nabla \vphi\big) \, ,
\end{equ}
where we denoted by $K^{\lim}_r \eqdef \exp(r \, \nabla \cdot D^{\lim} \nabla / 2)$ the tweaked heat semigroup.
\end{definition}

To prove Proposition~\ref{p:CVfindim}, we need to convert the convergence of semigroups at the level of the Fock space, as stated in Proposition~\ref{p:CVsemigroupFock}, to a convergence of semigroups at the level of the fluctuation process. In this regard, the key step is to replace the bridge operator $\fsi_t^\tau \colon L^2(\mathbb P) \to L^2(\nu_\rho)$ by the evaluation on $Y_t^\tau$, in the following sense: for any cylinder functional $F$, we should have
\begin{equ} \label{e:CVChaosHeuristic}
F(Z_t^\tau) \approx \fsi_t^\tau (F(\zeta)) \, .
\end{equ}
The next lemma, whose proof is postponed to Appendix~\ref{app:Cyl}, justifies this approximation.

\begin{lemma} \label{lem:CVchaos}
For every $F \in \mathrm{Cyl}$ and $t \geq 0$, it holds
\begin{equ} \label{e:CVchaos}
\|F(Z_t^\tau) - \fsi_t^\tau (F(\zeta))\|_{L^2(\nu_\rho)} \xrightarrow[\tau \to \infty]{} 0 \, .
\end{equ}
\end{lemma}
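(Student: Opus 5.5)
The plan is to reduce by density to polynomial cylinder functionals, and then to compare the two sides chaos by chaos.

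\emph{Reduction.} Since $Z_t^\tau = Z_0^\tau \circ \fT$ with a shift that is absorbed into $\fsi_t^\tau = \fsi_0^\tau \fT_{-\tau t v}$, I would first assume $t=0$ (the shift $\fT_{-\tau t v}$ preserves all the norms used below). The map $F \mapsto F(Z_0^\tau) - \fsi_0^\tau(F(\zeta))$ is uniformly bounded in $\tau$ in the following sense. The bridge $\fsi_0^\tau = \fsR\fsP\fU_0^\tau$ is a contraction from $L^2(\mathbb P)$ to $L^2(\nu_\rho)$, by Lemma~\ref{lem:BoundR} and the fact that $\fsP$ is a contraction. Moreover, the polynomial moments of $Z_0^\tau(\vphi_i)$ are bounded uniformly in $\tau$ by~\eqref{e:Concentration}. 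Hence, if $f$ is approximated by polynomials $f_k$ in $L^2$ with respect to a weight that dominates both the laws of $(Z_0^\tau(\vphi_i))_i$, uniformly in $\tau$, and the Gaussian law of $(\zeta(\vphi_i))_i$, then $F$ can be replaced by $f_k(w(\vphi_1),\dots,w(\vphi_m))$ at uniformly small cost. One natural choice is a truncation plus Weierstrass argument on compacts, together with tail bounds from higher moments. It therefore suffices to treat monomials $F(w)=\prod_{i=1}^m w(\vphi_i)$. One could also take $\vphi_i$ with Fourier transform compactly supported, after another density step; I expect this not to be needed.

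\emph{Monomials.} On the continuum side, $\prod_i \zeta(\vphi_i)$ has the explicit Wick expansion: the sum over partial pairings $\pi$ of $\{1,\dots,m\}$ of $\prod_{\{i,j\}\in\pi}\langle\vphi_i,\vphi_j\rangle$ times the symmetrised tensor of the unpaired $\vphi$'s, placed in the chaos of degree $m-2|\pi|$. Applying $\fsi_0^\tau$ gives kernels $\fsR\fsP\,\fU_0^\tau(\otimes_{\text{unpaired}}\vphi_l)$. On the discrete side, write $Z_0^\tau(\vphi_i)=\sum_x \psi_i(x)\sigma(x)$ with $\psi_i = \det(\Sigma^\tau)^{-1/2}\vphi_i((\Sigma^\tau)^{-1}\cdot)$. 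Expanding the product, one uses $\sigma(x)^2 = 1 + c_\rho\,\sigma(x)$ with $c_\rho=(1-2\rho)/\sqrt{\rho(1-\rho)}$, and more generally reduces $\sigma(x)^k$ to the span of $\{1,\sigma(x)\}$. This yields a chaos expansion indexed by set partitions of $\{1,\dots,m\}$ into blocks, where each block corresponds to coinciding sites.

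\begin{itemize}
\item Blocks of size $1$ with all sites distinct reproduce exactly $\fsR$ of the restriction $\otimes \psi_l|_{\Z^2}$.
\item Blocks of size $2$ contributing the constant $1$ produce the factor $\sum_x\psi_i(x)\psi_j(x)$. This factor converges to $\langle \vphi_i,\vphi_j\rangle$ by a Riemann sum argument, as in the proof of Lemma~\ref{lem:GKneat}.
\item All remaining contributions carry an extra factor of order $\det(\Sigma^\tau)^{-1/2}\to0$ in $\ell^2$. These are blocks of size $\ge 3$, blocks of size $2$ contributing $c_\rho\sigma(x)$, and the corrections needed to restore distinctness of sites. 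For instance, a block of size $k\ge2$ that stays in the chaos contributes the kernel $x\mapsto\prod\psi_i(x)$, whose $\ell^2$ norm is $O(\det(\Sigma^\tau)^{-(k-1)/2})$.
\end{itemize}

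It then remains to compare the restriction $\psi|_{\Z^{2n}}$ with $\fsP\fU_0^\tau \vphi^{\otimes}$. By Poisson summation~\eqref{e:PoissonSummation}, their discrete Fourier transforms differ by the aliased terms $\sum_{k\neq 0}\fF(\fU_0^\tau\phi)(p+2\pi k)$. These terms vanish in $L^2$ as $\tau\to\infty$, because $\fF(\fU_0^\tau \phi)$ concentrates on modes $|p|\lesssim\tau^{-1/2}$ and $\phi$ is Schwartz.

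\emph{Main obstacle.} The main difficulty is the first reduction. Cylinder functionals are only continuous with polynomial growth, so the density argument must be uniform in $\tau$. This needs controls on the laws of $Z_0^\tau(\vphi_i)$ beyond moments, for instance subgaussian tails (cf. Lemma~\ref{lem:Subgaussian}), so that truncation errors are small uniformly in $\tau$. The combinatorics on the diagonal is routine once organised by set partitions, with at most $m$-dependent constants.
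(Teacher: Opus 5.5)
Your proposal is correct and follows essentially the same route as the paper. The shared steps are: reduction to polynomial cylinder functionals via the subgaussian bounds of Lemma~\ref{lem:Subgaussian} (as you eventually note, uniform moment bounds alone would not control a truncation-plus-Weierstrass approximation); Poisson summation to replace $\fsP\fU_t^\tau$ by lattice restriction; and set-partition combinatorics in which pair blocks give $\langle\vphi_i,\vphi_j\rangle$ by Riemann sums while larger blocks vanish. The difference is only organisational: the paper polarises to Hermite polynomials $H_n(w(h),\|h\|_{L^2(\R^2)})$, so the Gaussian side is the single chaos $\fI(h^{\otimes n})$, and uses M\"obius inversion to write the hard-core restriction as a polynomial in the power sums $\sum_x h_t^\tau(x)^k\sigma(x)^k$. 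You instead expand products of linear statistics into Bernoulli chaoses and match them with the Wick product formula, which costs some extra bookkeeping for the distinctness corrections.
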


\begin{remark}
Intuitively, this lemma tells us that the Bernoulli and Gaussian chaos decompositions are consistent at large space scales. To better see why, we focus on the following example: let $F(w) = H_n(w(h))$ (where $H_n(X) := H_n(X, 1)$ is the $n$-th Hermite polynomial as in~\eqref{e:Hermite}) for some $n \geq 0$ and $h \in \cS(\R^2)$ such that $\|h\|_{L^2(\R^2)} = 1$. Note that, by definition of the isometry $\fI \colon \fock{}{} \to L^2(\mathbb P)$, we have $F(\zeta) = \fI(h^{\otimes n})$. Writing $h_t^\tau := \fU_t^\tau h$ and granting the elementary approximation $\fsP h_t^\tau \approx \restriction{h_t^\tau}{\Z^2}$, we then deduce
\begin{equ} \label{e:ConsistencyExample}
\fsi_t^\tau (F(\zeta)) = I \, \fsR \fsP \, (h_t^\tau)^{\otimes n} \approx I (\restriction{(h_t^\tau)^{\otimes n}}{\Z^{2n} \cap HC}) = \sum_{x_{1:n} \in \Z^{2n} \cap HC} h_t^\tau(x_1) \dots h_t^\tau(x_n) \, \sigma(x_1) \dots \sigma(x_n) \, .
\end{equ}
If we omit the hardcore condition in the last sum of~\eqref{e:ConsistencyExample}, then we obtain $Z_t^\tau(h)^n$. But the missing $x_{1:n} \in \Z^{2n} \cap DI$ cannot be neglected: more precisely, they result in terms of smaller degree (depending on the ``collision diagram'' of $x_{1:n}$), which eventually leads to a linear combination of smaller powers of $Z_t^\tau(h)$. What Lemma~\ref{lem:CVchaos} really tells us is that the combinatorics behind the ``collision diagrams'' matches, in the limit $\tau \to \infty$, the coefficients $c_{k,n}$ of the Hermite polynomial (recall~\eqref{e:Hermite}), so that~\eqref{e:ConsistencyExample} eventually approximates $H_n(Z_t^\tau(h)) = F(Z_t^\tau(h))$.
\end{remark}
\ss

Now, from Proposition~\ref{p:CVsemigroupFock} and Lemma~\ref{lem:CVchaos}, we can easily deduce the following convergence of semigroups, at the level of the fluctuation process.

\begin{lemma} \label{lem:CVsemigroupCyl}
For every $F \in \mathrm{Cyl}$, $t_0, t \geq 0$ and $p \in [1,\infty)$, it holds
\begin{equ} \label{e:CVsemigroupCyl}
\|P_{\tau t} (F(Z^{\tau}_{t_0 + t})) - (P_t^{\lim} F) (Z_{t_0}^\tau)\|_{L^p(\nu_\rho)} \xrightarrow[\tau \to \infty]{} 0 \, .
\end{equ}
\end{lemma}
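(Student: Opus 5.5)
The plan is to combine Proposition~\ref{p:CVsemigroupFock} with Lemma~\ref{lem:CVchaos}, first in $L^2(\nu_\rho)$ and then in $L^p$ by uniform integrability. Fix $F \in \mathrm{Cyl}$, $t_0, t \geq 0$, and set $\Phi \eqdef F(\zeta) \in L^2(\mathbb P) \simeq \fock{}{}$; polynomial growth of $f$ and Gaussianity of $\zeta$ ensure $\Phi \in \fock{}{}$. By Definition~\ref{def:CylinderFunctional}, $P_t^{\lim} F \in \mathrm{Cyl}$, and $(P_t^{\lim}F)(\zeta) = P_t^{\lim}\Phi$ as elements of $L^2(\mathbb P)$; this is just the definition of the semigroup~\eqref{e:defPlim} on $L^2(\mathbb P)$.

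We decompose
\begin{equs}
P_{\tau t}(F(Z^\tau_{t_0+t})) - (P_t^{\lim}F)(Z^\tau_{t_0}) &= P_{\tau t}\big(F(Z^\tau_{t_0+t}) - \fsi^\tau_{t_0+t}\Phi\big) + \big(P_{\tau t}\fsi^\tau_{t_0+t}\Phi - \fsi^\tau_{t_0}P_t^{\lim}\Phi\big) \\
&\quad + \big(\fsi^\tau_{t_0}(P^{\lim}_t\Phi) - (P_t^{\lim}F)(Z^\tau_{t_0})\big) \, .
\end{equs}
The first term is bounded in $L^2(\nu_\rho)$, using the contractivity of $P_{\tau t}$, by $\|F(Z^\tau_{t_0+t}) - \fsi^\tau_{t_0+t}\Phi\|$. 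This tends to $0$ by Lemma~\ref{lem:CVchaos} applied at time $t_0+t$. The second term tends to $0$ by~\eqref{e:CVsemigroupFock}, which applies to any $\Phi \in \fock{}{}$. The third term tends to $0$ by Lemma~\ref{lem:CVchaos} applied to the cylinder functional $P_t^{\lim}F$ at time $t_0$. This gives the $L^2$ case, hence also the case $p \in [1,2]$.

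For $p > 2$, we upgrade via uniform integrability. It suffices to bound the $L^{2p}(\nu_\rho)$ norm of the left-hand difference uniformly in $\tau$ and then interpolate between $L^2$ and $L^{2p}$ with Hölder. By Jensen's inequality, $|P_{\tau t}G|^{2p} \leq P_{\tau t}|G|^{2p}$, and by $\nu_\rho$-stationarity $\nu_\rho(|P_{\tau t}(F(Z^\tau_{t_0+t}))|^{2p}) \leq \nu_\rho(|F(Z^\tau_{t_0+t})|^{2p})$. The latter is bounded uniformly by the polynomial growth of $f$ together with~\eqref{e:Concentration}.

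Similarly, $P_t^{\lim}F$ is given by $w \mapsto \mathbf E^{\lim}_w[f(Y_t^{\lim}(\vphi_1),\dots)]$, so $|(P^{\lim}_tF)(Z^\tau_{t_0})|^{2p}$ is controlled by a polynomial in the quantities $\mathbf E^{\lim}_{Z^\tau_{t_0}}[|Y^{\lim}_t(\vphi_i)|^{q}]$. Their $\nu_\rho$-moments are uniformly bounded by the last part of~\eqref{e:Concentration}. Equivalently, by~\eqref{e:YlimExplicit} they are Gaussian moments of $Z^\tau_{t_0}(K_t^{\lim}\vphi_i)$ plus constants.

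The main point needing care is the identification $(P^{\lim}_tF)(\zeta) = P^{\lim}_t(F(\zeta))$ and the membership $P_t^{\lim}F \in \mathrm{Cyl}$ with polynomial growth. Both follow from the explicit Gaussian representation~\eqref{e:YlimExplicit}: $P_t^{\lim}F(w) = \tilde f(w(K_t^{\lim}\vphi_1),\dots,w(K_t^{\lim}\vphi_m))$, where $\tilde f$ is the convolution of $f$ with a Gaussian, hence continuous with polynomial growth, and $K_t^{\lim}\vphi_i \in \cS(\R^2)$. The rest of the argument is a routine assembly of the cited results.
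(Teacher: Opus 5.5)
Your proof is correct and follows the paper's argument essentially verbatim. The $L^2$ case comes from the same three-term splitting, handled by Proposition~\ref{p:CVsemigroupFock} and by Lemma~\ref{lem:CVchaos} (the second time for the cylinder functional $P_t^{\lim}F$); the extension to general $p$ is obtained, as in the paper, by H\"older interpolation against higher-moment bounds uniform in $\tau$, which follow from Jensen, stationarity, polynomial growth and~\eqref{e:Concentration}. The only cosmetic slip is calling the $\nu_\rho$-moments of $Z^\tau_{t_0}(K_t^{\lim}\vphi_i)$ ``Gaussian'': they are merely sub-Gaussian, which is all you need.
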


\begin{proof}[of Lemma~\ref{lem:CVsemigroupCyl}]

By using the triangle inequality, we obtain
\begin{equs}
\|P_{\tau t}(F(Z^{\tau}_{t_0 + t})) - (P_t^{\lim} F)(Z_{t_0}^\tau)\|_{L^2(\nu_\rho)} &\leq \|P_{\tau t} \fsi_{t_0 + t}^\tau (F(\zeta)) - \fsi_{t_0}^\tau P_t^{\lim} (F(\zeta))\|_{L^2(\nu_\rho)} \\
&\quad + \|P_{\tau t} [F(Z^{\tau}_{t_0 + t}) - \fsi_{t_0 + t}^\tau (F(\zeta))]\|_{L^2(\nu_\rho)} \\
&\quad + \|(P_t^{\lim} F) (Z^\tau_{t_0}) - \fsi_{t_0}^\tau \, ((P_t^{\lim} F)(\zeta))\|_{L^2(\nu_\rho)} \\
&\xrightarrow[\tau \to \infty]{} 0 \, ,
\end{equs}
where we used Proposition~\ref{p:CVsemigroupFock} to deal with the first summand and Lemma~\ref{lem:CVchaos} (together with the contractivity of $P_{\tau t}$) to deal with the two last. This shows the desired statement~\eqref{e:CVsemigroupCyl} for $p = 2$. But note that, for any $q \geq 1$,
\begin{equ}
\|P_{\tau t} (F(Z^{\tau}_{t_0 + t})) - (P_t^{\lim} F) (Z_{t_0}^\tau)\|_{L^q(\nu_\rho)} \leq \nu_\rho[|F(Z^{\tau}_{t_0 + t})|^q]^{1/q} + \mathbf E_{\nu_\rho}[\mathbf E^{\lim}_{Y_{t_0}^\tau}[|F(Y_{t}^{\lim})|^q]]^{1/q} \, ,
\end{equ}
which is uniformly bounded in $\tau$, because of~\eqref{e:Concentration} and the at most polynomial growth of cylinder functionals. Therefore, by using Hölder's inequality, we can extend the statement~\eqref{e:CVsemigroupCyl} from $p = 2$ to any $p \geq 1$, which concludes the proof.
\end{proof}

Finally, by iterating Lemma~\ref{lem:CVsemigroupCyl} and the Markov property, we can easily deduce the convergence of finite-dimensional laws as stated in Proposition~\ref{p:CVfindim}.

\begin{proof}[of Proposition~\ref{p:CVfindim}]

First, from the concentration bounds~\eqref{e:Concentration} (and the $\nu_\rho$-stationarity of the ASEP, the $\mathbb P$-stationarity of $Y_\cdot^{\lim}$), we immediately deduce that, for any $\vphi \in \cS(\R^2)$, $t \geq 0$, and $q \geq 1$,
\begin{equ}
\sup_{\tau \geq 1} \mathbf E_{\nu_\rho}[|Y_t^\tau(\vphi)|^q] \vee \mathbf E_{\nu_\rho}[\mathbf E_{Y_0^\tau}^{\lim}[|Y_t^{\lim}(\vphi)|^q]] \vee \mathbf E_{\mathbb P}^{\lim}[|Y_t^{\lim}(\vphi)|^q] < \infty \, .
\end{equ}
Consequently, an easy density/approximation argument shows that it is enough to prove Proposition~\ref{p:CVfindim} for a subset of functions $f$ whose span is dense in $\cC_0(\R^n, \R)$. Hence, we can assume without loss of generality that $f$ is of the form $f(y_1, \dots, y_n) = f_1(y_1) \dots f_n(y_n)$ with functions $f_i \in \cC_0(\R, \R)$.
\ss

We denote $F_i = f_i(\cdot(\vphi_i)) \in \mathrm{Cyl}$, and let $t_0 := 0$. Let us define iteratively, for $k \in \{0, \dots, n\}$, the functionals $G_k^\tau \colon \Omega \to \R$ and $G_k^{\lim} \in \mathrm{Cyl}$ as follows: $G_n^\tau = G_n^{\lim} = 1$ and for $k \in \{0, \dots, n-1\}$
\begin{equ} \label{e:MarkovIteration}
G_k^\tau := P_{\tau (t_{k+1} - t_k)} \big(F_{k+1}(Z_{t_{k+1}}^\tau) \times G_{k+1}^\tau\big) \, , \quad G_k^{\lim} := P^{\lim}_{t_{k+1} - t_k} \big(F_{k+1} \times G_{k+1}^{\lim}\big) \, .
\end{equ}
Note that these functionals are uniformly bounded by $C := (1 \vee \|f_1\|_\infty) \dots (1 \vee \|f_n\|_\infty)$. The interest of this definition is that, by the Markov property, we have
\begin{equ} \label{e:MarkovCascade}
G_0^\tau(\eta) = \mathbf E_{\eta}[F_1(Y^\tau_{t_1}) \dots F_n(Y^{\tau}_{t_n})] \, , \quad G_0^{\lim}(Z_0^\tau) = E_{Z_0^\tau}^{\lim}[F_1(Y^{\lim}_{t_1}) \dots F_n(Y^{\lim}_{t_n})] \, .
\end{equ}
Let us show, by (backward) induction on the index $k$, that $G_k^\tau - G_k^{\lim}(Z_{t_k}^\tau) \to 0$ in $L^p(\nu_\rho)$ for every $p \geq 1$. For the index $n$, this is trivial since $G_n^\tau \equiv 1 \equiv G_n^{\lim}$. Now, assume that the property holds for index $k+1$ and let us prove it for index $k$. Observe that
\begin{equs}
G_k^\tau - G_k^{\lim}(Z_k^\tau) &= P_{\tau (t_{k+1} - t_k)} \, \big((F_{k+1} \times G_{k+1}^{\lim})(Z_{t_{k+1}}^\tau)\big) - \big(P^{\lim}_{t_{k+1} - t_k} (F_{k+1} \times G_{k+1}^{\lim})\big) (Z_{t_k}^\tau) \\
&\quad + P_{\tau (t_{k+1} - t_k)} \, \big(F_{k+1}(Z_{t_{k+1}}^\tau) \times (G_{k+1}^\tau - G_{k+1}^{\lim}(Z_{t_{k+1}}^\tau))\big) \, .
\end{equs}
Notice that the two lines on the right hand side converge to $0$ in $L^p(\nu_\rho)$ as $\tau \to \infty$: for the first line this follows from Lemma~\ref{lem:CVsemigroupCyl} (applied to the cylinder functional $F_{k+1} \times G_{k+1}^{\lim}$), and for the second line this follows from the $L^p(\nu_\rho)$-contractivity of the ASEP semigroup, the bound $\|F_{k+1}\|_\infty \leq C$ and the induction hypothesis $\|G_{k+1}^\tau - G_{k+1}^{\lim}(Z_{t_{k+1}}^\tau)\|_{L^p(\nu_\rho)} \to 0$. This concludes the proof of the induction: we deduce that $\|G_0^\tau - G_0^{\lim}(Z_0^\tau)\|_{L^p(\nu_\rho)} \to 0$, which, in view of~\eqref{e:MarkovCascade}, is exactly the desired statement~\eqref{e:CVfindimQuenched}.
\ss

Finally, we prove~\eqref{e:CVfindimAnnealed}. By using the trivial bound
\begin{equs}
    \Big|\mathbf E_{\nu_\rho}&[f(Y^\tau_{t_1}(\vphi_1), \dots, Y^\tau_{t_n}(\vphi_n))] - \int_{\Omega} \nu_\rho(\dd \eta) \, \mathbf E_{Z_0^\tau(\eta)}^{\lim}[f(Y^{\lim}_{t_1}(\vphi_1), \dots, Y^{\lim}_{t_n}(\vphi_n))]\Big| \\
    &\leq \int_{\Omega} \nu_\rho(\dd \eta) \, \Big|\mathbf E_{\eta}[f(Y^\tau_{t_1}(\vphi_1), \dots, Y^\tau_{t_n}(\vphi_n))] - \mathbf E_{Z_0^\tau(\eta)}^{\lim}[f(Y^{\lim}_{t_1}(\vphi_1), \dots, Y^{\lim}_{t_n}(\vphi_n))]\Big| \, ,
\end{equs}
and since we have already proved~\eqref{e:CVfindimQuenched}, we see that it only remains to show that, as $\tau \to \infty$, the process $Y_\cdot^{\lim}$ started from $Z_0^\tau$ (with $\eta \sim \nu_\rho$) converges in law to the stationary process $\mathbf P_{\mathbb P}^{\lim}$ in $C(\R_+, \cS'(\R^2))$. But this is a straightforward consequence of the convergence in law of $Z_0^\tau$ to $\mathbb P$, and the explicit representation~\eqref{e:YlimExplicit} for the process $Y^{\lim}_\cdot$. This concludes the proof.
\end{proof}

\subsection{Tightness}

\label{sec:Tightness}

In this section, we show the tightness of the fluctuation process.

\begin{proposition} \label{p:Tightness}
The laws of $\{Y_\cdot^\tau\}_{\tau \geq 1}$ under $\mathbf P_{\nu_\rho}$ are tight in $D(\R_+, \cS'(\R^2))$.
\end{proposition}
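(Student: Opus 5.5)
By Mitoma's criterion, tightness of $\{Y^\tau_\cdot\}$ in $D(\R_+,\cS'(\R^2))$ reduces to tightness of the real-valued processes $\{Y^\tau_\cdot(\vphi)\}_{\tau\ge1}$ in $D(\R_+,\R)$ for each fixed $\vphi\in\cS(\R^2)$. Moreover, one may restrict to a finite horizon $[0,T]$. The one-time marginals are already tight by the concentration bound~\eqref{e:Concentration}. I would use Aldous' criterion for the martingale part and Kolmogorov--Chentsov for the continuous parts. Using Remark~\ref{rmk:Utau}, write $Y_t^\tau(\vphi)=\fsi_t^\tau\vphi(\eta_{\tau t})$ up to the harmless replacement of $\fsP\fU_t^\tau\vphi$ by its restriction, controlled as in Lemma~\ref{lem:CVchaos}. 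Dynkin's decomposition then splits the process into
\begin{equ}
Y_t^\tau(\vphi)=Y_0^\tau(\vphi)+M_t^\tau(\fsi^\tau_\cdot\vphi)+W_t^\tau\big((\partial_\cdot+\tau(S+A_-+A_0))\fsi^\tau_\cdot\vphi\big)+W_t^\tau(\tau A_+\fsi^\tau_\cdot\vphi)\,.
\end{equ}
For the martingale, I would apply~\eqref{e:AldousM} together with~\eqref{e:SimpleH1} (with $\alpha=\alpha(p)$ for some fixed $p>2$). This gives $\mathbf E|M^\tau_{\rm T_1,\rm T_2}|^2\lesssim\delta^{1-2/p}$ uniformly in $\tau$, which is exactly Aldous' condition. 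For the third term, I would apply~\eqref{e:KolmogorovI} with $p>2$. Since $\|(|t-s|^{-1}-\tau S)^{-1/2}g\|\le\|(1-\tau S)^{-1/2}g\|$ for $|t-s|\le1$, Lemma~\ref{lem:SimpleGen} (namely~\eqref{e:SimpleS},~\eqref{e:SimpleA-},~\eqref{e:SimpleA0}, plus the bounded term $D_0[\ff,\ff]\fsi^\tau\fL_0^\ff\vphi$) yields $\mathbf E|W_{s,t}|^p\lesssim|t-s|^{p/2}$ uniformly in $\tau$. Kolmogorov then gives tightness in $C$.

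The main obstacle is the $A_+$ term, for which~\eqref{e:KolmogorovI} fails by a factor $\log(\tau)^{1/6}$. For each pair $s<t$ in $[0,T]$ with $h:=t-s\le 1$, I would apply the fluctuation--dissipation relation of Proposition~\ref{p:AnsatzA+ASEP} with the window-dependent parameter $\mu:=(\tau h)^{-1}$, so that $\mu\tau=h^{-1}\ge1$. On $[s,t]$ this gives
\begin{equ}
W^\tau_{s,t}(\tau A_+\fsi^\tau\vphi)=W^\tau_{s,t}(\mathcal E)+c_{\tau,\mu}\,W^\tau_{s,t}(\fsi^\tau\fL_0^\fe\vphi)+M^\tau_{s,t}(\mathrm{Cor}^{\tau,\mu})-\big[\mathrm{Cor}^{\tau,\mu}_t(\eta_{\tau t})-\mathrm{Cor}^{\tau,\mu}_s(\eta_{\tau s})\big],
\end{equ}
where $c_{\tau,\mu}=d^{\lim}\log_*(1/\mu)^{2/3}/\log_*(\tau)^{2/3}\lesssim 1+\log_*(1/h)$ and $\mathcal E$ is the error in~\eqref{e:AnsatzA+GenASEP}. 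Each piece is then bounded in $L^p$, with $\alpha=\alpha(p)<\sqrt{1+6\eps}$, choosing $p$ close to $2$.
\begin{itemize}
\item The error term is handled by~\eqref{e:KolmogorovI} with exactly the norm $(\mu\tau-\tau S)^{-1/2}$ appearing in~\eqref{e:AnsatzA+GenASEP}, giving $\lesssim h^{p/2}$.
\item The dissipation term is handled by the crude bound (stationarity and hypercontractivity), giving $\lesssim c^p h^p\lesssim h^{p/2}$.
\item The corrector martingale is handled by~\eqref{e:KolmogorovM}. Its norm $(h^{-1}-\tau S)^{1/2}$ combines~\eqref{e:AnsatzA+L2ASEP} (where $h^{-1/2}\|\cdot\|=\sqrt{\mu\tau}\|\cdot\|$ is bounded) with~\eqref{e:AnsatzA+H1ASEP}, giving $\lesssim h^{p/2}$.
\item The boundary terms are handled by stationarity, hypercontractivity~\eqref{e:UltHyper} and~\eqref{e:AnsatzA+L2ASEP}, giving $\lesssim (\mu\tau)^{-p/2}=h^{p/2}$.
\end{itemize}
Altogether $\mathbf E|W^\tau_{s,t}(\tau A_+\fsi^\tau\vphi)|^p\lesssim h^{p/2}$ uniformly in $\tau$. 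With $p>2$, this is the Kolmogorov--Chentsov condition for the continuous additive functional. The delicate points are, first, checking that every norm in Proposition~\ref{p:AnsatzA+ASEP} is compatible with the weight $\alpha(p)^N$; this is why $p$ must be taken close to $2$, and the polynomial weights $C_{\alpha,\kappa}$ are finite for Schwartz $\vphi$. Second, the estimates of Proposition~\ref{p:TightnessBlackBox} are for fixed paths $f_\cdot$, so they are applied window by window with a corrector depending on $(s,t)$, which is legitimate since Kolmogorov only needs pairwise increment bounds. Summing the three contributions, and noting that a sum of tight sequences each satisfying Aldous' or Kolmogorov's criterion is tight in $D$, concludes the proof.
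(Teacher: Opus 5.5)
Your proposal follows the paper's proof essentially step by step. The ingredients match: Mitoma's criterion, Dynkin's decomposition, Aldous' criterion for the martingale via \eqref{e:AldousM} and \eqref{e:SimpleH1}, and Kolmogorov--Chentsov for the drift terms via \eqref{e:KolmogorovI} and Lemma~\ref{lem:SimpleGen}. For the $A_+$ term you use the same window-dependent choice $\mu=(\tau|t-s|)^{-1}$ and the same four-piece bound (error, dissipation, corrector martingale, boundary terms) from Proposition~\ref{p:AnsatzA+ASEP}.

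The one step stated too loosely is replacing the restriction of $\fU_t^\tau\vphi$ to $\Z^2$ by $\fsi_t^\tau\vphi$. A fixed-time $L^2$ bound as in Lemma~\ref{lem:CVchaos} does not by itself control the difference as a process in $D$. As in the paper's Step~1, you should apply Dynkin's decomposition to the error path and bound its additive-functional and martingale parts with \eqref{e:KolmogorovI} and \eqref{e:AldousM}, using the $O(\tau^{-m})$ estimate \eqref{e:ReplacementRP}.
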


\begin{proof}
By Mitoma's criterion~\cite{Mitoma}, it is sufficient to show that, for every $\vphi \in \cS(\R^2)$, the laws of $\{Y_\cdot^\tau(\vphi)\}_{\tau \geq 1}$ under $\mathbf P_{\nu_\rho}$ are tight in $D(\R_+, \R)$. Recall that we have $Y_t^\tau(\vphi) = \restriction{(\fU_t^\tau \vphi)}{\Z^2} (\eta_{\tau t})$. Then, we write the Dynkin decomposition (recall Definition~\ref{def:MI})
\begin{equ}
Y_t^\tau(\vphi) = Y_0^\tau(\vphi) + W_t^\tau\big((\partial_\cdot + \tau L) \restriction{(\fU_\cdot^\tau \vphi)}{\Z^2}\big) + M_t^\tau\big(\restriction{(\fU_\cdot^\tau \vphi)}{\Z^2}\big) \, .
\end{equ}
The first summand is trivially tight (it is constant in time and converges in law to $\cN(0, \|\vphi\|_{L^2(\R^2)}^2)$). As for the two last summands, we show that they satisfy, respectively, the Kolmogorov-Chentsov\footnote{We use the Kolmogorov-Chentsov criterion rather than Aldous' criterion primarily because it provides much better control over the delicate boundary terms produced by the fluctuation-dissipation relation.} and Aldous' tightness criteria (see~\cite[Th. 16.10]{Bil99} for the latter). More precisely, we prove that there exists $p > 2$ such that for any $\tau \geq 1$, $0 \leq s \leq t \leq T$, $\delta > 0$, ${\rm T}_1, {\rm T}_2$ two $(\cF_{\tau t})_{t \geq 0}$-stopping times taking values in $[0,T]$ such that ${\rm T}_1 \leq {\rm T}_2 \leq {\rm T}_1 + \delta$, it holds
\begin{equs}
\mathbf E_{\nu_\rho} \Big[\big|W_{s,t}^\tau\big((\partial_\cdot + \tau L) \restriction{(\fU_\cdot^\tau \vphi)}{\Z^2}\big)\big|^p\Big] &\lesssim_{T, \vphi} |t-s|^{p/2} \, , \label{e:TightI} \\
\mathbf E_{\nu_\rho} \Big[\big|M_{{\rm T}_1, {\rm T}_2}^\tau \big(\restriction{(\fU_\cdot^\tau \vphi)}{\Z^2}\big)\big|^2\Big] &\lesssim_{T, \vphi} \delta^{1 - 2/p} \, . \label{e:TightM}
\end{equs}
Consequently, we deduce that the laws of $\{W_\cdot^\tau\big((\partial_\cdot + \tau L) \restriction{(\fU_\cdot^\tau \vphi)}{\Z^2}\big)\}_{\tau \geq 1}$ and $\{M_t^\tau\big(\restriction{(\fU_\cdot^\tau \vphi)}{\Z^2}\big)\}_{\tau \geq 1}$ are tight in $C(\R_+, \R)$ and $D(\R_+, \R)$ respectively, which implies the desired statement.
\ss

We divide the proof of~\eqref{e:TightI} and~\eqref{e:TightM} into three steps. First, we show that we can replace $\restriction{(\fU_\cdot^\tau \vphi)}{\Z^2}$ by $\fsi_\cdot^\tau \vphi$. Then, we deal with the diffusive contributions $W^\tau((\partial_\cdot + \tau L - \tau A_+) \fsi_\cdot^\tau \vphi)$ and $M^\tau(\fsi_\cdot^\tau \vphi)$. Finally, we tackle the superdiffusive contribution $W_\cdot^\tau(\tau A_+ \fsi_\cdot^\tau \vphi)$. For the upcoming estimates, it is convenient to fix any $\eps \in (0,1/6)$, $\alpha \in (1, \sqrt{1 + 6 \eps})$ and $p > 2$ such that the constant $\alpha(p)$ coming from Lemma~\ref{lem:UltimateHypercontractivity} is smaller than $\alpha$. We do not track below the dependence of the hidden constants in $\lesssim$ on $\eps$, $\alpha$ and $p$.
\ss

{\noindent \it Step 1. The replacement with the bridge $\fsi_\cdot^\tau$}
\ss

Let us write $\mathrm{Err}_r^\tau(\psi) := \restriction{(\fU_r^\tau \psi)}{\Z^2} - \fsi_r^\tau \psi$ for any $\psi \in \cS(\R^2)$. We show that, for any $m \geq 0$, we have
\begin{equs}
\mathbf E_{\nu_\rho} \Big[\big|W_{s,t}^\tau\big((\partial_\cdot + \tau L) \mathrm{Err}_\cdot^\tau(\vphi)\big)\big|^p\Big] &\lesssim_{T, \vphi, m} \tau^{-m} \, |t-s|^{p/2} \, , \label{e:TightIRepl} \\
\mathbf E_{\nu_\rho} \Big[\big|M_{{\rm T}_1, {\rm T}_2}^\tau \big(\mathrm{Err}_r^\tau(\vphi)\big)\big|^2\Big] &\lesssim_{T, \vphi, m} \tau^{-m} \, \delta^{1 - 2/p} \, , \label{e:TightMRepl}
\end{equs}
which is much stronger than needed. In fact, these estimates are easy consequences of Proposition~\ref{p:TightnessBlackBox} and the following claim: for any $\psi \in \cS(\R^2)$ and $m \geq 2$, we have
\begin{equ} \label{e:ReplacementRP}
\|\mathrm{Err}_r^\tau(\psi)\| \, \lesssim \tau^{-m} \, \|(- \fL_0)^{m} \, \psi\| \, .
\end{equ}

We start by showing~\eqref{e:ReplacementRP}. By using the Poisson summation formula~\eqref{e:PoissonSummation} and the definition of $\fsi_r^\tau$ (and noticing that the hardcore and softcore first chaoses coincide: $\fockd{1}{} = \fockdo{1}{}$), we obtain for any $k \in \square:=(- \pi, \pi)^2$
\begin{equ}
\cF(\mathrm{Err}_r^\tau(\psi)) \, (k) = \sum_{\ell \in 2 \pi \Z^2 \setminus \{0\}} \fF(\fU_r^\tau \psi) \, (k + \ell) \, .
\end{equ}
In particular, we deduce
\begin{equ}
\|\mathrm{Err}_r^\tau(\psi)\| \leq \sum_{\ell \in 2 \pi \Z^2 \setminus \{0\}} \|\fF(\fU_r^\tau \psi)\|_{L^2(\ell + \square)} \lesssim_m \sum_{\ell \in 2 \pi \Z^2 \setminus \{0\}} |\ell|^{-2m} \, \|(- \fL_0)^m \, \fU_r^\tau \psi\| \lesssim_m \|(- \fL_0)^m \, \fU_r^\tau \psi\| \, .
\end{equ}
Then, by using the definition of $\fU_r^\tau$ and the bound $|(\Sigma^\tau)^{-1} \, k|^2 \leq \tau^{-1} \, |k|^2$, we obtain $\|(- \fL_0)^m \, \fU_r^\tau \psi\| \leq \tau^{-m} \|(- \fL_0)^m \, \psi\|$ and deduce the claim~\eqref{e:ReplacementRP}.
\ss

Now, note that we deduce from~\eqref{e:ReplacementRP} the crude bounds
\begin{equ} \label{e:ReplacementRPCrude}
    \|(-\tau S)^{1/2} \, \mathrm{Err}_r^\tau(\psi)\| \lesssim_{m, \psi} \tau^{1/2 - m} \, , \quad \|\tau L \, \mathrm{Err}_r^\tau(\psi)\| \lesssim_{m, \psi} \tau^{1 - m} \, , \quad \|\partial_r \, \mathrm{Err}_r^\tau(\psi)\| \lesssim_{m, \psi} \tau^{1/2-m} \, ,
\end{equ}
where we used that $S$, $L$ are bounded from $\fockdo{1}{}$ to $\fockdo{}{}$, and we noticed that Definition~\ref{def:Utau} gives $\partial_r \mathrm{Err}_r^\tau(\psi) = - \tau^{1/2} \log_*(\tau)^{-1/3} \, \mathrm{Err}_r^\tau(v \cdot \nabla \psi)$.
\ss

Finally, we prove~\eqref{e:TightIRepl} and~\eqref{e:TightMRepl}. On the one hand, by using~\eqref{e:KolmogorovI} (with the trivial bound $\|(|t-s|^{-1} - \tau S)^{-1/2} \cdot\| \leq |t-s|^{1/2} \|\cdot\|$) and~\eqref{e:ReplacementRPCrude}, we obtain
\begin{equ} \label{e:TightIReplbis}
\mathbf E_{\nu_\rho} \Big[\big|W_{s,t}^\tau\big((\partial_\cdot + \tau L) \mathrm{Err}_\cdot^\tau(\vphi)\big)\big|^p\Big] \lesssim_{\vphi} |t-s|^{p} \, \tau^{(1-m)p} \, .
\end{equ}
On the other hand, by using~\eqref{e:AldousM} and~\eqref{e:ReplacementRPCrude}, we deduce
\begin{equ}  \label{e:TightMReplbis}
\mathbf E_{\nu_\rho} \Big[\big|M_{{\rm T}_1, {\rm T}_2}^\tau\big(\mathrm{Err}_\cdot^\tau(\vphi)\big)\big|^2\Big] \lesssim_{\vphi, T} \delta^{1-2/p} \, \tau^{1-2m} \, .
\end{equ}
Since $m$ can be chosen arbitrarily large, we deduce the desired estimates~\eqref{e:TightIRepl} and~\eqref{e:TightMRepl}.
\ms

{\noindent \it Step 2. The diffusive part of the dynamics}
\ss

Now, we turn to the diffusive contributions and show that
\begin{equs}
\mathbf E_{\nu_\rho} \Big[\big|W_{s,t}^\tau\big((\partial_\cdot + \tau L - \tau A_+) \, \fsi_\cdot^\tau \vphi\big)\big|^p\Big] &\lesssim_{T, \vphi} |t-s|^{p/2} \, , \label{e:TightIdiff} \\
\mathbf E_{\nu_\rho} \Big[\big|M_{{\rm T}_1, {\rm T}_2}^\tau (\fsi_\cdot^\tau \vphi)\big|^2\Big] &\lesssim_{T, \vphi} \delta^{1 - 2/p} \, . \label{e:TightMdiff}
\end{equs}
In fact, these bounds are easy consequences of Proposition~\ref{p:TightnessBlackBox} and Lemma~\ref{lem:SimpleGen}.
\ss

First, observe that~\eqref{e:TightMdiff} readily follows from~\eqref{e:AldousM} and~\eqref{e:SimpleH1}.

We turn to the proof of~\eqref{e:TightIdiff}. First, note that $(\partial_r + \tau L - \tau A_+) \, \fsi_r^\tau \vphi = (\tau S + \partial_r + \tau A_0) \, \fsi_r^\tau \vphi$; indeed, $A_- \fsi_r^\tau \vphi = 0$ because $\vphi \in \fockdo{1}{}$. By using~\eqref{e:KolmogorovI} (with the trivial bound $\|(|t-s|^{-1} - \tau S)^{-1/2} \cdot\| \leq \|(- \tau S)^{-1/2} \cdot\|$) and~\eqref{e:SimpleH1}, we obtain
\begin{equ} \label{e:TightIdiffS}
\mathbf E_{\nu_\rho} \Big[\big|W_{s,t}^\tau(\tau S \, \fsi_\cdot^\tau \vphi)\big|^p\Big] \lesssim_{T, \vphi} |t-s|^{p/2} \, .
\end{equ}
By using~\eqref{e:KolmogorovI} (with the trivial bound $\|(|t-s|^{-1} - \tau S)^{-1/2} \cdot\| \leq (1 \vee T)^{1/2} \, \|(1- \tau S)^{-1/2} \cdot\|$) and~\eqref{e:SimpleA0}, we deduce that, for any $\gamma \in (0,1)$,
\begin{equ} \label{e:TightIdiffA0}
\mathbf E_{\nu_\rho} \Big[\big|W_{s,t}^\tau\big((\partial_\cdot + \tau A_0) \, \fsi_\cdot^\tau \vphi\big)\big|^p\Big] \lesssim_{T, \vphi, \gamma} |t-s|^{p/2} \, \tau^{- \gamma p/2} \, .
\end{equ}
By combining~\eqref{e:TightIdiffS} and~\eqref{e:TightIdiffA0}, we deduce the desired estimate~\eqref{e:TightIdiff}.
\ms

{\noindent \it Step 3. The superdiffusive part of the dynamics}
\ss

Finally, we tackle the superdiffusive contribution and show that
\begin{equ}
    \mathbf E_{\nu_\rho} \Big[\big|W_{s,t}^\tau\big(\tau A_+ \, \fsi_\cdot^\tau \vphi\big)\big|^p\Big] \lesssim_{T, \vphi} |t-s|^{p/2} \, . \label{e:TightIA+}
\end{equ}
This is the most interesting term because it requires the use of the fluctuation dissipation relation~\eqref{e:FormalFluctuationDissipationASEP}. As mentioned before, we choose here the parameter $\mu$ as $\mu := \tau^{-1} \, |t-s|^{-1}$. Note that, without loss of generality, it is sufficient to show~\eqref{e:TightIA+} for times $s,t$ that satisfy $|t-s| \leq 1$: then, we have $\mu \tau \geq 1$ as required in Proposition~\ref{p:AnsatzA+}. Now, we write
\begin{equs}
W_{s,t}^\tau\big(\tau A_+ \, \fsi_\cdot^\tau \vphi\big) &= M_{s,t}^\tau\big(\mathrm{Cor}^{\tau, \mu}_\cdot(\vphi)\big) + d^{\lim} \frac{\log_{*}(1/\mu)^{2/3}}{\log_{*}(\tau)^{2/3}} W_{s,t}(\fsi_\cdot^\tau \fL_0^\fe \vphi) + \mathrm{Cor}^{\tau, \mu}_s(\vphi)(\eta_{\tau s}) - \mathrm{Cor}^{\tau, \mu}_t(\vphi)(\eta_{\tau t}) \\
&\quad + W_{s,t}^\tau\Big(\tau A_+ \, \fsi_\cdot^\tau \vphi + (\partial_\cdot + \tau L) \mathrm{Cor}^{\tau, \mu}_\cdot(\vphi) - d^{\lim} \frac{\log_{*}(1/\mu)^{2/3}}{\log_{*}(\tau)^{2/3}} \fsi_\cdot^\tau \fL_0^\fe \vphi\Big) \, . \label{e:TightFluctuationDissipation}
\end{equs}

To estimate the fluctuation term, we use~\eqref{e:KolmogorovM},~\eqref{e:AnsatzA+L2ASEP} and~\eqref{e:AnsatzA+H1ASEP}, which gives
\begin{equ} \label{e:TightIA+M}
\mathbf E_{\nu_\rho}\Big[\big|M_{s,t}^\tau\big(\mathrm{Cor}^{\tau, \mu}_\cdot(\vphi)\big)\big|^p\Big] \lesssim_{T, \vphi} |t-s|^{p/2} \, (\log_{*}(\tau)^{-(1/6 - \eps) p} \, |t-s|^{p/2} + 1) \lesssim_{T, \vphi} |t-s|^{p/2} \, .
\end{equ}

To estimate the dissipation term, we recall that $1/\mu \leq \tau$, use~\eqref{e:KolmogorovI} (with the trivial bound $\|(|t-s|^{-1} - \tau S)^{-1/2} \cdot\| \leq |t-s|^{1/2}$) and the $L^2$ contractivity of $\fsi_\cdot^\tau$, which leads to
\begin{equ} \label{e:TightIA+Lnew}
\mathbf E_{\nu_\rho}\Big[\Big|d^{\lim} \frac{\log_{*}(1/\mu)^{2/3}}{\log_{*}(\tau)^{2/3}} \, W_{s,t}^\tau(\fsi_\cdot^\tau \fL_0^\fe \vphi)\Big|^p\Big] \lesssim_{T, \vphi} |t-s|^{p} \, .
\end{equ}

To estimate the boundary terms, we use the $\nu_\rho$-stationarity, the hypercontractivity~\eqref{e:UltHyper} and~\eqref{e:AnsatzA+L2ASEP} (recall that $\mu \tau = |t-s|^{-1}$), which results in (for $r \in \{s,t\}$)
\begin{equ} \label{e:TightIA+bc}
\mathbf E_{\nu_\rho}\Big[\big|\mathrm{Cor}^{\tau, \mu}_r(\vphi)\big|^p\Big] = \|\mathrm{Cor}^{\tau, \mu}_\cdot(\vphi)\|_{L^p(\nu_\rho)}^p \lesssim \|\alpha^N \mathrm{Cor}^{\tau, \mu}_\cdot(\vphi)\|^p \lesssim_{\vphi} |t - s|^{p/2} \, \log_{*}(\tau)^{-p/3} \, .
\end{equ}

To estimate the error term, we use~\eqref{e:KolmogorovI} and~\eqref{e:AnsatzA+GenASEP}, which gives
\begin{equs} \label{e:TightIA+Error}
\mathbf E_{\nu_\rho}\Big[\Big|W_{s,t}^\tau\big(\tau A_+ \, \fsi_\cdot^\tau \vphi + (\partial_\cdot + \tau L) \mathrm{Cor}^{\tau, \mu}_\cdot(\vphi) - &d^{\lim} \frac{\log_{*}(1/\mu)^{2/3}}{\log_{*}(\tau)^{2/3}} \fsi_\cdot^\tau \fL_0^\fe \vphi\big)\Big|^p\Big] \\
&\lesssim_{T, \vphi} |t-s|^{p/2} \, \log_{*}(\tau)^{-(1/6 - \eps) p} \, .
\end{equs}

By combining~\eqref{e:TightIA+M},~\eqref{e:TightIA+Lnew},~\eqref{e:TightIA+bc}, and~\eqref{e:TightIA+Error}, we deduce~\eqref{e:TightIA+} and conclude the proof.
\end{proof}

\subsection{The invariance principle (proof of Theorem~\ref{th:CLT})}

\label{sec:CLTccl}

We conclude with the proof of Theorem~\ref{th:CLT}.

\begin{proof}[of Theorem~\ref{th:CLT}]

The convergence in law of the process $(Y_t^\tau)_{t \geq 0}$ under $\mathbf P_{\nu_\rho}$ to the process $(Y_t^{\lim})_{t \geq 0}$ under $\mathbf P_{\mathbb P}^{\lim}$ is a standard consequence of the convergence of the finite-dimensional laws (\eqref{e:CVfindimAnnealed} in Proposition~\ref{p:CVfindim}) and tightness (Proposition~\ref{p:Tightness}). However, deducing the ``functional convergence in law conditioned on the initial condition'' as stated in~\eqref{e:CLTFunctionalQuenched} from its finite-dimensional marginal analogue (\eqref{e:CVfindimQuenched} in Proposition~\ref{p:CVfindim}) and tightness is much less standard; hence, we give more details.
\ss

The key ingredient is the following elementary result which states that bounded continuous functionals on $D(\R_+, \cS'(\R^2))$ can always be uniformly approximated on compact subsets by space-time cylinder functionals.
\begin{lemma} \label{lem:ApproxCyl}
Let $F \colon D(\R_+, \cS'(\R^2)) \to \R$ be a bounded continuous function, $K \subset D(\R_+, \cS'(\R^2))$ be a compact subset, and $\eps > 0$. Then, there exist $n \geq 1$, $0 \leq t_1 \leq ... \leq t_n$, $\vphi_1, ..., \vphi_n \in \cS(\R^2)$, and a continuous function $g \colon \R^n \to \R$ bounded by $\|F\|_\infty$ such that, for every $y_\cdot \in K$,
\begin{equ} \label{e:ApproxCyl}
|F(y_\cdot) - g(y_{t_1}(\vphi_1), ..., y_{t_n}(\vphi_n))| < \eps \, .
\end{equ}
\end{lemma}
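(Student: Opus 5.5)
The plan is to apply the Stone--Weierstrass theorem on the compact metrisable space $K$. Consider the algebra $\mathcal A$ of functions on $K$ of the form $y_\cdot \mapsto h(y_{t_1}(\vphi_1),\dots,y_{t_n}(\vphi_n))$ with $h \in \cC(\R^n,\R)$. The first task is to check continuity: evaluation maps $y_\cdot \mapsto y_t(\vphi)$ are not continuous on all of $D(\R_+,\cS'(\R^2))$ in the Skorokhod topology. They are, however, continuous at every path continuous at $t$. To handle this I would use that $K$ is compact, so for each $\vphi$ the set of times $t$ at which some path in $K$ jumps in direction $\vphi$ with a jump of size at least $1/k$ is not obviously countable. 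A cleaner route is to avoid point evaluations and use time-averaged functionals
\begin{equ}
y_\cdot \mapsto \int_0^\infty \psi(t)\, y_t(\vphi)\, \dd t \, , \qquad \psi \in \cC_c(\R_+) \, ,
\end{equ}
which are continuous on $D(\R_+,\cS'(\R^2))$. Note that $K$ is bounded in the relevant sense by compactness, and Skorokhod convergence implies convergence at almost every time together with local uniform boundedness.

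\textbf{Step 1 (separation).} These integral functionals separate points of $D$. Two càdlàg paths with equal integrals against all $\psi$ and $\vphi$ agree almost everywhere, and hence everywhere by right-continuity. Constants belong to the algebra generated by these functionals. Stone--Weierstrass then yields a polynomial $P$ in finitely many such functionals with $|F - P(\cdot)| < \eps/2$ on $K$. Finally, composing with a truncation $x \mapsto (x \wedge \|F\|_\infty)\vee(-\|F\|_\infty)$ keeps the bound $\|F\|_\infty$ without increasing the error.

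\textbf{Step 2 (back to point evaluations).} It remains to replace each integral by a Riemann sum $\sum_j \psi(t_j)\, y_{t_j}(\vphi)\, \Delta t$. This converges uniformly on $K$ as the mesh goes to $0$, since relative compactness in Skorokhod space gives a uniform control of the modulus $w'$ and of local bounds on $\sup_t |y_t(\vphi)|$. The Riemann sum error for a càdlàg path is controlled by $w'$ up to the finitely many large jumps, which each contribute $O(\Delta t)$. Putting this together, the statement holds with $g$ a truncated continuous function of finitely many point evaluations.

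The main obstacle is this uniform Riemann-sum approximation on $K$, which requires the characterisation of compact sets in $D(\R_+,\cS'(\R^2))$. Here I would invoke Mitoma's framework: compactness reduces to compactness of the projections $y \mapsto y(\vphi)$ in $D(\R_+,\R)$ for each fixed $\vphi$. For real-valued paths, the classical criterion (uniform boundedness on $[0,T]$ and $w'_T(\delta) \to 0$) makes the Riemann-sum estimate routine.
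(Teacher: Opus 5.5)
Your proof is correct, and it takes a different route from the paper's.

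\textbf{The paper's route.} The paper never uses Stone--Weierstrass. It works with the dyadic piecewise-constant discretisations $x^{[n]}$ and a left-to-the-reader fact that $d(g^{[n]},f)<\delta$ uniformly for $g$ near $f$. It covers $K$ by finitely many neighbourhoods $U(x^{(i)},\cT^{(i)},\delta^{(i)})$, each described by finitely many test functions, and projects onto their span $V$. It then defines $g$ directly as $F$ evaluated on the path $\sum_{k,j}\alpha_{k,j}\vphi_j\1_{[k2^{-n},(k+1)2^{-n})}$. Continuity of $g$ and the bound $\|g\|_\infty\le\|F\|_\infty$ then come for free.

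\textbf{Your route.} You avoid the discontinuity of point evaluations, which is exactly the paper's stated reason for not using Stone--Weierstrass. You do this by first approximating with the time-averaged functionals $\int\psi(t)\,y_t(\vphi)\,\dd t$. These are genuinely continuous, because they factor through the continuous projection $y\mapsto y(\vphi)\in D(\R_+,\R)$ followed by a sequentially continuous map on the metrisable space $D(\R_+,\R)$. Only afterwards do you discretise. The $w'_T$-characterisation of compact subsets of $D(\R_+,\R)$ makes the Riemann sums converge uniformly on $K$: with a $\theta$-sparse partition there are at most $T/\theta+1$ bad grid intervals, contributing $O(M\,\Delta t\,(T/\theta+1))$. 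Uniform continuity of the polynomial on a neighbourhood of the compact set of values of the integrals then closes the argument.

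\textbf{Comparison.} Your approach stays entirely on $K$. You never evaluate $F$ at paths outside $K$ such as $\pi_V(y)^{[n]}$. You also never need neighbourhoods in $D(\R_+,\cS'(\R^2))$ to be controlled by finitely many test functions, nor the dyadic-sampling lemma. The price is a non-explicit $g$ and the extra truncation step.

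\textbf{Minor points.}
\begin{itemize}
\item Constants are not in the algebra generated by the linear functionals, so adjoin them explicitly before invoking Stone--Weierstrass.
\item Compactness of $\{y(\vphi):y\in K\}$ in $D(\R_+,\R)$ is just the continuous image of a compact set. Mitoma's theorem, which gives the converse direction, is not needed.
\item Metrisability of $K$ is not needed for Stone--Weierstrass.
\item Your opening discussion of jump times is superfluous and can be dropped.
\end{itemize}
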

The subtlety here is that the space-time cylinder functionals themselves are \emph{not continuous} on $D(\R_+, \cS'(\R^2))$. In particular, it is not possible to deduce Lemma~\ref{lem:ApproxCyl} from the Stone-Weierstrass theorem (contrary to the analogous statement with $C(\R_+, \cS'(\R^2))$ instead of $D(\R_+, \cS'(\R^2))$). We postpone the proof of Lemma~\ref{lem:ApproxCyl} to the end of this section.
\ss

Now, let $\eps > 0$. First, we recall that Proposition~\ref{p:Tightness} shows the tightness of the laws of $Y_\cdot^\tau$ under $\mathbf P_{\nu_\rho}$ in $D(\R_+, \cS'(\R^2))$. Second, as discussed at the end of the proof of Proposition~\ref{p:CVfindim}, we know that the process $Y_\cdot^{\lim}$ started from $Z_0^\tau(\eta)$ with $\eta \sim \nu_\rho$ converges in law to $\mathbf P^{\lim}_{\mathbb P}$ as $\tau \to \infty$ in $C(\R_+, \cS'(\R^2))$. In particular, the laws of the latter processes are also tight in $D(\R_+, \cS'(\R^2))$. Consequently, there exists a compact subset $K \subset D(\R_+, \cS'(\R^2))$ such that, for every $\tau \geq 1$,
\begin{equ} \label{e:CompactTightness}
\mathbf P_{\nu_\rho}[Y_\cdot^{\tau} \not \in K] < \eps \, , \quad \nu_\rho[\mathbf P_{Z_0^\tau(\eta)}^{\lim}[Y_\cdot^{\lim} \not \in K]] < \eps \, .
\end{equ}
Then, we apply Lemma~\ref{lem:ApproxCyl} (let us write $G(y_\cdot) := g(y_{t_1}(\vphi_1), ..., y_{t_n}(\vphi_n))$ for the corresponding space-time cylinder functional) and deduce, for every $y_\cdot \in K$,
\begin{equ} \label{e:SpaceTimeCylinderApprox}
|F(y_\cdot) - G(y_{\cdot})| < \eps \, .
\end{equ}
Then, by gathering~\eqref{e:CompactTightness} and~\eqref{e:SpaceTimeCylinderApprox} (and recalling that $\|G\|_\infty \leq \|F\|_\infty$), we immediately deduce
\begin{equs}
\int_\Omega \dd \nu_\rho(\eta) \, \big|\mathbf E_{\eta}[F(Y_\cdot^\tau)] &- \mathbf E_{Z_0^\tau(\eta)}[F(Y_\cdot^{\lim})]\big| \\
&\leq 4 \|F\|_\infty \, \eps + 2 \eps + \int_\Omega \dd \nu_\rho(\eta) \, \big|\mathbf E_{\eta}[G(Y_\cdot^\tau)] - \mathbf E_{Z_0^\tau(\eta)}^{\lim}[G(Y_\cdot^{\lim})]\big| \\
&\leq 4 \|F\|_\infty \, \eps + 3 \eps \, ,
\end{equs}
where the last line holds for sufficiently large $\tau$ because of the convergence~\eqref{e:CVfindimQuenched}. Since $\eps$ can be taken arbitrarily small, we deduce the desired statement~\eqref{e:CLTFunctionalQuenched} and conclude the proof.
\end{proof}

We finish this section by proving Lemma~\ref{lem:ApproxCyl}.

\begin{proof}[of Lemma~\ref{lem:ApproxCyl}]
For any $n \geq 1$ and any path $x = (x_t)_{t \geq 0}$ (taking values in either $\R$ or $\cS'(\R^2)$), we define the dyadic piecewise constant path $x^{[n]} = (x_t^{[n]})_{t \geq 0}$ as follows
\begin{equ}
x_t^{[n]} \eqdef \sum_{k=0}^{4^n-1} x_{k \, 2^{-n}} \, \1_{[k \, 2^{-n}, \, (k+1) \, 2^{-n})}(t) \, .
\end{equ}
We leave to the reader the proof of the following elementary fact about the Skorokhod topology. Let $d$ be a metric inducing the Skorokhod topology on $D(\R_+, \R)$. For any $f \in D(\R_+, \R)$ and $\delta > 0$, there exist $n_0 \geq 1$ and $\delta' > 0$ such that, for every $n \geq n_0$ and $g \in D(\R_+, \R)$ such that $d(g, f) < \delta'$, we have $d(g^{[n]}, f) < \delta$.

Let $\eps > 0$. Now, by combining the continuity of $F$ and the previous fact, we deduce the following property: for every $x_\cdot \in D(\R_+, \cS'(\R^2))$ there exist $n_0 \geq 1$, $\delta > 0$ and a finite subset $\cT \subset \cS(\R^2)$ such that, for every $n \geq n_0$ and $y_\cdot \in U(x_\cdot, \cT, \delta) := \{y_\cdot \in D(\R_+, \cS'(\R^2)) \mbox{ such that } \forall \vphi \in \cT, \, d(y_\cdot(\vphi), x_\cdot(\vphi)) < \delta\}$, we have $|F(y_\cdot) - F(x_\cdot)| < \eps/2$ and $|F(y_\cdot^{[n]}) - F(x_\cdot)| < \eps/2$. By compactness of $K$, we can consider a finite covering by such open subsets: let $K \subset \bigcup_{i=1}^m U(x_\cdot^{(i)}, \cT^{(i)}, \delta^{(i)}) =: \bigcup_{i=1}^m U^{(i)}$. Thus, there exists $n \geq 1$ such that, for every $i \in \{1, ..., m\}$ and $y_\cdot \in U^{(i)}$, we have $|F(y_\cdot) - F(x_\cdot^{(i)})| < \eps/2$ and $|F(y_\cdot^{[n]}) - F(x_\cdot^{(i)})| < \eps/2$.

Finally, let $V \subset \cS(\R^2)$ be the linear span of the elements of $\cT^{(1)}, \dots, \cT^{(m)}$, and let $\pi_V \colon \cS'(\R^2) \to V (\subset \cS'(\R^2))$ be the orthogonal projection onto $V$, that is $\pi_V(T) := \sum_{i=1}^M T(\vphi_i) \, \langle \vphi_i, \cdot \rangle_{L^2(\R^2)}$ where $(\vphi_1, \dots, \vphi_M)$ is an $L^2(\R^2)$-orthonormal basis of $V$. Recalling the definition of $U^{(i)} = U(x_\cdot^{(i)}, \cT^{(i)}, \delta^{(i)})$, we deduce the following property: if $y_\cdot \in U^{(i)}$, then we also have $\pi_V(y_\cdot) \in U^{(i)}$, which in particular gives
\begin{equ}
|F(y_\cdot) - F(\pi_V(y_\cdot)^{[n]})| \leq |F(y_\cdot) - F(x_\cdot^{(i)})| + |F(\pi_V(y_\cdot)^{[n]}) - F(x_\cdot^{(i)})| < \eps \, .
\end{equ}
In particular, we deduce that for any $y \in K$ it holds $|F(y) - F(\pi_V(y_\cdot)^{[n]})| < \eps$. This concludes the proof after noticing that, for any $y_\cdot \in D(\R_+, \cS'(\R^2))$, we have $F(\pi_V(y_\cdot)^{[n]}) = g((y_{k 2^{-n}}(\vphi_j))_{0 \leq k \leq 4^n - 1, \, 1 \leq j \leq M})$ where we have defined $g \colon \R^{4^n \times M} \to \R$ as follows
\begin{equ}
g((\alpha_{k,j})_{k, j}) := F\Big(\sum_{k,j} \alpha_{k, j} \, \vphi_j \, \1_{[k \, 2^{-n}, \, (k+1) \, 2^{-n})}(\cdot)\Big) \, ,
\end{equ}
the latter function being indeed continuous and bounded by $\|F\|_\infty$.
\end{proof}

\section{Large time behaviour of the second class particle}

\label{sec:2ndClass}

We prove the central limit theorem for the second class particle (Theorem~\ref{th:CLT2ndClass}) in Section~\ref{sec:CLT2ndClass}, and the sharp asymptotics of the diffusivity (Theorem~\ref{th:Diff}) in Section~\ref{sec:Diff}.

\subsection{The central limit theorem (proof of Theorem~\ref{th:CLT2ndClass})}

\label{sec:CLT2ndClass}

In this section, we show the central limit theorem for the second class particle, as stated in Theorem~\ref{th:CLT2ndClass}. In fact, this is an easy consequence of the convergence of the density-density correlation function: more precisely,
as a corollary of Proposition~\ref{p:CVfindim}, we deduce that for any $\vphi_1, \vphi_2 \in \cS(\R^2)$ it holds, as $\tau \to \infty$,
\begin{equ} \label{e:2ptCV}
\mathbf{E}_{\nu_\rho}[Y_0^\tau(\vphi_1) Y_1^\tau(\vphi_2)] \to \mathbf {E}_{\mathbb P}^{\lim}[Y_0^{\lim}(\vphi_1) Y_1^{\lim}(\vphi_2)] = \langle \vphi_1(- \, \cdot) * \vphi_2, g^{\lim} \rangle_{L^2(\R^2)} \, ,
\end{equ}
where $g^{\lim}$ stands for the density of the Gaussian vector $\cN(0, D^{\lim})$.

\begin{proof}[of Theorem~\ref{th:CLT2ndClass}]

Let $\vphi_1, \vphi_2 \in \cS(\R^2)$ such that $\vphi_1$ is even-symmetric and observe that, by translation invariance of the law $\mathbf{P}_{\nu_\rho}$ and the formula~\eqref{e:2PointFunction}, it holds
\begin{equs}
\mathbf{E}_{\nu_\rho}[Y_0^\tau(\vphi_1) &Y_1^\tau(\vphi_2)] = \det(\Sigma^\tau)^{-1} \sum_{x, y \in \Z^2} \vphi_1((\Sigma^\tau)^{-1} x) \, \vphi_2((\Sigma^\tau)^{-1}(y - \tau v)) \, \mathbf{E}_{\nu_\rho}[\sigma_0(x) \, \sigma_\tau(y)] \\
&= \det(\Sigma^\tau)^{-1} \sum_{x, z \in \Z^2} \vphi_1((\Sigma^\tau)^{-1} x) \, \vphi_2((\Sigma^\tau)^{-1}(z - \tau v) + (\Sigma^\tau)^{-1} x) \, \mathbf{E}_{\nu_\rho}[\sigma_0(0) \, \sigma_\tau(z)] \\
&= \mathbf{E}_{\nu_\rho}^{\mathrm{II}}\big[(\vphi_1 *_\tau \vphi_2) \, ((\Sigma^\tau)^{-1}(X_\tau - \tau v))\big] \, , \label{e:Correspondence2ndClassFluctuation}
\end{equs}
where we have defined the discrete convolution $\vphi_1 *_\tau \vphi_2 \in \cS(\R^2)$ as follows
\begin{equ}
\vphi_1 *_\tau \vphi_2 \, (\mathrm{z}) \eqdef \det(\Sigma^\tau)^{-1} \sum_{x \in \Z^2} \vphi_1((\Sigma^\tau)^{-1} x) \, \vphi_2(\mathrm{z} - (\Sigma^\tau)^{-1} x) \, .
\end{equ}
Recognizing a Riemann sum, it is clear that $\vphi_1 *_\tau \vphi_2$ converges uniformly to $\vphi_1 * \vphi_2$ as $\tau \to \infty$. Consequently, as $\tau \to \infty$, it holds
\begin{equ}
\mathbf{E}_{\nu_\rho}[Y_0^\tau(\vphi_1) Y_1^\tau(\vphi_2)] = \mathbf{E}_{\nu_\rho}^{\mathrm{II}}\big[(\vphi_1 * \vphi_2) \, ((\Sigma^\tau)^{-1}(X_\tau - \tau v))\big] + o(1) \, .
\end{equ}

Therefore, we deduce from~\eqref{e:2ptCV} that, as $\tau \to \infty$,
\begin{equ} \label{e:preCVlaw2ndClass}
\mathbf{E}_{\nu_\rho}^{\mathrm{II}}\big[(\vphi_1 * \vphi_2) \, ((\Sigma^\tau)^{-1}(X_\tau - \tau v))\big] \to \langle \vphi_1 * \vphi_2, g^{\lim} \rangle_{L^2(\R^2)} \, ,
\end{equ}
for any $\vphi_1, \vphi_2 \in \cS(\R^2)$ such that $\vphi_1$ is even-symmetric. Then, since the span of the functions of the form $\vphi_1 * \vphi_2$ with such $\vphi_1$ and $\vphi_2$ is clearly dense in $\cC_0(\R^2)$ (observe for instance that it contains every linear combination of Gaussian densities), we deduce that
\begin{equ} \label{e:CVlaw2ndClass}
\mathbf{E}_{\nu_\rho}^{\mathrm{II}}\big[\vphi ((\Sigma^\tau)^{-1}(X_\tau - \tau v))\big] \to \langle \vphi, g^{\lim} \rangle_{L^2(\R^2)} \, ,
\end{equ}
for any $\vphi \in \cC_0(\R^2)$. Finally, since $g^{\lim}$ is a probability distribution, we can extend the convergence~\eqref{e:CVlaw2ndClass} to any $\vphi \in \cC_b(\R^2)$ and conclude the proof of Theorem~\ref{th:CLT2ndClass}.
\end{proof}

\subsection{The sharp asymptotic of the diffusivity (proof of Theorem~\ref{th:Diff})}

\label{sec:Diff}

In this section, we show the sharp asymptotic of the diffusivity matrix $D(t)$, as stated in Theorem~\ref{th:Diff}. In fact, we show the following stronger asymptotic, as $t \to \infty$, (recall that $D(t) = D_0 + d(t) \, \fe \fe^T$~\eqref{e:GKformulafe})
\begin{equ} \label{e:DiffStrong}
d(t) = d^{\lim} \log(t)^{2/3} + O(\log(t)^{1/2 + o(1)}) \, .
\end{equ}
Recalling the formula of $d(t)$ given by Lemma~\ref{lem:GKneat}, we need to capture the asymptotic of
\begin{equ}
\frac{1}{t} \, \mathbf E_{\nu_\rho}\Big[\Big(\int_0^t B_+ \cV^\delta(\eta_s) \, \dd s\Big)^2\Big] \, ,
\end{equ}
as first $\delta \to 0$ and then $t \to \infty$. The key idea is to exploit the corrector $\mathrm{Cor}_{\mu, \delta}^{B+}$ (cf. Definition~\ref{def:AnsatzDiffASEP}) and, in particular, to show that $\int_0^{t} B_+ \cV^\delta(\eta_s) \, \dd s$ is close to the Dynkin martingale $M_t(\mathrm{Cor}_{1/t, \delta}^{B+})$.

\begin{proof}[of Theorem~\ref{th:Diff}]

Let $t \geq 1$, set $\mu := 1/t$ and fix $\eps \in (0,1/6)$. Recall Definition~\ref{def:MI}. We write
\begin{equ} \label{e:DiffProof}
\int_0^t B_+ \cV^\delta(\eta_s) \, \dd s = M_t(\mathrm{Cor}_{\mu, \delta}^{B+}) - \mathrm{Cor}_{\mu, \delta}^{B+}(\eta_{t}) + \mathrm{Cor}_{\mu, \delta}^{B+}(\eta_0) + W_t(L \, \mathrm{Cor}_{\mu, \delta}^{B+} + B_+ \cV^\delta) \, .
\end{equ}
The boundary terms can be easily bounded by using the $\nu_\rho$-stationarity of the ASEP and~\eqref{e:AnsatzDiffL2ASEP},
\begin{equ} \label{e:DiffProof1}
\mathbf E_{\nu_\rho}[\mathrm{Cor}_{\mu, \delta}^{B+}(\eta_t)^2] \vee \mathbf E_{\nu_\rho}\big[\mathrm{Cor}_{\mu, \delta}^{B+}(\eta_0)^2\big] = \|\mathrm{Cor}_{\mu, \delta}^{B+}\|^2 \lesssim t \, .
\end{equ}
The last term in~\eqref{e:DiffProof} can be bounded by using the Itô trick bound~\eqref{e:IL2} together with~\eqref{e:AnsatzDiffGenASEP},
\begin{equ} \label{e:DiffProof2}
\limsup_{\delta \to 0} \mathbf E_{\nu_\rho}[W_t(L \, \mathrm{Cor}_{\mu, \delta}^{B+} + B_+ \cV^\delta)^2] \lesssim_\eps t \log_*(t)^{1/3 + 2 \eps} \, .
\end{equ}
Finally, by using the classical identity $\mathbf E_{\nu_\rho}[M_t(f)^2] = 2 \, t \, \|(- S)^{1/2} f\|^2$ (see~\cite[Section 2.4]{KLO}) together with~\eqref{e:AnsatzDiffH1ASEP}, we obtain
\begin{equ} \label{e:DiffProof3}
\limsup_{\delta \to 0} \big|\mathbf E_{\nu_\rho}[M_t(\mathrm{Cor}_{\mu, \delta}^{B+})^2] - d^{\lim} \, t \log_*(t)^{2/3}| \lesssim_\eps t \log_*(t)^{1/3 + 2 \eps} \, .
\end{equ}
By gathering~\eqref{e:DiffProof},~\eqref{e:DiffProof1},~\eqref{e:DiffProof2},~\eqref{e:DiffProof3}, we deduce
\begin{equ}
\limsup_{\delta \to 0} \Big|\frac{1}{t} \, \mathbf E_{\nu_\rho}\Big[\Big(\int_0^t B_+ \cV^\delta(\eta_s) \, \dd s\Big)^2\Big] - d^{\lim} \log_*(t)^{2/3}\Big| \lesssim_\eps \log_*(t)^{1/2 + \eps} \, ,
\end{equ}
which, in view of Lemma~\ref{lem:GKneat}, establishes~\eqref{e:DiffStrong} and concludes the proof.
\end{proof}

\begin{appendix}

\section{Estimates on the SBE Ansatz (in the spirit of~\cite{CMT25})}

\label{app:AnsatzSBE}

The purpose of this section is to first show Proposition~\ref{p:GenericAnsatz}  by following the procedure of~\cite{CMT25}, and then to deduce Propositions~\ref{p:AnsatzA+} and~\ref{p:AnsatzDiff}. But before diving into the proof, we state the following elementary fact that we will use repeatedly.
\ss

Most future computations involve quantities of the form $\langle \fD \fA_+ \phi, \fA_+ \phi\rangle$ where $\fD$ is a diagonal operator (recall Definition~\ref{def:DiagonalOperator}). A key observation is that these brackets can be decomposed into a \emph{diagonal} and an \emph{off-diagonal} contribution: more precisely, writing both occurrences of $\fA_+ \phi$ as a sum over the indices $i < j$ and $i' < j'$ respectively, as suggested by the formula~\eqref{e:fA+}, then the diagonal contribution consists in the sum over the indices such that $(i,j) = (i',j')$. For our purpose, we only need two simple facts: first, we can always bound the full bracket $\langle \fD \fA_+ \phi, \fA_+ \phi\rangle$ by its diagonal contribution, up to a factor that is polynomial in the number operator; second, the off-diagonal contribution always vanishes if we replace $\fA_+$ by $\fA_+^\sharp$ (this property is actually the main motivation behind the definition of $\fA_+^\sharp$).

\begin{lemma} \label{lem:Diag}
We define, for any positive diagonal operator $\fD$ on $\fock{}{}$ with kernel $(d_n)_{n \geq 0}$ and any $\phi, \psi \in \fock{\rap}{}$,
\begin{equs} \label{e:Diag}
\langle \fD \fA_+ \phi, \fA_+ \psi\rangle_{\mathrm{diag}} \eqdef \sum_{n \geq 0} n! \, \int_{\R^{2n}} &\dd k_{1:n} \, \fF \phi(k_{1:n}) \,  \overline{\fF \psi(k_{1:n})} \\
&\times \frac{1}{2} \sum_{i=1}^n (\fe \cdot k_i)^2 \ \frac{\lambda^2 \rho(1-\rho)}{\pi^2} \int_{\R^2} \dd p \, \Pi_{p,k_i-p} \, d_{n+1}(p, k_i - p, k_{1:n \setminus i}) \, .
\end{equs}
We also define $\langle \fD \fA_+^{\bullet} \phi, \fA_+^{\bullet} \psi\rangle_{\mathrm{diag}}$ for $\bullet \in \{\sharp, \flat\}$ by adding a factor $\chi^{\bullet}(p, k_i - p ; k_{1:n})$ in the last integrand. Then, for any $\bullet \in \{\emptyset,\sharp, \flat\}$, it holds
\begin{equ}
\langle \fD \fA_+^{\bullet} \phi, \fA_+^{\bullet} \phi \rangle \lesssim \langle \fN^2 \, \fD \, \fA_+^{\bullet} \phi, \fA_+^{\bullet} \phi\rangle_{\mathrm{diag}} \, . \label{e:CSDiag}
\end{equ}
Moreover, when $\bullet = \sharp$, we have
\begin{equ}
\langle \fD \fA_+^{\sharp} \phi, \fA_+^{\sharp} \psi \rangle = \langle \fD \fA_+^{\sharp} \phi, \fA_+^{\sharp} \psi\rangle_{\mathrm{diag}} \, .\label{e:SharpDiag}
\end{equ}
\end{lemma}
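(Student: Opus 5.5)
The plan is to work chaos by chaos, since both sides are orthogonal sums over $n$ and $\fD$, $\fN$ are diagonal. Fix $\phi\in\fock{\rap,n}{}$. By \eqref{e:fA+} (or \eqref{e:fA+Sharp}), $\fF(\fA_+^\bullet\phi)(k_{1:n+1})$ is $\frac{\sqrt{\rho(1-\rho)}}{\pi(n+1)}\sum_{i<j} T_{ij}(k_{1:n+1})$ with
\[
T_{ij}=\chi^\bullet\,\iota\,\fw\cdot(k_i+k_j)\,\Pi_{k_i,k_j}\,\fF\phi(k_i+k_j,k_{1:n+1\setminus i,j}).
\]
Expanding $\langle\fD\fA_+^\bullet\phi,\fA_+^\bullet\psi\rangle=(n+1)!\int d_{n+1}\,\fF(\fA_+^\bullet\phi)\overline{\fF(\fA_+^\bullet\psi)}$ gives a double sum over pairs $(i,j)$, $(i',j')$. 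For the diagonal terms $(i,j)=(i',j')$, I will use the symmetry of $d_{n+1}$ and of $\fF\phi$ to reduce to $(i,j)=(1,2)$. Then I change variables $(k_1,k_2)\mapsto(p,k-p)$ and relabel the remaining variables. Counting $\binom{n+1}{2}$ pairs against the prefactors $(n+1)!/(n+1)^2$ and $\lambda^2=|\fw|^2$ (with $\fw\cdot k=\lambda\,\fe\cdot k$), I expect to recover exactly the expression \eqref{e:Diag}, including the sum over $i$ of the chaos-$n$ variable that splits. This is a bookkeeping check of constants.

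For \eqref{e:CSDiag}, apply Cauchy--Schwarz pointwise: $|\sum_{i<j}T_{ij}|^2\le\binom{n+1}{2}\sum_{i<j}|T_{ij}|^2$. Since $d_{n+1}\ge0$, the full bracket is at most $\binom{n+1}{2}$ times the diagonal contribution. This gives a factor $\lesssim (n+1)^2\lesssim \fN^2$ evaluated on chaos $n+1$, i.e. the stated $\langle\fN^2\fD\fA_+^\bullet\phi,\fA_+^\bullet\phi\rangle_{\rm diag}$. This is routine once the diagonal identification above is done.

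For \eqref{e:SharpDiag}, the step to be most careful about, I will show that the off-diagonal products $T^\sharp_{ij}\overline{T^\sharp_{i'j'}}$ vanish identically for $\{i,j\}\neq\{i',j'\}$. The term $T^\sharp_{ij}$ is nonzero only if $|k_i|\wedge|k_j|\ge3|(k_i+k_j,k_{\setminus i,j})|$, which forces $|k_i|,|k_j|\ge 3|k_m|$ for every other index $m$, and also $\ge 3|k_i+k_j|$.

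If $\{i',j'\}$ contains some $m\notin\{i,j\}$, the symmetric condition for $(i',j')$ requires $|k_m|\ge3|k_i|$ or $\ge 3|k_j|$ (whichever of $i,j$ is left outside $\{i',j'\}$, or one of them otherwise). In the disjoint case it requires $|k_m|\ge 3|k_i|$ with $i\notin\{i',j'\}$, contradicting $|k_i|\ge3|k_m|$ unless everything vanishes, which is a null set. In the overlapping case, say $\{i',j'\}=\{i,m\}$, one needs $|k_m|\ge3|k_j|$, contradicting $|k_j|\ge 3|k_m|$ on a set of positive measure.

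So the indicator supports are disjoint up to null sets, the cross terms integrate to zero, and the bracket equals its diagonal part.
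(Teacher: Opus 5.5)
Your proposal is correct and follows the paper's proof essentially step by step. The diagonal part is identified with \eqref{e:Diag} by symmetry and the change of variables $(k_i,k_j)\mapsto(p,k_i-p)$. Cauchy--Schwarz over the $\binom{n+1}{2}$ pairs gives \eqref{e:CSDiag}; the paper uses the cruder constant $n(n+1)$. For $\sharp$, the off-diagonal terms vanish because $\chi^\sharp$ forces $k_i,k_j$ to carry the two largest norms.

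Your handling of the exceptional set is slightly more careful than the paper's ``always vanishes'': the two indicators can only overlap where some $k_m=0$, which is a null set. Your wording ``on a set of positive measure'' should really say ``off a null set''.
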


\begin{proof}
It is clearly sufficient to prove the claims for $\phi = \psi \in \fock{\rap, n}{}$ and any $n \geq 1$. Let $\bullet \in \{\emptyset, \sharp, \flat\}$ and write $\fA_+^\bullet \phi$ in the form $\sum_{1 \leq i < j \leq n+1} \fA_+^{\bullet,i,j} \, \phi$ as suggested by~\eqref{e:fA+} or~\eqref{e:fA+Sharp}. The key observation is the following computation (we use the convention $\chi^\emptyset \equiv 1$)
\begin{equs}
\ &\sum_{1 \leq i < j \leq n+1} \langle \fD \fA_+^{\bullet, i, j} \phi, \fA_+^{\bullet, i, j} \phi \rangle \quad \\
&= \sum_{1 \leq i < j \leq n+1} (n+1)! \, \frac{\rho(1 - \rho)}{\pi^2 (n+1)^2} \int_{\R^{2(n+1)}} \dd k_{1:n+1} \, d_{n+1}(k_{1:n+1}) \, \chi^\bullet(k_i, k_j ; k_i + k_j, k_{1:n+1 \setminus i, j}) \, \Pi_{k_i, k_j} \, \\
&\qquad \qquad \qquad \qquad \qquad \qquad \qquad  \qquad \qquad \qquad \qquad \times |\fw \cdot (k_i + k_j)|^2 \, |\fF \phi(k_i + k_j, k_{1:n+1 \setminus i, j})|^2 \\
&= \frac{n(n+1)}{2} \ (n+1)! \, \frac{\lambda^2 \rho(1 - \rho)}{\pi^2 (n+1)^2} \int_{\R^{2n}} \dd q_{1:n} \int_{\R^2} \dd p \, d_{n+1}(p, q_1 - p, q_{2:n}) \, \chi^\bullet(p, q_1 - p ; q_{1:n}) \, \Pi_{p, q_1 - p} \\
&\qquad \qquad \qquad \qquad \qquad \qquad \qquad  \qquad \qquad \qquad \qquad \times \, |\fe \cdot q_1|^2 \, |\fF \phi(q_{1:n})|^2 \\
&= \langle \fD \fA_+^\bullet \phi, \fA_+^\bullet \phi \rangle_{\mathrm{diag}} \, , 
\end{equs}
where in the second equality we made the change of variables $q_{1:n} = (k_i + k_j, k_{1:n \setminus i, j})$ and $p = k_i$, and in the last equality we noticed that the sum over $i$ in~\eqref{e:Diag} can be replaced by $n$ times the summand $i=1$ because $\phi$ is symmetric.
\ss

In particular, using the Cauchy-Schwarz inequality, we deduce
\begin{equs} \label{e:BracketDiag}
\langle \fD \fA_+^\bullet \phi, \fA_+^\bullet \phi \rangle = \Big\|\fD^{1/2} \sum_{1 \leq i < j \leq n+1} \fA_+^{\bullet, i, j} \phi\Big\|^2 &\leq n(n+1) \sum_{1 \leq i < j \leq n+1} \|\fD^{1/2} \, \fA_+^{\bullet, i, j} \phi\|^2 \\
&= n(n+1) \, \langle \fD \fA_+^\bullet \phi, \fA_+^\bullet \phi\rangle_{\mathrm{diag}} \, ,
\end{equs}
which shows~\eqref{e:CSDiag}.

As for the proof of~\eqref{e:SharpDiag}, we first write
\begin{equ}
\langle \fD \fA_+^\sharp \phi, \fA_+^\sharp \psi \rangle =  \sum_{i < j} \langle \fD \fA_+^{\sharp, i, j} \phi, \fA_+^{\sharp, i, j} \psi \rangle +  \sum_{i < j, \, i' < j', \, (i,j) \ne (i', j')} \langle \fD \fA_+^{\sharp, i, j} \phi, \fA_+^{\sharp, i', j'} \psi \rangle \, .
\end{equ}
By the previous computation, the first sum equals $\langle \fD \fA_+^\sharp \phi, \fA_+^\sharp \psi\rangle_{\mathrm{diag}}$. As for the second sum, observe that each term can be written as an integral over $k_{1:n+1} \in \R^{2(n+1)}$ whose integrand is proportional to
\begin{equ} \label{e:OffDiagCancellation}
\chi^\sharp(k_i, k_j ; k_{1:n+1 \setminus i,j}, k_i + k_j) \, \chi^\sharp(k_{i'}, k_{j'} ; k_{1:n+1 \setminus i',j'}, k_{i'} + k_{j'}) \, .
\end{equ}
But the first (resp. second) indicator imposes that $k_i$ and $k_j$ (resp. $k_i'$ and $k_j'$) have the two largest norms among $k_1, \dots k_{n+1}$. Therefore~\eqref{e:OffDiagCancellation} always vanishes if $\{i,j\} \ne \{i',j'\}$, which shows~\eqref{e:SharpDiag}.
\end{proof}

Finally, as suggested by~\eqref{e:BracketDiag}, a lot of formulas will display vectors of the form $(p, k_i-p, k_{1:n \setminus i})$. Hence, to shorten notations, we will often write
\begin{equs} \label{e:XiNotation}
\Xi &:= \frac{1}{2} \, \big(|D_0^{1/2} \, k_{1:n \setminus i}|^2 + |D_0^{1/2} \, p|^2 + |D_0^{1/2} \, (k_i - p)|^2\big) \, , \\
\Xi_0 &:= \frac{1}{2} \, \big(|k_{1:n \setminus i}|^2 + |p|^2 + |k_i - p|^2\big) \, , \quad \Xi_0^\fe := \frac{1}{2} \big(|\fe \cdot k_{1:n \setminus i}|^2 + |\fe \cdot p|^2 + |\fe \cdot (k_i - p)|^2\big) \, , \\
\chi^\bullet&(...) := \chi^\bullet(p, k_i - p ; k_{1:n}) \quad \mbox{ with the convention $\chi^\emptyset \equiv 1$}
\end{equs}
where $p$, $k_{1:n}$ and $i$ will always be suggested by the context. Note that, because of the triangle inequality, we always have
\begin{equ} \label{e:Triangular}
\Xi \asymp \Xi_0 \asymp |k_{1:n}|^2 + |p|^2 \, , \quad \Xi_0^\fe \asymp |\fe \cdot k_{1:n}|^2 + |\fe \cdot p|^2 \, .
\end{equ}

\subsection{The Replacement Lemmas}

\label{app:ReplLemma}

In this section, we prove Lemma~\ref{lem:ReplLemma}. In fact, the latter follows from the following more general replacement lemma, which is the analogue of~\cite[Lemma C.1]{CMT25}.

\begin{lemma} \label{lem:ReplLemmaGeneral}
Let $f \colon \R_+ \to \R_+$ and $g \colon \R_+ \to \R_+$ be $C^1$ functions such that for some constants $C_f, C_g \geq 1$ and for every $x > 0$ it holds
\begin{equ} \label{e:AssReplLemma}
\frac{|f'(x)|}{1 + f(x)} \leq \frac{C_f}{x} \, , \quad \frac{g(x)}{1 + f(x)} \leq C_g \, , \quad \frac{|g'(x)|}{1 + f(x)} \leq \frac{C_g}{x} \, .
\end{equ}
Then, with the notations~\eqref{e:XiNotation}, for every $\bullet \in \{\emptyset, \sharp\}$, $k_{1:n} \in \R^{2n}$ ($n \geq 1$), $1 \leq i \leq n$, $\mu > 0$, we have
\begin{equs}
\label{e:FirstReplLemmaInt}
\Big|&\frac{\lambda^2 \rho(1 - \rho)}{\pi^2} \int_{\R^2} \dd p \, \frac{\chi^\bullet(...) \, \Pi_{p, k_i - p}}{\mu + \Xi +  f(\Log(\mu + \Xi_0)) \, \Xi_0^\fe} - J(f) \, (\Log(\mu + |k_{1:n}|^2/2))\Big| \lesssim C_f  \, , \\
\label{e:SecondReplLemmaInt}
\Big|&\frac{\lambda^2 \rho(1 - \rho)}{\pi^2} \int_{\R^2} \dd p \, \frac{\chi^\bullet(...) \, \Pi_{p, k_i - p} \ g(\Log(\mu + \Xi_0)) \, \Xi_0^\fe}{(\mu + \Xi + f(\Log(\mu + \Xi_0)) \, \Xi_0^\fe)^2} - J'(f;g) \, (\Log(\mu + |k_{1:n}|^2/2))\Big| \lesssim C_f \, C_g \, , \qquad \ \
\end{equs}
where the functions $J(f), \, J'(f; g) \colon \R_+ \to \R_+$ are given by (recall that $C_\Upsilon = \frac{\lambda^2 \rho(1 - \rho)}{\pi}$)
\begin{equ} \label{e:Jint}
J(f) \, (x) \eqdef \int_0^x \frac{C_\Upsilon \, \dd \ell}{(\det(D_0) + D_0[\ff, \ff] \, f(\ell))^{1/2}} \, , \quad J'(f;g) \, (x) \eqdef \frac{1}{2} \int_0^x \frac{C_\Upsilon \, D_0[\ff, \ff] \, g(\ell) \, \dd \ell}{(\det(D_0) + D_0[\ff,\ff] \, f(\ell))^{3/2}} \, .
\end{equ}
In particular, we deduce, for any $\mu > 0$ and $\phi, \psi \in \fock{\rap}{}$,
\begin{equs}
\label{e:FirstReplLemma}
\big|&\langle (\mu - \fS - \fL_0^\fe[f]_\mu)^{-1} \, \fA_+^\sharp \phi, \fA_+^\sharp \psi \rangle - \langle - \fL_0^\fe[J(f)]_\mu \, \phi, \psi \rangle\big| \lesssim C_f \, \|(- \fL_0^\fe)^{1/2} \, \phi\| \, \|(- \fL_0^\fe)^{1/2} \, \psi\| \, , \qquad \ \\
\label{e:SecondReplLemma}
\big|&\langle - \fL_0^\fe[g]_\mu \, (\mu - \fS - \fL_0^\fe[f]_\mu)^{-2} \, \fA_+^\sharp \phi, \fA_+^\sharp \psi \rangle - \langle - \fL_0^\fe [J'(f;g)]_\mu \, \phi, \psi \rangle\big| \\
&\qquad \qquad \qquad \qquad \qquad \qquad \qquad \qquad \qquad \qquad \lesssim C_f \, C_g \, \|(- \fL_0^\fe)^{1/2} \, \phi\| \,  \|((- \fL_0^\fe)^{1/2} \, \psi\| \, .
\end{equs}
Finally, the assumptions~\eqref{e:AssReplLemma} are satisfied by the functions $(f, g) = (\Upsilon_\nu, \tilde \Upsilon_\nu^\theta)$, for any $\nu \geq 0$ and $\theta \in (0,1]$, with constants $C_f, C_g$ that only depend on $D_0$ and $\lambda$; and it holds
\begin{equ} \label{e:JJ'Ups}
J(\Upsilon_\nu) = \Upsilon_\nu - \nu \, , \quad J'(\Upsilon_\nu ; \tilde \Upsilon_\nu^\theta) = \frac{1}{2 \theta} \, \big(\tilde \Upsilon_\nu^\theta - (\det(D_0) + D_0[\ff, \ff] \nu)^\theta\big) \, .
\end{equ}
\end{lemma}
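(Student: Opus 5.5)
The heart of the statement is the two pointwise integral estimates \eqref{e:FirstReplLemmaInt} and \eqref{e:SecondReplLemmaInt}; the operator bounds then follow from Lemma~\ref{lem:Diag}, and \eqref{e:JJ'Ups} is a direct computation. I would proceed in three steps.

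\emph{Step 1: the pointwise integrals.} Fix $k_{1:n}$, $i$ and $\mu$, and set $a:=\mu+|k_{1:n}|^2/2$. Since $\chi^\sharp$ and $\Pi$ are bounded by $1$, I split the $p$-integral into three regions.
\begin{itemize}
\item The region $|p|\lesssim |k_{1:n}|+\sqrt\mu$. There $\Xi\gtrsim a$ by \eqref{e:Triangular}, so the integrand of \eqref{e:FirstReplLemmaInt} is $\lesssim a^{-1}$ on a set of area $\lesssim a$, contributing $O(1)$. The same argument applies to \eqref{e:SecondReplLemmaInt}, using $g\,\Xi_0^\fe/(\cdots)\le C_g(1+f)\Xi_0^\fe/(\cdots)\lesssim C_g$.
\item The region $|p|\gtrsim 1$. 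Here the cutoff $\Pi$ matters, but this is again an $O(1)$ region.
\item The intermediate annulus $\sqrt a\ll|p|\ll1$, which carries the logarithm.
\end{itemize}
On the annulus, $\chi^\sharp=1$ (up to a boundary layer of bounded contribution), and I replace $\Xi,\Xi_0,\Xi_0^\fe$ by $|D_0^{1/2}p|^2$, $|p|^2$, $(\fe\cdot p)^2$. Each replacement error is relatively $O(|k|/|p|)$, and integrating against $|p|^{-2}\,\dd p$ over the annulus gives $O(1)$. One point needs care: the error $\Log(\mu+\Xi_0)-\Log(|p|^2)=O(|k|^2/|p|^2)$ propagates through $f$ with cost $C_f/\ell\cdot(1+f)$ by \eqref{e:AssReplLemma}. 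This is why the hypotheses are stated in relative form, so that the constant is $\lesssim C_f$.

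\emph{Step 2: polar coordinates.} I write $p=r(\cos\vartheta\,\fe+\sin\vartheta\,\ff)$ and $\ell=\Log(r^2)$, so that $\dd r/r=\dd\ell/2$. The angular integral is explicit:
\[
\int_0^{2\pi}\frac{\dd\vartheta}{D_0[u,u]+f(\ell)\cos^2\vartheta},\qquad u=u(\vartheta),
\]
is a quadratic form integral equal to $2\pi/\sqrt{\det(D_0+f\,\fe\fe^T)}=2\pi/(\det D_0+D_0[\ff,\ff]f)^{1/2}$. Here I use the identity $\det(D_0+f\fe\fe^T)=\det D_0+fD_0[\ff,\ff]$. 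Collecting constants gives $\frac{\lambda^2\rho(1-\rho)}{\pi^2}\cdot\pi\int\dd\ell\,(\cdots)^{-1/2}$, which is $J(f)$ evaluated at the upper limit $\Log(a)$, up to $O(C_f)$.

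For \eqref{e:SecondReplLemmaInt}, the angular integral $\int\cos^2\vartheta/(\cdots)^2\,\dd\vartheta$ is obtained by differentiating the previous identity in $f$, which yields $\pi D_0[\ff,\ff]/(\cdots)^{3/2}$. This matches $J'(f;g)$.

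I expect the main obstacle to be the bookkeeping that makes all errors uniform in $\mu$ and $k$: in particular the boundary of $\chi^\sharp$ (at $|p|\sim3|k_{1:n}|$), and the fact that $\Log$ vanishes for arguments above $1$.

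\emph{Step 3: operator bounds and \eqref{e:JJ'Ups}.} By \eqref{e:SharpDiag}, the bracket $\langle \fD\fA_+^\sharp\phi,\fA_+^\sharp\psi\rangle$ equals its diagonal part \eqref{e:Diag}, with the factor $\frac12\sum_i(\fe\cdot k_i)^2$ in front of the integrals of Step 1. Taking $\fD=(\mu-\fS-\fL_0^\fe[f]_\mu)^{-1}$, this gives $\sum_i(\fe\cdot k_i)^2\,[J(f)(\Log a)+O(C_f)]/2$, which is exactly the kernel of $-\fL_0^\fe[J(f)]_\mu$ plus an error. Cauchy--Schwarz on the error gives \eqref{e:FirstReplLemma}, and \eqref{e:SecondReplLemma} follows in the same way.

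For the last claims, the ODE \eqref{e:ODE} gives $\dot\Upsilon_\nu=C_\Upsilon(\cdots)^{-1/2}$, so $J(\Upsilon_\nu)=\Upsilon_\nu-\nu$. For the second identity, $\frac{\dd}{\dd\ell}\tilde\Upsilon^\theta=\theta D_0[\ff,\ff]\dot\Upsilon(\cdots)^{\theta-1}$ equals $2\theta$ times the integrand of $J'$, which gives the formula. The hypotheses \eqref{e:AssReplLemma} for $(\Upsilon_\nu,\tilde\Upsilon_\nu^\theta)$ follow from $\dot\Upsilon\lesssim1$ and $\tilde\Upsilon^\theta\lesssim1+\Upsilon$. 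The condition $|f'|/(1+f)\le C/x$ near $x\to0$ needs checking, since $\dot\Upsilon(0)\ne0$. It should hold because the integrals only ever evaluate $f$ at $\ell\ge0$ through $\Log$, but it may require a minor reformulation with $x_*$ in place of $x$.
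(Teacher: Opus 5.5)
Your Steps~2 and~3 and your verification of \eqref{e:JJ'Ups} are correct and match the paper. Your derivative computation for $J'$ is equivalent to the paper's change of variables $y=\det(D_0)+D_0[\ff,\ff]\Upsilon_\nu(\ell)$.

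\textbf{The gap is in Step~1.} You claim that on the isotropic annulus $\sqrt a\ll|p|\ll1$ every replacement error is relatively $O(|k|/|p|)$. This holds for $\Xi$ and $\Xi_0$, but it is false for $\Xi_0^\fe$. Indeed $\Xi_0^\fe-(\fe\cdot p)^2=\tfrac12|\fe\cdot k_{1:n}|^2-(\fe\cdot k_i)(\fe\cdot p)$. The relative error is therefore of order $|k_{1:n}|/|\fe\cdot p|+|k_{1:n}|^2/|\fe\cdot p|^2$, which is at least of order one in the strip $|\fe\cdot p|\lesssim|k_{1:n}|$, however large $|p|$ is.

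Write both integrands as $\Phi(x_1,x_2,x_3)$, evaluated at $(\mu+\Xi,\Xi_0^\fe,\Log(\mu+\Xi_0))$ and at $(|D_0^{1/2}p|^2,(\fe\cdot p)^2,\Log(|p|^2))$ respectively. In the strip the two values of $\Phi$ genuinely differ by a relative amount of order one once $f$ is large. For example, at $\fe\cdot p=0$ you compare $1/(x_1+f\,\Xi_0^\fe)$ with $1/x_1$. Neither way of controlling this works:
\begin{itemize}
\item A mean-value estimate gives errors like $|k_{1:n}|/(|p|^2|\fe\cdot p|)$, which are not integrable across $\fe\cdot p=0$.
\item Bounding $\partial_{x_2}\Phi$ without a lower bound on $x_2$ costs a factor of $f$. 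This is unbounded: for $f=\Upsilon_\nu$ it is of size $\Log(\mu+|k_{1:n}|^2/2)^{2/3}$ at the inner radius.
\end{itemize}
So as written you do not get an error $\lesssim C_f$ (resp.\ $C_fC_g$) uniformly in $\mu$ and $k_{1:n}$.

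\textbf{The missing idea is to make the comparison region anisotropic,} as the paper does. Compare only on
\[
G=\{|\fe\cdot p|>10\sqrt{\mu+|k_{1:n}|^2},\ |p|<1/10\},
\]
where $\Xi_0^\fe\asymp(\fe\cdot p)^2$. There the $f$-uniform bounds $|\partial_{x_\alpha}\Phi|\lesssim C/(x_1x_\alpha)$, which follow from \eqref{e:AssReplLemma} and $x_2\lesssim x_1$ (with $C=C_f$, resp.\ $C_fC_g$), give a pointwise error $\lesssim C\sqrt{\mu+|k_{1:n}|^2}/(|p|^2|\fe\cdot p|)$. After the rescaling $q=p/\sqrt{\mu+|k_{1:n}|^2}$, its integral over $G$ is $O(C)$.

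On the complement, which contains the whole strip $|\fe\cdot p|\le10\sqrt{\mu+|k_{1:n}|^2}$ out to $|p|\sim1$, do not compare at all. Both integrands are $\lesssim C/(\mu+|k_{1:n}|^2+|p|^2)$ there; for the second integral use $g\le C_g(1+f)$ and $x_2\lesssim x_1$. Since $\int_{\R^2}\1_{|\fe\cdot q|\le10}\,\dd q/(1+|q|^2)<\infty$, this contribution is also $O(C)$. With this change the rest of your plan goes through.

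\textbf{Two minor points.}
\begin{itemize}
\item $\Log(\mu+\Xi_0)-\Log(|p|^2)$ is $O(\sqrt{\mu+|k_{1:n}|^2}/|p|)$, not $O(|k|^2/|p|^2)$, because $\Xi_0=|p|^2-k_i\cdot p+|k_{1:n}|^2/2$ contains a cross term. This is harmless.
\item The condition $|f'|/(1+f)\le C_f/x$ is trivial as $x\to0$, because the right-hand side blows up there. For large $x$ it holds uniformly in $\nu$, since $(\det(D_0)+D_0[\ff,\ff]\Upsilon_\nu)^{3/2}$ is affine in $x$ with $\nu$-independent slope. No reformulation with $x_*$ is needed.
\end{itemize}
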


\begin{remark}
We emphasize that the two integrals appearing in~\eqref{e:FirstReplLemmaInt} and~\eqref{e:SecondReplLemmaInt} are tightly connected: more precisely, if we denote them by $I_1(f)$ and $I_2(f ; g)$, then it holds $I_2(f ; g) = - \restriction{\frac{\partial}{\partial \delta}}{\delta = 0} \, I_1(f + \delta g)$. Similarly, note that we have $J'(f;g) = - \restriction{\frac{\partial}{\partial \delta}}{\delta = 0} \, J(f + \delta g)$. In particular, the factor $1/2$ in the definition of $J'(f;h)$ comes from the differentiation of the square root in the formula of $J(f)$. In fact, this factor $1/2$ is crucial: indeed, it reappears in~\eqref{e:ReplContraction} and gives a strict contraction estimate (provided $\theta > 1/2$).
\end{remark}

\begin{proof}[of Lemma~\ref{lem:ReplLemma}, assuming Lemma~\ref{lem:ReplLemmaGeneral}]
\ss

First, by applying~\eqref{e:FirstReplLemma} with $f = \Upsilon_\nu$ and using~\eqref{e:JJ'Ups}, we deduce
\begin{equ}
|\langle (\mu - \fL_{\mu, \nu}^\eff)^{-1} \fA_+^\sharp \phi, \fA_+^\sharp \psi \rangle - \langle - \fL_0^\fe[\Upsilon_\nu]_\mu \, \phi, \psi\rangle| \lesssim (1 + \nu) \, \|(- \fL_0^\fe)^{1/2} \phi\| \, \|(- \fL_0^\fe)^{1/2} \psi\| \, ,
\end{equ}
which gives the desired bound~\eqref{e:ReplFixPt}.
\ss

Second, by applying~\eqref{e:SecondReplLemma} with $(f, g) = (\Upsilon_\nu, \tilde \Upsilon_\nu^\theta)$ and using~\eqref{e:JJ'Ups}, we deduce
\begin{equs}
\langle - \fL_0^\fe\big[\tilde \Upsilon_\nu^\theta\big]_\mu \, (\mu - \fL_{\mu, \nu}^\eff)^{-2} \, &\fA_+^\sharp \phi, \fA_+^\sharp \phi \rangle \\
&\leq \frac{1}{2 \theta} \langle - \fL_0^\fe\big[\tilde \Upsilon_\nu^\theta\big]_\mu \, \phi, \phi \rangle + \Big(C - \frac{(\det(D_0) + D_0[\ff, \ff] \nu)^\theta}{2 \theta}\Big) \langle - \fL_0^\fe \, \phi, \phi\rangle \, ,
\end{equs}
for some constant $C > 0$ (that only depends on $D_0$ and $\lambda$). Then, we deduce the desired bound~\eqref{e:ReplContraction} by assuming $\nu \geq \nu_1$ with $\nu_1$ sufficiently large to make the last term non-positive for every $\theta \in [0,1]$.
\end{proof}

\begin{proof}[of Lemma~\ref{lem:ReplLemmaGeneral}]

First, we prove the claim relative to the particular case $(f,g) = (\Upsilon_\nu, \tilde \Upsilon_\nu^\theta)$. From the formulas~\eqref{e:ODEexplicit} and~\eqref{e:Upstheta}, it is easily checked that the assumption~\eqref{e:AssReplLemma} holds with some constants $C_f$ and $C_g$ that depend only on $D_0$ and $\lambda$. Next, the identity $J(\Upsilon_\nu) = \Upsilon_\nu - \nu$ readily follows from the ODE~\eqref{e:ODE} solved by $\Upsilon_\nu$. Finally, by making the change of variables $y = \det(D_0) + D_0[\ff, \ff] \Upsilon_\nu(\ell)$ (note that $\dd y = \frac{C_\Upsilon \, D_0[\ff, \ff]}{\sqrt{\det(D_0) + D_0[\ff, \ff] \Upsilon_\nu(\ell)}} \, \dd \ell$ by the ODE~\eqref{e:ODE}), we obtain
\begin{equ}
J'(\Upsilon_\nu ; \tilde \Upsilon_\nu^\theta) \, (x) = \frac{1}{2} \int_{\det(D_0) + D_0[\ff, \ff] \, \nu}^{\det(D_0) + D_0[\ff, \ff] \, \Upsilon_\nu(x)} y^{\theta - 1} \, \dd y = \frac{1}{2 \theta} \, \big[\tilde \Upsilon_\nu^\theta(x) - (\det(D_0) + D_0[\ff, \ff] \nu)^\theta\big] \, ,
\end{equ}
which concludes the proof of~\eqref{e:JJ'Ups}.
\ss

Now, we notice that the two statements~\eqref{e:FirstReplLemma} and~\eqref{e:SecondReplLemma} directly follow from the identity~\eqref{e:SharpDiag} and the integral estimates~\eqref{e:FirstReplLemmaInt} and~\eqref{e:SecondReplLemmaInt} (with $\bullet = \sharp$), used together with the identity~\eqref{e:SharpDiag}.
\ss

Hence, it only remains to show~\eqref{e:FirstReplLemmaInt} and~\eqref{e:SecondReplLemmaInt}. To do so, we divide~\eqref{e:FirstReplLemmaInt} into the following two claims
\begin{equ}
\Big|\int_{\R^2} \dd p \, \frac{\chi^\bullet(...) \, \Pi_{p, k_i - p}}{\mu + \Xi + f(\Log(\mu + \Xi_0)) \Xi_0^\fe} - \int_{\R^2} \dd p \, \frac{\1_{\mu + |k_{1:n}|^2/2 < |p|^2 < 1}}{|\sqrt{D_0} \, p|^2 + f(\Log(|p|^2)) \, |\fe \cdot p|^2} \Big| \lesssim C_f \, , \label{e:FirstReplLemmaTechnical}
\end{equ}
\begin{equ}
\frac{\lambda^2 \rho(1 - \rho)}{\pi^2} \int_{\R^2} \dd p \, \frac{\1_{\mu + |k_{1:n}|^2/2 < |p|^2 < 1}}{|\sqrt{D_0} \, p|^2 + f(\Log(|p|^2)) |\fe \cdot p|^2} = J(f) \, (\Log(\mu + |k_{1:n}|^2/2)) \, , \label{e:FirstReplLemmaExact}
\end{equ}
and, similarly, we divide~\eqref{e:SecondReplLemmaInt} into the following two claims
\begin{equ}
\Big|\int_{\R^2} \dd p \, \frac{\chi^\bullet(...) \, \Pi_{p, k_i - p} \ g(\Log(\mu + \Xi_0)) \Xi_0^\fe}{(\mu + \Xi + f(\Log(\mu + \Xi_0)) \Xi_0^\fe)^2} - \int_{\R^2} \dd p \, \frac{\1_{\mu + |k_{1:n}|^2/2 < |p|^2 < 1} \ g(\Log(|p|^2)) |\fe \cdot p|^2}{(|\sqrt{D_0} \, p|^2 + f(\Log(|p|^2)) \, |\fe \cdot p|^2)^2} \Big| \lesssim C_f \, C_g \, , \label{e:SecondReplLemmaTechnical}
\end{equ}
\begin{equ}
\frac{\lambda^2 \rho(1 - \rho)}{\pi^2} \int_{\R^2} \dd p \, \frac{\1_{\mu + |k_{1:n}|^2/2 < |p|^2 < 1} \ g(\Log(|p|^2)) |\fe \cdot p|^2}{(|\sqrt{D_0} \, p|^2 + f(\Log(|p|^2)) \, |\fe \cdot p|^2)^2} = J'(f; g) \, (\Log(\mu + |k_{1:n}|^2/2)) \, . \label{e:SecondReplLemmaExact}
\end{equ}

We start with the proof of~\eqref{e:FirstReplLemmaExact}. We first change to polar coordinates (in particular, we denote $e_\theta := (\cos \theta, \sin \theta)^T$), then we use the identity
\footnote{More generally, for any positive-definite symmetric $2 \times 2$ matrix $S$ we have $\int_0^{2 \pi} \frac{\dd \theta}{e_\theta^T S e_\theta} = \frac{2 \pi}{\sqrt{\det(S)}}$, which can easily be seen by computing the Gaussian integral $\int_{\R^2} \exp(- p^T S p/2) \, \dd p = \frac{2 \pi}{\sqrt{\det(S)}}$ in polar coordinates.}
$\int_0^{2 \pi} \frac{\dd \theta}{e_\theta^T (D_0 + A \fe \fe^T) \, e_\theta} = \frac{2 \pi}{\sqrt{\det(D_0) + D_0[\ff, \ff] A}}$ for any $A \geq 0$, and finally we make the change of variables $\ell = \Log(r^2)$ (notice that formally $\dd \ell = \frac{2 \, \dd r}{r}$), which gives
\begin{equs}
\int_{\R^2} \dd p \, \frac{\1_{\mu + |k_{1:n}|^2/2 < |p|^2 < 1}}{|\sqrt{D_0} \, p|^2 + f(\Log(|p|^2)) |\fe \cdot p|^2} &= \int_{\sqrt{\mu + |k_{1:n}|^2/2}}^1 r \, \dd r \, \int_0^{2 \pi} \frac{\dd \theta}{r^2 \ e_\theta^T \, (D_0 + f(\Log(r^2)) \, \fe \fe^T) \, e_\theta} \\
&= \int_{\sqrt{\mu + |k_{1:n}|^2/2}}^1 \frac{\dd r}{r} \, \frac{2 \pi}{\sqrt{\det(D_0) + D_0[\ff, \ff] \, f(\Log(r^2))}} \\
&= \pi \int_0^{\Log(\mu + |k_{1:n}|^2/2)} \frac{\dd \ell}{\sqrt{\det(D_0) + D_0[\ff, \ff] \, f(\ell)}} \, \, ,
\end{equs}
which leads to~\eqref{e:FirstReplLemmaExact} (multiplying both sides by $\frac{\lambda^2 \rho(1 - \rho)}{\pi^2}$).

The formula~\eqref{e:SecondReplLemmaExact} can be proved similarly: we pass to polar coordinates, use the identity $\int_0^{2 \pi} \frac{\dd \theta \, e_\theta^T \fe \fe^T e_\theta}{(e_\theta^T (D_0 + A \fe \fe^T) \, e_\theta)^2} = \frac{D_0[\ff, \ff]}{2} \, \frac{2 \pi}{(\det(D_0) + D_0[\ff, \ff] A)^{3/2}}$ for any $A \geq 0$ (which can be derived by differentiating the previous identity with respect to $A$), and use the change of variables $\ell = \Log(r^2)$.
\ss

Thus we are left to prove the bounds~\eqref{e:FirstReplLemmaTechnical} and~\eqref{e:SecondReplLemmaTechnical}. In both cases, we want to show
\begin{equs} \label{e:ReplLemmaPhi}
\Big|\int_{\R^2} \dd p \, \chi^\bullet(...) \, &\Pi_{p, k_i - p} \, \Phi(\mu + \Xi, \Xi_0^\fe, \Log(\mu + \Xi_0)) \\
&- \int_{\R^2} \dd p \, \1_{\mu + |k_{1:n}|^2/2 < |p|^2 < 1} \, \Phi(|D_0^{1/2} p|^2, |\fe \cdot p|^2, \Log(|p|^2))\Big| \lesssim C \, ,
\end{equs}
where $\Phi(x_1,x_2,x_3)$ and $C$ equal $\frac{1}{x_1 + f(x_3) \, x_2}$ and $C_f$ for~\eqref{e:FirstReplLemmaTechnical}, and equal $\frac{g(x_3) \, x_2}{(x_1 + f(x_3) \, x_2)^2}$ and  $C_f \, C_g$ for~\eqref{e:SecondReplLemmaTechnical} (we fix these notations for the rest of the proof). First, it is easily checked from the assumptions~\eqref{e:AssReplLemma} that, in both cases, $\Phi$ is $C^1$ on $(\R_+^*)^3$ and satisfies, for every $x_1, x_2, x_3 > 0$ and $\alpha \in \{1, 2, 3\}$,
\begin{equ} \label{e:AssumptionPhi}
|\Phi(x_1, x_2, x_3)| \lesssim C_* \, \frac{1 \vee \frac{x_2}{x_1}}{x_1} \, , \quad |\partial_\alpha \Phi(x_1, x_2, x_3)| \lesssim C_* \, \frac{1 \vee \frac{x_2}{x_1}}{x_1 \, x_\alpha} \, ,
\end{equ}
We mention that, in practice, we are only interested in $x_1, x_2, x_3$ such that $x_2 \lesssim x_1$ (roughly speaking, because of the trivial bound $|\fe \cdot q| \leq |q|$), for which we can bound the above numerators as $1 \vee \frac{x_2}{x_1} \lesssim 1$. To prove~\eqref{e:ReplLemmaPhi}, we distinguish the good region 
\begin{equ} \label{e:GoodRegion}
    G := \{p \in \R^2 \colon |\fe \cdot p| > 10 \, \sqrt{\mu + |k_{1:n}|^2} \mbox{ and } |p| < 1/10\}
\end{equ}
and the bad region $B = \R^2 - G$. The idea is the following: on the one hand, the two integrands will be easier to compare on $G$ ; on the other hand, both integrands will give a negligible contribution on $B$. 

Indeed, we can bound the two contributions on the bad region as follows, by using in particular~\eqref{e:AssumptionPhi} and~\eqref{e:Triangular}, 
\begin{equs}
\int_B \dd p \, \chi^\bullet(...) \, \Pi_{p, k_i -p} \, |\Phi(\mu + \Xi, \Xi_0^\fe, &\Log(\mu + \Xi))| \vee \int_B \dd p \, \1_{\mu + |k_{1:n}|^2/2 < |p|^2 < 1} \, |\Phi(|D_0^{1/2} p|^2, |\fe \cdot p|^2, \Log(|p|^2))| \\
\lesssim C_* \int_B \dd p \, \frac{\1_{|p| \leq 1}}{\mu + |k_{1:n}|^2 + |p|^2} &\leq C_* \int_{\R^2} \frac{\1_{|\fe \cdot p| \leq 10 (\mu + |k_{1:n}|^2)^{1/2}} \, \dd p}{\mu + |k_{1:n}|^2 + |p|^2} + C_* \int_{\R^2} \frac{\1_{1/10 \leq |p| \leq 1} \, \dd p}{|p|^2} \\
&= C_* \int_{\R^2} \frac{\1_{|\fe \cdot q| \leq 10} \, \dd q}{1 + |q|^2} + C_* \int_{\R^2} \frac{\1_{1/10 \leq |p| \leq 1} \, \dd p}{|p|^2} \lesssim C_* \, , \label{e:BadRegionBound}
\end{equs}
where we used the change of variable $q = \frac{p}{\sqrt{\mu + |k_{1:n}|^2}}$ in the last line.

Now, we compare the two integrands on $G$. For the rest of this proof, let $p \in G$ so that
\begin{equ} \label{e:GoodRegionBis}
10 \, \sqrt{\mu + |k_{1:n}|^2} < |\fe \cdot p| \leq |p| < 1/10 \, .
\end{equ}
In particular, it is easily checked that
\begin{equ} \label{e:ComparisonGammaBrutal}
\mu + \Xi_0 \asymp \mu + \Xi \asymp |p|^2 \, , \quad \Xi_0^\fe \asymp |\fe \cdot p|^2 \, , \quad |p|^2  \vee (\mu + \Xi_0) \leq e^{-1} \, , \quad \Log(|p|^2) \wedge \Log(\mu + \Xi_0) \geq 1 \, .
\end{equ}
Now we want to finely compare $(\mu + \Xi, \Xi_0^\fe, \Log(\mu + \Xi_0))$ to $(|D_0^{1/2} p|^2, |\fe \cdot p|^2, \Log(|p|^2))$. First, by writing $\Xi_0$ as $\frac{1}{2}(|p|^2 + |p|^2 - 2 \langle k_i, p \rangle + |k_i|^2 + |k_{1:n \setminus i}|^2)$, we deduce
\begin{equ} \label{e:ComparisonGamma0Fine}
|\mu + \Xi_0 - |p|^2| \leq \mu + |k_{1:n}|^2 + 2 |k_i| |p| \lesssim \sqrt{\mu + |k_{1:n}|^2} \, |p| \, ,
\end{equ}
where the last bound comes from~\eqref{e:GoodRegionBis}. Similarly, we have
\begin{equs}
|\mu + \Xi - |D_0^{1/2} p|^2| &\leq \mu + |D_0^{1/2} k_{1:n}|^2 + 2 |D_0^{1/2} k_i| |D_0^{1/2} p| \lesssim \sqrt{\mu + |k_{1:n}|^2} \, |p| \, , \label{e:ComparisonGammaFine} \\
|\Xi_0^\fe - |\fe \cdot p|^2| &\leq \mu + |\fe \cdot k_{1:n}|^2 + 2 |\fe \cdot k_i| |\fe \cdot p| \lesssim \sqrt{\mu + |k_{1:n}|^2} \, |\fe \cdot p| \, . \label{e:ComparisonGammafeFine} 
\end{equs}
Finally, by using the mean value theorem, the fact that $|\Log'(x)| = 1/x$ for $x \in (0, 1)$, and the estimates~\eqref{e:ComparisonGammaBrutal} and~\eqref{e:ComparisonGamma0Fine}, we deduce
\begin{equ} \label{e:ComparisonLGamma0Fine} 
|\Log(\mu + \Xi_0) - \Log(|p|^2)| \lesssim \frac{|\mu + \Xi_0 - |p|^2|}{|p|^2} \lesssim \frac{\sqrt{\mu + |k_{1:n}|^2}}{|p|} \, .
\end{equ}
Now, we replace successively the arguments of $\Phi$ by using the mean value theorem and the estimates~\eqref{e:AssumptionPhi} and~\eqref{e:ComparisonGammaBrutal}, which gives
\begin{equs}
|\Phi(\mu + \Xi, \Xi_0^\fe, \Log(\mu + \Xi_0)) &- \Phi(|D_0^{1/2} p|^2, |\fe \cdot p|^2, \Log(|p|^2))| \\
&\lesssim C_* \Big(\frac{|\mu + \Xi - |D_0^{1/2} p|^2|}{|p|^4} + \frac{|\Xi_0^\fe - |p|^2|}{|p|^2 \, |\fe \cdot p|^2} + \frac{|\Log(\mu + \Xi_0) - \Log(|p|^2)|}{|p|^2}\Big) \\
&\lesssim C_* \Big(\frac{\sqrt{\mu + |k_{1:n}|^2}}{|p|^3} + \frac{\sqrt{\mu + |k_{1:n}|^2}}{|p|^2 \, |\fe \cdot p|} + \frac{\sqrt{\mu + |k_{1:n}|^2}}{|p|^3}\Big) \\
&\lesssim C_* \Big(\frac{\sqrt{\mu + |k_{1:n}|^2}}{|p|^2 \, |\fe \cdot p|}\Big) \, ,
\end{equs}
where in the second to last line we used~\eqref{e:ComparisonGammaFine},~\eqref{e:ComparisonGammafeFine} and~\eqref{e:ComparisonLGamma0Fine}. In addition, we notice that we can deduce from~\eqref{e:GoodRegionBis} the following basic estimates
\begin{equ}
|p| > 10 |k_{1:n}| \, , \quad |k_i - p| > 9 |k_{1:n}| \, , \quad |k_{1:n}| < \frac{1}{100} \, , \quad |k_i - p| < \frac{11}{100} \, ,
\end{equ}
which gives $\chi^\sharp(...) = 1$, $\Pi_{p,k_i-p} = 1$, and $\1_{\mu + |k_{1:n}|^2/2 < |p|^2 < 1} = 1$. Therefore, we deduce
\begin{equs}
\Big|\int_{G} \dd p \, \chi^\bullet(...) \, \Pi_{p, k_i - p} \, \Phi(\mu + \Xi, \Xi_0^\fe, &\Log(\mu + \Xi_0)) - \int_{G} \dd p \, \1_{\mu + |k_{1:n}|^2/2 < |p|^2 < 1} \, \Phi(|D_0^{1/2} p|^2, |\fe \cdot p|^2, \Log(|p|^2))\Big| \\
\lesssim C_* \int_{G} \dd p \, \frac{\sqrt{\mu + |k_{1:n}|^2}}{|p|^2 |\fe \cdot p|} &\leq C_* \int_{\R^2} \dd p \, \1_{|\fe \cdot p| \geq 10 \sqrt{\mu + |k_{1:n}|^2}} \, \frac{\sqrt{\mu + |k_{1:n}|^2}}{|p|^2 |\fe \cdot p|} \\
&= C_* \int_{\R^2} \dd q \, \1_{|\fe \cdot q| \geq 10} \, \frac{1}{|q|^2 |\fe \cdot q|} \lesssim C_* \, ,
\label{e:GoodRegionBound}
\end{equs}
where we used the change of variable $q = p / \sqrt{\mu + |k_{1:n}|^2}$ in the last line, and where the finiteness of the last integral follows from the computation
\begin{equ}
\int_{\R^2} \dd q \, \1_{|\fe \cdot q| \geq 10} \, \frac{1}{|q|^2 |\fe \cdot q|} = \int_{\R} \dd x \, \1_{|x| \geq 10} \int_{\R} \dd y \, \frac{1}{(x^2 + y^2) \, |x|} = \int_{\R} \1_{|x| \geq 10} \, \frac{\pi}{|x|^{3/2}} < +\infty \, .
\end{equ}

By gathering~\eqref{e:GoodRegionBound} and~\eqref{e:BadRegionBound}, we deduce~\eqref{e:ReplLemmaPhi} and conclude the proof.
\end{proof}

\subsection{Proof of Proposition~\ref{p:GenericAnsatz}}

\label{app:GenericAnsatz}

In this section, we carry out Steps $1$ and $3$ of the proof of Proposition~\ref{p:GenericAnsatz}.
\ss

{\noindent \it Step 1. The $L^2$ estimates}
\ss

We start with the estimates of $\mathfrak{Ans}_{\mu}(\bm \vphi)$ in $L^2$-norm, which include the proof of Lemma~\ref{lem:SummabilityAnsatz} and of~\eqref{e:AnsatzL2}. The key is the following lemma (the analogue of~\cite[Proposition 3.12]{CMT25}), whose statement can be summarised as follows: up to considering a sufficiently large $\nu$, we can always assume that the operator norm of $(\mu - \fL_{\mu, \nu}^\eff)^{-1} \fA_+^\sharp$ is arbitrarily small, say smaller than $1/10$.

\begin{lemma} \label{lem:GA+Contraction}
There exists $\nu_2 \geq 0$ (only depending on $D_0$ and $\lambda$), such that for every $\nu \geq \nu_2$, $\mu > 0$, and $\phi \in \fock{\rap}{}$, it holds
\begin{equ} \label{e:GA+Contraction}
\|(\mu - \fL_{\mu, \nu}^\eff)^{-1} \, \fA_+^\sharp \phi\| \leq \frac{1}{10} \, \|\phi\| \, .
\end{equ}
\end{lemma}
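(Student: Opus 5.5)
We reduce to a single chaos and to an explicit integral bound, as in Lemma~\ref{lem:Diag}. By orthogonality of chaoses it suffices to take $\phi \in \fock{\rap,n}{}$ with $n \geq 1$; the operator $(\mu - \fL_{\mu,\nu}^\eff)^{-1}\fA_+^\sharp$ maps chaos $n$ to chaos $n+1$. Writing the squared norm as $\langle (\mu - \fL_{\mu,\nu}^\eff)^{-2} \fA_+^\sharp\phi, \fA_+^\sharp\phi\rangle$ and applying the exact diagonal identity~\eqref{e:SharpDiag}, we obtain
\begin{equ}
\|(\mu - \fL_{\mu,\nu}^\eff)^{-1}\fA_+^\sharp\phi\|^2 = n!\int \dd k_{1:n}\,|\fF\phi(k_{1:n})|^2\,\frac12\sum_{i=1}^n(\fe\cdot k_i)^2\,\frac{\lambda^2\rho(1-\rho)}{\pi^2}\int_{\R^2}\dd p\,\frac{\chi^\sharp(\dots)\,\Pi_{p,k_i-p}}{(\mu+\Xi+\Upsilon_\nu(\Log(\mu+\Xi_0))\,\Xi_0^\fe)^2}.
\end{equ}
Because~\eqref{e:SharpDiag} is an identity, there is no polynomial loss in $n$ at this stage. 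It therefore suffices to show that, uniformly in $\mu$, $n$, $i$ and $k_{1:n}$, the quantity $(\fe\cdot k_i)^2$ times the inner $p$-integral is at most $c(\nu)$, with $c(\nu)\to 0$ as $\nu\to\infty$. The sum over $i$ is then controlled by $n$, but we must keep the factor $\frac{1}{2}\sum_i (\fe\cdot k_i)^2$ under control relative to $1$, not relative to $|k|^2$, so this needs care.

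The key mechanism is that $\chi^\sharp$ forces $|p|\wedge|k_i-p|\geq 3|k_{1:n}|$. On this region we have $\Xi_0^\fe \gtrsim |\fe\cdot p|^2 + |\fe\cdot k_{1:n}|^2$ and $\Xi\gtrsim |p|^2$ by~\eqref{e:Triangular}. Moreover $\Upsilon_\nu\geq\nu$, since $\Upsilon_\nu$ is increasing with initial value $\nu$. Hence the denominator is at least $(|p|^2+\nu|\fe\cdot p|^2+\nu|\fe\cdot k_{1:n}|^2)^2$. We plan to split $(\fe\cdot k_i)^2$ against the two terms carrying a factor $\nu$. First bound $(\fe\cdot k_i)^2/(\dots) \leq \nu^{-1}/(|p|^2+\nu|\fe\cdot p|^2)$, using one factor of the square to absorb $(\fe\cdot k_i)^2 \leq \nu^{-1}\cdot\nu|\fe\cdot k_{1:n}|^2$. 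The remaining integral $\int_{|p|\geq 3|k|}\dd p\,(|p|^2+\nu|\fe\cdot p|^2)^{-1}$ diverges logarithmically at large $|p|$, but the cutoff $\Pi$ restricts to $|p|\lesssim 1$, so we obtain a factor $\nu^{-1/2}\log(1/|k|)$.

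This logarithm is the main obstacle: a crude bound gives $\nu^{-1}\nu^{-1/2}\log(1/|k_{1:n}|)$, which is not uniform. We will instead keep the full square in the denominator and use $|p|\geq3|k_{1:n}|$. Bounding
\begin{equ}
\frac{(\fe\cdot k_i)^2}{(|p|^2+\nu|\fe\cdot p|^2+\nu|\fe\cdot k|^2)^2}\leq \frac{|\fe\cdot k_i|}{\sqrt{\nu}\,|\fe\cdot k|}\cdot\frac{|\fe \cdot k_i|\,\sqrt{\nu}|\fe\cdot k|}{(\dots)^2}
\end{equ}
and rescaling $p=|k|q$, the integral becomes scale-invariant with decay $|q|^{-4}$ at infinity. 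Concretely, $\int_{|q|\geq 3}\dd q\,\frac{\nu|\fe\cdot\hat k|^2}{(|q|^2+\nu|\fe\cdot q|^2+\nu|\fe\cdot \hat k|^2)^2}$ is bounded by $C\nu^{-1/2}$ after the anisotropic change of variables $y=\sqrt\nu\,\fe\cdot q$; this is the same computation as in the bad-region estimate~\eqref{e:BadRegionBound}. We then obtain $c(\nu)\lesssim\nu^{-1/2}$, and we choose $\nu_2$ so that $c(\nu_2)\leq 1/100$.

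The remaining step is to make sure the factor $\frac{n}{2}$ from the sum over $i$ does not ruin the uniformity in $n$. For this we use $\sum_i(\fe\cdot k_i)^2 = |\fe\cdot k_{1:n}|^2$ together with the term $\nu|\fe\cdot k_{1:n}|^2$ in the denominator: the sum over $i$ is absorbed directly into that term rather than estimated summand by summand.
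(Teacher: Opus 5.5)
Your argument is correct and is essentially the paper's proof. Both use the exact diagonal formula~\eqref{e:SharpDiag}, lower-bound the denominator via~\eqref{e:Triangular} and $\Upsilon_\nu \geq \nu$, apply an anisotropic change of variables to get the factor $\nu^{-1/2}$, and sum $(\fe \cdot k_i)^2$ over $i$ before estimating; your earlier per-$i$ reduction would by itself lose a factor $n$, as you noticed. The paper reaches the scale $|k_{1:n}|^{-2}$ more directly from $\Xi \gtrsim |k_{1:n}|^2 + |p|^2$, so neither $\chi^\sharp$ nor the logarithmic detour is needed: $\int_{\R^2} \frac{\dd p}{(|k_{1:n}|^2 + |p|^2 + \nu |\fe \cdot p|^2)^2} = \frac{\pi}{\sqrt{1+\nu}\, |k_{1:n}|^2}$, which is cancelled by $\sum_i (\fe \cdot k_i)^2 \leq |k_{1:n}|^2$.
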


In particular, we deduce that $(\mu - \fL_{\mu, \nu}^\eff)^{-1} \fA_+^\sharp$ is an $L^2$ contraction, which is useful to handle its iterations that appear in the definition~\ref{e:TrueAnsatz} of the Ansatz. However, the reader should not overvalue Lemma~\ref{lem:GA+Contraction}: indeed, most of the time, we are actually interested in estimating the Ansatz in $H^1$-type norms, for which the $L^2$-contraction estimate~\eqref{e:GA+Contraction} would be ineffective (in contrast to the second replacement lemma~\eqref{e:ReplContraction} which particularly shines in this context, as will be clear from the step $2$ of the proof).
\ss

\begin{proof}[of Lemma~\ref{lem:GA+Contraction}]
By orthogonality of chaoses, it is sufficient to prove the bound for any $\phi \in \fock{\rap,n}{}$ and $n \geq 1$. Then, using~\eqref{e:SharpDiag} and the notations~\eqref{e:XiNotation}, we deduce
\begin{equs}
\|(\mu - \fL_{\mu, \nu}^\eff)^{-1} \, \fA_+^\sharp \phi\|^2 &= \langle (\mu - \fL_{\mu, \nu}^\eff)^{-2} \, \fA_+^\sharp \phi, \fA_+^\sharp \phi \rangle_{\mathrm{diag}} \\
&\lesssim n! \int_{\R^{2n}} \dd k_{1:n} \, |\fF \phi(k_{1:n})|^2 \sum_{i=1}^n |\fe \cdot k_i|^2 \int_{\R^2} \frac{\dd p}{(\mu + \Xi + \Upsilon_\nu(\Log(\mu + \Xi_0)) \, \Xi_0^\fe)^2} \, \\
&\lesssim n! \int_{\R^{2n}} \dd k_{1:n} \, |\fF \phi(k_{1:n})|^2 \sum_{i=1}^n |\fe \cdot k_i|^2 \int_{\R^2} \frac{\dd p}{(|k_{1:n}|^2 + |p|^2 + \nu |\fe \cdot p|^2)^2} \, ,
\end{equs}
where in the last line we used~\eqref{e:Triangular} and the trivial lower bound $\Upsilon_\nu \geq \nu$. A simple change of variables shows that the last integral equals $\frac{C}{(1+\nu)^{1/2} \, |k_{1:n}|^2}$ for some constant $C > 0$. Thus we deduce
\begin{equ}
\|(\mu - \fL_{\mu, \nu}^\eff)^{-1} \, \fA_+^\sharp \phi\|^2 \lesssim \frac{1}{(1 + \nu)^{1/2}} \, \|\phi\|^2 \, ,
\end{equ}
which concludes the proof by assuming $\nu \geq \nu_2$ and taking $\nu_2$ sufficiently large.
\end{proof}

Now, we can explain how to choose the parameter $\nu$ in order to define the Ansatz.

\begin{definition} \label{def:nu}
Let $\nu_0 := \nu_1 \vee \nu_2$ where the constants $\nu_1, \nu_2 \geq 0$ come from the second part of Lemma~\ref{lem:ReplLemma} and from Lemma~\ref{lem:GA+Contraction}, respectively; in particular $\nu_0$ depends only on $D_0$ and $\lambda$.
\end{definition}

We highlight that we could have also chosen $\nu_0$ as any fixed value larger than $\nu_1$ and $\nu_2$. We also mention that it is in fact possible to take $\nu_0 = \nu_1 = \nu_2 = 0$ if the strength of the asymmetry $\lambda$ is sufficiently small compared to $D_0$.
\ss 

We can now easily prove Lemma~\ref{lem:SummabilityAnsatz} and~\eqref{e:AnsatzL2}.

\begin{proof}[of Lemma~\ref{lem:SummabilityAnsatz}]
We show that, for any $\mu > 0$, $a, b \geq 0$, $\phi \in \fockdo{m}{}$ ($m \geq 0$) and $n \geq 1$, we have
\begin{equ} \label{e:SummabilityAnsatzPrecise}
\|\fN_*^a \, (1 - \fL_0)^b \, \big((\mu - \fL_{\mu}^{\eff})^{-1} \, \fA_+^\sharp\big)^n \phi\| \leq 10^{-n} \, (m+n)_*^a \, (1 + 3 \pi^2)^b \, \|\phi\| \, ,
\end{equ}
from which Lemma~\ref{lem:SummabilityAnsatz} easily follows.

First, we deduce by iterating~\eqref{e:GA+Contraction} that
\begin{equ}
\|\big((\mu - \fL_{\mu}^{\eff})^{-1} \, \fA_+^\sharp\big)^n \phi\| \lesssim 10^{-n} \|\phi\| \, .
\end{equ}
It only remains to handle the extra factors $\fN_*^a$ and $(1 - \fL_0)^b$ appearing on the left hand side of~\eqref{e:SummabilityAnsatzPrecise}. First, we observe that $\big((\mu - \fL_{\mu}^{\eff})^{-1} \, \fA_+^\sharp\big)^n \phi \in \fock{m+n}{}$, so that the extra factor $\fN_*^a$ in~\eqref{e:SummabilityAnsatzPrecise} contributes as a factor $(m+n)_*
^a$. Second, we observe that, if for some vectors $p, q, k_{1:N} \in \R^2$ it holds
\begin{equ}
\chi^\sharp(p,q ; k_{1:N}) \, \Pi_{p,q} \ne 0 \, ,
\end{equ}
then we must have $|k_{1:N}| \leq |p| \vee |q| \leq \sqrt{2} \, \pi$, which leads to $|p|^2 + |q|^2 + |k_{1:N}|^2 \leq 6 \pi^2$. This shows that, if $\psi \in \fA_+^\sharp(\fock{\rap}{})$, then its Fourier transform is supported on $\{k_{1:N'} \colon |k_{1:N'}|^2 \leq 6 \pi^2, \, N' \geq 0\}$; and the same holds if $\psi \in (\mu - \fL_{\mu}^{\eff})^{-1} \, \fA_+^\sharp(\fock{\rap}{})$ since $(\mu - \fL_{\mu}^{\eff})^{-1}$ is a diagonal operator. In particular, we deduce that the extra factor $(1 - \fL_0)^b$ in~\eqref{e:SummabilityAnsatzPrecise} can be bounded by $(1 + 3 \pi^2)^b$. This concludes the proof of~\eqref{e:SummabilityAnsatzPrecise}.
\end{proof}

\begin{proof}[of~\eqref{e:AnsatzL2}]
First, by iterating~\eqref{e:GA+Contraction}, we deduce that, for every $\phi \in \fock{\rap}{}$ and $n \geq 0$, we have
\begin{equs}
\|\big((\mu - \fL_{\mu}^{\eff})^{-1} \fA_+^\sharp\big)^n \, (\mu - \fL_{\mu}^{\eff})^{-1} \phi\| \leq 10^{-n} \, \|(\mu - \fL_{\mu}^{\eff})^{-1} \phi\| \, .
\end{equs}
Now, notice that the operator $\big((\mu - \fL_{\mu}^{\eff})^{-1} \fA_+^\sharp\big)^n \, (\mu - \fL_{\mu}^{\eff})^{-1}$ maps $\fock{\rap,n_0}{}$ into $\fock{\rap, n_0 + n}{}$ for any $n_0 \geq 0$. Then, by using orthogonality of chaoses and the bound $(n_0 + n)_*^\kappa \, \alpha^{n_0 + n} \leq n_*^\kappa \alpha^n \, (n_0)_*^\kappa \alpha^{n_0}$, we deduce that, for any $\bm \vphi \in \fock{\rap}{}$ and $n \geq 0$, it holds
\begin{equ}
\|\fN_{*}^\kappa \alpha^\fN \, \big((\mu - \fL_{\mu}^{\eff})^{-1} \fA_+^\sharp\big)^n \, (\mu - \fL_{\mu}^{\eff})^{-1}  \bm \vphi\| \leq \frac{n_*^\kappa \alpha^n}{10^n} \, \|\fN_{*}^\kappa \alpha^\fN \, (\mu - \fL_{\mu}^{\eff})^{-1} \bm \vphi\| \, .
\end{equ}
By using~\eqref{e:TrueAnsatz}, the triangle inequality, and summing the above bound over $n \geq 0$ (note that $\sum_{n \geq 0} \frac{n_*^\kappa \alpha^n}{10^n} < +\infty$ because $\alpha \leq \sqrt{2}$), we deduce~\eqref{e:AnsatzL2} and conclude the proof.
\end{proof}
\ms

{\noindent \it Step 3. The generator estimates}
\ss

Finally, we study the action of the generator on the Ansatz and show to this end~\eqref{e:AnsatzGen} and~\eqref{e:AnsatzH1precise}. First, we compute $(\mu - \bar \fL) \, \mathfrak{Ans}_{\mu}(\bm \vphi)$ as in~\cite[Lemma 4.2]{CMT25}.

\begin{lemma} \label{lem:GenAnsatz}
For every $\mu > 0$ and $\bm \vphi \in \fock{\rap}{}$, we have
\begin{equs} \label{e:GenAnsatz}
(\mu - \bar \fL) \, \mathfrak{Ans}_{\mu}(\bm \vphi) &=  \bm \vphi - \fA_-^\sharp (\mu - \fL_{\mu}^\eff)^{-1} \bm \vphi \\
&\ \ + (\fL_0^\fe[\Upsilon]_\mu - \fA_-^\sharp (\mu - \fL_{\mu}^\eff)^{-1} \fA_+^\sharp) \, \mathfrak{Ans}_{\mu}(\bm \vphi) - \bar \fA^\flat \, \mathfrak{Ans}_{\mu}(\bm \vphi) \, .
\end{equs}
\end{lemma}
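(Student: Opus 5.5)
The plan is a purely algebraic computation from the recursive identity~\eqref{e:AnsatzIdentity}, with no estimates involved. First write $\bar \fL = \fL - \fA_0 = \fS + \fA_+ + \fA_-$. Then split the antisymmetric part as $\fA_+ + \fA_- = \fA_+^\sharp + \fA_-^\sharp + \bar \fA^\flat$, using Definition~\ref{def:Sharp}. Finally use $\fS = \fL_\mu^{\eff} - \fL_0^\fe[\Upsilon]_\mu$ from~\eqref{e:Leff}. Together these give
\begin{equ}
\mu - \bar \fL = (\mu - \fL_\mu^{\eff}) + \fL_0^\fe[\Upsilon]_\mu - \fA_+^\sharp - \fA_-^\sharp - \bar \fA^\flat \, ,
\end{equ}
as operators on $\fock{\rap}{}$. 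All operators involved are continuous on $\fock{\rap}{}$. By Lemma~\ref{lem:SummabilityAnsatz}, $\mathfrak{Ans}_\mu(\bm\vphi) \in \fock{\rap}{}$, so applying each term to it is legitimate.

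Next, apply $(\mu - \fL_\mu^{\eff})$ to~\eqref{e:AnsatzIdentity}. This is allowed because $(\mu - \fL_\mu^{\eff})$ is diagonal with polynomially bounded kernel and inverts $(\mu-\fL_\mu^{\eff})^{-1}$ on $\fock{\rap}{}$. It yields
\begin{equ}
(\mu - \fL_\mu^{\eff}) \, \mathfrak{Ans}_\mu(\bm\vphi) = \fA_+^\sharp \, \mathfrak{Ans}_\mu(\bm\vphi) + \bm\vphi \, .
\end{equ}
Substituting this into the decomposition above, the $\fA_+^\sharp$ terms cancel, and we are left with
\begin{equ}
(\mu - \bar \fL)\, \mathfrak{Ans}_\mu(\bm\vphi) = \bm\vphi + \fL_0^\fe[\Upsilon]_\mu \, \mathfrak{Ans}_\mu(\bm\vphi) - \fA_-^\sharp \, \mathfrak{Ans}_\mu(\bm\vphi) - \bar\fA^\flat \, \mathfrak{Ans}_\mu(\bm\vphi) \, .
\end{equ}

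It remains to rewrite the $\fA_-^\sharp$ term. Apply $\fA_-^\sharp$, which is continuous on $\fock{\rap}{}$, to~\eqref{e:AnsatzIdentity} once more:
\begin{equ}
\fA_-^\sharp \, \mathfrak{Ans}_\mu(\bm\vphi) = \fA_-^\sharp (\mu - \fL_\mu^{\eff})^{-1} \fA_+^\sharp \, \mathfrak{Ans}_\mu(\bm\vphi) + \fA_-^\sharp (\mu - \fL_\mu^{\eff})^{-1} \bm\vphi \, .
\end{equ}
Plugging this in and grouping the $\fL_0^\fe[\Upsilon]_\mu$ and $\fA_-^\sharp(\mu-\fL_\mu^{\eff})^{-1}\fA_+^\sharp$ terms gives exactly~\eqref{e:GenAnsatz}.

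The only delicate point is justifying that each operator acts continuously on $\fock{\rap}{}$ and that~\eqref{e:AnsatzIdentity} can be hit by these operators termwise. This is already settled by Lemma~\ref{lem:SummabilityAnsatz} and Remark~\ref{rmk:AnsatzIdentity}, so no real obstacle is expected.
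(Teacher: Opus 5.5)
Your proposal is correct and follows the paper's proof essentially verbatim: you decompose $\mu - \bar\fL = (\mu - \fL_\mu^{\eff}) + \fL_0^\fe[\Upsilon]_\mu - \fA_+^\sharp - \fA_-^\sharp - \bar\fA^\flat$, then use the recursive identity~\eqref{e:AnsatzIdentity} twice, once after applying $(\mu - \fL_\mu^{\eff})$ and once after applying $\fA_-^\sharp$. Your remarks on the continuity of these operators on $\fock{\rap}{}$ just make explicit what the paper leaves implicit.
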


\begin{proof}

To shorten notations, we denote $\fs := \mathfrak{Ans}_{\mu}(\bm \vphi)$ throughout this proof. We write
\begin{equ} \label{e:GenAnsatzComputation}
(\mu - \bar \fL) \, \fs = (\mu - \fL_{\mu}^\eff) \, \fs + \fL_0^\fe[\Upsilon]_{\mu} \, \fs - \fA_+^\sharp \, \fs - \fA_-^\sharp \fs - \bar \fA^\flat \fs \, .
\end{equ}
By using the identity~\eqref{e:AnsatzIdentity} twice, we have
\begin{equs}
(&\mu - \fL_{\mu}^\eff) \, \fs = \fA_+^\sharp \, \fs + \bm \vphi \, , \\
-&\fA_-^\sharp \, \fs = - \fA_-^\sharp (\mu - \fL_{\mu}^\eff)^{-1} \fA_+^\sharp \, \fs - \fA_-^\sharp (\mu - \fL_{\mu}^\eff)^{-1} \bm \vphi \, .
\end{equs}
Plugging these two equalities in~\eqref{e:GenAnsatzComputation}, we end up with
\begin{equ} \label{e:GenAnsatzBis}
(\mu - \bar \fL) \, \fs = \bm \vphi + \fL_0^\fe[\Upsilon]_{\mu} \, \fs - \fA_-^\sharp (\mu - \fL_{\mu}^\eff)^{-1} \fA_+^\sharp \, \fs - \fA_-^\sharp (\mu - \fL_{\mu}^\eff)^{-1} \bm \vphi - \bar \fA^\flat \, \fs \, ,
\end{equ}
which concludes the proof.
\end{proof}

Let us stress that, on the right hand side of~\eqref{e:GenAnsatz}, the first two summands correspond to those appearing in the estimate~\eqref{e:AnsatzGen}, while the last two summands should be negligible: indeed, the third term can be controlled by the replacement lemma~\eqref{e:ReplFixPt}, and the fourth term can be controlled by the following lemma (the analogue of~\cite[Proposition 3.4]{CMT25}).

\begin{lemma} \label{lem:Aflat}
For every $\mu > 0$ and $\phi \in \fock{\rap}{}$, we have
\begin{equ} \label{e:Aflat}
\|(- \fL_0)^{-1/2} \, \fA_+^\flat \phi\| \vee \|(- \fL_0)^{-1/2} \, \fA_-^\flat \phi\| \lesssim \|\fN \, (- \fL_0^\fe)^{1/2} \, \phi\| \, .
\end{equ}
\end{lemma}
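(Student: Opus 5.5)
We treat $\fA_+^\flat$ and $\fA_-^\flat$ separately and argue directly in Fourier variables. Duality does not help here: the two bounds are not adjoint to each other, because the weight $(-\fL_0^\fe)^{1/2}$ sits on the input in both estimates. By orthogonality of chaoses it suffices to take $\phi \in \fock{\rap,n}{}$ with $n \geq 1$, and to show the bound with a factor $n$ on the right-hand side.

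The key geometric observation concerns the cut-off $\chi^\flat(p, q; c) = 1 - \1_{|p| \wedge |q| \geq 3|c|}$ with $p + q = k_i$ and $|k_i| \leq |c|$. On its support, either $|p| < 3|c|$, or $|q| < 3|c|$ and then $|p| \leq |q| + |k_i| < 4|c|$. So in both cases the created mode $p$ lies in the ball $B(0, 4|c|)$. Since $|c|^2 \lesssim \Xi_0$, this gives
\begin{equ}
\int_{\R^2} \frac{\chi^\flat(p, k_i - p; c) \, \dd p}{|c|^2 + |p|^2} \lesssim 1 \, , \qquad \int_{\R^2} \chi^\flat(p, k_i - p; c) \, \dd p \lesssim |c|^2 \, ,
\end{equ}
uniformly in $c$. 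Both bounds are scale invariant, and this is exactly why no logarithm appears, in contrast with the $\sharp$ part.

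\emph{The case $\fA_+^\flat$.} We apply~\eqref{e:CSDiag} with $\bullet = \flat$ and with $\fD = (-\fL_0)^{-1}$, whose kernel on chaos $n+1$ is $1/\Xi_0$ in the notation~\eqref{e:XiNotation}. This gives
\begin{equ}
\|(-\fL_0)^{-1/2} \fA_+^\flat \phi\|^2 \lesssim n^2 \, n! \int \dd k_{1:n} \, |\fF \phi(k_{1:n})|^2 \sum_{i=1}^n (\fe \cdot k_i)^2 \int_{\R^2} \frac{\chi^\flat(p, k_i - p; k_{1:n}) \, \dd p}{\Xi_0} \, .
\end{equ}
Here $\Xi_0 \asymp |k_{1:n}|^2 + |p|^2$ by~\eqref{e:Triangular}. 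The first integral bound above, with $c = k_{1:n}$, then shows that the inner integral is $O(1)$. The remaining expression is $\lesssim n^2 \|(-\fL_0^\fe)^{1/2} \phi\|^2$, as desired.

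\emph{The case $\fA_-^\flat$.} We start from~\eqref{e:fA-Sharp}, with output variables $k_{1:n-1}$ and $c = k_{1:n-1}$. First we apply Cauchy--Schwarz over $i$, which costs a factor $n$. For each $i$ we then use $(\fe \cdot k_i)^2 \leq 2\big((\fe \cdot \ell)^2 + (\fe \cdot (k_i - \ell))^2\big)$, moving this factor inside the $\ell$-integral before squaring. Next we apply Cauchy--Schwarz in $\ell$ against the indicator $\chi^\flat$, whose support has area $\lesssim |k_{1:n-1}|^2$ by the second integral bound above. This area cancels the factor $|k_{1:n-1}|^{-2}$ coming from $(-\fL_0)^{-1/2}$. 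We are left with
\begin{equ}
n^2 \, n^2 \, (n-1)! \int \dd k_{1:n-1} \int \dd \ell \, \big((\fe \cdot \ell)^2 + (\fe \cdot (k_1 - \ell))^2\big) \, |\fF \phi(\ell, k_1 - \ell, k_{2:n-1})|^2 \, ,
\end{equ}
where we used the symmetry of $\phi$ to reduce to $i = 1$. The change of variables $(\ell, k_1 - \ell, k_{2:n-1}) \mapsto q_{1:n}$ turns this into $\lesssim n^3 (n-1)! \int |\fe \cdot q_{1:n}|^2 |\fF \phi(q_{1:n})|^2 \, \dd q_{1:n} \lesssim n^2 \|(-\fL_0^\fe)^{1/2} \phi\|^2$.

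The only delicate points are the geometric support claim for $\chi^\flat$, including the correct choice of the reference vector $c$ in each of the two operators, and keeping the powers of $n$ at most $n^2$ in the squared norms. The combinatorial factors $n!/(n-1)!$ must be tracked carefully in the $\fA_-^\flat$ computation; I expect this bookkeeping, rather than any analytic estimate, to be the main obstacle.
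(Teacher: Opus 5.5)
Your proof is correct. For $\fA_+^\flat$ it coincides with the paper's argument: \eqref{e:CSDiag} with $\fD = (\mu - \fL_0)^{-1}$, plus the observation that $\chi^\flat = 1$ forces $|p| \le 4|k_{1:n}|$. The paper keeps $\mu > 0$ and lets $\mu \searrow 0$ at the end, which is only a cosmetic difference.

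For $\fA_-^\flat$ you take a different route. Your remark that duality does not help is not accurate. It is true that the two stated bounds are not adjoint to each other. But the paper does use duality, to trade the $\fA_-^\flat$ estimate for a second $\fA_+^\flat$ estimate with the weights swapped, namely $\langle (\mu - \fL_0^\fe)^{-1} \fA_+^\flat \phi, \fA_+^\flat \phi \rangle \lesssim \langle \fN^2 (\mu - \fL_0) \phi, \phi \rangle$. Via $(\fA_+^\flat)^* = -\fA_-^\flat$, the index shift in $\fN$, and $\mu \searrow 0$, this is equivalent to $\|(-\fL_0)^{-1/2} \fA_-^\flat \phi\| \lesssim \|\fN (-\fL_0^\fe)^{1/2} \phi\|$. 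That swapped $\fA_+^\flat$ bound is again a one-line application of \eqref{e:CSDiag}. The factor $(\fe \cdot k_i)^2$ produced by $\fA_+$ is absorbed by the lower bound $\Xi_0^\fe \gtrsim |\fe \cdot k_{1:n}|^2$, and the area of the support $\{|p| \le 4|k_{1:n}|\}$ then produces $|k_{1:n}|^2$.

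Your direct computation performs the same Cauchy--Schwarz by hand on the output side. The area $\lesssim |k_{1:n-1}|^2$ of the $\chi^\flat$-support cancels the $(-\fL_0)^{-1}$ weight. The splitting $\fe \cdot k_i = \fe \cdot \ell + \fe \cdot (k_i - \ell)$ transfers the drift factor onto the input modes. Your bookkeeping is also right: $n^4 \, (n-1)! \cdot \tfrac{2}{n} \lesssim n^2 \, n!$.

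The paper's route is shorter and reuses the diagonal-bracket identity and the adjoint relation. Yours is self-contained and needs neither, at the price of tracking the combinatorial factors explicitly.
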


\begin{proof}
First, by an easy duality argument (and by letting $\mu \searrow 0$), it is sufficient to show for any $\phi \in \fock{}{}$ and $\mu > 0$
\begin{equ} \label{e:AflatProof}
\langle (\mu - \fL_0)^{-1} \, \fA_+^\flat \phi, \fA_+^\flat \phi \rangle \lesssim \langle \fN^2 (\mu - \fL_0^\fe) \, \phi, \phi \rangle \, , \quad \langle (\mu - \fL_0^\fe)^{-1} \, \fA_+^\flat \phi, \fA_+^\flat \phi \rangle \lesssim \langle \fN^2 (\mu - \fL_0) \, \phi, \phi\rangle \, .
\end{equ}
Further, by orthogonality of chaoses, it is sufficient to show the bound for any $\phi \in \fock{\rap,n}{}$ and $n \geq 1$.
\ss

We start with the first bound. Using~\eqref{e:CSDiag} and the notations~\eqref{e:XiNotation}, we obtain
\begin{equs}
\langle (\mu - \fL_0)^{-1} \, \fA_+^\flat \phi, \fA_+^\flat \phi \rangle &\lesssim n^2 \, n! \, \int_{\R^{2n}} \dd {k_{1:n}} |\fF \phi(k_{1:n})|^2 \sum_{i=1}^n |\fe \cdot k_i|^2 \, \int_{\R^2} \frac{\chi^\flat(p, k_i - p ; k_{1:n}) \, \dd p}{\mu + \Xi} \\
&\lesssim n^2 \, n! \, \int_{\R^{2n}} \dd {k_{1:n}} |\fF \phi(k_{1:n})|^2 \sum_{i=1}^n |\fe \cdot k_i|^2 \, \int_{\R^2} \frac{1_{|p| \leq 4 |k_{1:n}|} \, \dd p}{|k_{1:n}|^2} \\
&\lesssim n^2 \, \langle (- \fL_0^\fe) \, \phi, \phi \rangle \, ,
\end{equs}
where in the second line we used~\eqref{e:Triangular} and observed that, if $\chi^\flat(p, k_i - p ; k_{1:n}) = 1$, then either $|p| \leq 3 |k_{1:n}|$ or $|k_i - p| \leq 3 |k_{1:n}|$, so that in both cases it holds $|p| \leq 4 |k_{1:n}|$.

We can prove similarly the second bound. Indeed, by using~\eqref{e:CSDiag} and the notations~\eqref{e:XiNotation}, we obtain
\begin{equs}
\langle (\mu - \fL_0^\fe)^{-1} \, \fA_+^\flat \phi, \fA_+^\flat \phi \rangle &\lesssim n^2 \, n! \, \int_{\R^{2n}} \dd {k_{1:n}} |\fF \phi(k_{1:n})|^2 \sum_{i=1}^n |\fe \cdot k_i|^2 \, \int_{\R^2} \frac{\chi^\flat(p, k_i - p ; k_{1:n}) \, \dd p}{\mu + \Xi_0^\fe} \\
&\lesssim n^2 \, n! \, \int_{\R^{2n}} \dd {k_{1:n}} |\fF \phi(k_{1:n})|^2 \sum_{i=1}^n |\fe \cdot k_i|^2 \, \int_{\R^2} \frac{1_{|p| \leq 4 |k_{1:n}|} \, \dd p}{|\fe \cdot k_{1:n}|^2} \\
&\lesssim n^2 \langle (- \fL_0) \, \phi, \phi\rangle \, .
\end{equs}
This concludes the proof.
\end{proof}

We can now estimate $(\mu - \bar \fL) \, \mathfrak{Ans}_{\mu}(\bm \vphi)$ in $H^{-1}$-norm as in~\eqref{e:AnsatzGen}.

\begin{proof}[of~\eqref{e:AnsatzGen}]
First, by~\eqref{e:GenAnsatz} and the triangle inequality, we have
\begin{equs}
\|\fN_{*}^\kappa \alpha^{\fN} \, (- \fL_0)^{-1/2} \, &\big((\mu - \bar \fL) \, \mathfrak{Ans}_{\mu}(\bm \vphi) - \bm \vphi + \fA_-^\sharp (\mu - \fL_{\mu}^{\eff})^{-1} \bm \vphi\big)\| \\
&\leq \|\fN_{*}^\kappa \alpha^{\fN} \, (- \fL_0)^{-1/2} \big(\fL_0^\fe[\Upsilon]_{\mu} - \fA_-^\sharp (\mu - \fL_{\mu}^{\eff})^{-1} \fA_+^\sharp\big) \, \mathfrak{Ans}_{\mu}(\bm \vphi)\| \\
&\quad + \|\fN_{*}^\kappa \alpha^{\fN} \, (- \fL_0)^{-1/2} \, \bar \fA^\flat \, \mathfrak{Ans}_{\mu}(\bm \vphi)\| \, , \label{e:AnsatzGenComputation}
\end{equs}
Now, observe that we immediately deduce, respectively from~\eqref{e:ReplFixPt} and~\eqref{e:Aflat}, that for any $\phi \in \fock{\rap}{}$
\begin{equs}
\|\fN_{*}^{\kappa} \alpha^{\fN} \, (- \fL_0)^{-1/2} \big(\fL_0^\fe[\Upsilon]_{\mu} - \fA_-^\sharp (\mu - \fL_{\mu}^{\eff})^{-1} \fA_+^\sharp\big) \, \phi\| &\lesssim_{\kappa, \alpha} \|\fN_{*}^\kappa \alpha^{\fN} \, (- \fL_0^\fe)^{1/2} \, \phi\| \, , \\
\|\fN_{*}^\kappa \alpha^{\fN} \, (- \fL_0)^{-1/2} \, \bar \fA^\flat \, \phi\| &\lesssim_{\kappa, \alpha} \|\fN_{*}^{\kappa + 1} \alpha^{\fN} \, (- \fL_0^\fe)^{1/2} \, \phi\| \, .
\end{equs}
By taking $\phi = \mathfrak{Ans}_{\mu}(\bm \vphi)$ and using~\eqref{e:AnsatzH1fe}, we deduce
\begin{equs}
\|\fN_{*}^\kappa \alpha^{\fN} \, (- \fL_0)^{-1/2} \, [(\mu - \bar \fL) \, \mathfrak{Ans}_{\mu}(\bm \vphi) &- \bm \vphi + \fA_-^\sharp (\mu - \fL_{\mu}^{\eff})^{-1} \bm \vphi]\| \\
&\lesssim_{\kappa, \alpha} \|\fN_{*}^{\kappa + 1} \alpha^{\fN} \, (- \fL_0^\fe\big[\tilde \Upsilon^\theta\big]_\mu)^{1/2} (\mu - \fL_{\mu}^\eff)^{-1} \bm \vphi\| \, ,
\end{equs}
which readily shows~\eqref{e:AnsatzGen}.
\end{proof}

Finally, we estimate the $H^1$ norm of the Ansatz $\|(\mu - \fS)^{1/2} \, \mathfrak{Ans}_{\mu}(\bm \vphi)\|$ as in~\eqref{e:AnsatzH1precise}. For this last estimate, we assume that $\bm \vphi$ is homogeneous (that is, belonging to $\fock{n_0}{}$ for some $n_0 \geq 0$) as it considerably simplifies the proof, and because we only need~\eqref{e:AnsatzH1precise} to estimate the diffusivity, for which the relevant $\bm \vphi$ belongs to the second chaos $\fock{2}{}$. 

\begin{proof}[of~\eqref{e:AnsatzH1precise}]
Let $\bar \fL^\sharp := \fS + \bar \fA^\sharp = \bar \fL - \bar \fA^\flat$. We observe that the symmetric part of $\bar \fL^\sharp$ is given by $\fS$ (on $\fock{\rap}{} \subset \fock{}{}$), so that we deduce
\begin{equs}
\|(\mu - \fS)^{1/2} \, \mathfrak{Ans}_{\mu}(\bm \vphi)\|^2 &= \langle (\mu - \bar \fL^\sharp) \, \mathfrak{Ans}_{\mu}(\bm \vphi), \mathfrak{Ans}_{\mu}(\bm{\vphi}) \rangle \, , \\
&= \langle \bm \vphi, \mathfrak{Ans}_{\mu}(\bm \vphi) \rangle \\
&\quad + \langle - \fA_-^\sharp (\mu - \fL_{\mu}^\eff)^{-1} \bm \vphi, \mathfrak{Ans}_{\mu}(\bm \vphi) \rangle \\
&\quad + \langle (\fL_0^\fe[\Upsilon]_{\mu} - \fA_-^\sharp (\mu - \fL_{\mu}^\eff)^{-1} \fA_+^\sharp) \, \mathfrak{Ans}_{\mu}(\bm \vphi), \mathfrak{Ans}_{\mu}(\bm \vphi) \rangle \, , \\
&=: \one + \two + \three
\end{equs}
where we used Lemma~\ref{lem:GenAnsatz} in the second line. We recall that we assume $\bm \vphi \in \fock{\rap,n_0}{}$ for some $n_0 \geq 0$; in particular, the summand of index $n$ in the definition~\eqref{e:TrueAnsatz} of the Ansatz belongs to $\fock{n_0 + n}{}$. On the one hand, we deduce that only the summand $n=0$ contributes in $\one$ so that
\begin{equ}
\one = \langle \bm \vphi, (\mu - \fL_{\mu}^{\eff})^{-1} \bm \vphi \rangle = \|(\mu - \fL_{\mu}^{\eff})^{-1/2} \bm \vphi\|^2 \, .
\end{equ}
On the other hand, we deduce that $\two = 0$ because $- \fA_-^\sharp (\mu - \fL_{\mu}^\eff)^{-1} \bm \vphi \in \fock{n_0 - 1}{}$. Finally, we can bound $\three$ by using the first replacement lemma~\eqref{e:ReplFixPt}, as follows
\begin{equ}
|\three| \lesssim \|(- \fL_0^\fe)^{1/2} \, \mathfrak{Ans}_{\mu}(\bm \vphi)\|^2 \lesssim \|(-\fL_0^\fe\big[\tilde \Upsilon^\theta\big]_\mu)^{1/2} \, (\mu - \fL_{\mu}^\eff)^{-1} \bm \vphi\|^2 \, ,
\end{equ}
where in the last bound we used~\eqref{e:AnsatzH1fe}. Gathering the previous results, we deduce~\eqref{e:AnsatzH1precise} and conclude the proof.
\end{proof}
\ms

\subsection{Proof of Proposition~\ref{p:AnsatzDiff}}

\label{app:AnsatzProof}

In this section, we prove Proposition~\ref{p:AnsatzDiff}. We first state useful estimates on $\fB_+ \cV^\delta$.

\begin{lemma} \label{lem:InputB+}
For every $\mu \geq 0$, $\delta \in (0,1]$, $\theta \in (0,1]$, $\gamma \in (0,1]$, it holds
\begin{equs}
\sqrt{\mu} \, \|(\mu - \fL_{\mu}^{\eff})^{-1} \fB_+ \cV^\delta\| &\lesssim 1 \, , \label{e:InputB+L2} \\
\|(- \fL_0^\fe\big[\tilde \Upsilon^\theta\big]_\mu)^{1/2} (\mu - \fL_{\mu}^{\eff})^{-1} \fB_+ \cV^\delta\| &\lesssim_{\theta} \log_{*}(1/\mu)^{\theta / 3} \, , \label{e:InputB+H1e}\\
\|(\mu - \fL_0)^{(1+\gamma)/2} (\mu - \fL_{\mu}^{\eff})^{-1} \fB_+ \cV^\delta\| &\lesssim_\gamma 1 \, , \label{e:InputB+H1gamma} \\
\limsup_{\delta \to 0} \Big|\|(\mu - \fL_{\mu}^\eff)^{-1/2} \, \fB_+ \cV^\delta\|^2 - \frac{d^{\lim}}{2} \log_{*}(&1/\mu)^{2/3}\Big| \lesssim 1 \, , \label{e:InputB+H1} \\
\|\fA_0 \cV^\delta\| \xrightarrow[\delta \to 0]{} 0 \, ,  \quad \|\fA_-^\sharp (\mu - \fL_{\mu}^\eff)^{-1} \fB_+ &\cV^\delta\| \xrightarrow[\delta \to 0]{} 0 \, , \label{e:InputB+A-A0}
\end{equs}
\end{lemma}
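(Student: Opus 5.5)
Everything reduces to explicit Fourier computations in the second chaos. By \eqref{e:fB+} and \eqref{e:SqrtDirac}, $\fF(\fB_+\cV^\delta)(p,q)=\frac{\lambda\sqrt{\rho(1-\rho)}}{2\pi}\Pi_{p,q}\,\delta^{-1}\fF\cV(\delta^{-1}(p+q))$. Since $(\mu-\fL_\mu^\eff)^{-1}$ is diagonal with kernel $(\mu+\Xi+\Upsilon(\Log(\mu+\Xi_0))\Xi_0^\fe)^{-1}$ (notation \eqref{e:XiNotation} with $n=1$, $k_1=k=p+q$), each quantity becomes a double integral over $k$ and $p$. In $k$ we have the weight $\delta^{-2}|\fF\cV(\delta^{-1}k)|^2$, which is an approximate identity concentrating at $k=0$ with unit mass. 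In $p$ we have an integrand of the type treated in Lemma~\ref{lem:ReplLemmaGeneral}, up to the missing factor $|\fe\cdot k|^2$ and with $\chi^\bullet=1$, $\bullet=\emptyset$.

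\textbf{Bounds \eqref{e:InputB+L2}, \eqref{e:InputB+H1gamma} and \eqref{e:InputB+H1e}.} For \eqref{e:InputB+L2}, use $\mu+\Xi\gtrsim\mu+|p|^2$ from \eqref{e:Triangular}. The $p$-integral $\int(\mu+|p|^2)^{-2}\,\dd p\lesssim\mu^{-1}$ then gives the claim after integrating the unit-mass $k$-weight. For \eqref{e:InputB+H1gamma}, the $p$-integral is $\int\Pi\,(\mu+|p|^2)^{1+\gamma-2}\,\dd p$, which is finite uniformly in $\mu$ because $\gamma>0$ and the cutoff $\Pi$ restricts to $|p|\lesssim1$. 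For \eqref{e:InputB+H1e}, use $\tilde\Upsilon^\theta(\Log(\cdot))\lesssim\log_*(1/\mu)^{2\theta/3}$, valid since $\mu+\Xi_0\ge\mu$. Then use $\Xi_0^\fe\lesssim\Xi$, so the integrand is at most $\log_*(1/\mu)^{2\theta/3}\,\Xi_0^\fe/(\mu+\Xi+\Upsilon\Xi_0^\fe)^2$. Lemma~\ref{lem:ReplLemmaGeneral}, specifically \eqref{e:SecondReplLemmaInt} with $g=\tilde\Upsilon^\theta$, evaluates this exactly as $J'(\Upsilon;\tilde\Upsilon^\theta)+O(1)$. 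By \eqref{e:JJ'Ups} this is $\lesssim\tilde\Upsilon^\theta(\Log\mu)\lesssim\log_*(1/\mu)^{2\theta/3}$, and taking the square root gives $\log_*(1/\mu)^{\theta/3}$.

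\textbf{Asymptotic \eqref{e:InputB+H1}.} Write the norm as $\frac{\lambda^2\rho(1-\rho)}{4\pi^2}\cdot 2\int\dd k\,\delta^{-2}|\fF\cV(k/\delta)|^2\int\dd p\,\Pi\,(\mu+\Xi+\Upsilon\Xi_0^\fe)^{-1}$, with the factor $2$ coming from the second-chaos norm. Apply \eqref{e:FirstReplLemmaInt} with $f=\Upsilon$. This replaces the $p$-integral by $\frac{\pi^2}{\lambda^2\rho(1-\rho)}J(\Upsilon)(\Log(\mu+|k|^2/2))+O(1)$, and \eqref{e:JJ'Ups} gives $J(\Upsilon)=\Upsilon-\nu_0$. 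Letting $\delta\to0$, dominated convergence sends $k\to0$, leaving $\frac12\Upsilon(\Log\mu)+O(1)$. Finally \eqref{e:AsymptoticUps} replaces $\Upsilon(\Log\mu)$ by $d^{\lim}\log_*(1/\mu)^{2/3}+O(1)$.

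\textbf{Limits \eqref{e:InputB+A-A0}.} For the first, $\|\fA_0\cV^\delta\|^2=\int|v\cdot k|^2\delta^{-2}|\fF\cV(k/\delta)|^2\,\dd k=O(\delta^2)$. For the second, $\fA_-^\sharp$ produces the prefactor $\fw\cdot k$ times a $p$-integral bounded, by Cauchy--Schwarz, by $(\int\Pi(\mu+|p|^2)^{-1}\,\dd p)^{1/2}$ times the norm. That integral is $\lesssim\log_*(1/\mu)$, a constant for fixed $\mu$, so the whole expression is $O(\delta\log_*(1/\mu)^{1/2})\to0$.

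\textbf{Main obstacle.} The main technical point is applying Lemma~\ref{lem:ReplLemmaGeneral} with $n=1$ and $\bullet=\emptyset$ uniformly in $k$. Its error is $O(C_f)$ uniformly in $k_{1:n}$, so this is fine; the $\delta\to0$ limit must be taken after that uniform replacement.
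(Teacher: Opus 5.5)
Your proposal is correct and follows essentially the paper's own route. You write each norm via the second-chaos identity $\frac{\lambda^2\rho(1-\rho)}{2\pi^2}\int \dd k\,|\fF\cV^\delta(k)|^2\int \dd p\,\Pi_{p,k-p}\,d(p,k-p)$. You control the inner $p$-integral uniformly in $k$: crudely for \eqref{e:InputB+L2} and \eqref{e:InputB+H1gamma}, and via \eqref{e:FirstReplLemmaInt}, \eqref{e:SecondReplLemmaInt}, \eqref{e:JJ'Ups} and \eqref{e:AsymptoticUps} for \eqref{e:InputB+H1e} and \eqref{e:InputB+H1}. For the vanishing terms you use $|k|\lesssim\delta$ on the support of $\fF\cV^\delta$.

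One slip in your treatment of \eqref{e:InputB+H1e}: do not first pull $\tilde\Upsilon^\theta$ out as $\log_*(1/\mu)^{2\theta/3}$. The remaining integral would then be $J'(\Upsilon;1)+O(1)$, which grows like $\log\log(1/\mu)$, so the stated bound would be lost. Instead, apply \eqref{e:SecondReplLemmaInt} with $g=\tilde\Upsilon^\theta$ directly to the original integrand $\Pi_{p,k-p}\,\tilde\Upsilon^\theta(\Log(\mu+\Xi_0))\,\Xi_0^\fe/(\mu+\Xi+\Upsilon(\Log(\mu+\Xi_0))\,\Xi_0^\fe)^2$, which is what your final sentence actually does.
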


\begin{proof}[of Lemma~\ref{lem:InputB+}]
In this proof, we repeatedly use the following fact: for any positive diagonal operator $\fD$ with kernel $d(\cdot)$ it holds
\begin{equs}
\|\fD^{1/2} \, \fB_+ \cV^\delta\|^2 &= \langle \fD \, \fB_+ \cV^\delta, \fB_+ \cV^\delta\rangle \\
&= \frac{\lambda^2 \rho (1-\rho)}{2 \pi^2} \int_{\R^2} \dd k \, |\fF \cV^\delta(k)|^2 \, \int_{\R^2} \dd p \, \Pi_{p, k-p} \, d(p,k-p) \, \label{e:B+Diag} \\
&\lesssim \sup_{k \in B(0, \delta)} \, \int_{\R^2} \dd p \, \Pi_{p, k-p} \, d(p,k-p) \, , \label{e:B+DiagBound}
\end{equs}
where in the last line we used that $\|\cV^\delta\|_{L^2(\R^2)} = 1$ and $\mathrm{Supp}(\fF \cV^\delta) \subset B(0, \delta)$ (recall Definition~\ref{def:V}).
\ss

We start with~\eqref{e:InputB+L2}. By using the trivial bound $- \fL_{\mu}^\eff \geq - \|D_0^{-1}\|^{-1} \fL_0$ and~\eqref{e:B+DiagBound}, we deduce
\begin{equ}
\|(\mu - \fL_{\mu}^\eff)^{-1} \fB_+ \cV^\delta\|^2 \lesssim \|(\mu - \fL_0)^{-1} \, \fB_+ \cV^\delta\|^2 \lesssim \int_{\R^2} \frac{\dd p}{\mu + |p|^2} \lesssim \mu^{-1} \, .
\end{equ}

We turn to~\eqref{e:InputB+H1e}. By using~\eqref{e:B+DiagBound}, the notations~\eqref{e:XiNotation} (with $k_{1:n} = k$),~\eqref{e:SecondReplLemmaInt} with $(f,g) = (\Upsilon, \tilde \Upsilon^\theta)$ and~\eqref{e:JJ'Ups}, we deduce
\begin{equs}
\|(- \fL_0^\fe[\tilde \Upsilon^\theta]_\mu)^{1/2} \, (\mu - \fL_{\mu}^{\eff})^{-1} \, \fB_+ \cV^\delta\|^2 &\lesssim \sup_{k \in B(0, \delta)} \int_{\R^2} \dd p \, \frac{\Pi_{p,k-p} \, \tilde \Upsilon^\theta(\Log(\mu + \Xi_0))}{(\mu + \Xi + \Upsilon(\Log(\mu + \Xi_0)) \Xi_0^\fe)^2} \\
&\lesssim_\theta \sup_{k \in B(0, \delta)} \tilde \Upsilon^\theta(\Log(\mu + |k|^2/2)) + 1 \, , \\
&\lesssim \log_{*}(1/\mu)^{2 \theta / 3} \, .
\end{equs}

We continue with~\eqref{e:InputB+H1gamma}. By using the trivial bound $- \fL_{\mu}^\eff \geq - \|D_0^{-1}\|^{-1} \fL_0$,~\eqref{e:B+DiagBound}, and recalling that $\gamma \in (0,1]$, we deduce
\begin{equ}
\|(\mu - \fL_0)^{(1+\gamma)/2} \, (\mu - \fL_{\mu}^\eff)^{-1} \fB_+ \cV^\delta\|^2 \lesssim \|(\mu - \fL_0)^{(-1+\gamma)/2} \, \fB_+ \cV^\delta\|^2 \lesssim \int_{\R^2} \frac{\1_{|p|_\infty \leq \pi} \, \dd p}{ |p|^{2 - 4 \gamma}} \lesssim_\gamma 1 \, .
\end{equ}

We tackle~\eqref{e:InputB+H1}. By using~\eqref{e:B+Diag} and the notations~\eqref{e:XiNotation} (with $k_{1:n} = k$), we can write
\begin{equ} \label{e:InputB+H1detail}
\|(\mu - \fL_{\mu}^{\eff})^{-1/2} \fB_+ \cV^\delta\|^2 = \int_{\R^2} \dd k \, |\fF \cV^\delta(k)|^2 \, \frac{\lambda^2 \rho (1-\rho)}{2 \pi^2} \int_{\R^2} \frac{\dd p \, \Pi_{p, k - p}}{\mu + \Xi + \Upsilon(\Log(\mu + \Xi_0)) \, \Xi_0^\fe} \, .
\end{equ}
Then, we evaluate the last integral by using~\eqref{e:SecondReplLemmaInt} with $f = \Upsilon$ and~\eqref{e:JJ'Ups}, which leads to
\begin{equ}
\Big|\frac{\lambda^2 \rho(1-\rho)}{2 \pi^2} \int_{\R^2} \frac{\dd p \, \Pi_{p, k - p}}{\mu + \Xi + \Upsilon(\Log(\mu + \Xi_0)) \, \Xi_0^\fe} - \frac{1}{2} \, \Upsilon(\Log(\mu + |k|^2/2))\Big| \lesssim 1 \, .
\end{equ}
Then, by using~\eqref{e:AsymptoticUps} and the trivial fact $\Upsilon(\Log(\mu + |k|^2/2)) \to \Upsilon(L(\mu))$ as $k \to 0$, we deduce
\begin{equ}
\limsup_{\delta \to 0} \sup_{k \in B(0, \delta)} \Big|\frac{\lambda^2 \rho(1-\rho)}{2 \pi^2} \int_{\R^2} \frac{\dd p \, \Pi_{p, k - p}}{\mu + \Xi + \Upsilon(\Log(\mu + \Xi_0)) \, \Xi_0^\fe} - \frac{d^{\lim}}{2} \log_{*}(1/\mu)^{2/3} \Big| \lesssim 1 \, .
\end{equ}
By plugging this estimate in~\eqref{e:InputB+H1detail} and recalling that $\|\cV^\delta\|_{L^2(\R^2)} = 1$ and $\mathrm{Supp}(\cF \cV^\delta) \subset B(0, \delta)$, we deduce the desired estimate~\eqref{e:InputB+H1}.
\ss

Finally, we turn to~\eqref{e:InputB+A-A0}. First, we readily deduce from~\eqref{e:fA0} and Definition~\ref{def:V} (in particular,~\eqref{e:SqrtDirac}) that it holds
\begin{equ}
\|\fA_0 \cV^\delta\|^2 \lesssim \int_{\R^2} |\fe \cdot k|^2 \, |\fF \cV^\delta(k)|^2 \, \dd k \lesssim \delta^2 \, ,
\end{equ}
which shows the first statement of~\eqref{e:InputB+A-A0} by taking the limit as $\delta \to 0$. Second, by definition of $\fA_-^\sharp$ and by using the notations~\eqref{e:XiNotation} (with $k_{1:n} = k$), it holds
\begin{equs}
\|\fA_-^\sharp (\mu - \fL_{\mu}^{\eff})^{-1} &\fB_+ \cV^\delta\|^2 \\
&= \frac{\lambda^4 \rho^2 (1-\rho)^2}{\pi^4} \int_{\R^2} \dd k \, |\fe \cdot k|^2 \, |\fF \cV^\delta(k)|^2 \, \int_{\R^2} \frac{\dd p \, \Pi_{p,k-p} \, \chi^\sharp(p, k - p ; k)}{(\mu + \Xi + \Upsilon(\Log(\mu + \Xi_0)) \, \Xi_0^\fe)^2} \\
&\lesssim \mu^{-2} \int_{\R^2} \dd k \, |\fe \cdot k|^2 \, |\fF \cV^\delta(k)|^2 \\
&\lesssim \mu^{-2} \, \delta^2 \, ,
\end{equs}
which shows the second statement of~\eqref{e:InputB+A-A0} by taking the limit as $\delta \to 0$ (even though the estimate deteriorates significantly as $\mu \to 0$). This concludes the proof.
\end{proof}

We can now prove Proposition~\ref{p:AnsatzDiff}.

\begin{proof}[of Proposition~\ref{p:AnsatzDiff}]

First, we highlight that the estimates~\eqref{e:InputB+L2},~\eqref{e:InputB+H1e},~\eqref{e:InputB+H1gamma} in Lemma~\ref{lem:InputB+} trivially hold if we add a factor $\alpha^\fN$ on the left hand side, up to multiplying the hidden constant in $\lesssim$ by $\alpha^2$, simply because $\fB_+ \cV^\delta \in \fock{2}{}$. Now, we want to apply Proposition~\ref{p:GenericAnsatz} with $\bm \vphi = \fB_+ \cV^\delta$ and $\theta = 1/2 + 3 \eps$; we can bound the constants appearing in~\eqref{e:AnsatzConstantK} by using Lemma~\ref{lem:InputB+} (more precisely, by using~\eqref{e:InputB+L2},~\eqref{e:InputB+H1e},~\eqref{e:InputB+H1gamma} respectively), as follows
\begin{equs}
\, \sqrt{\mu} \, K_{\alpha, 0}^0(\fB_+ \cV^\delta) &\lesssim_\alpha 1 \, , \label{e:InputB+K0} \\
K_{\alpha, 0}^{1/2, \fe, 1/2 + 3 \eps}(\fB_+ \cV^\delta) &\lesssim_{\alpha, \eps} \log_{*}(1/\mu)^{1/6 + \eps} , \qquad \ \label{e:InputB+K1/2fe} \\
K_{\alpha, 0}^{(1+\gamma)/2}(\fB_+ \cV^\delta) &\lesssim_{\alpha, \gamma} 1 \, . \label{e:InputB+K1/2gamma}
\end{equs}
We then readily deduce the desired estimates~\eqref{e:AnsatzDiffL2},~\eqref{e:AnsatzDiffH1fe} and~\eqref{e:AnsatzDiffH1+gamma} from~\eqref{e:AnsatzL2},~\eqref{e:AnsatzH1fe} and~\eqref{e:AnsatzH1+gamma} respectively.

Now, we show~\eqref{e:AnsatzDiffGen}; first, by the triangle inequality, we have
\begin{equs}
\|\alpha^\fN \, &(\mu - \fL_0)^{-1/2} \, \big(\fL \, \mathfrak{Ans}_{\mu}(\fB_+ \cV^\delta) + \fB_+ \cV^\delta\big)\| \\
&\leq \|\alpha^\fN \, (- \fL_0)^{-1/2} \, \big((\mu - \bar \fL) \, \mathfrak{Ans}_{\mu}(\fB_+ \cV^\delta) - \fB_+ \cV^\delta + \fA_-^\sharp (\mu - \fL_{\mu}^{\eff})^{-1} \fB_+ \cV^\delta \big)\| \\
&\quad + \|\alpha^\fN \, \mu^{-1/2} \, \mu \, \mathfrak{Ans}_{\mu}(\fB_+ \cV^\delta)\| \\
&\quad + \|\alpha^\fN \, \mu^{-1/2} \, \fA_0 \, \mathfrak{Ans}_{\mu}(\fB_+ \cV^\delta)\| + \|\alpha \, \mu^{-1/2} \, \fA_-^\sharp (\mu - \fL_{\mu}^{\eff})^{-1} \fB_+ \cV^\delta\| \, . \label{e:AnsatzDiffGenProof} 
\end{equs}
Then, we bound the first summand by using~\eqref{e:AnsatzGen} and~\eqref{e:InputB+K1/2fe}, the second summand by using~\eqref{e:InputB+L2}, and we notice that the two lasts summands vanish as $\delta \to 0$ because of~\eqref{e:InputB+A-A0}. We give more details for the second-to-last summand: first we observe that $\fA_0$ commutes with $\mathfrak{Ans}_{\mu}$ and $\fB_+$ (cf. Remark~\ref{rmk:fA0}); we deduce, by using also~\eqref{e:AnsatzL2} and the trivial bound $\|\fB_+ \cdot\| \lesssim \|\cdot\|$,
\begin{equ}
\|\alpha^\fN \, \fA_0 \, \mathfrak{Ans}_{\mu}(\fB_+ \cV^\delta)\| = \|\alpha^\fN \, \mathfrak{Ans}_{\mu}(\fB_+ \fA_0 \cV^\delta)\| \lesssim_\alpha \|\alpha^\fN \, (\mu - \fL_{\mu}^{\eff})^{-1} \fB_+ \fA_0 \cV^\delta\| \lesssim \alpha^2 \mu^{-1} \|\fA_0 \cV^\delta\| \, ,
\end{equ}
which indeed vanishes as $\delta \to 0$ by~\eqref{e:InputB+A-A0}. Coming back to the bounds of the summands in~\eqref{e:AnsatzDiffGenProof}, we deduce
\begin{equ}
\limsup_{\delta \to 0} \|\alpha^\fN \, (\mu - \fL_0)^{-1/2} \, \big(\fL \, \mathfrak{Ans}_{\mu, \delta}^{\fB+} + \fB_+ \cV^\delta\big)\| \lesssim_{\alpha, \eps} \log_{*}(1/\mu)^{1/6 + \eps} + 1 + 0 + 0 \lesssim \log_{*}(1/\mu)^{1/6 + \eps} \, ,
\end{equ}
which shows~\eqref{e:AnsatzDiffGen}.
\ss

Finally, we tackle~\eqref{e:AnsatzDiffH1}; first, by the triangle inequality, we have
\begin{equs}
\big|\|(- \fS)^{1/2} \, &\mathfrak{Ans}_{\mu}(\fB_+ \cV^\delta)\|^2 - \tfrac{1}{2} \, d^{\lim} \log_{*}(1/\mu)^{2/3}\big| \\
&\leq \big|\|(\mu - \fS)^{1/2} \, \mathfrak{Ans}_{\mu}(\fB_+ \cV^\delta)\|^2 - \|(\mu - \fL_{\mu}^{\eff})^{-1/2} \fB_+ \cV^\delta\|^2\big| \\
&\quad + \mu \, \|\mathfrak{Ans}_{\mu}(\fB_+ \cV^\delta)\|^2 + \big|\|(\mu - \fL_{\mu}^{\eff})^{-1/2} \, \fB_+ \cV^\delta\|^2 - \tfrac{1}{2} \, d^{\lim} \log_{*}(1/\mu)^{2/3}\big| \, .
\end{equs}
Then, we can bound each summand by using, respectively,~\eqref{e:AnsatzH1precise} together with~\eqref{e:InputB+K1/2fe},~\eqref{e:AnsatzDiffL2},~\eqref{e:InputB+H1detail}; we end up with
\begin{equ}
\limsup_{\delta \to 0} \big|\|(- \fS)^{1/2} \, \mathfrak{Ans}_{\mu}(\fB_+ \cV^\delta)\|^2 - \tfrac{1}{2} \, d^{\lim} \log_{*}(1/\mu)^{2/3}\big| \lesssim_{\eps} \log_{*}(1/\mu)^{1/6 + \eps} + 1 + 1 \lesssim \log_{*}(1/\mu)^{1/6 + \eps} \, ,
\end{equ}
which proves~\eqref{e:AnsatzDiffH1} and concludes the proof.
\end{proof}

\section{Proof of some technical statements}

\subsection{Comparison between the hardcore and softcore $H^1$ norms (in the spirit of~\cite{LQSY04})}

\label{app:HCSC}

First, we prove Lemma~\ref{lem:ZiBoundHC}. The idea is to rely on Lemma~\ref{lem:ZiBound}, by suitably extending elements of the hard core Fock space $\fockdo{d}{}$ to the full Fock space $\fockd{d}{}$; in fact, this approach works well here because it turns out that we can reduce the problem to a small chaos degree $d$.

\begin{proof}[of Lemma~\ref{lem:ZiBoundHC}]

By orthogonality of chaoses, it is sufficient to prove the bound for any $g \in \fockdo{n}{}$ and $n \geq 1$. Let us define, for any $d \geq 1$, the function $\rho_d(g) \in \fockdo{d}{}$ given by
\begin{equ}
\rho_d(g) \, (x_{1:d}) \eqdef \sum_{x_{d+1:n} \in \Z^{2(n-d)}} g(x_{1:n})^2 \, .
\end{equ}
We first notice that
\begin{equs}
\|\1_{\cZ_{\geq 1}(l)} \, g\|_{\ell^2(\Z^{2n})}^2 &\leq n^2 \, \|\1_{|x_1 - x_2|_\infty \leq l} \, g\|_{\ell^2(\Z^{2n})}^2 = n^2 \, \|\1_{|x_1 - x_2|_\infty \leq l} \, \sqrt{\rho_2(g)}\|_{\ell^2(\Z^{4})}^2 \\
\|\1_{\cZ_{\geq 2}(l)} \, g\|_{\ell^2(\Z^{2n})}^2 &\leq n^3 \, \|\1_{|x_1 - x_3|_\infty \vee |x_2 - x_3|_\infty \leq l} \, g\|_{\ell^2(\Z^{2n})}^2 + n^4 \, \|\1_{|x_1 - x_3|_\infty \vee |x_2 - x_4|_\infty \leq l} \, g\|_{\ell^2(\Z^{2n})}^2 \\
&= n^3 \, \|\1_{|x_1 - x_3|_\infty \vee |x_2 - x_3|_\infty \leq \ell} \sqrt{\rho_3(g)}\|_{\ell^2(\Z^{6})}^2 + n^4 \, \|\1_{|x_1 - x_3|_\infty \vee |x_2 - x_4|_\infty \leq l} \sqrt{\rho_4(g)}\|_{\ell^2(\Z^{8})}^2 \, .
\end{equs}
Now we extend the functions $\sqrt{\rho_d(g)}$ to the diagonal $DI$ as follows: we define $G_d \in \fock{d}{}$ as
\begin{equ}
G_d(x_{1:d}) \eqdef \frac{1}{\# N_d(x_{1:d})} \sum_{y_{1:d} \in N_d(x_{1:d})} \sqrt{\rho_d(g)} \, (y_{1:d}) \, ,
\end{equ}
where $N_d(x_{1:d}) := \{x_{1:d}\}$ if $x_{1:d} \in HC$ and $N_d(x_{1:d}) := HC \cap B_\infty(x_{1:d}, d)$ else (we choose the $\ell^\infty$-ball of radius $d$ in order to guarantee that $N_d(x_{1:d})$ is never empty). Since the function $G_d$ extends $\sqrt{\rho_d(g)}$, we deduce in particular that
\begin{equs}
\|\1_{|x_1 - x_2|_\infty \leq l} \sqrt{\rho_2(g)}\|_{\ell^2(\Z^4)}^2 \leq \|&\1_{|x_1 - x_2|_\infty \leq \ell} \, G_2\|_{\ell^2(\Z^6)}^2 \lesssim l^2 \, \log_{*}(1/\mu) \, \|(\mu - \cS)^{1/2} \, G_2\|_{\ell^2(\Z^4)}^2 \, , \\
\|\1_{|x_1 - x_3|_\infty \vee |x_2 - x_3|_\infty \leq l} \sqrt{\rho_3(g)}\|_{\ell^2(\Z^6)}^2 &\leq \|\1_{|x_1 - x_3|_\infty \vee |x_2 - x_3|_\infty \leq l} \, G_3\|_{\ell^2(\Z^6)}^2 \lesssim l^4 \, \|(- \cS)^{1/2} \, G_3\|_{\ell^2(\Z^6)}^2 \, , \\
\|\1_{|x_1 - x_3|_\infty \vee |x_2 - x_4|_\infty \leq l} \sqrt{\rho_4(g)}\|_{\ell^2(\Z^8)}^2 &\leq \|\1_{|x_1 - x_3|_\infty \vee |x_2 - x_4|_\infty \leq l} \, G_4\|_{\ell^2(\Z^8)}^2 \lesssim l^4 \, \|(- \cS)^{1/2} \, G_4\|_{\ell^2(\Z^8)}^2 \, ,
\end{equs}
where we used~\eqref{e:ZiBoundReductionBis} in the last bound of each line. Therefore, it only remains to show that, for every $d \geq 1$ and $\mu \geq 0$, it holds
\begin{equ} \label{ZiBoundHCclaim}
\|(\mu - \cS)^{1/2} \, G_d\|_{\ell^2(\Z^{2d})}^2 \lesssim_d \|(\mu - S)^{1/2} \, g\|_{\ell^2(\Z^{2d})}^2 \, .
\end{equ}
First, we recall that we equipped $\Z^{2d}$ with a graph structure in the preamble. It is easily checked that, for any given $R \geq 1$, there exists a collection of paths $\Gamma(x_{1:d}, y_{1:d})$, for every $x_{1:d}, y_{1:d}$ in $\Z^{2d} \cap HC$ satisfying $|x_{1:d} - y_{1:d}|_\infty \leq R$, such that: $\Gamma(x_{1:d}, y_{1:d})$ connects $x_{1:d}$ to $y_{1:d}$ and only passes through vertices in $HC$; the length of $\Gamma(x_{1:d}, y_{1:d})$ is bounded by a constant $C = C(d, R)$. In particular, each edge of $\Z^{2d}$ is crossed by at most $C' = C'(d, R)$ distinct paths (because if $\Gamma(x_{1:d}, y_{1:d})$ passes through an edge $e$ then both $x_{1:d}$ and $y_{1:d}$ must be at graph distance smaller than $C$ from $e$). Then, we take $R := l_0 + 2 d$ and deduce by an easy application of the Cauchy-Schwarz inequality (in the spirit of the canonical path method used in Dirichlet form comparisons) that
\begin{equ}
\|(\mu - \cS)^{1/2} \, G_d\|_{\ell^2(\Z^{2d})}^2 \lesssim_d \|(\mu - S)^{1/2} \sqrt{\rho_d(g)}\|_{\ell^2(\Z^{2d})}^2 \, .
\end{equ}
Besides, another easy application of the Cauchy-Schwarz inequality (more precisely, the upper bound $\big(\big(\sum a_j^2\big)^{1/2} - \big(\sum b_j^2\big)^{1/2}\big)^2 \leq \sum (a_j - b_j)^2$) shows that
\begin{equ}
\|(\mu - S)^{1/2} \sqrt{\rho_d(g)}\|_{\ell^2(\Z^d)}^2 \leq \, \|(\mu - S)^{1/2} g\|_{\ell^2(\Z^{2d})}^2 \, ,
\end{equ}
which concludes the proof.
\end{proof}

Finally, we show the bound~\eqref{e:BoundQH1dot}, which states that the $H^1$ norm of $\fsQ g$ can be bounded by the one of $g$, up to a polynomial cost in the number operator.

\begin{proof}[of~\eqref{e:BoundQH1dot}]
Let $g \in \fockdo{n}{}$. We recall that we introduced a graph structure on $\Z^{2n}$ in the preamble. We start by writing 
\begin{equs}
\|(- \cS)^{1/2} \, \fsQ g\|_{\ell^2(\Z^{2n})}^2 &\asymp \sum_{x_{1:n} \sim \, y_{1:n} \in \Z^{2n}} \big(\fsQ g(y_{1:n}) - \fsQ g(x_{1:n})\big)^2 \\
&= \sum_{0 \leq i, j \leq 2} \sum_{x_{1:n} \sim \, y_{1:n} \in \Z^{2n}} \1_{\cE_i}(x_{1:n}) \, \1_{\cE_j}(y_{1:n}) \, \big(\fsQ g(y_{1:n}) - \fsQ g(x_{1:n})\big)^2 \, ,
\end{equs}
where we have denoted $\cE_0 := HC$, $\cE_1 := DI \cap \cZ_{1}$ and $\cE_2 := DI \cap \cZ_{\geq 2}$ (we recall that $\cZ_\cdot = \cZ_\cdot(2 l_0 + 1)$). We denote by $\Phi_{i,j}$ the summand relative to indices $i,j$ (note that $\Phi_{i,j} = \Phi_{j,i}$).
\ss

First, recalling that $Q g \equiv g$ on $\cE_0 = HC$, we immediately deduce
\begin{equ}
\Phi_{0,0} \lesssim \|(- \cS)^{1/2} g\|_{\ell^2(\Z^{2n})}^2 \, .
\end{equ}

Next, observe that $\Phi_{2,0} + \Phi_{2,1} + \Phi_{2,2}$ equals
\begin{equs}
\sum_{x_{1:n} \in \cE_2, y_{1:n} \in \Z^{2n} \colon x_{1:n} \sim y_{1:n}} \fsQ g(y_{1:n})^2 &\lesssim n \, \sum_{y_{1:n} \in \cZ_{\geq 2}(3 l_0 + 1)} \fsQ g(y_{1:n})^2 \lesssim n \, \|1_{\cZ_{\geq 2}(4 l_0 + 1)} \, g\|^2 \, ,
\end{equs}
where in the first bound we used the simple fact that any $y_{1:n} \in \Z^{2n}$ has at most $\# \mathrm{Supp}(s) \times n$ neighbours, while in the second bound we applied~\eqref{e:BoundL2QProof} together with the observation that $\fsQ g$ and $\fsQ(1_{\cZ_{\geq 2}(4 l_0 + 1)} \, g)$ coincide on $\cZ_{\geq 2}(3 l_0 + 1)$. Then, by using~\eqref{e:Z2BoundHC}, we deduce
\begin{equ}
\Phi_{2,0} + \Phi_{2,1} + \Phi_{2,2} \lesssim n^5 \, \|(- S)^{1/2} g\|_{\ell^2(\Z^{2n})}^2 \, .
\end{equ}

Now, we turn to $\Phi_{1,1}$. Observe that, if $x_{1:n} \sim y_{1:n}$ with $x_{1:n}, y_{1:n} \in \cE_1$, then there exist distinct indices $a, b, c$ such that $x_a = x_b$, $y_c - x_c \in \mathrm{Supp}(s)$ and $y_d = x_d$ for every $d \ne c$. Hence, we deduce
\begin{equ}
\Phi_{1,1} \leq n^3 \sum_{z \in \mathrm{Supp}(s)} \sum_{x_{1:n} \in \cE_1 \, : \, x_1 = x_2} \big(\fsQ g(x_1, x_1, x_3 + z, x_{4:n}) -  \fsQ g(x_1, x_1, x_3, x_{4:n})\big)^2 \, .
\end{equ}
Then, using the definition~\eqref{e:defQ} of $\fsQ f$ and the Cauchy-Schwarz inequality we deduce
\begin{equ}
\Phi_{1,1} \lesssim n^3 \sum_{z \in \mathrm{Supp}(s)} \sum_{x_{1:n} \in \cE_1 \, : \, x_1 = x_2} \big(g(x_1 + z_0, x_1, x_3 + z, x_{4:n}) -  g(x_1 + z_0, x_1, x_3, x_{4:n})\big)^2 \, .
\end{equ}
Now, because $x_{1:n} \in \cE_1 = DI \cap \cZ_1(2 l_0 + 1)$ with $x_1 = x_2$, we deduce that $(x_1 + z_0, x_1, x_3 + z, x_{4:n})$ and $(x_1 + z_0, x_1, x_3, x_{4:n})$ belong to $HC$ (and are neighbours). Thus, the last sum over $x_{1:n}$ is bounded, up to a multiplicative constant, by $\|(- S)^{1/2} g\|_{\ell^2(\Z^{2n})}^2$; we end up with
\begin{equ}
\Phi_{1,1} \lesssim n^3 \, \|(-S)^{1/2} g\|_{\ell^2(\Z^{2n})}^2 \, .
\end{equ}

Finally, we tackle $\Phi_{1,0}$. Observe that, if $x_{1:n} \sim y_{1:n}$ with $x_{1:n} \in \cE_1$ and $y_{1:n} \in \cE_0$, then there exist distinct indices $a \ne b$ such that $x_a = x_b$, $y_a - x_a \in \mathrm{Supp}(s)$ and $y_c = x_c$ for every $c \ne a$. Recalling the definition~\eqref{e:defQ} of $\fsQ$, we deduce
\begin{equs}
\Phi_{1,0} &\leq n^2 \sum_{z \in \mathrm{Supp}(s)} \sum_{x_{1:n} \in \cE_1 \colon x_1 = x_2} \big(g(x_1 + z_0, x_{2:n}) - g(x_1 + z, x_{2:n})\big)^2 \, .
\end{equs}
Now, we make the following observation\footnote{Here is a proof: we distinguish the cases $z \ne -z'$ and $z = -z'$. In the first case, we can take the path $z \to z + z' \to z'$. In the second case, let $z'' \in \mathrm{Supp}(s) \setminus \R z$ (it exists because $D_0$ is not degenerate), then we use the previous case to first connect $z$ to $z''$ and then $z''$ to $z'$.}: for any $z, z'$ in $\mathrm{Supp}(s)$, there exists a path $\Gamma(z,z')$ of length at most $4$ that connects $z$ to $z'$ in $(\Z^2 \setminus \{0\}) \cap B_\infty(0, 2 l_0)$. Then, by using the Cauchy-Schwarz inequality we deduce
\begin{equ}
\Phi_{1,0} \lesssim n^2 \sum_{z \in \mathrm{Supp}(s)} \sum_{(v, v') \in \Gamma(z,z_0)} \sum_{x_{1:n} \in \cE_1 \colon x_1 = x_2}\, \big(g(x_1 + v', x_{2:n}) - g(x_1 + v, x_{2:n})\big)^2 \, .
\end{equ}
Now, because $x_{1:n} \in \cE_1 = DI \cap \cZ_1(2 l_0 + 1)$ with $x_1 = x_2$ and because $\Gamma(z,z')$ takes values in $B_\infty(0, 2 l_0) \setminus \{0\}$, we deduce that $(x_1 + v', x_{2:n})$ and $(x_1 + v, x_{2:n})$ belong to $HC$ (and are neighbours). Thus the last sum over $x_{1:n}$ is bounded, up to a multiplicative constant, by $\|(-S)^{1/2} g\|_{\ell^2(\Z^{2n})}^2$ ; we end up with
\begin{equ}
\Phi_{1,0} \lesssim n^2 \, \|(-S)^{1/2} g\|_{\ell^2(\Z^{2n})}^2.
\end{equ}

Collecting all the previous bounds, we end up with
\begin{equ}
\|(-\cS)^{1/2} \fsQ g\|^2 \lesssim n^5 \, \|(- S)^{1/2} g\|^2 \, ,
\end{equ}
which concludes the proof of~\eqref{e:BoundQ}.
\end{proof}

\subsection{An elementary bilinear interpolation lemma}

We state an elementary bilinear interpolation lemma that will be useful for the proof of Proposition~\ref{p:HardcoreRemoval} when comparing the operators $A_0$ and $\cA_0$.

\begin{lemma} \label{lem:Interpolation}
Let $\mathscr S$ be a bounded positive definite self-adjoint operator acting on a Hilbert space $(H, \|\cdot\|)$, and let $B \colon H \times H \to V$ be a continuous bilinear map taking values in a normed vector space $(V, \|\cdot\|_V)$. We assume that $\mathscr S$ is bounded by $C > 0$, and that there exist $\alpha, \beta \geq 0$ such that for every $f, g \in H$
\begin{equ} \label{e:AssumptionInterpolation}
\|B(f,g)\|_V \lesssim \begin{cases}
\|\mathscr S^\alpha f\| \, \|\mathscr S^\beta g\| \\
\|\mathscr S^\beta f\| \, \|\mathscr S^\alpha g\|
\end{cases}
\end{equ}
Then, for any $a, b \geq 0$ such that $\max\{a,b\} < \max\{\alpha, \beta\}$ and $a + b < \alpha + \beta$, we have for every $f, g \in H$
\begin{equ} \label{e:Interpolation}
\|B(f,g)\|_V \lesssim_{a, b, \alpha, \beta} C^{\alpha + \beta - a - b} \, \|\mathscr S^a f\| \, \|\mathscr S^b g\| \, .
\end{equ}
\end{lemma}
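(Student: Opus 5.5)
The plan is to use spectral calculus for $\mathscr S$ and complex interpolation in the two variables, one after the other. Set $\gamma := \max\{\alpha,\beta\}$ and $\delta := \min\{\alpha,\beta\}$. By symmetry of the hypotheses~\eqref{e:AssumptionInterpolation}, each of the two configurations $(\gamma,\delta)$ and $(\delta,\gamma)$ is admissible for the exponent pair on $(f,g)$. The target pair $(a,b)$ satisfies $a,b<\gamma$ and $a+b<\gamma+\delta$. So, apart from a corner effect, $(a,b)$ lies strictly inside the region
\begin{equ}
\{(x,y) \colon x,y \leq \gamma, \ x+y \leq \gamma+\delta\} \, ,
\end{equ}
which is generated by the two vertices $(\gamma,\delta)$, $(\delta,\gamma)$ together with the directions in which exponents can be increased freely.

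\textbf{Step 1: monotonicity in the exponents.} Since $\mathscr S\leq C$, for $x\leq x'$ we have $\|\mathscr S^{x'}h\|\leq C^{x'-x}\|\mathscr S^{x}h\|$. Hence any bound $\|B(f,g)\|_V\lesssim\|\mathscr S^{x}f\|\,\|\mathscr S^{y}g\|$ can be upgraded to any pair $(x',y')\geq(x,y)$ coordinatewise, at the cost of a factor $C^{(x'-x)+(y'-y)}$. This step accounts for the power $C^{\alpha+\beta-a-b}$ in~\eqref{e:Interpolation}. It does not, however, let us lower the total exponent. Lowering the total exponent will instead come from interpolating along the segment between the two vertices, where the sum of the exponents stays equal to $\alpha+\beta$, and then applying Step 1.

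\textbf{Step 2: interpolation along the segment.} Fix $f,g$ and $\theta\in(0,1)$, and aim for
\begin{equ}
\|B(f,g)\|_V\lesssim\|\mathscr S^{(1-\theta)\gamma+\theta\delta}f\|\,\|\mathscr S^{(1-\theta)\delta+\theta\gamma}g\| \, .
\end{equ}
I would do this by a three-lines argument. Take a functional $\ell\in V^*$ of norm one and consider the analytic function
\begin{equ}
F(z)=\ell\big(B(\mathscr S^{-z\gamma-(1-z)\delta+x}f_0,\ \mathscr S^{-z\delta-(1-z)\gamma+y}g_0)\big) \, ,
\end{equ}
with $x,y$ chosen so that at the value of $\theta$ giving the target point we recover $B(f,g)$. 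Bilinearity and continuity of $B$ make $F$ analytic on a strip, and imaginary powers $\mathscr S^{is}$ are unitary, so the boundary bounds follow from the two hypotheses. Positive definiteness is needed for the negative powers; I would first truncate spectrally away from $0$, prove the bound there, and then pass to the limit.

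\textbf{Step 3: reaching $(a,b)$, and the expected obstacle.} Choose $\theta$ with $(1-\theta)\gamma+\theta\delta\leq a$ and $(1-\theta)\delta+\theta\gamma\leq b$. Such a $\theta$ exists as long as $a+b\geq\gamma+\delta$, after which Step 1 finishes the argument. The hard case is $a+b<\gamma+\delta$, since here Step 1 cannot lower the exponents. My first idea was to apply Step 1 with reversed roles, that is, to use $\|\mathscr S^{x}h\|\leq\|\mathscr S^{-1}\|^{\dots}$, but that is not allowed because there is no lower bound on $\mathscr S$. The fact that actually resolves this case is that, for a bounded $\mathscr S$, lower exponents dominate higher ones: $\|\mathscr S^{x'}h\|\leq C^{x'-x}\|\mathscr S^{x}h\|$ whenever $x\leq x'$. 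So any pair $(a,b)\leq(\gamma,\delta)$ is trivially admissible by upgrading $(a,b)$ to $(\gamma,\delta)$. I therefore expect the real content of the lemma to be the strict-inequality condition combined with the constant, namely choosing $(x,y)$ on the admissible set that dominates $(a,b)$ coordinatewise. The constraint $\max\{a,b\}<\gamma$ makes such a choice possible, and Step 2 provides the intermediate points. Checking this geometric choice, and collecting the resulting power of $C$, is the step I would verify most carefully.
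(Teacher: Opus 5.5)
The architecture you eventually land on is correct: interpolate along the segment, then dominate $(a,b)$ coordinatewise by a point of that segment and lower the exponents. However, Step~1 is stated in the wrong direction, and this invalidates the first half of Step~3.

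\textbf{The direction error.} Since $\|\mathscr S^{x'}h\|\le C^{x'-x}\|\mathscr S^{x}h\|$ for $x\le x'$, a bound $\|B(f,g)\|_V\lesssim\|\mathscr S^{x}f\|\,\|\mathscr S^{y}g\|$ transfers only to pairs $(x'',y'')\le(x,y)$, at cost $C^{(x-x'')+(y-y'')}$. It cannot be pushed to larger exponents, because $\|\mathscr S^{x'}f\|$ may be much smaller than $\|\mathscr S^{x}f\|$. For the same reason, the region you describe is generated by the segment together with the directions in which exponents \emph{decrease}, not increase.

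So choosing a segment point below $(a,b)$ and raising it is not legitimate. That case ($a+b\ge\alpha+\beta$) is in any event excluded by the hypothesis. The case you call hard is the only one, and it is handled by the corrected monotonicity you invoke at the end.

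\textbf{The repair.} Take $x\in[\max\{a,\delta\},\min\{\gamma,\alpha+\beta-b\}]$. This interval is nonempty precisely because $a\le\gamma$, $b\le\gamma$ and $a+b\le\alpha+\beta$; both constraints of the lemma are needed, not only $\max\{a,b\}<\gamma$. Set $y:=\alpha+\beta-x$, so that $y\ge b$. Then:
\begin{itemize}
\item the point $(x,y)$ lies on the closed segment from $(\gamma,\delta)$ to $(\delta,\gamma)$;
\item Step~2 gives $\|B(f,g)\|_V\lesssim\|\mathscr S^{x}f\|\,\|\mathscr S^{y}g\|$;
\item lowering to $(a,b)$ costs $C^{(x-a)+(y-b)}=C^{\alpha+\beta-a-b}$, which is exactly the power in the statement.
\end{itemize}

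\textbf{Comparison with the paper.} With that repair, your route genuinely differs from the paper's. The paper stays real-variable. It splits $f,g$ into dyadic spectral blocks $f_p=\1_{2^{-p-1}C<\mathscr S\le 2^{-p}C}f$, on which both hypotheses become explicit numerical bounds for $\|B(f_p,g_q)\|_V$. It takes their geometric mean with weight $\theta$ and converts back to $\|\mathscr S^{a}f_p\|\,\|\mathscr S^{b}g_q\|$, leaving factors $2^{-\kappa_1p-\kappa_2q}$. It then sums by Cauchy--Schwarz, which needs $\kappa_1,\kappa_2>0$; this is the only source of the strict inequalities.

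Your three-lines argument reaches the whole closed segment without loss. It therefore yields the lemma under the non-strict conditions $\max\{a,b\}\le\max\{\alpha,\beta\}$ and $a+b\le\alpha+\beta$. The price is a set of technicalities you only sketch:
\begin{itemize}
\item complexifying $H$, $V$ and $B$ when they are real (as in the application, where $V=\R$), and checking that the hypotheses persist for complex arguments up to a constant;
\item truncating spectrally away from $0$, so that $z\mapsto\mathscr S^{-x(z)}u$ is analytic and bounded on the strip;
\item removing the truncation using continuity of $B$ and $\ker\mathscr S=\{0\}$.
\end{itemize}
The paper's elementary argument is shorter and self-contained. Its $\eps$-loss is harmless for its only use, the $(A_0,\cA_0)$ case of Proposition~\ref{p:HardcoreRemoval}.
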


\begin{proof}
For every $p \geq 0$, we define the dyadic partition of $f$ and $g$ as follows (thanks to the spectral theorem)
\begin{equ}
f_p := \1_{2^{-p-1} C < \mathscr S \leq 2^{-p} C} \, f \, , \quad g_p := \1_{2^{-p-1} C < \mathscr S \leq 2^{-p} C} \, g \, .
\end{equ}
By bilinearity and continuity of $B$, we see that $B(f,g)$ is the limit of $\sum_{0 \leq p,q \leq M} B(f_p, g_q)$ as $M \to \infty$, and thus we deduce
\begin{equ}
\|B(f,g)\|_V \leq \sum_{p,q = 0}^\infty \|B(f_p, g_q)\|_V \, .
\end{equ}
But using~\eqref{e:AssumptionInterpolation} and the definition of $f_p$ and $g_q$, we obtain
\begin{equ}
\|B(f_p, g_q)\|_V \lesssim_{\alpha, \beta}
\begin{cases}
 C^{\alpha+\beta} \, 2^{- \alpha p - \beta q} \, \|f_p\| \|g_q\| \\
 C^{\alpha+\beta} \, 2^{- \beta p - \alpha q} \, \|f_p\| \|g_q\|
\end{cases}
\end{equ}
from which it follows, for any $\theta \in [0,1]$ and $\kappa_1, \kappa_2 \geq 0$,
\begin{equs}
\|B(f_p, g_q)\|_V &\lesssim_{\alpha, \beta} C^{\alpha + \beta} 2^{- (\theta \alpha + (1 - \theta) \beta) p - ((1 - \theta) \alpha + \theta \beta) q} \|f_p\| \|g_q\| \, \\
&\lesssim_{\alpha, \beta, \theta, \kappa_1, \kappa_2} C^{\kappa_1 + \kappa_2} \, 2^{- \kappa_1 p - \kappa_2 q} \, \|\mathscr S^{\theta \alpha + (1 - \theta) \beta - \kappa_1} f_p\| \|\mathscr S^{(1 - \theta) \alpha + \theta \beta - \kappa_2} g_q\| \, .
\end{equs}
Then, we assume $\kappa_1, \kappa_2 > 0$ and use the Cauchy-Schwarz inequality to deduce
\begin{equs}
\|B(f, g)\|_V &\lesssim_{\alpha, \beta, \theta, \kappa_1, \kappa_2} C^{\kappa_1 + \kappa_2} \, \Big(\sum_{p \geq 0} 2^{- \kappa_1 p} \, \|\mathscr S^{\theta \alpha + (1 - \theta) \beta - \kappa_1} f_p\|\Big) \, \Big(\sum_{q \geq 0} 2^{- \kappa_2 q} \, \|\mathscr S^{(1 - \theta) \alpha + \theta \beta - \kappa_2} g_q\|\Big) \\
&\lesssim_{\kappa_1, \kappa_2} C^{\kappa_1 + \kappa_2} \, \|\mathscr S^{\theta \alpha + (1 - \theta) \beta - \kappa_1} f\| \, \|\mathscr S^{(1 - \theta) \alpha + \theta \beta - \kappa_2} g\| \,
\end{equs}
Finally, because of the assumptions on $a$ and $b$, it can be checked that there exist $\theta \in [0,1]$ and $\kappa_1, \kappa_2 > 0$ such that $a = \theta \alpha + (1 - \theta) \beta - \kappa_1$ and $b = (1-\theta) \alpha + \theta \beta - \kappa_2$, which concludes the proof of~\eqref{e:Interpolation}.
\end{proof}

\subsection{Hypercontractivity}

\label{sec:Hypercontractivity}

The purpose of this section is to recall and exploit the classical hypercontractivity property enjoyed by the semigroup $(e^{-t N})_{t \geq 0}$ (which, in the literature, is sometimes referred to as the ``noise operator''). Ultimately, we are interested in the following estimates (let us recall that the ``puissance $q$ du champ'' $\Gamma_q \psi$ was defined in~\eqref{e:PuissanceDuChamp}).

\begin{lemma} \label{lem:UltimateHypercontractivity}

There exists a family of constants $(\alpha(p))_{p > 2}$ (that depends on $\rho$) satisfying $\alpha(p) \to 1$ as $p \to 2$, such that the following statement holds. For any $\phi \in L^2(\nu_\rho)$ and local functional\footnote{In this section, we often restrict ourselves to local functionals in order to avoid any unnecessary technical difficulties. But, of course, it is possible to extend the upcoming estimates to much larger classes of functionals, simply by using standard approximation arguments.} $\psi \colon \Omega \to \R$, we have
\begin{equs}
\nu_\rho(|\phi|^p) &\leq \|\alpha(p)^N \phi\|^p \, , \label{e:UltHyper} \\
\nu_\rho(|\Gamma_2 \psi|^{p/2}) &\lesssim_p \|\alpha(p)^N (- S)^{1/2} \psi\|^p \, , \label{e:UltHyperGamma2} \\
\nu_\rho(\Gamma_{p} \psi) &\lesssim_p \|\alpha(p)^N \psi\|^{p-2}  \, \|\alpha(p)^N (-S)^{1/2} \psi\|^2 \, . \label{e:UltHyperGammap}
\end{equs}
\end{lemma}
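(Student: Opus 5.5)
The plan is to deduce all three estimates from the classical hypercontractivity of the noise semigroup $(e^{-tN})_{t\ge0}$ on $L^2(\nu_\rho)$. Under the identification of $L^2(\nu_\rho)$ with $\fockdo{}{}$, $N$ acts as $n$ on the $n$-th chaos spanned by the $\sigma_A$, $|A|=n$. The semigroup $e^{-tN}$ is a product over sites of the one-site noise semigroup on $\{0,1\}$ with $\mathrm{Ber}(\rho)$. For $p>2$, the biased two-point hypercontractivity (Bonami--Beckner, Oleszkiewicz/Wolff for $\rho\neq1/2$) gives $\|e^{-tN}\phi\|_{L^p}\le\|\phi\|_{L^2}$ once $e^{-t}\le \alpha(p)^{-1}$. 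Here $\alpha(p)>1$ is an explicit constant depending on $\rho$, with $\alpha(p)\to1$ as $p\to2$. Tensorisation then extends this to the product measure, and applying it to $\alpha(p)^N\phi$ yields \eqref{e:UltHyper} directly.

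For \eqref{e:UltHyperGamma2}, I would expand $\Gamma_2\psi=\sum_{x,y}r(y-x)\eta(x)(1-\eta(y))(\psi(\eta^{x,y})-\psi(\eta))^2$. Bounding the indicator by $1$ and using finite range, it is enough to control $\nu_\rho\big((\sum_{x,z}s(z)|\nabla_{x,x+z}\psi|^2)^{p/2}\big)$, where $\nabla_{x,y}\psi:=\psi(\eta^{x,y})-\psi(\eta)$. The key observation is that $\nabla_{x,y}$ preserves the chaos degree on the Fock side: swapping occupation variables maps $\sigma_A$ to $\sigma_{A'}$ with $|A'|=|A|$. Hence $\alpha^N\nabla_{x,y}=\nabla_{x,y}\alpha^N$.

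The quantity is a vector-valued $L^p$ norm, namely $\|(\nabla_{x,x+z}\psi)_{x,z}\|_{L^p(\ell^2)}^2$. Hypercontractivity tensorises to Hilbert-valued functions, since the contraction $L^2\to L^p$ for $e^{-tN}$ lifts to $L^2(\ell^2)\to L^p(\ell^2)$ for positivity-preserving product operators via Minkowski's inequality. This gives
\[
\nu_\rho(|\Gamma_2\psi|^{p/2})\lesssim\Big(\sum_{x,z}s(z)\|\alpha^N\nabla_{x,x+z}\psi\|^2\Big)^{p/2}.
\]
The inner sum equals $2\,\nu_\rho(\Gamma^S_2(\alpha^N\psi))$ up to the factor $\eta(x)(1-\eta(y))$ and symmetrisation, i.e.\ $\lesssim\|(-S)^{1/2}\alpha^N\psi\|^2$.

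This is the step I expect to be the main obstacle. The indicator $\eta(x)(1-\eta(y))$ must be reinstated, because only with it does one recover the hard-core Dirichlet form $\langle(-S)\,\cdot,\cdot\rangle$. The fix is to note that $(\psi(\eta^{x,y})-\psi(\eta))$ vanishes when $\eta(x)=\eta(y)$. Hence $\sum_{x,y}s(y-x)|\nabla_{x,y}\psi|^2$ equals the symmetric Dirichlet form integrand exactly, and the hypercontractive bound is applied before discarding anything.

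For \eqref{e:UltHyperGammap}, I would use $\Gamma_p\psi\le\sum_{x,z}r(z)|\nabla_{x,x+z}\psi|^{p}$ together with Hölder's inequality in the form $\nu(|F|^p)\le\|F\|_{L^{2}}\|F\|_{L^{2(p-1)}}^{p-1}$ for each $F=\nabla_{x,x+z}\psi$. Then \eqref{e:UltHyper} at exponent $2(p-1)$, which is close to $2$ so the constant is still $\alpha(p)\to1$ after relabelling, bounds $\|F\|_{L^{2(p-1)}}\le\|\alpha^N F\|\le 2\|\alpha^N\psi\|$. Summing $\|F\|_{L^2}\,\|\alpha^NF\|^{p-2}\cdot\|\alpha^NF\|$ and applying Cauchy--Schwarz over $(x,z)$ with weights $s(z)$ gives $\|\alpha^N\psi\|^{p-2}\|\alpha^N(-S)^{1/2}\psi\|^2$. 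Some care is needed to keep one factor in the Dirichlet form rather than the $L^2$ norm.
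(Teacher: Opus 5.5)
Your proof is correct. For \eqref{e:UltHyper} it coincides with the paper's argument, but for \eqref{e:UltHyperGamma2} you take a genuinely different and simpler route.

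\textbf{The bound \eqref{e:UltHyperGamma2}.} The paper decomposes $\psi=\sum_n\psi_n$ into chaoses and pays polynomial weights $(1+n)^{3p/2}$ to move the power $p/2$ inside the sum. It then notes that the scalar $\Gamma_2^s\psi_n$ lies in chaoses of degree at most $2n$, and applies the finite-chaos reverse Jensen bound $L^1\to L^{p/2}$ of Lemma~\ref{lem:HypercontractivityL1}. This forces the extra requirement $\alpha(p)>\beta_1(p/2)$.

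You instead keep the whole family $(\nabla^{x,x+z}\psi)_{x,z}$, use that $\nabla^{x,y}$ preserves the chaos degree and hence commutes with $N$, and apply hypercontractivity componentwise. Your ``Hilbert-valued'' step does not need positivity or a vector-valued hypercontractivity theorem. Since $p>2$, Minkowski's inequality in $L^{p/2}$ gives $\|\sum_{x,z}s(z)|\nabla^{x,x+z}\psi|^2\|_{L^{p/2}}\le\sum_{x,z}s(z)\|\nabla^{x,x+z}\psi\|_{L^p}^2$, and then \eqref{e:UltHyper} applies to each term. This yields \eqref{e:UltHyperGamma2} with $\alpha(p)=\beta_2(p)$, with no polynomial loss in the chaos degree, and makes Lemma~\ref{lem:HypercontractivityL1} unnecessary.

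The step you flag as the main obstacle is not one. Swap-invariance of $\nu_\rho$ shows directly that $\nu_\rho\big(\sum_{x,z}s(z)|\nabla^{x,x+z}f|^2\big)$ is a fixed multiple of $\langle(-S)f,f\rangle$ (cf.~\eqref{e:MalliavinIPP}). So discarding $\eta(x)(1-\eta(y))$ at the outset costs only a constant, and nothing needs to be reinstated.

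\textbf{The bound \eqref{e:UltHyperGammap}.} Your Cauchy--Schwarz split $\nu_\rho(|F|^p)\le\|F\|_{L^2}\|F\|_{L^{2(p-1)}}^{p-1}$ works, but it is a detour. It needs hypercontractivity at exponent $2(p-1)>p$, so a slightly larger $\alpha(p)$, which still tends to $1$.

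The paper applies \eqref{e:UltHyper} at exponent $p$ to each $\nabla^{x,x+z}\psi$ directly. It then splits $\nu_\rho(|\cdot|^2)^{p/2}=\nu_\rho(|\cdot|^2)^{p/2-1}\,\nu_\rho(|\cdot|^2)$. The first factor is bounded by $(2\|\beta_2(p)^N\psi\|)^{p-2}$, and the second sums into the Dirichlet form of $\beta_2(p)^N\psi$. This uses only $\beta_2(p)$ and needs no Cauchy--Schwarz over $(x,z)$.
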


Before proving Lemma~\ref{lem:UltimateHypercontractivity}, we give some details about hypercontractivity. First, observe that the semigroup $(e^{-t N})_{t \geq 0}$ acting on $\fockdo{}{} \simeq L^2(\nu_\rho)$ can be interpreted as the semigroup of the $\nu_\rho$-stationary Markov process on $\Omega = \{0,1\}^{\Z^2}$ which consists in resampling each bit according to a Poisson point process with intensity $\bigotimes_{x \in \Z^2} \mathrm{Leb}$. Then, it is classical (for more details, we refer to~\cite[Lemma 3.2, Theorem 3.5, Theorem A.1]{DSC96}) that this semigroup is hypercontractive, in the following sense: for every $1 < p \leq q < \infty$ and local functional $f \colon \Omega \to \R$, we have
\begin{equ} \label{e:HypercontractivityClassical}
\|e^{- t(p,q,\rho) N} f\|_{L^q(\nu_\rho)} \leq \|f\|_{L^p(\nu_\rho)} \, , \mbox{ where } t(p,q, \rho) := \frac{1}{4 \, C_{LS}(\rho)} \log\Big(\frac{q-1}{p-1}\Big) \, ,
\end{equ}
and where $C_{LS}(\rho) := \frac{1 - 2 \min\{\rho, 1 - \rho\}}{\log(1/\min\{\rho, 1 - \rho\} \ - \, 1)} \in (0, \infty)$ (with the convention $C_{LS}(1/2) := 1/2$) is the so-called log-Sobolev constant (of the $\cB(\rho)$-stationary Markov process on $\{0,1\}$ that resamples itself according to a Poisson point process with intensity $\mathrm{Leb}$). Obviously, we can trivially reformulate the hypercontractivity property~\eqref{e:HypercontractivityClassical} as follows: for every $1 < p \leq q < \infty$ and local functional $f \colon \Omega \to \R$, it holds
\begin{equ} \label{e:Hypercontractivity}
\|f\|_{L^q(\nu_\rho)} \leq \|\beta(p,q,\rho)^N f\|_{L^p(\nu_\rho)} \, , \mbox{ where } \beta(p,q, \rho) := \Big(\frac{q-1}{p-1}\Big)^{\frac{1}{4 \, C_{LS}(\rho)}} \, .
\end{equ}
We are especially interested in the case where either $p = 2$ or $q = 2$, so that we denote $\beta_2(r, \rho) := \beta(\min\{2,r\}, \max\{2,r\}, \rho)$ for every $r \in (1,\infty)$. Intuitively, the estimate~\ref{e:Hypercontractivity} tells us that we can reverse Jensen's inequality provided that we pay an exponential cost in the number operator $N$.
\ss

But let us stress that~\eqref{e:Hypercontractivity} only works for $p > 1$, which is stressed by the limit $\beta(p,q,\rho) \to \infty$ as $p \to 1$. However, for our purposes, we also need to be able to ``reverse Jensen's inequality'' when $p = 1$. In fact, the next lemma shows that this is still possible, provided that we assume that $f$ belongs to chaoses $\fockdo{0}{} \oplus \dots \oplus \fockdo{n}{}$ and replace the exponential cost in $N$ by an exponential cost in $n$.

\begin{lemma} \label{lem:HypercontractivityL1}
There exists a family of constants $(\beta_1(r))_{r \geq 1}$ (depending on $\rho$) satisfying $\beta_1(r) \to 1$ as $r \to 1$, such that the following statement holds. For any $r \geq 1$, $n \geq 0$, and local functional $f \colon \Omega \to \R$ belonging to $\bigoplus_{k=0}^n \fockdo{k}{}$, we have
\begin{equ} \label{e:HypercontractivityL1}
    \|f\|_{L^r(\nu_\rho)} \leq \beta_1(r)^n \, \|f\|_{L^1(\nu_\rho)} \, .
\end{equ}
\end{lemma}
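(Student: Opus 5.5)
The approach is to combine the classical hypercontractivity estimate~\eqref{e:Hypercontractivity} with a Hölder interpolation. The point is to deduce the $L^1$ statement from the $L^2$ one at exponential cost in $n$.

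\textbf{Step 1: the $L^2$ bound.} For $f \in \bigoplus_{k \le n} \fockdo{k}{}$ we have $\|\beta^{-N} f\| \ge \beta^{-n}\|f\|$ for any $\beta \ge 1$. This holds because $N \le n$ on the support of the chaos expansion, and chaoses are orthogonal. Apply~\eqref{e:Hypercontractivity} with $(p,q)=(r',2)$ for some fixed $r' \in (1,2)$ to $g = \beta(r',2,\rho)^{-N} f$, and use that $\beta^{-N}$ commutes with the chaos projection. This gives
\begin{equ}
\|f\|_{L^2(\nu_\rho)} \le \beta(r',2,\rho)^n \|f\|_{L^{r'}(\nu_\rho)} \, .
\end{equ}
Next interpolate $L^{r'}$ between $L^1$ and $L^2$. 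Hölder gives $\|f\|_{L^{r'}} \le \|f\|_{L^1}^{1-\vartheta}\|f\|_{L^2}^{\vartheta}$ with $\vartheta = 2(1-1/r') \in (0,1)$. Substituting and dividing by $\|f\|_{L^2}^\vartheta$ (the case $f=0$ is trivial) yields
\begin{equ}
\|f\|_{L^2} \le \beta(r',2,\rho)^{n/(1-\vartheta)} \|f\|_{L^1} \, .
\end{equ}
So we may set $\beta_1(2) := \beta(r',2,\rho)^{1/(1-\vartheta)}$ for some fixed $r'$, say $r' = 3/2$.

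\textbf{Step 2: general $r$.} For $r \in [1,2]$, apply Hölder again: $\|f\|_{L^r} \le \|f\|_{L^1}^{1-\vartheta_r}\|f\|_{L^2}^{\vartheta_r}$ with $\vartheta_r = 2(1-1/r)$. Together with Step 1 this gives $\|f\|_{L^r} \le \beta_1(2)^{\vartheta_r n}\|f\|_{L^1}$. We then define $\beta_1(r) := \beta_1(2)^{\vartheta_r}$, which tends to $1$ as $r\to1$ since $\vartheta_r \to 0$. For $r > 2$, apply~\eqref{e:Hypercontractivity} with $(p,q)=(2,r)$, again using $N\le n$, to get $\|f\|_{L^r} \le \beta(2,r,\rho)^n\|f\|_{L^2}$. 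Combined with Step 1 this gives $\beta_1(r) := \beta(2,r,\rho)\beta_1(2)$.

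\textbf{Main obstacle.} The one delicate point is the requirement $\beta_1(r)\to1$ as $r\to1$. It is not automatic from a single application of hypercontractivity, because $\beta(p,q,\rho)\to\infty$ as $p\to1$. This is exactly why we first prove a fixed-exponent bound $L^2 \lesssim L^1$ using a fixed $r'>1$, and only afterwards interpolate. In that second interpolation the exponent $\vartheta_r$ vanishes as $r\to1$, which forces $\beta_1(r)\to1$. One also needs the monotonicity $\|\beta^{-N}f\|_{L^q}$-type manipulations to respect the finite-chaos assumption. Here we only use $\|g\|_{L^2}$ and the fact that $\beta^{N}$ maps $\bigoplus_{k\le n}\fockdo{k}{}$ to itself with $L^2$ operator norm at most $\beta^{n}$. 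Everything is done for local $f$, so all quantities are finite.
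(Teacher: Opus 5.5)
Your proof is correct, and it reorganises the paper's argument in a way that changes what is needed. The paper works at the target exponent $r\in(1,2)$ directly. It interpolates $\|f\|_{L^r}$ between $L^1$ and $L^2$, bounds $\|f\|_{L^2}\le\beta_2(r)^n\|f\|_{L^r}$ by hypercontractivity at that same exponent $r$, and reabsorbs, which gives $\beta_1(r)=\beta_2(r)^{(2r-2)/(2-r)}$. Since $\beta_2(r)\to\infty$ as $r\to1$, the paper must then use the explicit form of $\beta(p,q,\rho)$ from \eqref{e:Hypercontractivity} to check that the blow-up is slow enough, namely that $(r-1)\log(1/(r-1))\to0$.

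You instead reabsorb only once, at a fixed exponent $r'=3/2$, which gives the reverse inequality $\|f\|_{L^2}\le\beta_1(2)^n\|f\|_{L^1}$ with a fixed constant. Every $r\in[1,2]$ then follows from a plain H\"older interpolation whose exponent $\vartheta_r=2(1-1/r)$ vanishes as $r\to1$. This buys three things. The limit $\beta_1(r)\to1$ becomes automatic rather than a balance between a diverging constant and a vanishing exponent. You need hypercontractivity for only a single pair of exponents, so you do not rely on the quantitative form of the log-Sobolev constant. And you get the slightly better rate $\log\beta_1(r)=O(r-1)$ instead of $O((r-1)\log(1/(r-1)))$.

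The extension to $r>2$ via $L^r\to L^2$ hypercontractivity is essentially the same in both arguments.
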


It might seem counter-intuitive at first glance that we have $\beta_1(r) \to 1$ as $r \to 1$ while $\beta(p,q,\rho) \to \infty$ as $p \to 1$. Actually, the proof below shows that it works because the divergence of $\beta(p,q,\rho)$ as $p \to 1$ is sufficiently slow (it only grows as a power of $1/(p-1)$).

\begin{proof}[of Lemma~\ref{lem:HypercontractivityL1}]
First, we notice that for any $r \in (1,2]$ and $s \in [2,\infty)$, we have
\begin{equ} \label{e:HypercontractivityL2}
    \|f\|_{L^s(\nu_\rho)} \leq \beta_2(s)^n \, \|f\|_{L^2(\nu_\rho)} \, , \quad \|f\|_{L^2(\nu_\rho)} \leq \beta_2(r)^n \, \|f\|_{L^r(\nu_\rho)} \, .
\end{equ}
Indeed, this follows from~\eqref{e:Hypercontractivity} (applied respectively to $p = 2$, $q = s$, $f$ ; $p = r$, $q = 2$, $\beta_2(r)^{-N} f$), together with the trivial bounds $\|\beta^{N} f\|_{L^2(\nu_\rho)} \leq \beta^{n} \, \|f\|_{L^2(\nu_\rho)}$ and $\beta^{-n} \, \|f\|_{L^2(\nu_\rho)} \leq \|\beta^{-N} f\|_{L^2(\nu_\rho)}$ for any $\beta \geq 1$ (which follow from the orthogonality of the chaoses).
\ss

Now, we notice that if we prove~\eqref{e:HypercontractivityL1} for some $r_* \in (1, 2)$, say $r_* = 3/2$, then we deduce that~\eqref{e:HypercontractivityL1} also holds for any $r \geq 2$ by taking $\beta_1(r) := \beta_2(r) \, \beta_2(r_*) \, \beta_1(r_*)$ (to see this, bound the $L^r$-norm by the $L^{r_*}$-norm using~\eqref{e:HypercontractivityL2}). Hence, we can assume that $r \in (1, 2)$ without loss of generality. The key idea is the following: we first interpolate the $L^r$-norm between the $L^1$-norm and the $L^2$-norm (by Hölder's inequality) and then we apply~\eqref{e:HypercontractivityL2} to bound the $L^2$-norm by the $L^r$-norm, which can be reabsorbed. More precisely, we write
\begin{equ}
\|f\|_{L^r(\nu_\rho)}^r \leq \|f\|_{L^1(\nu_\rho)}^{2 - r} \, \|f\|_{L^2(\nu_\rho)}^{2 r - 2} \leq \beta_2(r)^{(2r - 2) n} \, \|f\|_{L^1(\nu_\rho)}^{2 - r} \, \|f\|_{L^r(\nu_\rho)}^{2r - 2} \, ,
\end{equ}
and then we divide both sides by $\|f\|_{L^r(\nu_\rho)}^{2 r - 2}$ and raise both sides to the power $1/(2-r)$, which gives
\begin{equ}
\|f\|_{L^r(\nu_\rho)} \leq \beta_2(r)^{\frac{2 r - 2}{2 - r} \, n} \, \|f\|_{L^1(\nu_\rho)} \, .
\end{equ}
Thus, we deduce that the estimate~\eqref{e:HypercontractivityL1} holds with $\beta_1(r) := \beta_2(r)^{\frac{2 r - 2}{2 - r}}$. Observe that, as $r \to 1$, we have $\beta_2(r) \to \infty$ and $\frac{2 r - 2}{2 - r} \to 0$, so we need to be careful to show that $\beta_1(r) \to 1$. But, recalling~\eqref{e:Hypercontractivity}, we have
\begin{equ}
\beta_1(r) = \exp\Big(\frac{1}{2 \, (2 - r) \, C_{LS}} \, (r - 1) \log\Big(\frac{1}{r-1}\Big)\Big) \, ,
\end{equ}
which indeed converges to $1$ as $r \to 1$. This concludes the proof.
\end{proof}

We are now ready to prove Lemma~\ref{lem:UltimateHypercontractivity}.

\begin{proof}[of Lemma~\ref{lem:UltimateHypercontractivity}]

We will show that the estimates~\eqref{e:UltHyper},~\eqref{e:UltHyperGamma2},~\eqref{e:UltHyperGammap} hold for any choice of $\alpha(p)$ such that
\begin{equ} \label{e:ChoiceAlpha(p)}
\alpha(p) \geq \beta_2(p) \mbox{ and } \alpha(p) > \beta_1(p/2) \, .
\end{equ}
Since $\beta_2(r) \to 1$ as $r \to 2$ (by the formula~\eqref{e:Hypercontractivity}) and $\beta_1(r) \to 1$ as $r \to 1$ (by Lemma~\ref{lem:HypercontractivityL1}), we can indeed find a family of constants $(\alpha(p))_{p > 2}$ satisfying~\eqref{e:ChoiceAlpha(p)} and such that $\alpha(p) \to 1$ as $p \to 2$.
\ss

First, by using~\eqref{e:Hypercontractivity} and that $\alpha(p) \geq \beta_2(p)$, we immediately deduce that the desired bound~\eqref{e:UltHyper} holds for any local functional $\phi$; but then an easy approximation argument allows us to extend the estimate to any $\phi \in L^2(\nu_\rho)$.
\ss

Now we tackle the estimates~\eqref{e:UltHyperGamma2} and~\eqref{e:UltHyperGammap} relative to the ``puissance $q$ du champ''. We first point out two basic properties of this object. First, we can bound the ``puissance $q$ du champ'' by its symmetric counterpart, as follows: by using the trivial bound $r(z) \leq 2 \, s(z)$, we deduce $\Gamma_q \psi \leq 2 \, \Gamma_q^s \psi$ where $\Gamma_q^s \psi$ is given by
\begin{equ} \label{e:PuissanceDuChampSymmetric}
\Gamma_q^{s} \psi(\eta) \eqdef \sum_{x, y \in \Z^2} s(y-x) \, \eta(x) (1 - \eta(y)) \, \big|\psi(\eta^{x,y}) - \psi(\eta)\big|^q = \frac{1}{2} \sum_{x, z \in \Z^2} s(z) \, \big|\psi(\eta^{x,x+z}) - \psi(\eta)\big|^q \, .
\end{equ}
Second, a standard computation shows that the expectation of the ``carré du champ'' is given by the Dirichlet form, as follows
\begin{equ} \label{e:MalliavinIPP}
\nu_\rho(\Gamma_2 \psi) = 2 \, \|(- S)^{1/2} \, \psi\|^2 \ \, \big(= \nu_\rho(\Gamma_2^s \psi)\big) \, .
\end{equ}

We turn to the proof of~\eqref{e:UltHyperGamma2}. We start from the bound $\Gamma_2 \psi \leq 2 \, \Gamma_2^s \psi$, then we write $\psi = \sum_{n \geq 0} \psi_n$ with $\psi_n \in \fockdo{n}{}$, and we use twice the generic bound $\big(\sum_{n \geq 0} u_n\big)^\theta \lesssim_{\theta, \eps} \sum_{n \geq 0} \, (1+n)^{\theta - 1 + \eps} \, u_n^\theta$ for $\theta \geq 1$ and $\eps > 0$ (use Hölder's inequality), which results in
\begin{equ}
|\Gamma_2 \psi|^{p/2} \lesssim_p |\Gamma_2^s \psi|^{p/2} \lesssim_{p, \eps} \sum_{n \geq 0} (1+n)^{\frac{p}{2} (1 + \eps) + \frac{p}{2} - 1 + \eps} \, |\Gamma_2^s \psi_n|^{p/2} = \sum_{n \geq 0} (1+n)^{\frac{3 p}{2}} \, |\Gamma_2^s \psi_n|^{p/2} \, ,
\end{equ}
where for the last equality we decided to take $\eps = 1$. Now, we notice that $\Gamma_2^s \psi_n \in \bigoplus_{0 \leq k \leq 2n} \fockdo{k}{}$ and apply Lemma~\ref{lem:HypercontractivityL1} (with $2 n$ and $p/2$), which leads to
\begin{equ}
\nu_\rho\big(|\Gamma_2^s \psi_n|^{p/2}\big) \leq \beta_1(p/2)^{p n} \, \nu_\rho\big(\Gamma_2^s \psi_n\big)^{p/2} \lesssim_p \beta_1(p/2)^{p n} \, \|(- S)^{1/2} \, \psi_n\|^p \, ,
\end{equ}
where the last equality comes from~\eqref{e:MalliavinIPP}. Thus, we deduce by the orthogonality of chaoses (and the general bound $\sum_{n \geq 0} u_n^p \leq \big(\sum_{n \geq 0} u_n^2\big)^{p/2}$ for $p \geq 2$) that
\begin{equs}
\nu_\rho\big(|\Gamma_2 \psi|^{p/2}\big) \lesssim_p \sum_{n \geq 0} (1 + n)^{\frac{3 p}{2}} \beta_1(p/2)^{p n} \|(- S)^{1/2} \psi_n\|^p
&\leq \|(1 + N)^{3/2} \beta_1(p/2)^{N} \, (- S)^{1/2} \psi\|^p \\
&\lesssim_{\alpha(p), p} \|\alpha(p)^N (- S)^{1/2} \psi\|^p \, , \label{e:BoundGamma2}
\end{equs}
where in the last bound we used that $\alpha(p) > \beta_1(p/2)$. This shows the desired bound~\eqref{e:UltHyperGamma2}.

We continue with the proof of~\eqref{e:UltHyperGammap}. We start from $\Gamma_p \psi \leq 2 \, \Gamma_p^s \psi$ and we apply~\eqref{e:Hypercontractivity}, which leads to (we use the notation $\nabla^{x,y} F(\eta) := F(\eta^{x,y}) - F(\eta)$ for conciseness; note that $\nabla^{x,y}$ commutes with $N$)
\begin{equ}
\nu_\rho(\Gamma_p \psi) \leq 2 \, \nu_\rho(\Gamma_p^s \psi) = \sum_{x, z \in \Z^2} s(z) \, \nu_\rho(|\nabla^{x,x+z} \psi|^p) \leq \sum_{x, z \in \Z^2} s(z) \, \nu_\rho(|\nabla^{x,x+z} \beta_2(p)^N \psi|^{2})^{p/2} \, .
\end{equ}
Now, we split the last $\nu_\rho(...)^{p/2}$ as $\nu_\rho(...)^{p/2 - 1} \times \nu_\rho(...)$, then we bound the first factor by using the trivial bound $\|\nabla^{x,y} \cdot\| \leq 2 \, \|\cdot\|$, and finally we notice that the remaining sum equals $2 \, \nu_\rho(\Gamma_2^s \beta_2(p/2)^N \psi)$ and use~\eqref{e:MalliavinIPP}. This results in
\begin{equs}
\nu_\rho(\Gamma_p \psi) &\lesssim_p \|\beta_2(p)^N \psi\|^{p - 2} \sum_{x, z \in \Z^2} s(z) \, \nu_\rho(|\nabla^{x,x+z} \beta_2(p)^{N} \psi|^{2}) \\
&\lesssim \|\beta_2(p)^N \psi\|^{p - 2} \, \|\beta_2(p)^{N} (- S)^{1/2} \psi\| \\
&\leq \|\alpha(p)^N \psi\|^{p - 2} \, \|\alpha(p)^{N} (- S)^{1/2} \psi\| \, , \label{e:BoundGammap}
\end{equs}
where in the third line we used that $\alpha(p) \geq \beta_2(p)$. This shows~\eqref{e:UltHyperGammap} and concludes the proof.
\end{proof}

\subsection{Convergence of the Bernoulli chaos expansion to the Gaussian chaos expansion}

\label{app:Cyl}

The main purpose of this section is to prove Lemma~\ref{lem:CVchaos}.
We start by proving the following useful subgaussian estimates.

\begin{lemma} \label{lem:Subgaussian}
For every $\tau \geq 1$, $t_0, t \geq 0$, $\vphi \in \cS(\R^2)$ and $\theta \in \R$ we have
\begin{equ} \label{e:Subgaussian}
\E[\exp(\theta \zeta(\vphi))] \vee \nu_\rho[\exp(\theta Z_t^\tau(\vphi))] \vee \nu_\rho[E^{\lim}_{Z_{t_0}^\tau(\eta)}[\exp(\theta Y_t^{\lim}(\vphi))]] \leq \exp(c \, \|\vphi\|_{L^2 L_{\mathrm{loc}}^\infty}^2 \, \theta^2/2) \, ,
\end{equ}
where the norm $\|\cdot\|_{L^2 L_{\mathrm{loc}}^\infty(\R^2)}$ on $\cS(\R^2)$ is defined as
\begin{equ} \label{e:L2Linfty}
\|\vphi\|_{L^2 L_{\mathrm{loc}}^\infty(\R^2)} \eqdef \Big(\int_{\R^2} \dd x \, \sup_{y \in B(x, 10)} |\vphi(y)|^2\Big)^{1/2} \, .
\end{equ}
\end{lemma}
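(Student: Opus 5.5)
We treat the three quantities in \eqref{e:Subgaussian} separately; in each case the exponent is a linear functional of the underlying randomness. The constant $c$ is chosen at the end, depending only on $\rho$, $D_0$ and $d^{\lim}$.

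\emph{White noise.} Since $\zeta(\vphi)$ is centred Gaussian with variance $\|\vphi\|_{L^2}^2$, we have $\E[\exp(\theta\zeta(\vphi))] = \exp(\theta^2\|\vphi\|_{L^2}^2/2)$. It then suffices to note the trivial bound $\|\vphi\|_{L^2} \leq \|\vphi\|_{L^2 L^\infty_{\mathrm{loc}}}$, which holds because $|\vphi(x)| \leq \sup_{B(x,10)}|\vphi|$.

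\emph{Bernoulli field.} Write $Z_t^\tau(\vphi) = \sum_x a_x \sigma(x)$ with $a_x = \det(\Sigma^\tau)^{-1/2}\vphi((\Sigma^\tau)^{-1}(x-\tau t v))$. The $\sigma(x)$ are independent, centred and bounded by $C_\rho = \max\{\sqrt{\rho/(1-\rho)},\sqrt{(1-\rho)/\rho}\}$. Hoeffding's lemma therefore gives
\[
\nu_\rho[\exp(\theta Z_t^\tau(\vphi))] \leq \exp\big(c_\rho \, \theta^2 \textstyle\sum_x a_x^2/2\big).
\]
The main point is to bound the Riemann sum $\sum_x a_x^2$ by $C\|\vphi\|_{L^2L^\infty_{\mathrm{loc}}}^2$ uniformly in $\tau\geq1$ and $t$. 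Set $y_x = (\Sigma^\tau)^{-1}(x-\tau t v)$. The points $y_x$ form a lattice whose cells $Q_x$ have area $\det(\Sigma^\tau)^{-1}$. Since $\|(\Sigma^\tau)^{-1}\|\leq \tau^{-1/2}\leq 1$, each cell has diameter at most $\sqrt2 < 10$. Hence for every $z\in Q_x$ we have $|\vphi(y_x)|^2 \leq \sup_{B(z,10)}|\vphi|^2$. Integrating this over $Q_x$ and summing over $x$ yields
\[
\textstyle\sum_x a_x^2 = \sum_x \det(\Sigma^\tau)^{-1}|\vphi(y_x)|^2 \leq \|\vphi\|_{L^2L^\infty_{\mathrm{loc}}}^2 .
\]
This uniform control is exactly why the $L^2L^\infty_{\mathrm{loc}}$ norm was introduced, and I expect it to be the only step needing care.

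\emph{Limiting process started from $Z_{t_0}^\tau$.} Use the explicit representation \eqref{e:YlimExplicit}. Conditionally on the initial condition, $Y_t^{\lim}(\vphi)$ equals $Z_{t_0}^\tau(K_t^{\lim}\vphi)$ plus an independent centred Gaussian with variance
\[
\int_0^t \|K_{t-s}^{\lim}\sqrt{D^{\lim}}\nabla\vphi\|_{L^2}^2\,\dd s = \|\vphi\|_{L^2}^2 - \|K_t^{\lim}\vphi\|_{L^2}^2 ,
\]
which is a standard energy identity for the heat semigroup. Taking the expectation first over the noise and then applying the Bernoulli step to $\psi = K_t^{\lim}\vphi$, the left-hand side is at most
\[
\exp\Big(\tfrac{\theta^2}{2}\big(\|\vphi\|_{L^2}^2 + c_\rho\|K_t^{\lim}\vphi\|_{L^2L^\infty_{\mathrm{loc}}}^2\big)\Big).
\]
It remains to show $\|K_t^{\lim}\vphi\|_{L^2L^\infty_{\mathrm{loc}}}\lesssim\|\vphi\|_{L^2L^\infty_{\mathrm{loc}}}$ uniformly in $t$. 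For this, write $\sup_{B(x,10)}|K_t\vphi| \leq \int g_t(w) \sup_{B(x-w,10)}|\vphi|\,\dd w$, where $g_t$ is the Gaussian kernel of $K_t^{\lim}$. Minkowski's integral inequality then gives the bound with constant $\int g_t = 1$. Choosing $c$ as the maximum of the constants obtained in the three steps concludes the proof.
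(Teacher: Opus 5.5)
Your proof is correct and follows the paper's argument essentially step by step. The Gaussian computation for $\zeta$ is the same. So is the combination of subgaussianity of the independent bounded $\sigma(x)$ with a Riemann-sum bound over lattice cells of diameter less than $10$. For $Y^{\lim}$, you use the same ingredients as the paper: the explicit representation \eqref{e:YlimExplicit}, independence of its two summands, the $L^1$-kernel convolution bound for the $L^2L^\infty_{\mathrm{loc}}$ norm, and the heat-semigroup energy identity. The differences are only cosmetic. You use the exact variance $\|\vphi\|_{L^2}^2-\|K_t^{\lim}\vphi\|_{L^2}^2$, where the paper bounds it by $\int_0^\infty=\|\vphi\|_{L^2}^2$. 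You also write out the Riemann-sum and Minkowski steps that the paper only states.
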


\begin{proof}[of Lemma~\ref{lem:Subgaussian}]

First, the subgaussian estimate for $\zeta$ is trivial since $\zeta(\vphi) \in \cN(0,\|\vphi\|_{L^2(\R^2)}^2)$.

Next, we show the subgaussian estimates for $Z_t^\tau$ under the probability measure $\nu_\rho$. Observe that
\begin{equ}
Z_t^\tau(\vphi) = \det(\Sigma^\tau)^{-1/2} \sum_{x \in \Z^2} \vphi\big((\Sigma^\tau)^{-1} (x - \tau t v)\big) \, \sigma(x) \, ,
\end{equ}
and that the random variables $(\sigma(x))_{x \in \Z^2}$ are iid and $c$-subgaussian for some $c = c(\rho) \in (0,\infty)$. We deduce that $Y^\tau(\vphi)$ is subgaussian with parameter
\begin{equ}
c \, \det(\Sigma^\tau)^{-1} \sum_{x \in \Z^2} \vphi\big((\Sigma^\tau)^{-1} (x - \tau t v)\big)^2 \leq c \, \|\vphi\|_{L^2 L^\infty(\R^2)}^2 \, ,
\end{equ}
where the last bound comes from recognizing a Riemann sum (more precisely, we used that every cell of the affine lattice $(\Sigma^\tau)^{-1} (\Z^2 - \tau t v)$ has volume $\det(\Sigma^\tau)^{-1}$ and diameter less than $10$).
\ss

Finally, we show the subgaussian estimates for $Y_t^{\lim}$. By using~\eqref{e:YlimExplicit}, we can write, under the probability measure $\nu_\rho \, \mathbf E^{\lim}_{Z_{t_0}^\tau}$,
\begin{equ}
Y_t^{\lim}(\vphi) = Z_{t_0}^\tau(K^{\lim}_t \vphi) + \int_0^t \vec{\xi}(\dd s) \, [K_{t-s}^{\lim} \, \sqrt{D^{\lim}} \nabla \vphi] \, .
\end{equ}
Observe first that the two summands on the right hand side are independent. On the one hand, we just showed that the first summand is $c \, \|\vphi\|_{L^2 L^\infty(\R^2)}^2$-subgaussian (using in addition the general bound $\|k * \vphi\|_{L^2 L^{\infty}(\R^2)} \leq \|k\|_{L^1(\R^2)} \|\vphi\|_{L^2 L^{\infty}(\R^2)}$, with $k$ being the kernel of $K_t^{\lim}$). On the other hand, the second summand is a centred Gaussian random variable whose variance is bounded by $\int_0^\infty \dd r \, \|K_r^{\lim} \sqrt{D^{\lim}} \nabla \vphi\|_{L^2(\R^2)}^2 = \|\vphi\|_{L^2(\R^2)}^2$. Therefore we deduce
\begin{equ}
\nu_\rho[E^{\lim}_{Y^\tau}[\exp(\theta Y^{\lim}(\vphi))]] \leq \exp((c+1) \|\vphi\|_{L^2 L^\infty(\R^2)}^2/2) \, ,
\end{equ}
which concludes the proof.
\end{proof}

Now, we turn to the proof of Lemma~\ref{lem:CVchaos}.

\begin{proof}[of Lemma~\ref{lem:CVchaos}]

{\noindent \it Step 1. Reduction to $F(w) = H_n(w(h), \|h\|_{L^2(\R^2)})$}

We first show that it is sufficient to prove Lemma~\ref{lem:CVchaos} for polynomial cylinder functionals $F$, that is to say of the form~\eqref{e:CylinderFunctional} but with a polynomial function $f$. Let $F$ be a cylinder functional of the form~\eqref{e:CylinderFunctional}. Since $f \in \cC(\R^m, \R)$ grows at most polynomially fast, it is classical that there exists a sequence of polynomial functions $(p_n)_{n \geq 1}$ such that, as $n \to \infty$,
\begin{equ}
\eps_n := \sup_{y \in \R^m} |f(y) - p_n(y)| \exp(- (|y_1| + \dots + |y_m|)) \to 0 \, .
\end{equ}
We denote by $P_n$ the polynomial cylinder functional $P_n(w) := p_n(w(\vphi_1), \dots, w(\vphi_m))$. Then, by using Lemma~\ref{lem:Subgaussian} and the contractivity of $\fsi_t^\tau \colon L^2(\mathbb P) \to L^2(\nu_\rho)$, we immediately deduce
\begin{equ}
\|F(Z_t^\tau) - P_n(Z_t^\tau)\|_{L^2(\nu_\rho)} \lesssim \eps_n \, , \quad \|\fsi_t^\tau (F(\zeta) - P_n(\zeta))\|_{L^2(\nu_\rho)} \lesssim \eps_n \, .
\end{equ}
Thus, if Lemma~\ref{lem:CVchaos} holds for each $P_n$ then we immediately deduce that it also holds for $F$. Consequently, it is sufficient to show Lemma~\ref{lem:CVchaos} for polynomial cylinder functionals.
\ss

Furthermore, by using a linearity and polarisation argument, we can reduce the proof of Lemma~\ref{lem:CVchaos} to polynomial cylinder functionals $F$ of the form $F(w) = H_n(w(h), \|h\|_{L^2(\R^2)})$ for $n \geq 0$ and $h \in \cS(\R^2)$. Since $F(\zeta) = \fI(h^{\otimes n})$ by definition, we are left to show that
\begin{equ} \label{e:CVchaosGoal1}
\|H_n(Z_t^\tau(h), \|h\|_{L^2(\R^2)}) - \fsi_t^\tau h^{\otimes n}\|_{L^2(\nu_\rho)} \xrightarrow[\tau \to \infty]{} 0 \, .
\end{equ}

{\noindent \it Step 2. Replacement of $\fsi_t^\tau h^{\otimes n}$ by $\restriction{(h_t^\tau)^{\otimes n}}{\Z^{2n} \cap HC}$}

For conciseness, we denote $h_t^\tau := \fU_t^\tau h$ (note that $\fU_t^\tau h^{\otimes n} = (h_t^\tau)^{\otimes n}$). We claim that
\begin{equ} \label{e:RemovalPRn}
\|\fsi_t^\tau h^{\otimes n} - \restriction{(h_t^\tau)^{\otimes n}}{\Z^{2n} \cap HC}\| \xrightarrow[\tau \to \infty]{} 0 \, .
\end{equ}
By recalling that $\fsi_t^\tau = \fsR \fsP \fU_t^\tau$ and using that $\fsR \colon \fockd{}{} \to \fockdo{}{}$ is a contraction, it is sufficient to show that $\|\fsP (h_t^\tau)^{\otimes n} - \restriction{(h_t^\tau)^{\otimes n}}{\Z^{2n}}\|_{\ell^2(\Z^{2n})} \to 0$. 
Observe that, by using the Poisson summation formula~\eqref{e:PoissonSummation}, it holds for any $k_{1:n} \in (- \pi, \pi)^{2n} := \square_n$
\begin{equ}
\cF (\restriction{(h_t^\tau)^{\otimes n}}{\Z^{2n}} - \fsP (h_t^\tau)^{\otimes n}) \, (k_{1:n}) = \sum_{\ell_{1:n} \in 2 \pi \Z^{2n} \setminus \{0\}} (\fF h_t^\tau)^{\otimes n} (k_{1:n} + \ell_{1:n}) \, ,
\end{equ}
In particular, we deduce that, for any $m \geq n + 1$,
\begin{equs}
\|\restriction{(h_t^\tau)^{\otimes n}}{\Z^{2n}} - \fsP (h_t^\tau)^{\otimes n}\|_{\ell^2(\Z^{2n})} &\leq \sum_{\ell_{1:n} \in 2 \pi \Z^{2n} \setminus \{0\}} \|\fF h_t^\tau\|_{L^2(\square_1 + \ell_1)} \dots \|\fF h_t^\tau\|_{L^2(\square_1 + \ell_n)} \\
&\lesssim_{n,m} \sum_{\ell_{1:n} \in 2 \pi \Z^{2n} \setminus \{0\}} |\ell_{1:n}|_\infty^{-2 m} \, \|(- \fL_0)^m \, h_t^\tau\| \, \|h_t^\tau\|^{n-1} \\
&\lesssim_{n, m} \|(- \fL_0)^m \, h_t^\tau\| \, \|h_t^\tau\|^{n-1} \, ,
\end{equs}
where in the second line we used the bound $\|\fF h_t^\tau\|_{L^2(\square_1 + \ell_i)} \lesssim_{m} |\ell_i|^{-2m} \, \|(- \fL_0)^m \, h_t^\tau\|$ with an index $i$ such that $|\ell_i| = |\ell_{1:n}|_\infty$. Now, note that by definition of $\fU_t^\tau$ and the bound $|(\Sigma^\tau)^{-1} \, k|^2 \leq \tau^{-1} \, |k|^2$, we have $\|h_t^\tau\| \leq \|h\|$ and $\|(- \fL_0)^m h_t^\tau\| \leq \tau^{-m} \, \|(- \fL_0)^m h\|$, which gives
\begin{equs}
\|\restriction{(h_t^\tau)^{\otimes n}}{\Z^{2n}} - \fsP (h_t^\tau)^{\otimes n}\|_{\ell^2(\Z^{2n})} &\lesssim_{n, m, h} \tau^{-m} \, ,
\end{equs}
and concludes the proof of~\eqref{e:RemovalPRn}.
\ss

Consequently, it only remains to show that
\begin{equ} \label{e:CVchaosGoal2}
\|H_n(Z_t^\tau(h), \|h\|_{L^2(\R^2)}) - \restriction{(h_t^\tau)^{\otimes n}}{\Z^{2n} \cap HC}\|_{L^2(\nu_\rho)} \xrightarrow[\tau \to \infty]{} 0 \, .
\end{equ}

{\noindent \it Step 3. The combinatorial analysis}

We first introduce some combinatorial notation and facts, that can be found in~\cite[Section 2.5]{peccati2011wiener}. Let $\Pi_n$ be the set of partitions of $\{1, \dots, n\}$, equipped with the refinement order: that is $\pi \leq \pi'$ if any block of $\pi$ is included in a block of $\pi'$. If $\pi \in \Pi_n$ and $1 \leq k \leq n$, then we denote by $\pi_k$ the set of blocks of size $k$ of $\pi$. Let $\hat{0} \in \Pi_n$ be the singleton partition $\{\{1\}, \dots, \{n\}\}$. Let $\mathrm{Pair}_n \subset \Pi_n$ be the set of partial pairings, that is partitions whose blocks are of size $1$ or $2$. The Möbius function corresponding to the poset $(\Pi_n, \leq)$ is  given by
\begin{equ}
\mu(\pi) \eqdef \prod_{B \in \pi} (-1)^{|B|-1} \, (|B|-1)! \, .
\end{equ}
Note that, if $\pi \in \mathrm{Pair}_n$, then $\mu(\pi) = (-1)^{\# \pi_2}$. Since $\hat{0}$ is the minimum element of $\Pi_n$, the Möbius inversion formula gives that, for any $\pi \in \Pi_n$,
\begin{equ} \label{e:MobiusInv}
\1_{\pi = \hat{0}} = \sum_{\pi' \in \Pi_n} \mu(\pi') \, \1_{\pi' \leq \pi} \, .  
\end{equ}
Finally, if $x_{1:n} \in \R^{2n}$, then we denote by $p(x_{1:n}) = \hat{0}$ the corresponding ``equality'' partition, that is $i$ and $j$ belong to the same block if and only if $x_i = x_j$. Observe that $p(x_{1:n}) = \hat{0}$ if and only if $x_{1:n}$ satisfies the hard core condition; note also that for any given $\pi \in \Pi_n$, it holds $p(x_{1:n}) \geq \pi$ if and only if $(x_i)_{1 \leq i \leq n}$ is constant on each block of $\pi$.
\ms

Now, we come back to~\eqref{e:CVchaosGoal2} and rewrite the two appearing summands. On the one hand, by recalling $Z_t^\tau(h) = \sum_{x \in \Z^2} h_t^\tau(x) \, \sigma(x)$ and the formula~\eqref{e:Hermite} for the Hermite polynomial (recall also that $c_{k,n}$ is the number of partial pairings with $k$ blocks of size $2$ and $n-2k$ blocks of size $1$), we have
\begin{equs}
H_n(Z_t^\tau(h), \|h\|_{L^2(\R^2)}) &= \sum_{0 \leq k \leq n/2} c_{k,n} \, (-1)^{k} \, \|h\|_{L^2(\R^2)}^{2 k} \, \Big(\sum_{x \in \Z^2} h_t^\tau(x) \, \sigma(x)\Big)^{n - 2k} \\
&= \sum_{\pi \in \Pi_n} \1_{\pi \in \mathrm{Pair}_n} \, \mu(\pi) \, \|h\|_{L^2(\R^2)}^{2 \# \pi_2} \, \Big(\sum_{x \in \Z^2} h_t^\tau(x) \, \sigma(x)\Big)^{\# \pi_1} \, . \label{e:HermiteZ}
\end{equs}
On the other hand, by using that $x_{1:n} \in HC$ if and only if $p(x_{1:n}) = \hat{0}$, we have
\begin{equ}
I(\restriction{(h_t^\tau)^{\otimes n}}{\Z^{2n} \cap HC}) = \sum_{x_{1:n} \in \Z^{2n}} \1_{p(x_{1:n}) = \hat{0}} \, h_t^\tau(x_1) \dots h_t^\tau(x_n) \, \sigma(x_1) \dots \sigma(x_n) \, .
\end{equ}
By using the Möbius inversion formula~\eqref{e:MobiusInv} to rewrite $\1_{p(x_{1:n}) = 0}$ (recall also the couple of observations we made just after~\eqref{e:MobiusInv}), we end up with
\begin{equ} \label{e:ItauhMobius}
I(\restriction{(h_t^\tau)^{\otimes n}}{\Z^{2n} \cap HC}) = \sum_{\pi \in \Pi_n} \mu(\pi) \, \prod_{B \in \pi} \Big(\sum_{x \in \Z^2} h_t^\tau(x)^{\# B} \sigma(x)^{\# B}\Big) \, .
\end{equ}

For conciseness, we denote $S_{t,\tau}^{(k)} := \sum_{x \in \Z^2} h_t^\tau(x)^{k} \, \sigma(x)^{k}$ for any $k \in \{1, \dots, n\}$. By comparing the two formulas~\eqref{e:ItauhMobius} and~\eqref{e:HermiteZ}, we deduce that, in order to prove~\eqref{e:CVchaosGoal2}, it only remains to show that, for any $\pi \in \Pi_n$, it holds
\begin{equ} \label{e:CVchaosGoal3}
\Big\|\prod_{B \in \pi} S_{t, \tau}^{(\# B)} - \1_{\pi \in \mathrm{Pair}_n} \, \big(S_{t,\tau}^{(1)}\big)^{\# \pi_1} \|h\|_{L^2(\R^2)}^{2 \# \pi_2} \Big\|_{L^2(\nu_\rho)} \xrightarrow[\tau \to \infty]{} 0 \, .
\end{equ}

{\noindent \it Step 4. Conclusion}

First, we observe that, by using Hölder's inequality as follows
\begin{equ}
\|f_1 \dots f_m - g_1 \dots g_m\|_{L^2} \leq \sum_{i=1}^m \|f_1\|_{L^{2m}} \dots \|f_{i-1}\|_{L^{2m}} \|f_i - g_i\|_{L^{2m}} \|g_{i+1}\|_{L^{2m}} \dots \|g_{m}\|_{L^{2m}} \, ,
\end{equ}
we can reduce the proof of~\eqref{e:CVchaosGoal3} to the proof of the following estimate: for every $p \geq 1$, as $\tau \to \infty$, it holds
\begin{equ} \label{e:Sktau}
\|S_{t,\tau}^{(1)}\|_{L^p(\nu_\rho)} = O(1) \, , \quad \|S_{t,\tau}^{(2)} - \|h\|_{L^2(\R^2)}^2\|_{L^p(\nu_\rho)} \to 0 \, , \quad \|S_{t,\tau}^{(k)}\|_{L^p(\nu_\rho)} \xrightarrow[]{} 0 \mbox{ if } k \geq 3 \, .
\end{equ}

We start with the following key observation: by using the definition of $\fU_t^\tau$ and recognizing a Riemann sum, it is easily checked that, for any $k \geq 1$,
\begin{equ} \label{e:RiemannLk}
    \sum_{x \in \Z^2} |h_t^\tau(x)|^k \underset{\tau \to \infty}{\sim} \det(\Sigma^\tau)^{1 - k/2} \, \|h\|_{L^k(\R^2)}^k \, .
\end{equ}
First, by using~\eqref{e:RiemannLk} and the boundedness of $\sigma$, we deduce that, for any $k \geq 3$,
\begin{equ}
    \|S_{t, \tau}^{(k)}\|_{L^\infty(\nu_\rho)} \lesssim_k \sum_{x \in \Z^2} |h_t^\tau(x)|^k \xrightarrow[\tau \to \infty]{} 0 \, ,
\end{equ}
which shows the third statement of~\eqref{e:Sktau}. Second, by using classical subgaussian estimates for bounded centred iid random variables and~\eqref{e:RiemannLk}, we deduce 
\begin{equs}
    \Big\|\sum_{x \in \Z^2} h_t^\tau(x) \, \sigma(x)\Big\|_{L^p(\nu_\rho)} &\lesssim_p \Big(\sum_{x \in \Z^2} h_t^\tau(x)^2\Big)^{1/2} \lesssim_h 1 \, , \\
    \Big\|\sum_{x \in \Z^2} h_t^\tau(x)^2 \, (\sigma(x)^2 - 1)\Big\|_{L^p(\nu_\rho)} &\lesssim_p \Big(\sum_{x \in \Z^2} h_t^\tau(x)^4\Big)^{1/2} \xrightarrow[\tau \to \infty]{} 0 \, .
\end{equs}
By using also that $\sum_{x \in \Z^2} h_t^\tau(x)^2 \to \|h\|_{L^2(\R^2)}^2$ by~\eqref{e:RiemannLk}, we deduce the first two statements of~\eqref{e:Sktau}. This concludes the proof.
\end{proof}

\end{appendix}

\bibliography{refs}

\bibliographystyle{Martin}

\end{document}